\theoremstyle{definition}
\newtheorem{theorem}{{Theorem}}[subsection]
\newtheorem{lemma}[theorem]{{Lemma}}
\newtheorem{proposition}[theorem]{{Proposition}}
\newtheorem{definition}[theorem]{{Definition}}
\newtheorem{corollary}[theorem]{{Corollary}}
\newtheorem{example}[theorem]{{Example}}
\newtheorem{exercise}[theorem]{{Exercise}}
\newtheorem{conjecture}[theorem]{{Conjecture}}
\newtheorem{remark}[theorem]{{Remark}}
\newtheorem{claim}[theorem]{{Claim}}
\newtheorem{algorithm}[theorem]{{Algorithm}}
\newtheorem{notation}[theorem]{{Notation}}
\newtheorem{question}[theorem]{{Question}}
\newtheorem{fact}[theorem]{{Fact}}
\newtheorem{aside}[theorem]{{Aside}}
\newtheorem{warning}[theorem]{{Warning}}
\newtheorem{warmup}[theorem]{{Warmup}}
\newcommand{\bea}{\begin{enumerate}[label=(\alph*)]}
\newcommand{\ee}{\end{enumerate}}
\newcommand{\C}{\mathbb{C}}
\newcommand{\Q}{\mathbb{Q}}
\newcommand{\R}{\mathbb{R}}
\newcommand{\Z}{\mathbb{Z}}
\newcommand{\N}{\mathbb{N}} 
\newcommand{\F}{\mathbb{F}} 
\newcommand{\CA}{\mathcal{A}} 
\newcommand{\essl}{\mathfrak{sl}}
\newcommand{\Sym}{\mathrm{Sym}} 
\newcommand{\Hom}{\mathrm{Hom}} 
\newcommand{\trace}{\mathrm{Tr}} 
\newcommand{\Kar}{\mathrm{Kar}} 
\newcommand{\map}[1]{\xrightarrow{#1}}
\newcommand{\inv}{^{-1}}
\newcommand{\st}{\ | \ }
\newcommand{\into}{\hookrightarrow}
\newcommand{\onto}{\twoheadrightarrow}
\newcommand{\abuts}{\rightrightarrows} 
\DeclareMathOperator{\rk}{rank} 
\renewcommand{\emptyset}{\varnothing} 
\newcommand{\diffeo}{\cong} 
\newcommand{\homeo}{\simeq} 
\newcommand{\Top}{\mathrm{Top}}
\newcommand{\catLink}{\textbf{Link}} 
\newcommand{\catTang}{\textbf{Tang}} 
\newcommand{\catDiag}{\textbf{LinkDiag}} 
\newcommand{\catTangDiag}{\textbf{TangDiag}} 
\newcommand{\concgroup}{\mathscr{C}} 
\newcommand{\KBSM}{\mathrm{KBSM}} 
\newcommand{\Kh}{\mathrm{Kh}}
\newcommand{\Kg}{\mathrm{Kg}} 
\newcommand{\Khred}{\widetilde{\Kh}} 
\newcommand{\KhR}{\mathrm{KhR}} 
\newcommand{\BNfun}{\functor_{\text{BN}}} 
\newcommand{\CAKh}{\mathrm{CAKh}} 
\newcommand{\AKh}{\mathrm{AKh}}
\newcommand{\Khsymp}{\Kh_{\mathrm{symp}}}
\newcommand{\BN}{\mathrm{BN}}
\newcommand{\BNred}{\widetilde{\BN}} 
\newcommand{\BNcat}{\mathcal{BN}} 
\newcommand{\CKh}{\mathrm{CKh}}
\newcommand{\CKhred}{\widetilde{\CKh}} 
\newcommand{\CLee}{\mathrm{CLee}} 
\newcommand{\Lee}{\mathrm{Lee}} 
\newcommand{\bolda}{\mathbf{a}} 
\newcommand{\boldb}{\mathbf{b}} 
\newcommand{\fraks}{\mathfrak{s}} 
\newcommand{\TL}{\mathrm{TL}} 
\newcommand{\TLcat}{\mathcal{TL}} 
\newcommand{\WW}{\mathbb{W}} 
\newcommand{\VV}{\mathbb{V}} 
\newcommand{\CS}{\mathcal{S}} 
\newcommand{\CL}{\mathcal{L}} 
\newcommand{\lasagna}{\CS_0^2} 
\newcommand{\lasfilling}{\mathcal{F}} 
\newcommand{\cabledKhR}{\underline{\KhR}} 
\newcommand{\TT}{\mathbb{T}} 
\newcommand{\CF}{\mathrm{CF}}
\newcommand{\CFhat}{\widehat{\CF}}
\newcommand{\HFhat}{\widehat{\mathrm{HF}}}
\newcommand{\HD}{\mathcal{H}} 
\newcommand{\Tot}{\mathrm{Tot}} 
\newcommand{\gr}{\mathrm{gr}} 
\newcommand{\Ob}{\mathrm{Ob}} 
\newcommand{\id}{\mathrm{id}}
\newcommand{\CC}{\mathcal{C}} 
\newcommand{\SA}{\mathscr{A}} 
\newcommand{\One}{\mathbbm{1}} 
\newcommand{\functor}{\mathcal{F}} 
\newcommand{\cone}{\mathrm{Cone}} 
\newcommand{\colim}{\mathrm{colim}}
\newcommand{\im}{\mathrm{im}}
\newcommand{\Zmod}{\text{Mod}_\Z} 
\newcommand{\ggmod}{gg\text{Mod}_\Z} 
\newcommand{\ggVect}{gg\text{Vect}} 
\newcommand{\Mat}{\mathrm{Mat}} 
\newcommand{\Kom}{\mathrm{Kom}} 
\newcommand{\CX}{\mathcal{X}} 
\newcommand{\XKh}{\CX_{\Kh}} 
\newcommand{\moduli}{\mathcal{M}} 
\newcommand{\Sq}{\mathrm{Sq}}
\newcommand{\cubecat}{\underline{2}} 
\newcommand{\Burn}{\mathcal{B}} 
\newcommand{\Topcat}{\mathbf{Top}}
\newcommand{\SC}{\mathscr{C}} 
\newcommand{\SChat}{\hat\SC} 
\newcommand{\BE}{\mathbb{E}} 
\newcommand{\Conf}{\mathrm{Conf}} 
\newcommand{\Wh}{\mathrm{Wh}} 
\newcommand{\writhe}{\mathrm{wr}} 
\newcommand{\tb}{\mathrm{tb}}
\newcommand{\rot}{\mathrm{rot}}
\newcommand{\maxtb}{\overline{\tb}} 
\newcommand{\xistd}{\xi_{std}} 
\newcommand{\xirot}{\xi_{rot}} 
\newcommand{\transknot}{\mathcal{T}} 
\newcommand{\selflinking}{sl} 
\newcommand{\fix}{{\mathrm{fix}}} 
\newcommand{\idbraid}{\mathbbm{1}} 
\newcommand{\crossing}{
    \begin{tikzpicture}[scale=.125, baseline=-.67ex]
        \draw (-1,1) -- (1,-1);
        \filldraw[white] (0,0) circle (.4cm);
        \draw (-1,-1) -- (1,1);
    \end{tikzpicture}
}
\newcommand{\vertres}{
    \begin{tikzpicture}[scale=.125, baseline=-.67ex]
        \draw (-1,-1) .. controls (0,0) .. (-1,1);
        \draw (1,-1) .. controls (0,0) .. (1,1);
    \end{tikzpicture}
}
\newcommand{\horizres}{
    \begin{tikzpicture}[scale=.125, baseline=-.67ex]
        \begin{scope}[rotate=90]
            \draw (-1,-1) .. controls (0,0) .. (-1,1);
            \draw (1,-1) .. controls (0,0) .. (1,1);
        \end{scope}
    \end{tikzpicture}
}
\newcommand{\poscrossing}{
    \begin{tikzpicture}[scale=.25, baseline=-1ex]
        \draw[-Stealth] (1,-1) -- (-1,1);
        \filldraw[white] (0,0) circle (.4cm);
        \draw[-Stealth] (-1,-1) -- (1,1);
    \end{tikzpicture}
}
\newcommand{\negcrossing}{
    \begin{tikzpicture}[xscale=-1, scale=.25, baseline=-1ex]
        \draw[-Stealth] (1,-1) -- (-1,1);
        \filldraw[white] (0,0) circle (.4cm);
        \draw[-Stealth] (-1,-1) -- (1,1);
    \end{tikzpicture}
}
\newcommand{\mzcolor}{WildStrawberry} 
\newcommand{\mz}[1]{{\color{\mzcolor}#1}}  
\newcommand{\toadd}[1]{{\color{Melon}(add: #1)}}
\newcommand{\note}[1]{{\color{MidnightBlue}#1}} 
\newcommand{\alert}[1]{{\color{red}#1}} 
\title{Notes on Khovanov homology}
\author{Melissa Zhang}
\date{Fall 2024}
\begin{document}

\begin{abstract}
These are expository lecture notes from a graduate topics course taught by the author on Khovanov homology and related invariants. 
Major topics include the Jones polynomial, Khovanov homology, Bar-Natan's cobordism category, applications of Khovanov homology, some spectral sequences, Khovanov stable homotopy type, and skein lasagna modules. 
Topological and algebraic exposition are sprinkled throughout as needed.
\end{abstract}

\maketitle

\tableofcontents

\section{Introduction}

These are lecture notes from a graduate topics course (MAT280) on \emph{Khovanov homology and related invariants} I taught at UC Davis in Fall Quarter of 2024. 

\subsubsection*{Goal}
The goal of the course was to expose a diverse group of algebra, topology, and combinatorics graduate students to some of the most impactful ideas from Khovanov homology. 
As such, we covered some requisite material quite quickly, opting for more intuitive explanations of auxiliary concepts, rather than formal treatments.

\subsubsection*{Audience}
Students taking the course were required to be familiar with homological algebra (MAT250) but not necessarily with low-dimensional topology. 
As such, I included some exposition on topological concepts when needed, but at othe times also discussed  advanced topology topics without exposition. 

\subsubsection*{Tone}
The casual tone of these notes are similar to the style of my handwritten notes from previous courses, which received positive feedback from students. 
Throughout, you may find colored text:
\begin{itemize}
    \item \note{Additional mathematical commentary is written in this color.}
    \item \alert{Warnings} look like this.
    \item Occasional other comments from me are written in \mz{this color}. (These include organizational notes, general encouragement, life advice, etc.)
    \item \toadd{This indicates that I would like to add something in a future version of these notes.}
\end{itemize}

\subsubsection{Evolution}
There are likely still numerous typos in these notes. 
If you find any and would like to let me know, please email me.
I will post the most recent version somewhere on my personal website:
\url{https://www.melissa-zhang.com}.

\subsection*{Acknowledgements}
I would like to thank the students, both registered and those just sitting in, in my MAT280 course. 
Your persistent engagement, enthusiasm in lectures, and pertinent questions helped me clarify my explanations in these notes. 
I would also in particular like to thank Jake Quinn and Daniel Qin for finding many typos in the notes, and Ian Sullivan for teaching the final lecture of the course while I was traveling.

\section{Knots and Topology}

\subsection{Knots and links}


A knot is sometimes defined as a smooth embedding $S^1 \into S^3$. 

Notice that we can 
\begin{itemize}
    \item reparametrize the embedding, preserving the image setwise
    \item perform an \emph{ambient isotopy} on the knot (`isotop' the knot)
\end{itemize}

\begin{definition}
Let $K, K': S^1 \into S^3$ be two smooth embeddings. 
We say $K$ and $K'$ are (smoothly, ambiently) \emph{isotopic} if there exists a smooth family of diffeomorphisms 
\[
    \{\varphi_t: S^3 \to S^3\}_{t \in [0,1]}
\]
such that $\varphi_0 = \id$ and $\varphi(1) \circ K = K'$. 
\end{definition}

\begin{remark}
Convince yourself of the following:
\begin{enumerate}
    \item ``Ambiently isotopic'' is an equivalence relation. The equivalence classes are called \emph{isotopy classes} of knots.
    \item In other words, an ambient isotopy smoothly morphs the embedding $K$ into the embedding $K'$, through a family of diffeomorphisms of $S^3$.
    \item An order-preserving reparametrization is an ambient isotopy.
\end{enumerate}
\end{remark}

\begin{remark}
\label{rmk:colloquial-knot}
    In practice, when I say \emph{knot}, I'm probably referring to either (1) the image of the knot in $S^3$ or (2) an entire isotopy class.  
\end{remark}

\begin{example}
    `The' \emph{unknot} $U$ is the equivalence class of the standard embedding $S^1 \into \R^2 \into \R^3 \into S^3$.
\end{example}

\begin{definition}
Any diffeomorphic copy of $S^1$ can have two possible orientations. 
The \emph{orientation} of a knot is given by the direction $\frac{dK}{d\theta}$.\footnote{This is terrible notation and should not ever appear again, because of how we use the term `knot'; see Remark \ref{rmk:colloquial-knot}.}
\end{definition}

\begin{definition}
The \emph{reverse} $K^r$ (also denoted by $\bar K$) of a knot $K$ is 
the\footnote{By using the article `the', I'm using the term `knot' in the sense of Remark \ref{rmk:colloquial-knot} (2).}
knot obtained by precomposing $K$ with an order-reversing diffeomorphism $\rho: [0,1] \to [0,1]$. 

Let $\tau$ be an orientation-reversing diffeomorphism of $S^3$. 
The \emph{mirror} of a knot $K$ is the knot $m(K) = \tau \circ K$. 
\end{definition}

\begin{remark}
    An \emph{unoriented knot} is just the knot after you forget about the orientation. You can think of this as the union of the isotopy classes of $K$ and $\bar K$. 
\end{remark}

\begin{exercise}
First observe that the orientation of $S^3$ does not reverse under isotopy. 

Prove that 
\begin{itemize}
    \item In general, $m(K)$ is not necessarily isotopic to $K$, even if we treat them as unoriented knots.  \note{You can prove this via a (counter)example. Proving this directly is hard. Use the Jones polynomial, introduced in Section \ref{sec:jones-polyn}.}
    \item In general, $K^r$ is not necessarily isotopic to $K$. \note{There is no general algorithm for determining when $K \not\sim K^r$! For this problem, do a search online and see if you can an example in the literature, as well as the relevant terms for describing this kind of symmetry.}
\end{itemize}

Then, find examples of knots $K$ that do happen to satisfy the following, and exhibit an explicit isotopy:
\begin{itemize}
    \item $K \sim K^r$
    \item $K \sim m(K)$
\end{itemize}
\note{Diagrammatically exhibit the isotopy by applying Reidemeister moves; see Section \ref{sec:link-diagrams}. If  there's an obvious part of the isotopy that's easy to describe but takes a lot of Reidemeister moves, you can just note what you're doing between two pictures.}
\end{exercise}

\begin{exercise}
\alert{(Important)}
A \emph{link} with $\ell \in \Z_{\geq 0}$ components is a smooth embedding 
\[
    L: \coprod_{i=1}^\ell S^1 \into S^3
\]
The link with zero components is called the \emph{empty link}. 
The $i$th \emph{component} is the embedding of the $i$th copy of $S^1$ into $S^3$.

All of the definitions above can be generalized to links.
Write down definitions for the following:
\begin{enumerate}
    \item oriented link $L = (L,o)$ \note{If we want to be explicit about the orientation of a link, we use the letter $o$ for the orientation information.}
    \item unoriented (isotopy class of) $L$
    \item $m(L)$, the mirror of a link $L$
    \item $L^r$, the reverse of an oriented link $(L,o)$
\end{enumerate}

How many orientations does a link with $\ell$ components have?
\end{exercise}

\begin{remark}
    There are also other categories of links, such as \emph{topological links} and \emph{wild links}. We won't talk about these until, maybe, far later in the course.
\end{remark}

\subsection{Link diagrams and Reidemeister moves}
\label{sec:link-diagrams}

We will now start abusing notation and language without comment, per Remark \ref{rmk:colloquial-knot}. 

We've so far been implicitly using link diagrams to draw links. 
More formally, a \emph{link diagram} is a compact projection of a \note{representative of a} link $L$ onto the $xy$-plane such that the only intersections are transverse double points.  

For example, here are some bad singularities:

\begin{center}
\includegraphics[width=\textwidth]{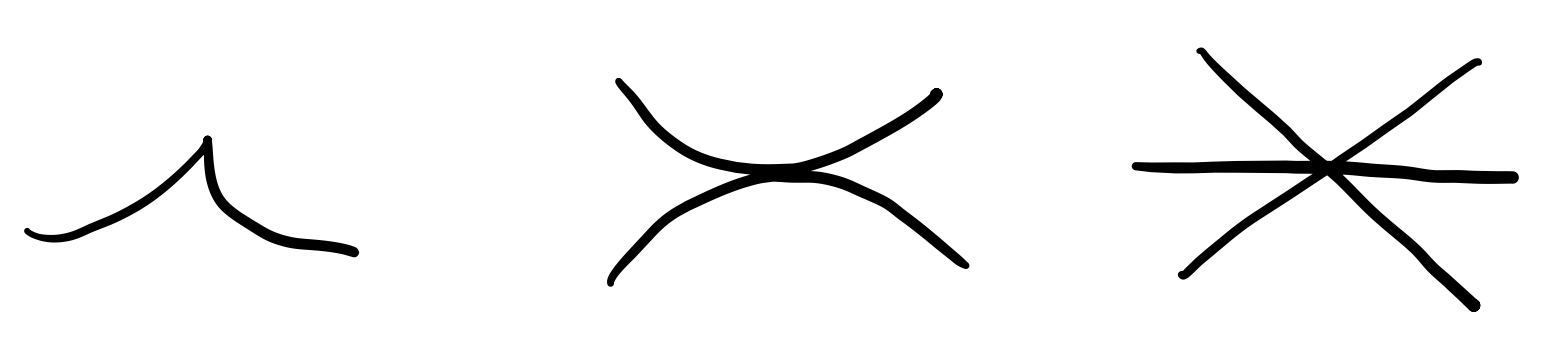}
\end{center}

\begin{theorem}[Reidemeister, 1930s]
If $D$ and $D'$ are two diagrams of the same link, then they are related by a finite sequence of the following moves:

\begin{enumerate}
\item[(R1)] \includegraphics[width=.8\textwidth]{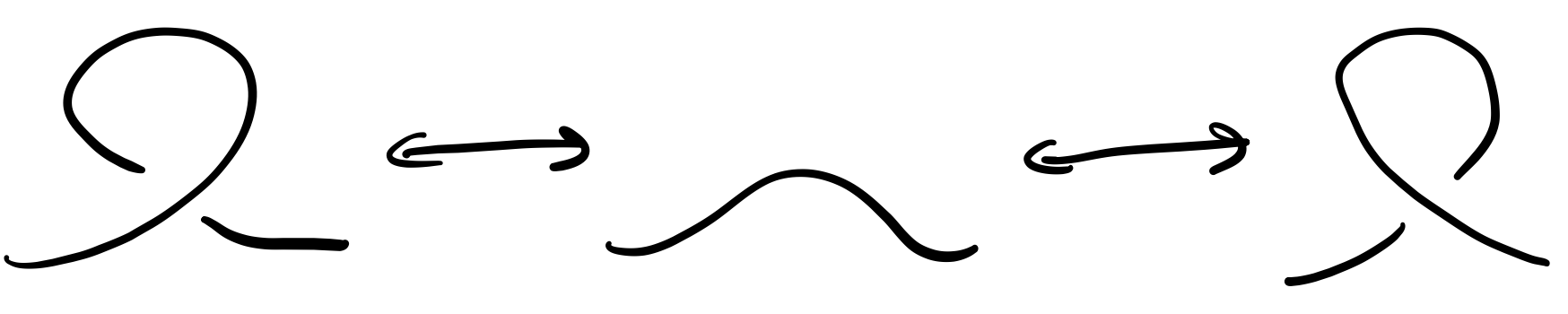}
\item[(R2)] \includegraphics[width=.8\textwidth]{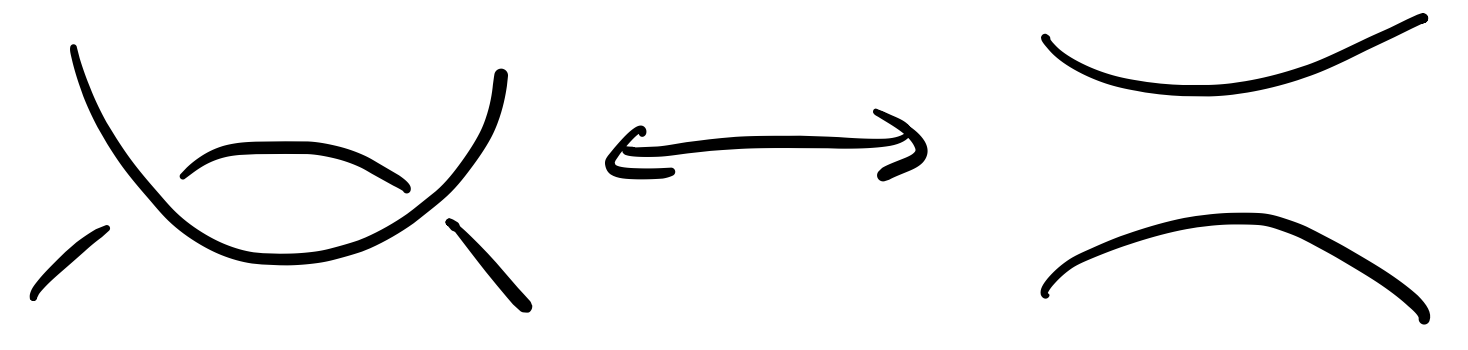}
\item[(R3)] \includegraphics[width=.8\textwidth]{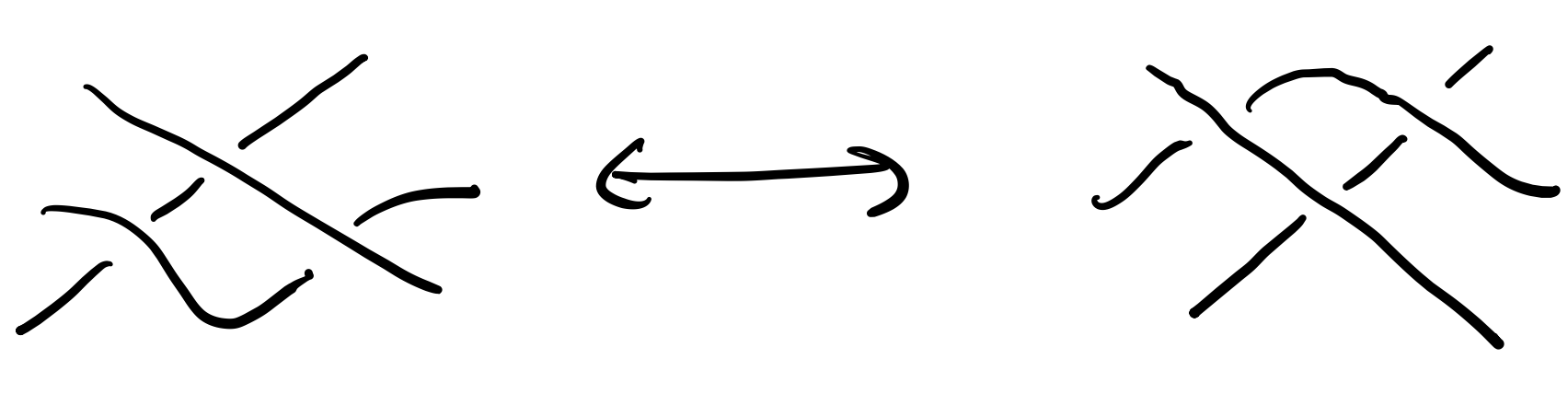}
\end{enumerate}

\end{theorem}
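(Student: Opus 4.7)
The plan is to prove the theorem in two directions, treating them quite differently. The easy direction says that diagrams related by $R1$, $R2$, $R3$ represent isotopic links. This is a finite case check: for each move, one exhibits an ambient isotopy of $S^3$ supported in a small ball containing the local tangle, carrying one local picture to the other. Since each move is a local $3$-dimensional ambient isotopy, its global extension by the identity is an ambient isotopy realizing the change in diagrams.

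The main content is the converse: given two diagrams $D_0, D_1$ of isotopic links $L_0, L_1$, they are related by a finite sequence of Reidemeister moves (and planar isotopies). I would work in the smooth category and use a genericity/transversality argument. Given an ambient isotopy $\{\varphi_t\}_{t \in [0,1]}$ of $S^3$ carrying $L_0$ to $L_1$, I would first isotop $\varphi_t$ so that the images $\varphi_t(L_0)$ all lie in a fixed $\R^3 \subset S^3$ for which the $xy$-projection $\pi$ is well-defined. Then I consider the one-parameter family of projected maps $\{\pi \circ \varphi_t \circ L_0\}_{t \in [0,1]}$.

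Next I would invoke Thom's transversality theorem: within the space of smooth one-parameter families of maps $\coprod S^1 \to \R^2$, the subspace giving valid link diagrams (immersions whose only self-intersections are transverse double points) is open and dense. Its complement is stratified, and a generic one-parameter family is transverse to this stratification, meeting only the codimension-$1$ strata and doing so at finitely many times $0 < t_1 < \cdots < t_n < 1$. The codimension-$1$ strata consist exactly of: (i) a vertical tangency to the projection, appearing as a cusp that creates or removes a monogon, i.e.\ $R1$; (ii) a simple tangency between two strands of the projection, i.e.\ $R2$; (iii) a transverse triple point, i.e.\ $R3$. Between consecutive singular times $t_i, t_{i+1}$, the family consists of valid link diagrams and thus differs only by planar isotopy. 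At each $t_i$ exactly one Reidemeister move occurs.

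The main obstacle is making the genericity argument actually correspond to the three Reidemeister moves as stated, rather than some longer list. One has to verify that the codimension-$1$ degenerations of a diagram are exactly these three, with no exotic singularities (e.g.\ higher-order tangencies, quadruple points, tangent triple points) appearing in a generic family — this is a jet-space calculation that must be carried out carefully. One also has to track the over/under information across each degeneration, which is automatic from the ambient isotopy itself but should be checked to confirm one recovers the pictures drawn in $R1$, $R2$, $R3$ (in particular, $R1$ is genuinely a $3$-dimensional move, so both sign variants of the crossing that appears must be accounted for). Finally, one must handle the boundary case where the given diagrams $D_0, D_1$ themselves might not lie in the generic family; a small initial and final perturbation, connected to $D_0, D_1$ by further planar isotopies and Reidemeister moves, takes care of this.
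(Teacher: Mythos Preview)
The paper does not prove this theorem; it is stated as a classical result attributed to Reidemeister (1930s) and used as a black box throughout. So there is no paper proof to compare against.

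Your outline is the standard modern proof via transversality, and the overall architecture is correct: the easy direction is a local ambient-isotopy check, and the hard direction analyzes a generic one-parameter family of projections, identifying the codimension-$1$ singular strata with the three Reidemeister moves. You have also correctly flagged the genuine technical obligations: the jet-space computation that the only codimension-$1$ degenerations of a planar curve with normal crossings are cusp (R1), tangency (R2), and triple point (R3); carrying the over/under data through each degeneration; and perturbing the endpoints. One small point worth adding to your list of obligations: the ``cusp'' stratum for R1 is really a singularity of the projection restricted to the knot (a fold/vertical tangency in the height function along the strand), not a cusp of the planar image per se, so the local model deserves a sentence distinguishing it from a singularity of the immersed curve itself. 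Otherwise this is a sound sketch of the argument one finds in, e.g., Burde--Zieschang or Roseman's treatment.
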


\begin{remark}
\begin{enumerate}
    \item These are local pictures. You can rotate them.
    \item Note that there are technically many cases within each `move', if you consider all the possible orientations of the interacting strands in these local pictures. 
    This is important when producing invariants for oriented links. 
    \item If $D$ and $D'$ are diagrams for the same link, then we will sometimes write $D \sim D'$. In other words, ``there exists a sequence of Reidemeister moves between'' is an equivalence relations on the set of link diagrams.
\end{enumerate}    
\end{remark}

\begin{exercise}
(Unimportant)
    Explain why we don't need to include the following local move:
    \begin{center}
    \includegraphics[width=.8\textwidth]{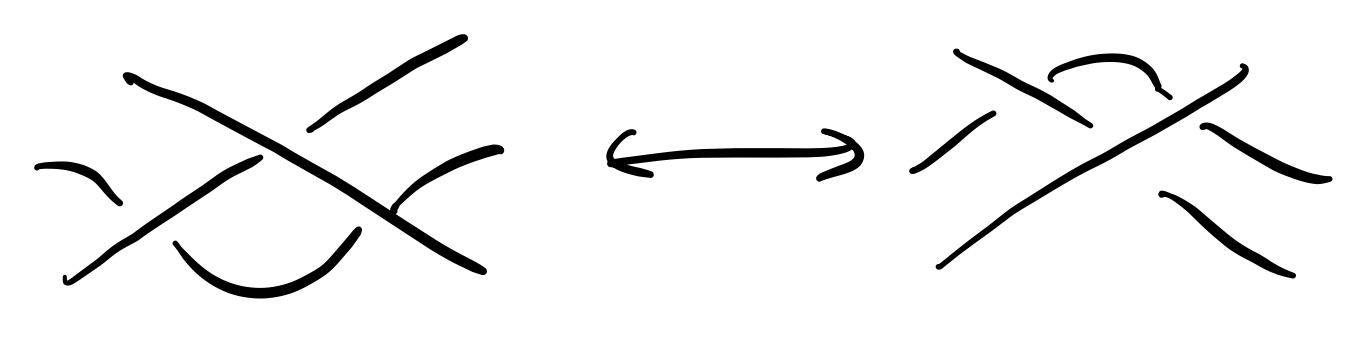}
    \end{center}
\end{exercise}

\begin{definition}
   A \emph{link invariant} $F$ valued in a category $\CC$ is a machine that 
   \begin{itemize}
       \item takes in a link diagram $D$
       \item and outputs $F(D) \in \Ob(\CC)$
   \end{itemize}
   such that 
   \[
            D \sim D' \qquad \implies \qquad F(D) = F(D').
   \]
\end{definition}

\begin{remark}
    Note that a link invariant is not necessarily a functor. We will define the categories $\catLink$ and $\catDiag$, and define \emph{functorial} link invariants later.
\end{remark}

\subsection{Jones polynomial via skein relation}
\label{sec:jones-polyn}

Khovanov homology is a categorical lift of the Jones polynomial, so we will focus on this invariant first.

\begin{remark}
    The first polynomial invariant of links was the Alexander polynomial, introduced in the 1920s. We will not discuss the Alexander polynomial until needed later.
\end{remark}

The Jones polynomial was discovered by Vaughan Jones in the 1980s, and arose from his work in statistical mechanics \cite{Jones-polynomial}. 
\note{We will not study the original definition of his invariant, but you are welcome to look into it if it interests you. Perhaps a final project idea?}

\begin{definition}
\label{def:jones-polyn} 
\textbf{(/ Theorem / Algorithm / Conventions)} 
The Jones polynomial is a link invariant valued in Laurent polynomials $\Z[q,q\inv]$ that is uniquely determined by the following recursion:
\begin{itemize}
\item base case: $J( \fullmoon ) = 1$
\item skein relation: \alert{$q^{-2} J(L_+) - q ^2 J(L_-) = (q\inv-q) J(L_0)$}
where 
\end{itemize}
\begin{center}
    \includegraphics[width=\textwidth]{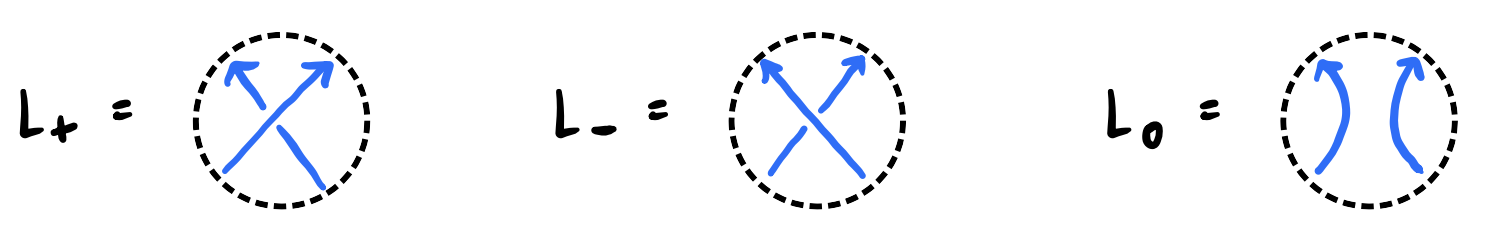}
\end{center}
\end{definition}

\begin{remark}
    The diagrams for $L_+$, $L_-$, and $L_0$ above show a \emph{positive crossing}, a \emph{negative crossing}, and an \emph{oriented resolution}, respectively. 
    \note{The singularity at the crossing can be resolved in two ways; $L_0$ is the only resolution that maintains the orientation of all the strands.}
\end{remark}

\begin{remark}
In class, we essentially used Jones' conventions; these produce a Laurent polynomial in the variable $\sqrt{t}$:
\begin{itemize}
    \item base case: $V(\fullmoon) = 1$
    \item skein relation: {$t\inv V(L_+) - t V(L_-) = (t^{1/2}-t^{-1/2}) V(L_0)$}.
\end{itemize}
If you look at a database of Jones polynomials, you'll likely find this convention used.

\note{For the purposes of this course, we will stick with Khovanov's original conventions for the first few weeks of class. }
\end{remark}

\begin{example}
Here we use the skein relation to compute the Jones polynomial (with the conventions set in Definition \ref{def:jones-polyn}) of the \emph{unlink of two components}, $\fullmoon \fullmoon$. 

\begin{center}
    \includegraphics[width=\textwidth]{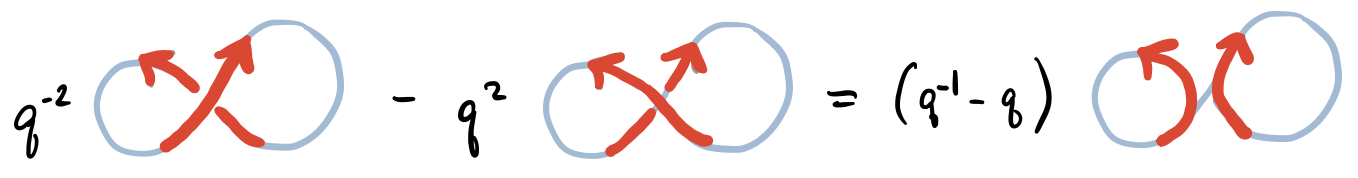}
\end{center}

\begin{align*}
    q^{-2} \cdot 1 - q^2 \cdot 1 
        &= (q\inv - q) J(\fullmoon \fullmoon) \\
    J(\fullmoon \fullmoon) &= \frac{ q^2-q^{-2} }{q - q\inv} = q + q\inv.
\end{align*}

\end{example}

\begin{exercise}
    Compute the Jones polynomials of the following links:
    \begin{enumerate}
        \item (positively linked) Hopf link

        \item right-handed trefoil
    \end{enumerate}
\end{exercise}

\begin{exercise}
\alert{(Important)}
\begin{enumerate}
    \item Prove that for any link $L$, $J(L) = J(L^r)$. 
    \item Let $K_1$ and $K_2$ be knots, and let $K_1 \# K_2$ denote their \textit{connected sum}\footnote{I have not provided a precise definition here. This is a good time to Google the term yourself to see some examples of the connected sum operation.}: 
    \begin{center}
    \includegraphics[width=.8\textwidth]{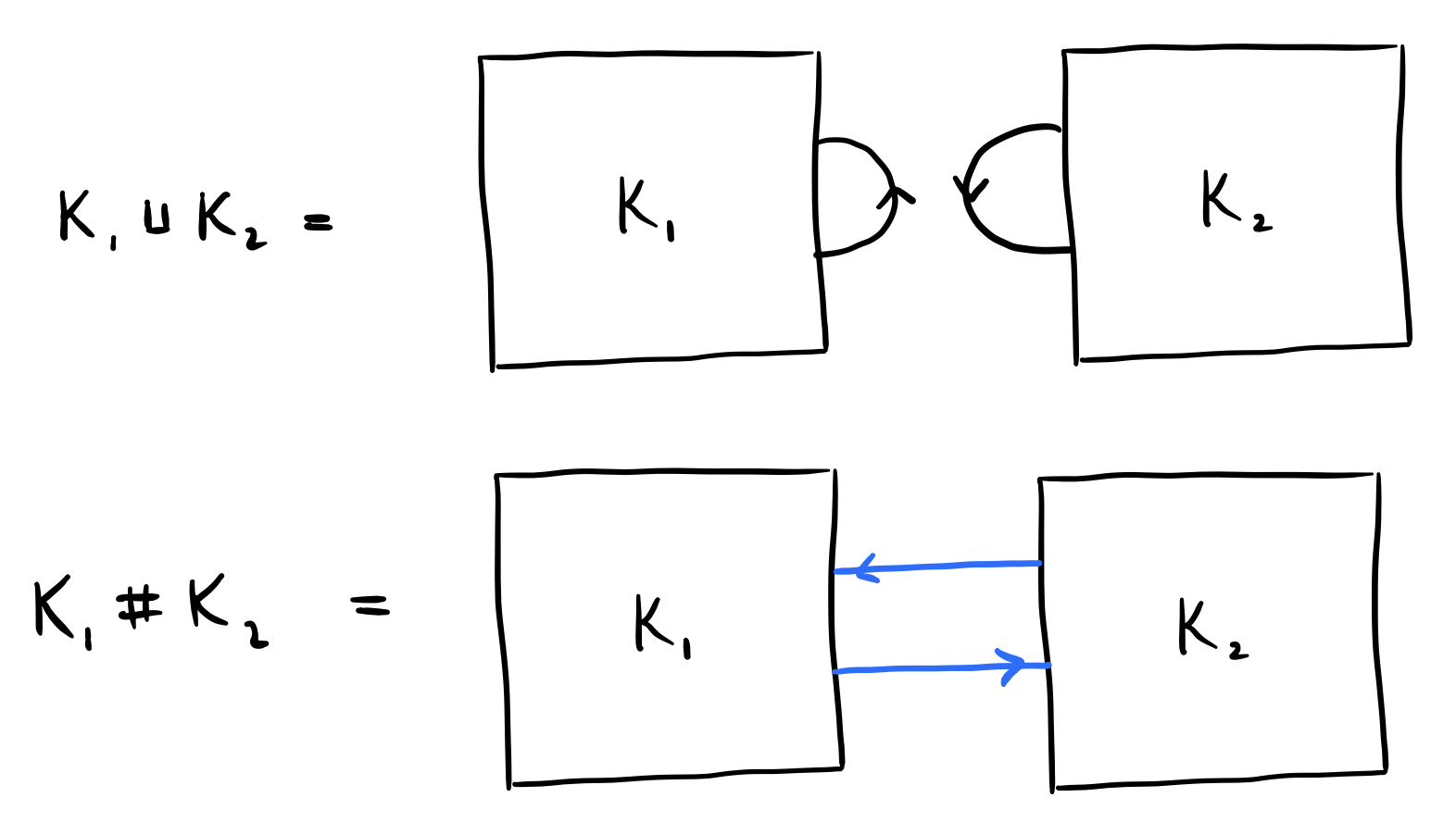}
    \end{center}
    Prove that $J(K_1 \# K_2) = J(K_1)J(K_2)$.
    What is $J(K_1 \sqcup K_2)$?
\end{enumerate}
\end{exercise}

\begin{exercise}
\begin{enumerate}[label=(\alph*)]
    \item Show why the Jones polynomial is invariant under an R1 move. 
    \item Prove that for any link $L$, $J(\fullmoon L) = (q+q\inv) J(L)$.
\end{enumerate}
\end{exercise}

\begin{question}(Open)
Does the Jones polynomial \emph{detect} the unknot $U$? 
In other words, if $J(D) = 1$, is $D$ necessarily a diagram for the unknot?
\end{question}

\subsection{Jones polynomial from Kauffman bracket}

In this section, we generally follow \cite{BN-Kh}, certainly with the same conventions.
\note{However, I may word things differently or rotate some pictures, for our future benefit.}

\begin{definition}[\cite{Kauffman-bracket}]
\label{def:kauffman-bracket}
    The \emph{Kauffman bracket} $\langle \cdot \rangle$ is defined by the recursion
    \begin{itemize}
        \item $\langle \emptyset  \rangle = 1$
        \item $\langle \fullmoon L \rangle = (q+q\inv) \langle L \rangle$
        \item $\langle \crossing \rangle = \langle \vertres \rangle - q\langle \horizres \rangle$
    \end{itemize}
\note{Note that these local pictures are \emph{unoriented}, unlike those in the skein relation for the Jones polynomial.}
\end{definition}

\begin{remark}
Each crossing has two possible smoothings, which we will name the $0$-resolution and the $1$-resolution, as shown:
\[
    \vertres \xleftarrow{0} \crossing \map{1} \horizres.
\]
\begin{itemize}
\item The $0$-resolution is the one that you would naturally draw if you started at an over-strand and drew a smile. It gets no coefficient in the Kauffman bracket. 
\item The $1$-resolution is the one that you would draw if you started at the over-strand and drew a frown. It gets a coefficient of $-q$ in the Kauffman bracket.
\end{itemize}
Note that I draw my smileys like this:
\begin{tikzpicture}[scale=.2, baseline=0ex]
    \node at (-1,1) {$\bullet$};
    \node at (1,1) {$\bullet$};
    \draw[->] (-1,0) arc (180:360:1cm);
\end{tikzpicture}
\end{remark}

The Kauffman bracket is \emph{not} a link invariant. To see this, compare $\langle \fullmoon \rangle$ and 
        $\langle 
            \begin{tikzpicture}[scale=.125, baseline=-.67ex]
                \draw (-1,-1) arc (270:90:1);
                \draw (1,1) arc (90:-90:1);
                \draw (-1,1) -- (1,-1);
                \filldraw[white] (0,0) circle (.4cm);
                \draw (-1,-1) -- (1,1);
            \end{tikzpicture}
        \rangle$.
It is, however, a \emph{framed} invariant. \mz{May talk more about this later.}

To fix this, we have to take into account the \emph{writhe} of a diagram, which measures how `twisty' the choice of diagram is. 
\includegraphics[width=.25in]{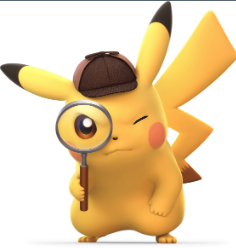}

\begin{definition}
Let $D$ be an oriented link diagram. 
\begin{itemize}
\item Let $n$ be the number of crossings in $D$.
\item Let $n_+$ be the number of positive crossings, and let $n_-$ be the number of negative crossings.
\end{itemize}
The \emph{writhe} of $D$ is $\writhe(D) = n_+ - n_-$. 
\end{definition}

\begin{definition}
    The \emph{(unnormalized)} Jones polynomial is defined by 
    \[
        \hat J(L) = (-1)^{n_-} q^{n_+-2n_-} \langle L \rangle
    \]
    \note{Note that we are treating $L$ as both the link diagram and the link it represents.}
    Equivalently, $\hat J(L) = (-q)^{-n_-} \cdot q^{\writhe(L)} \langle L \rangle$.
\end{definition}

\begin{remark}
\label{rmk:oriented-kauffman-bracket}
Alternatively, we can build the overall shifts into the bracket, using the following recursion:
\begin{itemize}
    \item $\langle \emptyset \rangle_o = 1$
    \item $\langle \fullmoon L \rangle_o = (q+q\inv) \langle L \rangle_o$ 
        \note{Recall that the two orientations on the unknot are isotopic.}
    \item (positive crossing) $\langle \poscrossing \rangle_o 
        = q \langle \vertres \rangle_o - q^2 \langle \horizres \rangle_o$
    \item (negative crossing) $\langle \negcrossing \rangle_o 
        = q^{-2} \langle \horizres \rangle_o - q\inv \langle \vertres \rangle_o$
\end{itemize}
\alert{The notation `$\langle \cdot \rangle_o$' is not standard, and is used here only to distinguish it from the  bracket used in \cite{BN-Kh}.}
\end{remark}

\begin{exercise}
\begin{enumerate}
    \item How does the writhe of a diagram change under the Reidemeister moves? 
    \item How does the Kauffman bracket $\langle \cdot \rangle$ of a diagram change under the Reidmeister moves?
    \item Verify that the bracket $\langle \cdot \rangle_o$ in Remark \ref{rmk:oriented-kauffman-bracket} computes $\hat J$, i.e. $\hat J(L) = \langle L \rangle_o$. 
\end{enumerate}
\end{exercise}

\begin{example}
\label{eg:hopf-kauffman-computation}
Let $H$ denote (oriented) Hopf link shown below, where $n = 2$, $n_+ = 2$, and $n_- = 0$. 

First, we compute the Kauffman bracket of the shown diagram:

\begin{center}
    \includegraphics[width=3in]{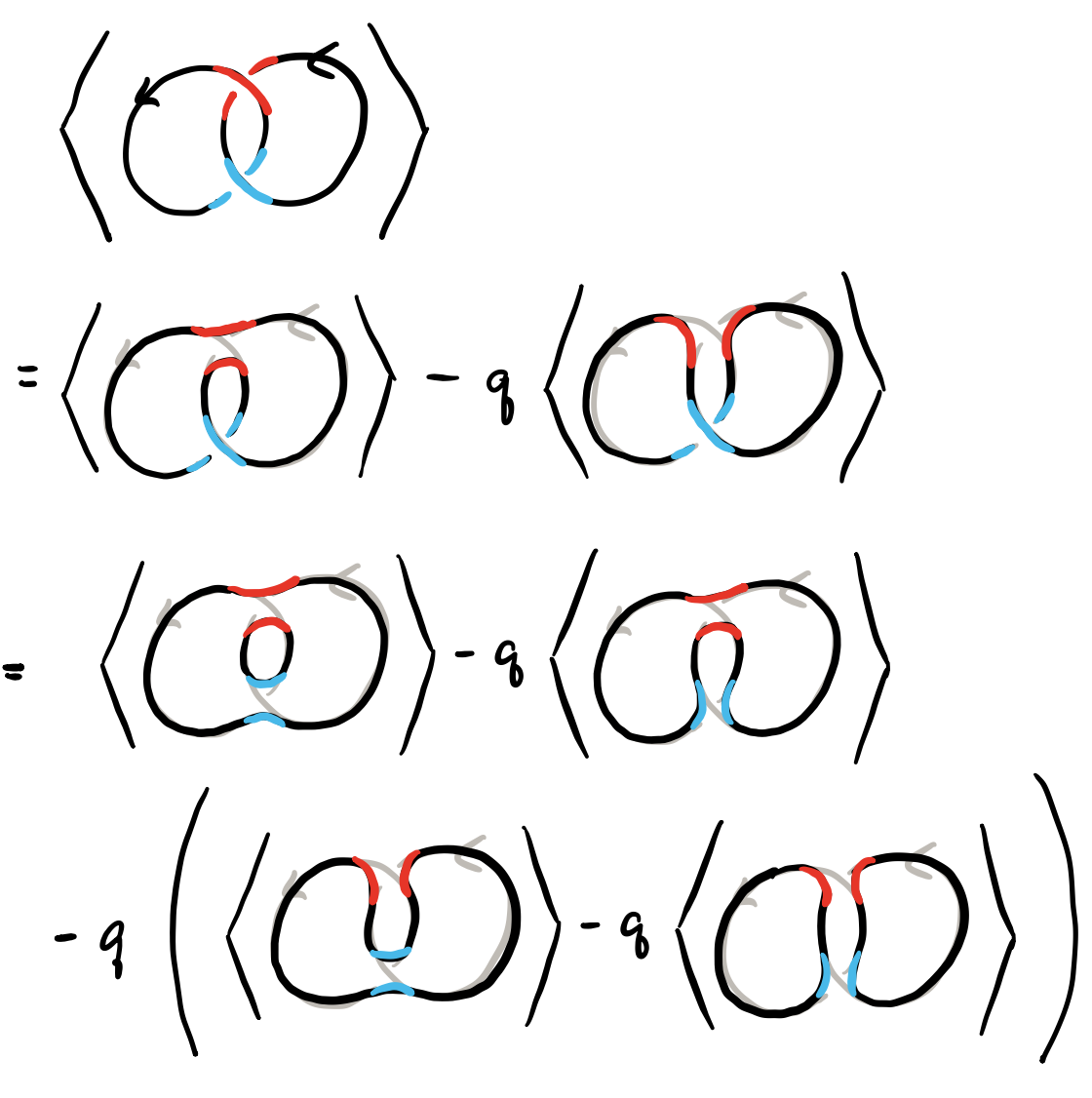}
\end{center}
\begin{align*}
    &= (q+q\inv)^2  -2q(q+q\inv) + q^2(q+q\inv)^2 \\
    &= q^4 + q^2 + 1 + q^{-2}.
\end{align*}

Therefore 
\begin{align*}
    \hat J(H) &= (-1)^{n_-}q^{n_+ - 2n_-} \langle H \rangle \\
    &= q^2(q^4 + q^2 + 1 + q^{-2}) = q^6 + q^4 + q^2 + 1.
\end{align*}

The normalized Jones polynomial, using these conventions, would be $\frac{\hat J(H)}{q+q\inv} = q^5 + q$. 
\end{example}

\subsection{Jones polynomial via the Khovanov bracket}

Khovanov homology $\Kh(\cdot)$ is a $(\Z \oplus \Z)$-graded (co)homology theory whose graded Euler characteristic recovers the Jones polynomial. The extra grading is usually called the \emph{quantum grading} or the \emph{internal grading}.\footnote{Sometimes I may say `degree' instead of grading. 
    I will likely pontificate on the terms \emph{grading} and \emph{degree} later in the course.}

\begin{definition}
    Let $C = (\bigoplus_{i,j \in \Z} C^{i,j}, d)$ be a bigraded chain complex of $\Z$-modules where, 
    for each $j$, $\bigoplus_{i \in \Z} C^{i,j}$ is finite rank. 

    The \emph{graded Euler characteristic} of $C$ is 
    \[
        \chi_q(C) = 
        \sum_{i,j \in \Z} (-1)^i q^j \rk H^{i,j}(C).
    \]
\end{definition}

Recall that the Euler characteristic of a CW complex can be computed using any CW decomposition; in particular, you do not need to compute the differentials in the CW chain complex to determine the Euler characteristic. 

Similarly, we do not need to know the differential $d$ to compute $\chi_q(C)$.
So, for now, we will define Bar-Natan's \emph{Khovanov bracket}, which lift the recursion in Definition \ref{def:kauffman-bracket} 
to the level of bigraded chain complexes:

\begin{definition}
    The \emph{Khovanov bracket} is defined by the axioms
    \begin{itemize}
        \item $\llbracket \emptyset \rrbracket 
        = \left ( 0 \to \underline{\Z} \to 0 \right )$
        \item $\llbracket \fullmoon L \rrbracket = V \otimes \llbracket L \rrbracket$ where $V = \Z v_+ \oplus \Z v_-$ of graded dimension $q + q\inv$. \mz{We will discuss the actual definition later.}
        \item $\llbracket \crossing \rrbracket
        = \Tot \left ( 0 \to 
            \underline{\llbracket \vertres \rrbracket} \map{d} 
            \llbracket \horizres \rrbracket\{1\} \to 0
        \right )$, where $\{1\}$ means `$q$-grading shift of 1' (see Notation \ref{nota:shift-functors}).
    \end{itemize}
    The terms of each chain complex at homological grading 0 are underlined.
    The \emph{totalization} functor flattens a multi-dimensional complex into a one-dimensional chain complex. \note{Since we are not considering the differential right now, we will not carefully define this here.} \alert{If the tensor product of chain complexes looks unfamiliar to you, this is something you should look up.} 
\end{definition}

\begin{theorem}[Khovanov, as interpreted by Bar-Natan]
Given a link diagram $L$ with $n_\pm$ crossings of sign $\pm$ respectively, the associated Khovanov chain complex is given by
\[
    \CKh(L) = \llbracket L \rrbracket[n_-]\{n_+ - 2n_-\}. 
\]
The square brackets indicate a shift in the homological grading (see Notation \ref{nota:shift-functors}).
Then
\[
    \hat J(L) = \chi_q(\CKh(L)) =  \chi_q(\Kh(L)).
\]
\end{theorem}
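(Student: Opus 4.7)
The plan is to exploit the structural parallel between the Kauffman bracket and the Khovanov bracket: both satisfy the same recursion (empty diagram, disjoint circle, crossing resolution), so it suffices to show that the graded Euler characteristic $\chi_q(\llbracket L \rrbracket)$ obeys exactly those three rules that characterize $\langle L \rangle$. Then the overall shifts $[n_-]\{n_+ - 2n_-\}$ applied to form $\CKh(L)$ will produce precisely the sign $(-1)^{n_-}$ and monomial $q^{n_+ - 2n_-}$ appearing in the definition of $\hat J(L)$.

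First I would record the formal properties of $\chi_q$ that are needed. For any bigraded complexes $A$ and $B$ of the kind considered:
\begin{align*}
\chi_q(A[n]) &= (-1)^n \chi_q(A), \\
\chi_q(A\{m\}) &= q^m \chi_q(A), \\
\chi_q(A \otimes B) &= \chi_q(A)\, \chi_q(B), \\
\chi_q(\Cone(f : A \to B)) &= \chi_q(B) - \chi_q(A).
\end{align*}
Each of these follows from rank additivity on short exact sequences, together with the general fact that $\sum_i (-1)^i \rk H^{i,j}(C) = \sum_i (-1)^i \rk C^{i,j}$ whenever the latter sum makes sense. This last identity simultaneously gives the second equality of the theorem, $\chi_q(\CKh(L)) = \chi_q(\Kh(L))$, for free.

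Next I would prove $\chi_q(\llbracket L \rrbracket) = \langle L \rangle$ by induction, reducing on the number of crossings and then on the number of disjoint circles. For the empty diagram, $\llbracket \emptyset \rrbracket = \underline{\Z}$ has $\chi_q = 1 = \langle \emptyset \rangle$. For a disjoint circle, tensor multiplicativity gives $\chi_q(V \otimes \llbracket L \rrbracket) = (q + q^{-1})\, \chi_q(\llbracket L \rrbracket)$, matching the circle axiom of $\langle \cdot \rangle$. For a crossing, the Khovanov bracket is precisely the mapping cone $\Cone\bigl(\llbracket \vertres \rrbracket \to \llbracket \horizres \rrbracket\{1\}\bigr)$ in homological degrees $0,1$, so the cone formula and the $\{1\}$ shift give
\[
\chi_q(\llbracket \crossing \rrbracket) = \chi_q(\llbracket \vertres \rrbracket) - q\, \chi_q(\llbracket \horizres \rrbracket),
\]
which is exactly the Kauffman skein axiom. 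Having reproduced all three defining rules, the induction closes.

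To finish, I apply the shift functors to the inductive identity:
\[
\chi_q(\CKh(L)) = \chi_q\bigl(\llbracket L \rrbracket [n_-]\{n_+ - 2n_-\}\bigr) = (-1)^{n_-} q^{n_+ - 2n_-}\, \langle L \rangle = \hat J(L).
\]
The one point deserving genuine care is step two above, the cone/totalization formula: I need to verify that the totalization convention used to define $\llbracket \crossing \rrbracket$ really does give a two-term complex with the claimed source and target in the correct homological positions, so that the sign in $\chi_q(\Cone(f)) = \chi_q(B) - \chi_q(A)$ produces the $-q$ and not $+q$. This is the only place where a convention slip could corrupt the answer; everything else is a formal consequence of additivity and multiplicativity of $\chi_q$.
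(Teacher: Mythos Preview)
Your proof is correct. The paper actually states this theorem without proof: it sets up the Kauffman bracket (Definition~\ref{def:kauffman-bracket}) and the Khovanov bracket with deliberately parallel axioms, defines $\chi_q$, and then simply asserts the result, following it with a worked example (the Hopf link) rather than an argument. Your inductive matching of the three Khovanov-bracket axioms against the three Kauffman-bracket axioms via the formal properties of $\chi_q$ is exactly the argument the paper's setup is designed to make transparent, and your caution about the cone/totalization convention is well placed --- in the paper's Khovanov bracket the $0$-resolution sits in homological degree $0$ and the $1$-resolution (with its $\{1\}$ shift) in degree $1$, which does give the required $-q$.
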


\begin{notation}
\label{nota:shift-functors}
We set our conventions for the grading shift functors $[n]$ and $\{m\}$ to agree with those appearing in \cite{BN-Kh}.
Let $A = \bigoplus A^{\bullet,\bullet}$ be a bigraded $\Z$-module. 
Then $A[n]\{m\}$ is the bigraded $\Z$-module where
\[
    \left ( A[n]\{m\} \right )^{i,j} = A^{i-n,j-m}.
\]
\note{Visually, if $A$ is plotted on the the $\Z \oplus \Z$ bigrading lattice, $A[n]\{m\}$ is obtained by grabbing $A$ and moving it by the vector $n e_1 + m e_2$. 
}
\end{notation}

\begin{example}
\label{eg:hopf-resolutions-cube}
We now reorganize the computation in Example \ref{eg:hopf-kauffman-computation}, as a primer for our formal introduction to Khovanov homology in the next section. 

Let $H$ denote both the following diagram as well as the underlying oriented Hopf link:
\begin{center}
    \includegraphics[width=1in]{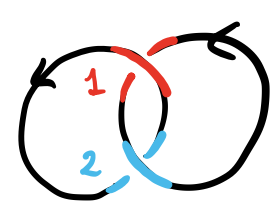}
\end{center}

To compute $\llbracket H \rrbracket$, we draw the \emph{cube of complete resolutions}:
\begin{center}
    \includegraphics[width=4in]{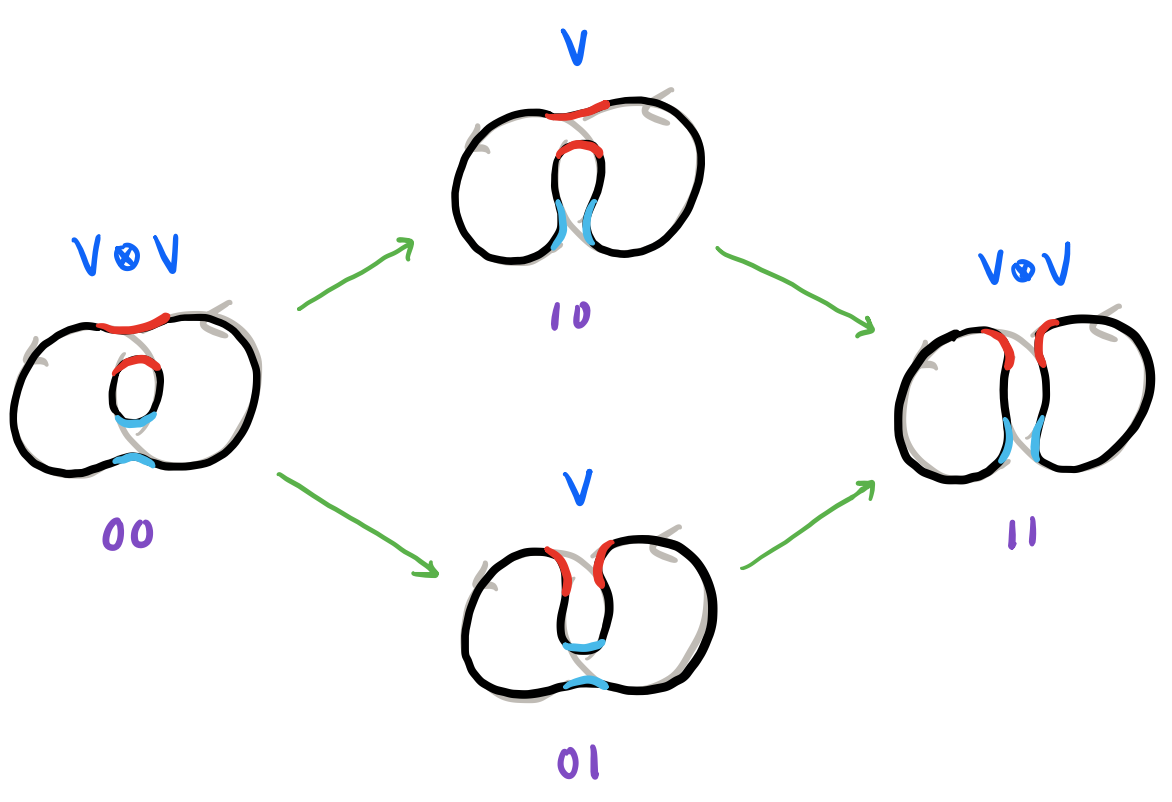}
\end{center}
We associate $V^{\otimes c}$ to each complete resolution containing $c$ closed components.
Each complete resolution corresponds to a binary string $u \in \{0,1\}^2$, and we perform a quantum grading shift of $|u|$ on associated $\Z$-module, where $|u|$ is the number of 1's appearing in the bitstring $u$. 

This yields the Khovanov bracket
\[
    \llbracket H \rrbracket 
    = \left (
        \underline{V \otimes V} 
        \to 
        V \oplus V \{1\} 
        \to
        V \otimes V\{2\}
    \right ).
\]
(Recall that the underline indicates the chain group at homological grading 0.)

The Khovanov chain complex (without specifying differentials) is therefore
\begin{align*}
    \CKh(H) &= \left (
        \underline{V \otimes V} 
        \to 
        V \oplus V \{1\} 
        \to
        V \otimes V\{2\}
    \right )[n_-]\{n_+ - 2n_-\}\\
    &= \left (
        \underline{V \otimes V} 
        \to 
        V \oplus V \{1\} 
        \to
        V \otimes V\{2\}
    \right )[0]\{2\}\\
    &= \left (
        \underline{V \otimes V} \{2\}
        \to 
        V \oplus V \{3\} 
        \to
        V \otimes V\{4\}
    \right ). \\
\end{align*}

Schematically, can visualize the bigraded chain groups as follows, where each \textbullet\  represents a copy of $\Z$: 

\begin{center}
\begin{tabular}{c || c | c | c || c }
&  & & & $\chi$: \\
\hline
$\gr_q=6$ & & & \textbullet  & $0 - 0 + 1 = 1$ \\
$\gr_q=4$ & \textbullet  & \textbullet  \textbullet & \textbullet \textbullet & $1 - 2 + 2 = 1$ \\
$\gr_q=2$ & \textbullet  \textbullet  & \textbullet  \textbullet & \textbullet & $2-2+1 = 1$ \\
$\gr_q=0$ & \textbullet  & & & $1-0+0=1$ \\
\hline
& $\gr_h = 0$ & $\gr_h = 1$ & $\gr_h = 2$ & \\ 
\end{tabular}
\end{center}
Here $\gr_h$ and $\gr_q$ denote the homological and quantum grading, respectively.

We conclude that, indeed, $\chi_q(\CKh(H)) = q^6 + q^4 + q^2 + 1 = \hat J(H)$. 
\end{example}

\begin{example}
    For a very comprehensive computation of the Jones polynomial of the trefoil using the Kauffman bracket, see Equation (1) of \cite{BN-Kh}. 
\end{example}

\section{Khovanov homology}

\subsection{The Khovanov chain complex $\CKh(D)$}

Given an oriented link diagram $D$ representing a link $L$, the \emph{Khovanov chain complex} $\CKh(D)$ is a
$(\Z\oplus \Z)$-graded chain complex of abelian groups. 
We define this chain complex throughout this section. 
Throughout, we will use the term \emph{bigraded} in place of `$(\Z\oplus \Z)$-graded'.

\subsubsection{Cube of resolutions}

\note{The cube of resolutions will set up the topological story that the chain complex tells. Afterwards, we will replace each part of the cube of resolution with algebraic objects, to fully define the Khovanov chain complex.}

Let $n$ be the number of crossings in the diagram $D$, and pick an ordering for the crossings. Let $c_i$ denote the $i$th crossing.

The $n$-dimensional \emph{binary cube} $\{0,1\}^n$ is the poset of \emph{binary strings} (a.k.a. \emph{bitstrings}) of length $n$, with partial order given by $0 \prec 1$ and lexicographic order. 

\begin{notation}
Recall some terms and notations used when discussing partially ordered sets:
    \begin{itemize}
        \item If $u\prec v$, then we say $u$ \emph{precedes} $v$ (and $v$ \emph{succeeds} $u$).
        \item If $u \prec v$ and they differ at exactly one bit, then we say $u$ is an \emph{immediate predecessor} of $v$ (and $v$ is an \emph{immediate successor} of $u$). In this case, we write $u \prec_1 v$. 
    \end{itemize}
We may think of a poset as a directed graph, with and edge $u\to v$ if $u \prec_1 v$. 
\end{notation}

\begin{notation}
\label{nota:edge-star-notation}
For convenience, if $u \prec_1 v$ and they differ at the $i$th bit, then we use the following length-$n$ string in $\{0,1,*\}^n$ to denote the edge $u \to v$:
\[
    u_1u_2\cdots u_{i-1} * u_{i+1} \cdots u_n
    = v_1v_2\cdots v_{i-1} * v_{i+1} \cdots v_n
\]
\end{notation}

For $u \in \{0,1\}^n$, let $D_u$ denote the complete resolution of the diagram $D$ where crossing $c_i$ is smoothed according to the bit $u_i$:
\[
    \vertres \xleftarrow{0} \crossing \map{1} \horizres
\]

If $u \prec_1 v$, then $D_u$ and $D_v$ differ only in the neighborhood of one crossing, called the \emph{active crossing} of the edge $u \prec_1 v$. All other crossings are \emph{passive}. \

Also, $u \prec_1 v$ represents either 
\begin{itemize}
    \item a \emph{merge} of two \emph{circles} (i.e.\ closed components) of $D_u$ into one circle in $D_v$ or
    \item a \emph{split} of one circle of $D_u$ into two in $D_v$.
\end{itemize} 
Any of these circles are called \emph{active circles} of the edge $u \prec_1 v$. 
All other circles are \emph{passive}; they look the same in both $D_u$ and $D_v$.

Finally, when assigning gradings, we will need the \emph{Hamming weight} of bitstrings:
\[
    |u| = \sum_i u_i.
\]

\subsubsection{Chain groups, distinguished generators, gradings}

\emph{We now describe the \emph{distinguished generators} of the Khovanov chain complex.}

Our chain groups will be bigraded by $\gr = (\gr_h, \gr_q)$:
\begin{itemize}
    \item The first grading is called the \emph{homological} grading, denoted by $\gr_h$. Its shift functor is denoted by $[\cdot]$. 
    \item The second grading is called the \emph{quantum} (or \emph{internal}) grading, denoted by $\gr_q$. Its shift functor is denoted by $\{\cdot \}$. 
\end{itemize}

Let $V$ denote the bigraded  $\Z$-module $\Z v_+ \oplus \Z v_-$ 
with generators $v_\pm$ in bigradings $(0,\pm 1)$.

Let $|D_u|$ denote the number of circles in the resolution $D_u$.
The chain group lying above vertex $u$ of the cube is 
$V^{\otimes |D_u|}[|u|]\{|u|\}$ generated by the $2^n$ length-$n$ pure tensors 
\[
    \{ v_\pm \otimes \cdots \otimes v_\pm \}.
\]
These are called the \emph{distinguished} (i.e.\ chosen) generators of the chain group at this vertex.

\begin{remark}
The bigrading $\gr$ for a distinguished generator in $V^{\otimes k}$ is determined by the bigrading on $V$:
\[
    \gr(a \otimes v_\pm) = \gr(a) + \gr(v_\pm). 
\]
For example, $\gr(v_+ \otimes v_- \otimes v_-) = (0,-1)$. 
\end{remark}

\begin{remark}
Note that when we actually write down a chain complex, we implicitly homologically shift the chain groups. 
For example, the following chain complex is acyclic\footnote{i.e.\ homology vanishes}:
\[
    \underline{V} \map{1} V.
\]
If we view the chain complex as a graded module $C$, and the differential as an endomorphism $d$, then 
\begin{itemize}
    \item $C = V \oplus V[1]$ and 
    \item $d = \begin{bmatrix} 0 & 0 \\ 1 & 0 \end{bmatrix}$
\end{itemize}
This \emph{mapping cone} point of view will become useful later in the course.
\note{Now is a good time to review/read about \emph{mapping cones} in homological algebra. See \S\ref{sec:mapping-cones} for a review, and also some comments about the conventions we are using.}
\end{remark}

\subsubsection{Differentials}

To each edge of the cube $u\to v$, we assign a map according to whether the edge represents a merging of two circles or the splitting of one circle.
\note{We casually call these the `edge maps'. }

\begin{itemize}
\item If $u \to v$ represents a merge, the map on tensor components corresponding to active circles is given by:
\begin{align*}
    m: V \otimes V &\to V \\
    v_+ \otimes v_+ &\mapsto v_+ \\
    v_+ \otimes v_- , v_- \otimes v_+ &\mapsto v_- \\
    v_- \otimes v_- &\mapsto 0
\end{align*}

\item If $u \to v$ represents a split, the map on active components is given by:
\begin{align*}
    \Delta: V &\to V \otimes V \\
    v_+ &\mapsto v_+ \otimes v_- + v_- \otimes v_+ \\
    v_- &\mapsto v_- \otimes v_-
\end{align*}
\end{itemize}
The map is identity on the passive components of the tensor product. 

\begin{example}
Here is a diagram for the right-handed trefoil, with a choice of ordering on the three crossings (in blue):
\begin{center}
    \includegraphics[height=1in]{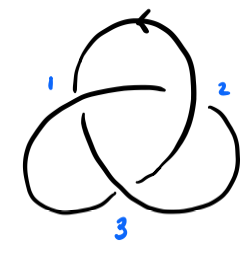}
\end{center}
In the cube of resolutions, the edge corresponding to $101 \to 111$ is a merging of two circles into one, with the active crossing circled in dotted cyan:
\begin{center}
    \includegraphics[height=1.2in]{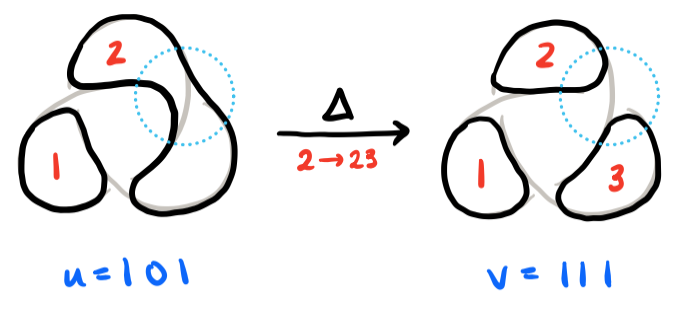}
\end{center}
We have chosen an ordering on the set of circles (in red) at each resolution so that we can identify the copies of $V$ in the tensor product at each vertex.
For example, the distinguished generator $v_- \otimes v_+$ at resolution $D_u$ labels the smaller circle $v_-$ and the larger $v_+$. 

The linear map $d_{uv}$ is given by the bundle of arrows shown below, where we use shorthand notation for compactness (e.g. $v_{+++} := v_+ \otimes v_+ \otimes v_+$):

\begin{center}
\begin{tikzpicture}
    \node (++) at (0,2) {\color{blue}$v_{++}$};
    \node (+-) at (-1,1) {\color{blue}$v_{+-}$};
    \node (-+) at (1,1) {\color{RedOrange}$v_{-+}$};
    \node (--) at (0,0) {\color{RedOrange}$v_{--}$};
\begin{scope}[xshift=4cm]
    \node (+++) at (0,3) {\color{blue}$v_{+++}$};
    \node (++-) at (-1,2) {\color{blue}$v_{++-}$};
    \node (+-+) at (0,2) {\color{blue}$v_{+-+}$};
    \node (-++) at (1,2) {\color{RedOrange}$v_{-++}$};
    \node (+--) at (-1,1) {\color{blue}$v_{+--}$};
    \node (-+-) at (0,1) {\color{RedOrange}$v_{-+-}$};
    \node (--+) at (1,1) {\color{RedOrange}$v_{--+}$};
    \node (---) at (0,0) {\color{RedOrange}$v_{---}$};
    \draw[->] (++) to [bend left] (++-);
    \draw[->] (++) to [bend left] (+-+);
    \draw[->] (-+) to [bend left] (-+-);
    \draw[->] (-+) to [bend left] (--+);
    \draw[->] (+-) to [bend right] (+--);
    \draw[->] (--) to [bend right] (---);
\end{scope}
\end{tikzpicture}
\end{center}
Observe that $d_{uv}$ is identity on the passive circle labeled `1' in both $D_u$ and $D_v$. 

\end{example}

\begin{exercise} 
\label{ex:edge-map-properties}
\alert{(Important)}
\bea
\item Verify that the merge and split maps, as written, decrease the quantum grading by $1$. 

So, as part of the differential, we modify the merge and split maps to be $\gr_q$-preserving maps 
    \[
        d_{uv}: V \otimes V \to V\{1\} \qquad 
        \text{or}
        \qquad 
        d_{uv}: V \to V\otimes V\{1\},
    \]
depending on whether the edge $u\to v$ corresponds to a merge or a split, respectively.

\item Verify that along each 2D face of the binary cube, the edge maps commute. 

Therefore, in order to get a chain complex (where $d^2 =0$), we will need to add some signs so that the faces instead \emph{anticommute}.

\item For an edge $u \to v$ in the cube with active crossing $c_i$, associate the following sign:
    \[
        s_{uv} = (-1)^{\sum_{j=1}^{i-1} u_i}.
  \]
In other words, $s_{uv}$ measures the parity of the number of 1's appearing before the $*$ in the label given to the edge in the binary cube (see Notation \ref{nota:edge-star-notation}).

Verify that, for any face of the binary cube, and odd number of the four edges bounding that face will have sign assignment $-1$.

\note{This is not the only possible sign assignment.}

\ee
\end{exercise}

\subsubsection{Global grading shifts and homology}

To compute Khovanov homology of an oriented link $L$ using diagram $D$:
\begin{enumerate}
    \item Draw the cube of resolutions. 
    \item Associate modules $V^{\otimes |D_u|}$ to each vertex $u$ of the cube.
    \item Associate linear maps $s_{uv}d_{uv}$ to each edge $u\to v$.
    \item Flatten the complex, by taking direct sums along Hamming weights; the resulting complex is the Khovanov bracket, $\llbracket D \rrbracket$.
    \item Add in the global bigrading shift $\{-n_-\}[n_+ - 2n_-]$ to get the \emph{Khovanov chain complex}:
        \[
            \CKh(D) = \llbracket D \rrbracket \{-n_-\}[n_+ - 2n_-].
        \]
    \item Take homology to get \emph{Khovanov homology}:
        \[
            \Kh(L) = H^*(\CKh(D)).
        \]
\end{enumerate}

The Khovanov chain complex $\CKh(D)$ is bigraded, and its differential $d_{Kh}$ is a bidegree $(1,0)$ endomorphism. Therefore $\CKh(D)$ is really the direct sum of many chain complexes, one for each quantum grading:
\[
    \CKh(D) = \bigoplus_{j \in \Z} \CKh^{\bullet,j}(D).
\]
The homology is bigraded by the homological and quantum gradings (indexed below by $i$ and $j$, respectively):
\[
    \Kh(L) = \bigoplus_{i,j \in \Z} \Kh^{i,j}(L).
\]

\begin{example}
We now finally compute the Khovanov homology of the Hopf link $H$ from Examples \ref{eg:hopf-kauffman-computation} and \ref{eg:hopf-resolutions-cube}. 

In order to uniquely identify the generators at the different resolutions in the cube, we label the resolutions $u,v,w,x$ and use these letters to denote the distinguished generators at each resolution:
\begin{center}
    \includegraphics[width=\textwidth]{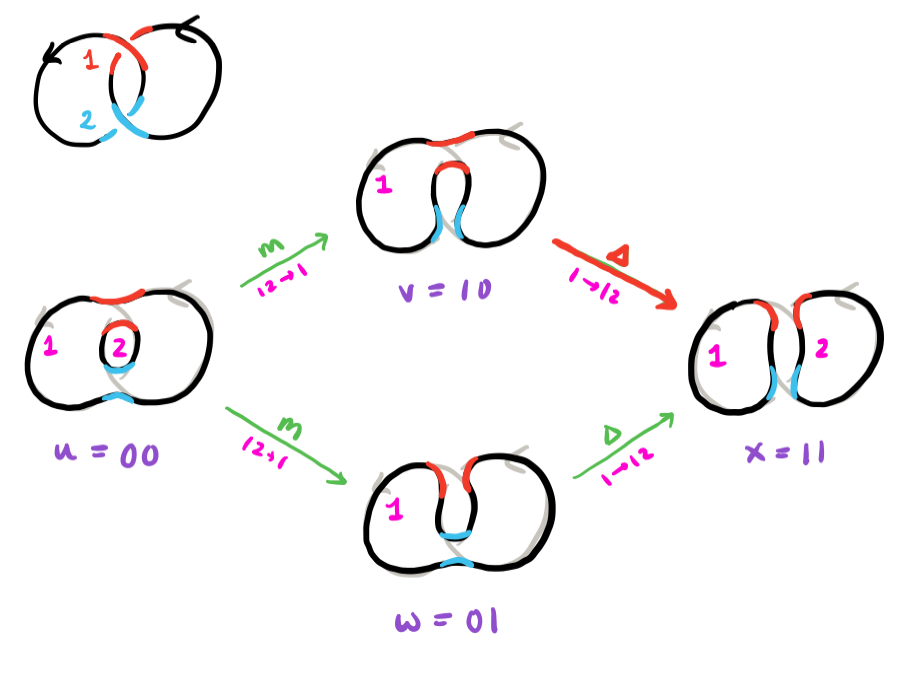}
\end{center}
\begin{itemize}
    \item The vertices of the binary cube are labeled in {\color{violet}purple}.
    \item The ordering of the tensor factors (i.e.\ circles) at each resolution is shown in {\color{magenta}pink}. Below each arrow, the {\color{magenta}pink} text indicates the active circles in the source and target of that edge.
    \item The only edge with sign assignment $-1$ is the split map shown in bold {\color{red}red}. 
\end{itemize}
In shorthand, the Khovanov chain complex $\CKh(H)$ (i.e.\ with global shifts incorporated) is the following chain complex (direct sums are taken vertically):
\begin{center}
    \includegraphics[width=\textwidth]{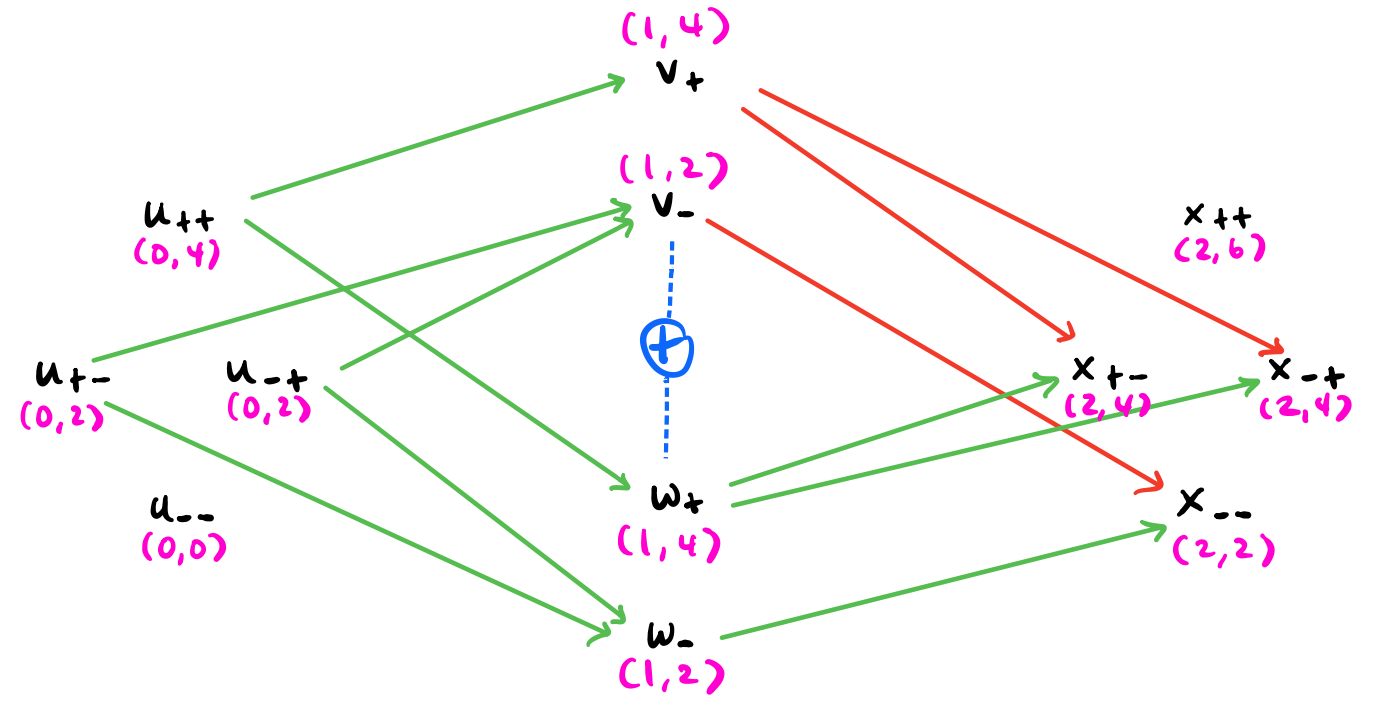}
\end{center}
The bigradings of the distinguished generators of $\CKh(H)$ are shown in {\color{magenta}pink}.

The reader may now verify that the Khovanov homology of the positively-linked Hopf link $H$ is the bigraded $\Z$-module 
\renewcommand{\arraystretch}{1.5}
\begin{center}
\begin{tabular}{|c|c|c|c|}
    \hline
    $\gr_q = 6$  &  &  &  $\Z$ \\ \hline
    $\gr_q = 4$  &  &  &  $\Z$ \\ \hline
    $\gr_q = 2$  & $\Z$ &  & \\ \hline
    $\gr_q = 0$  & $\Z$ &  & \\ \hline 
       & $\gr_h = 0$ & $\gr_h = 1$ & $\gr_h = 2$ \\ \hline
\end{tabular}
\end{center}
generated by the homology classes 
\begin{center}
\begin{tabular}{|c|c|c|c|}
    \hline
    $\gr_q = 6$  &  &  &  $[x_{++}]$ \\ \hline
    $\gr_q = 4$  &  &  &  $[x_{+-}] = [x_{-+}]$ \\ \hline
    $\gr_q = 2$  & $[u_{+-} - u_{-+}]$ &  & \\ \hline
    $\gr_q = 0$  & $[u_{--}]$ &  & \\ \hline 
       & $\gr_h = 0$ & $\gr_h = 1$ & $\gr_h = 2$ \\ \hline
\end{tabular}
\end{center}
\end{example}

\begin{exercise}
The diagrams $D$ and $D'$ below both represent the unknot, $U$.
\begin{center}
    \includegraphics[height=.8in]{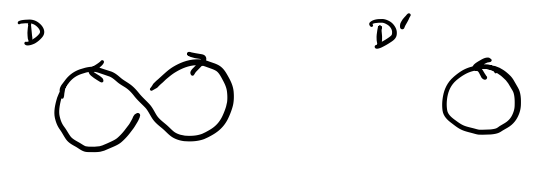}
\end{center}
\bea
\item  
Compute the Khovanov chain complex $\CKh$ for both, and then compute homology to verify that they indeed agree.

\item Can you prove that Khovanov homology is invariant under the following Reidemeister 1 move?
\begin{center}
    \includegraphics[height=.8in]{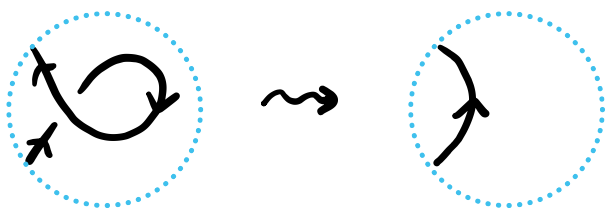}
\end{center}
\note{Bar-Natan's introductory paper does cover this, but try first to use intuition from your solution to part (a) to find the appropriate chain homotopy equivalence.}
\ee
\end{exercise}

\subsection{Algebraic aside: Mapping cones}
\label{sec:mapping-cones}

In a category of chain complexes graded cohomologically, let $f: A \to B$ be a chain map between complexes 
$A = (\bigoplus_i A^i, d_A^i)$ and $B = (\bigoplus_i B^i, d_B^i)$:
\begin{center}
    \includegraphics[width=4in]{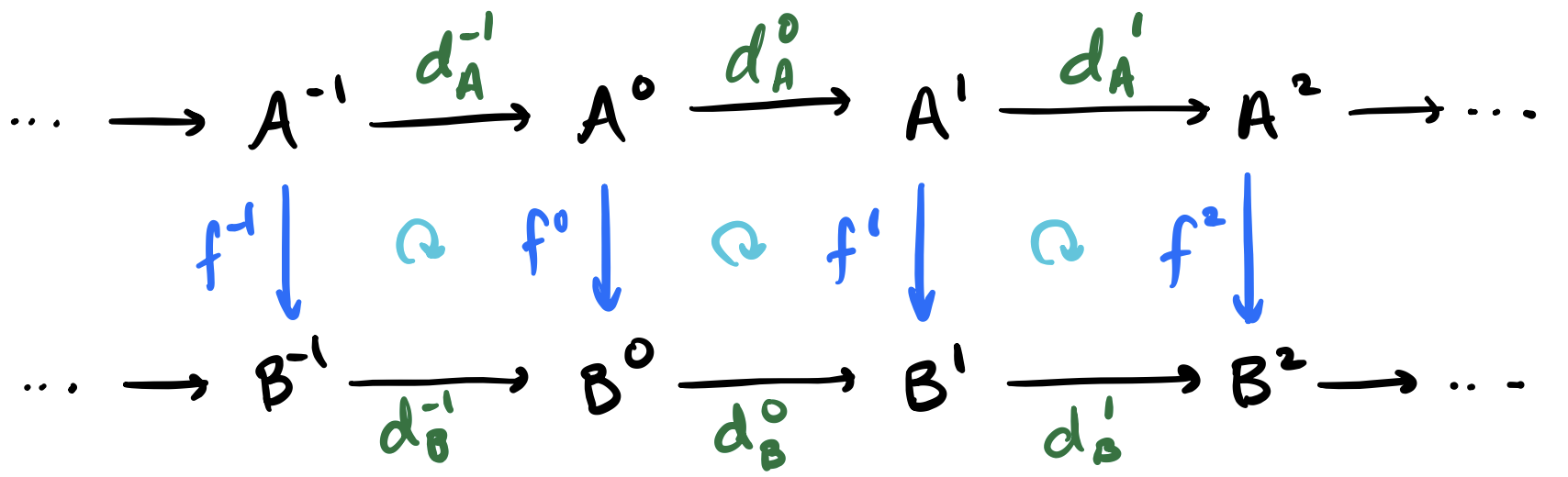}
\end{center}

The \emph{mapping cone} of $f$, denoted by  $C(f)$ or $\cone(f)$, is defined as 
\[
    C(f) := \left (
    \bigoplus_i (A^{i+1} \oplus B^i) ,  \quad
    \bigoplus_i 
        \begin{pmatrix}
            -d_A^{i+1} & 0 \\
            f^{i+1} & d_B^i
        \end{pmatrix}
    \right)
\]
and can be visualized as a collapse of previous diagram:
\begin{center}
    \includegraphics[width=4in]{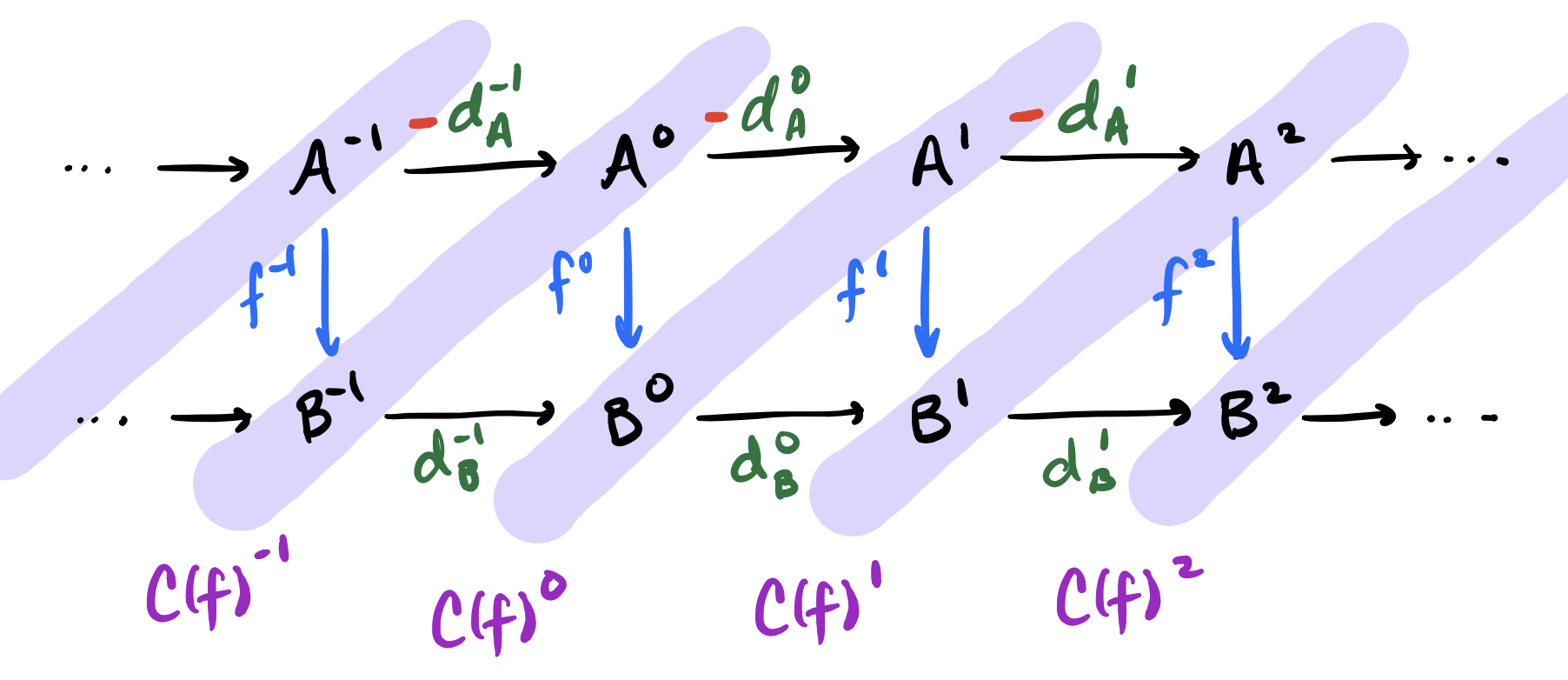}
\end{center}
In the previous diagram, the squares commuted (because $f$ is a chain map). In order to get $d_{C(f)}^2 = 0$, we instead need the squares to \emph{anticommute}, and so we made the choice to negate all the instances of the $d_A^i$ differentials. 

\begin{warning}
Recall that the shift functor $[n]$ in Bar-Natan's conventions \cite{BN-Kh} is defined by 
\[
    (A[n])^i = A^{i-n}
\]
so that the quantum dimension of  $(A[n]^i)$ is  $q^n$ times the quantum dimension of $A^i$.
In these conventions, we may write 
\[
    C(f) = \left (
    A[-1] \oplus B,  
        \begin{pmatrix}
            d_{A[-1]} & 0 \\
            f[-1] & d_B
        \end{pmatrix}
    \right)
\]
where we use the convention that $d_{A[-1]} = -d_A$.

This is not how you'd usually see the definition of a cone  in a cochain complex category, so I encourage you to instead carefully understand the cone construction without memorizing the notation.
We will, however, continue using these conventions for the sake of having useful notation.
\end{warning}

In practice, we usually just write the cone as 
\[
    C(f) = \left ( A \map{f} B \right ),
\]
with the parentheses indicating that we are thinking of a single complex, i.e.\ the whole cone, rather than a map between complexes. 
\note{We can think of $B$ as `shifted by 0 homologically', and instead write 
\[
    C(f) = \left ( A \map{f} \underline{B} \right )
\]
to be more clear that $A$ is the one being shifted left.}

Notice that $B \into C(f)$ as a subcomplex, and $A[-1] \cong C(f)/ B$ is the quotient. 
Thus there is a short exact sequence 
\[
    0 \to B \map{i} C(f) \map{p} A[-1] \to 0
\]
where $i$ is inclusion and $p$ is the quotient map.
This gives rise to a long exact sequence on homology
\[
\cdots \to H^j(B) \map{i^*} H^j(C(f)) \map{p^*} H^{j+1}(A)
\map{f^*} H^{j+1}(B) \map{i^*} H^{j+1}(C(f)) \to \cdots
\]
where the connecting map $f^*$ is induced by $f$. 

\begin{remark}
    Once again, because of our conventions, we have a somewhat nonstandard-looking exact triangle:
    \[
        A \to B \to C(f) \to A[-1].
    \]
    But all is well; this is just a notational difference from what you might be used to.
\end{remark}

\subsection{An underlying TQFT}

The algorithm/definition in the previous section is concrete, but you might be wondering where these $m$ and $\Delta$ maps come from. The answer lies in the fact that morphisms in the category of bigraded $\Z$-modules are images of cobordisms under a functor from a more topologically defined category. In this section, we describe an\footnote{We will see variations later, but focus first on the one that's easiest to work with.} underlying TQFT that determines the maps $m$ and $\Delta$ from the previous section.

\subsubsection{TQFTs}
QFTs (quantum field theories) and TQFTs (topological quantum field theories), have a rich history in mathematical physics. 
The purposes of this course, we will use the following simplified (i.e.\ vague) definition, adapted from \cite{Atiyah-tqft}:

\begin{definition}(Vague\footnote{We are more interested in specific TQFTs, so will not emphasize the details here.})
Let $R$ be a commutative ground ring. 
An $(n+1)$-dimensional \emph{TQFT} is a functor $Z$ from a category of closed $n$-dimensional manifolds and  $(n+1)$-dimensional cobordisms\footnote{up to some equivalence relation} between them to a category of finitely generated (see Remark \ref{rmk:tqft-pivotal-category}) $R$-modules, such that 
\begin{itemize}
    \item $Z$ is \emph{multiplicative}: $Z(Y \sqcup Y') = Z(Y) \otimes Z(Y')$. 
    \item $Z$ is \emph{involutory}: If $\bar Y$ is $Y$ with the opposite orientation, then $Z(\bar Y) = Z(Y)^*$ (the dual module). 
\end{itemize}
We also naturally would like $Z$ to send the identity cobordism $Y \times I: Y \to Y$ to the identity map. 
\end{definition}

Functoriality implies that if $C_{01}: Y_0 \to Y_1$ and $C_{12}: Y_1 \to Y_2$ are cobordisms, then
\[
    Z(C_{12}\circ C_{01}) = Z(C_{12}) \circ Z(C_{01}): Z(Y_0) \to Z(Y_2).
\]

\note{Make sure you know what \emph{category} and \emph{functor} mean. Things will get confusing from here on out if these terms aren't clear.}

\begin{remark}
    To learn more about physical origins of the term \emph{topological quantum field theory}, take a look at the \href{https://ncatlab.org/nlab/show/topological+quantum+field+theory}{ncatlab page}.
   
    Atiyah's \textit{Topology quantum field theories} \cite{Atiyah-tqft} provides a set of precise axioms for $(n+1)$-TQFTs and surveys some prominent examples. 
\end{remark}

\begin{remark}
\label{rmk:tqft-pivotal-category}
Since any (orientable) manifold only has two orientations, the category of cobordisms is \emph{pivotal}, i.e.\ $A \cong (A^*)^*$ for any object $A$. 
\begin{enumerate}
\item Thus the target category of the functor needs to be pivotal as well; this is why we require \emph{finitely generated} $R$-modules. 

\item Since our categories are pivotal we can identify any cobordism $C: K \to L$  with various `bent' versions of $C$, shown in the schematics below:
\begin{center}
    \includegraphics[width=3in]{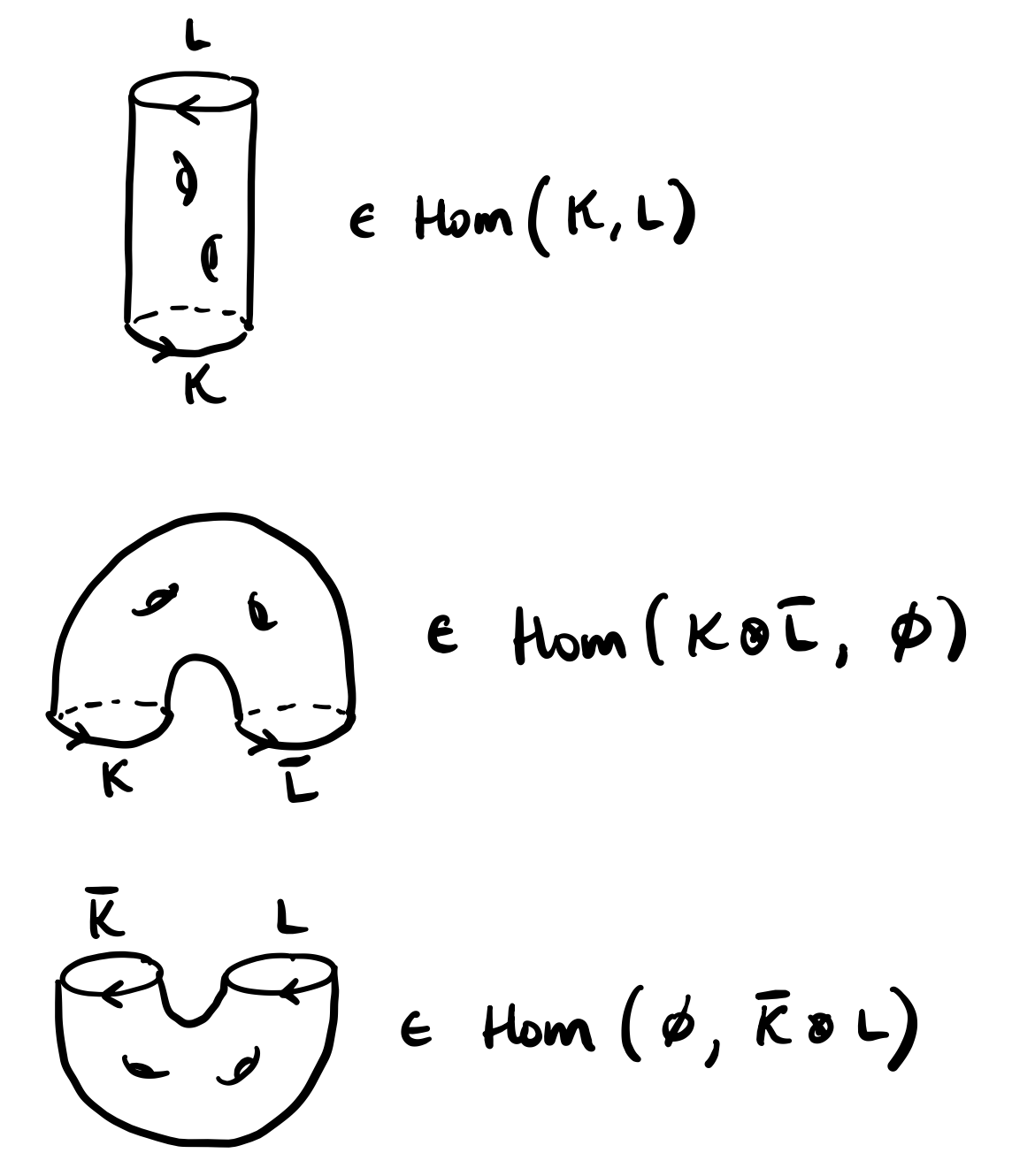}
\end{center}
We thus get equivalences
\[
    \Hom(K, L) \cong \Hom(K \otimes \bar L, \emptyset) \cong \Hom(\emptyset, \bar K \otimes L).
\]

\item We can view the equivalence 
\[
    \Hom(Z(K) \otimes Z(\bar L), Z(\emptyset)) 
    \cong 
    \Hom(Z(K), Z(L))
\]
as an instance of the
Tensor-Hom Adjunction:
\[
    \Hom(M \otimes N,P) \cong \Hom(M, \Hom(N,P))
\]
by setting $M = Z(K)$, $N = Z(\bar L) = Z(L)^*$, and $P = Z(\emptyset) = R$. 
(Note that $\Hom(N,P) = \Hom(Z(L)^*, R) = (Z(L)^*)^* \cong Z(L)$.)

\end{enumerate}
\end{remark}

\subsubsection{Bar-Natan's dotted cobordism category and TQFT}
In this section, we will focus on one particular TQFT, which we will denote $\BNfun$. This material is adapted from \cite{BN-tangles}. 

First, we need to understand the source category.

\begin{definition}
\label{defn:preadditive-category}
    A small category $\CC$ is \emph{preadditive} (a.k.a.\ \emph{$\Zmod$-enriched})if for $X,Y \in \Ob(\CC)$, $\Hom_\CC(X,Y)$ is an abelian group (i.e.\ a $\Z$-module) under composition, and this composition is bilinear (under the action of $\Z$). 
\end{definition}

\begin{remark}
\label{rmk:additive-category}
A category is \emph{additive} if additionally, we have finite coproducts. We will boost up to additive categories later in \S \ref{sec:BN-Kom-categories}.
\end{remark}

\begin{definition}
\label{def:TLcat-0}
    The preadditive category $\TLcat_0$ is defined as follows:
\begin{itemize}
    \item Objects are closed 1-manifolds with finitely many components embedded smoothly in the plane $\R^2$; we call these \emph{planar circles}. \note{Notice that we are \emph{not} identifying isotopic embeddings, but we \emph{do} ignore parametrization.}
    \item Morphisms are finite sums of \emph{dotted cobordisms}, or smooth surfaces embedded in $\R^2 \times I$,
    \begin{itemize}
        \item with boundary only in $\R^2 \times \partial I$,
        \item possibly decorated with a finite number of dots,
        \item up to boundary-preserving isotopy, and
        \item subject to the following relations:        
    \end{itemize}
    \begin{center}
        \begin{equation}
        \label{eq:dotted-BN-relations}`
        \includegraphics[width=4in]{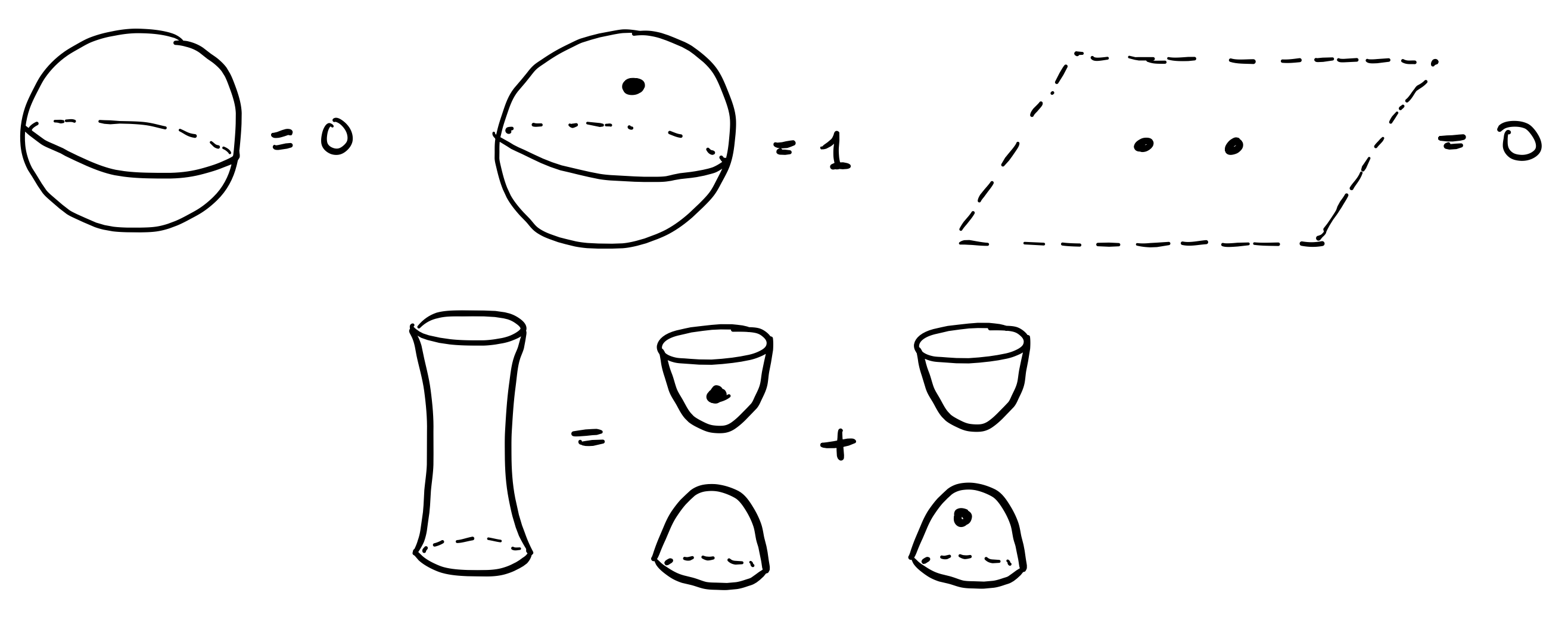}
        \end{equation}
    \end{center}
    The last relation is often referred to (gruesomely) as the \emph{neck-cutting} relation.
\end{itemize} 
\end{definition}

The target category will be bigraded $\Z$-modules, which we will denote as $\ggmod$ when we want to emphasize the bigrading, or just $\Zmod$ for short. The most important module for us will be the one associated to a single circle in the plane, which we previously called $V$. 
We will now rename this module and view it as $V \cong \CA\{1\}$, where 
\[
    \CA = \Z[X]/(X^2)
\]
with $\deg_{q}(X) = -2$. 

\begin{definition}
\label{def:BN-functor}
    The functor $\BNfun: \TLcat_0 \to \ggmod$ assigns
    \begin{itemize}
        \item to each collection of $k$ planar circles the module $(\CA\{1\})^{\otimes k}$ (with $\emptyset \leadsto \Z$)
        \item to each dotted cobordism a linear map determined by the following assignments:
        \begin{center}
            \includegraphics[width=2.5in]{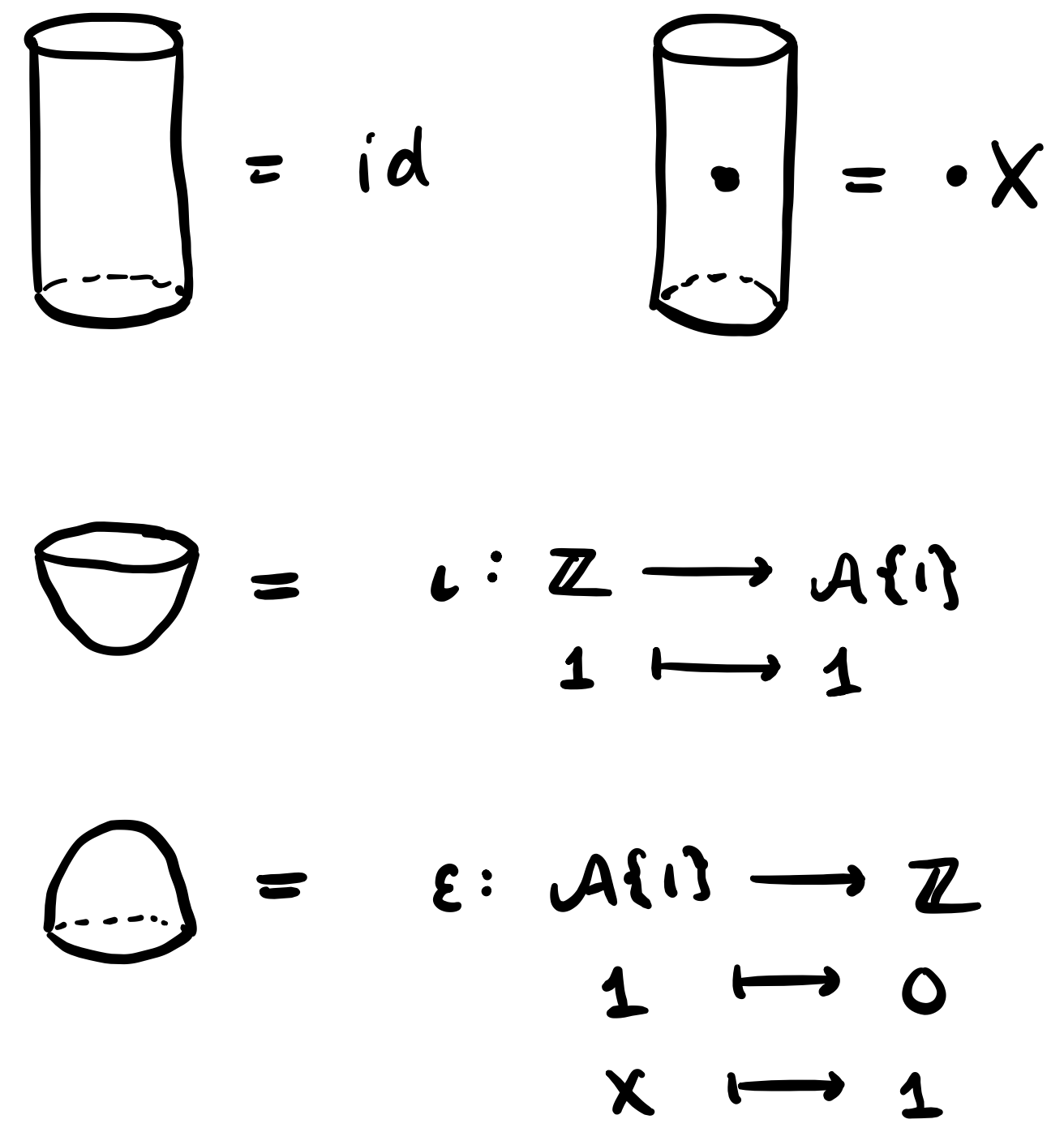}
        \end{center}
    \end{itemize}
\end{definition}

\begin{remark}
    Notice that Definition \ref{def:BN-functor} is really a Definition-Theorem, since one needs to check that it really is a functor. 
    \mz{What does one need to check to make sure that $\BNfun$ really is a functor?}
\end{remark}

Let $C_1$ and $C_2$ be cobordisms $\emptyset \to P$, where $P$ is a collection of planar circles. Then there is a $\Z$-valued pairing 
\[
    \langle C_1, C_2 \rangle = \BNfun( C_1 \cup_\partial \bar C_2)  \qquad (= \BNfun(\bar C_1 \cup_\partial C_2) )
\]
because $C_1 \cup_{\partial} \bar C_2$ is a closed 2-manifold, and the relations in $\TLcat_0$ allow us to evaluate closed surfaces.

In the following exercise, we will use this pairing to recover the linear maps $m$ and $\Delta$ from the previous section.

\begin{exercise}
\textit{(Not eligible for HW submission; we will complete these in lecture.)}
    \bea
    \item In Definition \ref{def:BN-functor}, why did we not need to specify how $\BNfun$ assigns higher-genus cobordisms?
    \item We can identify the distinguished $\Z$-module generators $1$ and $X$ in $\CA\{1\}$ with a cup and a dotted cup, respectively. Why does this make sense? 
    \item What is the dual cobordism to the cup? ...the dotted cup?
    \item Using this basis for $\CA\{1\}$ and the pure tensor basis for $\CA\{1\}^{\otimes 2}$, determine the matrix (i.e. chart) associated to the merge and split cobordisms by using the pairing.
    \ee
\end{exercise}

\begin{aside}
(Computing linear maps using pairings)

Here is the linear algebra analogue to the method we are using above to compute the maps $m$ and $\Delta$. 

Let $M: \R^n \to \R^n$ be a matrix with respect to the standard basis vectors $\{e_i\}$. 
We can view $M$ as a pairing, by setting 
\[
    \langle v,w \rangle_M = w^\top M v \in \R.
\]
The entries of the matrix are determined by the pairing on basis vectors:
\[
    M_{ij} = e_i^\top M e_j
\]
(Recall $i$ is the row and $j$ is the column.)
This is because $M_{ij}$ is the coefficient of $e_i$ in the image vector $M e_j$.

In our setting, we are actually taking one additional step, which is to identify the column vectors $v$ and $w$ with the linear maps $\R \to \R^n$ that they represent, as $n \times 1$ matrices. (The `dual' vector $w^\top$ is a row vector that represents a linear map $\R^n \to \R$.)

So a `closed surface' in our setting corresponds to the composition of maps 
\[
    \R \map{v} \R^n \map{M} \R^n \map{w^\top} \R,
\]
and setting $v$ and $w$ to be basis vectors allows us to compute the entries of $M$.

(Note that any linear map $\R \to \R$ is necessarily of the form $\cdot r$ for some $r \in \R$. We are implicitly using $\Hom_\R(\R,\R) \cong \R.$)
\end{aside}

Observe that even though we defined the target of $\BNfun$ to be $\ggmod$, everything is happening at homological grading 0 at the moment; we don't yet have chain complexes! 
However, the quantum degree of morphisms can be determined in the source category $\TLcat_0$, as you'll discover in the next exercise.

Using Morse theory, we can show that every cobordism $C$ between finite collections of planar circles $P \to P'$ can be isotoped so that the critical points occur at distinct times $t \in I$. We can slice up the cobordism into pieces
\[
    C = C_m \circ C_{m-1} \circ \cdots \circ C_2 \circ C_1
\]
where each $C_i$ is a disjoint union of some identity cylinders and one of the following four \emph{elementary cobordisms}:
\begin{itemize}
    \item cup ($\iota$)
    \item cap ($\varepsilon$)
    \item merge ($m$)
    \item split ($\Delta$)
\end{itemize}

\begin{exercise}
\alert{(Important)}
Prove that for any cobordism $C: P \to P'$, the bidegree of the associated linear map is 
\[
    \gr(\BNfun(C)) = (0, \chi(C)),
\]
where $\chi(C)$ is the Euler characteristic of the surface $C$.
\end{exercise}

\begin{remark}
    Bar-Natan's more general cobordism category does not include dots as decorations. 
    The objects are the same as in $\TLcat_0$, but morphisms are subject to different relations:
    \begin{center}
        \includegraphics[width=3in]{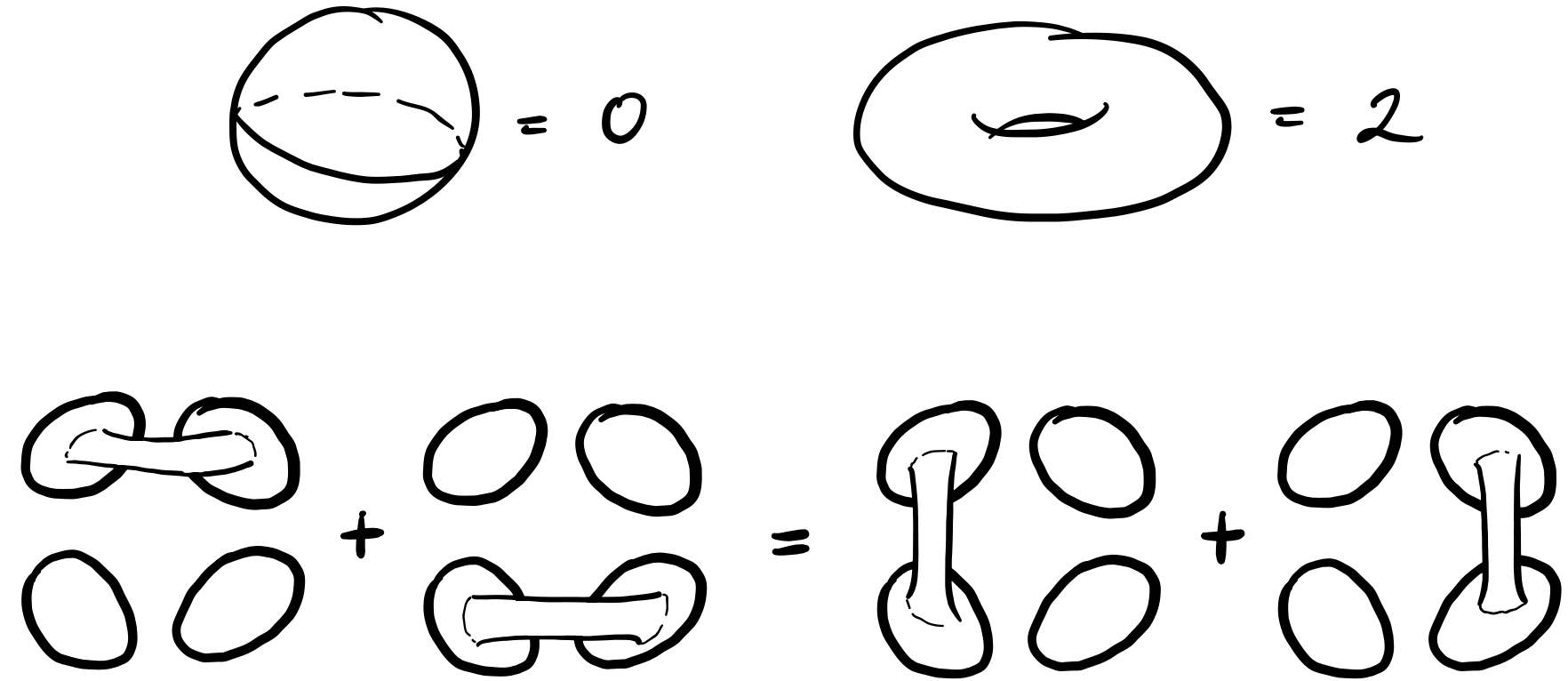}
    \end{center}
    These relations are called the $S$ (sphere), $T$ (torus), and $4Tu$ (four tubes) relations.

    We will sometimes work with this category, but for now have chosen to start with the dotted category because elementary calculations are easier there.
\end{remark}

\begin{exercise}
\bea
\item Use the $T$ and $4Tu$ relations to show that the genus-2 orientable surface evaluates to 0:
    \begin{center}
        \includegraphics[height=1in]{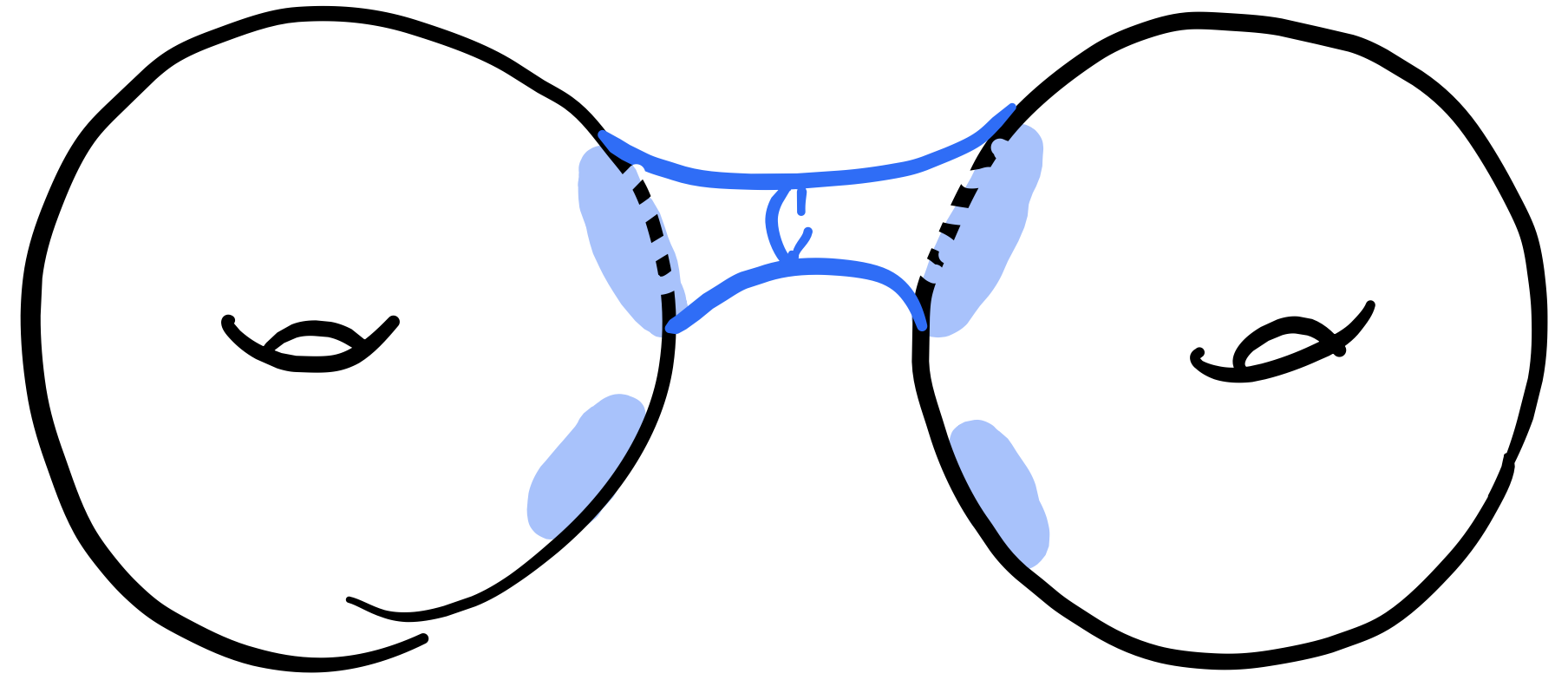}
    \end{center}
\item Use $4Tu$ to show the relation below:
    \begin{center}
    \includegraphics[height=1in]{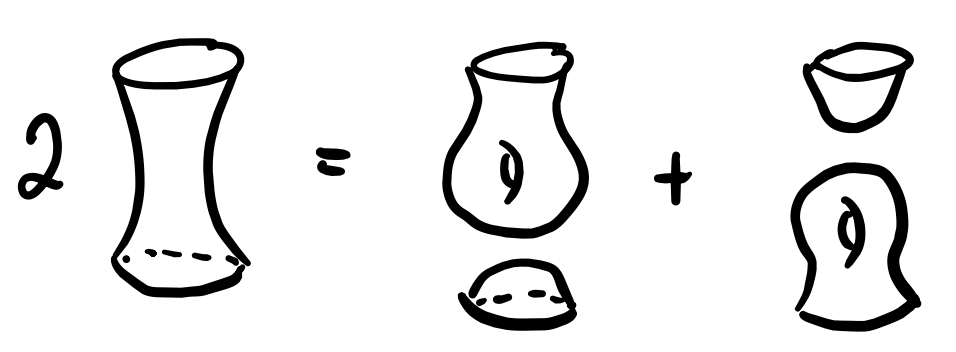}
    \end{center}
Use this to explain why, morally speaking, `dot' = `half of a handle'. 
\ee
\end{exercise}

\subsubsection{(Aside) Frobenius algebras and (1+1)-TQFTs}

We now discuss why we switched from using the $\Z$-module
\[
    V = \Z v_+ \oplus \Z v_-
\]
to the underlying $\Z$-module of the $\Z$-algebra
\[
    \CA = \Z[X]/(X^2).
\]
See \cite{Kh-frob-ext} for a reference.

\begin{definition}
A \emph{Frobenius system} is the data $(R, A, \varepsilon, \Delta)$ consisting of 
    \begin{itemize}
        \item a commutative ground ring $R$; 
        \item an $R$-algebra $A$; in particular:
            \begin{itemize}
                \item There is a \emph{unit} or inclusion map $\iota: R \to A$  that sends $1 \mapsto 1$.
                \item $A$ has a \emph{multiplication} map $m: A \otimes_R A \to A$.
            \end{itemize}
        \item a \emph{comultiplication map} $\Delta: A \to A \otimes_R A$ that is both coassociative and cocommutative; and
        \item an $R$-module \emph{counit} map $\varepsilon: A \to R$ such that 
            \[
                (\varepsilon \otimes \id) \circ \Delta = \id.
            \]
    \end{itemize}
\end{definition}

The algebra $A$ is a \emph{Frobenius algebra}; it is simultaneously both an algebra and a coalgebra, and the following relation holds:
\[
    (\id_A \otimes m) \circ (\Delta \otimes \id_A) 
    = \Delta \circ m
    = (m \otimes \id_A) \circ (\id_A \otimes \Delta).
\]

\begin{remark}
    There are many equivalent definitions for the term `Frobenius algebra'. The definition we used above is the most topological:
    \begin{center}
        \includegraphics[width=3in]{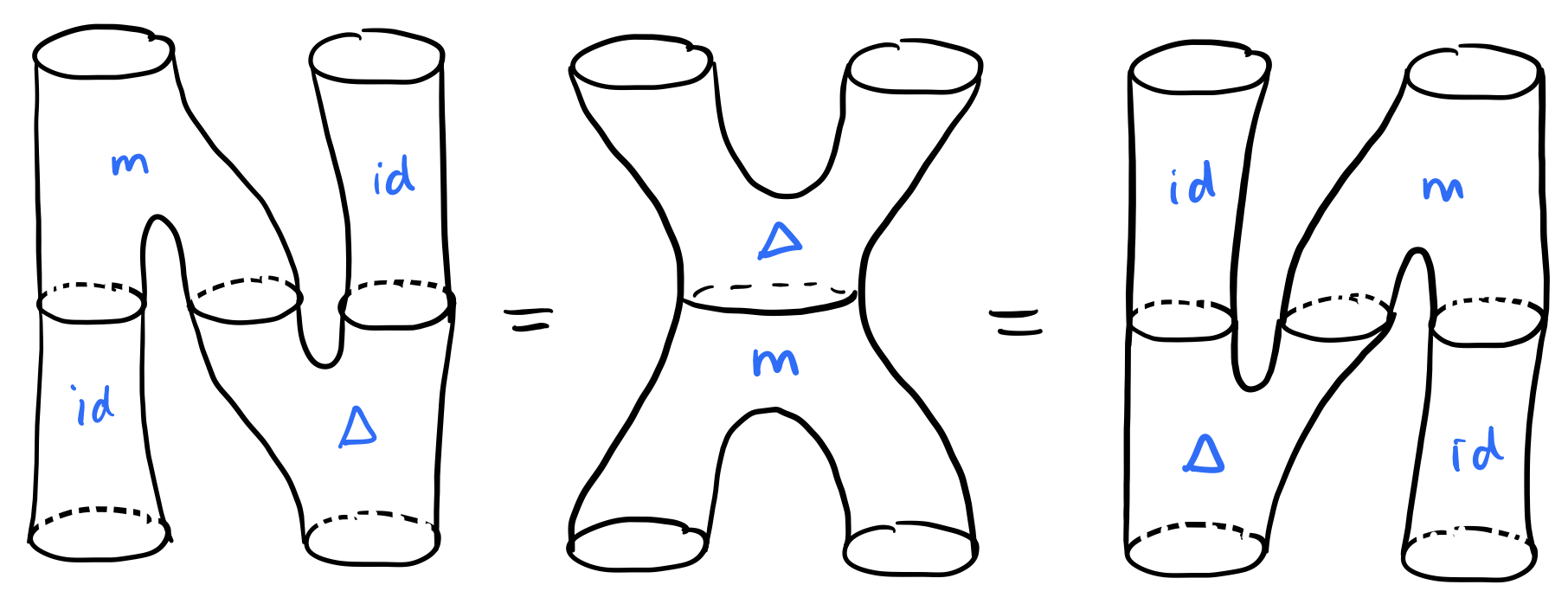}
    \end{center}
\end{remark}

As you might have guessed, there is a strong relationship between Frobenius algebras and TQFTs.

In our case, $\CA = \Z[X]/(X^2)$ is a rank 2 Frobenius extension of the ground ring $\Z$. 
Rank 2 Frobenius systems yield $(1+1)$-dimensional TQFTs, via the identifications below:

\renewcommand{\arraystretch}{1.4}
\begin{center}
\begin{tabular}{| c | c | }
    \hline
    the $(1+1)$-TQFT sends this... & ... to this in the Frobenius system \\
    \hline
    $\emptyset$ & $R$  \\
    $S^1$ & $A$  \\
    cup  & $\iota$ \\
    merge & $m$  \\
    split & $\Delta$   \\
    cap & $\varepsilon$ \\
    \hline
\end{tabular}
\end{center}

By modifying the Frobenius algebra, we can get many more flavors of Khovanov homology. For example, if we instead use $R = \Z$, $A = \Z[X]/(X^2-1)$, we would build a version of Khovanov homology that is not quantum-graded (because $X^2 = 1$ is not a graded equation). 
This version of Khovanov homology is called \emph{Lee homology}, and will be discussed later in this course when we talk about topological applications of Khovanov homology.

\begin{warning}
Do not use the category $\TLcat_0$ from Definition \ref{def:TLcat-0} as the source of the TQFT functor for Lee homology! The category $\TLcat_0$ was specifically tailored to the version of Khovanov homology over $\CA = \Z[X]/(X^2)$, where `two dots = 0'. 
\end{warning}

\subsection{Bar-Natan's tangle categories}

We now return back to dotted cobordisms, but develop the theory for not just planar circles, but also planar tangles. 

Just as a link is an embedding of a finite number of circles in $\R^3$, a \emph{tangle} is a proper embedding of a finite number of circles \underline{and arcs} in a 3-ball $B^3$. 
A \emph{planar tangle} is a tangle embedded in a 2-disk $D^2$. 

Since we will be building complicated categories out of tangles, we want to be very concrete with our definition of tangle categories, and will instead use the following definition.

\begin{definition}
An \emph{$(n,n)$-tangle} is a 1-manifold with $2n$ boundary components (a.k.a.\ endpoints),
properly embedded in the thickened square $[0,1] \times [0,1] \times (-\frac{1}{2}, \frac{1}{2})$,
with $2n$ endpoints located at 
\begin{equation}
\label{eq:tangle-endpoints}
    \left \{ \left (\frac{i}{n+1}, 0, 0 \right ) \right \}_{i=1}^{n}
\ \cup \ \ 
\left \{ \left (\frac{i}{n+1}, 1, 0 \right ) \right \}_{i=1}^{n}.
\end{equation}
\end{definition}

An \emph{$(n,n)$-tangle diagram} is a projection of an $(n,n)$-tangle to the unit square $[0,1] \times [0,1]$ where all singular points are transverse intersections (just as in link diagrams).

A \emph{planar} $(n,n)$-tangle is an $(n,n)$-tangle embedded in the square $[0,1] \times [0,1] \times \{0\}$. 
\note{In other words, a planar tangle is a crossingless projection of a (quite untangled) tangle.} 

\begin{center}
    \includegraphics[width=3in]{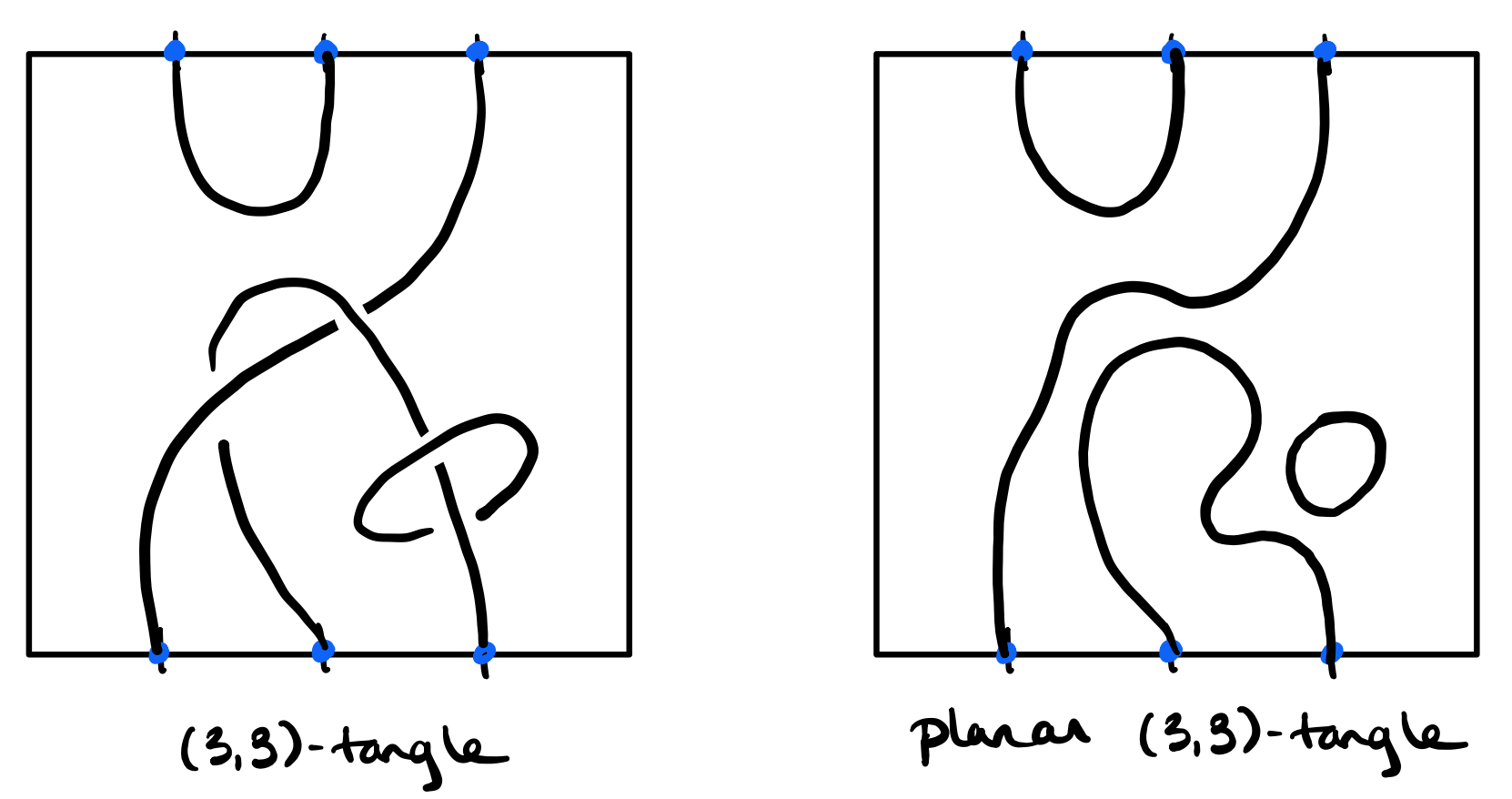}
\end{center}

\begin{remark}
    A planar $(n,n)$-tangle with no closed components is a \emph{crossingless matching}. These are the \note{Catalan($n$) many} generators of the Temperley-Lieb algebra $\TL_n(\delta)$ over a field $\Bbbk$, whose composition $\otimes$ is given by stacking squares vertically:
        \begin{center}
        \includegraphics[height=1.5in]{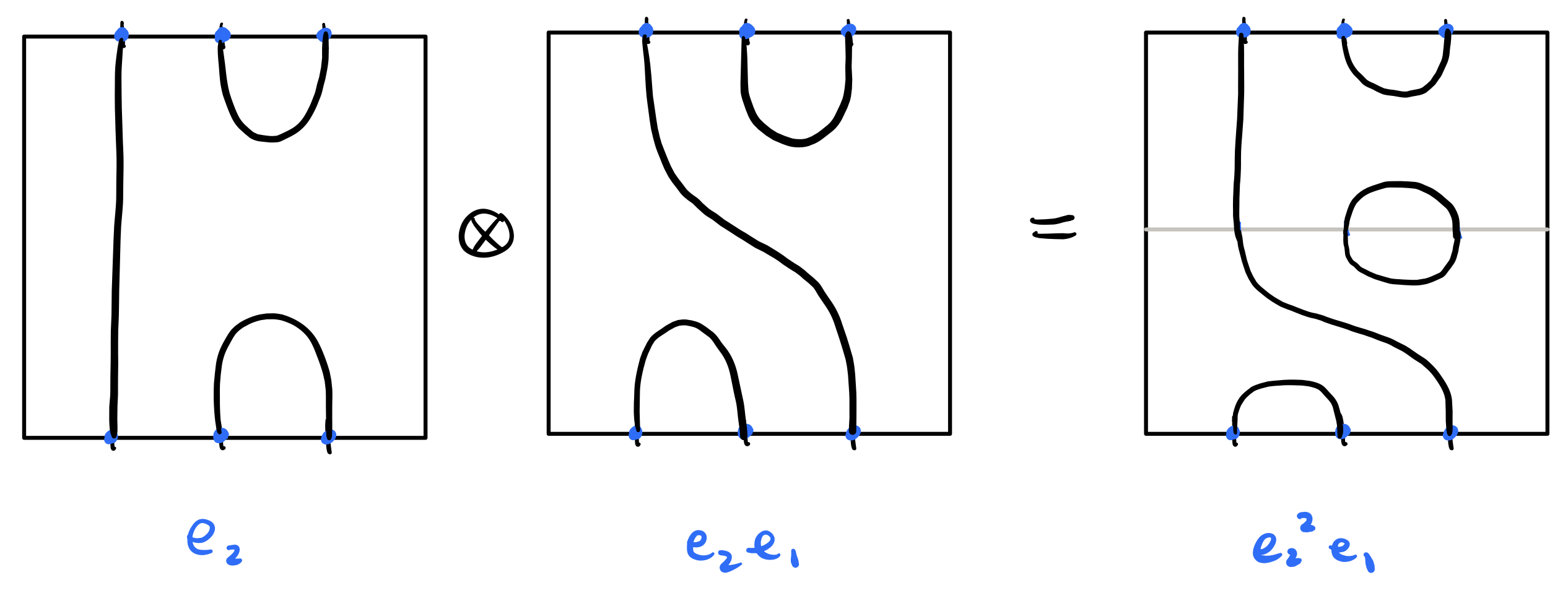}
        \end{center}
    Closed components evaluate to a nonzero element $\delta \in \Bbbk$:
        \begin{center}
        \includegraphics[height=1.5in]{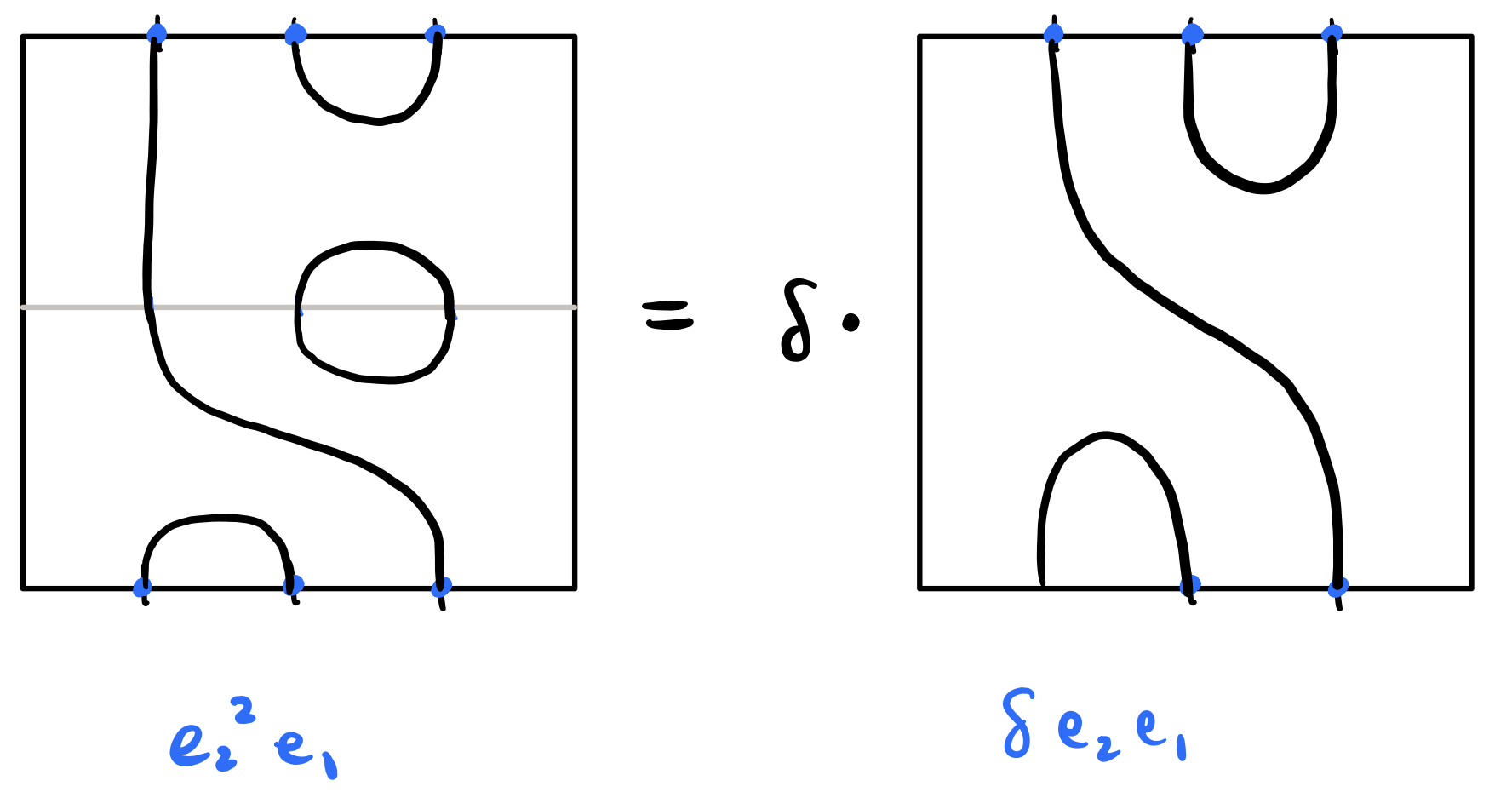}
        \end{center}
    \note{We are not thinking about the monoidal structure of these diagrams just yet. But perhaps you can see the analogue between this (0+1)-dimensional cobordism category and the (1+1)-dimensional cobordism categories we're working with.}
\end{remark}

\begin{definition}[cf.\ Definition \ref{def:TLcat-0}]
\label{def:TLcat-n}
The preadditive category $\TLcat_n$ is defined as follows:
\begin{itemize}
    \item Objects are planar $(n,n)$-tangles with finitely many components, embedded smoothly in the square $I^2$ with endpoints at the $2n$ points specified in \eqref{eq:tangle-endpoints}, denoted by $\mathbf{p}$.
    \item Morphisms are finite sums of dotted cobordisms properly embedded in $I^2 \times [0,1]$,  
    \begin{itemize}
        \item with vertical boundary (i.e.\ on $\partial I^2 \times [0,1]$) consisting only of $2n$ vertical line segments $\mathbf{p} \times [0,1]$;
        \item possibly decorated with a finite number of dots;
        \item up to boundary-preserving isotopy;
        \item subject to the same local relations \eqref{eq:dotted-BN-relations} as in Definition \ref{def:TLcat-0}.
    \end{itemize}
\end{itemize}
\end{definition}

So far, we've only defined a TQFT that can `evaluate' closed components to a $\Z$-module. Since tangles in $\TLcat_n$ will inevitably have non-closed components, i.e.\ arcs, we will not be specifying a functor to $\Zmod$ just yet. Instead, we will bring our algebraic tools to the topological categories $\TLcat_n$ and do as much homological algebra as we can before passing through any TQFT.
\note{Later on in the course, however, we may discuss how to define a 2-functor from the 2-category of ($n$ points, $(n,n)$-tangles, $(n,n)$-tangle cobordisms) to an appropriately rich algebraic 2-category.}

In particular, we can still treat $\TLcat_n$ as a (bi)graded category, where the quantum degree of a dotted cobordism $C$ is \emph{defined} as 
\[
    \deg_q(C) := \chi(C) - n = \chi(C) - \frac{1}{2}(\text{\# vertical boundary components})
\]
(and the homological degree is $\deg_h(C) = 0$).

\subsection{Adding crossings : Boosting to chain complexes}
\label{sec:BN-Kom-categories}

The Kauffman bracket showed us how to take a tangle with 
crossings and express it in terms of planar tangles; 
the Khovanov bracket tells us that a tangle with crossings is really just a chain complex of planar tangles. 
In this section, we will boost our categories $\TLcat_n$ to categories of chain complexes, so that we can capture tangles in general, not just planar tangles.

Throughout this section, we will be working in the more general setting of tangles; recall that $\TLcat_0$ is just a special case of the categories $\TLcat_n$. Also recall that these are all preadditive categories, by construction.

\begin{definition}[\cite{BN-tangles}, Definition 3.2]
Let $\CC$ be a preadditive category. The \emph{additive closure} of $\CC$, denoted by $\Mat(\CC)$, is the additive category defined as follows:
\begin{itemize}
    \item Objects are (formal) direct sums of objects of $\CC$. We can represent these as column vectors whose entries are objects of $\CC$.
    \item Morphisms are matrices of morphisms in $\CC$, which are added, multiplied, and applied to the objects just as matrices are added, multiplied, and applied to vectors.
\end{itemize}
\end{definition}

An additive category is preadditive, by definition (Remark \ref{rmk:additive-category}). We can build a category of chain complexes over any preadditive category:

\begin{definition}[\cite{BN-tangles}, Definition 3.3]
Let $\CC$ be a preadditive category. The \emph{$\star$ category of chain complexes} over $\CC$ for $\star \in \{b, -, +\}$, denoted by $\Kom^\star(\CC)$,  is defined as follows:
\begin{itemize}
    \item Objects are \{ finite length, bounded above, bounded below \}\footnote{If the notation feels counterintuitive, just remember that `bounded above' complexes are `supported mostly in negative degrees'.}
    chain complexes of objects and morphisms in $\CC$.
    \item Morphisms are chain maps between complexes.
\end{itemize}

\end{definition}

\begin{remark}
    For an example of a category of chain complexes over a preadditive but not additive category, take a look at $\Kom(\mathcal{M}_{\Z[G]})$ in \cite[Remark 2.11]{ILM-rational-unknotting}.
\end{remark}

\begin{exercise}
\mz{compute this composition of morphisms in Mat}
\end{exercise}

We are now ready to define the \emph{Bar-Natan categories}, which are the categories we land in just before applying a TQFT to an algebraic category like chain complex over $\Z$-modules.

\begin{definition}
    The Bar-Natan category $\BNcat^\star_n$ is $\Kom^\star(\Mat(\TLcat_n))$, for $\star \in \{ b, -, +\}$. 
    \note{In practice, I will mostly drop the $\star$ from the notation. The bounded category is a full subcategory of both the $-$ and $+$ categories. We will only be working with bounded complexes at the beginning of the course, so the distinction won't matter.}
\end{definition}

These categories are rich enough to capture the entirety of the information in Khovanov homology, without ever passing to rings and modules.
For example, we can think of Khovanov homology as a functor from links and cobordisms to the Bar-Natan category $\BNcat_0$. We may then choose a TQFT (or, equivalently, a Frobenius system) to apply to the resulting invariant, to obtain many different flavors of Khovanov homology. 
Moreover, we now have a ``Khovanov homology for tangles.''

\begin{remark}
Since $\SA = \Mat(\TLcat_n)$ is an abelian category, the \emph{homotopy category} $K^\star(\SA)$ of chain complexes over $\SA$ (for $\star \in \{b, -, +\}$) is a \emph{triangulated category}. Khovanov homology can be thought of a functor to a triangulated category $K^b(\Mat(\TLcat_0))$, in which case the quasi-isomorphism class of $\Kh(L)$ is the link invariant. The Kauffman bracket skein relation gives exact triangles in this category.
\note{For exercises involving long exact sequences in Khovanov homology, see \cite{Turner-galaxy}.}
\end{remark}

To demonstrate the use of these Bar-Natan categories, we will use these categories to prove the invariance of Khovanov homology under some Reidemeister moves. You may find the proofs using the undotted ($S$, $T$, $4Tu$) theory in \cite{BN-tangles}. We will instead use the dotted theory as part of our demonstration. But first, we need to introduce two important lemmas that are immensely helpful with both by-hand and computer-assisted computations.

\subsection{Computational tools}

Bar-Natan's categories are also incredibly useful in computing Khovanov homology (by computer), because they allow for a `divide-and-conquer' approach using tangles. 
Implemented algorithms typically scan a link diagram, simplifying the homological data at each step of the filtration of the diagram. 
The main tools used for simplification are \emph{delooping} and \emph{abstract Gaussian elimination}, which we discuss below. This section follows \cite{BN-fast-kh}.

\begin{remark}
These algorithms have been immensely important to solving problems in low-dimensional topology. For example, Lisa Piccirillo's proof that the Conway knot is not slice \cite{Piccirillo-conway-knot} involves computing the $s$ invariant (see \S \ref{sec:rasmussen-s}) from the Khovanov homology of a knot with \textit{a lot} (\note{like around 40; I haven't counted carefully}) of crossings, which is effectively impossible by naive computation.
\end{remark}

Let us first write down a concrete formula for the degree of a morphism in $\TLcat_n$.

\begin{warmup}
As a sanity check, let's answer some warmup questions. See Notation \ref{nota:shift-functors} for our conventions on shift functors.
\bea
\item Let $A$ be a graded $R$-module, with shift functor $[\cdot]$. 
The set of degree-preserving maps $A[i] \to A[j]$ correspond to elements of $\Hom^k(A,A)$ for some $k \in \Z$. What is $k$? \note{Answer: $k = i-j$}
\item Suppose $\varphi \in \Hom^0(A,A)$. What is the degree of the map induced by $\varphi$ from $A[i] \to A[j]$? \note{Answer: $j-i$}
\item Now suppose $\psi \in \Hom^\ell(A,A)$. What is the degree of the map induced by $\psi$ from $A[i] \to A[j]$? \note{Answer: $j-i + \deg(\psi)$}
\ee
\end{warmup}

\begin{lemma}
\label{lem:cobordism-degree-with-shifts}
Let $F$ be a (possibly dotted) cobordism between planar tangles $T, T' \in \TLcat_n$. 
The degree of the morphism 
\[
    F: T\{i\} \to T'\{j\}
\]
is
\begin{equation}
\label{eq:cobordism-degree-with-shifts}
    \deg_q(F) = j - i + \chi(F) - n
\end{equation}
(where $n$ is $\frac{1}{2}$ the number of tangle endpoints, or vertical boundary components).
\end{lemma}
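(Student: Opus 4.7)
The plan is to assemble the formula from two ingredients already on the table: the \emph{intrinsic} quantum degree of a dotted cobordism as a morphism in $\TLcat_n$, and the effect of the grading shift functors $\{\cdot\}$ on the degree of an induced morphism.

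First I would recall the definition given immediately before Section \ref{sec:BN-Kom-categories}: for a dotted cobordism $F: T \to T'$ in $\TLcat_n$ between planar tangles \emph{without} any grading shifts, the quantum degree is declared to be
\[
    \deg_q(F) = \chi(F) - n,
\]
where $n$ is half the number of vertical boundary components. This handles the case $i = j = 0$ directly.

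Next I would invoke the general principle recorded in the warmup: if $\psi$ is a morphism of degree $\ell$ between graded objects $A$ and $B$, then viewed as a morphism $A\{i\} \to B\{j\}$ it has degree $j - i + \ell$. This follows immediately from Notation \ref{nota:shift-functors}: shifting $A$ by $\{i\}$ pushes the internal grading of each homogeneous element up by $i$, so the degree of the image under $\psi$ relative to the source is adjusted by $-i$ on the source side and $+j$ on the target side. Applied to our $F$ of intrinsic degree $\chi(F) - n$, this gives
\[
    \deg_q\bigl( F: T\{i\} \to T'\{j\} \bigr) = (j - i) + (\chi(F) - n),
\]
which is exactly \eqref{eq:cobordism-degree-with-shifts}.

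There is no real obstacle here; the statement is essentially a bookkeeping lemma combining the definition of $\deg_q$ in $\TLcat_n$ with the standard behavior of shift functors. The only thing worth flagging carefully, and which I would mention in a remark, is that the formula $\deg_q(F) = \chi(F) - n$ is consistent with the earlier exercise showing $\gr(\BNfun(C)) = (0, \chi(C))$ for closed cobordisms between planar circles (where $n = 0$), so that this lemma is the tangle-level generalization that will be the basic input for degree checks in the delooping and Gaussian elimination arguments to follow.
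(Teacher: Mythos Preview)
Your proposal is correct and follows exactly the route the paper intends: the paper does not give a formal proof but instead precedes the lemma with the Warmup whose part (c) establishes that $\psi \in \Hom^\ell$ induces a map $A\{i\} \to B\{j\}$ of degree $j - i + \ell$, and then simply writes ``Verify that this makes sense to you!'' after the statement. You have combined the definition $\deg_q(F) = \chi(F) - n$ with that warmup exactly as expected.
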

\note{Verify that this makes sense to you!}

From now on, we will treat diagrams and cobordism drawings as the same things as the objects and morphisms they represent in $\BNcat$. 

\begin{lemma}[\cite{BN-fast-kh}, Lemma 4.1]
(Delooping)
$\fullmoon$ is chain homotopy equivalent to $\emptyset \{1\} \oplus \emptyset \{-1\}$ via the chain homotopy equivalences
\begin{center}
    \includegraphics[width=3in]{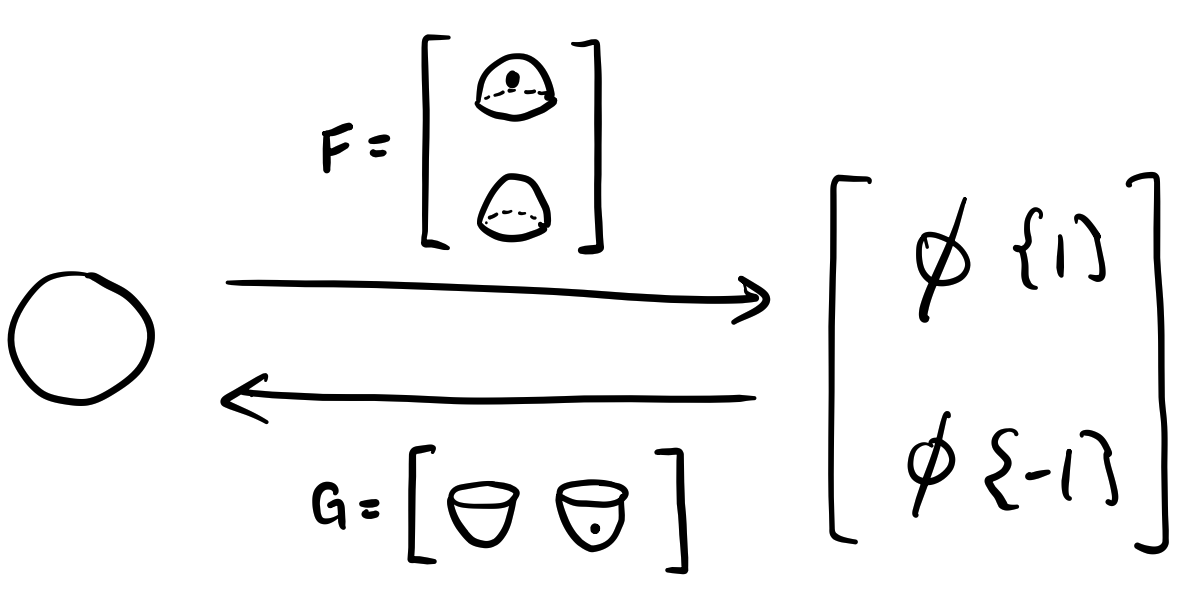}
\end{center}
\begin{proof}
We leave it to the reader to check that $F$ and $G$ are indeed degree-preserving (use Lemma \ref{lem:cobordism-degree-with-shifts}).
It suffices to check that $G \circ F \simeq \id_{\fullmoon}$ and $F \circ G \simeq \id_{\emptyset \{1\} \oplus \emptyset \{-1\}}$. 
Indeed,
\begin{center}
    \includegraphics[width=3.5in]{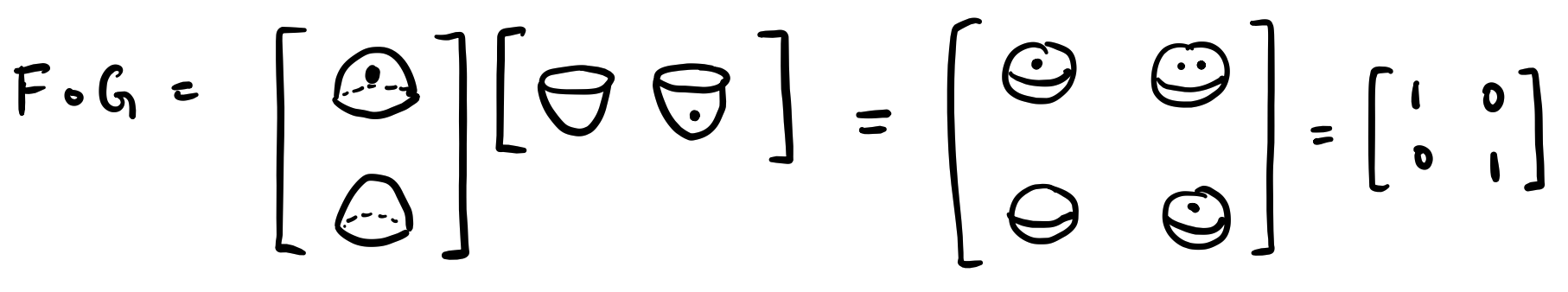}
\end{center}
by the sphere, dotted sphere, and two dots relations, and 
\begin{center}
    \includegraphics[width=3.5in]{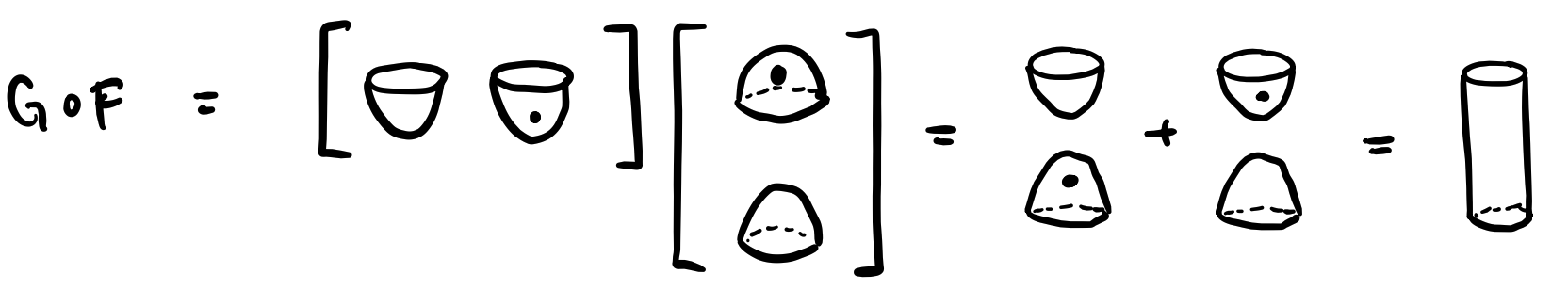}
\end{center}
by the neck-cutting relation.
(No nontrivial homotopies were needed; these compositions are identity ``on the nose''.)
\end{proof}
\end{lemma}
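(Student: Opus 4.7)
The plan is a direct diagrammatic computation showing that $F$ and $G$ are mutually inverse, where the compositions will turn out to equal the appropriate identities strictly (not merely up to homotopy). So the entire argument reduces to applying the local relations \eqref{eq:dotted-BN-relations} in $\TLcat_n$, with no genuine chain homotopies required.

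First I would verify that $F$ and $G$ are degree-preserving. Based on the pictures, $G$ should consist of an undotted cup $\emptyset\{1\} \to \fullmoon$ and a dotted cup $\emptyset\{-1\} \to \fullmoon$, while $F$ consists of a dotted cap $\fullmoon \to \emptyset\{1\}$ and an undotted cap $\fullmoon \to \emptyset\{-1\}$. Using Lemma \ref{lem:cobordism-degree-with-shifts}, each cup/cap has Euler characteristic $1$ and each dot contributes $-2$ to the intrinsic degree; one then checks that the shift factors $\{\pm 1\}$ have been chosen precisely to make each entry land in quantum degree $0$.

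Next I would compute $F \circ G : \emptyset\{1\} \oplus \emptyset\{-1\} \to \emptyset\{1\} \oplus \emptyset\{-1\}$. This is a $2 \times 2$ matrix whose entries are closed surfaces: a cap stacked above a cup gives a sphere carrying $0$, $1$, or $2$ dots. Using the sphere relations of \eqref{eq:dotted-BN-relations} (undotted sphere $=0$, once-dotted sphere $=1$, twice-dotted sphere $=0$), the off-diagonal entries vanish, and the two diagonal entries each equal $1$, so $F \circ G = \id$ on the nose. Then I would compute $G \circ F : \fullmoon \to \fullmoon$, which is a sum of two cobordisms: each is obtained by stacking a cup above a cap, yielding two disjoint disks with exactly one of them carrying a dot. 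But this is exactly the right-hand side of the neck-cutting relation applied to the identity cylinder on $\fullmoon$, so $G \circ F = \id_\fullmoon$ strictly.

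The main potential obstacle is purely bookkeeping: one must keep track of which shift factor $\{+1\}$ vs.\ $\{-1\}$ corresponds to which summand, and confirm that the dotted/undotted placements on $F$ and $G$ are paired so that neck-cutting reproduces the identity with the correct signs and that the sphere-relation evaluations land in the identity matrix. Once the diagrammatic setup is properly aligned, the \emph{$S$, two-dots, and neck-cutting} relations do all the work, and the stronger claim that $F,G$ are mutually inverse (not merely a homotopy equivalence) falls out for free.
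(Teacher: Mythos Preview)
Your proposal is correct and matches the paper's proof essentially line for line: verify degree-preservation via Lemma~\ref{lem:cobordism-degree-with-shifts}, evaluate $F\circ G$ as a $2\times 2$ matrix of spheres using the sphere, dotted-sphere, and two-dots relations, and reduce $G\circ F$ to the identity cylinder via neck-cutting, with no homotopies needed. The only cosmetic difference is that you spell out the matrix entries and degree check explicitly, whereas the paper leaves those to the reader and to the accompanying figures.
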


Delooping essentially allows us to replace any flat tangle containing a closed component with two (quantum-shifted) copies of that flat tangle with that circle removed. 

Our next tool is an abstract version of Gaussian elimination.
\note{You can ponder for yourself why this `is' Gaussian elimination. Just remember that, in $\Q$, any nonzero number is a unit. Row operations are just changes of basis on the target of the linear map. Similarly, column operations are just changes of the basis on the source of the linear map.}

\begin{lemma}[\cite{BN-fast-kh}, Lemma 4.2]
(Gaussian elimination)
Let $\CC$ be a pre-additive  category. Suppose in $\Kom(\Mat(\CC))$ there is a chain complex segment
\[
    \cdots 
    \map{
        \begin{pmatrix}
            \alpha \\ \beta
        \end{pmatrix}
    }
    \begin{bmatrix}
        A \\ B 
    \end{bmatrix}
    \map{
        \begin{pmatrix}
            e & g \\ f & h
        \end{pmatrix}
    }
    \begin{bmatrix}
        C \\ D
    \end{bmatrix}
    \map{
        \begin{pmatrix}
            \gamma & \delta
        \end{pmatrix}
    }
    \cdots
\]
where $e$ is an isomorphism. 
Then the chain complex is homotopy equivalent to 
\[
    \cdots 
    \map{
        \begin{pmatrix}
            \beta
        \end{pmatrix}
    }
    \begin{bmatrix}
        B 
    \end{bmatrix}
    \map{
        \begin{pmatrix}
            h-fe\inv g
        \end{pmatrix}
    }
    \begin{bmatrix}
        D
    \end{bmatrix}
    \map{
        \begin{pmatrix}
            \delta
        \end{pmatrix}
    }
    \cdots
\]

\note{Note that 
\begin{itemize}
    \item $A$, $B$, $C$, $D$ are objects in $\Mat(\CC)$
    \item $e,f,g,h,\alpha,\beta, \gamma, \delta$ are morphisms in $\Mat(\CC)$, i.e.\ these are matrices.
\end{itemize}
}

\end{lemma}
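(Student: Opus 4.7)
The plan is to diagonalize the middle differential by a change of basis on $A \oplus B$ and $C \oplus D$, and then recognize the resulting complex as a direct sum of a contractible two-term complex and the target reduced complex. Since we are working in $\Kom(\Mat(\CC))$, a ``change of basis'' just means conjugating the matrices of morphisms by invertible matrices of morphisms. Concretely, I would use the unipotent automorphisms
\[
    \phi = \begin{pmatrix} 1 & -e\inv g \\ 0 & 1 \end{pmatrix}
    \ \text{on } \begin{bmatrix} A \\ B \end{bmatrix},
    \qquad
    \psi = \begin{pmatrix} 1 & 0 \\ -f e\inv & 1 \end{pmatrix}
    \ \text{on } \begin{bmatrix} C \\ D \end{bmatrix},
\]
which are invertible in $\Mat(\CC)$ because $e\inv$ exists by hypothesis. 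A direct matrix multiplication shows
\[
    \psi \cdot \begin{pmatrix} e & g \\ f & h \end{pmatrix} \cdot \phi = \begin{pmatrix} e & 0 \\ 0 & h - f e\inv g \end{pmatrix},
\]
so in the new bases the middle differential is block-diagonal.

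Next I would verify that the adjacent differentials also simplify, essentially for free, using the condition $d^2 = 0$. The conjugated incoming map is $\phi\inv \circ \begin{pmatrix}\alpha \\ \beta\end{pmatrix} = \begin{pmatrix} \alpha + e\inv g \beta \\ \beta\end{pmatrix}$ and the conjugated outgoing map is $\begin{pmatrix}\gamma & \delta\end{pmatrix} \circ \psi\inv = \begin{pmatrix}\gamma + \delta f e\inv & \delta\end{pmatrix}$. But $d^2 = 0$ forces $e\alpha + g\beta = 0$ and $\gamma e + \delta f = 0$; multiplying by $e\inv$ on the appropriate side, these become $\alpha + e\inv g\beta = 0$ and $\gamma + \delta f e\inv = 0$. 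So after the change of basis the new adjacent maps are simply $\begin{pmatrix} 0 \\ \beta \end{pmatrix}$ and $\begin{pmatrix} 0 & \delta \end{pmatrix}$.

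At this point the complex visibly decomposes as a direct sum of two subcomplexes: one is the two-term complex $0 \to A \map{e} C \to 0$, which has no maps connecting it to the rest, and the other is $\cdots \map{\beta} B \map{h - f e\inv g} D \map{\delta} \cdots$. The first summand is chain-contractible because $e$ is an isomorphism (an explicit contracting homotopy is $e\inv: C \to A$), hence homotopy equivalent to the zero complex. Therefore the original complex is homotopy equivalent to the second summand, which is exactly the reduced complex in the statement. The main subtlety to keep in mind is the abstract categorical setting: all of these matrix manipulations are operations on morphisms in the additive category $\Mat(\CC)$, and they are legitimate precisely because $e$ is a genuine isomorphism in $\CC$, so the unipotent matrices $\phi$ and $\psi$ really are invertible in $\Mat(\CC)$ and can be used to conjugate without any further hypotheses on $\CC$.
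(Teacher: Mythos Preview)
Your proof is correct and follows exactly the approach the paper sketches: use row and column operations (your unipotent $\phi$ and $\psi$) to split the complex as a direct sum of the reduced complex and the acyclic piece $0 \to A \map{e} C \to 0$. In fact you give more detail than the paper's displayed proof, which only states the main idea and refers to Bar-Natan for the computation; your explicit use of $d^2 = 0$ to see that the conjugated adjacent differentials become $\begin{pmatrix} 0 \\ \beta \end{pmatrix}$ and $\begin{pmatrix} 0 & \delta \end{pmatrix}$ is a clean touch that makes the direct-sum decomposition completely transparent.
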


The main idea is that, by row and column operations, we are able to choose a basis so that the first chain complex is the direct sum of the second chain complex and an acyclic complex 
\[
    0 \to A \map{e} C \to 0.
\]
See Bar-Natan's proof for full details. 

\begin{corollary}
\label{cor:cancellation-lemma}
(Cancellation lemma)\footnote{See \cite[Lemma 4.1]{Baldwin-Plam-tight}, which directs you to \cite[Lemma 5.1]{Rasmussen-thesis}, which directs you to \cite{Floer-infinite-morse}.}
Suppose $(C, d)$ is a chain complex freely generated by a distinguished set of generators $\mathcal{G}$, and we draw it using dots and arrows.
For $x,y \in \mathcal{G}$, let $d(x,y)$ denote the coefficient of $y$ in $d(x)$. 

Suppose there is an isomorphism arrow $a \map{e} c$ between distinguished basis elements $a,c \in \mathcal{G}$, i.e.\ the coefficient of the arrow is a unit in the ground ring $R$.
Then $(C,d)$ is chain homotopy equivalent to a `smaller' chain complex $(C', d')$ where $C'$ is generated by $\mathcal{G} - \{a,c\}$, and for any $b \in \mathcal{G}$,
\[
    d'(b) = d(b) - d(b,c)d(a).
\]

\begin{center}
    \includegraphics[height=1in]{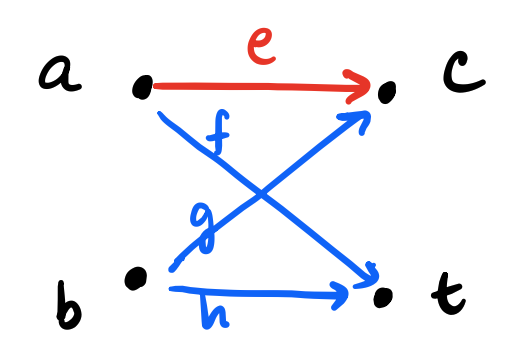}
\end{center}

\end{corollary}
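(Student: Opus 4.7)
My plan is to derive the Cancellation Lemma directly from the preceding Gaussian Elimination Lemma by choosing a block decomposition of the chain complex adapted to the pair $(a, c)$.

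Since $d$ raises homological grading by $1$ and there is an arrow $a \map{e} c$, we have $\gr_h(c) = \gr_h(a) + 1$; let $i = \gr_h(a)$. Decompose the chain groups in these two degrees as $C^i = \langle a \rangle \oplus B$ and $C^{i+1} = \langle c \rangle \oplus D$, with $B = \langle \mathcal{G}_i \setminus \{a\} \rangle$ and $D = \langle \mathcal{G}_{i+1} \setminus \{c\} \rangle$. In these splittings, the differential $d^i$ takes the matrix form $\begin{pmatrix} e & g \\ f & h \end{pmatrix}$, where $g(b) = d(b,c)\, c$ for $b \in B$ and $f(a) = d(a) - ec$. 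The adjacent differentials $d^{i-1}$ and $d^{i+1}$ split into the pieces $\alpha, \beta, \gamma, \delta$ exactly as in the statement of the Gaussian Elimination Lemma. The assumption that the arrow $a \map{e} c$ has unit coefficient is precisely the assumption that $e$ is an isomorphism, which is all that is needed to invoke the lemma.

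Applying the lemma produces a chain-homotopy equivalent complex in which $\langle a \rangle$ and $\langle c \rangle$ are deleted, the adjacent chain groups together with the incoming/outgoing components $\beta, \delta$ are preserved, and the new map $B \to D$ becomes $h - fe^{-1}g$. A direct computation on a basis element $b \in B$ gives
\[
    (h - fe^{-1}g)(b) = d(b) - d(b,c)\, e^{-1} d(a),
\]
which recovers the stated formula once $e^{-1}$ is absorbed into the coefficient (in the normalization $e = \pm 1$ arising, e.g., after delooping in Khovanov homology, the formulas agree on the nose up to a sign).

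The one subtle point, and really the only content beyond Gaussian Elimination itself, is that the formula $d'(b) = d(b) - d(b,c)\, d(a)$ must be read uniformly in $b \in \mathcal{G} - \{a, c\}$ with the implicit convention that $a$ and $c$ are set to $0$ in the output, reflecting the deletion of these generators from the basis. For $b$ in degree $i-1$ mapping into $a$, the coefficient $d(b,c)$ vanishes, so the formula returns $d(b)$; the required discarding of the $d(b,a) \cdot a$ contribution is then accomplished by the projection $a \mapsto 0$. For $b$ in degree $i$ the formula produces the nontrivial correction from Gaussian elimination; for $b$ in higher degrees (or in degree $i+1$ other than $c$) one has $d(b,c) = 0$ and the formula correctly reduces to $d'(b) = d(b)$. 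The main obstacle I expect is thus this bookkeeping: verifying that the Gaussian elimination output, translated back into the language of distinguished generators in $\mathcal{G} - \{a, c\}$, gives exactly the advertised symbolic formula under the uniform $a = c = 0$ convention.
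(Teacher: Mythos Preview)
Your proposal is correct and follows exactly the approach the paper implies: the corollary is stated immediately after the Gaussian Elimination Lemma with no separate proof, so deriving it by specializing that lemma to the block decomposition $C^i = \langle a \rangle \oplus B$, $C^{i+1} = \langle c \rangle \oplus D$ is precisely what is intended. Your identification of the zigzag term $h - fe^{-1}g$ with the stated formula, together with the bookkeeping about the $a=c=0$ convention and the missing $e^{-1}$ normalization, is accurate and in fact more careful than the paper's own statement.
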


The new arrows ($f e\inv g$ in the figure above) are called \emph{zigzags} for obvious reasons.

\begin{remark}
Cancellation provides an especially fast way to compute Khovanov homology over $\F_2$. Here is what my calculation for the Khovanov homology of the Hopf link looks like:
\begin{center}
    \includegraphics[width=2in]{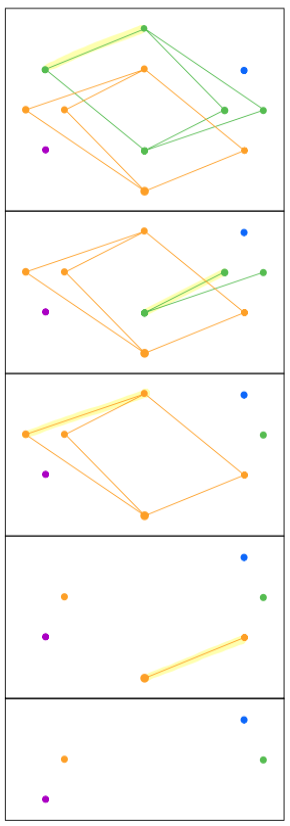}
\end{center}
\end{remark}

\subsection{Reidemeister invariance of Khovanov homology}

Bar-Natan's Reidemeister invariance proofs use the more general $S, T, 4Tu$ categories, and gives explicit homotopies where needed.

Here, we will stay with the $\TLcat_n$ categories that we defined and prove invariance under some (easy) Reidemeister moves, and make use of some facts from homological algebra.

\begin{definition}
    Let $\CC, \CC'$ be chain complexes. 
    Let $C_n$ denote the $n$-th chain group of $\CC$, and let $C_n'$ be defined analogously.
\begin{itemize}
    \item $\CC'$ is a \emph{subcomplex} of $\CC$ (written $\CC' \subseteq \CC$) if each $C_n'$ is a submodule of $C_n$ and the differential on $\CC'$ is the restriction of the differential on $\CC$. 
    \item If $\CC' \subseteq \CC$, then the \emph{quotient complex} $\CC/\CC'$ has chain groups $C_n/C'_n$, and the differential is induced by the differential on $\CC$. 
\end{itemize}
\end{definition}

\begin{lemma}
\label{lem:null-sub-and-quotient-complex}
Let 
\[
    0 \to \CC' \to \CC \to \CC'' \to 0
\]
be an exact sequence of chain complexes, i.e.\ $\CC'' \cong \CC/\CC'$. 
\begin{enumerate}
    \item If $\CC' \simeq 0$, then $\CC \simeq \CC''$.
    \item If $\CC'' \simeq 0$, then $\CC \simeq \CC'$.
\end{enumerate}
\end{lemma}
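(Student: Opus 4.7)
The plan is to exploit the characterization that, in an additive setting with a degree-wise split short exact sequence, the middle term $\CC$ is a mapping cone of a chain map built from the two outer terms. Contractibility of either outer term will then force the connecting chain map to be null-homotopic, and a cone of a null-homotopic map splits.

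First I would observe that in our working setting ($\Kom^\star(\Mat(\TLcat_n))$, or any additive category), a short exact sequence $0 \to \CC' \to \CC \to \CC'' \to 0$ splits degree-wise: in each homological degree, $C_n \cong C'_n \oplus C''_n$ as objects, because our chain groups are finite formal direct sums of planar tangles. Picking such splittings, the differential on $\CC$ takes the upper-triangular form
\[
    d_\CC = \begin{pmatrix} d_{\CC'} & \phi \\ 0 & d_{\CC''} \end{pmatrix},
\]
and the condition $d_\CC^2 = 0$ forces $\phi$ to be a chain map $\CC'' \to \CC'[1]$. In other words, $\CC \cong \Cone\bigl(\phi[-1]\colon \CC''[-1] \to \CC'\bigr)$, recovering the subcomplex $\CC'$ and quotient $\CC''$ of the lemma.

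Next I would use the standard fact that $\Cone(f) \simeq \text{target}(f) \oplus \text{source}(f)$ whenever $f$ is null-homotopic, via an explicit chain homotopy built from the null-homotopy of $f$. For part (1), a chain contraction $s'\colon \id_{\CC'} \simeq 0$ gives
\[
    \phi = (d_{\CC'} s' + s' d_{\CC'})\,\phi = d_{\CC'}(s'\phi) + (s'\phi)\, d_{\CC''[1]},
\]
using that $\phi$ is a chain map, so $\phi$ is null-homotopic. Thus $\CC \simeq \Cone(\phi[-1]) \simeq \CC' \oplus \CC'' \simeq 0 \oplus \CC'' \simeq \CC''$. For part (2), the argument is symmetric: a chain contraction of $\CC''$ post-composes with $\phi$ to give a null-homotopy of $\phi$, after which the same cone-splitting argument yields $\CC \simeq \CC' \oplus \CC'' \simeq \CC'$.

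The main obstacle is really just bookkeeping: one must justify the degree-wise splitting (which in a general abelian category would be a genuine hypothesis, but in $\Mat(\TLcat_n)$ is automatic because every chain group is a formal direct sum of generators), and one must carefully verify the identity $\Cone(\text{null-homotopic}) \simeq \text{direct sum}$ with the signs from Bar-Natan's conventions. Beyond that, both parts reduce to the one-line Leibniz-rule calculation showing that precomposing or postcomposing a chain map with a null-homotopy of its target or source produces a null-homotopy of the map itself.
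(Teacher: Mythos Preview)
Your argument is correct. Note, however, that the paper does not actually supply a proof of this lemma: it is stated as a standard fact from homological algebra and then immediately applied to the Reidemeister invariance arguments. So there is no ``paper's own proof'' to compare against.

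Your approach---recognizing $\CC$ as a mapping cone $\Cone(\phi[-1]\colon \CC''[-1]\to\CC')$ via a degree-wise splitting, then showing that a contraction of either end yields a null-homotopy of $\phi$ and hence a splitting of the cone---is exactly the right level of generality for the Bar-Natan setting. One small caveat worth making explicit: the degree-wise splitting is not automatic in an arbitrary additive category (not every monomorphism between formal direct sums splits), but in every instance where the paper \emph{uses} this lemma the short exact sequence arises from a cone decomposition of the Khovanov bracket, so the splitting is there by construction. You gesture at this when you say the chain groups are ``finite formal direct sums,'' but it would be cleaner to simply hypothesize that the sequence is degree-wise split, or to note that this holds in the cases of interest.
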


We now show R1 invariance of any flavor of Khovanov homology that factors through the dotted cobordism categories we defined.

\begin{example}
Consider the Reidemeister move involving a twist with a negative crossing:
\begin{center}
    \includegraphics[width=1.5in]{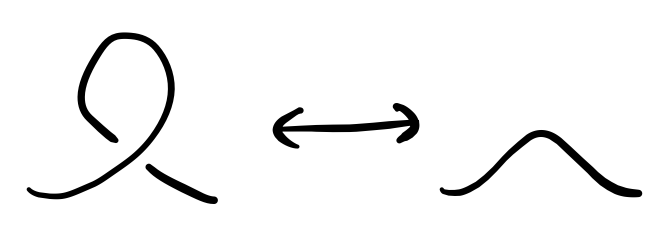}
\end{center}
\note{There is only one strand in this local picture, so this crossing will be negative regardless of how you orient the strand.}

Using the Khovanov bracket, we resolve the crossing to obtain a two-term chain complex in $\Kom(\Mat(\TLcat_1))$ representing the twist, and deloop the resolution on the right:
\begin{center}
    \includegraphics[width=3.5in]{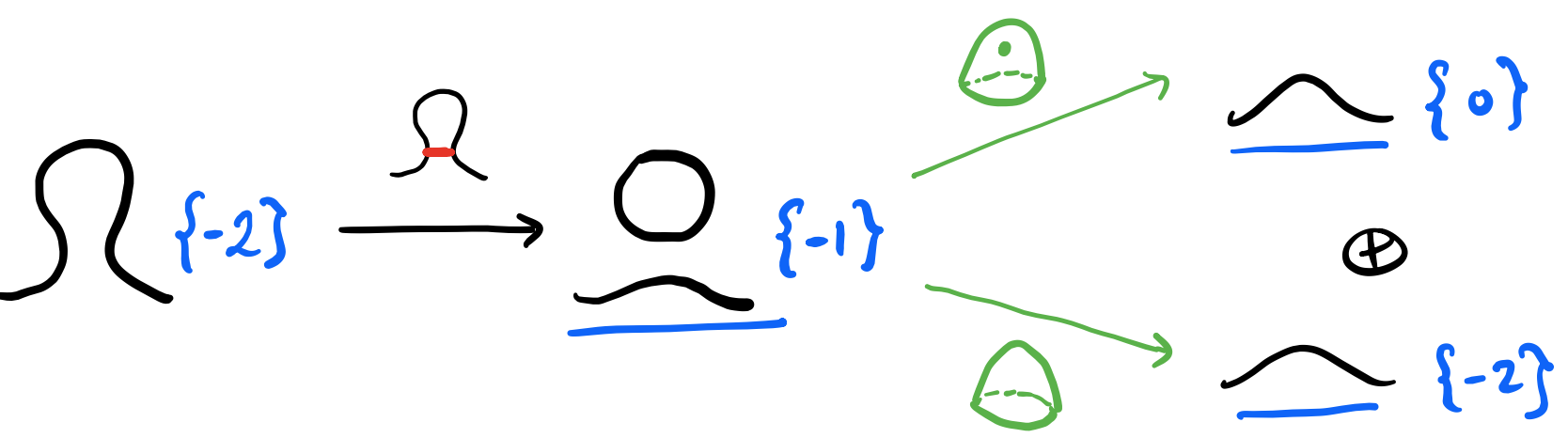}
\end{center}

After delooping, our homotopic (actually isomorphic) complex is:
\begin{center}
    \includegraphics[width=3.5in]{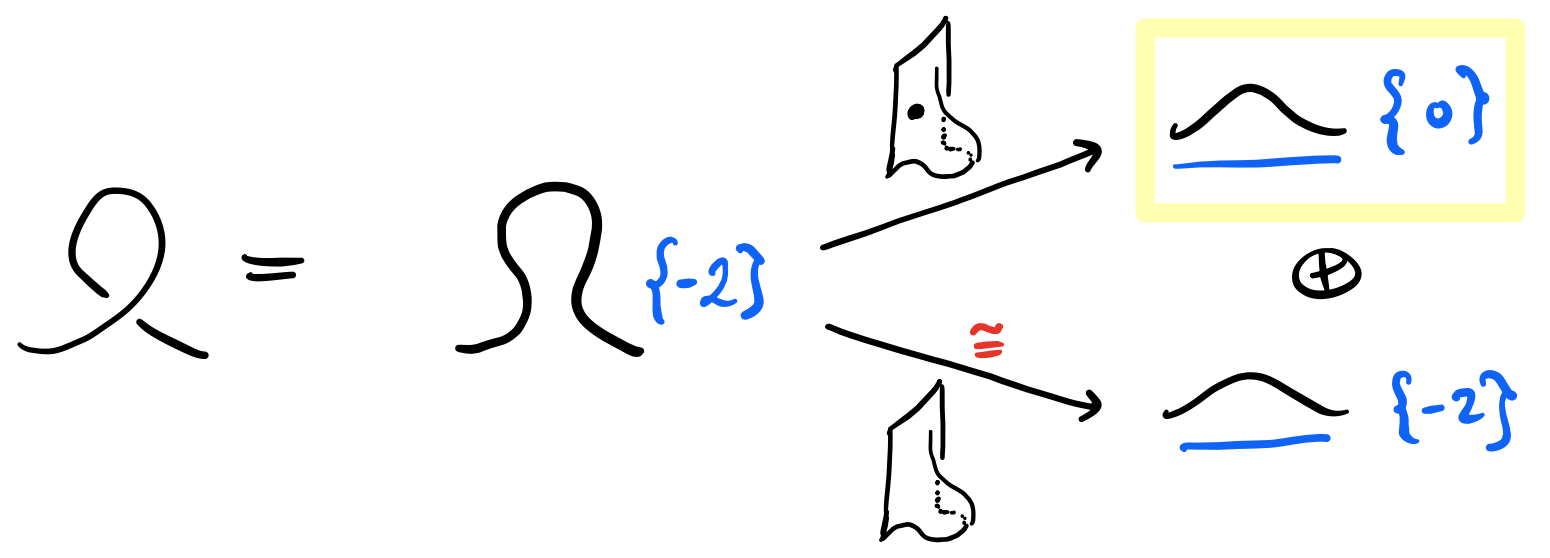}
\end{center}
Notice that the downward arrow is an isomorphism in $\Mat(\TLcat_1)$. 
The quotient of the highlighted subcomplex is therefore nullhomotopic. 

Using Lemma \ref{lem:null-sub-and-quotient-complex}, we conclude that the complex representing the negative R1 twist is chain homotopy equivalent to the highlighted complex.
\end{example}

\begin{remark}
If you close up the $(1,1)$-tangle in the example above, then we can see exactly why the downward arrow is an isomorphism (red below), and the upward arrow (green below) is not:
\begin{center}
    \includegraphics[width=3.5in]{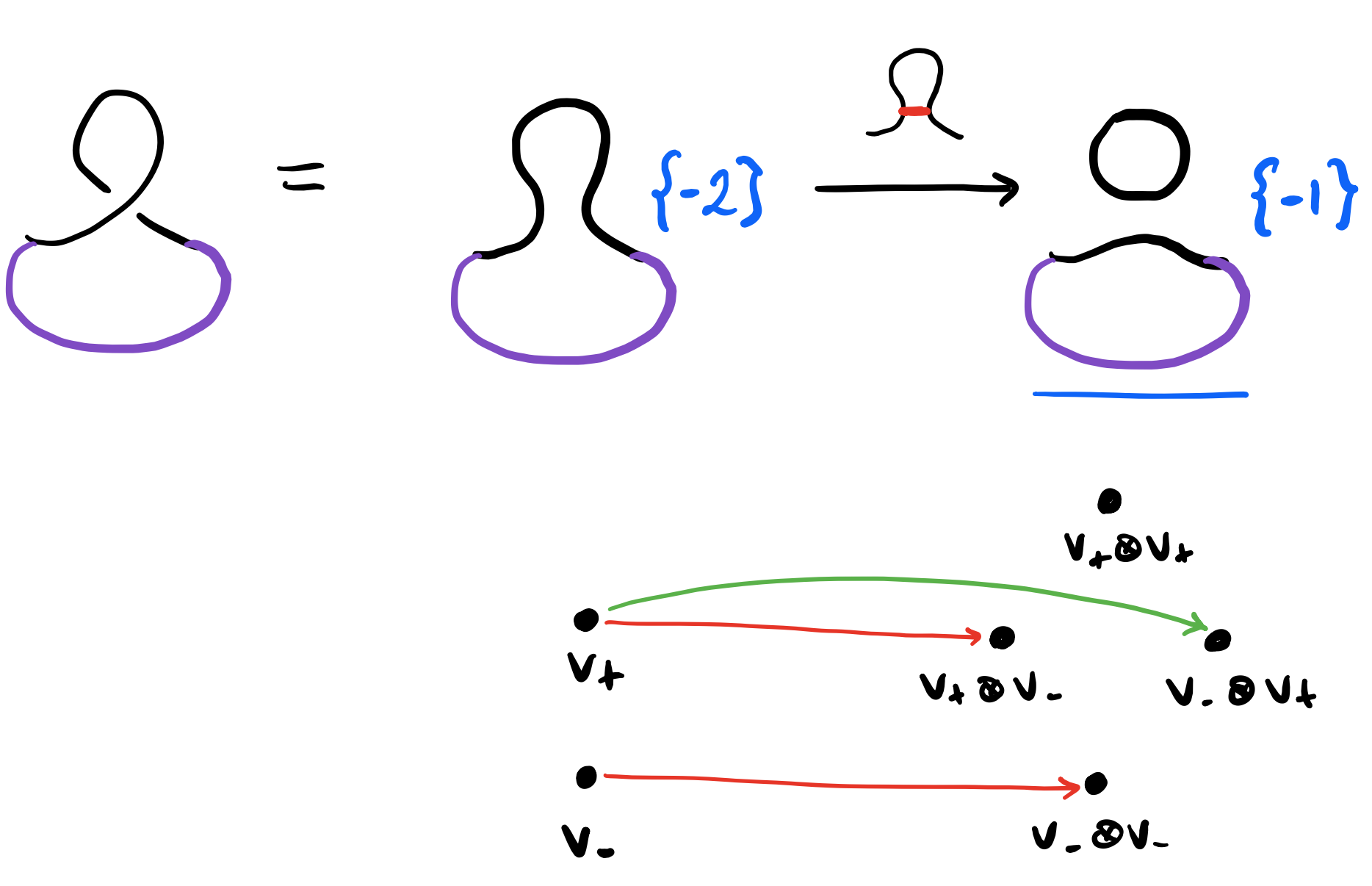}
\end{center}
\end{remark}

\begin{notation}
    We have been using shorthand for merges and splits by drawing the descending manifold of the index-1 critical point in the saddle, on a diagram of the domain diagram:
    \begin{center}
        \includegraphics[width=.7in]{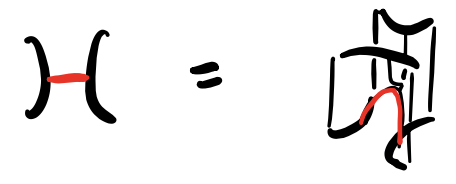}
    \end{center}
    It is sometimes also useful to have shorthand for other types of elementary morphisms:
    \begin{center}
        \includegraphics[width=2in]{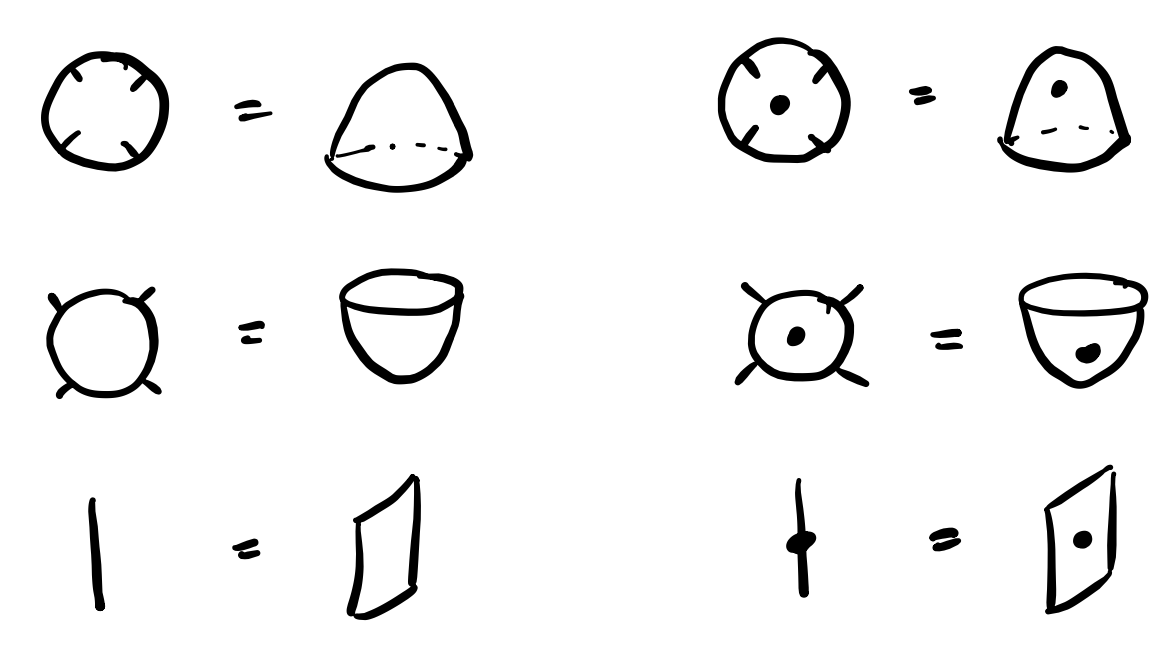}
    \end{center}
\end{notation}

\begin{example}
Here is a proof of invariance under the `braidlike' R2 move, corresponding to the diagram 
\begin{center}
    \includegraphics[width=1in]{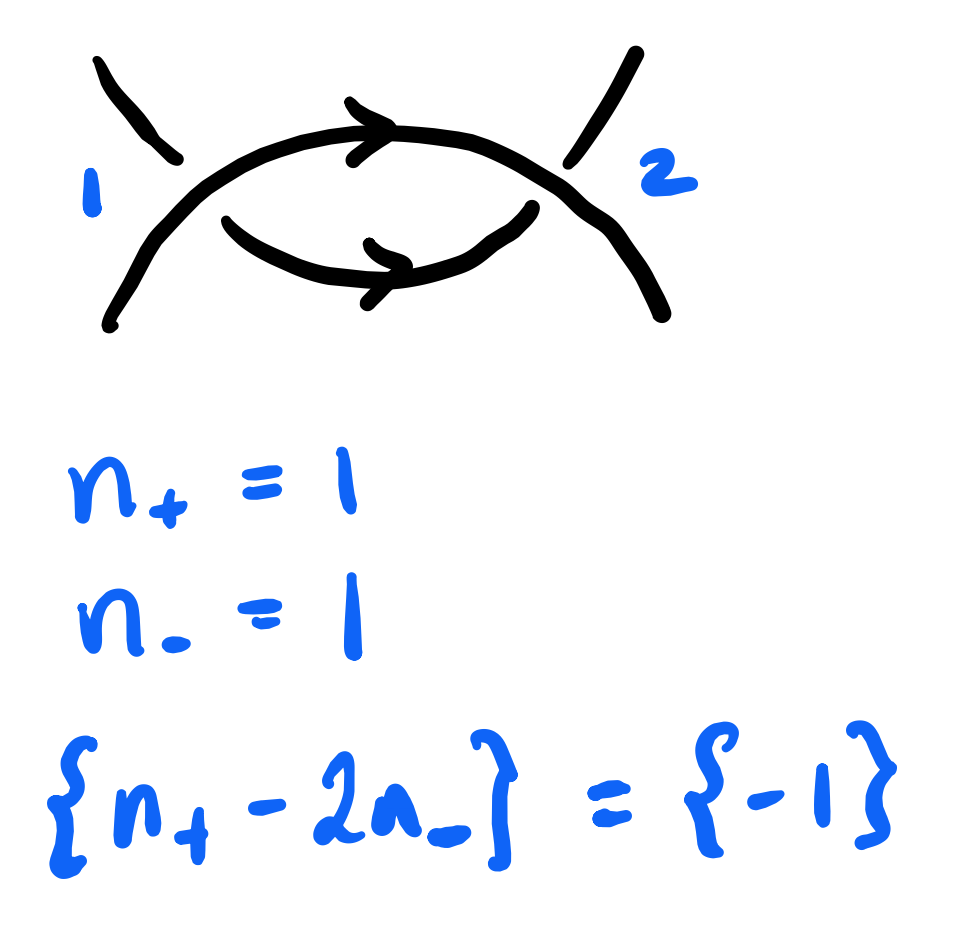}
\end{center}
Below is the complex representing the $(2,2)$-tangle above, with delooping maps draw in orange. The purple maps are compositions of green and orange maps.
\begin{center}
    \includegraphics[width=4in]{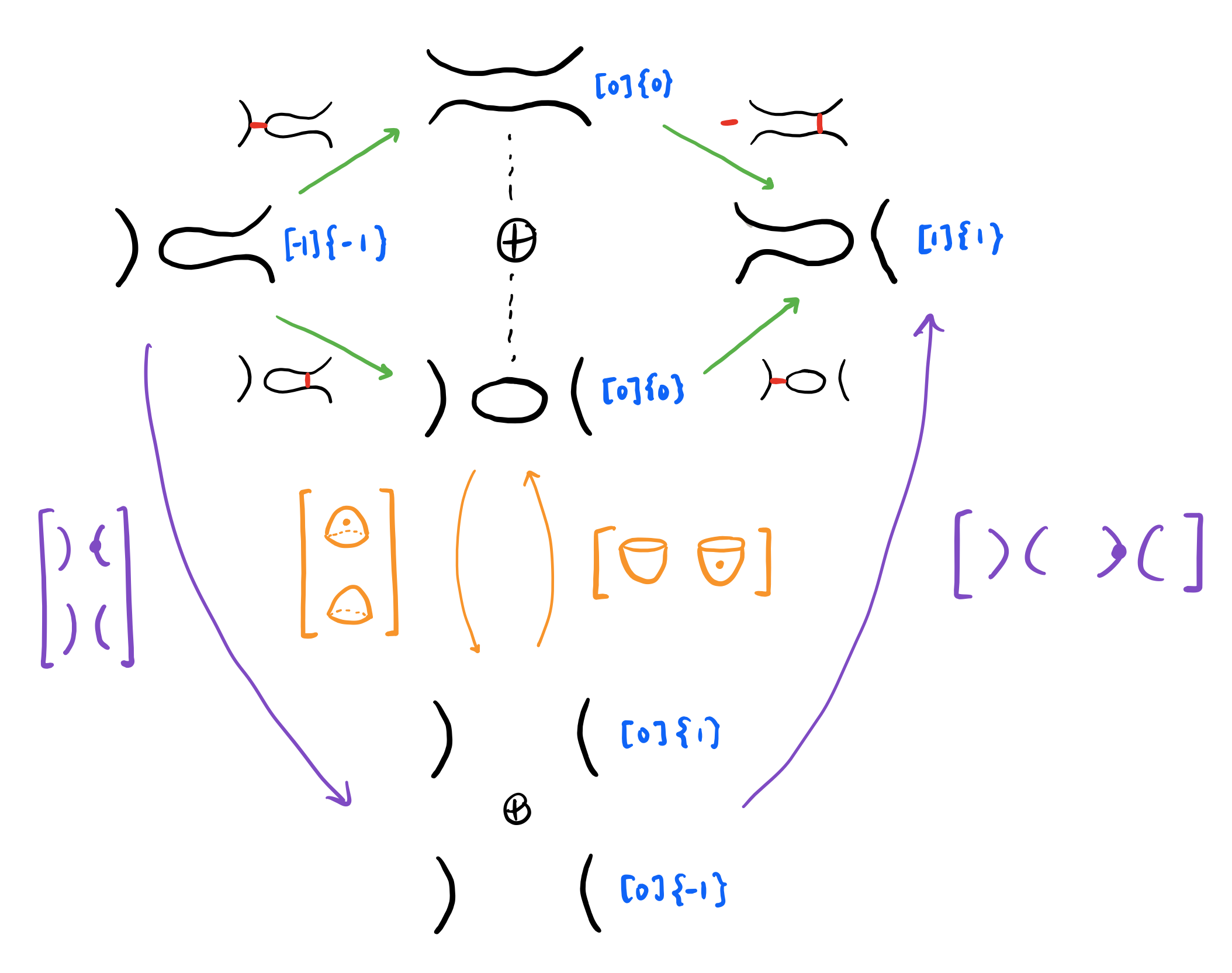}
\end{center}

After delooping, our complex looks like this:
\begin{center}
    \includegraphics[width=4in]{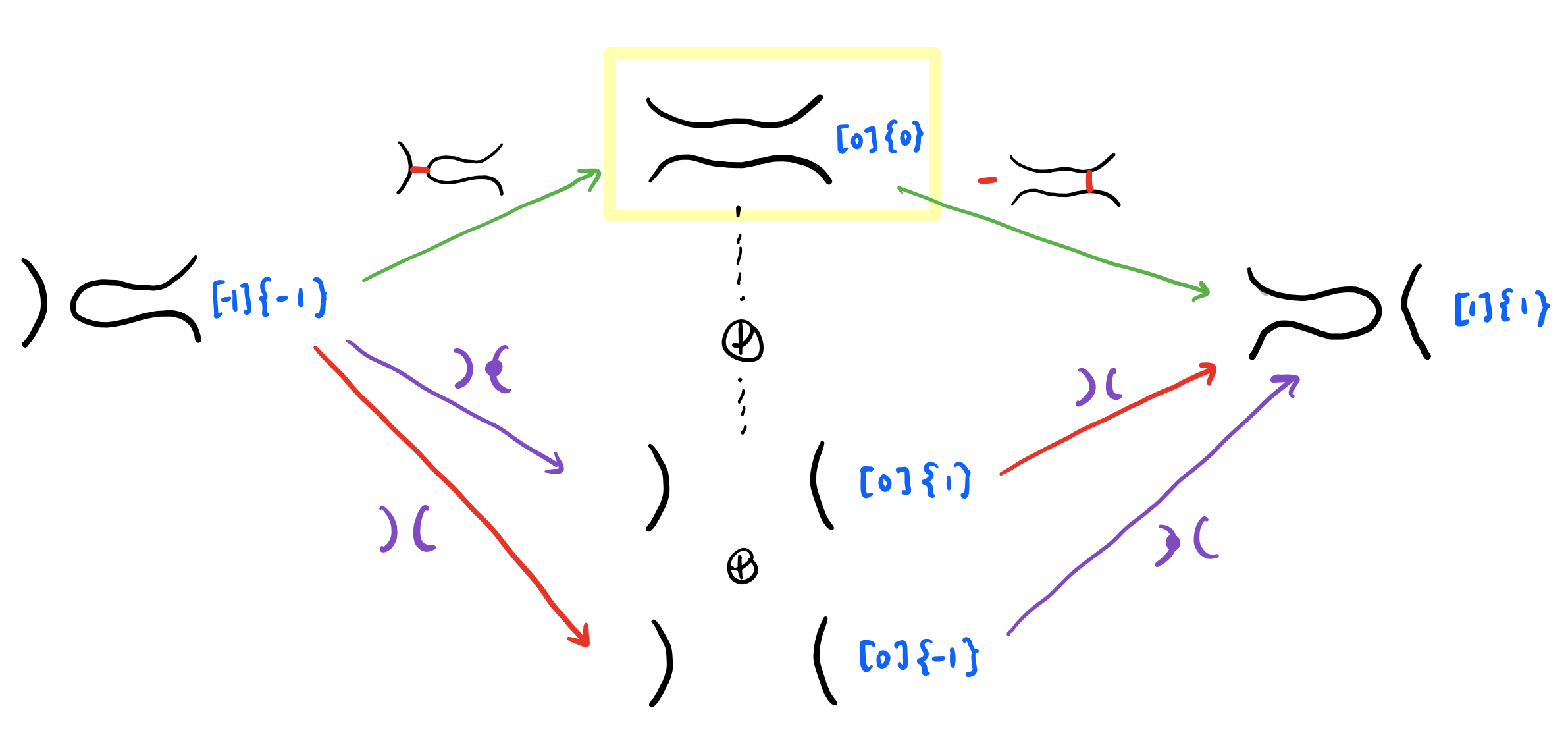}
\end{center}
Notice that the red arrows are isomorphisms. 
After performing Gaussian elimination on these red arrows, we obtain the homotopy equivalent complex highlighted in yellow.
\end{example}

R3 is always a more complicated move to deal with, because
\begin{itemize}
    \item there are 3 crossings, so one needs to compare two cubes with eight vertices each, and
    \item there are three strands, and therefore many possible orientations
\end{itemize}
Nevertheless, Bar-Natan's proof fits beautifully on just one page; see \cite[Figure 9]{BN-tangles} to learn what a `monkey saddle' is.

\begin{exercise}
\label{ex:reid-invariance}
\begin{enumerate}
    \item Prove invariance under the other R1 move, where the twist has a positive crossing.
    \item Prove invariance under the other R2 move, where the strands are antiparallel.
\end{enumerate}
\end{exercise}

\subsection{(Projective) Functoriality}
\label{sec:projective-functoriality}

We are now equipped to fully define a functor that allows us to study links and cobordisms by replacing them with  chain complexes and chain maps. 

Once again, we begin by carefully defining our domain categories.

\begin{definition}
The category $\catLink$ is defined as follows:
\begin{itemize}
    \item Objects are smooth links in $\R^3$\footnote{or $S^3$, if preferred, but see Remark \ref{rmk:catLink-in-S3}}
    \item Morphisms are cobordisms between links, modulo isotopy rel boundary.
\end{itemize}
\end{definition}

At first glance, $\catLink$ would be the category we would want to define our functor out of. 
However, recall that we don't actually compute Khovanov homology directly from links; we actually use link diagrams. 
So, we need to define an intermediate \emph{diagrammatic category} that is equivalent to $\catLink$.

\begin{definition}
The category $\catDiag$ is defined as follows:
\begin{itemize}
    \item Objects are smooth link diagrams drawn in $\R^2$ \note{again, NOT up to isotopy!}
    \item Morphisms are \emph{movies} between link diagrams, modulo \emph{movie moves}. 
\end{itemize}
\end{definition}

\emph{Movies} and \emph{movie moves} need to be discussed carefully, analogously to how we defined \emph{link diagrams} and \emph{Reidemeister moves}.

\begin{definition}
A \emph{movie} is a finite composition of the following `movie clips':
    \begin{itemize}
        \item planar isotopy
        \item Reidemeister moves
        \item Morse moves: birth of a circle, death of a circle, merging of two circles, splitting of one circle into two
    \end{itemize}
\end{definition}

If $F \subset \R^3 \times I$ is a cobordism from $L_0 \subset \R^3 \times \{0\}$ to $L_1 \subset \R^3 \times \{1\}$, then an associated movie $M$ can be thought of as a collection of movie `frames' $\{M_t \st t \in [0,1]\}$ where $M_t$ is a diagrammatic projection of the `slice' of the cobordism at time $t$, $F \cap (\R^3 \times \{t\})$. 
Indeed, the four Morse moves correspond to cup, cap, merge, and split cobordisms, respectively. 

Just as we required link diagrams to only have transverse intersections, and for crossings to not be on top of each other, our definition for `movie' ensures that our \emph{critical frames} (i.e.\ frames where the projection $\R^3 \times \{t\} \to \R^2$ is not a link diagram) are isolated. Reidemeister moves and Morse moves are basically `before and after' pictures of the process of passing through a critical frame. 

In the category $\catLink$, morphisms are considered up to isotopy rel boundary. Just as Reidemeister proved that any diagram isotopy can be described as a finite composition of 3 local Reidemeister moves (and planar isotopy), Carter and Saito showed that any isotopy rel boundary of a cobordism can be captured as a finite composition of 15 local movie moves (and  time-preserving isotopy), which are now known as the \emph{Carter-Saito movie moves} \cite{Carter-Saito-movie}. 

Here is a figure ripped from \cite{Graham-markings}:
\begin{center}
    \includegraphics[width=4in]{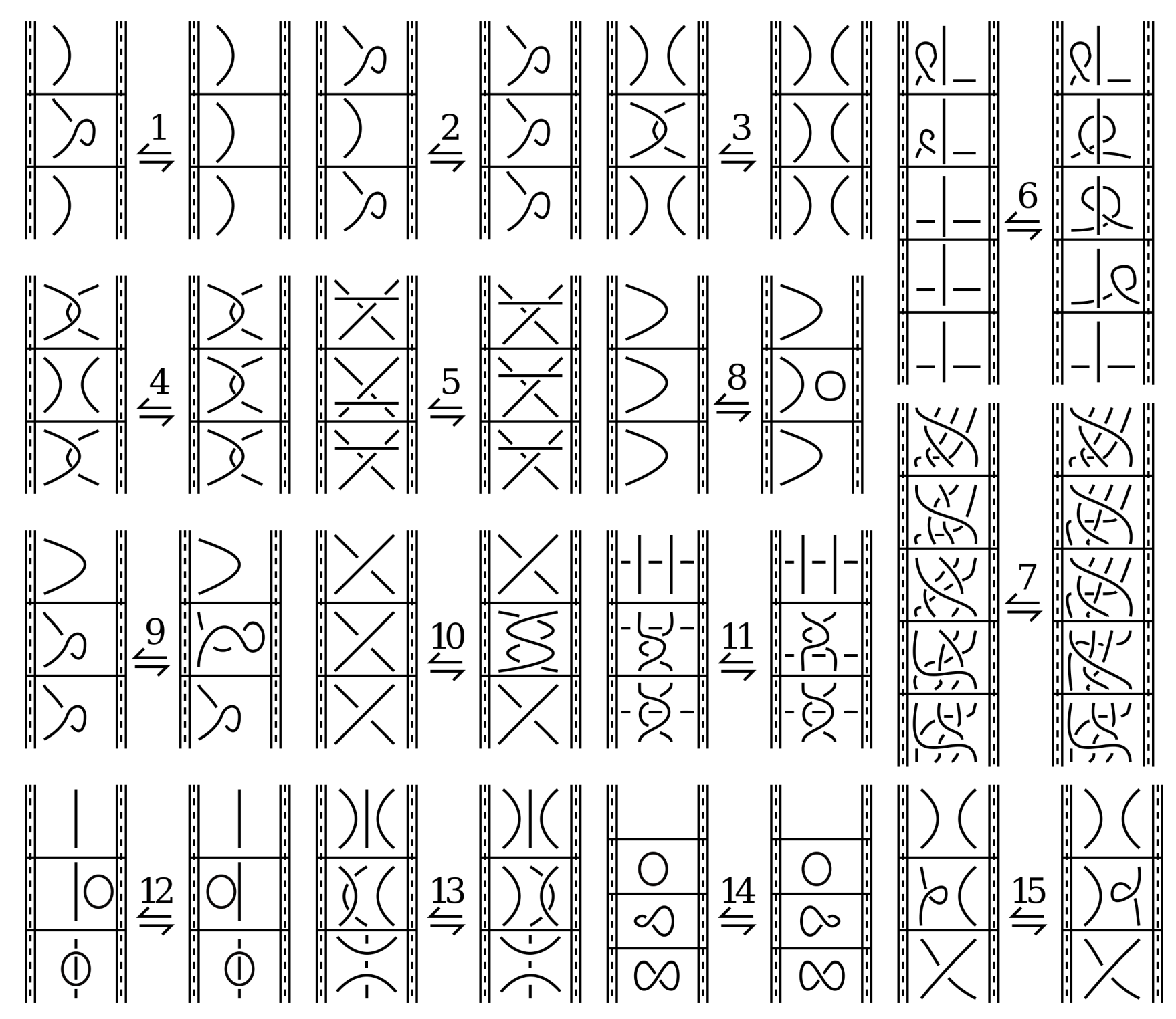}
\end{center}

Similar definitions can also be made for tangles:

\begin{definition}
The category $\catTang_n$ is defined as follows:
\begin{itemize}
    \item Objects are smooth $(n,n)$-tangles in $[0,1]^2$ with boundary at the $2n$ points $\mathbf{p}$ as in Definition \ref{def:TLcat-n}.
    \item Morphisms are tangle cobordisms whose vertical boundary (i.e.\ the boundary in $\partial [0,1]^2 \times I$) consists of the $2n$ line segments $\mathbf{p} \times I$, up to isotopy rel boundary.
\end{itemize}
\end{definition}

The Carter-Saito movie moves are local, and so the definition of the diagrammatic category is essentially the same:

\begin{definition}
The category $\catTangDiag_n$ is defined as follows:
\begin{itemize}
    \item Objects are smooth $(n,n)$-tangle diagrams drawn in $[0,1]^2$.
    \item Morphisms are \emph{movies} between tangle diagrams that preserve the boundary points $\mathbf{p}$, modulo \emph{movie moves}. 
\end{itemize}
\end{definition}

The punchline is that the diagrammatic categories are sufficient for capturing all the information in the topological categories.

\begin{theorem}
    There is an equivalence of categories between $\catLink$ and $\catDiag$ (and similarly between $\catTang_n$ and $\catTangDiag_n$).
\end{theorem}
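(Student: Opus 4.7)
The plan is to exhibit an explicit functor $\Phi : \catDiag \to \catLink$ and show that it is essentially surjective and fully faithful. On objects, $\Phi$ sends a link diagram $D \subset \R^2$ to the smooth link $\Phi(D) \subset \R^3$ obtained by lifting $D$ via its over/under crossing information (pushing the over-strand slightly into the positive third coordinate near each crossing and smoothing). On morphisms, a movie $M = \{M_t\}_{t \in [0,1]}$ is reassembled into a cobordism by lifting each frame and interpolating: in a neighborhood of each non-critical frame the lift is an isotopy, and each Reidemeister/Morse clip is replaced by the corresponding local 2-handle/saddle/cup/cap model. One verifies that this assembly is well-defined up to isotopy rel boundary by checking that each Carter-Saito movie move corresponds to a topologically trivial modification of the reassembled cobordism. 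I would handle the tangle case in parallel; nothing in the argument changes beyond respecting the boundary points $\mathbf{p}$.

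For essential surjectivity, take any smooth link $L \subset \R^3$. A standard general-position argument (a generic projection $\R^3 \to \R^2$ restricted to $L$ is an immersion with only transverse double points, and double points can be made not to coincide) yields a diagram $D$ and a small ambient isotopy carrying $L$ to $\Phi(D)$. The trace of this isotopy is an invertible morphism in $\catLink$, giving $L \cong \Phi(D)$.

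For fullness, given a cobordism $F : L_0 \to L_1$ and diagrams $D_i$ for $L_i$, I would first absorb the two small isotopies $L_i \leadsto \Phi(D_i)$ into $F$, so it suffices to produce a movie whose reassembly is isotopic to $F$ itself (with diagrammatic boundary). Apply a generic perturbation of the projection $\R^3 \times I \to \R^2 \times I$ so that (i) the restriction to each time slice $F \cap (\R^3 \times \{t\})$ is a link diagram except at finitely many critical times $t_1 < \cdots < t_k$; (ii) at each $t_j$ the singularity is one of the standard Reidemeister or Morse models; and (iii) no two events coincide. This is the standard stratification-of-mappings argument for surfaces in 4-space and produces a movie $M$; by construction the reassembly $\Phi(M)$ is ambient-isotopic to $F$ rel boundary.

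Faithfulness is the step that carries the real content, and where I would invoke the Carter-Saito theorem \cite{Carter-Saito-movie} as the main external input: two movies (with the same diagrammatic endpoints) reassemble to isotopic cobordisms rel boundary if and only if they are related by a finite sequence of the $15$ local movie moves together with time-preserving planar isotopy. One direction is a finite case-check that each listed movie move is realized by an isotopy rel boundary (this can be done by hand, frame by frame, and is exactly what Carter-Saito set up their list to satisfy); the harder direction, that any isotopy can be decomposed into these local pieces, is the content of their classification via generic one-parameter families of projections and is what I would cite. I expect this to be the principal obstacle conceptually: it is essentially a transversality/Cerf-theory argument in one higher parameter, bookkeeping the codimension-one degenerations of the projection. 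Combining essential surjectivity, fullness, and the Carter-Saito faithfulness then yields the equivalence $\catDiag \simeq \catLink$; the identical argument, carried out inside the square with fixed endpoints $\mathbf{p}$, gives $\catTangDiag_n \simeq \catTang_n$.
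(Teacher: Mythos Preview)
The paper does not actually prove this theorem: immediately after the statement it says ``We can attribute this theorem to the combined work of Reidemeister and Carter--Saito\ldots\ The proof is beyond the scope of this course.'' So there is nothing to compare your argument against line by line.

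That said, your outline is exactly the standard route the paper is gesturing at: essential surjectivity via generic projection (Reidemeister), fullness by putting a cobordism in generic position with respect to the time coordinate and the planar projection to obtain a movie, and faithfulness by citing the Carter--Saito movie-move theorem for the hard direction. You have correctly identified that the Carter--Saito classification is the real content and that it is a one-parameter Cerf-type argument; the rest is genuinely routine transversality. Your sketch is sound as a roadmap, and the only thing I would flag is that ``well-defined up to isotopy rel boundary'' for the functor $\Phi$ on morphisms (i.e., that movie-move-equivalent movies give isotopic cobordisms) is the easy direction of Carter--Saito and should be separated cleanly from the hard direction you invoke for faithfulness --- you do this, but it could be stated more sharply.
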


We can attribute this theorem to the combined work of Reidemeister and Carter--Saito \mz{but I should check this}. The proof is beyond the scope of this course, but we will make use of these equivalences all the time.

\begin{remark}
\label{rmk:catLink-in-S3}
    If you want to work with links in $S^3$ instead, there are more diagrammatic moves to check. In particular, we need to include the \emph{sweep-around} move, which equates  the sweep-around movie (swinging a strand of the diagram around the 2-sphere through the point at infinity) with the identity movie. See \cite{MWW-lasagna}.
\end{remark}

We now know what it means for a link invariant to be \emph{functorial}: it is a functor from the category $\catLink$ to its target category.  

For our purposes, we will define the \emph{Khovanov functor} 
\[
    \functor_{\Kh}: \catDiag \to \ggmod
\]
to be the composition of the functors
\[
    \catDiag \to \Kom(\Mat(\TLcat_0)) \to \ggmod.
\]

The first functor is the unfortunately unnamed (projective) functor that Bar-Natan gives us in \cite{BN-tangles}\footnote{We can't call it $\functor_{BN}$ because `Bar-Natan homology' means something else, which we will discuss later.}
The second is the TQFT associated to the Frobenius system $(\Z, \CA = \Z[X]/(X^2), \iota, m, \Delta, \varepsilon)$ where the morphisms are given by 

    \begin{align*}
        \iota : \Z &\to \CA \\
                1 &\mapsto 1
    \end{align*}
    \begin{align*}
        m : \CA \otimes \CA &\to \CA \\
                1 \otimes 1 &\mapsto 1 \\
                X \otimes 1, 1 \otimes X &\mapsto X \\
                X \otimes X &\mapsto 0
    \end{align*}
    \begin{align*}
        \Delta : \CA &\to \CA \otimes \CA \\
                1 &\mapsto X \otimes 1 + 1 \otimes X \\
                X &\mapsto X \otimes X
    \end{align*}
    \begin{align*}
        \varepsilon : \CA &\to \Z \\
                1 &\mapsto 0\\
                X &\mapsto 1.
    \end{align*}

\begin{example}
Khovanov homology over $\F_2$ is functorial; it defines a functor
\[
    \catDiag \map{\functor_{\Kh}} \ggVect_{\F}
\]
which, when precomposed with the equivalence
\[
    \catLink \map{\cong} \catDiag,
\]
defines a functor from the category of links to the category of bigraded vector spaces. 
\end{example}

Jacobsson \cite{Jacobsson-functoriality} and Bar-Natan \cite[Section 4.3]{BN-tangles}
each showed that Khovanov homology over $\Z$ is \emph{projectively} functorial, meaning that it's functorial up to a sign. 
In other words, suppose you take two movies $M$ and $M'$ representing isotopic (rel boundary) cobordisms $C$ and $C'$. Then the corresponding morphisms in $\ggmod$ satisfy
\[
    \Kh(M') = \pm \Kh(M).
\]

This sign discrepancy can be fixed, and it has been by a whole host of authors \cite{CaprauFix, CMW09, Bla10, San21, Vog20, ETW, Beliakova-Hogancamp-Putyra-functoriality}. 
However, projective functoriality is enough for many, many important applications of Khovanov homology, which we will see in the next section.

\begin{exercise}
\alert{(Highly recommended)}
The following is a version of Movie Move 14 (`MM14'):
\begin{center}
    \includegraphics[width=3in]{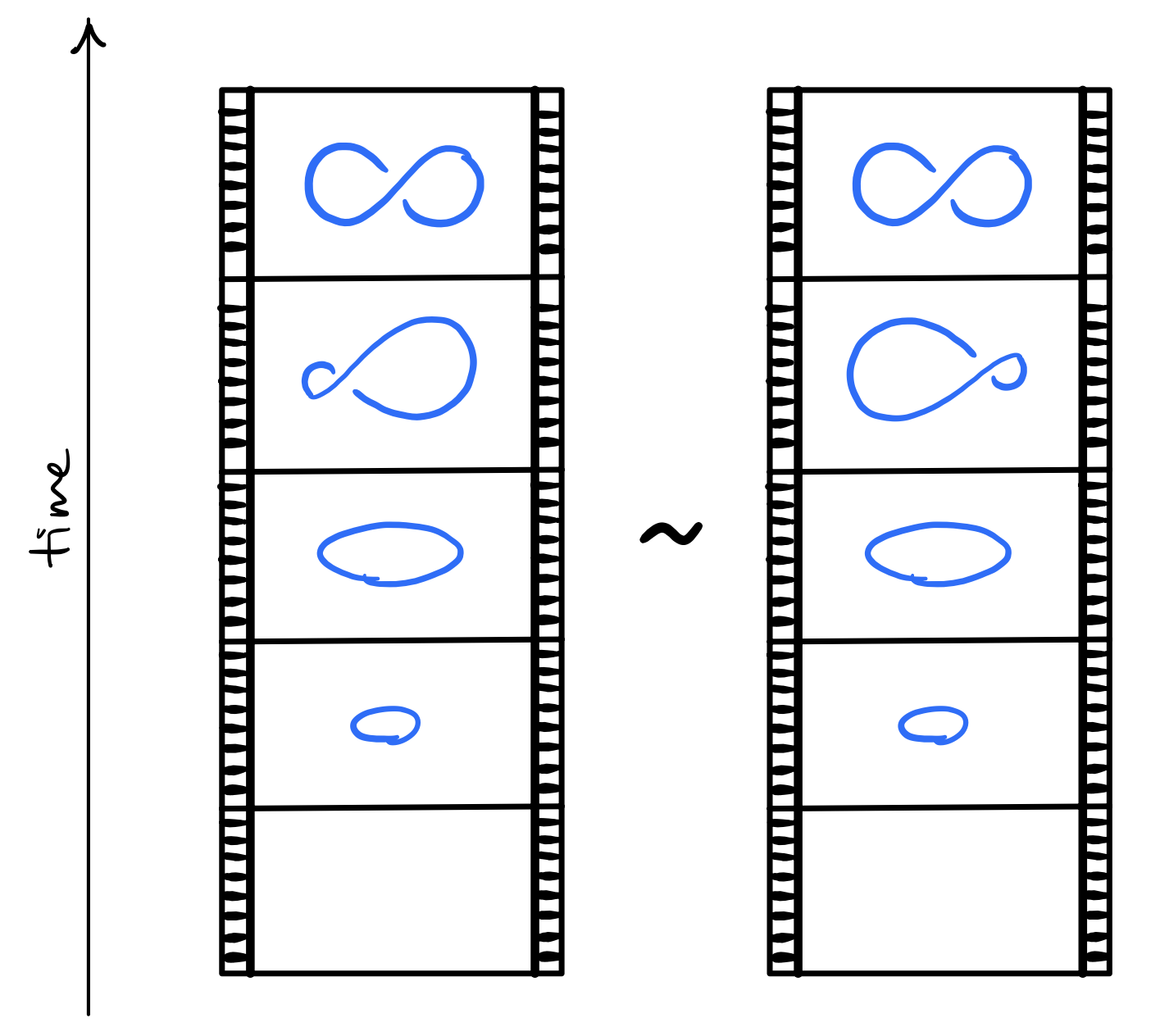}
\end{center}
Compute the induced map on Khovanov homology for each movie, by using the Reidemeister 1 chain maps given in \cite[Figure 5]{BN-tangles}. Show that the resulting morphisms have opposite signs.

\note{You do not need to pass through the TQFT, but you are welcome to; the TQFT is a true (not projective) functor.}

\alert{Beware}: Bar-Natan's cobordisms flow \emph{downward} with time. You can tell by looking at the domain and target objects of the cobordisms.

\note{You can also quickly convince yourself that these movies, when read \emph{backwards}, actually \emph{do} yield morphisms with the same sign.}
\end{exercise}


\mz{Maybe add section on general planar algebras, operads}

\section{Applications of Khovanov homology}

As a functorial invariant for links in the 3-sphere and cobordisms in the 4-ball, Khovanov homology is a natural tool to study the relationship between links in the context of the surfaces they bound. In this section, we survey some of these applications. We start with some additional background on surfaces properly embedded in $B^4$. Once again, everything is smooth.

\subsection{Surfaces in $B^4$}

We have already discussed cobordisms between links in $S^3$. In this section, we will use $F$ to denote a cobordism (because they're 2-dimensional, like `faces'). 
We reserve $C$ for \emph{concordances}, which are cobordisms that are diffeomorphic to cylinders. 
\note{I.e.\ without the context of their embedding, they \emph{are} cylinders.}
We will use the more precise term \emph{annulus} instead of `cylinder'. 

\begin{definition}
Let $K_0, K_1$ be \textbf{knots} in $S^3$. A \emph{concordance} $C$ from $K_0$ to $K_1$ is an oriented cobordism such that $C \cong S^1 \times I$. 

Equivalently, $C: K_0 \to K_1$ is a concordance if it is a (smooth, oriented) connected cobordism with $\chi(C) = 0$.

If such a $C$ exists, then we say $K_0$ and $K_1$ are \emph{concordant}: $K_0 \sim K_1$. This is an equivalence relation, and the equivalence classes are called \emph{concordance classes}. 
\end{definition}

In fact, we can turn the set of knots into a group (!) by modding out by concordance:

\begin{definition}
    The \emph{smooth knot concordance group} $\concgroup$ is the group where 
    \begin{itemize}
        \item the elements are the concordance classes knots in $S^3$;
        \item the binary operation is induced by $\#$ (connected sum);
        \item the identity element is the class of \emph{slice knots}, or knots that are concordant to the unknot;
        \item inverses are given by mirroring.
    \end{itemize}
\end{definition}

\begin{remark}
If you take a knife to a $B^4$ and cut off a slice, the cut you make is a slice disk. \note{I haven't checked if this is the historical origin of the text.}

In general, a surface $F$ properly embedded in $B^4$ whose boundary is $\partial F = K$ is called a \emph{slice surface} for $K$. 
\end{remark}

\begin{remark}
    We can equivalently say that a knot $K \subset S^3$ is \emph{slice} if it bounds a disk $D$ in $B^4$: if we arrange $D$ so that it is in Morse position with respect to the radial function on $B^4$, then the boundary of a neighborhood of its lowest 0-handle is an unknot. 
    \note{In other words, an annulus is just a punctured disk.}

    In this case, we say that $D$ is a \emph{slice disk} for $K$.
\end{remark}

Again, this is actually a definition-theorem, and we will prove the theorem after seeing a quick example.

\begin{example}
\label{eg:square-knot-slice}
    Consider the right-handed trefoil $K$:
    \begin{center}
        \includegraphics[width=1in]{images/RH-trefoil-diagram.png}
    \end{center}
    
    If we claim that the left-handed trefoil $m(K)$ represents the inverse concordance class, then we must show that $K \# m(K)$ is concordant to the unknot $U$.
    
    It suffices to show that $K \# m(K)$ bounds a disk $D$ embedded in $B^4$. 
    You can see a projection of a slice disk in $S^3$ in the following picture:

    \begin{center}
        \includegraphics[width=2in]{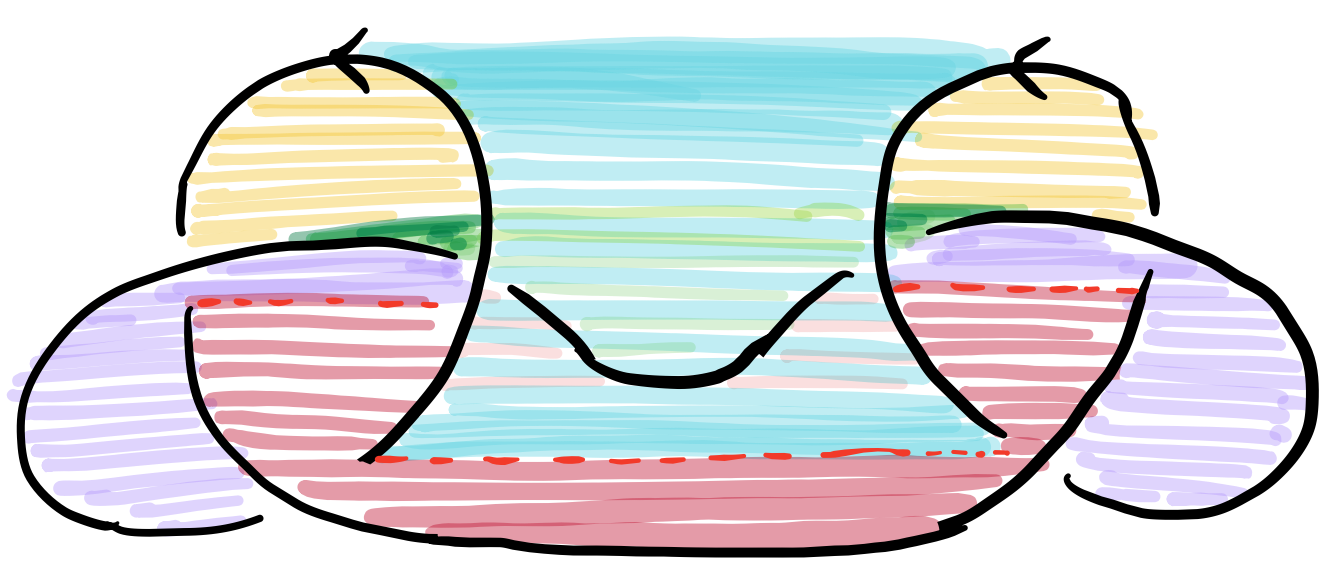}
    \end{center}

    The disk is immersed in $S^3$, and the only intersections are of the following form, called \emph{ribbon intersections}:

    \begin{center}
        \includegraphics[width=1in]{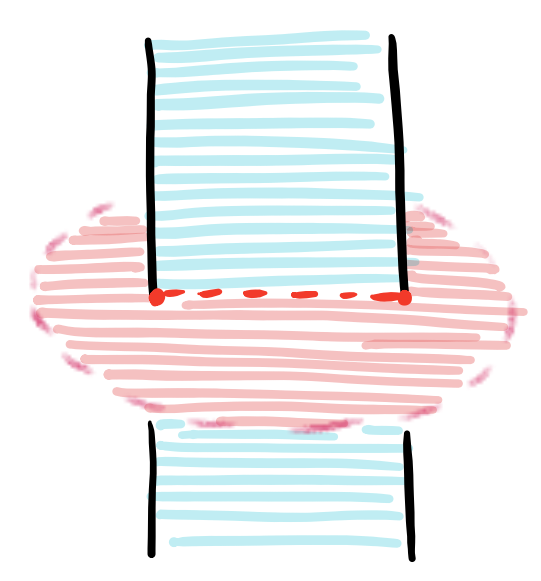}
    \end{center}

    The interior of the horizontal sheet can be pushed deeper into $B^4$, so that the slice disk in $B^4$ has no self-intersection. 
    
\end{example}

To verify that the concordance group really is a group, we need to check that 
\begin{enumerate}
\item $\#$ is well-defined on equivalence classes
\item $\#$ is associative
\item the equivalence class $[U]$ acts as the identity 
\item for any $K$, $K \# m(K)$ is slice.
\end{enumerate}
You can convince yourself that the binary operation $\#$ on the set of knots is both associative and commutative; if you want to think about this in more detail, see \cite{Adams-knot-book}. 
Once we show that $\#$ is well-defined, showing that $[U]$ is the identity element is also easy. 

It remains to show that $\#$ is well-defined, and that $K \# m(K)$ is slice. Both proofs use standard topological arguments.

\begin{claim}
    $\#$ is a well-defined binary operation on the set of concordance classes. 
\begin{proof}
To see that $\#$ is a well-defined binary operation on concordance classes, consider knots $K, K', J \subset S^3$, where $K \sim K'$. Then there is some concordance $C: K \to K'$, a cobordism in $\S^3 \times [0,1]$. 
Pick basepoints $p \in K$, $p' \in K'$ and isotope $K'$ so that $p' = p$ (as points in $S^3$). 
Pick an arc $\gamma: [0,1] \into C$ such that $\gamma(0) = p$ and $\gamma(1) = p'$. 

Perform a boundary-preserving ambient isotopy to `straighten out' $\gamma$; that is, to arrange so that $\gamma(t) \in S^3 \times \{t\}$. (This is possible because the codimension of $\gamma$ in $S^3 \times [0,1]$ is 3, and codimension 3 submanifolds can always be unknotted.)
We will now assume that $C$ is in such a position so that $\gamma$ is of the form $p \times [0,1]$. 

\begin{center}
    \includegraphics[width=3in]{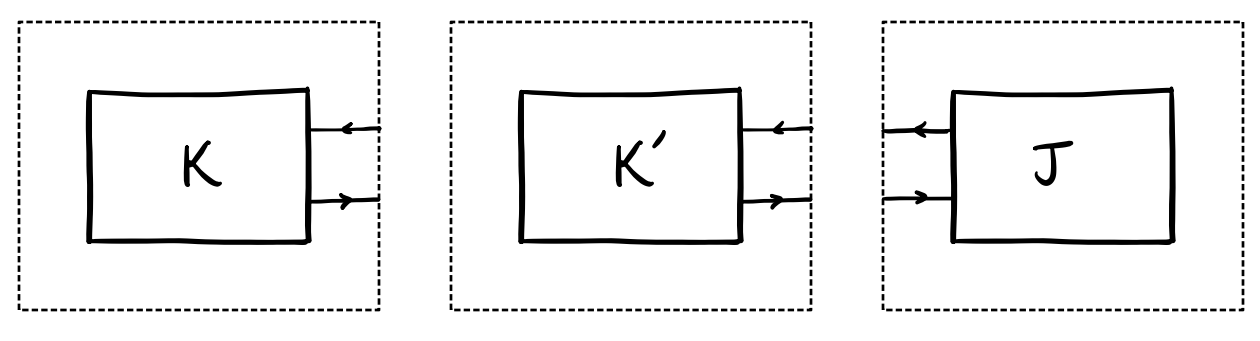}
\end{center}

Now delete a small neighborhood of $\gamma$ (i.e. $\nu(p) \times [0,1] \cap C$) in $C$. 
Pick a point $q \in J$ and delete it, forming a $(1,1)$ tangle $J - \nu(q)$ whose closure is $J$. Shrink this tangle so that it fits within $\nu(p)$.

Finally, glue $C - (\nu(p) \times [0,1])$ with $(J - \nu(q)) \times [0,1]$ to form a concordance from $K \# J$ to $K'\# J$. 
\begin{center}
    \includegraphics[width=3in]{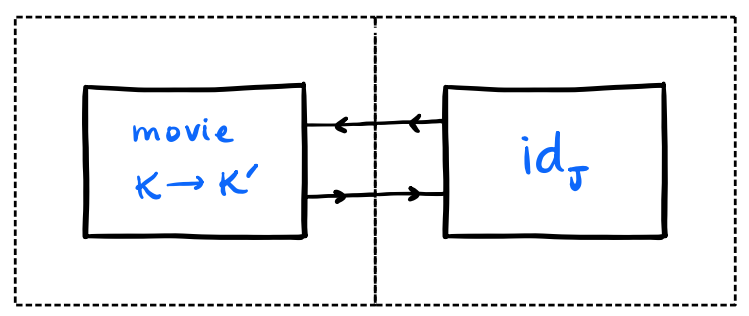}
\end{center}

\end{proof}
\end{claim}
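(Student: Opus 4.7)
The plan is to produce, given a concordance $C \colon K \to K'$ and an auxiliary knot $J$, an explicit concordance $C' \colon K \# J \to K' \# J$. Since concordance is symmetric and reflexive, it suffices to handle the case where one factor is altered; commutativity of $\#$ then takes care of alterations on the other side. So the goal reduces to: if $K \sim K'$, then $K \# J \sim K' \# J$ for every knot $J$.

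First I would set up the building blocks. Fix basepoints $p \in K$ and $p' \in K'$, and after a boundary-preserving ambient isotopy of $C \subset S^3 \times [0,1]$ arrange that $p$ and $p'$ agree as points of $S^3$. Choose a properly embedded arc $\gamma \colon [0,1] \hookrightarrow C$ from $p$ to $p'$. The idea is then to ``drill out'' a tubular neighborhood of $\gamma$ from $C$ to open up a channel along which to insert $J$.

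The technical heart of the argument is straightening $\gamma$. Because $\gamma$ has codimension $3$ in the $4$-manifold $S^3 \times [0,1]$, general position lets us isotope it rel $\partial$ to a monotone arc of the form $p \times [0,1]$. This is the step I expect to be the main obstacle to state carefully: one needs that $\gamma$ is unknotted through an ambient isotopy of $(S^3 \times [0,1], C)$ carrying the boundary configuration to itself. Once this is done, removing $\nu(\gamma) \cap C$ from $C$ gives a cobordism with two additional boundary arcs, one on each of $K \setminus \nu(p)$ and $K' \setminus \nu(p)$.

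Finally, I would take $J$, delete a small neighborhood of a chosen point $q \in J$, rescale the resulting $(1,1)$-tangle $J \setminus \nu(q)$ so that it sits inside $\nu(p) \subset S^3$, and cross with $[0,1]$ to form the trivial cobordism $(J \setminus \nu(q)) \times [0,1]$. Gluing this tangle-cobordism onto $C \setminus (\nu(p) \times [0,1])$ along the matching boundary arcs yields a smooth cobordism $C' \colon K \# J \to K' \# J$. By construction $C'$ is connected, orientable, and $\chi(C') = \chi(C) + \chi((J \setminus \nu(q)) \times [0,1]) = 0 + 0 = 0$, so $C'$ is a concordance. This establishes that $\#$ descends to equivalence classes; associativity and commutativity of $\#$ on the set level then descend automatically, completing the proof.
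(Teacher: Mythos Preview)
Your proposal is correct and follows essentially the same approach as the paper: pick basepoints, straighten an arc $\gamma \subset C$ to $p \times [0,1]$ using the codimension-3 unknotting principle, drill out $\nu(\gamma) \cap C$, and glue in the product tangle $(J \setminus \nu(q)) \times [0,1]$. Your added verification that $\chi(C') = 0$ and your remark that commutativity of $\#$ handles alterations in the second factor are nice touches that the paper leaves implicit.
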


\begin{aside}
\textit{How much should one write for such a proof?}
You might notice that, for example, the last sentence of the preceding proof is not super duper precise. 
However, the figure helps you understand the notation, and also the underlying concept is quite simple. Also, we were careful to define the category $\catDiag$ so that we can make diagrammatic arguments.

As a human who uses language, I don't have a perfect answer for `how much detail to show' -- this comes from getting to know the common vocabulary, techniques, facts, and tricks used in the community you are writing for. 
\end{aside}

We also give a (more terse) proof sketch that $[m(K)] = [K]\inv$:

\begin{claim}
    For any knot $K \subset S^3$, $K \# m(K)$ is slice.
\begin{proof}
    The identity cobordism $K \to K$ is a concordance. 
    Pick a point $p \in K$ and delete $p \times [0,1]$ from the identity cobordism. The resulting surface can be properly embedded in $B^4$ and viewed as a slice disk for $K \# m(K)$. 

    \begin{center}
        \includegraphics[width=3in]{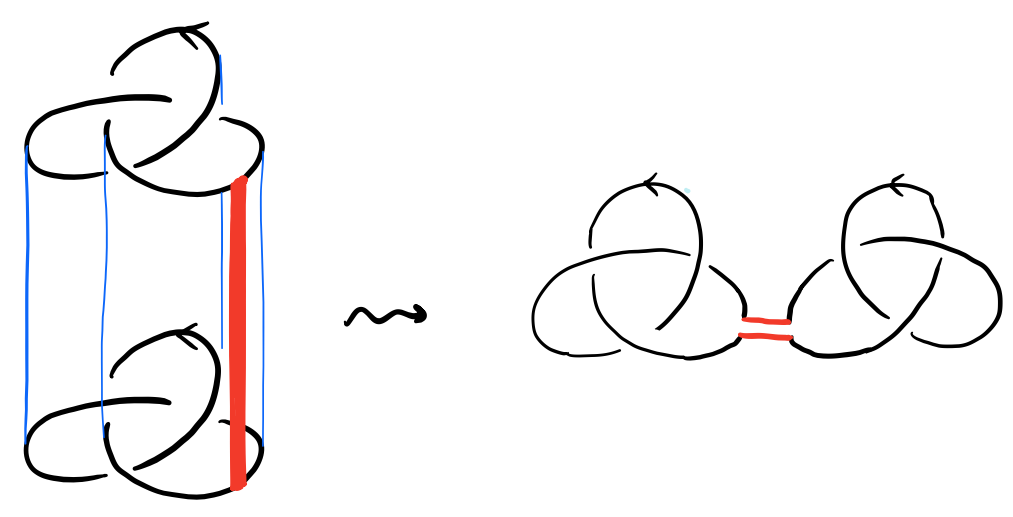}
    \end{center}
    
\end{proof}
\end{claim}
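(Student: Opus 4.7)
The plan is to exhibit an explicit slice disk for $K \# m(K)$ by taking the trivial identity cobordism $K \times [0,1]$ (which is an annulus) and cutting it along an arc to turn it into a disk, embedded in $B^4$ so that its boundary is $K \# m(K) \subset \partial B^4$.

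First, I would identify $B^4$ diffeomorphically with $B^3 \times [0,1]$ (after smoothing corners), so that
\[
    \partial B^4 = (B^3 \times \{0\}) \cup_{S^2 \times \{0\}} (S^2 \times [0,1]) \cup_{S^2 \times \{1\}} (B^3 \times \{1\}) \cong S^3
\]
decomposes as two hemispherical $3$-balls joined by an equatorial cylinder. Placing $K$ in the interior slice $B^3 \times \{1/2\}$, the annulus $\Sigma = K \times [0,1]$ is properly embedded in $B^4 = B^3 \times [0,1]$ with $\partial \Sigma = (K \times \{0\}) \sqcup (K \times \{1\}) \subset \partial B^4$. The key observation is that the two hemispheres $B^3 \times \{0\}$ and $B^3 \times \{1\}$ inherit opposite orientations as pieces of the oriented boundary $\partial B^4$, so identifying both hemispheres as sitting inside a single oriented $S^3$ requires an orientation-reversing diffeomorphism between them. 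Consequently, although the two boundary circles look identical as subsets of $B^3$, as knots in $\partial B^4 = S^3$ they appear as $K$ and $m(K)$ respectively.

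Next, I would pick a point $p \in K$ and delete a small open neighborhood of the arc $\{p\} \times [0,1] \subset \Sigma$. Removing this arc from the annulus yields a disk $\Sigma'$, and its boundary is a single knot in $\partial B^4$: it traverses $K$ in one hemisphere, crosses through the equatorial $S^2 \times [0,1]$ along one side of the slit, traverses $m(K)$ in the other hemisphere, and returns through the equatorial region along the other side of the slit. This is precisely the connect sum $K \# m(K)$, with the connect-sum band provided by a neighborhood of the slit in the equatorial region.

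The main obstacle will be justifying the orientation-reversal claim. Making it rigorous requires a careful computation of induced boundary orientations on $\partial(B^3 \times [0,1])$ together with identifying the mirror operation $m(\cdot)$ with the restriction of a reflection of $S^3$ through an equatorial $S^2$. Once this is in hand, the remaining steps (assembling the disk from the slit annulus and recognizing its boundary as a connect sum) follow by the same diagrammatic reasoning already used to show that $\#$ is well-defined on concordance classes.
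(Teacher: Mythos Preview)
Your proposal is correct and follows essentially the same construction as the paper: take the product annulus $K \times [0,1]$, delete the arc $\{p\} \times [0,1]$, and recognize the resulting disk as a slice disk for $K \# m(K)$ in $B^4$. The paper's proof is a terse two-sentence sketch (plus a picture), and what you have written is a fleshed-out version of exactly that argument --- in particular, your explicit discussion of the induced orientations on the two $B^3$ hemispheres of $\partial(B^3 \times [0,1])$ supplies the justification for why the second boundary component is $m(K)$ rather than $K$, a point the paper leaves entirely to the figure.
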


\begin{remark}
One can also define a notion of concordance between links. However, the notion of a `link concordance group' isn't obvious, and is an area of active research.
\end{remark}

\subsection{Obstruction to ribbon concordance}

In Example \ref{eg:square-knot-slice}, we envisioned a slice disk by seeing that its projection to $S^3$ had only \emph{ribbon singularities}. The following definition gives a more general definition of such phenomena.

\begin{definition}
Let $F: L_0 \to L_1$ be a cobordism embedded in $S^3 \times I$ such that the second coordinate gives a Morse function. 
We say $F$ is \emph{ribbon} if, with respect to the Morse handle decomposition, $F$ contains only 0- and 1-handles.

A ribbon cobordism that $D: \emptyset \to K$ that is a disk is called a \emph{ribbon disk}, and is a special case of a slice disk. 

A knot $K$ that bounds a ribbon disk is called a \emph{ribbon knot}. 
\end{definition}

\begin{example}
Here is a movie for the ribbon disk for the square knot that we visualized in Example \ref{eg:square-knot-slice}:
\begin{center}
    \includegraphics[width=2in]{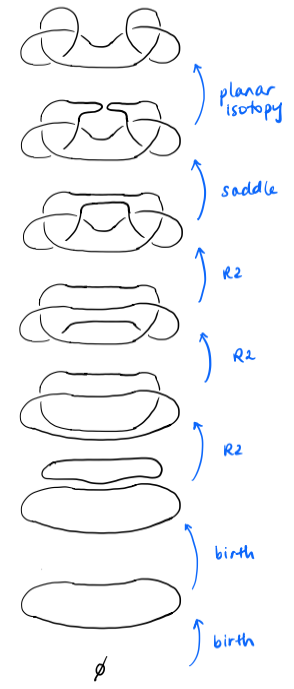}
\end{center}
\end{example}

From the definition, the `ribbon concordant' relation is not symmetric. If we flip a ribbon concordance containing a $0$-handle upside-down (by reversing the Morse function),  the upside-down concordance $\bar C$ clearly has a 2-handle.

\begin{exercise}
    Draw a movie for the ribbon disk for the Stevedore knot $6_1$ implied in the diagram below:
    \begin{center}
        \includegraphics[width=1.5in]{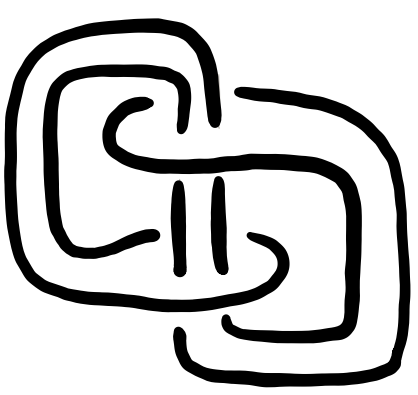}
    \end{center}
\end{exercise}

\begin{remark}
Unfortunately, we naturally would want to say `$6_1$ is concordant to the unknot', which is true, but not if we add the word `ribbon'.
Mathematically it makes more sense to say $U$ is ribbon concordant to $6_1$, and this is the language used in \cite{Levine-Zemke-ribbon}. Historically, some people defined ribbon surfaces by taking the descending Morse function and requiring only 1- and 2-handles, so be careful.
For my sanity, I will always specify the direction of the cobordism as a morphism.
\end{remark} 

\begin{remark}
    In terms of movies, a ribbon disk is a `happy movie', where the only critical moments are births of circles and merges of circles. 
    There are no scenes where circles split or die. 
\end{remark}

\begin{conjecture}(Open: Slice-Ribbon Conjecture)
All slice knots are ribbon.
\end{conjecture}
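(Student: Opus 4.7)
Since this is famously open (Fox's 1962 conjecture), any proposal is speculative. The plan is to take a slice knot $K$ with slice disk $D \subset B^4$, place $D$ in Morse general position with respect to the radial function on $B^4$, and try to cancel every $2$-handle of $D$ against a $1$-handle by ambient isotopy rel $\partial D = K$. Since $\chi(D) = 1$, ribbon position means $h_2 = 0$ and $h_0 = 1 + h_1$; the task is to eliminate all index-$2$ critical points while preserving the diffeomorphism type of $D$ and its boundary. First I would normalize so that $0$-handles occur at smaller radii than $1$-handles, which occur at smaller radii than $2$-handles (always achievable by rearranging critical levels). The ``happy movie'' form then requires killing every scene in which circles die or split, and the natural tool is handle cancellation: for each $2$-handle, look for a dual $1$-handle that cancels it, possibly after handle slides or after introducing a cancelling $1$-$2$ pair to create room.

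Second, since these notes are about Khovanov homology, I would try to leverage the (projective) functor $\functor_{\Kh}: \catDiag \to \ggmod$. A movie for $D$ induces a chain map $\Kh(U) \to \Kh(K)$, and the structure of this map (e.g., its interaction with Lee-type deformations or with the Rasmussen $s$-invariant) depends on the handle decomposition. The hope would be to extract a quantitative invariant whose value on a slice disk forces the existence of a ribbon representative with the same boundary, or else obstructs sliceness in a way that is automatically obstructing ribbonness.

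The hard part -- and the reason this is open -- is that no such invariant is known, and handle cancellation admits no general procedure in dimension four: a $2$-handle of $D$ can be linked with the rest of $D$ in a way that no finite sequence of slides resolves. The genuine obstruction, if any, lies in $\pi_1(B^4 \setminus \nu(D))$, which for ribbon disks admits deficiency-one Wirtinger-like presentations of a very restricted form, whereas slice disk complements in principle need not. A proof would therefore require either a new handle-theoretic cancellation theorem in dimension four (in the spirit of a smooth $4$-dimensional $h$-cobordism-style result), or a new Khovanov-theoretic invariant sensitive enough to detect the $2$-handle count; failing either, one instead hunts for a counterexample, a pursuit that has so far also failed (e.g., $(2,1)$-cables of ribbon knots, Casson-Gordon candidates). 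I expect any successful approach to pass through a substantial new invariant rather than through the naive handle-slide plan above.
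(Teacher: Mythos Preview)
The paper does not prove this statement: it is explicitly labeled ``(Open: Slice-Ribbon Conjecture)'' and no proof is given or attempted. You have correctly recognized this, and your write-up is not a proof but a survey of obstacles and speculative strategies. There is therefore nothing to compare against; your assessment that the problem is open, that the difficulty lies in cancelling 2-handles of a slice disk in dimension four, and that no known Khovanov-type invariant resolves it, is accurate and consistent with the paper's treatment.
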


It is clear that ribbon disks are slice, so any ribbon knot is a slice knot. 
However, not all slice \emph{disks} are (isotopic to) ribbon disks (see \href{https://people.mpim-bonn.mpg.de/aruray/documents/slicenotes.pdf}{Aru Ray's notes}, Proposition 1.8). 
So, the conjecture posits that if $K$ bounds a slice disk, it bounds a (potentially non-isotopic) ribbon disk.

\begin{remark}
We will see soon that there are knots that bound non-isotopic ribbon disks \cite{hay-sun}.
\end{remark}

Levine--Zemke showed that we can use Khovanov homology to obstruct the existence of ribbon cobordisms between two knots:

\begin{theorem}[\cite{Levine-Zemke-ribbon}]
\label{thm:Kh-ribbon-obstruction}
    If $C: K_0 \to K_1$ is a ribbon concordance, then the morphism
    \[
        \Kh(C): \Kh(K_0) \to \Kh(L_1)
    \]
    is injective, with left inverse $\Kh(\bar C)$.
\end{theorem}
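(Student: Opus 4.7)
The plan is to prove the stronger statement $\Kh(\bar{C} \circ C) = \id_{\Kh(K_0)}$. Combined with the (projective) functoriality of Khovanov homology developed in \S\ref{sec:projective-functoriality}, this yields $\Kh(\bar{C}) \circ \Kh(C) = \id_{\Kh(K_0)}$ (the sign issue from projectivity can be pinned down by tracking orientations of the cobordism), and hence $\Kh(C)$ is injective with left inverse $\Kh(\bar{C})$.

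First, decompose $C$ via its ribbon Morse structure. Because $C$ has only index-0 and index-1 critical points and $\chi(C) = 0$, the numbers of births and saddles are equal; call this number $n$. Rearrange the Morse function so that all births precede all saddles, giving $C = B \circ E$ where $E \colon K_0 \to K_0 \sqcup U^{\sqcup n}$ is $n$ births and $B \colon K_0 \sqcup U^{\sqcup n} \to K_1$ is $n$ saddles. A quick component count (starting with $n+1$ components and ending with one) forces every saddle in $B$ to be a merge. Correspondingly, $\bar{C} = \bar{E} \circ \bar{B}$ where $\bar{B}$ is $n$ splits and $\bar{E}$ is $n$ deaths.

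Next, analyze $\bar{C} \circ C = \bar{E} \circ \bar{B} \circ B \circ E$. The main claim is that, up to isotopy of cobordisms rel boundary (which preserves the induced Khovanov map by the Carter--Saito movie moves), the critical points can be rearranged into $n$ disjoint local quadruples of the form (birth, merge, split, death) in which a single unknot is born, merged into a strand via a band, split off via the same band, and then killed. Each such quadruple induces, via the underlying TQFT, the endomorphism of $\CA$ given by
\[
    (\id_{\CA} \otimes \varepsilon) \circ \Delta \circ m \circ (\id_{\CA} \otimes \iota).
\]
The unit axiom gives $m \circ (\id_{\CA} \otimes \iota) = \id_{\CA}$, and the counit axiom gives $(\id_{\CA} \otimes \varepsilon) \circ \Delta = \id_{\CA}$, so each quadruple acts as the identity. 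Combining the $n$ local identities gives $\Kh(\bar{C} \circ C) = \id_{\Kh(K_0)}$.

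The main obstacle is justifying the rearrangement in the previous paragraph: one must verify that the combinatorial pairing of critical points of $C$ with critical points of $\bar{C}$ can really be realized by an isotopy of embedded cobordisms in $S^3 \times I$, not merely by an abstract Morse-theoretic reorganization. The ribbon hypothesis is essential here, because the absence of index-2 critical points in $C$ (equivalently, index-0 critical points in $\bar{C}$) produces a controlled Morse structure on the composition with no obstructions to the pairing beyond the natural reverse-matching of critical points. Making this precise will likely require a careful movie-move argument, perhaps inductive on $n$, to move the paired critical frames next to each other without introducing any intervening topology.
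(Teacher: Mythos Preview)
Your approach is essentially the paper's. The ``main obstacle'' you isolate---rearranging $\bar{C}\circ C$ into paired birth/merge/split/death quadruples---is exactly Lemma~\ref{lem:Zemke-tubing} (Zemke's tubing lemma), which the paper states and attributes to \cite{Zemke-HFK-ribbon}; this is the genuine topological content, and neither you nor the paper proves it in place.

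For the algebraic step the paper applies the neck-cutting relation in $\TLcat_0$ to the $n$ tubes, reducing the morphism to $\id_{K_0}$ together with $n$ dotted spheres (each evaluating to $1$), while you instead invoke the Frobenius unit and counit axioms on each quadruple. These two computations are equivalent for a local tube. One small advantage of the paper's phrasing is that neck-cutting is manifestly a local relation in Bar-Natan's category, so the argument visibly does not care whether the tubed spheres happen to be knotted in $S^3\times I$ (as the paper explicitly notes). Your ``local quadruple'' computation---a single $m$ and a single $\Delta$, with no intervening Reidemeister chain maps---implicitly assumes the bands can be taken simple, which is stronger than what Zemke's lemma actually delivers; you would need either the neck-cutting relation or the Rasmussen--Tanaka result on closed-surface invariants to close that gap cleanly.
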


Here are some immediate corollaries that follow from basic algebra:
\begin{corollary}[\cite{Levine-Zemke-ribbon}]
Suppose $C: K_0 \to K_1$ is a ribbon concordance.
    \begin{enumerate}
        \item At any bigrading, $\Kh^{i,j}(K_0) \into \Kh^{i,j}(K_1)$ as a direct summand.
        \item If additionally there is a ribbon concordance $C': K_1 \to K_0$, then $\Kh(K_0) \cong \Kh(K_1)$.
    \end{enumerate}
\end{corollary}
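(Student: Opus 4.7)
The plan is to leverage Theorem \ref{thm:Kh-ribbon-obstruction} directly and translate its algebraic content---that $\Kh(C)$ is a split monomorphism with explicit left inverse $\Kh(\bar C)$---into the two stated consequences. First, I would observe that $\Kh(C)$ and $\Kh(\bar C)$ are bigrading-preserving maps in $\ggmod$: a concordance satisfies $\chi(C) = 0$, and by the exercise establishing $\deg_q(\BNfun(F)) = \chi(F)$ for a cobordism between closed $1$-manifolds, the induced morphism has bidegree $(0,0)$. So all the relevant maps live in the category of bigraded $\Z$-modules, not merely the underlying category of abelian groups.

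For (1), the relation $\Kh(\bar C) \circ \Kh(C) = \id_{\Kh(K_0)}$ is the textbook hypothesis for a split monomorphism. In the bigraded module category, this yields a canonical decomposition
\[
    \Kh(K_1) \;\cong\; \im\bigl(\Kh(C)\bigr) \,\oplus\, \ker\bigl(\Kh(\bar C)\bigr),
\]
with both summands bigraded submodules. Restricting to a fixed bidegree $(i,j)$ and identifying $\im(\Kh(C))^{i,j}$ with $\Kh^{i,j}(K_0)$ (via the injection) gives the desired direct-summand statement at each bigrading.

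For (2), a symmetric application of (1) to $C'$ produces a split injection $\Kh(K_1) \hookrightarrow \Kh(K_0)$, also bigrading preserving. I would then work one bigrading at a time: the groups $A = \Kh^{i,j}(K_0)$ and $B = \Kh^{i,j}(K_1)$ are finitely generated abelian groups (inherited from finite generation of $\CKh$ at each bidegree), and each is a direct summand of the other. Writing $B = A \oplus A'$ and $A = B \oplus B'$ and substituting gives $A \cong A \oplus (A' \oplus B')$, whence the Hopfian property of finitely generated $\Z$-modules forces $A' \oplus B' = 0$ and so $A \cong B$. Collecting across bigradings produces the bigraded isomorphism $\Kh(K_0) \cong \Kh(K_1)$.

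The main obstacle is resisting a tempting but incorrect shortcut in (2): one cannot just claim that some composition of cobordisms is the identity concordance, because gluing $C$ to $C'$ or to $\bar{C'}$ generally does not yield a ribbon cobordism, so Theorem \ref{thm:Kh-ribbon-obstruction} does not apply to the composite. The honest route is via the two independent split injections together with the Hopfian step. The only other care point is confirming that the splitting in (1) is genuinely in $\ggmod$ rather than merely in $\mathrm{Ab}$---a formality once one checks that concordances induce bigrading-preserving maps.
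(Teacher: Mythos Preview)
Your proposal is correct and is exactly the ``basic algebra'' the paper has in mind; the paper does not actually write out a proof, instead leaving it as an exercise for the reader. Your careful handling of the bigrading (via $\chi(C)=0$) and your observation that part (2) requires the Hopfian property of finitely generated abelian groups---rather than any direct cobordism composition---are precisely the points one needs.
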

\note{Seriously, prove these for yourself.}
There are four more corollaries in the paper, if you're interested. 

The proof uses a topological lemma first appearing in Zemke's \cite{Zemke-HFK-ribbon}. This paper sparked a whole series of papers proving similar or related results for other functorial link homology theories.

The proof of this lemma is embedded in the proof of Zemke's main theorem:

\begin{lemma}[\cite{Zemke-HFK-ribbon}]
\label{lem:Zemke-tubing}
Let $C: K_0 \to K_1$ be a ribbon concordance, and let $\bar C$ be the upside-down (and opposite orientation) concordance to $C$, a morphism $K_1 \to  K_0$. 

If a movie presentation for $C$ has $n$ births and $n$ saddles (note that the Euler characteristic of a concordance is 0), then there is a movie presentation for $\bar C \circ C$ with $n$ births, $n$ merge saddles, $n$ split saddles that are dual to the merge saddles, and $n$ deaths. 
\end{lemma}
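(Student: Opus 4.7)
The plan is to exhibit the desired movie for $\bar C \circ C$ directly by constructing favorable movies for $C$ and $\bar C$ separately and then concatenating them.

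First I would analyze the movie for $C$ using the ribbon hypothesis. By definition, a ribbon cobordism admits a Morse function with critical points only of index $0$ and $1$, so the movie for $C$ consists only of births and saddles (merges or splits), with no deaths. If $C$ contains $n$ births and $s$ saddles, then $\chi(C) = n - s = 0$ forces $s = n$. Moreover, $C$ starts with one component ($K_0$) and ends with one component ($K_1$); tracking component counts through the movie, (\# merges) $-$ (\# splits) $= n$, while (\# merges) $+$ (\# splits) $= n$, so all $n$ saddles must be merges. Thus $C$ admits a movie with $n$ births and $n$ merge saddles.

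Next I would build the movie for $\bar C$ by time-reversal. The cobordism $\bar C$ is defined precisely by reversing the time coordinate (and reversing orientation), so the induced Morse function on $\bar C$ is obtained from the Morse function on $C$ by negation. Under negation, index-$0$ critical points become index-$2$ critical points and vice versa, while index-$1$ critical points of merge type become index-$1$ critical points of split type. Concretely, each of the $n$ merges in $C$ corresponds to attaching a band; the same band, viewed with reversed time, becomes a split saddle in $\bar C$, which is exactly what is meant by its \emph{dual}. Hence $\bar C$ admits a movie consisting of $n$ split saddles (each dual to one of the merges in $C$) followed by $n$ deaths.

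Finally, I would concatenate the two movies to obtain a movie presentation for $\bar C \circ C$: the $n$ births of $C$, then the $n$ merge saddles of $C$, then the $n$ dual split saddles of $\bar C$, then the $n$ deaths of $\bar C$. This yields exactly the structure asserted by the lemma. The only real obstacle is ensuring that the pairing of each merge with its dual split is preserved under concatenation, but since the splits in $\bar C$ are constructed directly from the merges in $C$ by negating the Morse function, this pairing is tautological; no further topological argument is required.
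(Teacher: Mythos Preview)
Your argument is correct and is essentially the standard one; the paper itself does not give a proof of this lemma, deferring instead to Zemke's original paper, so there is nothing to compare against beyond that. Your component-count computation showing that every saddle in a ribbon concordance between \emph{knots} is necessarily a merge is exactly the observation that makes the duality transparent: once you know $C$ consists of $n$ births and $n$ merges, time-reversal immediately gives $\bar C$ as $n$ splits (each along the same band as the corresponding merge) followed by $n$ deaths, and concatenation yields the asserted movie.

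One small remark on presentation: your final paragraph reads as though the movie for $C$ is literally ordered as ``all births, then all merges,'' which you did not argue (and which requires a standard handle-reordering step). This is harmless for the lemma as stated, since it only asks for the counts together with the merge/split duality, and both are delivered by concatenating any movie for $C$ with its time-reverse. If you want the stronger picture the paper draws afterward --- that $\bar C \circ C$ is the identity cobordism on $K_0$ with $n$ spheres tubed on --- then the duality of each merge/split pair (same band, opposite direction) is precisely what produces each tube, and each birth/death pair produces the sphere at the end of that tube; your setup already contains this, even without reordering.
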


\begin{center}
    \includegraphics[width=2in]{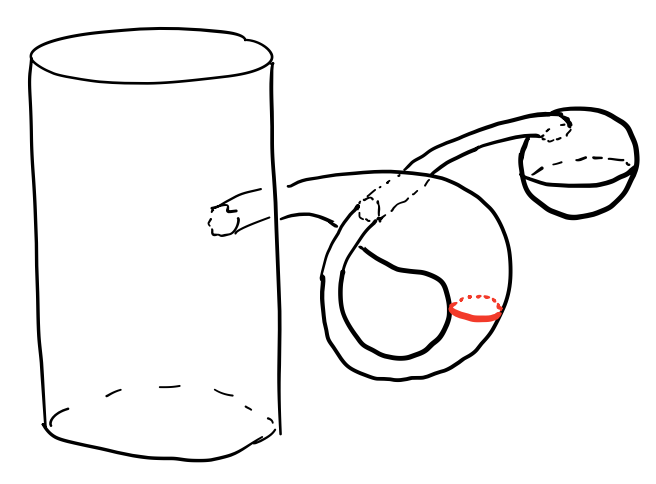}
\end{center}

In particular, the lemma tells us that $\bar C \circ C$ is isotopic to a cobordism $K_0 \to K_0$ that looks like the identity cobordism for $K_0$ with $n$ spheres tubed on. Because of nontrivial knot theory for surfaces in dimension 4, this cobordism might not be isotopic to the identity cobordism, but nevertheless, Khovanov homology can't tell because of the neckcutting relation:

\begin{proof}[Proof of Theorem \ref{thm:Kh-ribbon-obstruction}]
Take $C$ and $\bar C$ as in Lemma \ref{lem:Zemke-tubing}.
In $\TLcat_0$, by neckcutting, we see that the morphism $\bar C \circ C$ is equal to the morphism with $2^n$ summands that are all of the form $\id_{K_0}$ disjoint union with $n$ spheres, and all summands have just one dot somewhere. After deleting the summands containing an undotted sphere, the only remaining morphism is the one consisting of $\id_{K_0}$ and $n$ dotted spheres, which evaluate to a coefficient of 1. 
Therefore $\bar C \circ C = \id_{K_0}$ as morphisms in $\TLcat_0$.

By (projective) functoriality of $\Kh$, 
we have $\Kh(\bar C)\circ \Kh(C) = \Kh(\bar C \circ C)$, and the remainder of the theorem follows.
\end{proof}

\subsection{Lee homology and the $s$ invariant}
\label{sec:rasmussen-s}

The most important numerical invariant(s) that you can distill from Khovanov homology techniques is Rasmussen's $s$ invariant, which was introduced in \cite{Rasmussen-s}.
It is a `concordance homomorphism,' i.e.\ a homomorphism of groups 
\[
    s: \concgroup \to \Z.
\]
Rasmussen showed that $s$ gives a lower bound on the \emph{slice genus} of a knot. The following definition defines two knot invariants:

\begin{definition}
A \emph{Seifert surface} for a knot $K \subset S^3$ is an oriented surface embedded in $S^3$ whose boundary is $K$.
The 3-ball genus or \emph{Seifert genus} of a knot $K$, $g_3(K)$, is the minimal genus of a Seifert surface for $K$: 
\[
    g_3(K) = \min \{ g(F) \st F \into S^3 \text{ with } \partial{F} = K\}
\]

Analogously, the 4-ball genus or \emph{slice genus} of a knot $K \in S^3$ is
    \[
        g_4(K) = \min \{ g(F) \st F \into B^4 \text{ with } \partial{F} = K\},
    \]
the minimal genus of a slice surface for $K$. 
\end{definition}

\begin{example}
    Here is a Seifert surface for the right-handed trefoil, otherwise known as the torus knot $T_{2,3}$:
    \begin{center}
        \includegraphics[height=1.5in]{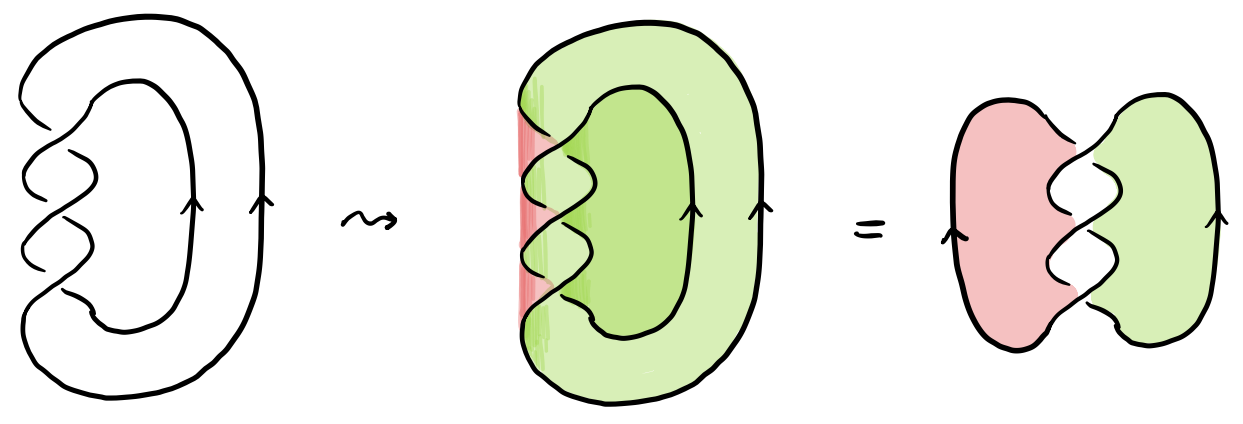}
    \end{center}
\end{example}

Since any Seifert surface can be `punched in' to $B^4$ to become a slice surface, for any knot $K$, we have $g_4(K) \leq g_3(K)$, and therefore $s$ also gives a lower bound on the Seifert genus of $K$.

Rasmussen used the $s$ invariant to give a combinatorial proof of the (topological) Milnor conjecture: 

\begin{theorem}[\cite{KM-gauge-embedded-I}, \cite{Rasmussen-s}]
The slice genus of a torus knot $T(p,q)$ is 
\[
    g_4(T(p,q)) = g_3(T(p,q)) = \frac{(p-1)(q-1)}{2}.
\]
\end{theorem}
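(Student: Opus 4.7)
The proof splits into an upper bound on $g_3$ and a matching lower bound on $g_4$, since $g_4(K) \le g_3(K)$ always. The upper bound $g_3(T(p,q)) \le \tfrac{(p-1)(q-1)}{2}$ is classical and elementary: drawing $T(p,q)$ as the closure of the positive braid $(\sigma_1 \cdots \sigma_{p-1})^q$ on $p$ strands and running Seifert's algorithm on this diagram produces a Seifert surface with $p$ disks and $q(p-1)$ bands, whose Euler characteristic is $p - q(p-1)$, giving genus exactly $\tfrac{(p-1)(q-1)}{2}$.

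The entire content is therefore the lower bound $g_4(T(p,q)) \ge \tfrac{(p-1)(q-1)}{2}$, and the plan is to obtain it from Rasmussen's $s$ invariant. The setup I would build is: introduce Lee's deformation of Khovanov homology, where the Frobenius algebra is $\Z[X]/(X^2-1)$ instead of $\Z[X]/(X^2)$; observe that $\CLee(D)$ is filtered by the (no longer homogeneous) quantum grading and that its associated graded is $\CKh(D)$, giving a spectral sequence $\Kh(K) \Rightarrow \Lee(K)$. Using Lee's theorem, $\Lee(K) \cong \Q^2$ for any knot $K$, with a preferred basis $\{\fraks_o, \fraks_{\bar o}\}$ of canonical generators built from the two orientations. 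Define $s(K)$ as the average of the filtration levels at which $\fraks_o + \fraks_{\bar o}$ and $\fraks_o - \fraks_{\bar o}$ survive; I would then verify the three key properties: $s$ takes even integer values, it descends to a homomorphism $\concgroup \to \Z$ (via projective functoriality of $\Lee$ under cobordisms, which works because the ambiguity is only a sign), and it satisfies the slice-genus bound $|s(K)| \le 2 g_4(K)$. This last bound is the crux: a genus-$g$ cobordism $K \to U$ induces a map on Lee homology that is nontrivial on the canonical generators, and each handle shifts the filtration by at most $\pm 1$, so the composition $U \to K \to U$ being the identity forces $|s(K)| \le 2g$.

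It remains to compute $s(T(p,q)) = (p-1)(q-1)$. For this I would use the fact that $T(p,q)$ is the closure of a positive braid: in such a diagram every crossing is positive, so $n_- = 0$, and the oriented resolution sits at the bottom of the cube of resolutions with minimum possible homological grading. The canonical generator $\fraks_o$, written in the Lee basis, is supported at the oriented resolution; computing its quantum grading using $\CKh(D) = \llbracket D \rrbracket [n_-]\{n_+ - 2n_-\}$ and the fact that the oriented resolution of a positive braid closure has $o$ Seifert circles yields $s(T(p,q)) \ge n_+ - o + 1$. For our braid $n_+ = q(p-1)$ and $o = p$, producing $s(T(p,q)) \ge (p-1)(q-1)$. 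Combining,
\[
(p-1)(q-1) \le s(T(p,q)) \le 2 g_4(T(p,q)) \le 2 g_3(T(p,q)) \le (p-1)(q-1),
\]
and every inequality collapses.

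The main obstacle is the slice-genus inequality $|s(K)| \le 2g_4(K)$: this is where functoriality of Khovanov/Lee homology under cobordisms has to be used carefully. One has to track how each elementary cobordism (birth, death, saddle) shifts the filtration on $\Lee$ by a controlled amount, and one must also handle the sign ambiguity from projective functoriality (which is harmless because we only care about whether certain maps are zero or nonzero). The positive-braid computation of $s$ is then essentially a grading calculation once the machinery is in place; the real topological content is in the cobordism-induced maps and how they interact with the quantum filtration.
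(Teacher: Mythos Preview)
Your proposal is correct and follows essentially the same route as the paper: develop Lee homology and the $s$ invariant, prove the slice-genus bound $|s(K)|\le 2g_4(K)$ via the filtered cobordism maps and the nonvanishing of Lee's canonical generators under weakly connected cobordisms, then exploit the positive-braid presentation of $T(p,q)$ to compute $s$ exactly (the oriented resolution sits at the extreme of the cube, so $\fraks_o$ is alone in its homology class and $s(K)=\gr_q(\fraks_o)+1$), and combine with the elementary Seifert-surface upper bound. One small slip: Lee's Frobenius algebra should be over $\Q$ (or any ring with $\tfrac12$), not $\Z$, since you need to invert $2$ to get the clean $\bolda,\boldb$ basis and the structure theorem.
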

\note{I bet the first Seifert surface you draw for $T(p,q)$ is the minimal genus (both $g_4$ and $g_3$!) surface!}
 
Another major application of Rasmussen's $s$ invariant in 4D topology is Piccirillo's proof that the Conway knot is not slice \cite{Piccirillo-conway-knot}; this was a long-standing conjecture until she proved it in less than 8 pages. This is an \textit{Annals of Mathematics} paper.
\note{For those of you interested in 4-manifolds and using Kirby calculus, this is a potential final project idea.}

The $s$ invariant relies on \emph{Lee homology}, a version of Khovanov homology introduced by E.~S.~Lee in her study of Khovanov homology of alternating knots \cite{Lee-endomorphism}, very soon after Khovanov homology was first introduced. We now understand Lee homology as the version corresponding to the Frobenius algebra $\CA' = \Q[X]/(X^2-1)$.

In this section, we will discuss Lee homology and Rasmussen's $s$ invariant, as well various related ideas, including extremely useful algebraic constructions such as filtration spectral sequences and their relationship to torsion order.
This will take a while, but these constructions were fundamental to the various applications of Khovanov homology in the following two decades.


\subsubsection{Lee homology and quantum filtration}

In this section we replace Khovanov's TQFT $\CA$ with Lee's, which we denote using primes (e.g.\ $\CA'$), following Rasmussen's paper \cite{Rasmussen-s}.
\note{In future sections of these notes we will likely use more specific notation.}

\emph{Lee's TQFT} comes from the Frobenius system 
\[
    (\Q, \CA' = \Q[X]/(X^2-1), \iota', m', \Delta', \varepsilon')
\]
where the structure morphisms are given by 

    \begin{align*}
        \iota' : \Q &\to \CA' \\
                1 &\mapsto 1
    \end{align*}
    \begin{align*}
        m' : \CA' \otimes \CA' &\to \CA' \\
                1 \otimes 1 &\mapsto 1 \\
                X \otimes 1, \ 1 \otimes X &\mapsto X \\
                X \otimes X & \mapsto \alert{1}
    \end{align*}
    \begin{align*}
        \Delta' : \CA' &\to \CA' \otimes \CA' \\
                1 &\mapsto X \otimes 1 + 1 \otimes X \\
                X &\mapsto X \otimes X  + \alert{1 \otimes 1}
    \end{align*}
    \begin{align*}
        \varepsilon' : \CA' &\to \Q \\
                1 &\mapsto 0\\
                X &\mapsto 1.
    \end{align*}

The most important thing to notice right now is that these maps are not grading preserving! After all, if you mod out by a non-quantum-homogeneous polynomial $X^2 - 1$, you will not get a graded theory. 
\note{But since $\deg X^2 = 4$, you \emph{do} get a $\Z/4\Z$-quantum-graded theory, and this \emph{will} be used in Rasmussen's proofs.}

Lee originally described her differentials as a perturbation of Khovanov's. Indeed, we may write 
\[
    d_{\Lee} = d_{\Kh} + \alert{ \Phi}
\]
where $\Phi$ consists of the red terms in the $m'$ and $\Delta'$ morphisms above. 

Check by inspection that while $d_{\Kh}$ preserves quantum grading, $\Phi$ increases quantum grading by 4. 
So even though $d_{\Lee}$ does not preserve grading, the Lee chain complex 
\[
    \CLee(K) = (\CKh(K), d_{\Kh} + \Phi)
\]
is \emph{filtered}:

\begin{definition}
\label{defn:filtered-complex}
    
    A \emph{filtration} of a chain complex $(\CC, d)$ is a sequence of subcomplexes $F_\bullet$ of $(\CC, d)$
    \[
        \cdots \supseteq  F_i
        \supseteq  F_{i+1}
        \supseteq  F_{i+2}
        \supseteq  F_{i+3}
        \supseteq \cdots
    \]
    such that 
    \[
        \bigcap_i F_i = \emptyset 
        \qquad
        \text{and}
        \qquad
        \bigcup_i F_i = \CC
    \]
    and where $d(F_i) \subseteq F_i$.

    A chain map $f: (\CC, d) \to (\CC', d')$ between filtered complexes $\CC = \bigcup F_i$ and $\CC' = \bigcup F'_i$
    is a chain map that respects the filtration: $f(F_i) \subseteq F'_i$. 
\end{definition}

\begin{example}
    Here is a cartoon of a filtered chain complex:
    \begin{center}
        \includegraphics[width=2in]{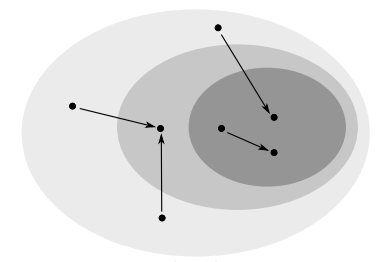}
    \end{center}
\end{example}

\begin{warning}
    This is only one type of filtration; see a homological algebra book for a more general definition.
    We will in particular be using \emph{finite length filtrations}, where only finitely many $F_i$ are not $\emptyset$ or $\CC$. 
\end{warning}

The filtration on the Lee complex is very special, because it is defined by a  \emph{filtration grading} $\gr_q$ on a free module generated by a distinguished, $\gr_q$-homogeneous basis.

Let $\Kg(L)$ denote the set of pure tensors that generate the Khovanov chain group $\CKh(L)$. 
Each chain is a finite sum $x = \sum_{i=1}^k c_i g_i$ where $g_i \in \Kg(L)$, and $c_i \in \Q^\times$.
The filtration grading is defined for non-homogeneous chains by the following formula:
\begin{equation}
\label{eq:quantum-filtration-grading}
    \gr_q(\sum_{i=1}^k c_i g_i) 
    = \min \{ \gr_q(g_i)\}_{i=1}^k.
\end{equation}
Observe that, by this definition, for all $j \leq \gr_q(x)$, we have $x \in F_j$; meanwhile, for $j \gneq \gr_q(x)$, $x \not\in F_j$.

\begin{remark}
    Filtered complexes give rise to \emph{filtration spectral sequences}, which we will likely discuss later on. For now, just know that because the Khovanov differential is the grading-preserving piece of the Lee differential, there is a spectral sequence relating Khovanov homology to Lee homology.
\end{remark}

\subsubsection{Lee's basis and canonical generators}

At first, the Lee complex might look \emph{more} complicated than the Khovanov one, but this is only because we made an inconvenient choice of distinguished basis, because in the Khovanov TQFT, we wanted a homogeneous basis. 
Note that $\CA'$ is still isomorphic, as a non-graded vector space, to what we will still call $V = \Q v_+ + \Q v_-$, \note{even though we've switched to $\Q$ coefficients, so we're technically working with $V \otimes_{\Z} \Q$}.
Lee introduced a much better distinguished basis for her TQFT, where 
\[
    \bolda = v_- + v_+ = 1 + X
    \qquad
    \boldb = v_- - v_+ = 1 - X.
\]
Notice that these are the factors of $X^2 - 1$; therefore, you already automatically know that 
$m'(\bolda \otimes \boldb) = m'(\boldb \otimes \bolda) = 0$, which is nice.
Here are the other morphisms, in this new basis:

\begin{notation}
    \label{nota:lee-better-basis}
    \begin{align*}
        \iota' : \Q &\to \CA' \\
                1 &\mapsto \frac{\bolda - \boldb}{2}
    \end{align*}
    \begin{align*}
        m' : \CA' \otimes \CA' &\to \CA' \\
                \bolda \otimes \bolda &\mapsto 2\bolda \\
                \bolda \otimes \boldb,\  \boldb \otimes \bolda &\mapsto 0\\
                \boldb \otimes \boldb & \mapsto -2\boldb
    \end{align*}
    \begin{align*}
        \Delta' : \CA' &\to \CA' \otimes \CA' \\
                \bolda &\mapsto \bolda \otimes \bolda \\
                \boldb &\mapsto \boldb \otimes \boldb
    \end{align*}
    \begin{align*}
        \varepsilon' : \CA' &\to \Q \\
                \bolda, \  \boldb  &\mapsto 1
    \end{align*}
\note{You may wish to check that this indeed defines a Frobenius algebra by checking the required relations.}
\end{notation}

With a Frobenius algebra structure so simple (and with 20 years of reflection and hindsight), one would expect that the homology would be very easy to compute. Indeed, Lee showed that Lee homology is very simple: 

\begin{theorem}[\cite{Lee-endomorphism}, Theorem 4.2]
\label{thm:Lee-structure-theorem}
For a link $L$ with $\ell$ components, there is an isomorphism of (ungraded) vector spaces $\Lee(L) \cong (\Q \oplus \Q)^\ell$. 
\end{theorem}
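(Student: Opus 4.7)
The plan is to produce $2^\ell$ canonical cycles $\fraks_o \in \CLee(L)$, one for each orientation $o$ of $L$, and then argue that their homology classes form a basis of $\Lee(L)$. The key algebraic observation is that over $\Q$ the Frobenius algebra $\CA' = \Q[X]/(X^2-1)$ splits as a product $\Q \times \Q$ via the orthogonal idempotents $\tfrac{1 \pm X}{2}$, which are proportional to Lee's basis elements $\bolda$ and $\boldb$ from Notation~\ref{nota:lee-better-basis}. In that basis the structure maps are diagonal: $m'$ kills mixed-color tensors and $\Delta'$ duplicates the color. Thus distinguished generators at a resolution $D_u$ are recorded as colorings of the $|D_u|$ circles by $\{\bolda,\boldb\}$, and the Lee differential is supported only on edges whose merge (or split) involves same-colored inputs.

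First I would construct $\fraks_o$. Given an orientation $o$ on a diagram $D$ for $L$, form the oriented resolution $D_o$ (the unique resolution whose strands continue the orientation at each crossing); this is a disjoint union of Seifert circles. Label each Seifert circle by $\bolda$ or $\boldb$ according to its planar orientation and its nesting depth modulo $2$, arranged so that at every crossing of $D$ the two arcs of that crossing touch circles of \emph{opposite} color in $D_o$. Let $\fraks_o$ be the corresponding distinguished generator in $\CLee(L)$. Checking $d_\Lee \fraks_o = 0$ is then local: the edges of the cube emanating from $D_o$ are in bijection with the crossings of $D$, each such edge is a saddle between the two circles meeting at that crossing, and on such a saddle the coloring rule forces a mixed-color input, hence zero output by $m'(\bolda \otimes \boldb) = m'(\boldb \otimes \bolda) = 0$ (and the dual statement for $\Delta'$ when the saddle is a split).

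Linear independence of the $\{[\fraks_o]\}$ follows because distinct orientations yield distinct coloring patterns at $D_o$, and any boundary hitting $D_o$ introduces at least one mixed-color term, so no nontrivial combination of the $\fraks_o$ is exact. The main obstacle, and the heart of the proof, is the upper bound $\dim_{\Q}\Lee(L) \leq 2^\ell$. I would attack it by induction on the number of crossings using the mapping cone $\CLee(\crossing) \simeq \Cone(\CLee(\vertres) \to \CLee(\horizres))$ and the induced long exact sequence; the base case is the $\ell$-component unlink, whose Lee complex has zero differential and homology $(\CA')^{\otimes \ell} \cong \Q^{2^\ell}$. The delicate point is that resolving a crossing changes the component count by $\pm 1$, so the naive estimate from the long exact sequence overshoots; one has to match orientations on the two resolved links with orientations on $L$ and check that the connecting map in the long exact sequence has exactly the rank needed to cut the bound back down to $2^\ell$. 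An alternative, perhaps cleaner, route is to exploit the algebra splitting $\CA' \cong \Q \times \Q$ globally, writing $\functor_\Lee$ as a direct sum of two simpler TQFTs and counting consistent colorings of the cube directly; this is essentially Lee's original approach and avoids the component-count bookkeeping, but requires verifying that the splitting survives the cube differentials with signs.
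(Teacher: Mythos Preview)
Your outline is essentially Lee's original argument and is valid in broad strokes, but it is not the proof the paper gives. The paper follows Bar-Natan--Morrison and works entirely in the Karoubi envelope of the Lee cobordism category $\TLcat'_n$: the dotted-identity endomorphism $b$ is an involution, so its eigenprojections $r=\tfrac{\id+b}{2}$ and $g=\tfrac{\id-b}{2}$ split each strand into a ``red'' and a ``green'' summand. The crossing complex then decomposes into four colored pieces; the same-color saddle maps ($rr$ and $gg$) are shown to be isomorphisms, so those summands are contractible, leaving only the mixed-color pieces with zero differential. Gluing these local tangle complexes in the planar algebra kills any strand colored both $r$ and $g$ (since $rg=0$), and what survives is exactly the set of $\{r,g\}$-colorings of complete resolutions in which circles meeting at a crossing carry opposite colors---i.e.\ the orientations of $L$. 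This argument is local at each crossing and sidesteps both the component-count bookkeeping in your skein/long-exact-sequence induction and the sign-tracking in your global TQFT-splitting alternative; it also gives the canonical generators and the dimension bound simultaneously rather than as separate lower/upper bounds. Your ``alternative'' route (splitting $\CA'\cong\Q\times\Q$ and counting consistent colorings) is really the decategorified shadow of what the paper does in $\Kar(\Mat(\TLcat'_n))$.

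One small gap worth flagging: your linear-independence argument (``any boundary hitting $D_o$ introduces at least one mixed-color term'') is not quite right. In the $\{\bolda,\boldb\}$ basis the image of $d_\Lee$ consists of \emph{same}-color tensors, not mixed ones, so the argument as written does not exclude a linear combination of the $\fraks_o$ from being exact. In the paper's approach this issue never arises, because the homotopy equivalence to a zero-differential complex exhibits the $\fraks_o$ directly as a basis.
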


Moreover, she knows the homology classes that generate the $2^{\ell}$-dimensional homology; these correspond to the $2^{\ell}$ orientations of the link!
These special homology classes are now known as \emph{ Lee's canonical generators}. \note{And are so, so, so important.}

Here is an algorithm used for obtaining Lee's canonical generators. 

\begin{algorithm}
\label{algo:Lee-canonical-generators}
    Suppose we are given a link $L$, an orientation $o$ on $L$, and a diagram $D$ of $L$. 
    Fix a checkerboard coloring of the complement of $D$ in the plane.
    (A standard convention is to leave the infinite region unshaded.)

    Let $D_o$ denote the oriented resolution: that is, resolve each crossing so that the orientations of the arcs are all preserved. 
    The resulting circles have induced orientations. For each circle, draw a small dot slightly to the left of any point on the circle (based on the orientation of the circle). 
    \begin{itemize}
        \item If the dot lands in a shaded region, label the circle $\bolda$.
        \item If the dot lands in an unshaded region, label the circle $\boldb$. 
    \end{itemize}
    Let $\fraks_o$ denote the chain correspond to the above labeling of the resolution $D_o$
\end{algorithm}

\begin{exercise}
    Prove that $\fraks_o$ is closed (i.e.\ in the kernel of the differential).
\end{exercise}

Lee's canonical generators are the set of homology classes 
\[
    \{ [\fraks_o] \st o \text{ is an orientation for } L\}.
\]

In the remainder of this (subsub)section, we give a proof of Lee's Theorem \ref{thm:Lee-structure-theorem} that is due to Bar-Natan--Morrison \cite{BN-Morrison}.
\note{There are many versions of Lee's proof in the literature. They all involve similar ideas to those in the original proof.}

First of all, we need to set up the Bar-Natan category that works for Lee homology. 
In particular, we know that $X^2=1$, so now `two dots' must evaluate to 1, rather than zero. 
Below are the Bar-Natan categories we will use. Beware that the notation is not standard, and in fact does not appear anywhere in the literature that I know of (\cite{BN-Morrison} calls these categories `$\mathcal{C}\mathit{ob}_1(\partial T)$'). However, we use this primed notation to be consistent with the notation used in Rasmussen's $s$ invariant paper \cite{Rasmussen-s}.

\begin{definition}
    The preadditive categories $\TLcat'_n$ for $n \in \N \cup \{0\}$ have the same objects as $\TLcat_n$. The morphisms are also cobordisms, but modulo these slightly modified relations:
    \begin{center}
        \includegraphics[width=3in]{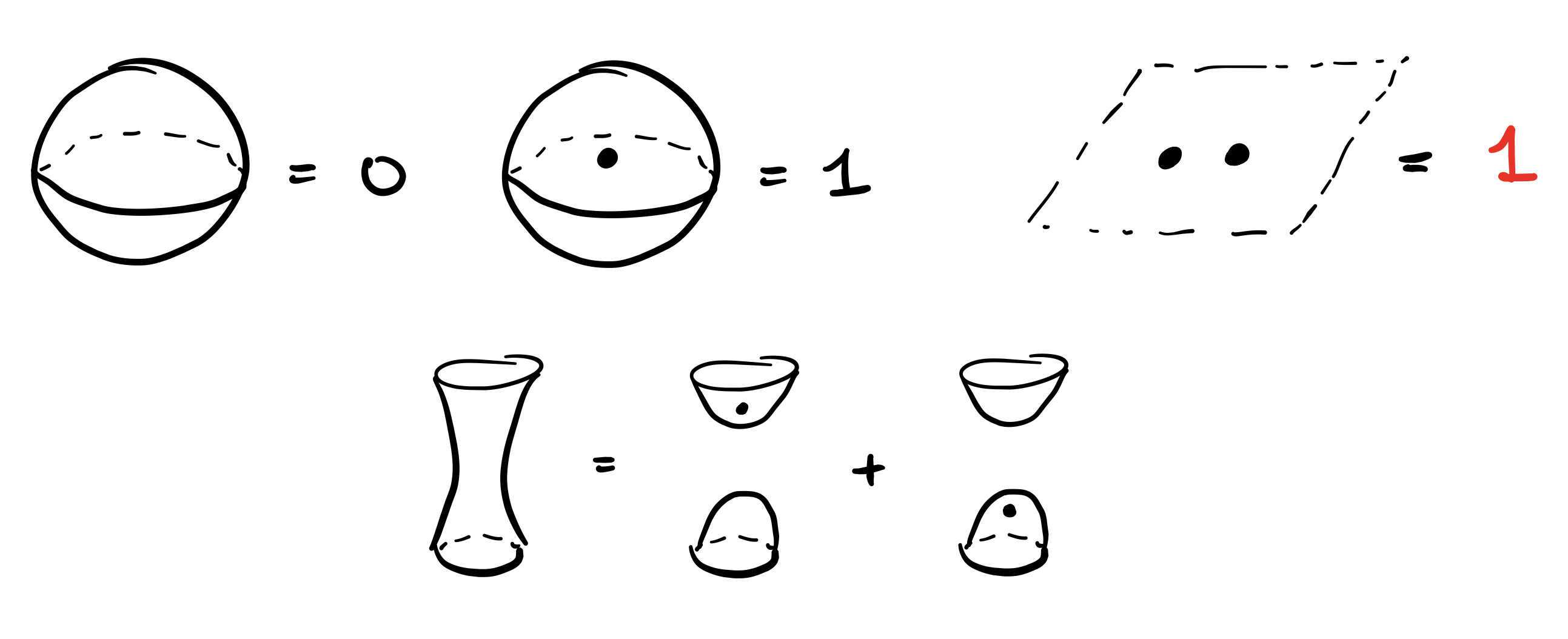}
    \end{center}
\end{definition}

\begin{remark}
    If you wanted to, you could verify that these relations still hold. The dotted identity still means `$\cdot X$'. What's more interesting is coming up with a new decorated cobordism category that innately uses the $\{\bolda, \boldb\}$ basis for $\CA'$. (So, define new kinds of dots.) What would your morphism relations look like, with these new decorations?
    \note{There is an answer in the literature, and major hints in the remainder of the section. Also, I could have phrased this as an Exercise but I would not want to grade this problem!}
\end{remark}

We will need an additional algebraic construction; we follow the exposition in \cite[Section 3]{BN-Morrison}.
Recall that in any category $\CC$, any endomorphism $p \in \Hom_{\CC}(X,X)$ satisfying $p^2 = p\circ p = p$ is called a \emph{projection}.
\note{We say that $p$ is \emph{idempotent}.}
The \emph{Karoubi envelope} of $\CC$, denoted by $\Kar(\CC)$, is an enlargement of the category $\CC$ that includes all images of idempotent endomorphisms:

\begin{definition}
The \emph{Karoubi envelope} (or \emph{idempotent completion}) of $\CC$, denoted by $\Kar(\CC)$, is the category where
\begin{itemize}
    \item objects are pairs $(X,p)$ where $X \in \Ob(\CC)$, $p \in \Hom_{\CC}(X,X)$, and $p^2 = p$; and 
    \item morphisms from $(X,p)$ to $(X',p')$ are the morphisms $f \in \Hom_{\CC}(X,X')$ that respect the projections: $f\circ p = p' \circ f$.    
\end{itemize}
\end{definition}

In other words, if you ever find a projection $p$ in (the morphisms of) $\CC$, then its image $\im(p)$ is an object in $\Kar(\CC)$. In particular, every object in $\CC$ also appears in $\Kar(\CC)$, in the form $(X, \id_X)$. 

\begin{example}
The Karoubi envelope occurs in the wild!
For example, let $\mathbf{Open}$ be the category consisting of 
\begin{itemize}
    \item objects: open subsets of Euclidean space (all $\R^n$ for $n \in \N$)
    \item morphisms: smooth maps between these open sets.
\end{itemize}
The Karoubi envelope of $\mathbf{Open}$ turns out to be the category of all smooth (closed, finite-dimensional) manifolds $\mathbf{Man}$. 
For example, while $S^n$ is not an open subset of any $\R^m$, it is the image of the projection
\begin{align*}
    p: U &\to U \\
        x &\mapsto \frac{x}{|x|}
\end{align*}
where $U$ is the open subset $\R^n - \{0\} \subset \R^n$. 
This example comes from the ncatlab page for `Karoubi envelope,' \cite{nlab:karoubi_envelope}.
\end{example}

Here are some more important algebraic facts that are not hard to prove.

\begin{proposition}
\label{prop:karoubi-facts}
Let $\CC$ be an additive category.
\bea
    \item If $p \in \Hom(X,X)$ is a projection, then so is $\id-p \in \Hom(X,X)$.  Thus in $\Kar(\CC)$, $X \cong \im(p) \oplus \im(\id - p)$. 
    \item Let $A$ and $B$ be chain complexes in $\Kom(\CC)$. If $A$ and $B$ are chain homotopy equivalent in $\Kom(\Kar(\CC))$, then they were also homotopy equivalent in $\Kom(\CC)$. 
\ee
\end{proposition}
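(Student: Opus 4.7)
For part (a), the idempotency of $\id - p$ is a one-line computation: $(\id - p)^2 = \id - 2p + p^2 = \id - p$, using only that $\CC$ is preadditive and $p^2 = p$. For the splitting, the plan is to exhibit mutually inverse morphisms $\alpha \colon (X, \id_X) \to (X, p) \oplus (X, \id - p)$ and $\beta$ in the reverse direction, with both components of $\alpha$ and of $\beta$ given by $p$ and $\id - p$ respectively. Checking that these are genuine morphisms in $\Kar(\CC)$ requires the intertwining condition $f \circ q = q' \circ f$, which follows by direct computation from $p^2 = p$ together with the orthogonality relations $p(\id - p) = (\id - p)p = 0$. The same orthogonality relations combined with $p + (\id - p) = \id_X$ are then exactly what guarantee $\beta \alpha = \id_X$ and that $\alpha \beta$ is the identity of the direct sum.

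For part (b), the key observation is that the canonical embedding $\iota \colon \CC \hookrightarrow \Kar(\CC)$ defined on objects by $X \mapsto (X, \id_X)$ is fully faithful. Indeed, under the paper's convention, $\Hom_{\Kar(\CC)}((X, \id_X), (Y, \id_Y)) = \{f \in \Hom_\CC(X, Y) : \id_Y \circ f = f \circ \id_X\} = \Hom_\CC(X, Y)$. Applying this observation levelwise, the induced functor $\Kom(\CC) \hookrightarrow \Kom(\Kar(\CC))$ is therefore also fully faithful.

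Given a chain homotopy equivalence between $A, B \in \Kom(\CC)$ realized in $\Kom(\Kar(\CC))$, we have chain maps $f \colon A \to B$ and $g \colon B \to A$ together with homotopies $h \colon A \to A$ and $k \colon B \to B$ satisfying $gf - \id_A = d_A h + h d_A$ and $fg - \id_B = d_B k + k d_B$. Each of $f, g, h, k$ is a morphism in $\Kar(\CC)$ between objects of the form $(X, \id_X)$, and hence by the fully faithful embedding above is already a morphism in $\CC$ between the underlying objects. The same four pieces of data therefore witness a chain homotopy equivalence in $\Kom(\CC)$. The only real obstacle is the bookkeeping in part (a): verifying that the proposed matrix entries really do satisfy the intertwining condition for morphisms in $\Kar(\CC)$ and that, with the chosen conventions, the identities on $(X, p)$ and $(X, \id - p)$ behave as expected under $\alpha \beta$. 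Part (b) is essentially formal once the fully faithful embedding is recognized.
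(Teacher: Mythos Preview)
Your proof is correct. The paper does not actually prove this proposition; it leaves it as an exercise immediately after the statement, so there is no approach to compare against. Your argument for (a) via explicit inverse maps and for (b) via full faithfulness of the embedding $\CC \hookrightarrow \Kar(\CC)$ is the standard one and is exactly what the exercise is asking for. One small point worth making explicit in (a): the identity morphism of $(X,p)$ in $\Kar(\CC)$ is $p$ itself (not $\id_X$), which is why $\alpha\beta = \begin{pmatrix} p & 0 \\ 0 & \id - p \end{pmatrix}$ really is the identity of the direct sum; you allude to this in your final sentence but it is the one place a reader might stumble.
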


\begin{exercise}
    Prove Proposition \ref{prop:karoubi-facts}. 
\end{exercise}

\begin{remark}
Recall from, say, Mat167 that involutions are related to projections:
if $P$ is a projection, then $I-2P$ is an involution. 
Geometrically, this involution reflects across the plane that $P$ projects onto, because you (a point in $\R^n$) are being translated by a vector to the plane, and then you overshoot by traveling by that vector again.

Conversely, given an involution $S$, the \emph{eigenprojections} of $S$ are given by 
$\frac{I \pm S}{2}$. 
If $P$ is an eigenprojection for $S$, then $SP = \lambda P$ where $\lambda = \pm 1$. 
These correspond to $\im(P)$ and $\im(P)^\perp$, because $P(I-P) = I$.

For example, the projection $P = \begin{pmatrix} 1 & 0 \\ 0 & 0 \end{pmatrix}$ projects points in $\R^2$ to the $x$-axis. The projection $I-P$ projects the $y$-axis. 
The involution (reflection) $I-2P$ reflects across the $x$-axis. 
This fixes the points on the $x$-axis ($\lambda= 1$) but reflects the points on the $y$-axis ($\lambda = -1$).
\end{remark}

Now consider the additive category $\Mat(\TLcat_1')$. The object with no closed components
\begin{center}
\begin{tikzpicture}[scale=.5]
    \draw[dotted] (0,0) circle (1cm);
    \draw[thick] (0,-1) -- (0,1);
\end{tikzpicture}
\end{center}
has an interesting involution: the `$\cdot X$' map (however, we can only use this terminology after passing through the TQFT, which we have not). Recall that this morphisms is just the dotted identity map. We will call this endomorphism $b$ for \texttt{\textbackslash bullet}, following Bar-Natan--Morrison. The `two dots = 1' relation in $\TLcat'$ means that $b^2 = \id$. 
The corresponding idempotents for this involution are
\[
    {\color{Red} r = \frac{\id+b}{2}} 
    \qquad \text{and}  \qquad 
    {\color{OliveGreen} g = \frac{\id-b}{2}}
\]
and they satisfy $r^2=r$, $g^2=g$, $r+g = \id$, $rg = 0$, $br=r$ and $bg=-g$. 
So, in $\Kar(\Mat(\TLcat_1'))$, we have a new decomposition of our  object 
\[
\begin{tikzpicture}[scale=.5, baseline=-.33em]
    \draw[dotted] (0,0) circle (1cm);
    \draw[thick] (0,-1) -- (0,1);
\end{tikzpicture}
=
\begin{tikzpicture}[scale=.5, baseline=-.33em]
    \draw[dotted] (0,0) circle (1cm);
    \draw[thick, Red ] (0,-1) -- (0,1);
    \node at (.3,0) {$r$};
\end{tikzpicture}
\oplus
\begin{tikzpicture}[scale=.5, baseline=-.33em]
    \draw[dotted] (0,0) circle (1cm);
    \draw[thick, OliveGreen] (0,-1) -- (0,1);
    \node at (.3,0) {$g$};
\end{tikzpicture}
\]

In $\Mat(\TLcat_n')$, for every object containing no closed components, we have a similar story; for each strand, the dotted identity (with the dot on the sheet corresponding to that strand) is an involution, and we get a projections $r$ and $g$ for each strand.
In short, an $n$-strand crossingless matching has $2^n$ colorings that assign either $r$ or $g$ to each strand. The $2^n$ colors correspond to the $2^n$ projections $\{r,g\}^{\otimes n}$.

Let's now study the chain complex associated to a crossing in the category $\Kom(\Kar(\Mat(\TLcat_2')))$:

\begin{center}
    \includegraphics[width=3in]{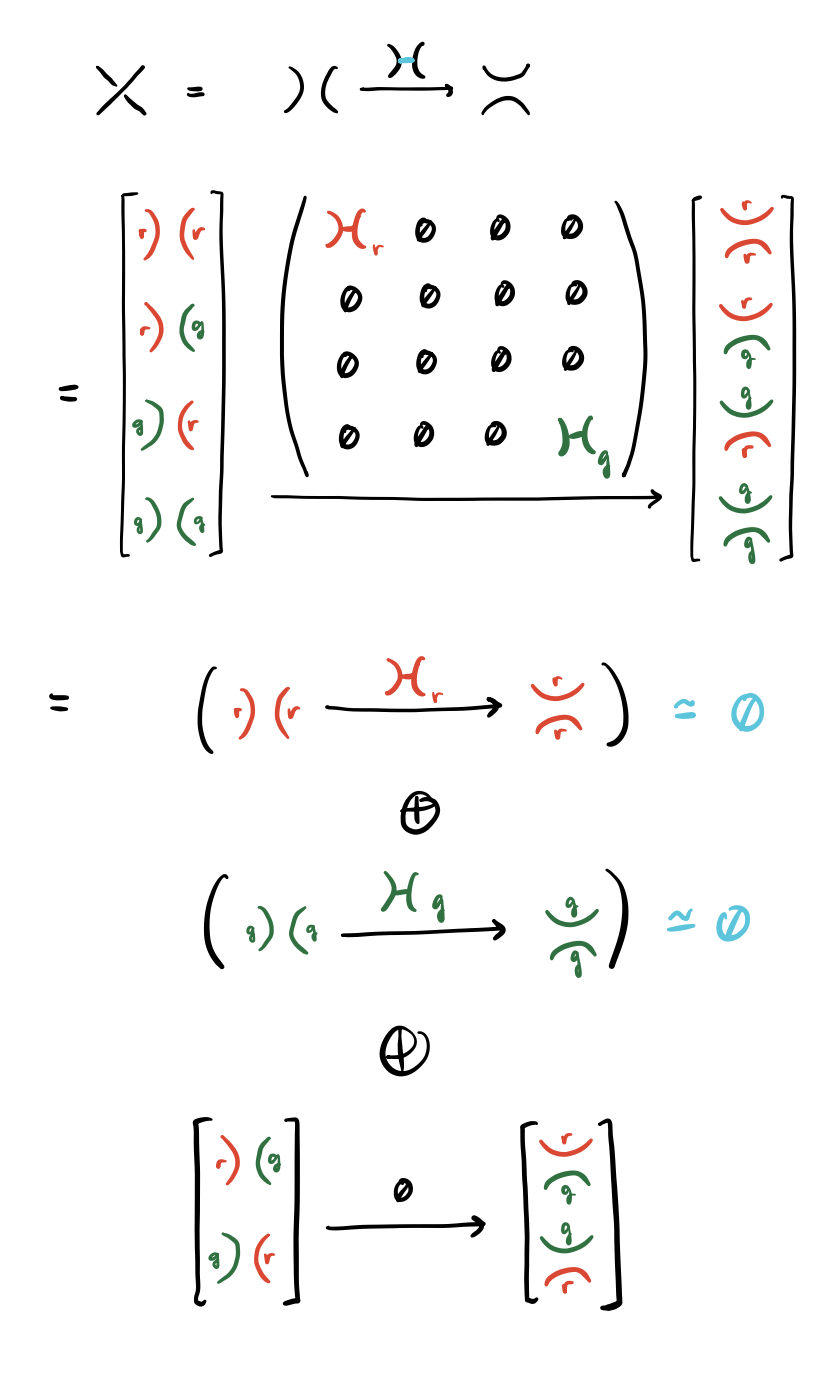}
\end{center}

Above, we claim that first two summands are contractible. To show this, it suffices to see that the red and green saddle maps are chain homotopy equivalences. \note{To see this, try to come up with inverses for these saddle maps.}
Therefore the chain complex for the crossing is equivalent to the last summand, which has no differential.
The generators of homology are therefore the four objects drawn. 

Now consider a link with $n$ crossings. A neighborhood of every crossing (a diagram representing a complex in $\Kom(\Kar(\Mat(\TLcat_2')))$ is equivalent to the complex above with 0 differential. The surviving homology classes are therefore obtained by gluing these complexes in the spaghetti-and-meatballs picture; more precisely, by tensoring together these complexes in a planar algebra. 
Observe that if, upon tensoring, a single strand is colored by red and green, that diagram dies, because $rg = 0$. 
\note{Make sure this really actually makes sense to you.}
The only remaining objects are complete resolutions where each planar circle is colored entirely either red or green.

\begin{aside}
    We never carefully defined what a planar algebra is, mostly because it is quite intuitive what `gluing' means (and our intuition is correct). 

    Here is a slightly more precise definition for our particular setting, though we will remain somewhat vague; we will often refer to this whole story as the `spaghetti-and-meatballs' picture. 

    Our $\TLcat_n$ categories are categorifications of the Temperley-Lieb algebras $\TL_n(\delta)$. You can look these up, but the main point is that these are generated by crossingless tangles with $2n$ endpoints, and any time you see a closed component, it evaluates to some $\delta \in R$, where $R$ is the base ring. 

    Consider a planar tangle $\mathcal{T}$ properly embedded in a disk with a finite number of smaller \emph{input disks} removed where each boundary component contains an even number of endpoints. 

    Suppose the boundary of the input disk $D_i$ contains $2n_i$ tangle endpoints. Then we can input (i.e.\ glue) any object of $\TL_{n_i}$ into this hole. 
    If the boundary of the big disk (the dinner plate) contains $2n_0$ endpoints of the tangle, then $\mathcal{T}$ gives us a map of $R$-algebras
    \[
        \TL_{n_1} \otimes \TL_{n_2} \otimes \cdots \otimes \TL_{n_k} \to \TL_{n_0}.
    \]
    
    Similar, for the categories $\TLcat_{n_i}$, the tangle $\mathcal{T}$ gives us a $\Z$-linear multifunctor
    \[
        \TLcat_{n_1} \times \TLcat_{n_2} \times \cdots \times \TLcat_{n_k} \to \TLcat_{n_0}.
    \]
\end{aside}

\begin{exercise}
In this exercise, you will fill in some details in the proof above of Lee's structure theorem.
\bea
\item Prove that 
\begin{center}
    \includegraphics[width=2in]{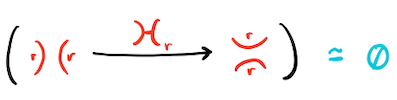}.
\end{center}
\item Explain carefully (i.e.\ at the level of homological algebra, planar algebras, and Karoubi envelopes) why $rg=0$ implies that gluing a red strand to a green strand results in an object that is equivalent to the zero object in $\Kar(\Mat(\TLcat_1'))$. 
\ee
    
\end{exercise}

This concludes the proof of Theorem \ref{thm:Lee-structure-theorem}, as we stated it. 
However, it remains to show that the surviving objects correspond to orientations on the link diagram, as described in Algorithm \ref{algo:Lee-canonical-generators}. We leave this as an exercise for the reader.

\begin{exercise}
Let $P$ be a surviving $\{r,g\}$-colored complete resolution of a link diagram $D$, in the proof of Lee's structure theorem. 
\bea
\item Prove that $P$ satisfies the following property: if two circles in $P$ both abut (a small neighborhood of) a crossing $c$ in $D$, then their colors are different (or more precisely, orthogonal!). 
\item Use this to argue that $P$ corresponds to some $\fraks_o$. \note{Perhaps use a counting argument.}
\item Determine a formula for the homological grading of the canonical generator $[\fraks_o]$. \note{The quantum filtration level is harder to compute, and is captured by a generalization of the $s$ invariant to links.}
\ee
\end{exercise}

\begin{warning}
\label{warn:char-2-Lee-vs-BN}

Nowadays, when discussing other TQFTs, we also use the term `Lee homology' to mean a very specific construction that results in a more degenerate homology theory.

\emph{Universal Khovanov homology} is the theory over the very general Frobenius algebra $\Bbbk[h,t,X]/(X^2 - hX - t)$. 
The corresponding `Lee homology' 
(\note{sorry about the confusing language}) is obtained by inverting the discriminant $\mathcal{D} = \sqrt{h^2+4t}$ (see \cite{Kh-frobext}).
In Lee's original theory, we set $h=0$ and $t=1$, so $\mathcal{D} = \sqrt{4} = 2$. 
So, we didn't need to work over $\Bbbk = \Q$; we just needed $\Bbbk$ to contain $\frac{1}{2}$. Otherwise, we'd be dividing by 0.
That is to say, you shouldn't do Lee homology over $\F_2$ if you're not ok with localizing at 0 (I'm not!); use the Frobenius algebra $\F_2[X]/(X^2-X)$ instead. 
This theory appears in earlier literature as `Bar-Natan homology'; nowadays, `Bar-Natan homology' typically refers to the theory for $\F_2[h,X]/(X^2-hX)$, which is more powerful.

Finally, also notice that for the original Frobenius algebra, the discriminant is 0 (which happens when the two roots of the quadratic are the same), so we can't actually invert the discriminant to get a corresponding `Lee homology'.

For my sanity, I prefer to call the `corresponding Lee homology' to a flavor of Khovanov homology as the `localized homology' because we localize at the discriminant.
\end{warning}

\subsubsection{Defining the $s$ invariant}

We now return back to \cite{Rasmussen-s}. 
Throughout, let $K$ be a knot. Then $\Lee(K) \cong \Q \oplus \Q$. 
Rasmussen's $s$ is defined using the filtration grading $\gr_q$ on the Lee complex. 

First note that our definition for the filtration grading of a chain $x \in \CLee(K)$ in \eqref{eq:quantum-filtration-grading} is equivalent to
\[
    \gr_q(x) = \max \{ j \in \Z \st x \in F^j(\CLee(K))
\]
where $F^\bullet$ is the filtration induced by $\gr_q$ on the homogeneous distinguished generators.

This induces a filtration grading on the homology class $[x] \in \Lee(K)$ by
\[
    \gr_q([x]) = \max \{ \gr_q(y) \st y \text{ is homologous to } x\}.
\]

Rasmussen defines two knot invariants, $s_{\max}$ and $s_{\min}$, using this filtration grading on Lee homology (\cite[Definition 3.1]{Rasmussen-s}):

\begin{equation}
    \label{eq:s-min-defn}
    s_{\min}(K) = {\min} \{\gr_q([x]) \st [x] \in \Lee(K), \ [x] \not= 0\}
\end{equation}
\begin{equation}
    \label{eq:s-max-defn}
    s_{\max}(K) = {\max} \{\gr_q([x]) \st [x] \in \Lee(K), \ [x] \not= 0\}
\end{equation}

\note{Prove to yourself that 
(1) these two values are always odd integers and 
(2) these are knot invariants.}

The $s$ invariant is defined to be the average of these two odd integers:
\begin{definition}
    For a knot in $K \subset \R^3$, 
    \[
        s(K) = \frac{s_{\max}(K) + s_{\min}(K)}{2}.
    \]
\end{definition}

In fact, $s(K)$ is always an even integer because of the following proposition, whose proof is the focus of the remainder of this section.

\begin{proposition}
\label{prop:smax-smin}
$s_{\max} = s_{\min} + 2$.
Therefore $s(K) = s_{\max}-1 = s_{\min}+1$. 
\end{proposition}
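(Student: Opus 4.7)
The plan is to pass to the basis $\alpha = [\fraks_o + \fraks_{\bar o}]$, $\beta = [\fraks_o - \fraks_{\bar o}]$ of the 2-dimensional $\Q$-vector space $\Lee(K)$ (where $o, \bar o$ are the two orientations of the knot $K$), compute their filtration gradings directly from the canonical chains, and show these realize the two filtration jump levels of $\Lee(K)$—differing by exactly $2$.

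First I would observe that $\fraks_{\bar o}$ is obtained from $\fraks_o$ by swapping $\bolda \leftrightarrow \boldb$ on every circle of the oriented resolution $D_o$: reversing the global orientation reverses the orientation of every circle simultaneously, which flips ``left'' and ``right'' for the checkerboard labeling in Algorithm \ref{algo:Lee-canonical-generators}. Under $\bolda = v_- + v_+$, $\boldb = v_- - v_+$, this swap corresponds to $v_+ \mapsto -v_+$, which multiplies each pure tensor $v_{\epsilon_1} \otimes \cdots \otimes v_{\epsilon_n}$ by $(-1)^{\#\{i:\epsilon_i = +\}}$. Hence, expanding in the pure-tensor basis, $\fraks_o + \fraks_{\bar o}$ retains only monomials with an even number of $v_+$ factors (each with coefficient $\pm 2$), while $\fraks_o - \fraks_{\bar o}$ retains only monomials with an odd number. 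The lowest-$\gr_q$ contribution to $\fraks_o + \fraks_{\bar o}$ is the all-$v_-$ monomial, at some value $q_0$; the lowest-$\gr_q$ contributions to $\fraks_o - \fraks_{\bar o}$ are the monomials with exactly one $v_+$, at $\gr_q = q_0 + 2$. Both are nonzero as chains, so $\gr_q(\fraks_o + \fraks_{\bar o}) = q_0$ and $\gr_q(\fraks_o - \fraks_{\bar o}) = q_0 + 2$, giving filtration-grading lower bounds $\gr_q(\alpha) \geq q_0$ and $\gr_q(\beta) \geq q_0 + 2$.

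Next I would upgrade these chain-level bounds to equalities using the Lee filtration spectral sequence. Since $\dim_\Q \Lee(K) = 2$, the associated graded $\bigoplus_j F^j/F^{j+1}$ has total dimension $2$, so the filtration has at most two jump levels; equivalently, $E_\infty$ is supported in at most two values of $\gr_q$. Combined with the two distinct lower bounds $q_0 < q_0 + 2$ already established, these must be exactly the two jump levels, forcing $\gr_q(\alpha) = q_0$ and $\gr_q(\beta) = q_0 + 2$. For a general nonzero class $c\alpha + d\beta$ with $c \neq 0$, the representative $c \cdot (\fraks_o + \fraks_{\bar o}) + d \cdot (\fraks_o - \fraks_{\bar o})$ has nonzero leading term at $\gr_q = q_0$ (the $c$-scaled all-$v_-$ term), and this term is not a boundary because it represents the nonzero class $\alpha$ in $F^{q_0}/F^{q_0+1} \subset E_\infty$; hence $\gr_q(c\alpha + d\beta) = q_0$. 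Thus $s_{\min}(K) = q_0$ is achieved by every class with nonzero $\alpha$-component, while $s_{\max}(K) = q_0 + 2$ is achieved by $\beta$, proving $s_{\max} = s_{\min} + 2$.

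The main obstacle I foresee is the rigidity step: promoting the chain-level inequalities $\gr_q(\fraks_o \pm \fraks_{\bar o}) = q_0, q_0 + 2$ to honest filtration gradings on homology classes, i.e.\ ruling out the possibility that some boundary could cancel the leading all-$v_-$ term of $\fraks_o + \fraks_{\bar o}$ and allow an improved representative. The combinatorial expansion in the $\{v_+, v_-\}$ basis is routine, and the final division into cases for $c\alpha + d\beta$ is immediate once the gradings of $\alpha$ and $\beta$ are pinned down; the homological-algebraic input—either via the dimension count on $E_\infty$ or an equivalent direct argument about non-bounding leading terms—is where genuine care is needed.
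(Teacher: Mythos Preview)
Your chain-level analysis in the first two paragraphs is correct and closely parallels the paper's Lemma \ref{lem:ras-s-quantum-summands}: you correctly identify that $\fraks_o + \fraks_{\bar o}$ and $\fraks_o - \fraks_{\bar o}$ live in different $\Z/4\Z$-quantum summands, with chain-level filtration gradings $q_0$ and $q_0 + 2$ respectively. This gives the strict inequality $s_{\max} > s_{\min}$, just as in the paper.

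However, your third paragraph contains a genuine gap, precisely at the point you yourself flag as the main obstacle. The inference ``there are at most two jump levels, and we have two distinct lower bounds $q_0 < q_0+2$, so these must be the jump levels'' is invalid. Nothing prevents both jump levels from being strictly larger: the associated graded could be supported at, say, $q_0 + 4$ and $q_0 + 6$, with $\gr_q(\alpha) = q_0 + 4$ and $\gr_q(\beta) = q_0 + 6$, which is fully consistent with your lower bounds. The $\Z/4\Z$ structure forces the two jump levels into the residue classes of $q_0$ and $q_0+2$ mod $4$, but does not bound them from above. Your dimension count on $E_\infty$ tells you \emph{how many} jump levels there are, not \emph{where} they are.

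What is missing is exactly an upper bound on $s_{\max} - s_{\min}$, and this is where the paper's approach diverges from yours. Rasmussen obtains it via Lemma \ref{lem:lee-connected-sum-ses}: the short exact sequence for $K \# U$ gives maps $p^*, m^*$ that are filtered of degree $-1$, and applying $m^*$ to $[\fraks_a + \varepsilon \fraks_b] \otimes \bolda$ yields $[\fraks_a]$, producing the inequality $s_{\max} - 1 \leq s_{\min} + 1$. This filtered-degree argument is the essential missing ingredient; without it (or a substitute), the chain-level computation alone cannot close the gap.
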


For a knot $K$, the quantum gradings of the homogeneous generators are all odd.
\note{(Why? Prove this to yourself! I do have a favorite proof of this.)}
Now recall that $d_{\Lee} = d_{\Kh} + \Phi$ where $\Phi$ raises $\gr_q$ by 4, and so we concluded that $\CLee(K)$ is $\Z/4\Z$-quantum graded. But in fact, we only have generators in degrees $1 \mod 4$ and $3 \mod 4$. 
We get a decomposition of the chain complex
\begin{equation}
\label{eq:lee-q-grading-direct-sum}
    \CLee(K) \cong \CLee_1(K) \oplus \CLee_{-1}(K),
\end{equation}
where $\CLee_i(K)$ is the summand at the $\Z/4\Z$-quantum degree $i \mod 4$. 

\begin{lemma}[cf.\ Lemma 3.5 of \cite{Rasmussen-s}]
\label{lem:ras-s-quantum-summands}
Let $o$ be an orientation of $K$, and fix a diagram $D$ for $K$. Then the two cycles $\fraks_o \pm \fraks_{\bar o}$ are contained in two different $\Z/4\Z$-quantum grading summands of $\CLee(D)$. 
\begin{proof}
The Lee generators $\fraks_o$ and $\fraks_{\bar o}$ both live in the oriented resolution $D_o$ of a diagram $D$ for $K$. 
The quantum grading of any homogenenous generator $g$ at $D_o$ is given by 
\[
    \gr_q(g) = \gr_h(D_o) + p(g) + n_+ - n_-
\]
where 
\begin{itemize}
    \item $\gr_h(D_o)$ is the homological degree at the resolution $D_o$ and
    \item $p(g) = \#(v_+ \text{ in }g) - \#(v_-\text{ in }g)$ (following notation in \cite{BN-Kh}).
\end{itemize}
\note{For the purposes of this proof, we only need to worry about $p(g)$, since the actual quantum gradings for all generators appearing in this proof differ from this local $p$ grading only by an overall shift (of $\gr_h(D_o) +  n_+ - 2n_-$).}

Define an involution $\tau: V \to V$ by $v_- \mapsto v_-$, $v_+ \mapsto -v_+$. Extend this to tensor products $V^{\otimes k}$, and in particular, $V^{\otimes |D_o|}$, \note{which is isomorphic to  $\CLee(D_o)$ up to the global shift we discussed.}

Observe that 
\begin{enumerate}
    \item $\tau$ acts as identity on generators (pure tensors) containing an even number of instances of $v_+$; call the span of these generators $\CLee_{even}(D_o)$.
    \item $\tau$ acts as $-1$ on generators with an odd number of instances of $v_+$; call the span of these generators $\CLee_{odd}(D_o)$.
\end{enumerate}
Since the number of tensor factors $(|D_o|)$ is fixed, these two summands of $\CLee(D_o)$ are at two different $\Z/4\Z$ quantum gradings. 

Since $\bolda = X-1$ and $\boldb = X+1$, we have $\tau(\bolda) = \boldb$ and $\tau(\boldb) = \bolda$; therefore 
\[  
    \tau(\fraks_o) = \fraks_{\bar o}.
\]
Therefore
\begin{itemize}
    \item $\tau$ acts as identity on the sum $\fraks_o +  \fraks_{\bar o}$, so we must have $\fraks_o +  \fraks_{\bar o} \in \CLee_{even}(D_o)$.
    \item $\tau$ acts as $-1$ on the difference $\fraks_o - \fraks_{\bar o}$, so we must have $\fraks_o -  \fraks_{\bar o} \in \CLee_{odd}(D_o)$.
\end{itemize}

Switching $p$ out for $\gr_q$ (by applying the quantum shift), we still have that the cycles $\fraks_o \pm \fraks_{\bar o}$ are supported in different $\Z/4\Z$-quantum grading summands.
\end{proof}
\end{lemma}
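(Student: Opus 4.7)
The plan is to construct an involution $\tau$ on the Lee chain complex whose $\pm 1$-eigenspaces are the summands containing $\fraks_o + \fraks_{\bar o}$ and $\fraks_o - \fraks_{\bar o}$, respectively, and then identify these eigenspaces with two distinct residues of the $\Z/4\Z$-quantum grading. The geometric input is a comparison of how $\fraks_o$ and $\fraks_{\bar o}$ are built by Algorithm \ref{algo:Lee-canonical-generators}; the algebraic input is a sign-tracking in the change of basis between $\{v_+, v_-\}$ and $\{\bolda, \boldb\}$.

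First I would observe that reversing the orientation of $K$ simultaneously reverses the induced orientation of every circle in the oriented resolution $D_o = D_{\bar o}$. Under Algorithm \ref{algo:Lee-canonical-generators}, the label-defining dot for each circle moves from its left side to its right side, crossing the circle once and therefore landing in a region of the opposite shading. Thus the label on every circle switches between $\bolda$ and $\boldb$, so $\fraks_{\bar o}$ is obtained from $\fraks_o$ by applying the tensor-factor swap $\bolda \leftrightarrow \boldb$ to every factor.

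Next I would translate this swap into the Khovanov basis. Since $\bolda = v_- + v_+$ and $\boldb = v_- - v_+$, the swap is induced by the linear involution $\tau: V \to V$ with $v_- \mapsto v_-$ and $v_+ \mapsto -v_+$, extended diagonally to $V^{\otimes |D_o|}$. This gives $\tau(\fraks_o) = \fraks_{\bar o}$, so $\fraks_o + \fraks_{\bar o}$ lies in the $+1$-eigenspace of $\tau$ and $\fraks_o - \fraks_{\bar o}$ lies in the $-1$-eigenspace.

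Finally I would match these eigenspaces with $\Z/4\Z$-quantum-grading summands. A pure tensor $g \in V^{\otimes |D_o|}$ with $k$ factors equal to $v_+$ satisfies $\tau(g) = (-1)^k g$, while its $p$-grading is $p(g) = 2k - |D_o|$; up to a global shift depending only on $D$, this is $\gr_q(g)$. Replacing one $v_+$ with a $v_-$ changes $\gr_q$ by $\pm 2$, so the parity of $k$ controls the residue of $\gr_q$ modulo $4$, and the two eigenspaces of $\tau$ therefore sit in two distinct $\Z/4\Z$-quantum-grading summands of $\CLee(D)$. The main thing I expect to be delicate is the first step: confirming that reversing $K$ really does execute the simultaneous $\bolda \leftrightarrow \boldb$ swap, which depends on the checkerboard convention and on the compatibility of the two shadings adjacent to each circle; once this geometric fact is nailed down, the rest of the proof is an eigenspace decomposition.
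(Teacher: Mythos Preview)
Your proposal is correct and essentially identical to the paper's proof: both define the involution $\tau$ on $V$ by $v_-\mapsto v_-$, $v_+\mapsto -v_+$, observe that $\tau$ interchanges $\bolda$ and $\boldb$ so that $\tau(\fraks_o)=\fraks_{\bar o}$, and then identify the $\pm 1$-eigenspaces of $\tau$ with the two $\Z/4\Z$-quantum-grading summands via the parity of the number of $v_+$'s. The only cosmetic difference is order of exposition: you motivate $\tau$ geometrically via Algorithm~\ref{algo:Lee-canonical-generators} and then compute its matrix, whereas the paper defines $\tau$ first and then checks $\tau(\bolda)=\boldb$; your extra paragraph on the checkerboard shading is exactly the implicit step the paper uses when it asserts $\tau(\fraks_o)=\fraks_{\bar o}$.
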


\begin{remark}
    In general, of $L$ has $n$ components, then the chains $\CKh(L)$ are all supported in quantum gradings $n \mod 2$. 
    Lemma 3.5 of \cite{Rasmussen-s} gives a more general statement than our discussion above.
    The definition for the $s$-invariant can be extended to oriented links, though the filtration gradings of the canonical classes behave more complexly; see \cite{Beliakova-Wehrli-ras-s-links}.
\end{remark}

Lemma \ref{lem:ras-s-quantum-summands} actually tells us more. 
Because $\Lee(K)$ is 2-dimensional, we now know that we can choose the basis $\{ \fraks_o \pm \fraks_{\bar o}\}$, and that these generate the summands of the direct sum decomposition on homology
\[
    \Lee(K) \cong \Lee_1(K) \oplus \Lee_{-1}(K).
\]
arising from the decomposition of the chain complex in \eqref{eq:lee-q-grading-direct-sum}.
We conclude that 
\begin{equation}
\label{eq:smax-gneq-smin}
    s_{\max}(K) \gneq s_{\min}(K).
\end{equation}

Furthermore, we must have 
\begin{equation}
\label{eq:smin-canonical-generators}
    s_{\min}(K) = \gr_q([\fraks_o]) = \gr_q([\fraks_{\bar o}])
\end{equation}
because both have components in the $\Z/4\Z$-quantum grading summand corresponding to $s_{\min}(K)$. 

To finish the proof of Proposition \ref{prop:smax-smin}, we will need another lemma, which will let us bound the difference between $s_{\max}$ and $s_{\min}$. 

\begin{definition}
    A map $f: C \to C'$  between filtered chain complexes (of the form in Definition \ref{defn:filtered-complex}
    is \emph{filtered of degree $k$} if $f(F_i) \subseteq F'_{i+k}$. 
\end{definition}

\begin{lemma}[\cite{Rasmussen-s}, Lemma 3.8]
\label{lem:lee-connected-sum-ses}
Let $K_1, K_2$ be knots. There is a short exact sequence 
\[
    0 \to \Lee(K_1 \# K_2) \map{p^*} 
    \Lee(K_1) \otimes \Lee(K_2) \map{m^*}
    \Lee(K_1 \# K_2)
    \to 0
\]
where $p^*$ and $m^*$ have filtered degree $-1$.
\begin{proof}
Let $D_1, D_2$ be diagrams for $K_1, K_2$, respectively.
We have the following mapping cone:
\begin{center}
    \includegraphics[width=3in]{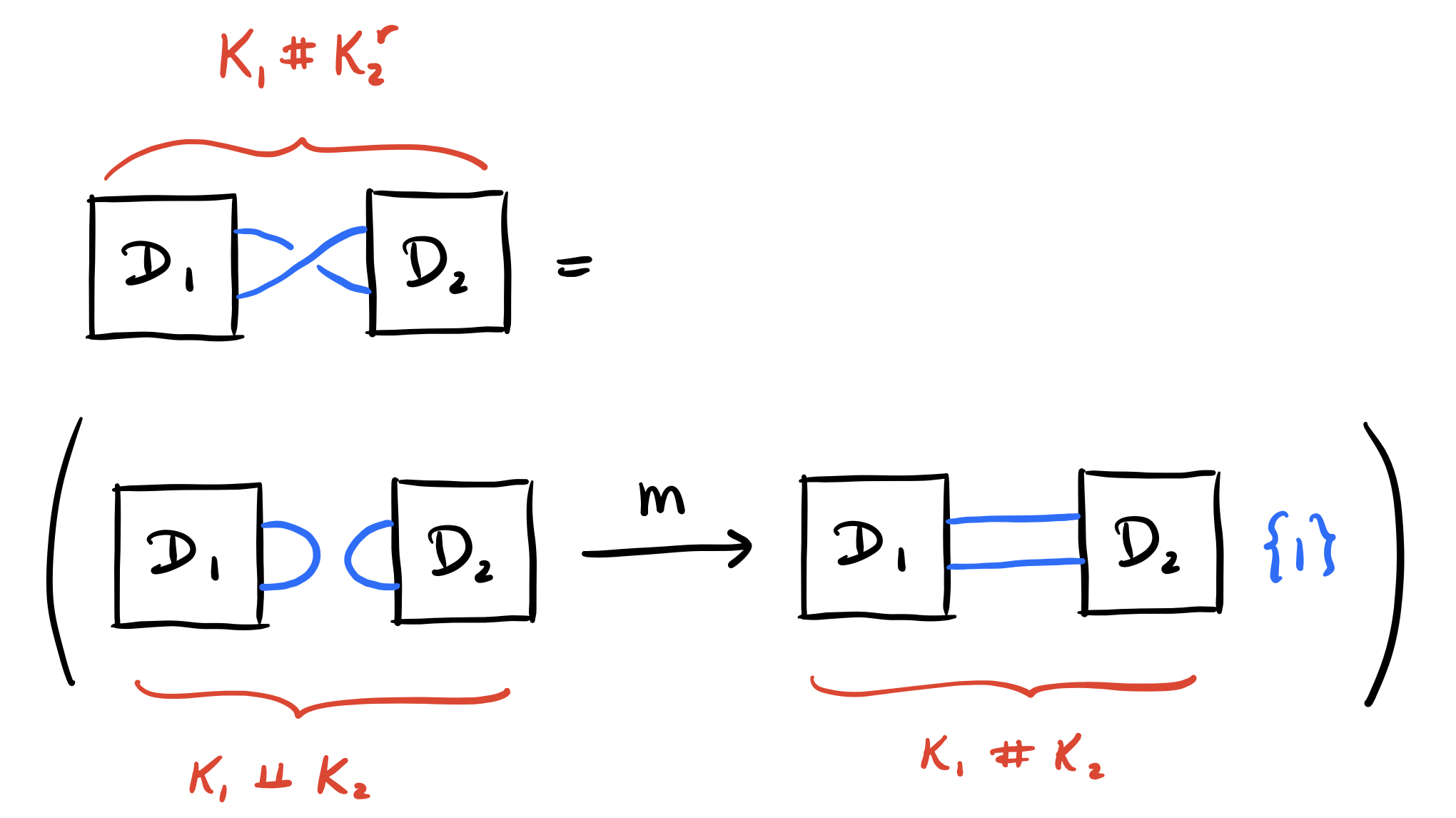}
\end{center}
\note{
Note that the first diagram isn't exactly $D_1 \# D_2^r$, but you can visualize an isotopy that spins the $D_2^r$ around the $x$-axis so that it looks like a disjoint union. So, the Lee complex is chain homotopy equivalent to $\CLee(D_1\# D_2^r)$, which is in turn obviously isomorphic to $\CLee(D_1 \# D_2)$. 
}
This yields a short exact sequence (see Section \ref{sec:mapping-cones}):
\[
    0 \to \CLee(K_1 \# K_2)\{1\} \map{i}
    \CLee(K_1 \# K_2^r) \map{p}
    \CLee(K_1 \sqcup K_2) \to 0
\]
\note{where $\CLee(K_1 \# K_2^r)$ is the chain homotopy representative given by the specific diagram as shown above, and the quantum shift comes from the internal quantum shift in the Khovanov chain complex construction}.
This gives you a long exact sequence on Lee homology. But because of Lee's structure theorem, we know the dimensions of the $\Q$ vector spaces in the sequence. Since $2+2 = 4$, the long exact sequence splits \note{(yay vector spaces!)}, i.e.\ $i^* = 0$. 

We conclude that we have a short exact sequence
\[
    0 \to \Lee(K_1 \# K_2) \map{p^*} 
    \Lee(K_1) \otimes \Lee(K_2) \map{m^*}
    \Lee(K_1 \# K_2)
    \to 0,
\]
but we've kind of discarded the internal quantum degree shifts when we write this down. 
If we're careful and  keep track of everything (i.e.\ keep track of orientations), we can figure out that both $p^*$ and $m^*$ are filtered of degree $-1$, as follows.

Without loss of generality, we can choose diagrams $D_1, D_2$ so that our crossing is positive. Then the actual mapping cone would be
\begin{center}
    \includegraphics[width=4in]{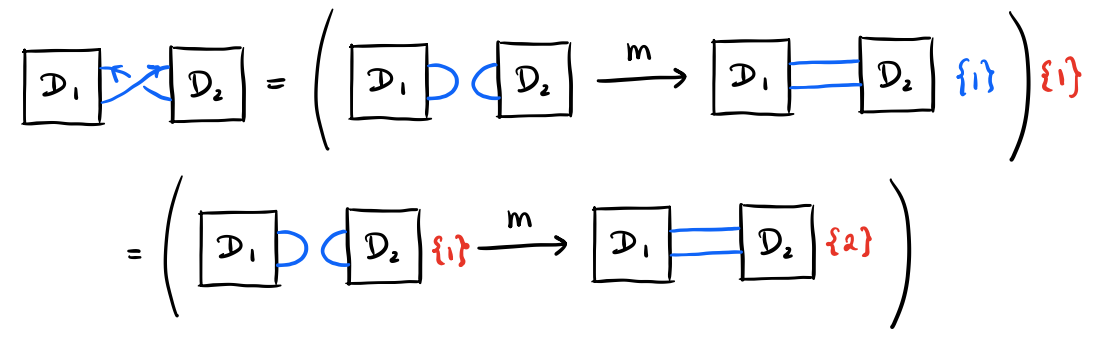}
\end{center}
By construction, $m$ (secretly $m'$ because we are doing Lee homology) 
\begin{center}
    \includegraphics[width=3in]{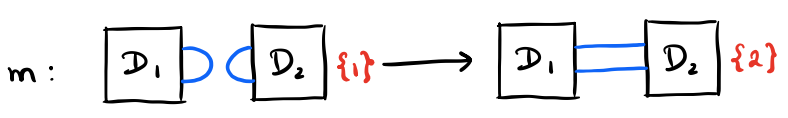}
\end{center}
is filtered of degree 0. 
Also, tautologically, $p$
\begin{center}
    \includegraphics[width=3in]{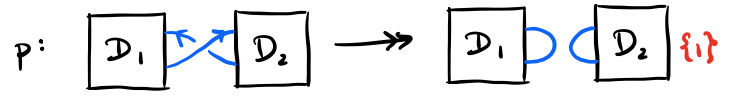}
\end{center}
is filtered of degree 0.
Therefore the \textit{actual} SES maps 
\begin{center}
    \includegraphics[width=3in]{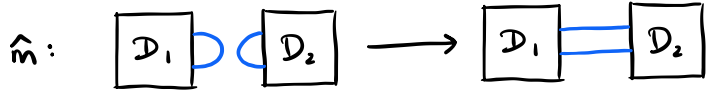}\\
    \includegraphics[width=3in]{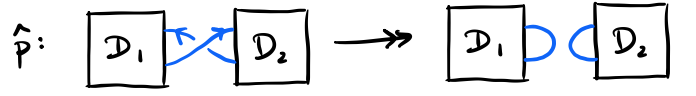}
\end{center}
(that we meant to use to obtain our LES)
are actually filtered of degree $-1$.

\note{Alternatively, Rasmussen suggests that we first note that the maps must be filtered of some degree, because, without shifts, $m^*$ is induced by a saddle (filtered degree $-1$) and $p^*$ is induced by a quotient (a filtered degree $0$ operation). 
So, he instead just uses the case $K_1, K_2 = U$ to figure out the filtered degree of these maps, and concludes that they are indeed both  $-1$.}

\end{proof}
\end{lemma}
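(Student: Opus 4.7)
The plan is to realize $\Lee(K_1 \# K_2)$ and $\Lee(K_1 \sqcup K_2)$ as the two pieces of a mapping cone built from a single crossing in the Khovanov bracket. Choose diagrams $D_1, D_2$ for $K_1, K_2$ and form a diagram $D$ obtained by introducing a single positive crossing between the two components, positioned so that one resolution of this crossing is (isotopic to) $D_1 \sqcup D_2$ and the other is a diagram for $K_1 \# K_2$ (or $K_1 \# K_2^r$, which has the same Lee homology). The Khovanov bracket then expresses $\CLee(D)$ as a two-term cone whose pieces are $\CLee(K_1 \sqcup K_2)$ and $\CLee(K_1 \# K_2)\{1\}$, up to the global shifts $\{n_+ - 2n_-\}[n_-]$. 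This yields a short exact sequence of chain complexes
\[ 0 \to \CLee(K_1 \# K_2)\{\bullet\} \to \CLee(D) \to \CLee(K_1 \sqcup K_2) \to 0. \]

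Passing to homology gives a long exact sequence involving two copies of $\Lee(K_1 \# K_2)$ (from the middle cone complex $\CLee(D)$ and from the subcomplex) and one copy of $\Lee(K_1 \sqcup K_2) \cong \Lee(K_1) \otimes \Lee(K_2)$. Here I would invoke Lee's structure theorem (Theorem \ref{thm:Lee-structure-theorem}): $\dim \Lee(K_1 \# K_2) = 2$ and $\dim \Lee(K_1 \sqcup K_2) = 4$. Since $2 + 2 = 4$ and the sequence is exact, one of the connecting maps is forced to vanish, and the LES collapses into the claimed short exact sequence $0 \to \Lee(K_1 \# K_2) \to \Lee(K_1) \otimes \Lee(K_2) \to \Lee(K_1 \# K_2) \to 0$.

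For the filtration-degree claim, I would apply Lemma \ref{lem:cobordism-degree-with-shifts} locally. The map $m^*$ is induced by the saddle cobordism merging the two components, whose underlying surface has Euler characteristic $-1$ and so contributes filtration degree $-1$; the map $p^*$ is induced by the projection onto a quotient chain complex, which is nominally filtration degree $0$. However, the internal $\{1\}$ shift inside the cone together with the global Khovanov normalization shifts from $n_+$ and $n_-$ of the added crossing must be absorbed across both arrows in the SES, and careful bookkeeping shows that both $p^*$ and $m^*$ end up with filtration degree $-1$.

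The main obstacle will be tracking these filtration shifts precisely through the cone decomposition, since the quantum shift convention inside $\llbracket \crossing \rrbracket$ interacts subtly with the SES extraction. A cleaner fallback, which is robust even when direct bookkeeping gets confusing, is to verify the filtration degrees on the baseline case $K_1 = K_2 = U$: there $\Lee(U) \cong \Q \oplus \Q$ is explicit, the canonical generators $\fraks_o, \fraks_{\bar o}$ are known, and the two maps can be written out in coordinates by hand. Since the local contribution of the crossing to the filtration shift is independent of the rest of the diagrams $D_1$ and $D_2$, the unknot calculation suffices to pin down the filtration degree of both $p^*$ and $m^*$ in general.
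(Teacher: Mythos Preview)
Your proposal is correct and follows essentially the same approach as the paper: introduce one crossing between $D_1$ and $D_2$, recognize the resulting cone as sitting in a short exact sequence of Lee complexes, invoke Lee's structure theorem and the $2+2=4$ dimension count to collapse the long exact sequence, and then track the filtration shifts (with the unknot case as the clean fallback). The only cosmetic difference is that the paper explicitly identifies the middle diagram $D$ as already representing $K_1 \# K_2^r \cong K_1 \# K_2$, so that the middle term of the SES is $\CLee(K_1 \# K_2^r)$ rather than an auxiliary $\CLee(D)$, but this is the same content.
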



\begin{proof}[Proof of Proposition \ref{prop:smax-smin}]

Now let's apply Lemma \ref{lem:lee-connected-sum-ses} to $K_1 = K$ and $K_2 = U$; we get a short exact sequence
\[
    0 \to \Lee(K) \map{p^*} \Lee(K) \otimes \Lee(U) \map{m^*} \Lee(K) \to 0
\]
where each nontrivial map has filtered degree $-1$. 
We know $\Lee(U) = \Q\bolda \oplus \Q\boldb$. 
We also know that one of $\{\fraks_o, \fraks_{\bar o}\}$ has label $\bolda$ on the component where the connected sum operation occurs; let's call this generator $\fraks_a$, and call the other $\fraks_b$. 

We know that $\gr_q([\fraks_a + \varepsilon \fraks_{\bar b}]) = s_{\max}$, where $\varepsilon$ is either $1$ or $-1$. 
In any case, we have
\[
    m^*([\fraks_a + \varepsilon \fraks_{\bar b}] \otimes \bolda) = [\fraks_a].
\]
Because $m^*$ is filtered of degree $-1$, it can only decrease the filtration degree by at most 1, i.e.
\[
    \gr_q([\fraks_a]) \geq 
    \gr_q([\fraks_a + \varepsilon \fraks_{\bar b}] \otimes \bolda) -1,
\]
i.e.
\[
    \gr_q([\fraks_a + \varepsilon \fraks_{\bar b}] \otimes \bolda)
    \leq 
    \gr_q([\fraks_a]) + 1.
\]
By \eqref{eq:smin-canonical-generators}, we have 
\[
    s_{\max}(K) - 1 \leq s_{\min}(K)+1,
\]
and by \eqref{eq:smax-gneq-smin}, we conclude that 
\[
    s_{\max}(K) = s_{\min}(K) + 2.
\]

\end{proof}

\subsubsection{$s$ is a concordance homomorphism}

We begin this section with the properties of $s$ that show it is a concordance homomorphism. 

These are proven in \cite{Rasmussen-s}, but that I leave for you to ponder on your own. If you aren't able to prove the statements on your own (but with the hints provided), feel free to look at Rasmussen's proof (and understand it) and then put it into your own words.

\begin{exercise}[\cite{Rasmussen-s}, Proposition 3.9]
\label{ex:s-invt-mirror}
\note{(Maybe hard?)}
Let $\bar K$ denote the mirror of $K$. Prove that $s(\bar K) = -s(K)$. 

\emph{Hint:}
Let $D$ be a diagram for $K$, and let $\bar D$ be the diagram where all the crossing information is reversed.
Explain briefly why, in the category of chain complexes of filtered vector spaces over $\Q$, we have 
\[
    \CLee(\bar D) \cong \CLee(D)^*
\]
where $(\cdot)^*$ is the dualizing functor that 
\begin{itemize}
    \item sends $v_{\pm} \mapsto v_{\mp}^*$; here $\gr_q(v_{\mp}^*) = \mp 1$; \note{I don't like this notation... The point is that under the degree-preserving isomorphism $V^* \cong V$, we have $v_-^* \mapsto v_-$. You can therefore think of the functor $(\cdot)^*$ as taking $v_\pm \mapsto v_{\mp}$. }
    \item reverses all quantum shifts (in fact, all quantum gradings);
    \item reverses all homological gradings and is contravariant on morphisms.
\end{itemize}
\end{exercise}

\begin{remark}
    In Khovanov's notation, the dualizing functor is denoted by $(\cdot )^!$. In other contexts, you may also see it written as $(\cdot )^\vee$, though the diagram mirroring rule might be different (e.g.\ reflect across $x$-axis). Nevertheless, however you mirror a diagram, there is a clear identification of chains for $D$ and for $\bar D$.
\end{remark}

\begin{exercise}[\cite{Rasmussen-s}, Proposition 3.11]
\note{(Maybe hard?)}
For knots $K_1, K_2$, prove that $s(K_1 \# K_2) = s(K_1) + s(K_2)$.

\emph{Hint:}
Use Lemma \ref{lem:lee-connected-sum-ses} for bounds on the filtration degree of morphisms, and then flip your morphisms upside down and use the result of Exercise \ref{ex:s-invt-mirror} too.

\end{exercise}

\subsubsection{Behavior of canonical generators under cobordisms}

In the proof of Proposition \ref{prop:smax-smin}, we saw how the filtration degree of maps 
helped us bound the behavior of $s$. 
Now, we will study the behavior of $s$ under cobordisms, by using the fact that every cobordism can be decomposed into elementary cobordisms, and we know the filtration degree of the Lee map associated to each elementary cobordism $F$: it's $\chi(F)$!

But this is not enough to know what happens to $s$: after all, the induced map could be 0. 

\begin{remark}
    Our discussion here will differ from Rasmussen's original paper, mainly because it was written before Bar-Natan's categories were introduced. (Rasmussen posted \cite{Rasmussen-s} in February 2004, whereas Bar-Natan posted \cite{BN-tangles} in October 2004.)
    Rasmussen carefully proves that the Reidemeister maps in Lee homology are filtered of degree 0. 
    For us, this fact is almost immediate because Bar-Natan's proof of Reidemeister invariance (in the most general $S,T,4Tu$ cobordism categories) is grading-preserving. 
    Instead, we can imagine passing first through a TQFT for the Frobenius algebra $\Z[T,X]/(X^2 = T)$, where $\deg_q(T) = -4$, and then setting $T = 1$ (and then localize at $\sqrt{\mathcal{D}} = 2$, or just $\otimes \Q$). This preserves the \emph{filtration} grading, because now all the arrows that used to have a $T$ coefficient (and were grading-preserving) are now increase filtration degree by 4.
\end{remark}

We know the Reidemeister moves are chain homotopy equivalences in Bar-Natan's category, so these maps are nonzero. We are more worried about compositions of Morse moves.
Luckily, we have the following lemma:

\begin{lemma}[see proof of Proposition 4.1 in \cite{Rasmussen-s}]
\label{lem:compatible-orientations}
    Let $S: L \to L'$ be a cobordism with no closed components. 
    Let $o$ be an orientation on $L$.
    Then 
        \[ 
            \phi_S([\fraks_o]) 
            = \sum_{I} a_I [\fraks_{o'_I}]
        \]
        where $\{o'_I\}$ is the set of all orientations on $L'$ that are compatible with $(L,o)$ on the bottom boundary of $S$, and where all coefficients $a_I$ are nonzero.
\end{lemma}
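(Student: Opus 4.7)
The approach is to decompose $S$ into elementary cobordisms via Morse theory and track canonical generators piece by piece, using the Frobenius formulas of Notation \ref{nota:lee-better-basis}. Reidemeister cobordisms are filtered chain homotopy equivalences (Bar-Natan's proof in $\TLcat$ is grading-preserving, and passes through the Lee TQFT unchanged), and by naturality they carry each $[\fraks_o]$ to $[\fraks_{o'}]$ (up to a nonzero scalar) for the corresponding orientation $o'$ on the new diagram. So it suffices to verify the lemma on the elementary Morse pieces --- cups, caps, and saddles --- and show that the form of the formula is closed under composition. The ``no closed components'' hypothesis ensures no factors of zero appear: in particular, it excludes closed spheres, which evaluate to $\varepsilon' \circ \iota'(1) = 0$.

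Cups and caps are handled directly by the Frobenius formulas. A birth of an unknot $c$ sends $1 \mapsto \tfrac{1}{2}(\bolda - \boldb)$, so each inductive summand $a_I [\fraks_{o'_I}]$ splits into two new summands $\pm \tfrac{a_I}{2}[\fraks_{o'_I \cup c_\pm}]$ indexed by the two orientations of $c$; both are compatible with $o$ since $c$ bounds a disk in $S$ and so its orientation is unconstrained on the bottom. A death of an unknot $c$ applies $\varepsilon'$, which sends both $\bolda$ and $\boldb$ to $1$, collapsing the pair of canonical generators that differ only by the orientation of $c$ into the canonical generator for the restricted orientation; the resulting coefficient is a sum that cannot cancel because the no-closed-components hypothesis forces the cap to be paired topologically with the earlier creation of $c$, so the two summands enter with the same sign.

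The main case --- and the main obstacle --- is the saddle. For a merge, $m'(\bolda\otimes\bolda) = 2\bolda$ and $m'(\boldb\otimes\boldb) = -2\boldb$, while the mixed products vanish; the split case is dual and easier, as $\Delta'$ is diagonal on $\{\bolda,\boldb\}$ and automatically produces the unique compatible orientation on the two resulting circles. The work therefore reduces to the following local claim: \emph{an orientation $o$ on the pair of input circles $C_1,C_2$ extends through the saddle to an orientation of the merged circle $C$ if and only if Algorithm \ref{algo:Lee-canonical-generators} assigns the same label to $C_1$ and $C_2$.} To prove this, fix a checkerboard coloring of a neighborhood of the saddle that is consistent for both the pre- and post-saddle pictures (possible since the saddle is a local modification), and perform the finite case analysis over local arc orientations: the ``dot slightly to the left'' of each of the two incoming arcs near the saddle lands in the same shaded/unshaded region precisely when those arcs flow into the saddle in the pattern required for orientation compatibility.

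Assembling these elementary calculations, $\phi_S([\fraks_o])$ is a sum indexed precisely by those orientations of $L'$ that extend $(L,o)$ through $S$, with coefficients that are products of $\pm 2$, $\pm \tfrac{1}{2}$, and $1$ --- hence nonzero. The hardest step is the saddle local claim; once it is established, the rest is bookkeeping that fits naturally into the dotted Bar-Natan framework we set up for Lee's theory.
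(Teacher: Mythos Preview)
Your overall strategy matches the paper's: decompose $S$ into elementary cobordisms and verify the statement piece by piece, with the saddle case carrying the main content. Your treatment of cups and saddles is fine, and your ``local claim'' for the saddle is exactly the right thing to check.

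The gap is in the cap case. You write that the death of $c$ collapses ``the pair of canonical generators that differ only by the orientation of $c$'' and that ``the two summands enter with the same sign'' because the cap is ``paired topologically with the earlier creation of $c$.'' Both assertions are wrong. First, a cap need not be paired with a cup: the dying circle $c$ may have descended from $L$ itself or from a split, in which case there is only one summand to begin with. Second, when $c$ \emph{does} come from a cup, the two summands are born with \emph{opposite} signs $\pm\tfrac{1}{2}$ from $\iota'(1)=\tfrac{1}{2}(\bolda-\boldb)$, and nothing you have said prevents cancellation after intervening saddles.

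The correct argument, and the one in the paper (following Rasmussen), is a uniqueness statement rather than a sign statement. At the moment of the cap, suppose two distinct permissible orientations on $S_i$ restrict to the same orientation on $L_{i+1}$. Then they differ exactly on the connected component $F\subset S_i$ containing $c$. Since both are permissible, $F$ has no boundary in $L$ (else they could not both agree with $o$ there); since they agree on $L_{i+1}=L_i\setminus c$, the only top boundary of $F$ is $c$. Capping $c$ therefore closes $F$ off, producing a closed component of $S$, contradicting the hypothesis. Hence exactly one permissible orientation survives the cap, and no sum---let alone cancellation---occurs. Replace your ``same sign'' paragraph with this injectivity argument and the proof goes through.
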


Let us state this another way, using Rasmussen's vocabulary.
There are two types of connected components $F$ of the cobordism $S$:
\begin{itemize}
    \item $F$ is of the \emph{first type} if it has a boundary component in $L$. In this case, the chosen orientation on the bottom boundary of $F$ determines the orientation on the rest of the boundary components on $L'$.
    \item $F$ is of the \emph{second type} if it only has boundary on $L'$. Then either orientation of $F$ is compatible with $(L,o)$. This yields two different possible sets of orientations on the components of $L'$ at the (top) boundary if $F$. 
\end{itemize}
An orientation on $F$ is \emph{permissible} (with respect to $(L,o)$) if the bottom boundary of $F$ agrees with $(L,o)$. The lemma therefore states that the set of orientations $\{o'_I\}$ is the set of top boundary orientations of the set of permissible orientations on $S$.

\begin{remark}
    Requiring that $S$ contain no closed components obstructs the situation where you have, say, a closed undotted sphere that makes the entire cobordism evaluate to 0.
\end{remark}

\begin{remark}
\alert{Clarify this on Friday}
\mz{add remark on cobordism boundary orientation (in and out). }
\end{remark}

Before we prove Lemma \ref{lem:compatible-orientations}, let's state the important corollaries:

\begin{corollary}[Proposition 4.1 of \cite{Rasmussen-s}]
If $S$ is an oriented cobordism from $(L,o)$ to $(L',o')$ that is \emph{weakly connected}, i.e.\ every component of $S$ has a boundary component in $L$ (i.e.\ $S$ has no components of the second type), then 
$\phi_S([\fraks_o])$ is a \emph{nonzero} multiple of $\fraks_{o'}$. 
\end{corollary}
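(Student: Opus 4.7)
The plan is to apply Lemma \ref{lem:compatible-orientations} directly to the oriented cobordism $S: (L,o) \to (L',o')$. The lemma gives us
\[
    \phi_S([\fraks_o]) = \sum_{I} a_I [\fraks_{o'_I}]
\]
where the index set ranges over all orientations $o'_I$ on $L'$ that arise as the top-boundary restriction of some permissible orientation on $S$, and all coefficients $a_I$ are nonzero. The strategy is to show that, under the weak connectedness hypothesis, this sum collapses to a single term, which must be the one corresponding to the given $o'$.

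First, I would recall the dichotomy between components of the first and second type. A component $F \subseteq S$ is of the first type iff $F \cap L \neq \emptyset$, in which case the orientation on $F \cap L$ (inherited from $o$) determines an orientation on all of $F$, and in particular on the top boundary $F \cap L'$. A component of the second type has no boundary in $L$, so two independent orientation choices on $F$ are each compatible with $(L,o)$ on the (empty) bottom boundary, and these two choices give different orientations on $F \cap L'$.

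The key step is then immediate from the weak connectedness assumption: every component of $S$ is of the first type, so there are no free orientation choices to make when producing a permissible orientation. Consequently, there is a \emph{unique} permissible orientation on $S$ (with respect to $(L,o)$), and since $S$ is already given as an oriented cobordism restricting to $o$ on the bottom, this unique permissible orientation must be the given orientation on $S$ itself, whose top-boundary restriction is $o'$. So the index set in the lemma's sum is the singleton $\{o'\}$, and we conclude $\phi_S([\fraks_o]) = a \, [\fraks_{o'}]$ for some nonzero $a \in \Q$.

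I don't anticipate any genuine obstacle here once Lemma \ref{lem:compatible-orientations} is in hand; the corollary is essentially a uniqueness statement about permissible orientations on weakly connected cobordisms, which is a direct combinatorial consequence of the definition. The only mild subtlety to address cleanly in the write-up is clarifying conventions for induced boundary orientations (bottom vs.\ top) so that the phrase ``permissible orientation on $S$ extending $(L,o)$'' unambiguously picks out the single given orientation on $S$ rather than, say, its reverse.
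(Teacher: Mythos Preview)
Your proposal is correct and is exactly the approach the paper intends: the corollary is stated immediately after Lemma~\ref{lem:compatible-orientations} with no separate proof, precisely because weak connectedness forces all components to be of the first type, so the sum in the lemma collapses to the single term indexed by $o'$. The one small point worth making explicit in your write-up is that weak connectedness also guarantees $S$ has no closed components, so the hypothesis of the lemma is satisfied.
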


\begin{corollary}[Corollary 4.2 of \cite{Rasmussen-s}]
    If $S$ is a connected cobordism between knots, then $\phi_S$ is an isomorphism.
\end{corollary}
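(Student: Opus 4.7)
The plan is to reduce the statement directly to the previous corollary (Proposition 4.1 from \cite{Rasmussen-s}). Since $K$ and $K'$ are knots, Lee's structure theorem (Theorem \ref{thm:Lee-structure-theorem}) gives $\Lee(K) \cong \Q \oplus \Q$ with distinguished basis $\{[\fraks_o], [\fraks_{\bar o}]\}$, where $o$ and $\bar o$ are the two orientations on $K$; similarly, $\Lee(K') \cong \Q \oplus \Q$ has basis $\{[\fraks_{o'}], [\fraks_{\bar{o'}}]\}$. So it suffices to show that $\phi_S$ sends the first basis to a basis of the second, up to rescaling by nonzero scalars.

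First I would observe that the hypothesis ``connected'' is strictly stronger than ``weakly connected'' in our setting: a connected $S$ has a unique component, and since $K$ is nonempty, that single component must have a boundary component in $K$. Hence $S$ has no components of the second type, and Proposition 4.1 applies. Fix an orientation $o$ on $K$; orienting $S$ so that it restricts to $o$ on the incoming boundary determines an orientation on the outgoing boundary, which must be one of the two orientations on $K'$ — call it $o'$. By Proposition 4.1, $\phi_S([\fraks_o]) = a [\fraks_{o'}]$ for some nonzero $a \in \Q$. Applying the same argument with $o$ replaced by $\bar o$, we get $\phi_S([\fraks_{\bar o}]) = b [\fraks_{\bar{o'}}]$ for some nonzero $b \in \Q$, since reversing the incoming orientation reverses the unique induced orientation on the outgoing boundary.

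Finally, I would conclude by noting that with respect to the bases $\{[\fraks_o], [\fraks_{\bar o}]\}$ and $\{[\fraks_{o'}], [\fraks_{\bar{o'}}]\}$, the matrix of $\phi_S$ is the diagonal matrix with entries $a, b \in \Q^\times$, which is invertible. Hence $\phi_S: \Lee(K) \to \Lee(K')$ is an isomorphism.

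The only place where there is anything to be careful about is the claim that the induced outgoing orientation on $K'$ really flips when we flip $o$ to $\bar o$; this is immediate from the fact that $S$ has a single connected component (so a global orientation on $S$ is determined by its restriction to any boundary component), but it is the key input that guarantees $\phi_S$ takes the two canonical basis elements to \emph{different} canonical basis elements on $\Lee(K')$ rather than to two nonzero multiples of the same one. Given Proposition 4.1, this observation is the main (and essentially only) content of the proof.
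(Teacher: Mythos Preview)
Your proof is correct and is precisely the intended argument: the paper states this as an immediate corollary of Proposition 4.1 without further proof, and your reduction---connected $\Rightarrow$ weakly connected, apply Proposition 4.1 to both orientations, observe the resulting matrix is diagonal with nonzero entries---is exactly how one fills in the details. Your care in noting that connectedness of $S$ is what forces $\bar o \mapsto \bar{o'}$ (rather than both orientations landing on the same generator) is the one nontrivial observation, and you handled it correctly.
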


This corollary is particularly important because we can then study the filtration degree of $\phi_S$ to understand the behavior of $s$.

\begin{proof}[Proof of Lemma \ref{lem:compatible-orientations}]
To set up, first isotope $S$ so that you may select regular values $0 < t_1 < t_2 < \ldots < t_k = 1$ such that, between any $t_i$ and $t_{i+1}$, there is at most one critical value of $S$, and there are no critical values between $0$ and $t_1$.
\note{This hopefully feels like a familiar procedure by now!}

Let $L_i = S \cap \R^3 \times \{t_i\}$, and let $S_i$ be the cobordism $S \cap [0,t_i]$, a cobordism $L \to L_i$. 

The proof basically proceeds by induction on the heights of elementary cobordisms.
The cobordism $S_1$ is a planar isotopy; this is the base case, and the proposition holds because $\phi_{S_1} = \id$. 

Assume the proposition holds for $S_i$. Then $S_e := S_{i+1} - S_i$ consists of a single elementary cobordism (union identity everywhere else, of course). 
The induction step holds if $S_e$ is a Reidemeister move, so we only need to consider the cases of the four possible Morse moves; see Notation \ref{nota:lee-better-basis}. 
\note{We will abuse notation a bit below and let $\phi_{S_e}$ also denote the chain map in $\Kom(\ggVect_\Q)$, rather than just the map on homology.} 
\mz{These are individually not hard to prove; you should actually think through these yourself.}
\begin{enumerate}
    \item[(0)] If $S_e$ is a 0-handle attachment (birth), then $\phi_{S_e}(\fraks(o_I)) = \fraks(o_I) \otimes \frac{1}{2} \bolda \otimes \boldb$. (This is a disjoint union, so the differentials and homology respect the tensor product decomposition.) \mz{Done!}
        \begin{center}
            add pic
        \end{center}
    \item[(1)] If $S_e$ is a 1-handle attachment (i.e.\ a merge or a split), once again you can check by inspection that the induction step holds. \mz{Verify it! Here you actually need to think about orientations in the plane.}
        \begin{center}
            add pic
        \end{center}
    \item[(2)] If $S_e$ is a 2-handle attachment (death), then there's only one possible compatible orientation for $L_{i+1}$; since $\varepsilon'(\bolda) = \varepsilon'(\boldb) = 1$, you basically just erase the component that died.
    But we need to check that only one permissible orientation on $S_i$ (and thus on $L_i$) extends to this unique orientation on $L_{i+1}$. \mz{Here, argue why if two different permissible orientations on $S_i$ extended to the orientation on $L_{i+1}$, then $S$ overall would have a closed component.}
\end{enumerate}
\end{proof}

\begin{exercise}
    Fill in the details in the proof of Lemma \ref{lem:compatible-orientations} for the four Morse moves, by supplying pictures working out the possible cases and explaining why the induction step holds in each case.
\end{exercise}

\begin{remark}
\note{This is a very subtle comment. Feel free to ignore.}
Notice that while we are using the Lee maps associated to cobordisms, nowhere did we actually use the fact that this assignment of cobordisms to filtered chain maps actually defines a functor. In other words, Rasmussen's invariant can be defined in settings where you have something weaker than an actual functor; we only needed the fact that isotopic cobordisms have the same Euler characteristic, and that Euler characteristics add when you glue cobordisms along collections of circles.
\end{remark}

\subsubsection{Profit!}

Now that we understand how Lee's canonical generators behave under cobordisms, we reap the benefits and prove a bunch of facts about the behavior of $s$ under cobordisms embedded in 4D. Clever mathmaticians (\mz{like you!}) can then use these facts to do interesting topology.

\begin{theorem}[\cite{Rasmussen-s}, Theorem 1]
\label{thm:s-invt-genus-bound}
\[ 
    |s(K)| \leq 2g_4(K). 
\]
\begin{proof}
Suppose $K \subset S^3$ bounds a genus-$g$ slice surface $F \subset B^4$.
Let $S$ be the cobordism $K \to U$ obtained by deleting a 0-handle from $F$ and flipping it upside down. Then $\chi(S) = -2g$.
Let $[x]$ be a homology class where $\gr_q([x]) = s_{max}(K)$, and further pick $x \in [x]$ so that $\gr_q(x) =\gr_q([x]) =  s_{\max}(K)$. \note{Note that none of these classes or cycles are 0.}

Since $s_{\max}(U) = 1$, we have
\[
    \gr_q(\phi_S(x)) \leq 1.
\]
Since $\phi_S$ is filtered of degree $-2g$, we have 
\[
    \gr_q(\phi_S(x)) \geq \gr_q(x) - 2g 
\]
and so
\[
    s(K)+1 = s_{\max}(K) = \gr_q(x) \leq \gr_q(\phi_S(x)) + 2g \leq 1 + 2g
\]
so we conclude that 
\[
    s(K) \leq 2g.
\]

To get that $s(K) \geq -2g$, we use the same argument but for $\bar K$, which is cobordant to the unknot via the surface $\bar S$, and we similarly conclude that $s(\bar K) \geq 2g$.
But $s(\bar K) = -s(K)$, so we multiply the whole inequality by $-1$ to get $s(K) \leq -2g$. The theorem follows.
\end{proof}
\end{theorem}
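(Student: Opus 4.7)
The plan is to use the Lee-homology machinery developed in the preceding subsections, together with the fact that Lee's canonical generators behave controllably under cobordisms. Concretely, given a knot $K \subset S^3$ bounding a genus-$g$ smooth surface $F \subset B^4$, I would first produce a connected cobordism $S \colon K \to U$ in $S^3 \times I$ by removing a small open disk from $F$ (a 0-handle around any interior point) and reorganizing it as a concordance-like morphism running from $K$ up to the unknot $U$. The Euler characteristic computation is immediate: $\chi(F) = 1-2g$, so $\chi(S) = -2g$.

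Next I would invoke the filtered-degree principle: since $\phi_S$ is the Lee-homology morphism associated to a cobordism of Euler characteristic $-2g$, it is filtered of degree $-2g$, i.e.\ it can decrease the $\gr_q$ filtration by at most $2g$. The key nontriviality input is the corollary of Lemma \ref{lem:compatible-orientations}: because $S$ is a connected cobordism between knots, $\phi_S \colon \Lee(K) \to \Lee(U)$ is actually an isomorphism on Lee homology, so in particular it is nonzero on every nonzero class.

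With these pieces in place the upper bound on $s(K)$ essentially writes itself. Pick a cycle $x \in \CLee(K)$ with $[x] \neq 0$ and $\gr_q(x) = \gr_q([x]) = s_{\max}(K)$. Since $\phi_S([x]) \neq 0$ and $\Lee(U)$ has $s_{\max}(U) = 1$, we get $\gr_q(\phi_S(x)) \leq 1$. Combining with the filtered-degree bound $\gr_q(\phi_S(x)) \geq \gr_q(x) - 2g$ yields $s_{\max}(K) - 2g \leq 1$, i.e.\ $s(K) \leq 2g$.

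To close out the lower bound, I would apply the same argument to the mirror $\bar K$, which bounds the mirrored slice surface $\bar F$ of the same genus, obtaining $s(\bar K) \leq 2g$. Then Exercise \ref{ex:s-invt-mirror} gives $s(\bar K) = -s(K)$, so $-s(K) \leq 2g$ and therefore $|s(K)| \leq 2g_4(K)$. I expect the main obstacle, if any, to lie not in the inequality chain itself but in justifying that $\phi_S$ is actually nonzero on the class realizing $s_{\max}(K)$; however, this is exactly what the connectedness corollary of Lemma \ref{lem:compatible-orientations} guarantees, so the whole argument reduces to bookkeeping of filtered degrees.
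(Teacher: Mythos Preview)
Your proposal is correct and follows essentially the same route as the paper's proof: build a connected cobordism $S\colon K\to U$ from a genus-$g$ slice surface, use that $\phi_S$ is filtered of degree $\chi(S)=-2g$, compare $s_{\max}(K)$ with $s_{\max}(U)=1$, and then mirror to get the other inequality. If anything, you are slightly more explicit than the paper in invoking the connectedness corollary of Lemma~\ref{lem:compatible-orientations} to guarantee $\phi_S([x])\neq 0$, which is exactly the point the paper's parenthetical ``none of these classes or cycles are $0$'' is gesturing at.
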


\begin{exercise}
    Suppose knots $K$ and $K'$ differ by just one crossing change. 
    \bea
    \item Prove that $|s(K) - s(K')| \leq 1$.
    \item From this, obtain a bound on the \textit{unknotting number} $u(K)$ of knot $K$, where $u(K)$ is the minimal number of self-intersections needed in a homotopy of $K \into S^3$ that changes $K$ into an unknot.
    \ee
\end{exercise}

\begin{theorem}
    If $D$ is a positive diagram for a positive knot $K$, then 
    \[
        s(K)  = s_{\min}(K) + 1 = \gr_q(\fraks_o) +1.
    \]
\begin{proof}
    The oriented resolution $D_o$ is the unique resolution at the left-most vertex of the cube of resolutions. There are no differentials into this homological grading, so $\fraks_{o}$ and $\fraks_{\bar o}$ are alone in their respective homology classes.
\end{proof}
\end{theorem}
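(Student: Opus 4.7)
The plan is to exploit positivity of the diagram to force $\fraks_o$ to be the unique chain-level representative of its Lee homology class, and then to read off $s(K)$ from the formulas already established in this section.

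First, I would observe that at a positive crossing the oriented resolution coincides with the $0$-smoothing. Hence, when $D$ is positive, $D_o$ is the resolution at the all-zeros vertex of the cube $\{0,1\}^{n_+}$, i.e., the unique resolution of Hamming weight $0$. Since $n_- = 0$, the global homological shift applied to $\llbracket D \rrbracket$ does not introduce any chains at lower homological grading than $D_o$; in particular, both $\fraks_o$ and $\fraks_{\bar o}$ live in the leftmost (minimum $\gr_h$) chain group of $\CLee(D)$.

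Next, because this is the leftmost homological grading, the chain group immediately to its left is zero and so the image of the incoming Lee differential is trivially $0$. Hence any cycle $y$ homologous to $\fraks_o$ must satisfy $y - \fraks_o = 0$, i.e., $y = \fraks_o$; the homology class $[\fraks_o]$ has $\fraks_o$ as its only chain-level representative. In particular, taking the maximum over this one-element equivalence class gives $\gr_q([\fraks_o]) = \gr_q(\fraks_o)$. Combined with \eqref{eq:smin-canonical-generators}, which asserts $s_{\min}(K) = \gr_q([\fraks_o])$, this yields $s_{\min}(K) = \gr_q(\fraks_o)$. Applying Proposition \ref{prop:smax-smin} to rewrite $s(K) = s_{\min}(K) + 1$ then completes the identification $s(K) = \gr_q(\fraks_o) + 1$.

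The main (mild) obstacle is simply confirming the leftmost-vertex argument survives the global shifts: one must use $n_- = 0$ to ensure the homological shift does not pull any chain below $\gr_h(D_o)$. Once this is verified, the argument is a short bookkeeping reduction to the equations and propositions already proved earlier in the section, and no new analytic input is required.
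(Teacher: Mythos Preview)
Your proposal is correct and follows essentially the same approach as the paper: both observe that positivity forces $D_o$ to sit at the leftmost vertex of the cube, so there are no incoming differentials and $\fraks_o$ is the unique representative of its homology class. You have simply filled in the bookkeeping (the $n_-=0$ shift check and the explicit appeals to \eqref{eq:smin-canonical-generators} and Proposition~\ref{prop:smax-smin}) that the paper's terse proof leaves implicit.
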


\begin{exercise}[\cite{Rasmussen-s}, Corollary 1](The Milnor Conjecture)
Let $\gcd(p,q) = 1$.
For the braid-closure diagram of the torus knot $T_{p,q} = \hat \beta$ for 
$\beta = (\sigma_1\sigma_2\cdots\sigma_{p-1}^q$, compute $\gr_q(\fraks_{o})$. 
Deduce the Milnor Conjecture:
\[
    g_4(T_{p,q}) = g_3(T_{p,q}) = \frac{(p-1)(q-1)}{2}.
\]
\end{exercise}

\begin{remark}
    Rasmussen also shows that the $s$ invariant is not so useful for alternating knots, as it gives the same bound as the classical knot signature. Manolescu--Ozsv\'ath later proved that, in fact, Khovanov homology is not very interesting for a larger class of recursively defined knots called \emph{quasialternating knots} \cite{MO-quasialternating}. \mz{We probably won't cover this; good idea for a final project.}
\end{remark}

\subsection{Non-isotopic (smooth) surfaces in the 4-ball; exotic disks}

Here we give some more examples where Khovanov homology was used to study surfaces embedded in 4D.

Recall that an $(n+1)$-dimensional TQFT will give an invariant of $(n+1)$-dimensional \emph{closed} manifolds: a cobordism $F: \emptyset \to \emptyset$ yields map $Z(F): R \to R$, which is determined by $Z(F)(1) \in R$. 
Rasmussen \cite{Rasmussen-KJ} and Tanaka \cite{Tanaka-KJ} showed that this invariant for (vanilla) Khovanov homology is always just determined by the genus of the surface $F$. 

However, if we instead consider a relative version of such an invariant, we do get a rich invariant; this is possible because the target of the map $\Kh(K)$, is big.

\begin{definition}
Let $F$ be a slice surface for a knot $K$. 
The \emph{Khovanov-Jacobsson class} of $F$ is 
\[
    \Kh(F)(1) \in \Kh(K).
\]
\end{definition}

\begin{example}
    Sundberg--Swann \cite{Sundberg-Swann} showed that the slice knot $K = 9_{46}$ has a pair of non-isotopic slice disks $D, D'$ by showing that their \emph{Khovanov-Jacobsson classes} are unequal:
    \[
        \Kh(D)(1) \not= \Kh(D')(1).
    \]
    Here are the disks: 
    \begin{center}
        \includegraphics[height=1.5in]{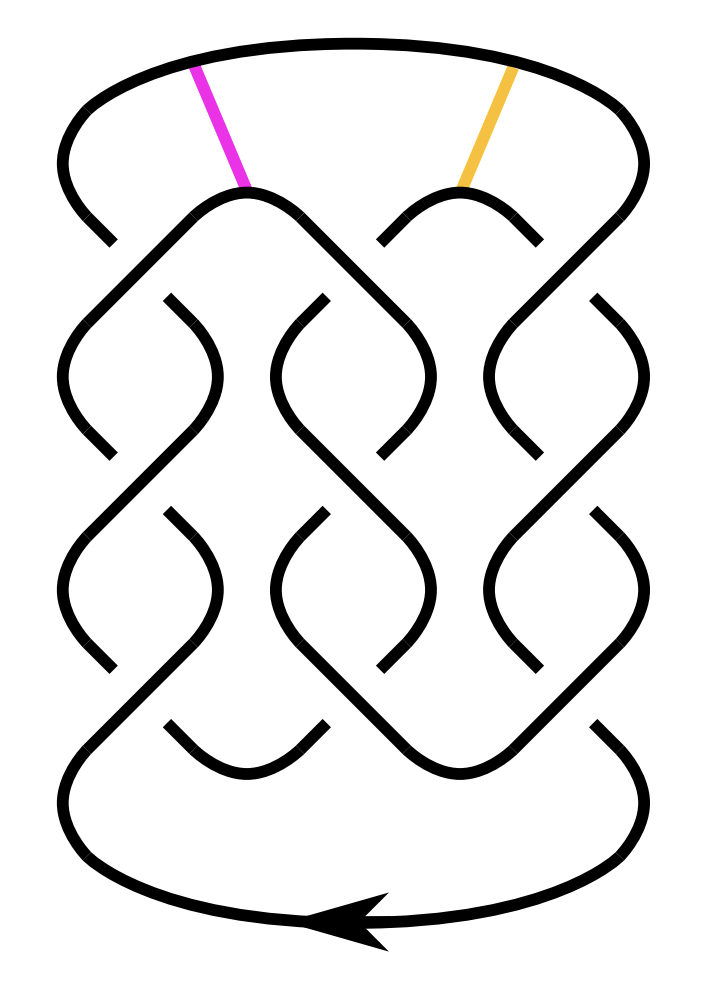}
    \end{center}
    \note{(Image taken from their paper.)}
    For the pink disk, surgery the knot at the pink arc; you will find the result unravels to a two-component unlink. Fill these in and run your movie backwards. 
    Do the same for the orange disk.
\end{example}

\begin{remark}
    Sundberg--Swann's paper is much more general; they show that for any pretzel knot $K = P(p, -p, p)$, you can use the same trick to get two different slice disks. Furthermore, they show that you get the same result if you take arbitrary connected sums of such $K$. 
\end{remark}

When a topologist sees this kind of result, they will naturally wonder, ``Is this because the disks were \emph{topologically} non-isotopic in the first place, or are these disks actually an \emph{exotic pair}?''
And a certain type of topologist will get very excited if the latter is true. 

Let's casually explain the difference. (All embeddings are proper.)

Let $F$ be a surface homeomorphically embedded in $B^4$, without any further regularity conditions. The embedding is \emph{locally flat} if, in any neighborhood of any point on $F \subset B^4$, we can choose coordinates so that $F$ looks flat, in a smooth sense.

For example, using the Jordan curve theorem, you can convince yourself that any embedding $S^1 \into \R^2$ is locally flat. However, as we saw on the first day of class, this is no longer true for $S^1 \into \R^3$:
\begin{center}
    \includegraphics[width=2.5in]{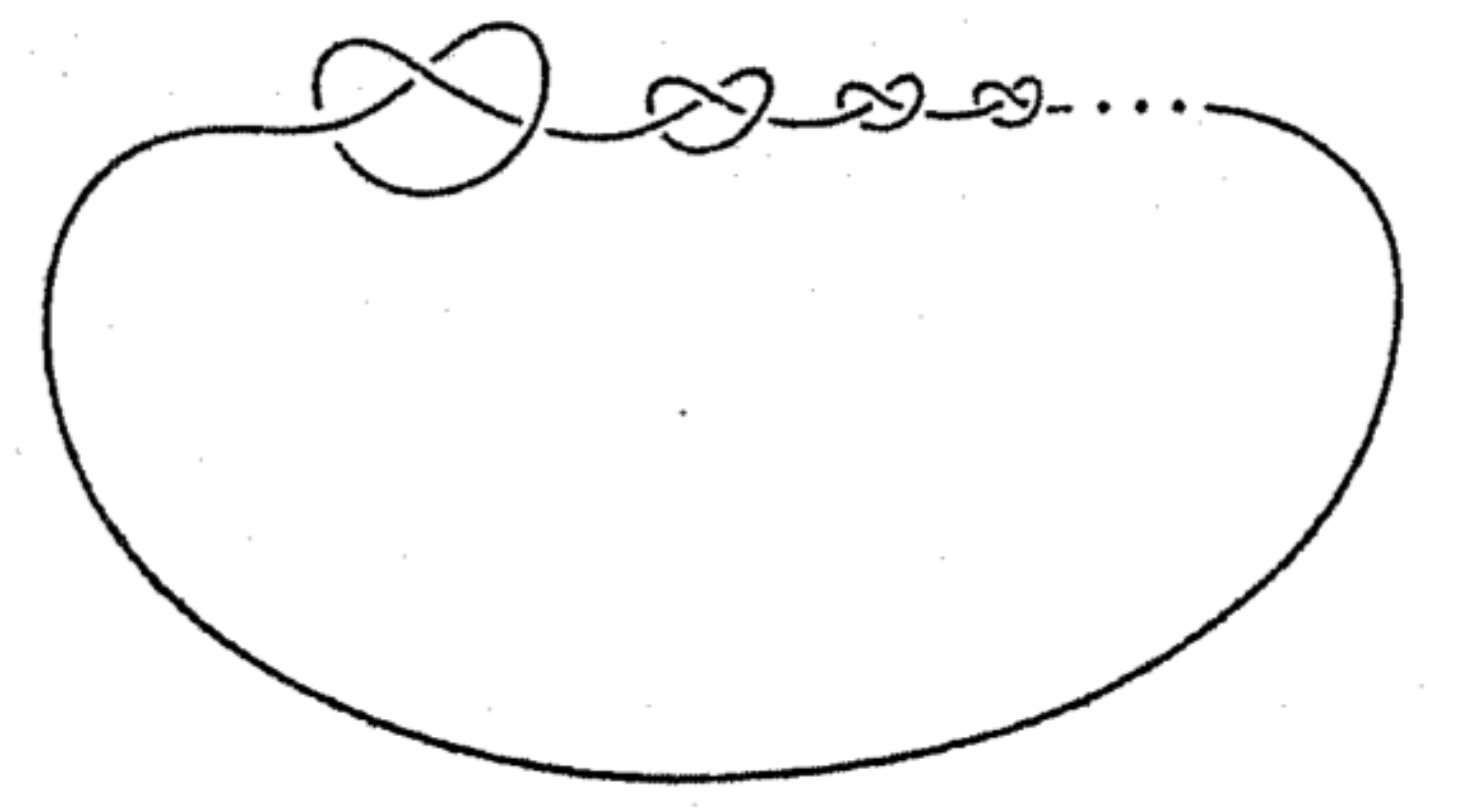}
\end{center}
or $S^2 \into \R^3$:
\begin{center}
    \includegraphics[width=2in]{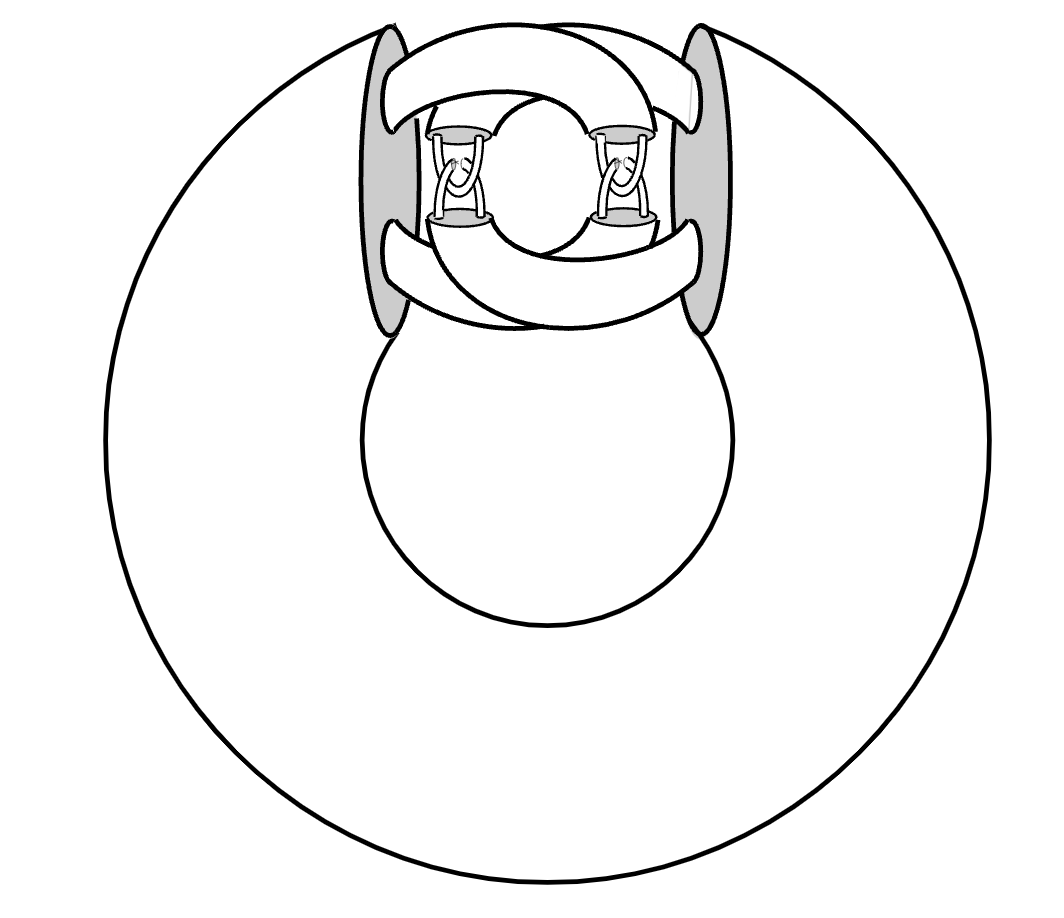}
\end{center}
\note{(Image taken from wolfram.com's article.)}

\begin{definition}
Let $F$ and $F'$ be two \textit{smoothly} embedded surfaces in $B^4$ that share the same boundary in $S^3 = \partial B^4$. 
\begin{itemize}
    \item We say $F$ and $F'$ are \emph{topologically (locally flatly)  isotopic} if there is an ambient isotopy taking $(B^4, F)$ to $(B^4, F')$ through locally flat(ly embedded) surfaces, rel boundary. 
    \item If $F$ and $F'$ are topologically isotopic but \emph{not smoothly isotopic}, then we say that they are an \emph{exotic pair} (or just \emph{exotic}, for short). 
\end{itemize}
\end{definition}

\begin{remark}
    Sometimes I cook food from my ancestral culture and it's referred to as `exotic'. I like to think that the people who say this really mean to say that my dish and their dish are an `exotic pair'. After all, being `exotic' is a symmetric relation.
\end{remark}

One particularly famous question about exotic phenomena in dimension 4 is THE: 
\begin{conjecture}[Smooth Poincar\'e Conjecture in dimension 4 (SPC4)] 
    (Open) If $X$ is homeomorphic to $S^4$, then $X$ is diffeomorphic to $S^4$. 
\end{conjecture}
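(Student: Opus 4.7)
The plan is to acknowledge upfront that SPC4 is the last open case of the generalized Poincar\'e conjecture and has resisted all known techniques since the 1960s, so I would not attempt a direct proof but rather outline a strategy to \emph{disprove} it by exhibiting an exotic $S^4$, using the Khovanov-theoretic machinery developed in this paper. The broad architecture, due to Freedman--Gompf--Morrison--Walker, is to start with a homotopy 4-ball $B$ (i.e.\ a contractible smooth 4-manifold with $\partial B = S^3$) whose double $DB := B \cup_{S^3} B$ is homeomorphic to $S^4$ by Freedman's theorem; then the task reduces to certifying that $DB$ is \emph{not} diffeomorphic to $S^4$.

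First, I would pick a concrete candidate exotic homotopy 4-ball, typically presented by a handle decomposition with a single 1-handle and a single 2-handle whose attaching circle $K$ sits in $S^1 \times S^2$. Cancelling the 1-handle against a $0$-framed 2-handle reduces the problem to the following 3-dimensional data: a knot $K' \subset S^3$ which is smoothly slice in $B$ but whose sliceness in the standard $B^4$ is in question. If one could show $K'$ is topologically slice in $B^4$ (automatic from the Freedman hypothesis) but \emph{smoothly non-slice} in $B^4$, then $B \neq B^4$ smoothly, so $DB$ is an exotic $S^4$. The detection step is exactly where the $s$-invariant from Section~\ref{sec:rasmussen-s} enters: by Theorem~\ref{thm:s-invt-genus-bound}, $|s(K')| \leq 2g_4(K')$, so $s(K') \neq 0$ obstructs smooth sliceness in the standard $B^4$.

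The key steps, in order, are: (i) select a handle presentation of a candidate homotopy ball, e.g.\ a Cappell--Shaneson sphere or a Gluck twist; (ii) translate the smooth slice problem across $B$ to a slice problem in $B^4$ for a specific knot $K'$ obtained by handle-slides and 1-handle cancellation; (iii) verify $K'$ is topologically slice (via Freedman's theorem, using that the Alexander polynomial is trivial); (iv) compute $s(K')$ using the Bar-Natan category and the delooping plus Gaussian elimination tools of the preceding sections, ideally via a machine computation assisted by the Karoubi-envelope decomposition behind Lee's structure theorem (Theorem~\ref{thm:Lee-structure-theorem}); (v) if $s(K') \neq 0$, conclude $B \not\cong_{\text{sm}} B^4$ and hence $DB$ is exotic.

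The hard part, and the reason this program has so far failed, is step (iv) together with the choice in step (i): every computed candidate $K'$ arising from known potentially-exotic homotopy balls has turned out to satisfy $s(K') = 0$, and in many cases the ambient ball was subsequently shown to be standard by direct handle calculus. Two concrete obstacles dominate. First, computational: the relevant diagrams have on the order of $100$ crossings after cancelling the 1-handle, pushing against the limits of even the fastest divide-and-conquer Bar-Natan implementations. Second, structural: the $s$-invariant, and indeed all presently known concordance homomorphisms factoring through Khovanov-type theories, vanish on topologically slice knots that arise in this way, suggesting a deeper obstruction rooted in the fact that Khovanov homology may be a \emph{topological} concordance invariant. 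A secondary plan, therefore, would be to replace $s$ with a finer invariant extracted from a universal or equivariant Khovanov theory (e.g.\ using $\F_2[h,X]/(X^2-hX)$ Bar-Natan homology and its torsion order), and to recast the detection step in the $\BNcat_0$ category directly, bypassing the TQFT entirely. I expect that any honest attempt to resolve SPC4 from the Khovanov side will live or die by whether such a refined invariant can distinguish smoothly inequivalent fillings of topologically slice knots.
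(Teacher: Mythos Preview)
The statement is labeled \emph{(Open)} in the paper and is presented as a conjecture with no proof; there is nothing in the paper to compare your attempt against. You correctly recognize this and do not claim to prove or disprove SPC4, instead sketching the Freedman--Gompf--Morrison--Walker program for potentially disproving it via the $s$-invariant. That is a reasonable piece of context, and your summary of why the program has stalled (every candidate $K'$ computed so far has $s(K')=0$, and several candidate balls have since been shown standard) is accurate.

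That said, as a ``proof proposal'' for the stated conjecture it is a category error: you are outlining a strategy to establish the \emph{negation} of the statement, and you acknowledge that the strategy has not succeeded. One technical point worth flagging: your remark that Khovanov homology ``may be a topological concordance invariant'' is too strong as written---Rasmussen's $s$ is known \emph{not} to be a topological concordance invariant (there exist topologically slice knots with $s\neq 0$, which is precisely what underlies the exotic $\R^4$ construction in the paper). The obstruction is subtler: it is that the particular knots $K'$ arising from handle-cancellation of known candidate homotopy balls seem always to have $s=0$, not that $s$ is blind to smooth-vs-topological sliceness in general.
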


As a smooth invariant, Khovanov homology can help with questions of this type. We will see some results about disks here, and then later about 4-manifolds when we talk about skein lasagna modules.

Here is an example, again only a specific one taken from a larger class described in the paper:
\begin{example}
    Hayden--Sundberg showed that the following knot $J$ bounds slice disks $D$ and $D'$ that are topologically isotopic (by work of Akbulut) but not smoothly isotopic, by using Khovanov homology. 
    \begin{center}
        \includegraphics[width=3.5in]{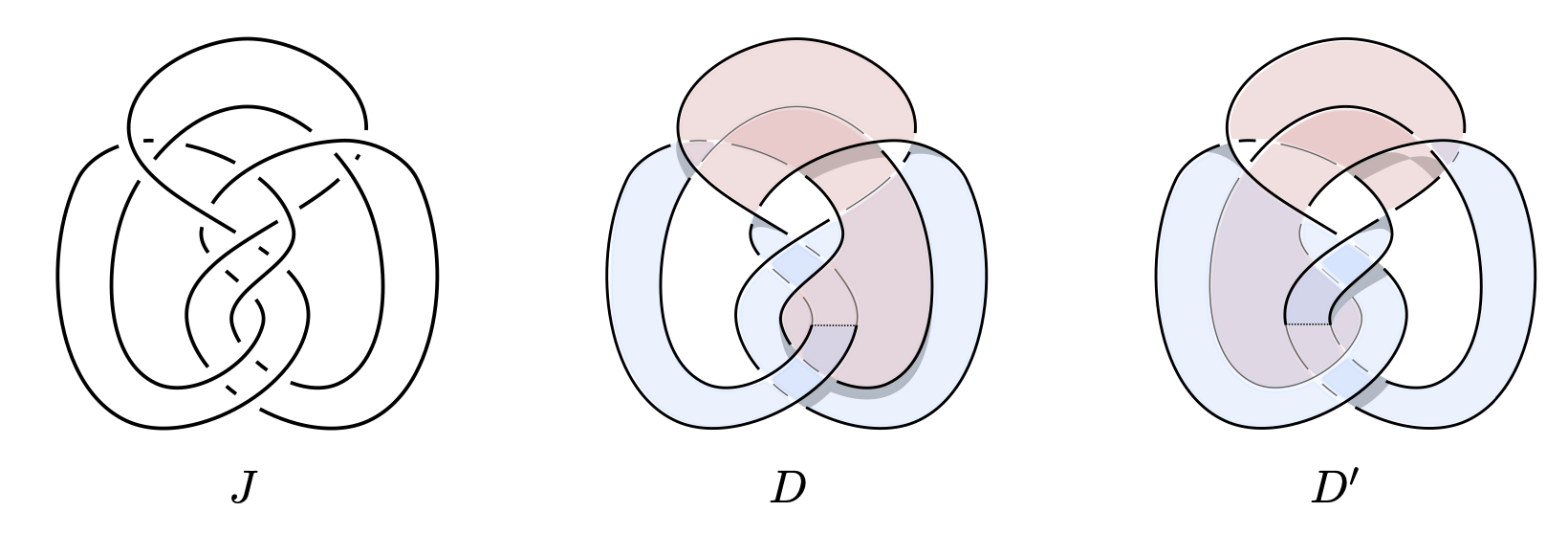}
    \end{center}    
    \note{(Image taken from their paper.)}
    They view the two disks upside-down, and study the maps $\Kh(D)$ and $\Kh(D')$ by identifying what happens to a class called \emph{Plamenevskaya's class} $\phi \in \Kh(J)$ under these maps. For one of these disks, $\Kh(D)(\phi) = 0$ but for the other, $\Kh(D')(\phi)\neq 0$.
\end{example}

\note{Plamenevskaya's class will show up again very very soon!}

That excited topologist from before will probably then also ask, ``Do these surfaces remain exotic after \emph{stabilization}? How many times do I need to \emph{stabilize} before the surfaces become smoothly isotopic?''

In this case, \emph{stabilization} refers to connect summing with a standard (small, boundary-parallel) torus embedded in $B^4$. 
In 4D topology, one expects that exoticness between two objects is eventually lost after stabilizations, because, roughly speaking, there is more room for maneuvering. \note{We'll talk a little more about this later on, but not about the details.}

It turns out that if we work with a TQFT over a polynomial ring, such as $\F_2[H]$, then Khovanov homology `sees' stabilization (`adding a handle'), e.g.\ as multiplication by $2X-H$ in the specific case of $\CA = \F_2[H,X]/(X^2 = HX)$ \note{(this theory is usually called \emph{Bar-Natan homology})}. 

\begin{example}
Hayden \cite{Hayden-atomic} gave an example (again, one of many) where (reduced) Bar-Natan homology can show us two disks remain smoothly non-isotopic even after one stabilization. 
He first finds two disks slice disks for a knot $K$ that are not topologically isotopic, and therefore also not smoothly isotopic. 
Then, he computes that 
\[
    H \cdot \left ( \BNred(D)(1) - \BNred(D')(1) \right ) \not= 0
\]
and therefore
\[
    H\cdot \BNred(D)(1) \not= H \cdot \BNred(D')(1)
\]
as classes in $\BN(K)$. 
\note{Where did the $2X$ part of the handle $2X-H$ go? We're over characteristic 2!}
\end{example}

\begin{remark}
    We did not talk about reduced homology, but will when we start talking about relationships between Khovanov homology and Floer homologies. For now, just know that it's a slimmer version of full Khovanov homology, so if it can tell you something, so can the full theory.
\end{remark}


\section{Legendrian, transverse, and annular links}

\subsection{Knots in the standard contact $\R^3$ and $S^3$}

In this section, we give just enough exposition on the standard contact structure on $S^3$ as well as Legendrian and transverse knots in this setting to proceed with our applications of Khovanov homology. See \href{https://etnyre.math.gatech.edu/preprints/papers/legsur.pdf}{John Etnyre's notes} \cite{Etnyre-leg-notes} for a fairly standard introductory reference on this material, and much more context. 

\subsubsection{Legendrian knots in $(\R^3, \xistd)$}

Roughly speaking, a \emph{contact structure} on a 3-manifold is a 2-plane field that is \emph{maximally non-integrable}, i.e. not tangent to any surface on any neighborhood of any point on the surface. 

\begin{definition}
    The \emph{standard contact structure on $\R^3$} is 
    \[
        \xistd = \ker (dz - ydx).
    \]
    Here $\alpha = dz-ydx$ is the \emph{standard contact 1-form} on $\R^3$.
\end{definition}

The \emph{contact planes} are invariant under $x$ and $z$ translation (i.e.\ parallel transport) and look like this at (the tangent spaces of the points on) the plane $z=0$:
\begin{center}
    \includegraphics[width=2in]{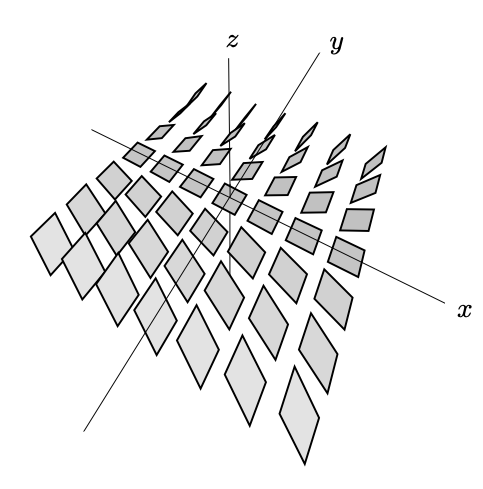}
\end{center}
(Image from \cite{Etnyre-leg-notes}, where he says the images are courtesy of S. Sch\"onenberger.)

Along each line parallel to the $y$-axis, you can think of these planes as the tangent spaces to a belt that is overall a half-twisted band, but whose ends are at infinity. 
\note{Let's call this an `infinite half-twisted band'.}
In particular, the planes are \textit{nearly} vertical (i.e.\ $yz$-planes) when $|y|$ is large. 

While embedded 2-manifolds can't be tangent to $\xistd$ at more than isolated points, embedded 1-manifolds can. For example, notice that the $x$-axis is everywhere tangent to the contact planes.

\begin{definition}
    A \emph{Legendrian knot (or link)} $\Lambda$ in $(\R^3, \xistd)$ is a smoothly embedded knot (or link) that is everywhere tangent to $\xistd$. 

    \note{In other words, $T_*L \subset \xistd|_L$, i.e.\ the tangent bundle of $L$ is contained in the contact plane bundle over $L \subset \R^3$.}

    Again, we will consider Legendrian knots only up to \emph{Legendrian isotopy}, i.e.\ a smooth isotopy only through Legendrian knots.
\end{definition}

The $x$- and $z$-translation invariance of $\xistd$ allows us to draw Legendrian knots very easily, by projecting away the $y$-coordinate. 
The only thing we need to remember is that, if you see a crossing, the more negatively sloped strand is closer to you. 

A \emph{front projection} of a Legendrian knot (or link) $\Lambda \subset (\R^3, \xistd)$ is the (image of the) projection of $\Lambda$ onto the $xz$-plane.
\note{The viewer is at either $\pm \infty$ on the $y$-axis. It doesn't matter which one, as long as we're all at the same place.}

If you stare at the contact form again, you will notice that the front projection will satisfy these properties:
\begin{itemize}
    \item There cannot be any vertical tangencies. 
    \item Consequently, there will be left and right cusps:
        \begin{center}
            \includegraphics[height=1.2in]{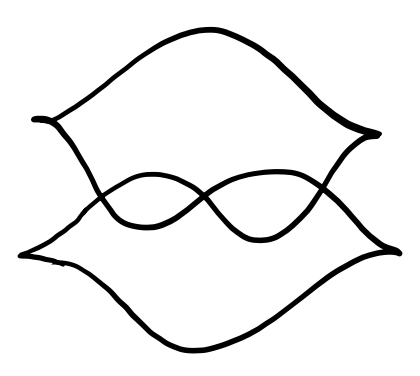}
        \end{center}
    \item At any crossing, the more negatively sloped strand is in front (i.e.\ closer to the viewer):
        \begin{center}
            \includegraphics[width=3in]{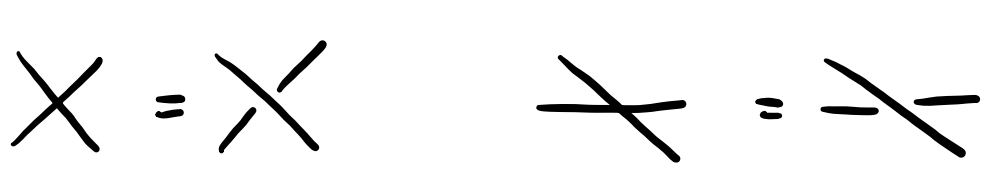}
        \end{center}
\end{itemize}

There are of course \emph{Legendrian Reidemeister moves} that relate front diagrams of the same Legendrian knot. See Figure 8 of \cite{Etnyre-leg-notes}.

\begin{remark}
Each Legendrian knot of course also represents a smooth knot. But because of the stricter geometric conditions on the equivalence between Legendrian knots, the set of Legendrian knots is a refinement of the set of smooth knots. 
For example, the Chekanov pair is a pair of Legendrian \emph{representatives} of the knot $m(5_2)$ that are famously hard to tell apart, but were proven to be in fact non-Legendrian-isotopic \cite{Chekanov-dga, Eliashberg-dga}:

    \begin{center}
        \includegraphics[width=3in]{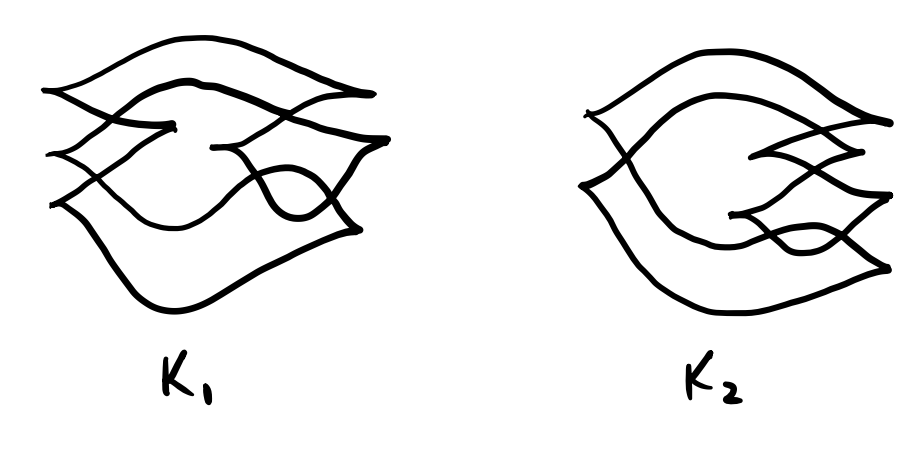}
    \end{center}

\note{($K_1$ is a sleeping cat with a long torso but short neck. $K_2$ is a sleeping cat with a short torso but a long neck.)}
\end{remark}

\subsubsection{Transverse knots in $(\R^3, \xirot)$}

We can also look at a less restrictive class of knots that are everywhere \emph{transverse} to the contact planes. 

\begin{definition}
    A \emph{transverse knot (or link)} in $(\R^3, \xistd)$ is a smoothly embedded knot (or link) $\transknot$ where at any point $p \in \transknot$, the tangent space is transverse to the contact planes.
    In other words, at all $p \in \transknot$,
    \[
        T_p\transknot \pitchfork (\xistd)_p.
    \]
\end{definition}

Since our contact structure is orientable (it's defined globally by a single 1-form, and therefore we can talk about the positive and negative sides of the contact planes), we will also require that our transverse knots intersect the contact planes \textit{positively}. This allows us to fix an orientation on transverse knots (and links) without having to draw any orientation arrows.
\note(Technically, these are \emph{positive transverse knots}, and we could also study the entirely analogous set of \emph{negative transverse knots}.)

\begin{remark}
    You can also describe front diagrams of transverse knots. The geometry will give you the following constraints:
    \begin{itemize}
        \item no downward pointing vertical tangencies (but upward is ok):
        \item no downward pointing positive crossings:
    \end{itemize}
There are also transverse Reidemeister moves for front diagrams; see Figure 14 of \cite{Etnyre-leg-notes}.
\end{remark}

\begin{remark}
    Transversality is a generic condition, so transverse knots are a less fine refinement of the set of smooth knots than Legendrian knots. In fact, by perturbing a Legendrian knot, you will land in a well-defined transverse knot class.
    
    The front projection gives us a diagrammatic way to make sense of the statement  
    \[
        \{\text{transverse knots}\} \leftrightarrow 
        \{\text{Legendrian knots}\}/\sim
    \]
    (which we will not discuss but that you can easily look up).
\end{remark}

Two contact structures $\xi_1, \xi_2$ on the same 3-manifold $M$ are \emph{contactomorphic} if there is a diffeomorphism that sends $(M, \xi_1) \to (M,\xi_2)$. 
In order to study a different type of knot living in contact 3-space, we sometimes prefer to instead work with a more rotationally symmetric version of the standard contact structure, by using cylindrical coordinates $(r,\theta, z)$ for $\R^3$ instead of the Cartesian $(x,y,z)$.

\begin{definition}
The \emph{rotationally symmetric contact structure on $\R^3$} is 
\[
    \xirot = \ker (dz + r^2 d\theta)\quad (= \ker(dz + xdy - ydx)). 
\]
\end{definition}
The contact plane for $\xirot$ are now $z$- and 
$\theta$-translation invariant. 
The contact planes look like this:
\begin{center}
    \includegraphics[width=2in]{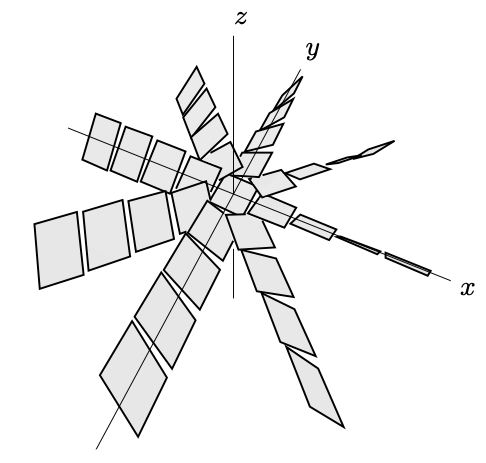}
\end{center}
(Image from \cite{Etnyre-leg-notes} again.)
Take the infinite half-twisted band from before, and now lay them out radially so that their flat planes are identified at the $z$-axis. Copy this to each $z$-coordinate. Take the tangent planes.

In particular, the contact planes are \textit{nearly} vertical when $|r|$ is large.
This is particularly exciting because this means that if you take a braid, wrap it around the $z$ axis and close it up, and then push it radially  far enough away from the origin, the result is a transverse link! 

The more surprising thing is that this characterizes \emph{all} transverse links.
In order to appreciate this fact, we need to take a detour to talk about braid closures.

\begin{remark}
    There are moves that relate braid closure representatives of $\transknot$. This will be discussed in the next section (the Transverse Markov Theorem).
\end{remark}

\subsubsection{Braid closures and annular links}

Let $\beta \in B_n$ be a braid on $n$ strands. 
The \emph{braid closure} of $\beta$, denoted by $\hat \beta$, is the link obtained by wrapping $\beta$ around the $z$-axis and gluing the strands:

\begin{center}
    \includegraphics[height=1.2in]{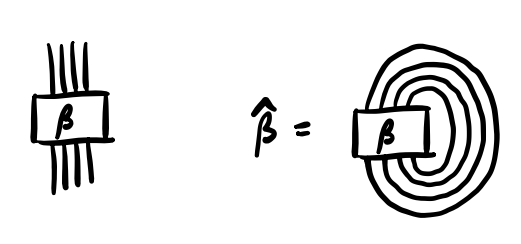}
\end{center}

\note{This is also referred to as taking the \emph{trace}. Prove to yourself that indeed, this operation satisfies the trace condition $\trace(\beta_1 \beta_2) = \trace(\beta_2 \beta_1)$.)
}

A braid closure naturally lives in a \emph{thickened annulus} $A \times I$, 
which is diffeomorphic to the complement of a neighborhood of the $z$-axis $\cup \{\infty\}$ in $S^3$. 

\begin{remark}
    The thickened annulus is also diffeomorphic to the solid torus. Also, if you're not too careful about details about open and closed sets, you can also think of this ambient space as 
    \[ 
        \R^3-\{z\text{-axis}\} = (\R^2-\{0\}) \times \R.
    \]
    In any case, the braid closure is properly embedded and compact, so any of these descriptions is valid. They all appear in the literature.
    The important thing is that you can draw your diagrams on an annulus by projecting away the `thickening' direction.
\end{remark}

\begin{theorem}
    Here are some classical theorems about knots in $S^3$.

\emph{Alexander's theorem} (proved by Alexander) showed that every knot or link in $S^3$ can be braided with respect to the $z$-axis. The \emph{Yamada-Vogel} algorithm (discovered by Vogel building on work of Yamada) gives a hands-on way to turn a link diagram into a braided diagram (through valid isotopies, of course).

The \emph{Markov theorem} (originally proven by Markov, but now there are many, many proofs) states that, for any two braids $\beta_1 \in B_{n_1}$ and $\beta_2 \in B_{n_2}$ whose closures represent a link $L$, $\beta_1$ and $\beta_2$ are related by a finite sequence of the following moves:
\begin{itemize}
    \item (Markov I) conjugation, i.e.\ replacing $\beta \in B_n$ with $\omega \beta \omega\inv$ where $\omega$ is any braid in $B_n$:

    \begin{center}
        \includegraphics[width=1.5in]{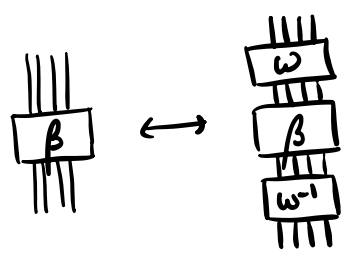}
    \end{center}
    
    \item (Markov II) positive or negative stabilization / destabilization, i.e.\ Reidemeister 1 of either type across the braid axis):
    
    \begin{center}
        \includegraphics[width=3in]{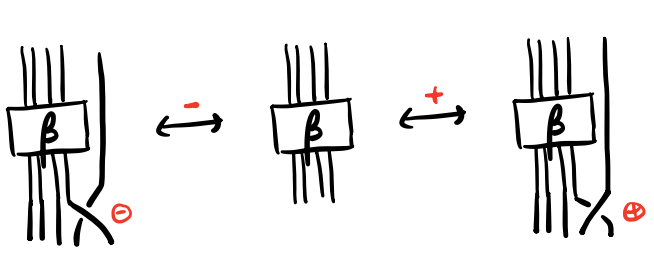}
    \end{center}
    
\end{itemize}
\note{These are meant to be construed in addition to braid isotopy in the annulus (i.e.\ braid-like Reidemeister 2 and 3, i.e.\ braid relations in the braid group).}
\end{theorem}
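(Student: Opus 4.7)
The plan is to handle the three assertions separately, since they rest on rather different techniques. For Alexander's theorem, I would give a constructive proof via the Yamada--Vogel algorithm. Fix an oriented link diagram $D$ in the plane and a distinguished point $p$ that will serve as the braid axis. Smooth every crossing according to orientation to obtain the Seifert circles; those that encircle $p$ in the chosen direction are ``good'' and the remainder are ``bad''. The Yamada--Vogel move surgers a pair of arcs on a bad Seifert circle by pulling one arc across $p$, introducing two new crossings but strictly decreasing a well-chosen complexity (the number of bad Seifert circles, or a related Seifert-circle count). Iterating produces a diagram whose Seifert circles are all nested and coherently oriented around $p$, which is precisely a closed braid.

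For the Markov theorem, I would follow a Morse-theoretic strategy on the space of link diagrams. Given braids $\beta_1, \beta_2$ whose closures both represent a link $L$, realize the ambient isotopy as a generic 1-parameter family of diagrams connecting $\hat\beta_1$ and $\hat\beta_2$. A standard stratification argument, analogous to the proof of the Reidemeister theorem, ensures that only finitely many critical times $t_1 < \cdots < t_k$ occur, at each of which the family experiences either a Reidemeister move or a tangency of the diagram with the braid axis. The key combinatorial lemma, whose proof is the main obstacle, is that after possibly reapplying the Yamada--Vogel algorithm at each intermediate time and comparing the resulting braid words, every critical event can be factored into (i) a braid-group relation (braid-like R2/R3, absorbed into ``braid isotopy''), (ii) a conjugation by a braid word (Markov I), corresponding to a strand sliding around the braid axis, or (iii) a stabilization by $\sigma_n^{\pm 1}$ (Markov II), corresponding to a Reidemeister 1 performed across the braid axis.

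The principal difficulty lies in controlling the intermediate diagrams: the braided structure may be lost at a critical time and regained with an entirely different braid word, and one must show that the resulting change of braid word is expressible purely in terms of (i)--(iii). A cleaner modern approach replaces the 1-parameter family of diagrams with a \emph{braided surface} in $\R^4$ interpolating between the two braid closures and analyzes its generic cross-sections, with stabilizations corresponding to saddle cobordisms that cross the braid axis. This framing is particularly well-suited to the subsequent discussion of transverse and annular links, where the braid axis plays a canonical role: the Transverse Markov Theorem will essentially amount to dropping negative stabilization from the list of allowed moves.
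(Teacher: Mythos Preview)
The paper does not prove this theorem at all: it is stated purely as classical background (note the parenthetical attributions ``proved by Alexander'', ``originally proven by Markov, but now there are many, many proofs'') and is immediately followed by a remark about how these results are \emph{used}, not how they are established. These are lecture notes, and the author is simply recording well-known facts before moving on to the Transverse Markov Theorem. So there is no ``paper's own proof'' to compare your proposal against.

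That said, your sketch is a reasonable outline of standard approaches, with one caveat. Your description of the Yamada--Vogel algorithm is slightly off: the move does not ``pull an arc across $p$''. Rather, one looks for two Seifert circles bounding a common complementary region with \emph{incompatible} orientations (an ``incoherent'' pair), and performs a local Reidemeister~II move on two parallel arcs of those circles inside that region. This increases the number of Seifert circles by one and the number of crossings by two, but strictly decreases a complexity count (Vogel's ``defect'', essentially the number of incoherent pairs). When the defect hits zero, all Seifert circles are coherently nested and the diagram is a closed braid. Your Markov sketch is honest about where the real work lies; the 1-parameter / braided-surface strategies you mention are indeed among the modern routes (Birman--Menasco, Traczyk, Kamada), though each requires substantial case analysis that your outline, appropriately, does not attempt.
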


\begin{remark}
Together, these two theorems allow us to study knots in terms of their braid representatives. 
This can be very useful. For example, at the moment, \emph{triply-graded Khovanov-Rozansky homology} (a.k.a.\ HOMFLYPT homology) can only be computed from a braided diagram of a link, even though it's an invariant for links in $S^3$. 
\end{remark}

We have similar theorems for braid representatives of transverse links.
Bennequin proved the analogue to Alexander's theorem:

\begin{theorem}[\cite{Bennequin-alexander-thm}]
(Bennequin's theorem)
   For any transverse knot in $(\R^3, \xirot)$, there exists a representative that is braided with respect to the $z$-axis.  
\end{theorem}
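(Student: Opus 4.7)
I would work in cylindrical coordinates and reduce the problem to a local modification argument. Parametrize the transverse knot as $\transknot(t)=(r(t),\theta(t),z(t))$ for $t\in S^1$, so that positive transversality to $\xirot=\ker(dz+r^2d\theta)$ reads
\[
\dot z(t)+r(t)^2\dot\theta(t)>0\quad\text{for all }t.
\]
The observation that makes the theorem tractable is that the open book decomposition of $\R^3\setminus\{z\text{-axis}\}$ with binding the $z$-axis and pages $P_{\theta_0}=\{\theta=\theta_0,\ r\geq 0\}$ translates ``braided with respect to the $z$-axis'' into ``positively transverse to every page $P_{\theta_0}$,'' which in coordinates is simply $\dot\theta(t)>0$ for all $t$. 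So the plan is to start from the weaker condition $\dot z+r^2\dot\theta>0$ and produce, through a transverse isotopy, a representative satisfying the stronger condition $\dot\theta>0$.

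Next I would integrate the contact form around $\transknot$: since $\oint dz=0$, the transversality inequality forces $\oint r^2\,d\theta>0$. This global bound already gives the ``right sign on average,'' and in particular shows the linking number of $\transknot$ with the $z$-axis (the eventual braid index, weighted by $r^2$) is positive. The work is to promote this averaged positivity to a pointwise positivity of $\dot\theta$, and this is where one uses the freedom in the $z$-direction. The main local move is the following: on any maximal subarc $\alpha\subset\transknot$ where $\dot\theta\le 0$, I would replace $\alpha$ by an arc $\alpha'$ with the same endpoints for which $\dot\theta>0$, paying for the ``wrong-direction'' portion by a sufficiently steep decrease in $z$. Concretely, because the contact form penalizes negative $\dot z$ and rewards positive $\dot\theta$, there is room to reroute $\alpha$ so that both $\dot\theta>0$ and $\dot z+r^2\dot\theta>0$ hold throughout $\alpha'$; the extra $z$-excursion can always be absorbed by a small contact Hamiltonian isotopy supported near $\alpha$, which keeps the knot transverse. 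One can think of this as the transverse analogue of the front-diagram desingularization used for Legendrian knots, but carried out in the half-plane foliation instead of the $xz$-plane.

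To make this rigorous I would iterate the local move and track a complexity (for instance, the total arclength of the set $\{\dot\theta\le 0\}$, or, after a small perturbation to make $\dot\theta=0$ a transverse value, the number of its preimage points). Each application of the local move strictly decreases this complexity, and a standard compactness/genericity argument shows the process terminates after finitely many steps, at which point $\dot\theta>0$ everywhere, i.e.\ $\transknot$ is braided about the $z$-axis. Since every step is a transverse isotopy, the resulting braided representative is transversely isotopic to the original $\transknot$.

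The main obstacle is verifying that the local move actually preserves transversality globally: the replacement arc $\alpha'$ must be glued smoothly to the rest of $\transknot$ without creating new violations of $\dot z+r^2\dot\theta>0$ near the endpoints of $\alpha$. This requires a careful choice of cutoff/bump function when producing the contact Hamiltonian, and it is essentially the technical heart of Bennequin's original argument. An alternative that avoids this bookkeeping, and which I would probably prefer for expository purposes, is to first invoke the Legendrian realization principle to approximate $\transknot$ by the positive transverse pushoff of a Legendrian knot $\Lambda$, resolve the front diagram of $\Lambda$ as in the picture already given in the notes, and then check directly that the braid-like front diagram of the pushoff is braided about the $z$-axis in $(\R^3,\xirot)$; this bypasses the global surgery argument at the cost of needing the Legendrian approximation theorem as a black box.
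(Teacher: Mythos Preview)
The paper does not actually prove this theorem: it is stated with a citation to Bennequin's original paper and used as a black box, so there is no ``paper's own proof'' to compare against.

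That said, your sketch is a reasonable outline of the standard argument. A few remarks. First, the sentence about ``the contact form penalizes negative $\dot z$ and rewards positive $\dot\theta$'' has the signs slightly garbled: since the condition is $\dot z + r^2\dot\theta>0$, making $\dot\theta$ positive \emph{relaxes} the constraint on $\dot z$, not the other way around; the real issue is that rerouting an arc to wind positively in $\theta$ forces a longer $\theta$-excursion, and you must check the $z$-coordinate can be controlled along the whole replacement. Second, $\oint r^2\,d\theta>0$ is not the linking number with the $z$-axis (that would be $\tfrac{1}{2\pi}\oint d\theta$), so your parenthetical is imprecise; the integral argument gives only an averaged positivity and does not by itself guarantee positive winding. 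Third, your alternative route via Legendrian approximation plus transverse pushoff is valid in spirit, but be careful about circularity: depending on how one sets things up, the Legendrian approximation theorem and Bennequin's braiding theorem sit at comparable depth, and in some treatments the former is proved \emph{using} ideas close to the latter.

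As a proof plan this is fine for an expository note; as a proof it would need the local-move step made precise, which (as you acknowledge) is exactly where Bennequin's work lies.
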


Braids are also naturally oriented; a (positive) transverse link will be oriented counterclockwise. 

Nancy Wrinkle proved the following in her PhD thesis (!):

\begin{theorem}[\cite{Wrinkle-transverse-markov}, \cite{Orev-Shev-markov}]
(Transverse Markov theorem)
Any two braid representatives of a transverse link are related by conjugation and \emph{positive} Markov stabilization / destabilization.
\end{theorem}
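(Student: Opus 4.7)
The plan is to reduce the transverse theorem to the classical (smooth) Markov theorem, which we assume as given, and then argue that the negative stabilizations can be eliminated from any Markov sequence. First I would verify the easy direction: conjugation and positive Markov stabilization preserve the transverse isotopy class. Conjugation corresponds to a rotation of the braid around the $z$-axis, which is a transverse isotopy because $\xirot$ is rotationally symmetric. Positive stabilization can be realized by a transverse isotopy near infinity (adding a small positive loop around the braid axis), whereas a negative stabilization introduces a clasp whose self-linking number differs by $-2$, so it can never arise from a transverse isotopy.

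Next, suppose $\beta_1, \beta_2$ are braids whose transverse closures are transversely isotopic. In particular their closures are smoothly isotopic, so by classical Markov there is a finite sequence
\[
    \beta_1 = \gamma_0 \to \gamma_1 \to \cdots \to \gamma_N = \beta_2
\]
where each arrow is conjugation, braid isotopy, or $\pm$-stabilization/destabilization. Define $sl(\beta) := w(\beta) - n(\beta)$, which realizes the self-linking number of the transverse braid closure. This quantity is invariant under conjugation and positive (de)stabilization, and it changes by $\mp 2$ under negative (de)stabilization. Because transverse isotopy preserves self-linking number, $sl(\beta_1) = sl(\beta_2)$, so the negative stabilizations and negative destabilizations in our sequence must appear in equal total numbers.

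The hard part, and the main obstacle, is to pair up each negative stabilization with a subsequent negative destabilization and show that the pair can be canceled, modulo additional conjugations and positive (de)stabilizations. The strategy is a commutation argument: I would show that a negative stabilization can be ``pushed forward'' past any intervening conjugation, braid isotopy, positive (de)stabilization, or even another negative move, at the cost of introducing only conjugations and positive stabilizations. The key technical step is a local analysis of each commutation, most delicately of a negative stabilization crossing past a positive stabilization or another negative destabilization: here one uses the braid-group relations together with the ``exchange move'' lemma of Birman–Menasco to rewrite the configuration so that the two negative moves become adjacent, hence cancel as inverses. Once every pair of negative moves has been brought into adjacency and canceled, what remains is a sequence of conjugations and positive (de)stabilizations connecting $\beta_1$ to $\beta_2$, as desired.

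This is morally the argument of Wrinkle and, independently, Orevkov–Shevchishin, whose proofs differ in how they package the commutation lemma (Wrinkle via a careful contact-geometric isotopy analysis; Orevkov–Shevchishin via a ``braid-move'' combinatorial reduction). I expect that in practice, writing out the commutation lemma in full requires a careful case analysis of how the strand being negatively stabilized interacts with the strands involved in the next move, and this case analysis is the main obstacle to a clean exposition.
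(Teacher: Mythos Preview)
The paper does not contain a proof of this theorem; it merely states the result and cites Wrinkle and Orevkov--Shevchishin. So there is no in-paper argument to compare against.

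That said, your proposed strategy has a substantive gap. The plan of starting from a smooth Markov sequence and then ``canceling'' the negative stabilizations via commutation is appealing but is not how either cited proof proceeds, and for good reason: the commutation lemma you describe---pushing a negative stabilization past arbitrary intervening moves at the cost of only conjugations and positive (de)stabilizations---is not a known result, and there is no clear mechanism for why it should hold. Equal counts of negative stabilizations and destabilizations (forced by the self-linking number) is far from sufficient to guarantee they can be brought into adjacency and canceled; the negative moves could be entangled with the rest of the sequence in a way that any attempt to remove one introduces another. The Birman--Menasco exchange move does not by itself do this job, and your case analysis would essentially have to reprove the theorem from scratch.

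The actual proofs work in the opposite direction: rather than post-processing a smooth Markov sequence, they analyze the given transverse isotopy directly. Orevkov--Shevchishin put the isotopy in general position with respect to the braid fibration of $(\R^3,\xirot)$ and study the singular events that occur, showing that each corresponds to one of the allowed moves. Wrinkle's argument is similar in spirit, via a contact-geometric analysis. The philosophical point is that these arguments never leave the transverse category, so negative stabilizations never appear and there is nothing to cancel.
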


These theorems allow us to study transverse knots via their braid representatives.

Finally, note that you can also have other non-braided links living in the solid annulus; braid (conjugacy classes) are just a special type of 
\emph{annular link}.

An \emph{annular link $L$} is a link $L \subset A \times I$. 
These links can be projected onto the annulus; these are called \emph{annular diagrams}. 
We usually draw them on the plane and put a marking where the axis is. 
In other words, an annular link is really just the data of a link $L$ and an unknotted axis $U$ inside $S^3$. 
We will talk more about annular links later. 

\subsubsection{Summary}

Let's summarize the relationships among the different types of links in $\R^3$ that we've introduced.

We have the following surjections of sets.
\begin{enumerate}
    \item Thinking about the front projection and $(\R^3, \xistd)$, we have 
    \[
        \text{Legendrian links} \onto
        \text{transverse links} \onto
        \text{smooth links}
    \]
    \item Thinking about braids and $(\R^3, \xirot)$, we have
    \[
        \text{braids} \onto
        \text{braid conjugacy classes} 
        \onto
        \text{transverse links}
        \onto \text{smooth links}.
    \]
    \item Finally, we obviously have 
    \[
        \text{annular links} \onto 
        \text{smooth links}
    \]
    by just forgetting the unknotted axis.
\end{enumerate}

\subsection{Plamenevskaya's transverse knot invariant}

The reference for this section is \cite{Plam-invt}

\begin{definition}
Let $\beta \in B_n$ be a braid representative for $\transknot = \hat \beta$. 
\begin{itemize}
    \item \emph{Plamenevskaya's cycle}, which we will denote by $\vec v_-$,  is the chain in $\Kh(\hat \beta)$ at the oriented resolution where all circles are labeled $v_-$. 
    \item \emph{Plamenevskaya's invariant}, denoted by $\psi$, is the homology class in $\Kh(\transknot)$ of $\vec v_-$. 
\end{itemize}
\end{definition}

Once again, this is a theorem-definition. 
We need to convince ourselves of the following facts:
\begin{enumerate}
    \item $\vec v_-$ is indeed a cycle. \note{In a previous exercise, you proved that there are no split maps out of an oriented resolution. Observe that any merge map out of $D_o$ sends $\vec v_- \mapsto 0$.}
    \item $\psi$ is invariant under the transverse Markov moves, so that it is actually an invariant of 
\end{enumerate}

Moreover, we also natrually want to know the answers to the following questions:
\begin{enumerate}
    \item Is $\psi$ actually a transverse knot invariant, or is it just a smooth knot invariant? 
    In other words, are there smoothly isotopic transverse knots $\transknot \neq \transknot'$ such that $\psi(\transknot) \neq \psi(\transknot')$? 
    \note{Note that `1' is a knot invariant, but it's not very useful.}
    \item How does $\psi$ behave under cobordisms? Is it \emph{functorial}, i.e.\ if $F: \transknot \to \transknot'$ is a cobordism, is $\Kh(F)(\psi(\transknot)) = \psi(\transknot')$? 
    \note{You can restrict to symplectic cobordisms $F$, but since Khovanov homology is a smooth invariant, $\Kh(F)$ only takes into account the underlying smooth cobordism.}
    \item Is $\psi$ \emph{effective}? In other words, can $\psi$ distinguish pairs of transverse knots that existing invariants can't?
\end{enumerate}

The first question is easy to answer, after we have the following proposition. 
\begin{proposition}
\label{prop:psi-neg-stab}
    If $\transknot$ is a negative stabilization of some $\transknot'$, then $\psi(\transknot) = 0$. 
\end{proposition}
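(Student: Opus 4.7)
The plan is to realize Plamenevskaya's chain $\vec v_-$ explicitly as a boundary in $\CKh(\transknot)$, so that $\psi(\transknot) = [\vec v_-] = 0$ in homology. Fix a braid word for $\beta \in B_n$ so that $\transknot = \widehat{\beta\sigma_n^{-1}}$, and let $u_o$ be the oriented vertex of the cube of resolutions of the diagram $\beta\sigma_n^{-1}$. At $u_o$ each crossing is replaced by its oriented (Seifert) smoothing: the strands of $\beta$ close up to the $n$ Seifert circles of $\hat\beta$, and strand $n+1$ closes up to a small separate ``stabilization circle,'' for $n+1$ circles total.

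Let $u_\ast$ be the adjacent cube vertex obtained by replacing only the smoothing at the stabilization crossing $\sigma_n^{-1}$ with the opposite, non-oriented ($\horizres$) cap-and-cup, leaving every other crossing at its oriented smoothing. The horizontal smoothing fuses the stabilization circle with the Seifert circle of $\hat\beta$ containing strand $n$, so $u_\ast$ carries $n$ circles. Consequently the cube edge connecting $u_\ast$ and $u_o$ is a split of the fused circle into the stabilization circle and strand $n$'s Seifert circle, and the split map $\Delta(v_-) = v_- \otimes v_-$ applied on the fused circle (with passives unchanged) reconstructs $\vec v_-$ at $u_o$.

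I will then take $c = v_-^{\otimes n}$ at $u_\ast$ and compute $dc$. The edge from $u_\ast$ to $u_o$ contributes $\pm\vec v_-$, as just described. The other outgoing edges from $u_\ast$ come from toggling a \emph{positive} crossing of $\beta$ to its non-oriented smoothing; the negative crossings of $\beta$ sit at their oriented smoothing at $u_\ast$ and contribute no outgoing differential on this side of the edge. Because the oriented resolution of any braid is the identity braid, each strand of $\beta$ closes to a distinct Seifert circle; hence every such toggle merges two distinct circles of $u_\ast$ (possibly the fused one together with a neighbor), and the merge rule $m(v_- \otimes v_-) = 0$ annihilates every such contribution.

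Putting these together, $dc = \pm\vec v_-$, and after possibly replacing $c$ by $-c$ to absorb the cube's sign assignment, we obtain $dc = \vec v_-$, so $\psi(\transknot) = 0$. The main obstacle is the geometric step of verifying that $u_\ast \to u_o$ really is a split of the fused circle and that every other toggle from $u_\ast$ is a merge of two distinct circles; this reduces to the fact that the oriented resolution of a braid is the identity braid, after which $m(v_- \otimes v_-) = 0$ closes the argument without any subtle sign or spectral sequence bookkeeping.
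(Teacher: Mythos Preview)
Your argument is correct and is exactly the standard proof of this fact (originally due to Plamenevskaya). The paper itself does not supply a proof; it leaves Propositions~\ref{prop:psi-neg-stab} and~\ref{prop:psi-q-gr} as an exercise, so there is nothing to compare against beyond confirming that your reasoning is sound.

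A couple of small remarks that might sharpen the write-up. First, it is worth stating explicitly why $u_\ast$ sits one step \emph{below} $u_o$ in homological grading: the stabilization crossing $\sigma_n^{-1}$ is negative, so its oriented smoothing is the $1$-resolution and its non-oriented smoothing is the $0$-resolution; hence the cube edge really is directed $u_\ast \to u_o$ and contributes to $d$. Second, your parenthetical ``possibly the fused one together with a neighbor'' covers the only subtle case, namely a positive crossing of $\beta$ involving strand $n$; you handle it correctly since strand $n$ lies in the fused circle while strand $n-1$ lies in its own Seifert circle, so the toggle is still a genuine merge of two distinct circles and $m(v_-\otimes v_-)=0$ applies. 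With those two points made explicit the proof is complete.
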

Thuss it suffices to find $\hat\beta$ such that $\psi(\hat \beta) \neq 0$, since the negative stabilization of $\hat \beta$ represents the same \emph{smooth} knot type.

\begin{example}
Let $\One_n$ denote the identity braid in the braid group $B_n$. 
The transverse (fillable, max tb) unknot satisfies $\psi(\hat \One_1) \neq 0$ but its stabilization $\psi(\hat \sigma_1\inv) = 0$. 
\end{example}

To answer the last question, we need to know what the \emph{classical} transverse invariants are.

\begin{definition}
Let $\transknot$ be a transverse knot. 

\begin{itemize}
    \item Let $\beta \in B_b$ be a braid representative for $\transknot$.
    The \emph{self-linking number} of $\transknot$ is 
    \[
        \selflinking(\transknot) = \selflinking(\hat \beta) = -b + \writhe(\hat \beta).
    \]
    \note{In terms of a front diagram $D$ for $\transknot$, the self-linking number is just $\writhe(D)$.
    There is a more geometric definition of self-linking, by thinking of $\transknot$ as a framed knot.}

    \mz{You can quickly check for yourself that self-linking is preserved by the transverse Markov moves.}
    
    \item The \emph{smooth knot type} of $\transknot$ is simply the equivalence class of the underlying smooth knot.
\end{itemize}

A transverse invariant is called \emph{effective} if it can distinguish transverse knots better than these classical knots can.

\note{In other words, in practice we just need the invariant to tell us more than just the number $\selflinking(\transknot)$. 
}
\end{definition}

\begin{proposition}
\label{prop:psi-q-gr}
    The quantum grading of the class $\psi(\transknot)$ is the self-linking number of $\transknot$. 
\end{proposition}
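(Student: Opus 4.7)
The plan is to compute the bigrading of the distinguished generator $\vec v_-$ at the oriented resolution $D_o$ of a braid-closure diagram $\hat\beta$ for $\transknot$, and then to observe that since the Khovanov differential has bidegree $(1,0)$ and therefore preserves $\gr_q$, the quantum grading of the cycle $\vec v_-$ equals that of the class $\psi = [\vec v_-]$ whenever the latter is nonzero. The whole proof thus reduces to locating $\vec v_-$ in the grading lattice of $\CKh(\hat\beta)$.

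First I would identify the cube vertex $u$ corresponding to $D_o$. At every positive crossing of a braid the oriented resolution is the $0$-smoothing, and at every negative crossing it is the $1$-smoothing; hence the bitstring $u$ has Hamming weight $|u| = n_-$. Moreover, the oriented resolution of $\hat\beta$ is precisely the closure of the identity braid $\One_b$, so $D_o$ consists of $|D_u| = b$ disjoint planar circles.

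Next I would plug into the grading formula. The chain group at vertex $u$ of $\llbracket \hat\beta \rrbracket$ is $V^{\otimes |D_u|}[|u|]\{|u|\}$ with $v_\pm$ at local quantum grading $\pm 1$, and passing to $\CKh(\hat\beta)$ introduces the global quantum shift $\{n_+ - 2n_-\}$. So a pure-tensor distinguished generator at vertex $u$ with $p = \#(v_+) - \#(v_-)$ sits in quantum grading $p + |u| + n_+ - 2n_-$. For $\vec v_-$ we have $p = -b$ and $|u| = n_-$, hence
\[
    \gr_q(\vec v_-) = -b + n_- + n_+ - 2n_- = -b + (n_+ - n_-) = -b + \writhe(\hat\beta) = \selflinking(\transknot).
\]

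Since the Khovanov differential preserves $\gr_q$, every cycle representative of $\psi$ lies in quantum grading $\selflinking(\transknot)$, as claimed. The hard part here really is not hard: the argument is pure bookkeeping with grading conventions, and the only observations that require any care are the two local identifications $|u|=n_-$ and $|D_u|=b$ at the oriented resolution of a braid closure, both of which are immediate from inspecting positive and negative braid crossings.
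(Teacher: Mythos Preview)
Your proof is correct and is exactly the computation the paper expects; the paper in fact leaves this proposition as an exercise rather than supplying a proof. Your identification of $|u|=n_-$ and $|D_u|=b$ at the oriented resolution, followed by the grading bookkeeping $\gr_q(\vec v_-)=-b+n_-+(n_+-2n_-)=-b+\writhe(\hat\beta)=\selflinking(\transknot)$, is the intended argument. One cosmetic remark: since $\Kh$ is genuinely bigraded (not merely filtered), the class $\psi$ lives in $\Kh^{0,\selflinking(\transknot)}$ regardless of whether it is zero, so you need not hedge with ``whenever the latter is nonzero.''
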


\begin{exercise}
Prove Propositions \ref{prop:psi-neg-stab} and \ref{prop:psi-q-gr}.
\end{exercise}

\begin{question}
(Open) Is Plamenevskaya's invariant effective? 
\end{question}

In fact, there have been many refinements of Plamenevskaya's invariant defined throughout the years
(e.g.\ \cite{LNS-transverse-khovanov}). It is not known if these are effective either!

\subsection{Annular Khovanov homology}

Annular Khovanov homology is the a version of Khovanov homology for links in the thickened annulus. 
We will first give a definition for $\AKh$ as a tri-graded homology theory, analogously to how we defined Khovanov homology. Then, in \S \ref{sec:AKh-summary}, we give a more nuanced interpretation of $\AKh$ as it relates to $\Kh$.

\subsubsection{Annular Khovanov homology}

Let $L \subset A \times I$ be a link in the thickened annulus, and let $D$ be a diagram for $L$ drawn on the annulus $A$. 
Instead of drawing the annulus, we typically mark the position of the deleted $z$-axis with an asterisk or other marking.

The annular Khovanov chain complex $\CAKh(D)$ is generated by the same distinguished generators as for Khovanov homology, but we now distinguish between homologically trivial and nontrivial circles.
We assign them \textit{triply} graded modules, graded by $\gr_h$, $\gr_q$, and a new \emph{winding number grading}, which we interpret below in Remark \ref{rmk:winding-number-grading}.

\begin{center}
    \includegraphics[height=1.5in]{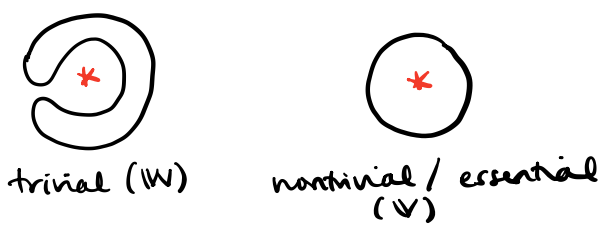}
\end{center}

\begin{itemize}
\item A circle $Z$ is \emph{trivial} if the marked point $*$ and $\infty$ are in the same region of $\R^2 \backslash Z$. To these circles, we associate the tri-graded module
    \[
        \WW := \Z \langle w_+, w_- \rangle
    \]
    where 
    \begin{align*}
        (\gr_h, \gr_q, \gr_k)(w_\pm) = (0, \pm 1, 0).
    \end{align*}
\item A circle $Z$ is \emph{non-trivial} or \emph{essential} if the marked point $*$ and $\infty$ are in different regions of $\R^2 \backslash Z$. To these circles, we associate the tri-graded module
    \[
        \VV := \Z \langle v_+, v_- \rangle
    \]
    where 
    \begin{align*}
        (\gr_h, \gr_q, \gr_k)(v_\pm) = (0, \pm 1, \pm 1).
    \end{align*}
\end{itemize}

The tri-grading for the Khovanov generators is then defined by extension to tensor products, and allowing for homological and quantum  shifts.

The annular Khovanov differential $d_{\AKh}$ is comprised of all the components of the Khovanov differential that preserve $\gr_k$ grading. 
One can work out all six possible interactions between trivial and essential circles to determine the tri-grading-preserving differential.

\mz{Instead of re-drawing these, here's stuff grad-school me TeXed up \cite{Zhang-akh}. The `type' names are not used in the literature; I just needed them for a proof later in the paper.}

\begin{center}
    \includegraphics[width=\textwidth]{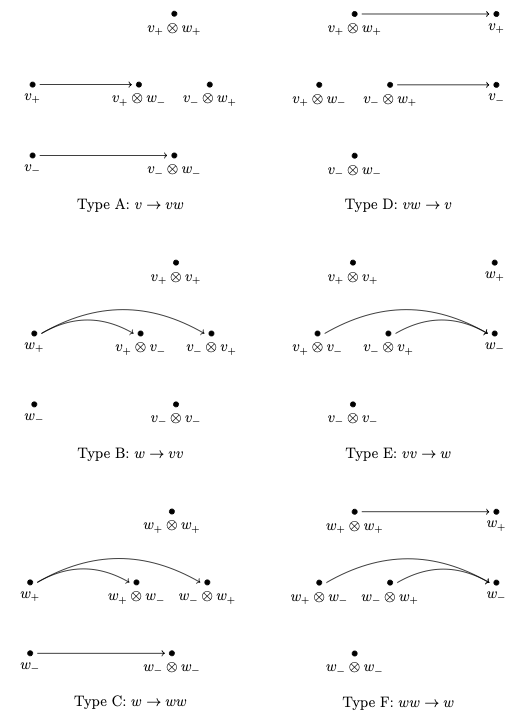}
\end{center}

The homology of $(\CAKh(D), d_{\AKh})$ is an invariant of the smooth isotopy type of the annular link.

\begin{remark}
    One can also define the annular Bar-Natan category, typically denoted by  $\BN(\mathbb{A})$ in the literature. We will not dig into this in this course.
\end{remark}

\begin{remark}
    \label{rmk:winding-number-grading}
\begin{enumerate}
    \item Pick a generic arc from $*$ to $\infty$. 
    For a given \emph{oriented} circle $\zeta$ in the plane missing the marked point $*$, $\gamma$ intersects $\zeta$ transversely (otherwise, perturb). The signed intersection (algebraic intersection) $I(\gamma, \zeta)$ is 0 if $\zeta$ is homologically trivial in the annulus, and is $\pm 1$ if $\gamma$ is essential. If we associate counter-clockwise circles with the `plus' labeling (and clockwise with `minus'), then $I(\gamma, \zeta) = \gr_k(\zeta)$, (interpreted appropriately).

    \item Note that this association of counter-clockwise/clockwise with plus/minus is a matter of convention and bookkeeping. This `orientation' on $\zeta$ is \emph{not} meant to be interpreted topologically in the context of cobordisms and such!

    \item Grigsby--Licata--Wehrli show that $\AKh(L)$ has a $\mathfrak{sl}_2(\C)$-representation, where $\gr_k$ is the weight-space grading. 
    From this point of view $\WW$ is two copies of the trivial representation, and $\VV$ is one copy of the defining representation.
    
    So, if you're working with $\C$ coefficients, it's also fair to call $\gr_k$ the weight-space grading. \note{I have in the past, quite unfortunately, also used this terminology even while working over $\F_2$ before really absorbing the `winding number grading' terminology, which is more intuitive.}
\end{enumerate}

\end{remark}

\subsubsection{Summary for $\AKh$}
\label{sec:AKh-summary}

Given the data of a link diagram $D$ in $\R^2$ and a basepoint $*$ in the complement of $D$ in $\R^2$, the winding number grading $\gr_k$ defines a filtration on the Khovanov chain complex $(\CKh, d_{\Kh})$.
Thus $(\CKh$ is $\gr_h$ and $\gr_q$ \emph{graded}, and $\gr_k$ \emph{filtered}.

The annular Khovanov homology of the annular link represented by $(D,*)$ is the homology of the \emph{associated graded object} to the $\gr_k$-induced filtration on $\CKh$. 
The Khovanov differential decomposes into two homogeneous-degree components:
\[
    d_{\Kh} = d_{(1,0,0)} + d_{(1,0,-2)}
\]

\begin{definition}
    Let $F_\bullet$ be a $\Z$-filtration on a chain complex $(\CC,d)$, with $F_i \supseteq F_{i+1}$. 
    \note{The situation with inclusion the other way is basically identical.} 
    The \emph{associated graded object} is the chain complex
    \[
        (\bigoplus F_i / F_{i+1}, \bar d)
    \]
    where $\bar d$ is the induced differential on the quotients. 
    \note{If $\CC$ is generated by distinguished generators that are homogeneous with respect to a filtration grading, then $\bar d$ is comprised precisely of the components of the differential between generators with the same filtration grading.}
\end{definition}

So, for an annular link $L \subset A \times I$, the annular Khovanov homology $\AKh(L)$ is a triply-graded invariant of annular links.

\begin{exercise}
\bea
\item Compute the annular Khovanov differentials for all six merge/split interactions between pairs of circles. Confirm that $d_{\Kh}$ only either preserves $\gr_k$ or decreases it by $2$. 
\item Since $\gr_k$ is defined on the level of Khovanov generators, this also gives a filtration on the \emph{Lee differential}. Identify all the $(\gr_h, \gr_q, \gr_k)$ homogeneous-degree components of the Lee differential.
\ee
\end{exercise}

\begin{remark}
Grigsby--Licata--Wehrli defined an annular version of Rasmussen's invariant by studying the annular Khovanov-Lee complex, which is $\gr_h$ graded and $(\gr_q, \gr_k)$-bifiltered. 
By taking a linear combinations of $\gr_q$ (denoted by $\gr_j$ in their paper) and $\gr_k$, they obtain a 1-parameter family $\{d_t\}$ of \textit{annular} concordance invariants \cite{GLW-akhlee}. 
This type of construction (mixing two filtration gradings) was popularized by Ozsv\'ath--Stipsicz--Szab\'o's $\Upsilon$ invariant from knot Floer homology. 

Here is a cartoon:
\begin{center}
    \includegraphics[width=2in]{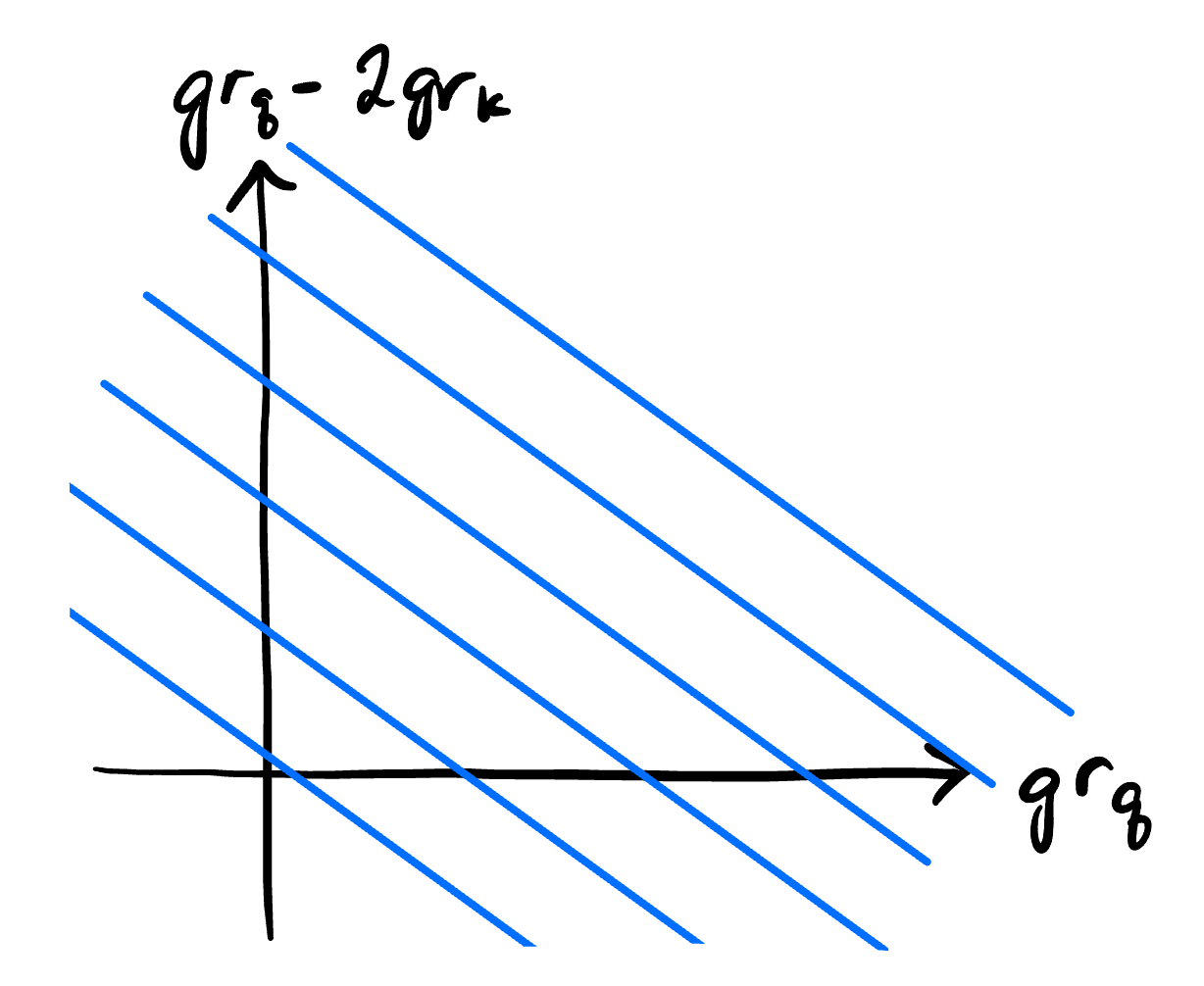}
\end{center}
The blue lines are level sets in the $\gr_t = \gr_q - t \gr_k$ filtration grading. For any $t \in [0,2]$, $\gr_t$ is a valid filtration grading on the annular-Khovanov-Lee complex.

Grigsby--Licata--Wehrli then apply these $d_t$ invariants to measure some contact topological quantities, such as \emph{right-veeringness} of the mapping class group element for the $b$-pointed disk given by a braid $\beta$.
\end{remark}

\begin{remark}
    We probably won't say much more about the mapping class group interpretation of braids in this course, but for those of you interested, here's an intuitive definition of the property of `right-veeringness'. 

    The braid group $B_n$ can be thought of as the mapping class group of a disk with $n$ (indistinguishable) marked points. 
    A braid $\beta \in B_n$ is then a dance of these $n$ marked points around each other, ending with the points returning to the original marked positions, but possibly permuted. This gives a sort of stirring action on the particles around them.

    Draw an oriented arc $\gamma$ from basepoint on the boundary of the disk to one of the marked points, then apply the stirring action by $\beta$. If the resulting arc $\gamma'$ veers right upon leaving the boundary, then $\beta$ is right-veering.

    We can use mapping class groups to describe the construction of contact 3-manifolds by Giroux's \emph{open-book decompositions}. Honda--Kazez--Mati\'c \cite{HKM-RV} showed that a contact structure $(M, \xi)$ is \emph{tight} if and only if \emph{all} its open books have right-veering monodromies.
\end{remark}

\subsubsection{Hubbard-Saltz's braid conjugacy class invariant $\kappa$}

As I mentioned previously
\note{but possibly only verbally in class}, Plamenevskaya's cycle $\tilde \psi$ / class $\psi$ gives us a homology class to measure, and thereby obtain invariants from in different settings. This is analogous to how Lee's canonical generators are measured to obtain Rasmussen's invariant.

As originally defined, $\psi$ is inherently annular. Hubbard and Saltz use the $\gr_k$ from $\AKh$ to define a braid conjugacy class invariant $\kappa$ by measuring the filtration level of $\psi$ \cite{Hubbard-Saltz-kappa}:

\begin{definition}[\cite{Hubbard-Saltz-kappa}, Definition 1]
Let $\beta \in B_n$. Let $\tilde \psi(\hat\beta) \in \CAKh(\hat \beta)$ denote Plamenevskaya's cycle in the annular Khovanov complex for the closure of $\beta$. 
Let $\{F_i\}$ denote the $\gr_k$-induced filtration on $\CAKh(\hat \beta)$. 
\begin{itemize}
    \item If $\psi(\hat \beta) \in \im(d_{\Kh})$, let 
        \[
            \kappa(\beta) = n + \min \{ i \st [\psi(\hat \beta)] = 0 \in H(F_i)\}
        \]
        \note{In other words, if $\psi$ dies in homology, $\kappa$ measures the filtration level at which it dies.}
    \item If $\psi(\hat \beta) \not= 0 \in \Kh(\hat \beta)$, then let $\kappa(\beta) = \infty$. 
        \note{If $\psi$ never dies, then $\kappa$ is infinite.}
\end{itemize}
\end{definition}

\begin{remark}
    In \cite[Theorem 2]{Hubbard-Saltz-kappa}, Hubbard--Saltz show that $\kappa$ sometimes increases by 2 under positive (Markov) stabilization, and is therefore not a transverse invariant.
\end{remark}

However, braid conjugacy class invariant can also be useful. For example, they use $\kappa$ to show the following:
\begin{theorem}[\cite{Hubbard-Saltz-kappa}, Corollary 17]
Let $\beta \in B_n$.
If $\kappa(\beta) \neq 2$ and $\kappa(m(\beta) \neq 2$, then $\beta = 1 \in B_n$. 
\end{theorem}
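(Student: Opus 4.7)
The plan is to argue the contrapositive: assuming $\beta \in B_n$ is non-trivial, we show that $\kappa(\beta) = 2$ or $\kappa(m(\beta)) = 2$. The heart of the argument is a dichotomy: for any braid $\beta$, the value $\kappa(\beta)$ lies in $\{2, \infty\}$, with $\infty$ corresponding to $\tilde\psi$ surviving in $\Kh(\hat\beta)$ and $2$ corresponding to $\tilde\psi$ being killed at the minimum possible filtration shift.

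First, I would record the key grading observation. Since the Khovanov differential decomposes as $d_{\Kh} = d_{(1,0,0)} + d_{(1,0,-2)}$ and $\tilde\psi = v_-^{\otimes n}$ lies in $\gr_k = -n$, any chain $y$ with $d_{\Kh}(y) = \tilde\psi$ must satisfy $\gr_k(y) \in \{-n, -n+2\}$. A short check using the fact that $\tilde\psi$ is the unique $\gr_k = -n$ distinguished generator at $D_o$ (every essential circle is forced to be labeled $v_-$) rules out the $\gr_k(y) = -n$ case, so $\kappa(\beta) \in \{2, \infty\}$.

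Next, the main case analysis: if a minimal braid word for $\beta$ contains some negative generator $\sigma_i\inv$, I claim $\kappa(\beta) = 2$. At such a negative crossing $c$, the oriented resolution is the $1$-resolution, so the cube of resolutions has an incoming edge $D' \to D_o$, where $D'$ differs from $D_o$ only at $c$. Near $c$, this edge is a merge or split of essential circles. Using the explicit $\AKh$-differential tables (the six essential/trivial circle interactions stated earlier), I would construct a chain $y$ supported on $D'$ with $\gr_k(y) = -n+2$ whose image $d_{\Kh}(y)$ contains $\tilde\psi$ plus terms at other resolutions. Combined with the grading observation, this proves that $[\tilde\psi] = 0$ in $H(F_{-n+2})$, forcing $\kappa(\beta) = 2$.

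If instead no minimal word of $\beta$ contains a negative generator, then $\beta$ is a positive braid, $D_o$ is the initial vertex $00\cdots 0$ of the cube, and there are no incoming Khovanov edges to $D_o$. Hence $\tilde\psi$ is not a boundary, so $\kappa(\beta) = \infty$. But $\beta \neq 1$ forces $m(\beta) \neq 1$, and $m(\beta)$ is now a nontrivial negative braid, to which the previous step applies and gives $\kappa(m(\beta)) = 2$.

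The main obstacle will be the explicit construction of $y$. One must case-split on whether the two arcs of $c$ in $D_o$ lie on the same essential circle (so the edge $D' \to D_o$ is a split) or on two different essential circles (so it is a merge), and then exhibit $y$ as a labeling of $D'$ by $v_\pm$ and $w_\pm$ markers lifting $\tilde\psi$ with $\gr_k(y) = -n+2$. The grading constraint pins down the allowable labels at the new/merged circles in $D'$, and one must verify in each local case, using the $\AKh$-differential tables, both that the relevant component of $d_{\Kh}(y)$ equals $\tilde\psi$ and that any ``leftover'' pieces of $d_{\Kh}(y)$ living at other resolutions are themselves contained in $F_{-n+2}$, so that the class of $\tilde\psi$ really does vanish at that filtration level.
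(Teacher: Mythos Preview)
Your central claim, that $\kappa(\beta) \in \{2,\infty\}$ for every braid, is false, and this breaks the whole strategy. The argument you give for it does not go through: from $d_{\Kh}(y)=\tilde\psi$ you only get that the $\gr_k=-n$ component of $d_{\Kh}(y)$ is $d_0(y_{-n})+d_{-2}(y_{-n+2})=\tilde\psi$, which constrains $y_{-n+2}$ but says nothing about higher pieces $y_{-n+4},y_{-n+6},\dots$; these can be nonzero and are needed precisely to cancel $d_0(y_{-n+2})$ and so on. In particular there is no reason a killing chain can be chosen entirely in $F_{-n+2}$. Hubbard--Saltz in fact exhibit braids with $2<\kappa(\beta)<\infty$ (and this is what makes the remark in the notes about $\kappa$ increasing by $2$ under positive stabilization meaningful). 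Relatedly, in your construction of $y$ at a negative crossing you say the ``leftover'' terms need only lie in $F_{-n+2}$; that is not enough, since $[\tilde\psi]=0$ in $H(F_{-n+2})$ requires $d_{\Kh}(y)=\tilde\psi$ on the nose, not merely modulo $F_{-n+2}$.

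The paper's argument is entirely different and does not attempt any such dichotomy. It uses the Hubbard--Saltz result that $\kappa(\beta)\neq 2$ forces $\beta$ to be right-veering (a contact-topological condition on the mapping class, not a statement about braid words containing negative letters). Applying this to both $\beta$ and $m(\beta)$ shows $\beta$ is simultaneously right- and left-veering, and then one invokes Baldwin--Grigsby \cite{BG-braids}: the only braid that is both right- and left-veering is the identity. Your positive/negative-letter case split is a rough proxy for non-right-veering but is not equivalent to it, which is another reason the direct combinatorial route does not close.
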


In short, satisfying both parts of the hypothesis tell us that the mapping class group element $\beta$ is both right-veering and left-veering, and by work of Baldwin-Grigsby \cite{BG-braids}, the only such braid is the identity braid.

\begin{remark}
    All of these `filtration level at which ...' definitions can equivalently be made in terms of spectral sequences. 
    For example, $\kappa$ gives a lower bound on the length of the spectral sequence $\AKh(\hat \beta) \abuts \Kh(\hat \beta)$. 
    
    Since many of you might not have worked with spectral sequences before, I will give an introduction to this algebraic tool next Wednesday before we talk about these in class.
\end{remark}

\begin{remark}
    Lipshitz, Ng, and Sarkar also give various filtered refinements of Plamenevskaya's invariant in \cite{LNS-transverse-khovanov}, which are more likely to be effective than $\psi$ (because they are refinements). 
    \note{If you are interested, this could be a good final project.}
\end{remark}

\subsection{Ng's Thurston-Bennequin bound for Legendrian knots}

Recall that a knot $\Lambda \subset (\R^3, \xistd)$ is \emph{Legendrian} if it is always tangent to the contact planes $\xistd = \ker(dz - y dx)$. 
We draw front diagrams to describe Legendrian knots, where the more negatively sloped strand is in front, and the diagram has no vertical tangents, only cusps.

We have talked about classical invariant for transverse knots: the topological (smooth) knot type, and the self-linking number. We now discuss classical invariants for Legendrian knots, and how smooth topology plus framing information can tell us about Legendrian representatives of knots $K$.

\begin{remark}
    Everything we discuss in this section is for oriented links. For knots, the orientation turns out not to matter. I will however write `knot' everywhere for simplicity.
\end{remark}

\begin{definition}
    Let $\Lambda$ be a Legendrian knot. We have the following classical invariants:
    \begin{enumerate}
        \item[(0)] Topological knot type, i.e.\ $K = [\Lambda]$, the equivalence class of $\Lambda$ under smooth isotopy
        \item[(1)] \emph{Thurston-Bennequin number}, which measures how much the contact planes twist as you follow the knot. Given a front diagram $F$ of $\Lambda$,
        \[
            \tb(\Lambda) = \writhe(F) - c(F)
        \]
        where 
        \[ 
            c(F) = \frac{1}{2} (\# \text{cusps}).
        \]
        \note{A zigzag in the front projection represents the knot traveling like a spiral staircase. If you think of this as the path of a car traveling up to down levels in a multi-story parking garage, $c(F)$ basically measures how many times you drive around in circles (ignoring the $z$-coordinate).} 
        \item \emph{Rotation number}, which measures (roughly speaking) how many spiral staircase floors you travel up and down (counted with sign). 
        Given a front diagram $F$ for $\Lambda$, 
        \[
            \rot(\Lambda) = \# ( \text{down cusps} - \text{up cusps}).
        \]
        \begin{center}
            \includegraphics[width=2in]{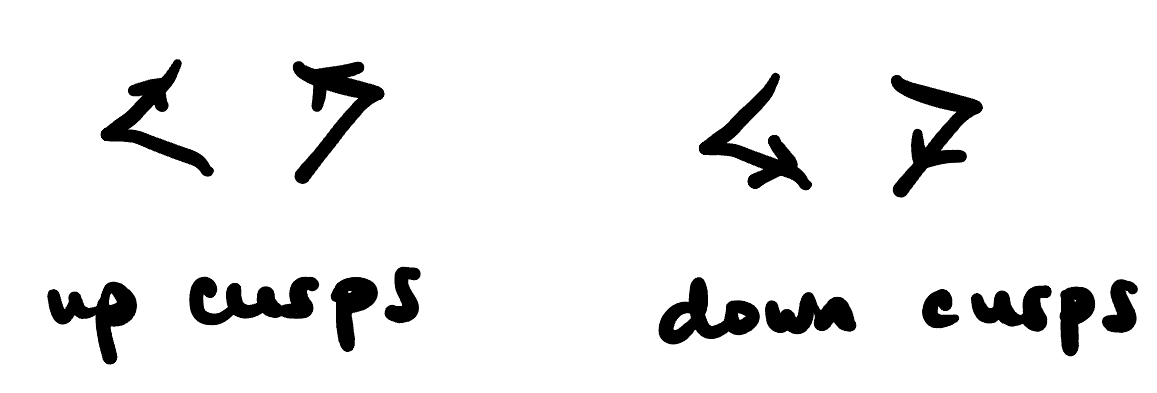}
        \end{center}
        \note{In my head, I think of this in terms of a kind of energy. Suppose you are running around a building with a lot of spiral staircases. You gain energy by descending staircases but lose energy by climbing staircases. In this Escher-esque world, you can travel the path of a knot and end up with more or less energy than you started with! (Sounds freaky, but this is just... monodromy.)}  
    \end{enumerate}
\end{definition}

\begin{remark}
    Recall that for a transverse knot $\transknot$, the self-linking number is just the writhe of a front projection. So, if $\transknot$ is a transverse push-off of $\Lambda$, then 
    \[
        \selflinking(\transknot) = \tb(\Lambda) + c(F).
    \]
\end{remark}

Our motivating question today is the following: 
\begin{question} 
Given a smooth knot $K$, what is the maximal $\tb(\Lambda)$ for Legendrian representatives $\Lambda$ of $K$?
\end{question}

The \emph{maximal Thurston-Bennequin number} achievable by a Legendrian representative of $K$ is called the ``max tb'' of $K$, and is denoted by $\maxtb(K)$. 

\begin{remark}
\begin{enumerate}
    \item Given $K$, we can study the Legendrian \emph{geography} and \emph{botany} of $K$. If you plot all Legendrian representatives of $K$ on a lattice with coordinates $(\rot, \tb)$, the support will be a ``mountain range''
    (see \cite{Etnyre-Honda-mountain}). The $\maxtb$ is the height of the tallest mountain.
    \textit{Geography} is the study of the support, and \textit{botany} is the study of the flora (i.e.\ Legendrian representatives) at each particular lattice point.
    \item The tallest mountain might not be unique. For example, the mountain range for positive torus knots looks like this:
    \begin{center}
        \includegraphics[width=1.5in]{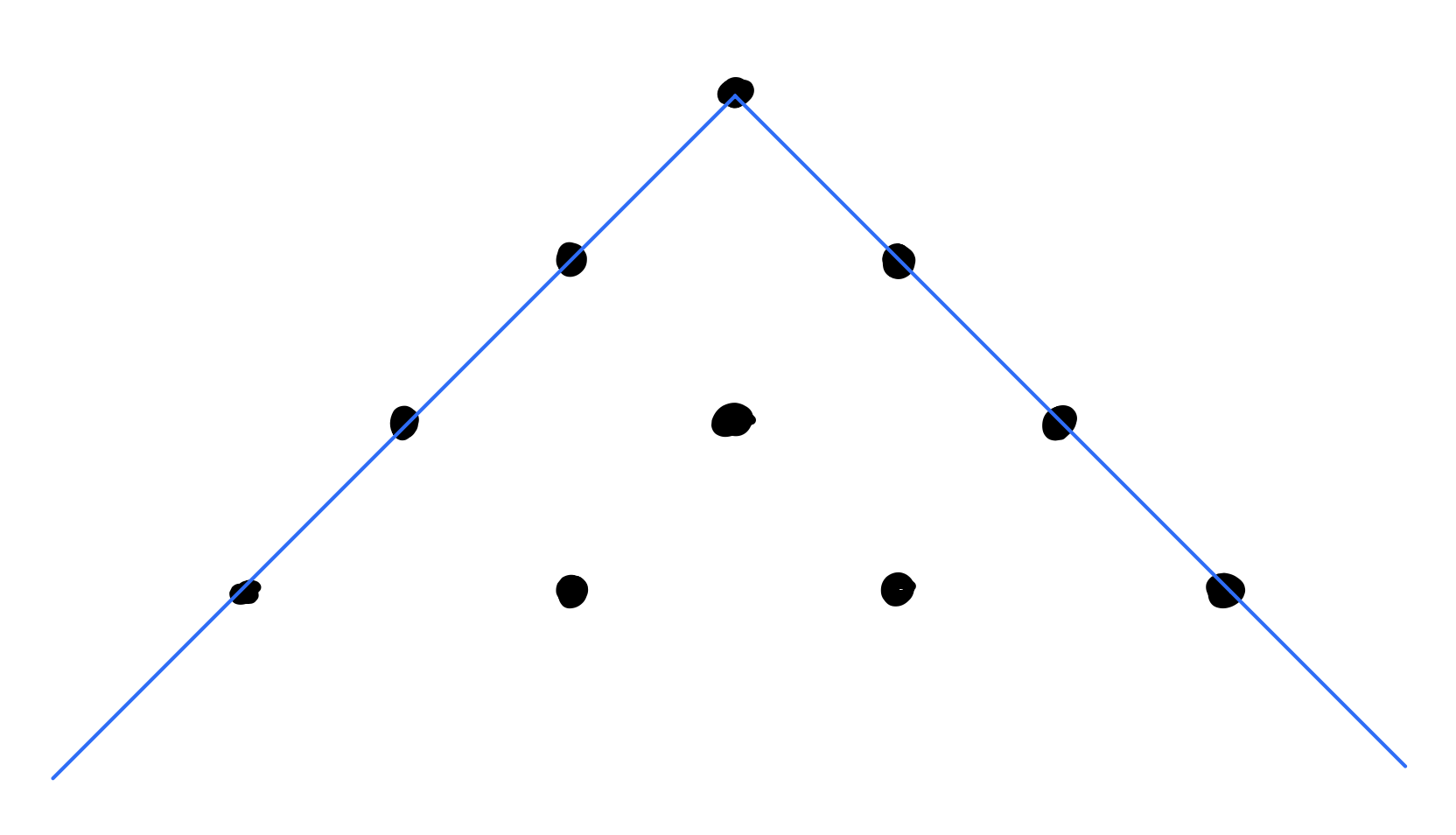}
    \end{center}
    while the mountain range for negative torus knots looks like this:
    \begin{center}
        \includegraphics[width=1.5in]{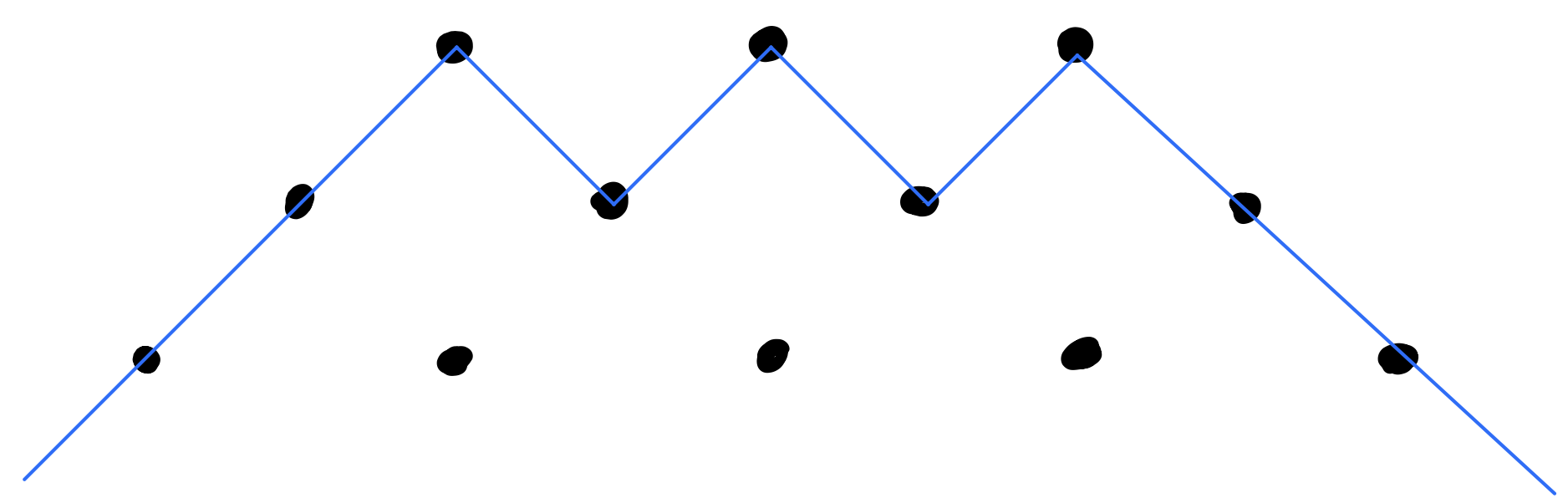}
    \end{center}
    \item It is easy to descend the mountain, i.e.\ decrease $\tb$, simply by negative and positive stabilization:
    \begin{center}
        \includegraphics[width=4in]{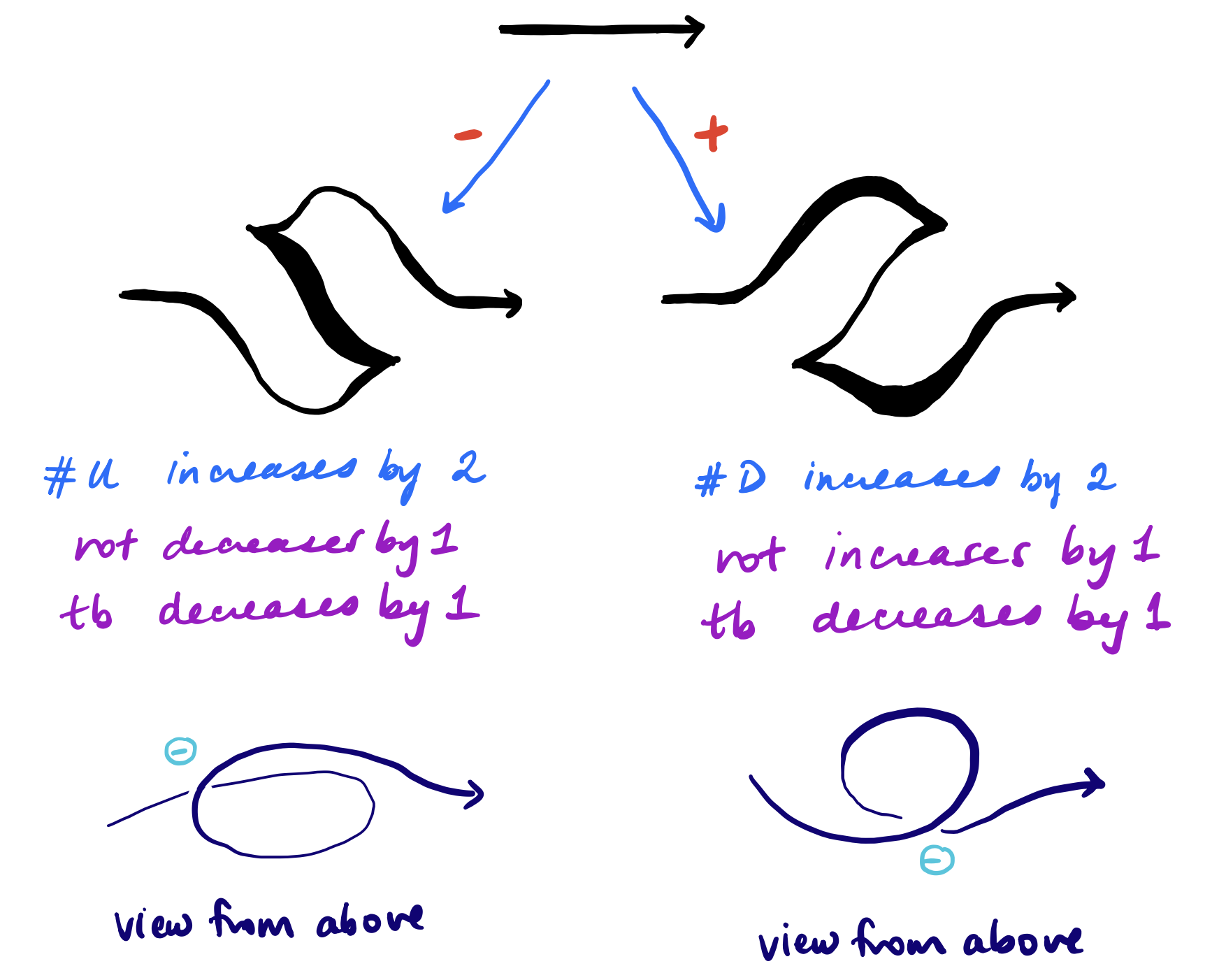}
    \end{center}
    However, it is not always possible to climb the mountain. 
\end{enumerate}

\begin{remark}
\begin{enumerate}
    \item In the drawings above, the thicker strands in the `S' and `Z' are closer to you. The `view from above' is a cartoon only. \note{This would normally be called a Lagrangian projection, but technically then one needs to be careful about the areas enclosed by curves in the projection. We will not comment on this further in this course.}
    \item Notice that the labels `positive' and `negative' for these Legendrian stabilizations have more to do with whether the rotation number increases or decreases, respectively. 
    From the point of view of smooth topology, these are both negative R1 moves. 
    \item On the other hand, the legal Legendrian R1 moves 
        \begin{center}
            \includegraphics[width=3in]{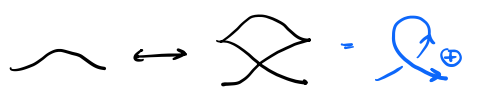}
        \end{center}
    (and the $180^\circ$ rotation of the above) are smoothly positive R1 moves. 
    This jives with the idea that, for the transverse pushoff of this Legendrian, positive stabilizations preserve transverse link type, but negative stabilization change the transverse link type (and therefore definitely also the Legendrian link type). 
        
\end{enumerate}
\end{remark}

\mz{I really ought to give citations for all this; I will need to spend some time finding the right references. Nevertheless, you do not need these references for the remainder of this course.}
\end{remark}

Bennequin \cite{Bennequin-tb-bound} discovered the first bound on $\maxtb$; we have since found stronger and stronger bounds. Here's a sampling:

\begin{proposition}
    Let $\Lambda$ be a Legendrian representative for $K$. 
    Then $\tb(\Lambda) \leq \maxtb(K)$, which is in turn bound by the following values:
\begin{enumerate}
    \item (Bennequin) \cite{Bennequin-tb-bound} $\maxtb(K) \leq 2g_3(K) - 1$ \mz{Impactful result -- was used to prove that there are contact structures on $\R^3$ that are homotopic but not contactomorphic to $(\R^3, \xistd)$; do not ask me for details. Maybe ask Orsola instead. :) }
    \item (Slice-Bennequin bound) \cite{Lisca-Matic-slice-bennequin}
    \[ \maxtb(K) \leq 2g_4(K) - 1\]
    So the original Bennequin bound was probably only detecting the 3D shadow of the real bound, which comes from 4D behavior! This makes sense because $(\R^3, \xistd)$ is the boundary of the standard symplectic 4-ball, which comes from the complex structure of the unit ball in $\C^2$.
    \item ($s$-Bennequin bound) \cite{Plam-invt, Shumakovitch-rasmussen-invt}
    \[ \maxtb(K) \leq s(K) - 1\]
    Here $s(K)$ is Rasmussen's invariant.
    \item (Ng's Khovanov bound) \cite{Ng-tb}
    \[ \maxtb(K) \leq \min \{ \delta \st \Kh^\delta(K) \neq 0 \} \]
    where $\gr_\delta = \gr_h - \gr_q$. 
    \mz{We will discuss the $\delta$ grading and prove this bound today. }
\end{enumerate}
\end{proposition}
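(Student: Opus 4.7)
The plan is to show, for every Legendrian representative $\Lambda$ of $K$ with front $F$, that every nonzero class in $\Kh(K)$ has $\delta$-grading at least $\tb(\Lambda) = w(F) - c(F)$. Taking the supremum over all Legendrian representatives then yields the stated bound on $\maxtb(K)$. (A quick sanity check on the right-handed trefoil, where $\maxtb = 1$ while the minimum of $\gr_h - \gr_q$ on $\Kh(T(2,3))$ is $-6$, suggests that the intended convention is $\delta = \gr_q - \gr_h$; I will work with that, and the analogous argument with the opposite sign goes through mutatis mutandis.)

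First I would do a direct chain-level computation. For any oriented diagram $D$ of $K$ with writhe $w(D) = n_+ - n_-$, a distinguished generator $x = v_{\pm_1} \otimes \cdots \otimes v_{\pm_{|D_u|}}$ at vertex $u$ of the cube of resolutions satisfies, after unpacking the vertex shift $\{|u|\}$ and the global shifts $\{n_+ - 2n_-\}$ and $[-n_-]$,
\[
    \gr_q(x) - \gr_h(x) = \sum_i \pm_i + w(D).
\]
The internal grading $\sum_i \pm_i$ ranges in $[-|D_u|, |D_u|]$, so over all nonzero chains the minimum of $\gr_q - \gr_h$ equals $w(D) - \max_u |D_u|$. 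Since $(\gr_h, \gr_q)$ are honest bigradings (not filtration gradings), this bound passes to homology: $\min_{\Kh(K)}(\gr_q - \gr_h) \geq w(D) - \max_u |D_u|$ for any diagram $D$ representing $K$.

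Next I would take $D$ to be the smoothing of the front $F$; since cusps are smoothed without introducing new crossings, $w(D) = w(F)$. The argument then reduces to the combinatorial claim
\[
    \max_u |D_u| \leq c(F),
\]
which would immediately give $\min_{\Kh(K)}(\gr_q - \gr_h) \geq w(F) - c(F) = \tb(\Lambda)$, as desired.

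The main obstacle is establishing this combinatorial bound. The guiding heuristic is that the oriented (Seifert) resolution of a front-derived diagram realizes exactly $c(F)$ circles---one per pair of left/right cusps---and that this resolution maximizes $|D_u|$ over all Kauffman states, because the cusp structure prevents any other resolution from splitting off additional components. I expect the rigorous proof to be inductive, either on the number of crossings or via Legendrian Reidemeister moves, tracking how each $0 \leftrightarrow 1$ resolution switch merges or splits circles while respecting the cusps. Once this lemma is in hand, the remainder of the argument reduces to the grading computation above, together with the observation that the bound is uniform in $\Lambda$.
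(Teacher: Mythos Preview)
Your chain-level computation is correct, and you are right about the sign convention: with $\delta = \gr_q - \gr_h$, the minimum $\delta$-grading appearing among nonzero chains of $\CKh(D)$ is exactly $w(D) - \max_u |D_u|$. The fatal gap is the combinatorial lemma. The claim $\max_u |D_u| \leq c(F)$ is false, and the underlying heuristic---that the Seifert resolution maximizes $|D_u|$---is where it breaks.

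Take the max-$\tb$ Legendrian right-handed trefoil: its front has three crossings, two left cusps and two right cusps, so $c(F) = 2$ and $w(F) = 3$. The smoothing is the standard braid-closure diagram for $\sigma_1^3$. The Seifert resolution indeed has two circles, but the opposite corner of the cube (every crossing resolved as $\horizres$) has three. Thus $\max_u |D_u| = 3 > 2 = c(F)$, and your chain-level estimate yields only $\min \delta \geq 0$, strictly weaker than the actual homological value $\min \delta = 1 = \tb(\Lambda)$.

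The upshot is that Ng's bound genuinely requires homology-level cancellation; no purely chain-group argument can succeed. The paper's proof (following Ng) is structurally different from yours: it passes to a writhe-shifted $\delta$-graded theory $\Kh_{sh}$ and proves by induction on the number of crossings in the front that $\Kh_{sh}^*(D) = 0$ for all $* < -c(F)$. The base case is the crossingless unlink. For the inductive step one resolves a rightmost crossing and invokes the skein long exact sequence on \emph{homology}, together with a case split according to whether the two rightward-emanating strands meet at a single cusp (handled by Reidemeister~1 invariance) or at two separate cusps (handled directly by the exact triangle). That long exact sequence is precisely the mechanism that accesses the cancellations your chain-level estimate cannot see.
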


\begin{remark}
    Observe that $(4) \implies (3) \implies (2) \implies (1)$. Such is the progress of humanity.
\end{remark}

\begin{exercise}
    Prove the $s$-Bennequin inequality. 
    
    \emph{Hint:} First relate $\maxtb$ with $\selflinking$. Then compare the quantum \emph{grading} of Plamenevskaya's cycle $\vec v_-$ in the Khovanov complex with the quantum \emph{filtration} of Lee's canonical classes in the Lee homology. 
\end{exercise}

Let's state Ng's `strong Khovanov bound' again the way he phrased it:

\begin{theorem}[\cite{Ng-tb}, Theorem 1]
\label{thm:Ng-tb-bound}
    For any link $K$, 
    \[
        \maxtb(K) \leq \min \{ k \st  \bigoplus_{i-j = k} \Kh^{i,j}(K) \neq 0 \}.
    \]
\end{theorem}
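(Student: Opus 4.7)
The plan is to translate the Legendrian data of $\Lambda$ into a diagram $D$ for $K$ and analyze $\CKh(D)$ finely enough to see that every Khovanov class lies at $\gr_q - \gr_h \geq \tb(\Lambda)$. I work with $\gr_\delta := \gr_q - \gr_h$, so the statement reads $\tb(\Lambda) \leq \min\{\gr_\delta : \Kh^{\gr_\delta}(K) \neq 0\}$ (consistent with $\min \gr_\delta = -1 = \maxtb(U)$ for the unknot and $\min \gr_\delta = 1 = \maxtb(T(2,3))$ for the right-handed trefoil).

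Given a front $F$ with $c = c(F)$ left cusps and $c$ right cusps, I first smooth every cusp to obtain an oriented link diagram $D$ for $K$. Cusps contribute no crossings, so $\writhe(D) = \writhe(F)$ and $n_\pm(D) = n_\pm(F)$; hence $\tb(\Lambda) = \writhe(D) - c$. Unwinding the shift conventions of $\CKh(D)$, every distinguished generator $x$ at cube vertex $u$ with total $V$-label $v \in \{-|D_u|, \ldots, |D_u|\}$ satisfies $\gr_q(x) - \gr_h(x) = v + \writhe(D)$. In particular, the all-minus tensor at the oriented resolution $D_o$ realizes $\gr_q - \gr_h = \writhe(D) - |D_o|$.

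The core geometric claim is $|D_o| \leq c$: the Seifert circles of $D$ correspond (at worst surjectively) to the horizontal bands of the front $F$ connecting left cusps to right cusps. Combined with the grading formula, this puts the all-minus tensor at $\gr_\delta \geq \tb(\Lambda)$. The chain-level support of $\CKh(D)$ is still too weak at higher-weight cube vertices --- for instance, the all-horizontal resolution of the right-handed trefoil diagram harbors generators at $\gr_q - \gr_h = 0 < 1 = \tb$ --- so the bulk of the work is to show that every Khovanov class survives only at $\gr_\delta \geq \tb(\Lambda)$. I would do this by inducting on the number of crossings of $F$ via the Khovanov skein exact triangle: each crossing yields an exact triangle relating $\Kh(K)$ to $\Kh$ of its two local resolutions, and each resolution inherits a Legendrian-front presentation whose effective Thurston-Bennequin number shifts in tandem with the bigrading shift in the triangle. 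The base case of a crossingless front is a disjoint union of standard Legendrian unknots, for which the bound is direct.

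The main obstacle is verifying the skein-to-$\tb$ compatibility: the two local resolutions of a crossing in $F$ must admit natural Legendrian-front presentations whose $\tb$-values match the $\pm 1$ bigrading shifts in the Khovanov long exact sequence, so that the induction closes. Alternatively, and more algebraically, one can apply delooping and Gaussian elimination iteratively to $\CKh(D)$, cancelling every distinguished generator at $\gr_q - \gr_h < \tb(\Lambda)$ against a neighbor via a unit differential; either route leverages Step 2 as the chain-level input and reduces the proof to checking that the cancellations always proceed strictly below the $\tb(\Lambda)$ threshold.
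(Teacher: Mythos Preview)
Your overall strategy---induction on the number of crossings in a front, using the Khovanov skein long exact sequence---is exactly the route the paper (following Ng) takes. Your identification of ``the main obstacle'' is also correct: the whole proof hinges on checking that the two resolutions $D_0, D_1$ of a crossing admit Legendrian front presentations whose cusp counts match the $\delta$-grading shifts in the triangle. But you have not actually resolved this obstacle, and for a \emph{generic} crossing the bookkeeping does not close up cleanly.

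The missing idea is to pick the \emph{rightmost} crossing of the front. Then the two strands leaving the crossing to the right either (Case~1) meet at a single right cusp, or (Case~2) go to two distinct right cusps with nothing else in between. In Case~1 the crossing together with the cusp is literally a smooth Reidemeister~1 curl, so $\Kh(D)\cong\Kh(D_0)$ and the writhe shift by $1$ exactly compensates the cusp count. In Case~2 one feeds both $D_0$ and $D_1$ into the exact triangle: $D_0$ has $c(D_0)=c(D)$ directly, while $D_1$ acquires a small stabilization on the far-right strand which, after a front isotopy removing it, brings the cusp count back in line. Without isolating the rightmost crossing you cannot guarantee that $D_1$ simplifies this way, and the induction stalls.

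Your preliminary observation $|D_o|\le c$ about Seifert circles is true but plays no role in the paper's argument (it only bounds one chain-level generator, not the homology support, as you yourself note). The Gaussian-elimination alternative you float at the end is not pursued in the paper and would require its own justification; it is not obviously easier than the rightmost-crossing induction.
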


\begin{remark}
    For alternating knots, the Khovanov homology (over $\Q$) is rather simple; it is supported on two diagonals on the $(\gr_h, \gr_q)$ lattice. This was first shown by Lee in \cite{Lee-endomorphism}, and was extended to \emph{quasi-alternating links} by Manolescu and Ozsv\'ath \cite{MO-quasialternating}. 
    Consequently, the Khovanov homology of a quasi-alternating knot is entirely determined by two `classical' invariants, the knot signature and the Jones polynomial. \note{I was at a conference in Berkeley a few summers back where Vaughan Jones objected to his polynomial being called `classical'.}

    Ng proves that his bound is \emph{sharp} for alternating knots.
\end{remark}

\begin{remark}
    The $\delta$ grading here is significant because it is also the grading used by Seidel--Smith in their \emph{symplectic Khovanov homology} \cite{SS-khsymp}, $\Khsymp$, which is a Lagrangian intersection Floer homology interpretation of Khovanov homology that is isomorphic to (combinatorial, $\delta$-graded) Khovanov homology over fields of characteristic 0 \cite{Abouzaid-Smith-kh}.

    This means that Ng's bound can equivalently be viewed as a $\maxtb$ bound coming from symplectic Khovanov homology!
\end{remark}

In addition to working with $\delta$-graded Khovanov homology, Ng also makes use of a `shifted' version of Khovanov homology, which depends on the diagram; you may think of this as a version of Khovanov homology for \emph{framed links}, i.e.\ links made out of ribbon (or pappardelle) rather than string (or spaghetti).

Observe that, for a distinguished generator $g$ for $\CKh(D)$, the $\delta$-grading is given by 
    \[
        \gr_\delta(g) = p(g) + \writhe(D).
    \]
Ng's \emph{shifted Khovanov homology} $\Kh_{sh}$ is related to regular ($\delta$-graded) Khovanov homology by 
\begin{equation}
\label{eq:kh-shifted}
    \Kh^*(D) = \Kh_{sh}^{*-\writhe(D)}(D)
\end{equation}
so that the grading for $g$ in $\CKh_sh$ is simply $p(v)$. 

\begin{remark}
    This terminology (`shifted $\Kh$') is entirely local to this current section, and is not used in the literature, as far as I'm aware.
    However, the homology theory absolutely does appear elsewhere in the literature in various forms (and slightly different conventions...).
    Here's how I think about $\Kh$ vs $\Kh_{sh}$.

    If we were to define a bracket for $\Kh$ so that we didn't have to include global shifts at the end, we would use the bracket relations
    \begin{center}
        \includegraphics[height=2in]{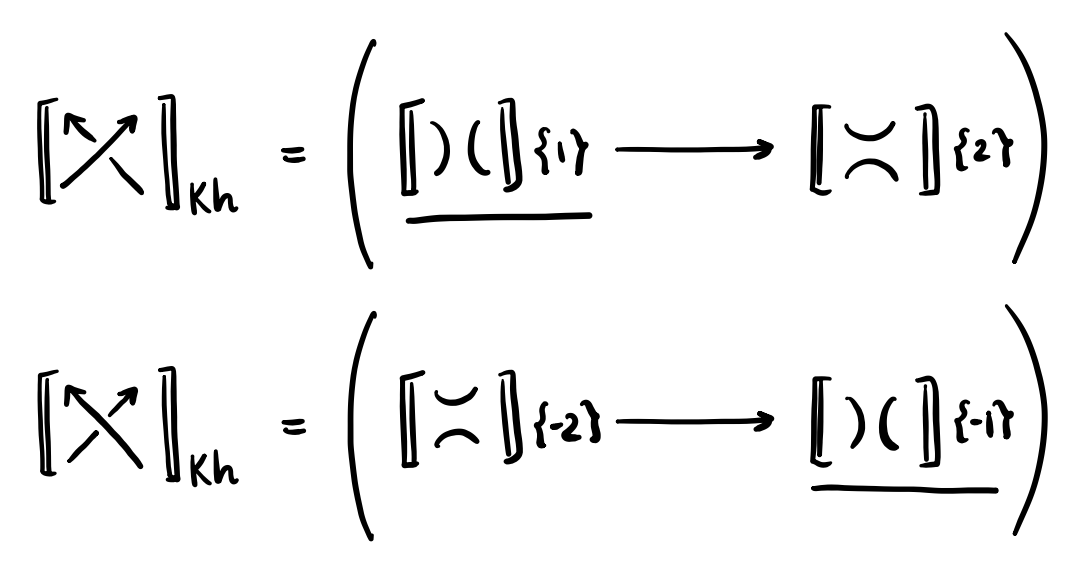}
    \end{center}

    For the framed link invariant $\Kh_{sh}$, the bracket relation would instead be 
    \begin{center}
        \includegraphics[height=2in]{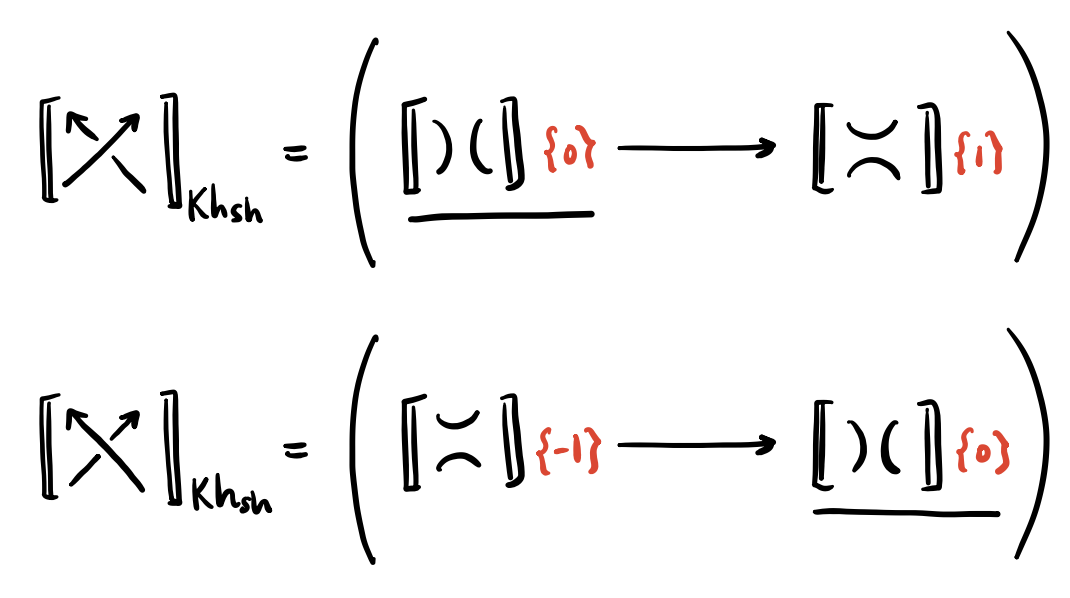}
    \end{center}

\end{remark}

The remainder of this section will be dedicated to proving the following proposition, which will immediately imply Theorem \ref{thm:Ng-tb-bound}.

\begin{proposition}[\cite{Ng-tb}, Proposition 5]
\label{prop:Ng-sh-tb-bound}
Let $D$ be a front diagram for a Legendrian knot $\Lambda$ with smooth knot type $K$. Then
\[
    \Kh_{sh}^*(D) = 0
\]
for all $* < -c(D)$, 
and therefore $\Kh^*(D) =0$ for all $* < \tb(D)$. 
\end{proposition}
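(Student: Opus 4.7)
The plan is to derive both assertions from a single combinatorial bound on resolutions. The second assertion follows from the first by combining $\Kh^{*}(D) = \Kh_{sh}^{*-\writhe(D)}(D)$ from \eqref{eq:kh-shifted} with the identity $\tb(D) = \writhe(D) - c(D)$: a vanishing $\Kh_{sh}^{k}(D) = 0$ for all $k < -c(D)$ translates directly into $\Kh^{t}(D) = 0$ for all $t < \tb(D)$. So I focus on the first assertion.

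For the first assertion, I would prove the stronger chain-level vanishing $\CKh_{sh}^{*}(D) = 0$ for $* < -c(D)$. A distinguished generator $g$ supported on a resolution $D_u$ has grading $p(g) \geq -|D_u|$, where $|D_u|$ denotes the number of circles in $D_u$, so the first assertion reduces to the following purely combinatorial statement about fronts (the \emph{Key Lemma}): for every $u \in \{0,1\}^{n}$, $|D_u| \leq c(D)$.

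The Key Lemma is the main obstacle. My strategy is to exploit the slope data of front projections. At each crossing of $D$, the two strands meet transversely with opposite, nonvertical slopes $\pm m$; after an appropriate local planar isotopy supported near the crossing, both the $\vertres$- and $\horizres$-smoothings can be drawn using local arcs whose tangent slopes transition continuously through a horizontal (rather than vertical) direction. Consequently $D_u$ can itself be realized as a planar diagram with no vertical tangents and whose cusps are precisely those inherited from $D$. Since any closed curve drawn in the plane with no vertical tangents must contain at least two cusps (one at its leftmost and one at its rightmost $x$-extremum), we obtain $2|D_u| \leq 2 c(D)$. Once the lemma is established, $p(g) \geq -c(D)$ for every distinguished generator, so $\CKh_{sh}^{*}(D) = 0$ for $* < -c(D)$; taking homology finishes the proof of the first assertion, and Theorem \ref{thm:Ng-tb-bound} follows immediately.
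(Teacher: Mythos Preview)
Your reduction to a chain-level statement is where the argument breaks. The Key Lemma --- that every complete resolution $D_u$ of a front diagram satisfies $|D_u|\le c(D)$ --- is false. Take the max-$\tb$ front of the right-handed trefoil: it has three positive crossings and $c(D)=2$, but its all-$1$ resolution has three circles. (You can see this directly by tracing the diagram, or indirectly: $\Kh^{3,9}(3_1)\neq 0$ forces a generator with $p=3$ at the $|u|=3$ resolution, hence $|D_{111}|\ge 3$.) Consequently $\CKh_{sh}^{-3}(D)\neq 0$ even though $-3<-c(D)$, so the stronger chain-level vanishing you aimed for simply does not hold.

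The specific step that fails in your argument is the claim that \emph{both} smoothings at a crossing can be drawn with only horizontal turning. Once you have arranged the local slopes to be $\pm m$, the $\horizres$ smoothing does pass through a horizontal tangent, but the $\vertres$ smoothing connects the upper-left arc (tangent direction $(1,-m)$, heading into the crossing) to the lower-left arc (tangent direction $(-1,-m)$, heading out); any smooth interpolation between these forces the tangent through a vertical direction. So the resolved picture acquires new $x$-extrema beyond the original cusps, and your count $2|D_u|\le 2c(D)$ collapses. This is precisely why Ng's proof (reproduced in the paper) cannot work at the chain level and instead proceeds by induction on the number of crossings, using the long exact sequence of Lemma~\ref{lem:Ng-leg-LES} to control $\Kh_{sh}(D)$ in terms of $\Kh_{sh}(D_0)$ and $\Kh_{sh}(D_1)$, together with the observation that a rightmost crossing always falls into one of two front-diagram cases whose resolutions have no more cusps than $D$.
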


The `therefore' part is clear after you remember that $\tb(D) = \writhe(D) - c(D)$. 

We will need the following lemma, which should hopefully feel somewhat similar to Lemma \ref{lem:lee-connected-sum-ses}.
\mz{If you want practice with long exact sequences coming from mapping cones, try proving this lemma!}

\begin{lemma}[\cite{Ng-tb}, Lemma 6]
\label{lem:Ng-leg-LES}
Let $D$ be a front diagram, and let $D_0, D_1$ denote the (Legendrian) $0$- and $1$- resolutions at a specific crossing:
    \begin{center}
        \includegraphics[width=2in]{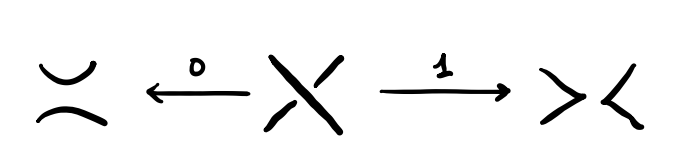}
    \end{center}

There is a long exact sequence
\begin{center}
\begin{tikzcd}
    \Kh_{sh}(D_0) \arrow{rr}{(-1)} & & \Kh_{sh}(D_1) \arrow{dl}{} \\
    & \Kh_{sh}(D) \arrow{ul}{} & 
\end{tikzcd}
\end{center}
where the $(-1)$ indicates a shift in $\delta$-grading.    
\end{lemma}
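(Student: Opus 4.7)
The plan is to derive this long exact sequence from the mapping cone structure of the Khovanov bracket at the chosen crossing, and then translate the result into $\Kh_{sh}$-language by carefully tracking the grading shifts.

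First I would isolate the specified crossing in the bracket and write
$$\llbracket D \rrbracket \;=\; \Tot\bigl(\underline{\llbracket D_0 \rrbracket} \to \llbracket D_1 \rrbracket\{1\}\bigr),$$
which packages as a short exact sequence of chain complexes
$$0 \to \llbracket D_1 \rrbracket\{1\}[1] \to \llbracket D \rrbracket \to \llbracket D_0 \rrbracket \to 0,$$
with $\llbracket D_1 \rrbracket\{1\}[1]$ the subcomplex (sitting in homological gradings $\geq 1$) and $\llbracket D_0 \rrbracket$ the quotient. This SES yields a LES in cohomology whose connecting homomorphism $\partial$ raises $\gr_h$ by $1$ while preserving $\gr_q$.

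The second step is to compare $\delta$-gradings across the three diagrams. From the formula $\gr_\delta(g) = p(g) + \writhe(X)$ for a distinguished generator $g \in \CKh(X)$, the $\Kh_{sh}$-normalization subtracts off $\writhe$ so that $\gr_\delta^{\Kh_{sh}}(g) = p(g)$, which agrees with the $\delta$-grading of $g$ viewed in the unshifted bracket. Since $\writhe(D_0) = \writhe(D_1)$ (both equal $\writhe(D)$ minus the sign of the resolved crossing), the writhe corrections disappear uniformly in $\Kh_{sh}$. Moreover, $\{1\}[1]$ is $\delta$-neutral because the contributions of $\{1\}$ and $[1]$ to $\gr_\delta$ cancel, so the two non-connecting maps in the LES preserve $\gr_\delta^{\Kh_{sh}}$.

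Finally, the $(-1)$ shift comes entirely from $\partial$. A class at $(\gr_h, \gr_q) = (h, q)$ in $H^*(\llbracket D_0 \rrbracket)$ is sent by $\partial$ to a class at $(h+1, q)$ in $H^*(\llbracket D_1 \rrbracket\{1\}[1])$, which in the native bigrading of $\llbracket D_1 \rrbracket$ itself sits at $(h, q-1)$. That is, $\gr_q$ drops by $1$ while $\gr_h$ is unchanged, so $\gr_\delta$ shifts by $-1$ (with the sign convention matching $\gr_\delta(g) = p(g) + \writhe$). The only real obstacle here is bookkeeping with conventions: one has to verify that the writhe-compensating shifts defining $\Kh_{sh}$ really do cancel across all three terms, which is precisely why the lemma is stated in $\Kh_{sh}$ rather than directly in $\Kh$.
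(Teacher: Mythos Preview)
Your proposal is correct and is precisely the approach the paper intends: the notes do not actually supply a proof of this lemma but instead remark that it ``should hopefully feel somewhat similar'' to the earlier connected-sum SES and explicitly invite the reader to derive it from the mapping-cone description of the Khovanov bracket. Your SES $0 \to \llbracket D_1\rrbracket\{1\}[1] \to \llbracket D\rrbracket \to \llbracket D_0\rrbracket \to 0$ and subsequent $\delta$-grading bookkeeping (noting that $\{1\}[1]$ is $\delta$-neutral, and that the connecting map drops $p$ by one) is exactly the intended argument; the writhe-cancellation you flag is indeed the reason the statement is cleanest in $\Kh_{sh}$.
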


Because this triangle is exact, we immediately have the following corollary.

\begin{corollary}[\cite{Ng-tb}, Lemma 7]
If $\Kh_{sh}(D_0)$ and $\Kh_{sh}(D_1)$ are both supported only in $\delta$ gradings $\geq n$, then so is $\Kh_{sh}(D)$. 
\end{corollary}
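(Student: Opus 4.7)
The plan is to read off the corollary directly from the long exact sequence in $\delta$-gradings obtained by unrolling the exact triangle of Lemma \ref{lem:Ng-leg-LES}. The two unmarked arrows preserve the $\delta$-grading, while the arrow labeled $(-1)$ shifts the $\delta$-grading by one. Extracting the piece at a fixed $\delta$-grading $k$ should yield an exact sequence of the form
\[
    \cdots \to \Kh_{sh}^{k+1}(D_0) \to \Kh_{sh}^{k}(D_1) \to \Kh_{sh}^{k}(D) \to \Kh_{sh}^{k}(D_0) \to \Kh_{sh}^{k-1}(D_1) \to \cdots
\]
(up to a potential reindexing depending on whether the shift is on the source or target of the connecting arrow, which I would sanity-check against a small example like an R1 twist).

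With that sequence in hand, the proof is a three-line squeeze: fix $k < n$. By the hypothesis, each of $\Kh_{sh}^{k}(D_1)$, $\Kh_{sh}^{k}(D_0)$, $\Kh_{sh}^{k+1}(D_0)$, and $\Kh_{sh}^{k-1}(D_1)$ is already zero (all involve $\delta$-gradings strictly less than $n$ in at least one of the two resolutions). So $\Kh_{sh}^{k}(D)$ sits between two zero terms in an exact sequence, and therefore vanishes. This gives $\Kh_{sh}^{*}(D) = 0$ for all $* < n$, which is the desired conclusion.

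There is no real obstacle here; the only thing to be careful about is the bookkeeping of the grading shift when passing from the exact triangle to the long exact sequence, and verifying that the shift $(-1)$ in the statement of Lemma \ref{lem:Ng-leg-LES} is consistent with the hypothesis of the corollary (i.e.\ that both $D_0$ and $D_1$ contribute only in $\delta \geq n$ to \emph{every} neighboring term in the LES at grading $k < n$). Once that check is done, the argument is essentially immediate from exactness.
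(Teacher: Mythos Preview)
Your approach is correct and is exactly what the paper intends: it states the corollary as immediate from exactness of the triangle, and unrolling into a long exact sequence and sandwiching $\Kh_{sh}^{k}(D)$ between $\Kh_{sh}^{k}(D_1)$ and $\Kh_{sh}^{k}(D_0)$ is the standard way to make that precise. One small slip: for $k=n-1$ the term $\Kh_{sh}^{k+1}(D_0)=\Kh_{sh}^{n}(D_0)$ need not vanish, but this is irrelevant since only the two immediate neighbors of $\Kh_{sh}^{k}(D)$ are needed for the squeeze.
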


As you can imagine, the proof of Proposition \ref{prop:Ng-sh-tb-bound} will proceed by induction on the number of crossings (since $D_0$ and $D_1$ both have one fewer crossing than $D$). 

\textbf{(Base case.)} If $D$ has no crossings, then $D$ is an unlink (call it $U^n$) of $n$ components, for some $n$. Since $\gr_\delta(g) = p(g)$ for all generators in $\CKh_{sh}(U^n)$, the complex (and homology) are supported on $\delta$ gradings $\{-n, \ldots, n\}$. 
The Legendrian representative consisting only of max-tb unknots 
    \begin{center}
        \includegraphics[height=1in]{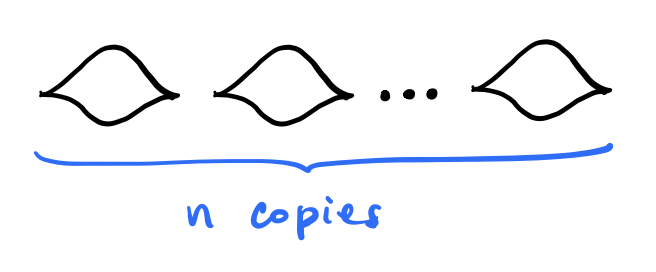}
    \end{center}
has $c(D) = n$ for the diagram above (and is in fact the max-tb representative of $U^n$).
So, indeed, the $\Kh_{sh}(U^n)$ is supported only in gradings $\geq c(D)$. 

\textbf{(Induction step.)} Now assume that for all diagrams with fewer crossings than $D$, the claim holds. 

\note{For the sake of understanding Ng's argument quickly, we are going to draw a concrete example to follow. But note that the proof is completely general.}

    \begin{center}
        \includegraphics[width=1.5in]{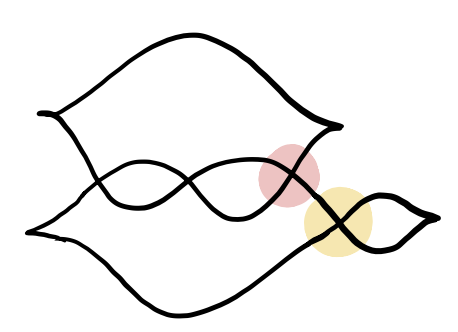}
    \end{center}

In $D$, pick a right-most crossing. There are two possible scenarios:
\begin{center}
    \includegraphics[height=1in]{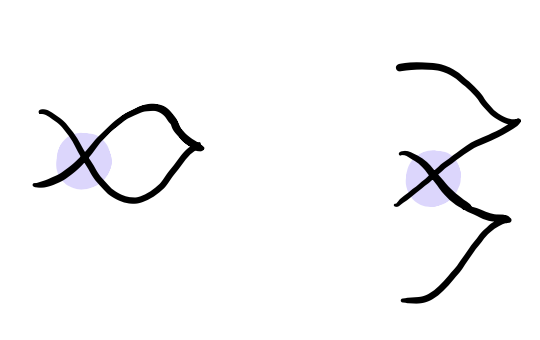}
\end{center}
\begin{enumerate}
    \item The two strands emanating rightward out of the crossing meet at a cusp, or 
    \item the two strands emanating rightward out of the crossing do not meet at a cusp, and in fact go to two separate cusps.
\end{enumerate}

We study the two cases separately. 
\note{In our example, Case 1 appears first, and then after reducing to $D_0$, we need to use Case 2, when the diagram looks like this:}
    \begin{center}
        \includegraphics[width=1.3in]{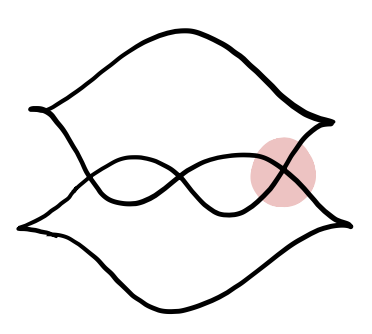}
    \end{center}

\textbf{Case 1.} 
Resolving the crossing in Case 1, we have 
\begin{center}
    \includegraphics[height=1in]{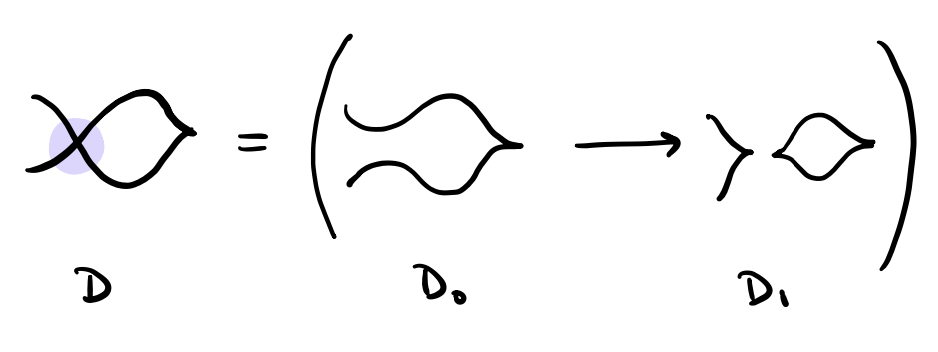}
\end{center}
By (smooth) R1 invariance, $\Kh(D) \cong \Kh(D_0)$. 
Note that $\writhe(D_0) = \writhe(D) -1$.
Combined, this tells us that $\Kh_{sh}^*(D) \cong \Kh_{sh}^{*-1}(D_0)$. 

On the other hand, $c(D_0) = c(D)$. 
By the induction hypothesis, $\Kh_{sh}(D_0)$ is supported on $\delta$-gradings $\geq c(D_0) = c(D)$.
Therefore $\Kh_{sh}^*(D) \cong \Kh_{sh}^{*-1}(D_0)$ must also be supported in these gradings as well. 

\textbf{Case 2.}
Resolving the crossing in Case 2, we have 
\begin{center}
    \includegraphics[height=1.2in]{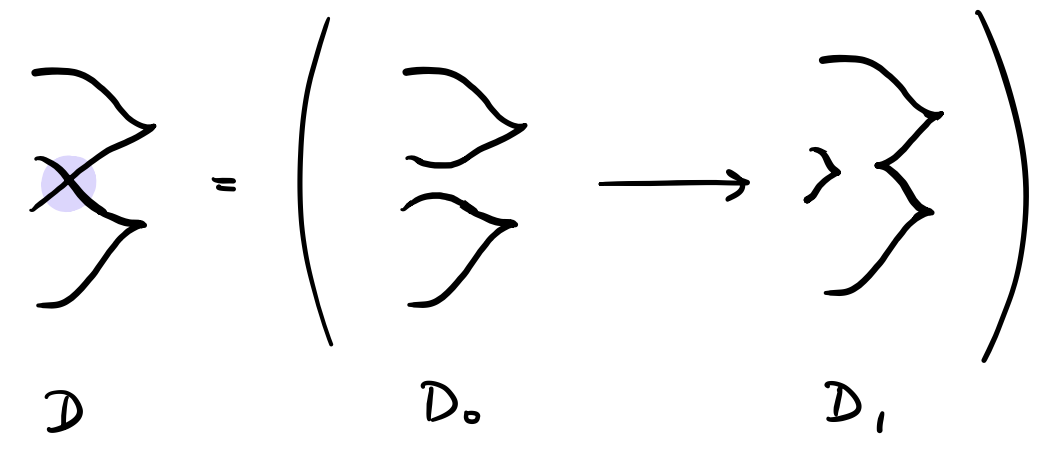}
\end{center}
By Lemma \ref{lem:Ng-leg-LES}, it suffices to show that both $D_0$ and $D_1$ are supported in degrees $\geq c(D)$. 
\begin{itemize}
    \item Note that $\writhe(D_0) = \writhe(D) - 1$, but $c(D) = c(D_0)$; by a similar argument as in Case 1, $D_0$ satisfies the induction hypothesis.
    \item While $c(D_1)$ is different from $c(D)$, $D_1$ is clearly isotopic to a diagram where you remove the stabilization on the far right strand. We can then apply the same argument as before. 
\end{itemize}

This concludes the proof of Proposition \ref{prop:Ng-sh-tb-bound}!

\section{Spectral sequences, $\Kh$'s relation to gauge / Floer theories}

Spectral sequences are an algebraic tool in homological algebra that computes homology by taking successively more accurate approximations.

In our text, the spectral sequence itself can be a useful object to study, e.g.\ as a knot invariant. 

We will start out with the most general version of a spectral sequence that you will see in this course; these come from filtered complexes. 
We'll then discuss more specific settings, like spectral sequences coming from bicomplexes.

\subsection{Spectral sequences from filtered complexes}

Let's begin by recalling some basics of filtered complexes in a different light, and in the process, set some notation (slightly different from before). 

A \emph{chain complex} (of, say, $\Z$-modules), in its most general form, is just a pair $(\CC, d)$ where $\CC$ is a $\Z$-module and $d$ is an endomorphism of $\CC$ satisfying $d^2 = 0$. 

In the situations we're usually most familiar with, there is a homological grading, and $\CC = \bigoplus C_i$ where $C_i$ are the homologically graded pieces. 

A ($\Z$-)\emph{filtration} on the complex $(\CC, d)$ is a sequence of subcomplexes $\{F_j \CC\}_{j \in \Z}$ such that 
\[
    F_j\CC \supseteq F_{j+1}\CC
\]
and $d(F_j\CC) \subseteq F_j\CC$. (In other words, $d$ preserves the filtration level.)

In our settings, we also require that there are levels $m$ and $M$ where $F_m\CC = \CC$ and $F_M \CC = \emptyset$. 
This makes our filtration \emph{finite-length}. 

\note{
    The filtration could also be indexed such that $F_j \CC \subseteq F_{j+1}\CC$. 
    We will see examples of both.
}

In our setting (i.e.\ Khovanov homology), $\CC$ is generated by a set of \emph{distinguished generators} that are homogeneous with respect to some \emph{filtration grading} $\gr_f$. 

In this case, we can decompose the differential as 
\[
    d = d_0 + d_1 + d_2 + \cdots
\]
where $d_i$ is the component of the differential that is $\gr_f$ graded degree $i$. Because our filtrations are finite-length, we know that eventually, for $k$ large enough, $d_k = 0$, so this is a finite sum.

\begin{remark}
    Notice that the associated graded chain complex can be written as just $(\CC, d_0)$ in this case.
\end{remark}

\begin{remark}
    Observe that since $d^2 =0$, we know that the graded pieces of the endomorphism $d^2$ must be 0, so we know
    \begin{align*}
        d_0^2 &= 0 \\
        d_1d_0 + d_0 d_1 &= 0\\
        d_2d_0 + d_0d_2 + d_1^2 &= 0\\
        \ldots & \\
    \end{align*}
    and so on. 
\end{remark}

\begin{definition}
The \emph{filtration spectral sequence} for the filtered complex $(\CC, d)$ with filtration $\{F_j\CC\}$ is a sequence of chain complexes, successively defined as follows:
    \begin{itemize}
        \item $(E^0, d^{(0)}) = (\CC, d_0)$; note that this is just the associated graded complex
        \item $(E^1, d^{(1)})$:
            \begin{itemize}
                \item $E^1$ is the homology of $E^0$
                \item $d^{(1)}$ is the graded degree 1 piece of the induced differential $\bar d$, viewed as a map on homology; this is the lowest graded piece of the induced map on homology
            \end{itemize}
        \item $(E^2, d^{(2)})$ is obtained the same way: $E^2$ is the homology of $E^1$, and $d^{(2)}$ is the lowest degree graded piece of the (now doubly induced map) $\bar \bar d$, which is in fact graded degree 2. 
    \end{itemize}
\end{definition}

\begin{remark}
First of all, if this definition is hard to parse, don't worry -- we will see an example and it will all be much clearer. 
    \begin{itemize}
        \item The notation in the definition above isn't really standard. For example, I added the parentheses around the superscript to remind you that, in general, $d^{(i)}$ is \emph{not} $d_i$. (It wouldn't even make sense since they're maps on completely different algebraic objects.) 
        \item The $E^i$ are called the \emph{pages} of the spectral sequence. Every time you turn the page, you are taking a subquotient (i.e.\ homology). 
        \item Subject to some boundedness conditions, these spectral sequences will \emph{collapse} (or \emph{terminate}) on a finite page. This means that, eventually, for $k \geq N$ for some large $N$, $E^k = E^N$, and all the $d^{(k)} = 0$. We say that the spectral sequence \emph{abuts} to the homology on page $E^N$, and we call this page the ``infinity page'' $E^\infty$. 
    \end{itemize}
\end{remark}

For instructional reasons, I will work over $\F_2$ today, so that we don't have to complicate the discussion with signs.
Our main tool is Gaussian elimination (see Corollary \ref{cor:cancellation-lemma}), which works over $\F_2$ as follows:

\begin{center}
    \includegraphics[width=2in]{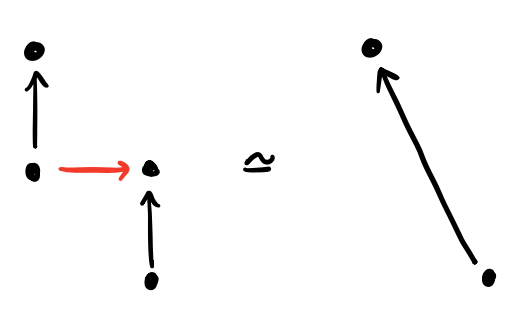}
\end{center}

\note{If you wish to work over characteristic 0, then you have to add a sign to your zigzag differentials.}

\begin{example}
    Here is an illustrative example of an abstract filtered complex and the pages of the associated spectral sequence. 
    \begin{itemize}
        \item On the left are the pages of the spectral sequence; induced maps that are not to be considered on the present page are grayed out.
        \item On the right is the actual computation one might do, where one keeps track of the longer induced differentials while working through the calculations of the pages.
    \end{itemize}

    \begin{center}
        \includegraphics[width=\linewidth]{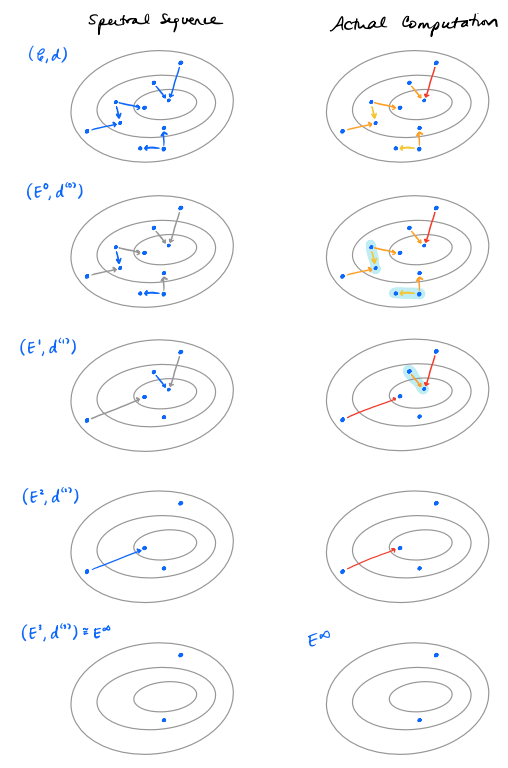}
    \end{center}
\end{example}

\begin{exercise}
    The Lee complex is $\gr_q$-filtered, and the homology of the associated graded complex is Khovanov homology. This gives a spectral sequence, known as the Rasmussen-Lee spectral sequence, or the Khovanov-to-Lee spectral sequence. 
    Compute the Khovanov--to--Bar-Natan spectral sequence over $\F_2$ for the trefoil appearing in \cite{BN-Kh}.

    Recall from Warning \ref{warn:char-2-Lee-vs-BN} that Bar-Natan homology corresponds to the Frobenius algebra $\F_2[X]/(X^2-X)$.
    The merge and split maps are shown below:
    \begin{center}
        \includegraphics[width=3in]{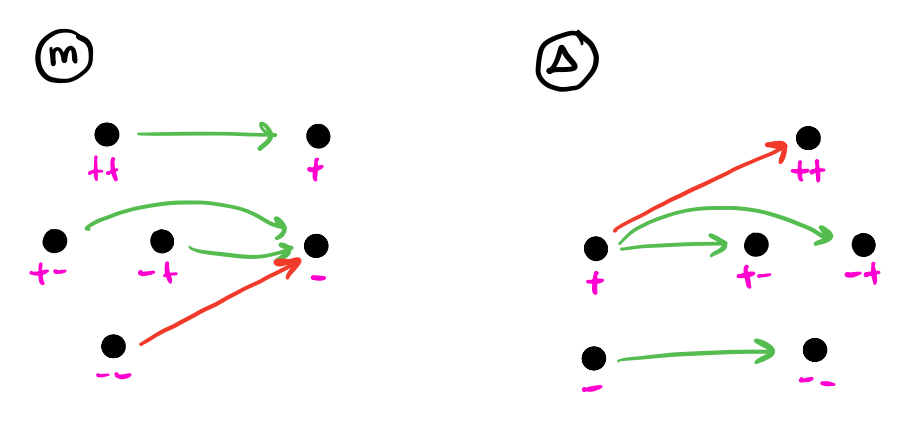}
    \end{center}

    \mz{The solution to this exercise is available on the class website.}
    
\end{exercise}

\note{
    To see how the Kh-to-Lee spectral sequence can be used to bound some topological quantities, see \cite{Alishahi-Dowlin-unknotting} and \cite{Caprau-etal-wiscon}.}

\begin{example}
Another filtration spectral sequence comes from the fact that $\AKh$ is the homology of the associated graded complex for $\Kh$ with respect to the $\gr_k$ grading. This filtration grading satisfies $F_k \subseteq F_{k+1}$. 
Since $d_{\Kh} = d_0 + d_{-2}$ where $d_0 = d_{\AKh}$, our spectral sequence first computes differentials that preserve $\gr_k$, then those that \emph{decrease} $\gr_k$ by $2$, then those induced differentials that \emph{decrease} $\gr_k$ by $4$, and so on. 

In the following annular Hopf link example, we don't have longer differentials, but you can see the difference between $\AKh(D)$ and $\Kh(D)$ on the different pages of the spectral sequence nevertheless. 

    \begin{center}
        \includegraphics[width=.9\linewidth]{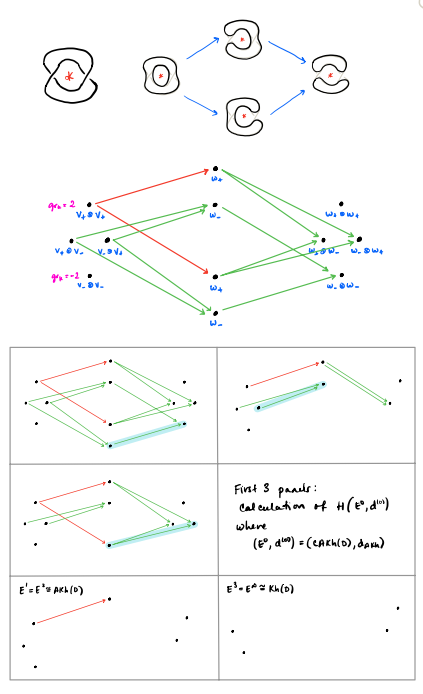}
    \end{center}
\end{example}

\subsection{Spectral sequences from bicomplexes}

Roughly speaking, a \emph{bicomplex} is a 2D complex with two differentials, one given by vertical arrows, and one given by horizontal arrows, as shown in the cartoon below:
\begin{center}
    \includegraphics[width=2in]{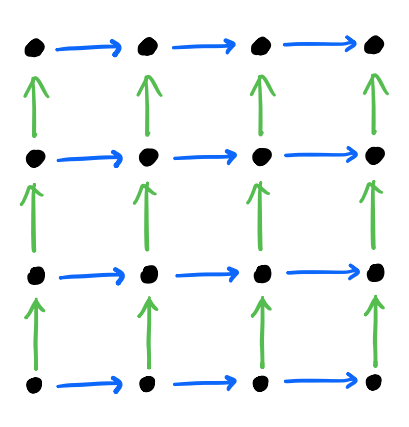}
\end{center}
It is automatically bigraded by $\Z \oplus \Z$ by the row number and the column number.

Here is a more formal definition with more details, now that you have some context.

\begin{definition}
A \emph{bigraded complex} or \emph{bicomplex} is the data of 
\begin{itemize}
    \item chain groups $C_{i,j}$ for $i,j \in \Z$
    \item two gradings $\gr_r$ and $\gr_c$, where $(\gr_r, \gr_c)(C_{i,j}) = (i,j)$
    \item two commuting differentials $d_h$ and $d_v$ such that 
        \begin{itemize}
            \item the $(\gr_r, \gr_c)$ bidegree of $d_h$ is $(0,1)$ and 
            \item the $(\gr_r, \gr_c)$ bidegree of $d_v$ is $(1,0)$.
        \end{itemize}
\end{itemize}

\end{definition}

\begin{remark}
    The \emph{total complex} of the bicomplex is a flattening of the above data into a single (one-dimensional) complex, with homological grading given by $\gr_r + \gr_c$ (whose level sets are lines of slope $-1$ in the cartoon). 

    However, since $d_h$ and $d_v$ commute, their sum is not a differential. 
    So, we need to introduce a minus sign to every other row of $d_h$ differentials in order to make each square \emph{anticommute}:

    \begin{center}
        \includegraphics[width=2in]{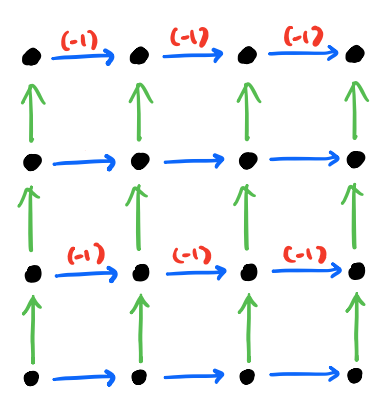}
    \end{center}
    
    Then, we get the \emph{total complex} of the bicomplex, which is a flattening of the bicomplex into a 1-dimensional, linear complex. 
    If $\mathcal{B}$ denotes the bicomplex, we denote the total complex (or \emph{totalization}) by $\Tot(\mathcal{B})$. 
\end{remark}

\mz{We again work over $\F_2$ in this section, in order to keep things as simple as possible while we are learning a new algebraic gadget. 
In this case, there is no difference between commuting and anticommuting differentials.}

Suppose we have a bicomplex $(\CC = \bigoplus C_{i,j}, d_h, d_v)$ of $\F_2$ vector spaces. The total complex clearly has two filtration gradings: $\gr_r$ and $\gr_c$. We call these the \emph{row-wise} and \emph{column-wise} filtrations:

\begin{center}
    \includegraphics[width=\linewidth]{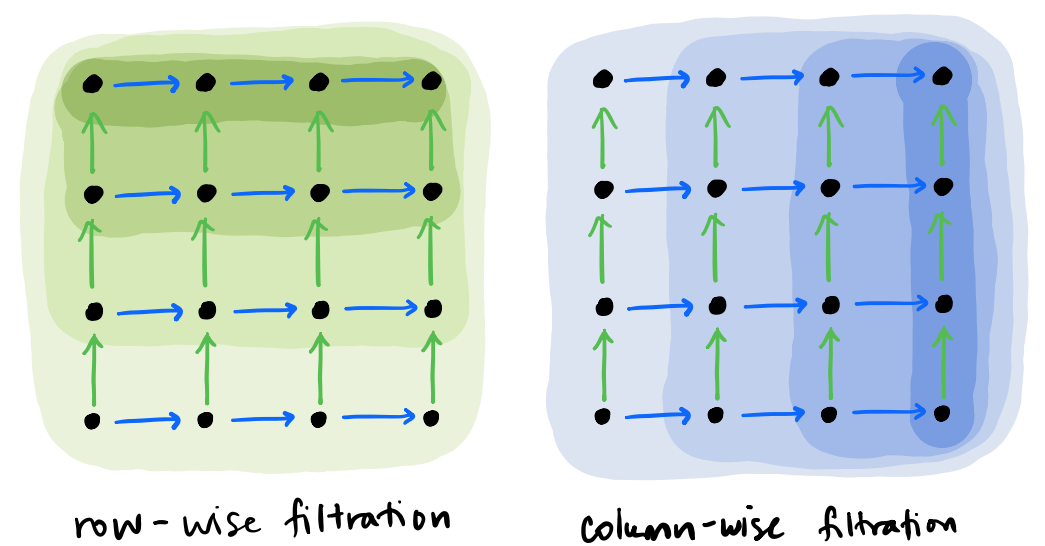}
\end{center}

There are therefore two associated filtration spectral sequences:
\begin{itemize}
    \item The filtration spectral sequence for the row-wise filtration (for filtration grading $\gr_r$) first computes homology with respect to $d_h$ on page 0. 
    On page 1, it proceeds to compute homology with respect to the chain maps induced by $d_v$.
    So, we denote this spectral sequence by $^{hv}E_\bullet$. 
    \item The filtration spectral sequence for the columnwise-wise filtration (for filtration grading $\gr_c$) first computes homology with respect to $d_v$ on page 0. 
    On page 1, it proceeds to compute homology with respect to the chain maps induced by $d_h$.
    So, we denote this spectral sequence by $^{vh}E_\bullet$. 
\end{itemize}
Both spectral sequences ultimately compute the homology of the total complex, so they abut to isomorphic vector spaces: $^{hv}E_\infty \cong ^{vh}E_\infty$.

\begin{remark}
\alert{Beware}, though, that because the total differential is filtered with respect to two different filtration gradings in these spectral sequence computations, the gradings will be messed up.
That is, the two infinity pages might have support on different lattice points! 
So all you can really say is that the ungraded homologies are isomorphic.

However, if you have some third unrelated grading $\gr_u$ ($u$ for \emph{unrelated} to the row and column gradings) hanging around, then obviously the spectral sequences will not affect that grading. 
In that case, you really have many spectral sequences, one for each value of $\gr_u$, and they don't talk to each other at all.
\end{remark}

The longer differentials (i.e.\ differentials on pages $\geq 2$ are induced by cancellation of differentials in the previous pages. 
For example, the `length-2' differentials on page 2 of the two spectral sequences travel like this:

\begin{center}
    \includegraphics[width=\linewidth]{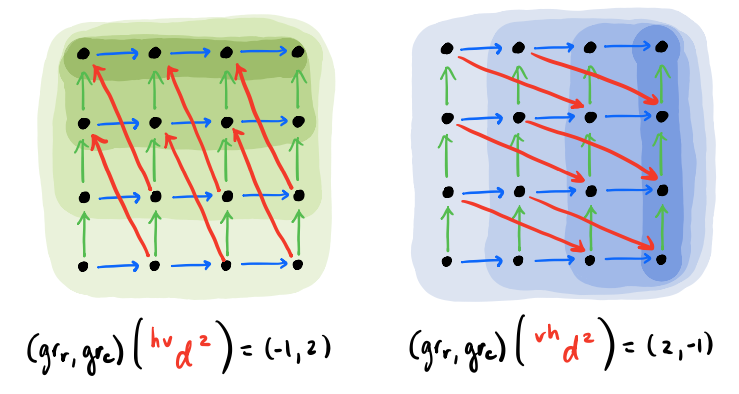}
\end{center}

The length-$n$ differentials would similarly travel the path determined by zigzags caused by Gauss elimination.

\begin{remark}
I'm using the terms `Gauss elimination' and `taking homology' sort of interchangeably, because we are working over a field. Allow me to explain:

Recall that `taking homology' is essentially the process of finding the smallest chain homotopy representative in a chain homotopy equivalence class; this is the one with zero differential, and we call it the homology.

Gaussian elimination (a.k.a.\ cancellation) is an algorithm for finding this smallest chain homotopy representative. 
Given a chain complex $(\CC, d)$, we can Gaussian eliminate away components of the differential $d$, \emph{in some order}, until we end up at a complex with zero differential. 
Because we are \alert{working over a field}, we are able to cancel along any component of the differential, because every nonzero coefficient is a unit.

This process is equivalent to changing the basis for our chain groups so that our original chain complex becomes a direct sum of the homology and a bunch of acyclic complexes, which we then discard.
\end{remark}

As a demonstration of the power of these two spectral sequences $^{hv}E_\bullet$ and $^{vh}E_\bullet$ computing the total homology, we will now discuss examples coming from equivariant homology. 
The goal is to obtain a spectral sequence relating two interesting objects $X$ and $Y$ via the following strategy.
\begin{enumerate}
    \item Set up a bicomplex using the topological data relating $X$ and $Y$.
    \item Arrange so that $^{vh}E_1$ is easy to compute, and describes $X$.
    \item Arrange so that $^{hv}E_\bullet$ is easy to compute, so that you know $^{hv}E_\infty$, which describes $Y$.
    \item Assuming the bicomplex is sufficiently bounded, we know that $^{vh}E^\infty$ must then also describe $Y$, and we are done. (The $^{vh}E_\bullet$ spectral sequence therefore relates $X$ and $Y$.)
\end{enumerate}

\subsubsection{Classical Smith inequality}

Let $p$ be a prime.
Let $X$ be a topological space with a $\Z/p\Z$ action generated by $\sigma: X \to X$ (that is, $\sigma^p = \id_X$). 
Let $X^\fix$ denote the fixed-point set, which itself is also a topological space.

The \emph{classical Smith\footnote{P.A.Smith, back in the 1920s, extensively studied $\Z/p\Z$ actions on topological spaces.
} inequality} gives an inequality on the total dimensions  of the singular homologies of $X$ and $X^\fix$, over $\F_p$:
\[ 
    \dim H_*(X; \F_p) \geq \dim H_*(X^\fix; \F_p).
\]

\begin{remark}
Borel's proof requires us to work over $\F_p$; see \cite{Borel-seminar-on-transformation-groups} for more details.
\end{remark}

We give Borel's proof \cite{Borel-seminar-on-transformation-groups} of the classical Smith inequality for $p=2$, working over $\F_2$ (as required). 
The proof for $p \geq 3$ is similar. 
Also, Borel's proof uses singular homology; we will instead demonstrate the proof using CW-homology, which means it's really only a proof for CW-complexes.
\note{Also, if you look at the pictures, this is kind of a `proof' by example; but you can piece together the real proof from the written text if you wanted to.}
We basically follow the proof found on pages 5-6 of \cite{Lipshitz-Treumann-smith}.

Our example to keep in mind is the following $S^2$ with the reflection involution $\tau$:
    \begin{center}
        \includegraphics[width=3in]{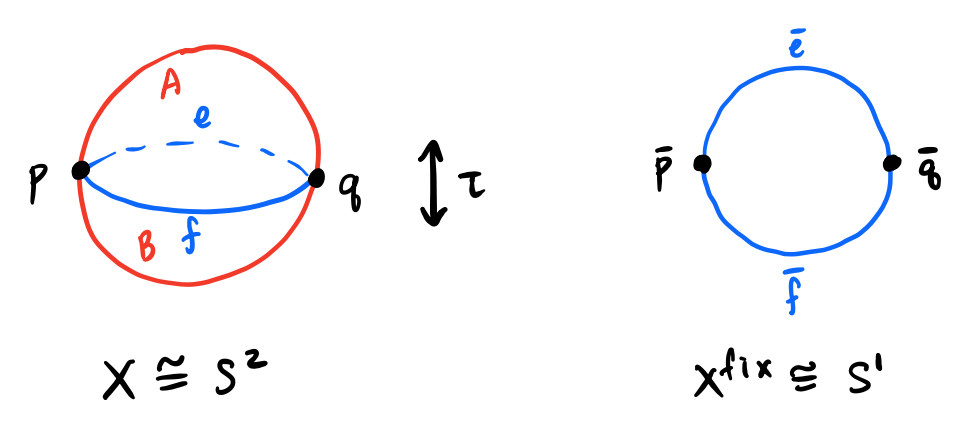}
    \end{center}
We've chosen a cell decomposition that plays well with this involution. 
The fixed point set is a copy of $S^1$ with the cell structure shown on the right of the figure.

The \emph{Tate bicomplex} is the bi-infinite bicomplex obtained by basically tensoring the complex $(C_*(X), \partial)$ with the localized polynomial ring $\F_p[\theta, \theta\inv]$, and then adding horizontal differentials $1+\tau$:

\begin{center}
    \includegraphics[width=4in]{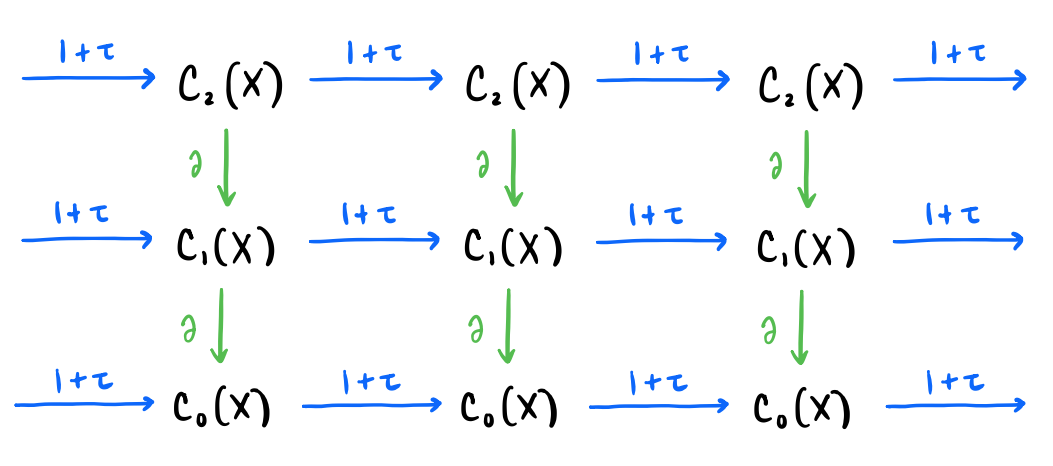}
\end{center}
The action of $\theta$ moves you to the next column over (to the right). 

\note{I'm abusing notation and letting $\tau$ denote both the topological involution as well as the involution on chains. This is because I want to use $\tau_*$ to denote the induced map on homology at some point.}

The $^{vh}E^\bullet$ spectral sequence is very hard to compute, but it's easy to compute up to page 1:
\begin{itemize}
\item $(^{vh}E^0, d^0) =$
    \begin{center}
        \includegraphics[width=4in]{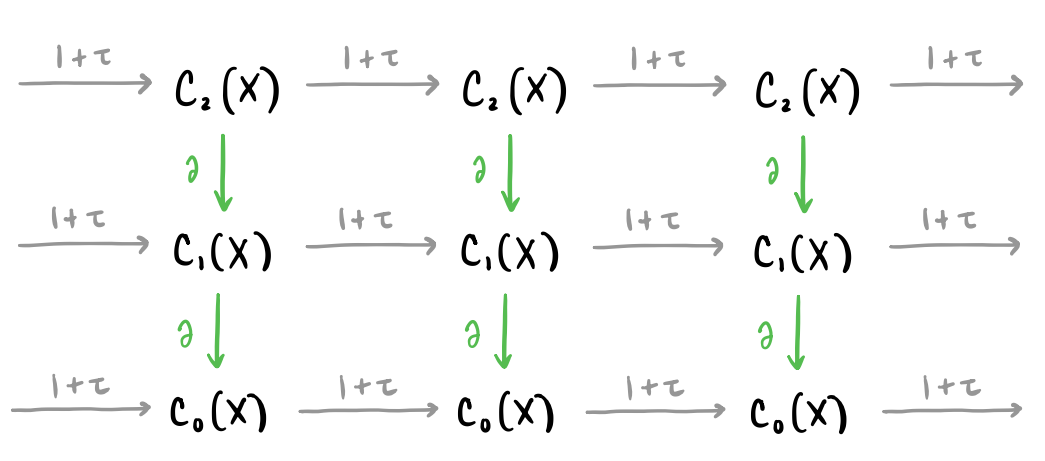}
    \end{center}
    The homology is just $H_*(X) \otimes \F_2[\theta, \theta\inv]$, which leads us to ...
\item $(^{vh}E^1, d^1) =$ 
    \begin{center}
        \includegraphics[width=4in]{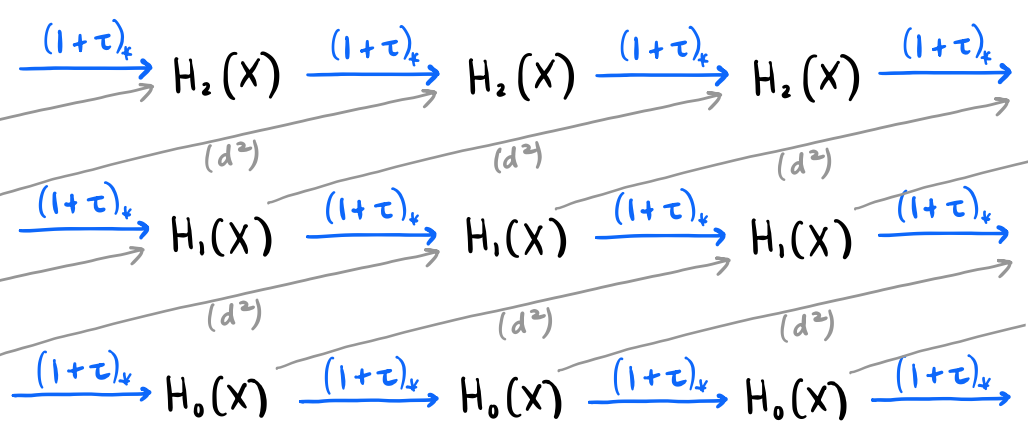}
    \end{center}
    I could think hard about what $(1+\tau)_*$ does, but this page is already interesting -- it describes $X$.
    In any case, $^{vh}d^2$ looks pretty bad, so I'm going to stop computing this spectral sequence here.
\end{itemize}

The $^{hv}E^\bullet$ spectral sequence turns out to be easy to compute:
\begin{itemize}
    \item $(^{vh}E^0, d^0) =$
    \begin{center}
        \includegraphics[width=4in]{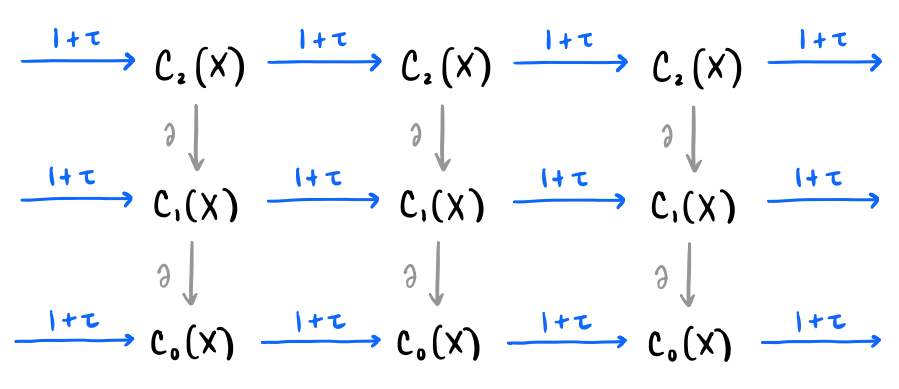}
    \end{center}
    The kernel of $d^0 = 1+\tau$ consists of precisely two types of chains:
    \begin{itemize}
        \item cells that are literally fixed by $\tau$ ($p,q,e,f$),  and 
        \item orbits of cells that are not fixed by $\tau$ ($A,B$).
    \end{itemize}
    The second class are the image of $1+\tau$. 
    Therefore the homology of this page can be canonically identified with $C_*(X^\fix)$. 
    \item Now what is the are the induced differentials $\partial_*$? These are just the restriction of $\partial$ to the fixed cells, of course -- the gluing maps definitely haven't changed! Let $\partial^\fix$ denote the CW differentials for $C_*(X^\fix)$.
    
    So, $(^{hv}E^1, d^1) =$
    \begin{center}
        \includegraphics[width=4in]{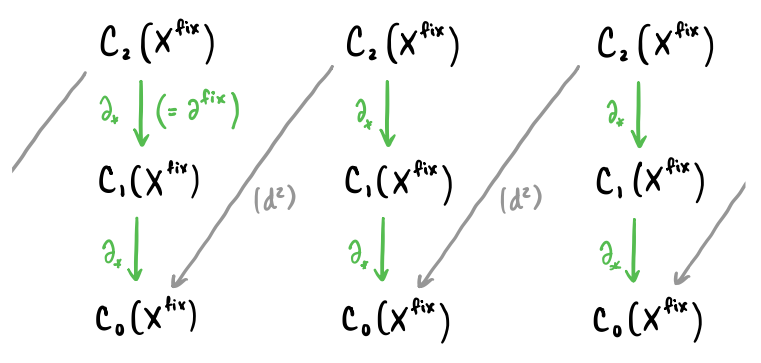}
    \end{center}
    which is just $(C_*(X^\fix), \partial^\fix) \otimes \F_2[\theta, \theta\inv]$.

    \item Hence $^{hv}E^2 \cong H_*(X^\fix) \otimes \F_2[\theta, \theta\inv]$. 
    We now need to reckon with the $d^2$ differentials shown in gray in the previous figure. These would have been induced by zigzags of the form
    \begin{center}
        \includegraphics[width=1in]{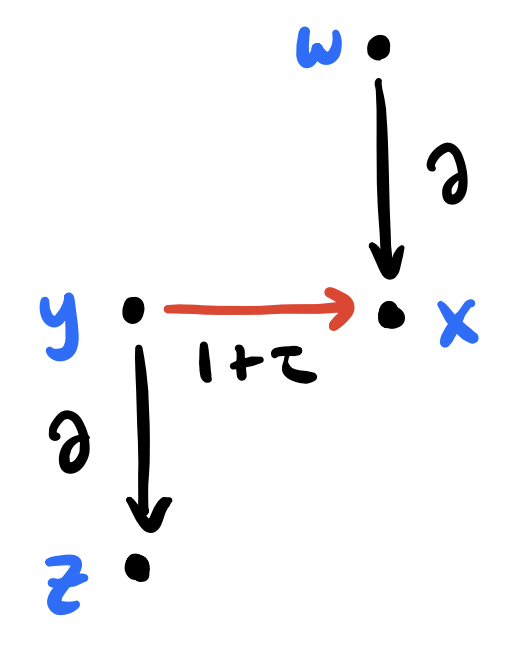}
    \end{center}
    For such a length-2 arrow to exist (i.e.\ have nonzero coefficient) on page 2, then both $w$ and $z$ needed to have survived to page 2. 

    \note{A subtlety here is that, technically speaking, we are thinking of $w$ as a homology class in $H_*(X^\fix)$. However, because we have been Gaussian eliminating this whole time, it's also ok to think of $w$ as a generator (CW cell) that survived to page 2; to be more precise, a chain representative $\tilde w$ of $w$ survived to page 1. But we will abuse notation and just call this chain $w$, since Gaussian elimination picks out a (noncanonical) representative / generator in a new basis.}
    
    Since $w$ survived to page 2, we know that it's (1) a fixed cell under $\tau$ and (2) in the kernel of $\partial^\fix$. But then $\partial w = 0$ as well, so the first step of this zigzag is 0. 

    \emph{There are no more longer differentials!}

    \item 
    So the spectral sequence collapses, and we know that 
    \[
        ^{hv}E^2 \cong H_*(X^\fix; \F_2) \otimes \F_2[\theta, \theta\inv].
    \]
    Furthermore, this is also isomorphic to both $^{hv}E^\infty$ and the homology of the total complex, $H_*(\Tot(\CC, \partial + (1+\tau)))$. 
\end{itemize}

Since $^{vh}E^\bullet$ also computes the homology of the total complex, we now know that this is a spectral sequence
\[
    H_*(X; \F_2) \otimes \F[\theta,\theta\inv] 
    \abuts
    H_*(X^\fix; \F_2) \otimes \F[\theta,\theta\inv].
\]
Now since every page turn of spectral sequence is a homology computation, the rank of the $\F_2[\theta, \theta\inv]$-module on each page must be monotone decreasing. 

After convincing ourselves that the rank of 
$H_*(Y; \F_2) \otimes \F[\theta,\theta\inv]$
as an $\F[\theta,\theta\inv]$-module is the same as the dimension of $H_*(Y; \F_2)$ over $\F_2$, we recover the classical Smith inequality.

\subsubsection{Khovanov homology and periodic links}
\label{sec:periodic-links}

We now use the same technique to prove a similar Smith inequality for the Khovanov homology of knots with a type of $\Z/2\Z$ symmetry. The reference for this section is \cite{Zhang-akh}; a more general reference is \cite{Stoffregen-Zhang-periodic}, but this uses techniques we have yet to discuss in this course.

Let $p$ be a prime.
Consider a link $\tilde L \subset S^3$ together with an orientation-preserving diffeomorphism $\sigma: S^3 \to S^3$ of order $p$ that preserves $\tilde L$ set-wise. 

The \emph{Smith conjecture} (named after P.A.Smith, and now a theorem \cite{Waldhausen-smith}, \cite{Morgan-Bass-book-smith}, tells us that 
the fixed point set $\sigma$ must either be (1) empty  or (2) an unknotted fixed-point axis. 

If $\sigma$ has an unknotted axis and $\tilde L$ is disjoint from this axis, then we say that $\tilde L$ is a \emph{$p$-periodic link}.

In 2010, Seidel and Smith\footnote{a different, much younger Smith} proved the following Smith-like inequality for symplectic Khovanov homology:

\begin{theorem}[\cite{Seidel-Smith-localization}]
Let $\tilde L$ be a 2-periodic link, and let $L$ denote the quotient link. Then 
\[  
    \dim \Khsymp(\tilde L; \F_2) \geq \Khsymp(L;\F_2).
\]
\end{theorem}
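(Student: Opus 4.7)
The plan is to mimic the Borel–Tate bicomplex proof of the classical Smith inequality that we just worked through, but at the level of the Floer chain complex computing $\Khsymp$. Since $\Khsymp(\tilde L)$ is defined as a Lagrangian intersection Floer homology $HF(L_{\boldsymbol\beta}, L_{\boldsymbol\gamma})$ inside a certain symplectic Seidel–Smith space $\mathcal{Y}_{2n}$, the first step is to lift the $\Z/2\Z$-action $\sigma$ on $(S^3, \tilde L)$ to a symplectic involution $\Sigma$ on $\mathcal{Y}_{2n}$ that preserves the two Lagrangians associated to the bridge decomposition of $\tilde L$. After choosing $\Sigma$-equivariant almost complex structures (and using equivariant transversality; this will be the main technical worry), $\Sigma$ induces an involution $\tau$ on the Floer chain complex $CF(L_{\boldsymbol\beta}, L_{\boldsymbol\gamma})$ whose homology is $\Khsymp(\tilde L;\F_2)$.

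Next I would form the Tate bicomplex exactly as in the preceding subsection: take $\CC = CF(L_{\boldsymbol\beta}, L_{\boldsymbol\gamma}) \otimes_{\F_2} \F_2[\theta,\theta^{-1}]$, with vertical differential the Floer differential $\partial$ and horizontal differential $1+\tau$. The row-wise spectral sequence $^{vh}E_\bullet$ has
\[
    ^{vh}E_1 \;\cong\; \Khsymp(\tilde L;\F_2)\otimes \F_2[\theta,\theta^{-1}]
\]
and abuts to the homology of the total complex, so as in the CW case the rank of $\Khsymp(\tilde L;\F_2)$ bounds the rank of $H_*(\Tot \CC)$ over $\F_2[\theta,\theta^{-1}]$. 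This half of the argument is formal once $\tau$ is in place.

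The heart of the proof is computing the column-wise spectral sequence $^{hv}E_\bullet$. On page $0$ the horizontal differential $1+\tau$ has, as its kernel-mod-image, the $\tau$-fixed chains; the key geometric input is then the identification of the fixed Floer complex with the Floer complex computing $\Khsymp$ of the quotient link $L$. Concretely one must show that $(\mathcal{Y}_{2n})^{\Sigma}$ is symplectomorphic to the Seidel–Smith space $\mathcal{Y}_{n}$ for the quotient, and that under this identification the fixed parts of $L_{\boldsymbol\beta}$ and $L_{\boldsymbol\gamma}$ become the Lagrangians for a bridge diagram of $L$. This is a genuinely symplectic-geometric input (this is what Seidel–Smith actually prove, using the structure of the nilpotent slice), and I expect it to be the main obstacle. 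Granted this identification, $^{hv}E_1 \cong CF_{\mathrm{symp}}(L)\otimes \F_2[\theta,\theta^{-1}]$, and the usual ``surviving generators are fixed cycles, hence their $\partial$ is already $0$'' argument shows that all higher $^{hv}d_r$ vanish, so
\[
    ^{hv}E_\infty \;\cong\; \Khsymp(L;\F_2)\otimes \F_2[\theta,\theta^{-1}].
\]

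Finally, both spectral sequences converge to $H_*(\Tot \CC)$, which forces
\[
    \dim_{\F_2[\theta,\theta^{-1}]}\!\bigl(\Khsymp(\tilde L;\F_2)\otimes \F_2[\theta,\theta^{-1}]\bigr) \;\geq\; \dim_{\F_2[\theta,\theta^{-1}]}\!\bigl(\Khsymp(L;\F_2)\otimes \F_2[\theta,\theta^{-1}]\bigr),
\]
because each page turn on $^{vh}E_\bullet$ can only decrease rank. Since tensoring with the localized polynomial ring preserves total $\F_2$-dimension in each variable, this gives the desired Smith inequality $\dim \Khsymp(\tilde L;\F_2) \geq \dim \Khsymp(L;\F_2)$.
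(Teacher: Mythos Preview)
The paper does not actually prove this theorem; it is quoted from \cite{Seidel-Smith-localization} as background and motivation for the author's own combinatorial result about $\AKh$ of periodic links. So there is no in-paper proof to compare against.

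That said, your sketch has a real gap worth naming. The step where the CW Borel argument shows that the $^{hv}$ spectral sequence collapses at $E_2$ uses a feature specific to $G$-CW complexes: a $\tau$-fixed cell $w$ has its entire closure inside $X^{\fix}$, so the full CW boundary $\partial w$ agrees with $\partial^{\fix} w$, and hence the zigzags producing $d^{r}$ for $r\geq 2$ start with a genuinely zero map. In Floer theory this fails on two counts. First, a $\Sigma$-fixed intersection point $x$ can bound ambient holomorphic disks to non-fixed intersection points; these come in $\tau$-pairs and so cancel on $^{hv}E_1$, but $\partial x$ is not zero as a chain, and the zigzag feeding $d^2$ is alive. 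Second, even the identification of $(^{hv}E_1,d^1)$ with $CF_{\mathrm{symp}}(L)\otimes\F_2[\theta,\theta^{-1}]$ is not formal: the induced $d^1$ counts ambient disks in $\mathcal{Y}_{2n}$ between fixed intersection points, whereas the Floer differential for $L$ counts disks confined to the fixed locus $\mathcal{Y}_n$. There is no a priori reason these counts agree.

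Seidel and Smith handle exactly these issues, and not by a naive chain-level Tate bicomplex. They construct genuine $\Z/2$-equivariant Floer cohomology and prove a localization theorem identifying it, after inverting the equivariant parameter, with the ordinary Floer cohomology of the fixed locus. The crucial geometric hypothesis making this work is a \emph{stable normal trivialization} of the normal bundle to the fixed locus inside the ambient symplectic manifold, which is what controls the comparison between ambient and fixed-locus disk counts. Your worry about equivariant transversality is also well placed --- it generically fails --- which is another reason the chain-level approach you outline is not the right vehicle here.
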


While Abouzaid--Smith proved that $\Khsymp$ agrees with $\delta$-graded $\Kh$ over characteristic 0, we still don't know if there is an isomorphism over $\F_2$. 
So this result does not tell us if there's a similar dimension inequality for regular $\Kh$. 

Let's see what happens when we use Borel's technique. Our goal is to relate $\Kh(\tilde L)$ and $\Kh(L)$ (all over $\F_2$), by hoping to see $\Kh(L)$ on some page of a spectral sequence built from $\CKh(\tilde L)$. 

Here is the Khovanov-Tate complex for a two-periodic diagram $\tilde D$:
\begin{center}
    \includegraphics[width=0.5\linewidth]{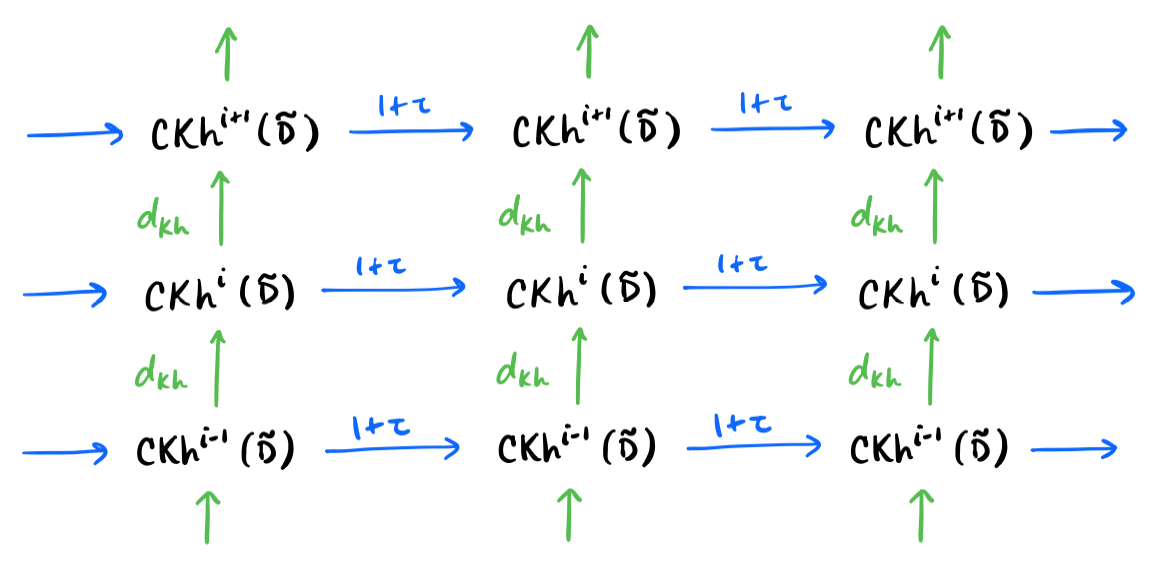}
\end{center}

For concreteness, let's consider a specific example. 
Let $\tilde D$ denote the following 2-periodic diagram for the Hopf link, along with its cube of resolutions:
\begin{center}
    \includegraphics[width=\linewidth]{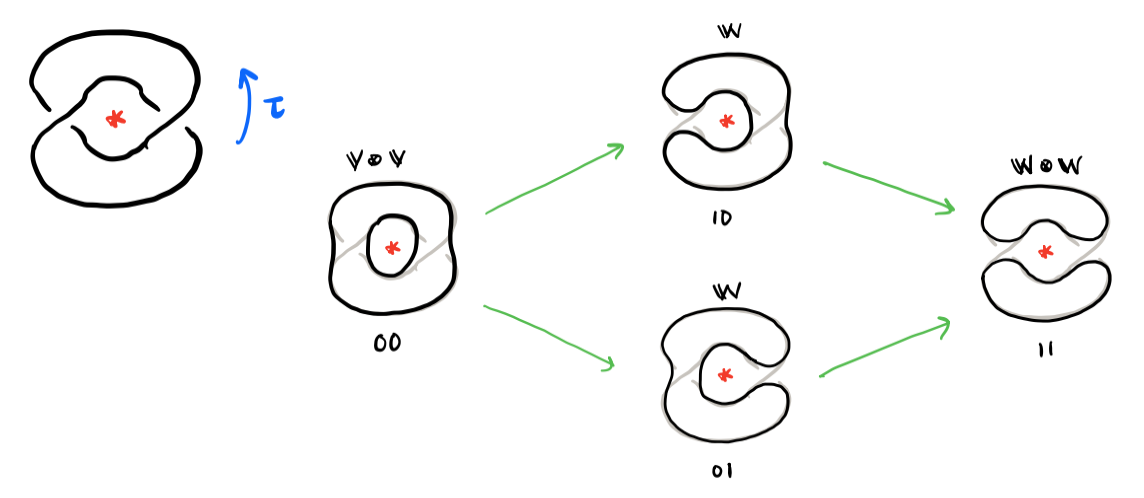}
\end{center}
The red asterisk {\color{red}*} marks the location of the unknotted axis of symmetry. 

Also, we have used annular Khovanov notation ($\VV$ and $\WW$) to keep track of the relationship between each planar circle and the axis of rotation. 
For $\VV$ circles, the quotient by $\tau$ is still a $\VV$ circle. 
For $\WW$ circles, each pair (orbit under $\tau$) maps to a single $\WW$ circle in the quotient.
\note{Draw this for yourself to convince yourself.}

The quotient diagram $D$ is a diagram for the unknot, with one Reidmeister move across the axis of symmetry.

Here is how $\tau$ acts on the distinguished generators:
\begin{center}
    \includegraphics[width=4in]{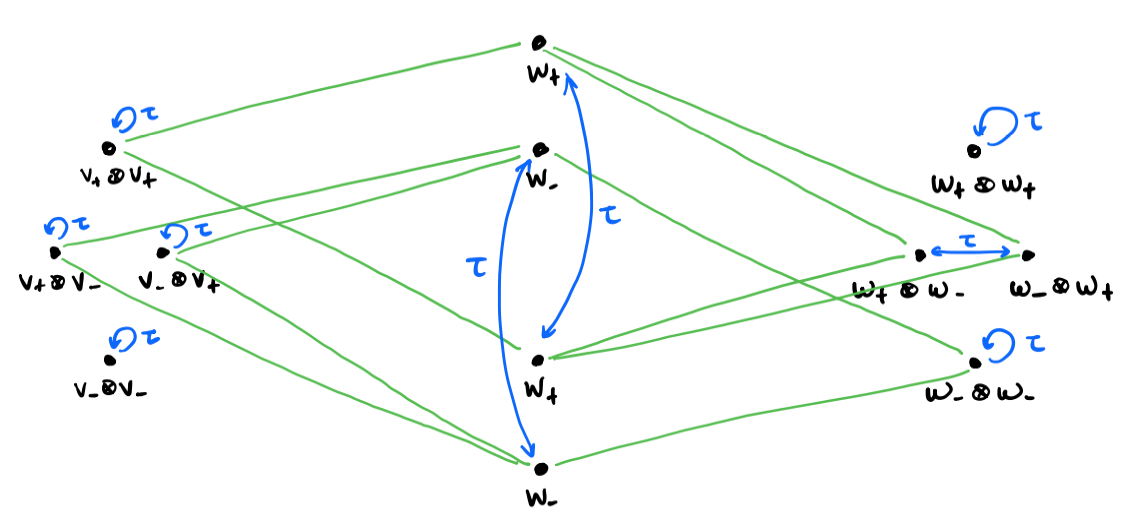}
\end{center}

Looking back at the Khovanov-Tate complex, consider the two spectral sequences, $^{vh}E^\bullet$ and $^{hv}E^\bullet$.

First of all, note that $^{vh}E^\bullet \cong \Kh(\tilde L) \otimes \F_2[\theta, \theta\inv]$. 

On the other hand, observe that 
$^{vh}E^1 \cong \CKh(D) \otimes \F_2[\theta, \theta\inv]$. 
There are no fixed resolutions in odd cube gradings, so $d^1 = 0$.
Therefore $^{vh}E^2 \cong \CKh(D) \otimes \F_2[\theta, \theta\inv]$ as well.

The length-2 differentials are computed as zigzags of the form 
(1) map by $d_\Kh$, (2) lift by $1+\tau$, (3) map by $d_\Kh$. These are shown below:
\begin{center}
    \includegraphics[width=4in]{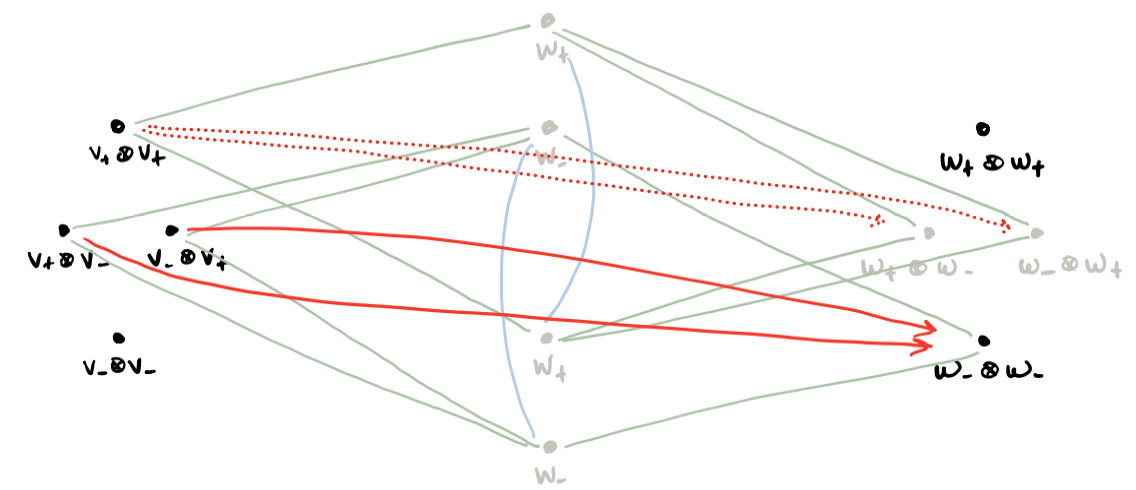}
\end{center}
The dotted maps are 0 because the targets did not survive to page 2. 

We now compare this with the $\CKh(D)$ complex:
\begin{center}
    \includegraphics[width=3in]{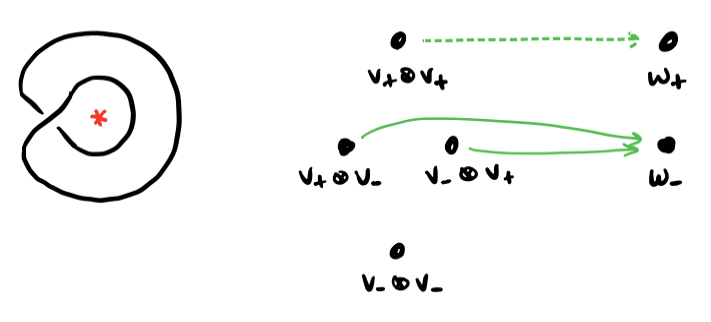}
\end{center}
Observe that there is a discrepancy! The dashed component does not correspond to an $^{hv}d^2$ differential, whereas the other two do. 

So what we \emph{actually} have is the \emph{annular} Khovanov differential, and you can check this in the following exercise.

\begin{exercise}
\bea
\item For each of the six possible Khovanov differentials, in the presence of the axis of symmetry ($\VV \otimes \VV \to \WW$, $\VV \otimes \WW \to \VV$, etc.), compute the induced $^{hv}d^2$ differentials, and verify that they correspond to the annular Khovanov differential, under the identification of generators $^{hv}E^2 \cong \CKh(D) \otimes \F_2[\theta, \theta\inv]$ discussed above.
\item Verify the same is true if you build the \emph{annular Khovanov-Tate} complex, i.e.\ use $d_{\AKh}$ as the vertical differentials in the bicomplex. 
\ee
\end{exercise}

Using grading arguments, it's possible to show that for the annular Khovanov-Tate complex, the $^{hv}E^\bullet$ spectral sequence collapses at page 3, and obtain the following theorem:

\begin{theorem}[\cite{Zhang-akh}, Theorem 1]
There is a spectral sequence 
\[
    \AKh(\tilde L;\F_2) \otimes \F_2[\theta, \theta\inv] 
    \abuts
    \AKh(L;\F_2) \otimes \F_2[\theta, \theta\inv]
\]
which implies the Smith-type inequality
\[
    \dim \AKh(\tilde L;\F_2) \geq \dim \AKh(L;\F_2).
\]
\end{theorem}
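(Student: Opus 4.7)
The plan is to adapt the Khovanov--Tate bicomplex strategy from the previous example to the annular setting, where we replace the vertical differential $d_{\Kh}$ with $d_{\AKh}$. Concretely, I would build the \emph{annular Khovanov--Tate bicomplex} $(\CAKh(\tilde L) \otimes_{\F_2} \F_2[\theta, \theta^{-1}], d_{\AKh}, 1+\tau)$, where $\tau$ is the involution induced on $\CAKh(\tilde D)$ by the periodic symmetry $\sigma$ acting on a $\tau$-equivariant diagram $\tilde D$ of $\tilde L$. Because $\sigma$ preserves a setwise-fixed unknotted axis disjoint from $\tilde L$, the action of $\tau$ on the cube of resolutions permutes the set of circles in each resolution and preserves the trivial/essential dichotomy, so $d_{\AKh}$ commutes with $\tau$ and the bicomplex is well-defined.

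Next I would run the two filtration spectral sequences of the total complex. The $^{vh}E^\bullet$ spectral sequence has $^{vh}E^1 \cong \AKh(\tilde L; \F_2) \otimes \F_2[\theta,\theta^{-1}]$ essentially by definition, so it remains to show this converges to $\AKh(L; \F_2) \otimes \F_2[\theta,\theta^{-1}]$. For that, I would compute the $^{hv}E^\bullet$ spectral sequence, which is tractable. On page $0$, the differential is $1+\tau$; over $\F_2$ its homology at each column picks out the $\tau$-fixed chains modulo orbit sums of non-fixed chains. The topological content is that $\tau$-fixed resolutions of $\tilde D$ are in bijection with resolutions of the quotient diagram $D$, and a fixed labeling corresponds to (i) arbitrary $\VV$- or $\WW$-labels on each circle that meets the axis (these descend to a single circle of the same type in $D$), together with (ii) pairs of $\tau$-swapped circles carrying the same $\WW$-label (which collapse to a single $\WW$-circle in $D$). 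This identification gives $^{hv}E^1 \cong \CAKh(D; \F_2) \otimes \F_2[\theta,\theta^{-1}]$ as a bigraded vector space, with zero $d^1$-differential for parity reasons on the cube grading.

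The main obstacle will be the identification of the length-2 differential $^{hv}d^2$ with the annular Khovanov differential on the quotient. I would check this by enumerating the six merge/split interactions in $\AKh$ ($\VV\otimes\VV$, $\VV\otimes\WW$, $\WW\otimes\WW$, and their duals) and, for each, computing the zigzag composition $d_{\AKh}\circ(1+\tau)^{-1}\circ d_{\AKh}$ on fixed generators, matching the result with $d_{\AKh}$ for the quotient tangle. This is the core case analysis suggested in the exercise immediately preceding the theorem; the analogous verification in the excerpt for $d_{\Kh}$ explains the crucial sign (mod 2) which generators are annihilated and which survive. The key subtlety compared to the non-annular case is that the discrepancy between $d_{\Kh}$ on the quotient and $d_{\AKh}$ on the quotient (i.e.\ the $\gr_k$-decreasing components) is precisely what gets killed at the Tate-complex level, because those components raise or lower winding numbers in a $\tau$-incompatible way.

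Having matched $^{hv}E^3 \cong \AKh(L; \F_2) \otimes \F_2[\theta,\theta^{-1}]$, I would close the argument by a tri-grading inspection to show $^{hv}E^\bullet$ collapses at page $3$: subsequent differentials would have to shift the annular $\gr_k$ and $\gr_q$ gradings in ways forbidden by the $\AKh$-decomposition of $d_{\AKh}$ being concentrated in degree $0$. Since both spectral sequences compute the total homology and the bicomplex is sufficiently bounded (only finitely many cube gradings support nonzero chains), we conclude $^{vh}E^\infty \cong \AKh(L; \F_2) \otimes \F_2[\theta,\theta^{-1}]$, giving the claimed spectral sequence. The Smith-type dimension inequality is then immediate: each page turn of a spectral sequence is a subquotient, so the $\F_2[\theta,\theta^{-1}]$-rank is monotonically nonincreasing from $E^1$ to $E^\infty$, and this rank equals the $\F_2$-dimension of the underlying annular Khovanov homology.
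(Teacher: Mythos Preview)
Your proposal is correct and follows essentially the same approach as the paper: build the annular Khovanov--Tate bicomplex, identify $^{vh}E^1$ with $\AKh(\tilde L;\F_2)\otimes\F_2[\theta,\theta^{-1}]$, compute the $^{hv}E^\bullet$ spectral sequence through page~3 via the six-case analysis of merge/split interactions (exactly the exercise preceding the theorem), and then invoke a grading argument to force collapse at page~3. One small wording issue: when you describe the fixed generators on $^{hv}E^1$, the $\tau$-invariant circles are not circles that ``meet the axis'' (the link is disjoint from the axis) but rather the essential $\VV$-circles that wind around it; the $\WW$-circles come in swapped pairs---but your identification of $^{hv}E^1$ with $\CAKh(D)\otimes\F_2[\theta,\theta^{-1}]$ is the intended one.
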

\note{The actual theorem is more refined, and includes some grading information.}

However, the same techique does not work for the Khovanov-Tate complex. In order to prove the corresponding theorem for $\Kh(\tilde L)$, and also to generalize this to all prime periodicities, we needed to use the Lipshitz--Sarkar Khovanov stable homotopy type, which we will discuss later in the course.

\subsection{Khovanov-to-Floer spectral sequences}

While this is not a course about Floer homologies, the relationship between categorified quantum invariants and Floer-type invariants should not be understated, for at least two reasons. 
First of all, the group of people studying these two types of invariants often overlap; these tools are often used in conjunction in low-dimensional topology, for example. Second, both ultimately have a homological flavor, and algebraic tricks can often be emulated on either side.

\begin{example}
    Recall that earlier in the course we mentioned the following wide-open problem:
    \begin{quote}
        \textit{Does the Jones polynomial detect the unknot?}
    \end{quote}
    Kronheimer-Mrowka proved that \emph{Khovanov homology} in fact \emph{does} detect the unknot \cite{KM-unknot-detector}. 
    The proof has two steps:
    \begin{enumerate}
        \item Find a spectral sequence from Khovanov homology to instanton Floer homology
        \item Prove that instanton Floer homology detects the unknot.
    \end{enumerate}
    \note{Prove to yourself that this completes the proof!}

    \mz{The paper is of course very involved and very long. The two steps are very hard to prove!}
\end{example}

Ozsv\'ath--Szab\'o's \emph{Heegaard Floer homology} \cite{OS-HF-3mfld}, and invariant of 3-manifolds, is most closely intertwined with Khovanov homology; both were discovered around the same time and were studied and expanded upon by the same cohort of grad students! (These grad students later became PhD advisors to my generation!)

In this section, we will focus on the first concrete connection between the two theories, a spectral sequence discovered by Ozsv\'ath--Szab\'o
\begin{equation}
\label{eq:OS-spectral-sequence}
    E^2 \cong \Khred(\bar L) \abuts E^\infty \cong \HFhat(\Sigma_2(S^3,L))
\end{equation}
relating the \emph{reduced Khovanov homology} of (the mirror of) a link $L$ to the Heegaard Floer homology of the branched double cover of $S^3$ along $L$ \cite{OS-dbc}.

We will explain all the components of this statement at various levels of detail.

We first give a very, very, almost criminally elementary introduction to Heegaard Floer homology. See \cite{OS-HF-intro}, \cite{Lipshitz-HF-intro}, and \cite{Hom-HF-intro} for good introductory readings on Heegaard Floer homology.

\subsubsection{Heegaard diagrams for closed, connected, orientable 3-manifolds}

We will work with the most basic flavor of Heegaard Floer homology and only concern ourselves with the simplest case of 3-manifolds.

\begin{notation}
    This is how I draw a 2-sphere (left) vs a 3-ball (right):
    \begin{center}
        \includegraphics[height=1in]{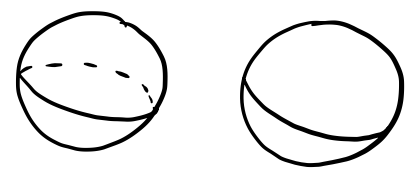}
    \end{center}

\end{notation}

Let $Y$ be a closed, connected, orientable 3-manifold. 
Let $f: Y \to \R$ be a self-indexing Morse function on $Y$;
that is, the index $i$ critical points are all in the critical level set $f\inv(i)$.

Observe the following:
\begin{itemize}
\item We can arrange so that there are unique index-0 and index-3 critical points. 
\item By Poincar\'e duality, 
    \[ \# \text{ index-1 critical points} =\# \text{ index-1 critical points}.\]
\item If there are $g$ index-1 critical points, (and $g$ index-2 critical points), then the level set $f\inv(3/2)$ is a closed, connected, orientable surface with genus $g$.
\end{itemize}

In this context, the surface $f\inv(3/2)$ splits $Y$ into two handlebodies. This decomposition is called a \emph{Heegaard splitting} for $Y$, and the surface $f\inv(3/2)$ is called the \emph{Heegaard surface} for this splitting.

\begin{center}
    \includegraphics[width=4in]{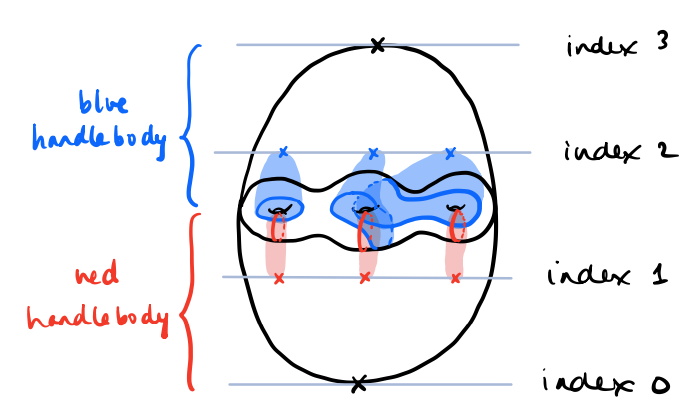}
\end{center}

A \emph{Heegaard diagram $\HD$} for a $Y$ is a combinatorial model for  a Heegaard splitting, and consists of the following data:
\begin{itemize}
    \item a genus $g$ surface $\Sigma_g$;
    \item a set of $g$ {\color{red} red} \emph{$\alpha$-curves} that each bound disks in the $f\inv([0,3/2])$ handlebody, and such that after cutting along these disks, the result is a 3-ball;
    \item a set of $g$ {\color{blue} blue} \emph{$\beta$-curves} that each bound disks in the $f\inv([3/2,3])$ handlebody, and such that after cutting along these disks, the result is a 3-ball.
\end{itemize}

Our running example will be this genus-2 Heegaard diagram for $S^3$:
\begin{center}
    \includegraphics[width=2.5in]{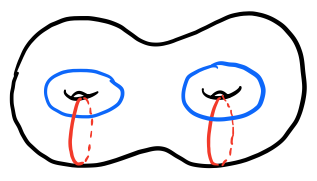}
\end{center}

Given a Heegaard diagram $\HD$ for $Y$, we know how to build $Y$:
\begin{enumerate}
    \item Start with a thickening of $\Sigma_g$.
    \item On inside, glue on thickened red disks, along the $\alpha$ curves.
    \item On the outside, glue on thickened blue disks, along the $\beta$ curves.
    \item Fill in the remaining two ball-shaped holes on each side with two 3-balls. 
\end{enumerate}

Starting with a thickened version of $\Sigma_2$, we can build the red handlebody by gluing in the disks on the inside with the red curves as boundary:
\begin{center}
    \includegraphics[width=4in]{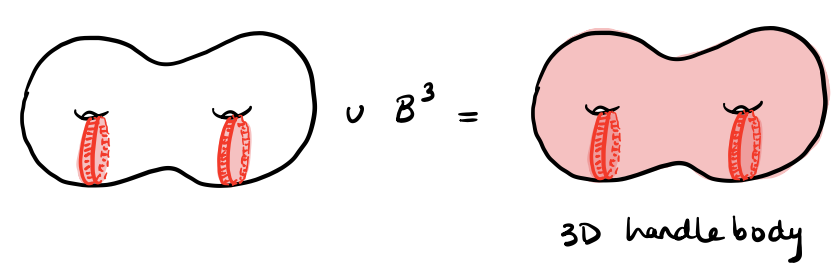}
\end{center}
Then, we can build the remainder of $S^3$ by attaching the blue disks on the outside, whose boundaries are the blue curves to get a ball, and then gluing to this $B^3$ another $B^3$:
\begin{center}
    \includegraphics[width=4in]{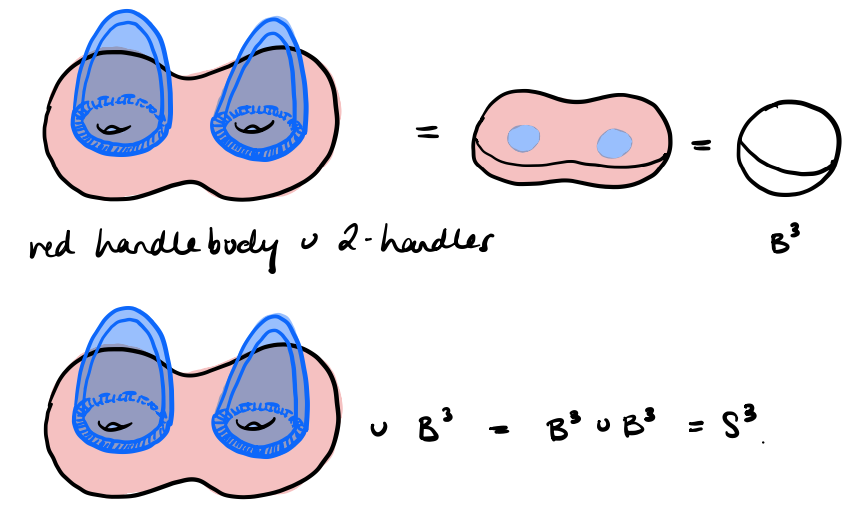}
\end{center}
We can relate this construction (which started with the thickened $\Sigma_2$ with the handle decomposition given by our Morse function as follows:
\begin{center}
    \includegraphics[width=4in]{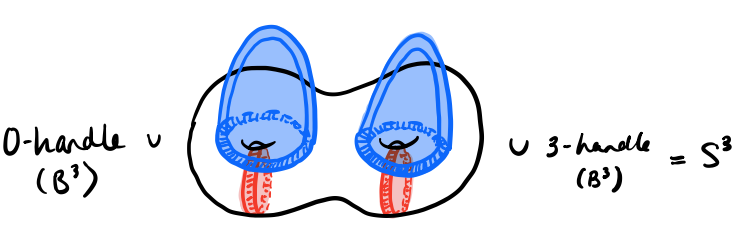}.
\end{center}

\begin{remark}
    This corresponds to a handle decomposition of $Y$ with 
    \begin{itemize}
        \item one 0-handle: the ball on the inside
        \item $g$ 1-handles: thickened version of the part of the ascending manifold for each index-1 critical point, in $f\inv([1,\frac{3}{2}])$
        \item $g$ 2-handles: thickened version of the part of the descending manifold for each index-2 critical point, in $f\inv([\frac{3}{2},2])$
        \item one 3-handle: the ball on the outside.
    \end{itemize}
\end{remark}

\begin{example}
\label{eg:s1xs2-HD}
    Let's now consider $S^1 \times S^2$. 
    This manifold is basically obtained by gluing two solid tori $S^1 \times D^2$ together so that, for each $\theta \in S^3$,  the disks $\{\theta\} \times D^2$ on the two solid tori are glued together along their boundary to form a $S^2$.

    Here is a genus-1 Heegaard splitting for $S^1 \times S^2$: 

    \begin{center}
        \includegraphics[width=3in]{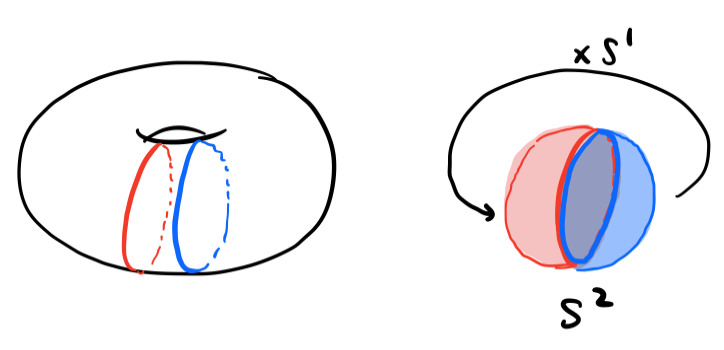}
    \end{center}

    The red and blue curves bound disks on the inside and outside. If you slide the blue disk so that its boundary is the same as the red curve, then you'll clearly see an $S^2$ with a red side and a blue side.
\end{example}

\subsubsection{Heegaard Floer homology basics}

Heegaard Floer homology was originally defined by Ozsv\'ath and Szab\'o, in \cite{OS-HF-3mfld}. 
There are many different version now, but today we will discuss only the most basic version, the `hat' flavor $\HFhat$, for closed, connected, orientable 3-manifolds $Y$. 

Here is a summary of the main steps:
\begin{enumerate}
    \item Start with a Heegaard diagram for $Y$. Pick a basepoint $z$ and perturb the $\alpha$ and $\beta$ curves so that the diagram is \emph{admissible} (see Remark \ref{rmk:admissibility-HF}).
    \item Build a symplectic manifold from $\Sigma_g$, as well as two Lagrangian submanifolds $\TT_\alpha$ and $\TT_\beta$ from the data of the $\alpha$- and $\beta$-curves, respectively.
    \item The chain complex is generated as an $\F_2$ vector space by the points $\TT_\alpha \cap \TT_\beta$.
    \item The differential counts moduli spaces of holomorphic disks with boundary on $\TT_\alpha \cup \TT_\beta$, that do not intersect the divisor defined by $z \in \Sigma_g$.
\end{enumerate}

For a genus $g$ surface $\Sigma_g$, $\Sym^g(\Sigma_g)$ is defined as the quotient of the  $g$-fold Cartesian product of copies of $\Sigma_g$ by the obvious action of  the symmetric group $S_g$:
\[
    \Sym^g(\Sigma_g) = \Sigma_g \times \cdots \times \Sigma_g / S_g.
\]
Observe that the points in $\Sym^g(\Sigma_g)$  are basically \textit{unordered} $g$-tuples of points in $\Sigma_g$. 

Now $\Sigma_g$ has a complex structure, so $\Sigma_g^{\times g}$ does as well.
What's surprising is that, while $\Sym^g(\Sigma_g)$ is a priori only an orbifold (the action of $S_g$ is obviously not free), it's in fact smooth, and moreover has an induced complex structure!
This makes $\Sym^g(\Sigma_g)$ into a symplectic manifold as well, with symplectic form induced by the complex structure.

Inside $\Sym^g(\Sigma_g)$ are two totally real tori 
\begin{align*}
    \TT_\alpha &= \alpha_1 \times \cdots \times \alpha_g / \sim \\
    \TT_\beta &= \beta_1 \times \cdots \times \beta_g / \sim
\end{align*}
which are Lagrangian with respect to the symplectic structure.

Heegaard Floer homology is then the Lagrangian intersection Floer homology of $(\Sym^g(\Sigma_g), \TT_\alpha, \TT_\beta)$...

... More concretely, here's how you think about the generators and the differential.

The generators of the chain complex are the points $\TT_\alpha \cap \TT_\beta$. In other words, these are unordered $g$-tuples of intersection points between $\alpha$ and $\beta$ curves.

The coefficient of the differential from one intersection point $x$ to another $y$ is determined by counting the size of the moduli space of holomorphic disks from $x$ to $y$ of the following form:

\begin{center}
    \includegraphics[width=4in]{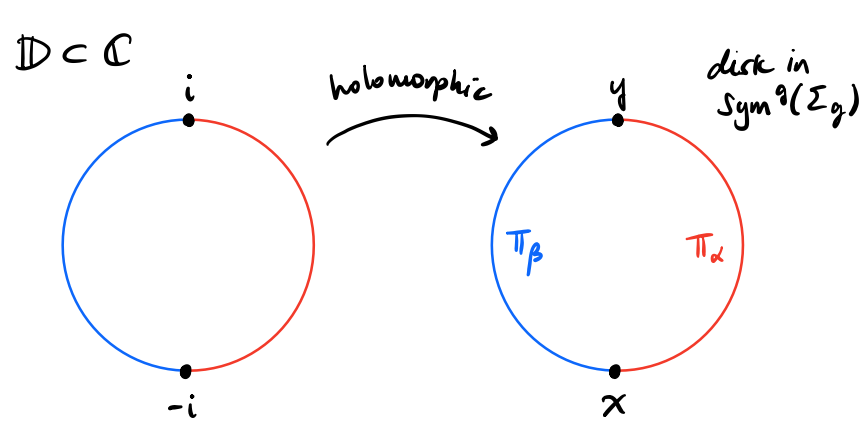}
\end{center}

Officially, here is the definition of the differential:
\[  
    \partial x = \sum_{y \in \TT_\alpha \cap \TT_\beta} 
    \sum_{\phi \in \pi_2(x,y),  \mu(\phi) = 1, n_z(\phi) = 0} \# \overline{\mathcal{M}(\phi)} y.
\]
\begin{itemize}
    \item $\pi_2(x,y)$ is the set of homotopy classes of Whitney disks from $x$ to $y$ as shown in the above drawing.
    \item $\mathcal{M}(\phi)$ is the moduli space of holomorphic representatives of $\phi$; these disks have an $\R$ action by parametrization (complex automorphisms of the unit disk in $\CC$ that preserve $\pm i$)
    \item $\mu(\phi)$ is the Maslov index of $\phi$, the `expected dimension' of the smooth, real manifold $\mathcal{M}(\phi)$; \note{Lipshitz gave more explicit formulas for $\mu(\phi)$ in his cylindrical reformulation for Heegaard Floer homology.}
    \item $\overline{\mathcal{M}(\phi)} = \mathcal{M}(\phi)/\R$ is a finite set of points, assuming that $\mu(\phi) = 1$.
\end{itemize}

\mz{For our purposes, it is necessary for you to really understand only the following examples.}

\begin{example}
\label{eg:s1xs2-CF}
    Our first example is the genus-1 Heegaard diagram for $S^1 \times S^2$ from Example \ref{eg:s1xs2-HD}, but now perturbed so that we actually have intersection points between the $\alpha$ and $\beta$ curves, and with a basepoint slapped on the complement of the $\alpha$ and $\beta$ curves:

    \begin{center}
        \includegraphics[width=2.5in]{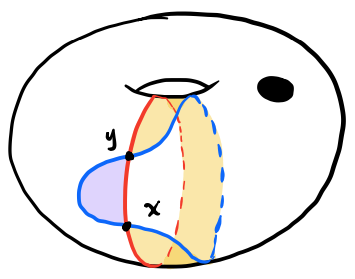}
    \end{center}

    In this $g=1$ example, we are lucky that $\Sym^g(\Sigma_g) = \Sigma_g$, and the drawn $\alpha$ and $\beta$ curves are literally Lagrangian.
    What you see is what you get, so we can explicitly determine the chain complex. 
    
    There are two intersection points, $x$ and $y$, as shown in the Heegaard diagram $\HD$ drawn above.
    Therefore 
    \[
        \CFhat(\HD) = \F_2 x \oplus \F_2 y.
    \]

    We now compute the differential; recall that we are working over $\F_2$:
    \begin{itemize}
        \item $\partial x = 0$: There are two holomorphic disks from $x$ to $y$; they are shaded in purple and yellow. 
        \item $\partial y = 0$: There are no holomorphic disks from $y$ to $x$ (the disks from computing $\partial x$ have the wrong orientation!). 
    \end{itemize}

    Therefore $\HFhat(S^1 \times S^2) \cong \F_2^2$. 

    It turns out that $\HFhat(Y)$ enjoys an action by $H_1(Y; \Z)/tor$. 
    For $S^1 \times S^2$, either generator $\theta$ of $H_1(Y; \Z) \cong \Z$ sends $x \mapsto y$. 
\end{example}

\begin{remark}
\label{rmk:admissibility-HF}
Admissibility of a Heegaard diagram ensures that we can actually build a chain complex, and that the invariant is well-defined. The actual definition of admissibility is somewhat involved, as the requirements really come from studying the spaces of holomorphic disks between generators. 

Our original diagram for $S^1 \times S^2$ is \emph{not} admissible:
\begin{center}
    \includegraphics[width=2.5in]{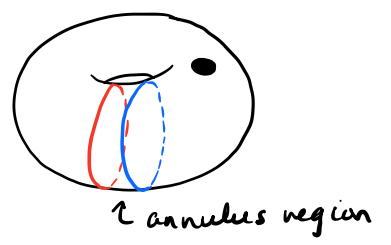}
\end{center}
We needed to perturb the $\beta$ curve so that, if we cut $\Sigma_1$ along both $\alpha$ and $\beta$, we get only disks, aside from the piece containing the basepoint.
\end{remark}

\begin{example}
\label{eg:s1xs2-CF-2copies}
    Let's do another example: $Y = (S^1 \times S^2) \# (S^1 \times S^2)$. 
    Here is a Heegaard diagram:
    \begin{center}
        \includegraphics[width=3in]{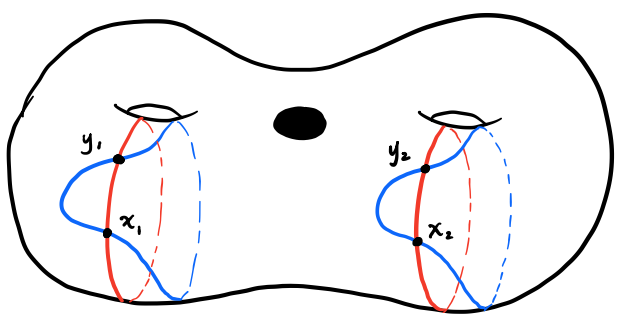}
    \end{center}

    There are four generators: $\{x_1,y_1\} \times \{x_2, y_2\}$. 

    Consider first the space of disks from $(x_1,x_2)$ to $(y_1, x_2)$. 
    There are two, given precisely by the product of the disks from Example \ref{eg:s1xs2-CF} from $x_1$ to $y_1$ and the constant disk at $x_2$. 

    After working through all four differentials (and remembering that we're working over $\F_2$, you should find that $\partial =0$, and each of the four generators represent a distinct homology class. 

    The $H_1$ action is likewise induced by the $H_1$ action on the two `sides' of the Heegaard diagram. 
    Fix generators $\theta_1$ and $\theta_2$ of the $H_1$ of the left and right copies of $S^1 \times S^2$, respectively. Then the $H_1(Y)$ action is given by 
    \begin{center}
    \begin{tikzcd}
        & (x_1, x_2) \arrow{dl}[swap]{\theta_1} \arrow{dr}{\theta_2} & \\
        (y_1, x_2) \arrow{dr}[swap]{\theta_2} & & (x_1, y_2)  \arrow{dl}{\theta_1}\\
        & (y_1, y_2) & 
    \end{tikzcd}
    \end{center}
    This should start to remind you of Khovanov homology...
\end{example}

Heegaard Floer homology behaves very well under connected sum:
    \[
        \HFhat(Y \# Y') \cong \HFhat(Y) \otimes \HFhat(Y').
    \]
If $\HD$ is a Heegaard diagram for $Y$, and $\HD'$ is a Heegaard diagram for $Y'$, then the connected sum of $\HD$ and $\HD'$ in the regions containing the basepoints is a Heegaard diagram for $Y \# Y'$. 
You can now check that, at least on the level of chain groups, it is perhaps believeable that 
\[
    \CFhat(\HD \# \HD') \cong \CFhat(\HD) \otimes \CFhat(\HD').
\]
And finally, because the basepoint in $\HD \# \HD'$ is in the connect-sum region, no holomorphic disks pass `between' the two sides.
More precisely, if $\mathbf{x}$ and $\mathbf{x'}$ are tuples of intersection points for $\HD$ and $\HD'$, respectively, then $(\mathbf{x}, \mathbf{x'})$ (obtained by concatenating the tuples) is a tuple of intersection points for $\HD \# \HD'$. Define another tuple $(\mathbf{y}, \mathbf{y'})$ similarly. 
Then holomorphic disks from $(\mathbf{x}, \mathbf{x'})$  to $(\mathbf{y}, \mathbf{y'})$ are in bijection with 
\[
    \{ \text{hol. disks } \phi: \mathbf{x} \to \mathbf{y} \}
    \times 
    \{ \text{hol. disks } \phi': \mathbf{x'} \to \mathbf{y'} \}
\]
by taking the diagonal disk in $\phi \times \phi'$.

Moreover, the action of $H_1(Y \# Y'; \Z)/tor \cong H_1(Y; \Z)/tor \otimes H_1(Y';\Z)/tor$ is induced by the actions of 
$H_1(Y; \Z)/tor$ and $H_1(Y';\Z)/tor$ on the tensor factors of $\HFhat(Y) \otimes \HFhat(Y')$, respectively. 

\begin{example}
We now know how to compute $\HFhat(\#^k S^1 \times S_2)$. And we also understand how $H_1$ acts.
\end{example}

\subsubsection{Branched double covers}

In Example \ref{eg:s1xs2-CF-2copies}, we saw an inkling of how Khovanov homology (for unlinks, at least) might be related to $\HFhat$ (for connected sums of $S^1 \times S^2$, at least). 

We now explain the topological relationship between planar circles and connected sums of $S^1 \times S^2$, by discussing branched double covers. 

Let $M$ be a manifold, and let $L \subset M$ be a submanifold. The $k$-fold branched cover of $M$ along branching locus $L$ is a manifold $\Sigma_k(M)$ with a (continuous) map $\pi: \Sigma_k(M) \to M$ such that
\begin{itemize}
    \item for each point $p \in M \backslash L$, $\pi\inv(p)$ consists of $k$ copies of $p$ (a covering map), and 
    \item for each point $p \in L$, $\pi\inv(p)$ is just one copy of $p$.
\end{itemize}

A prototypical example is the branched covering map $\pi: \C \to \C$ given by $z \mapsto z^2$.
In the following example, we describe a homeomorphic version of this branched covering, and how to construct the 2-fold branched cover, also known as the \emph{branched double cover}.

The typical notation for the branched double cover of $M$ along $L$ is $\Sigma_2(M, L)$. However, to avoid notation collision with a genus-2 Heegaard surface, we will instead just write $\Sigma$ to denote `branched double cover'.

\begin{example}
    Let $D$ be a disk, and let $p$ be a point in the interior of the disk:
    \begin{center}
        \includegraphics[width=2in]{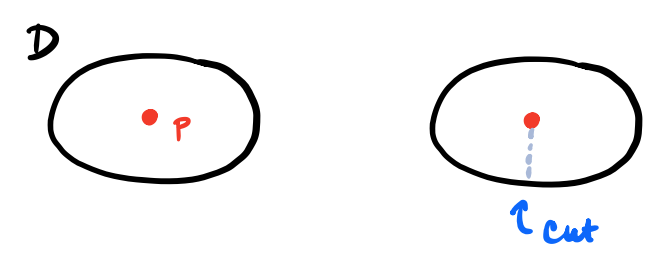}
    \end{center}
    We wish to build $\Sigma(D, p)$.
    
    We first make a branch cut (right side of above figure), and then take two copies of our snipped disk:
    \begin{center}
        \includegraphics[width=3in]{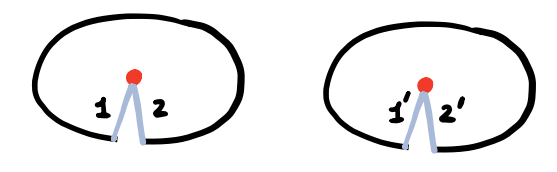}
    \end{center}
    We now  glue $1 \to 2'$ and $2 \to 1'$ so that the resulting surface $F$ has two lifts of each point other than $p$ (including the points along the branch cut -- notice that these were doubled already when we made the cut, so it's ok that we then glued them to their twins on the second copy). There is, however, only one lift of the branch point $p$. 

    We can visualize gluing the two shown disks above to get one very floppy disk.

    We can also think of $F$ as the ramp in a parking garage where you can climb from the 1st to the 2nd floor, but then when you keep climbing, you end up back at the 1st floor.

\end{example}

\begin{example}
    Now consider a disk with two branch points, red and blue:
    \begin{center}
        \includegraphics[width=3in]{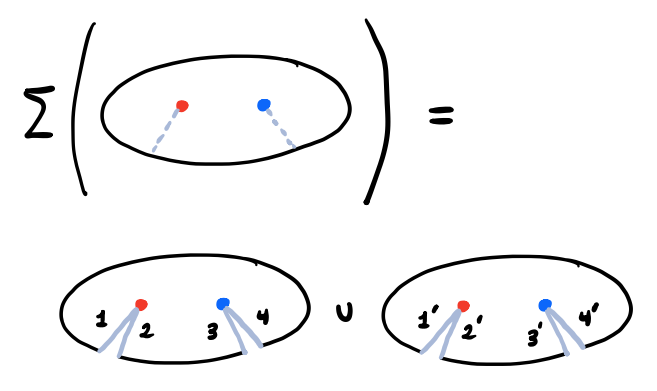}
    \end{center}
    We again make some branch cuts, and make two copies of our snipped disk.  
    (These cuts are just my choice, you could also decide to make a cut that connects the red and blue. However, I find the drawn visualization easier to explain.)
    
    When we glue the two pieces together by $1 \to 2'$, $2 \to 1'$; $3 \to 4'$, $4 \to 3'$, we get an annulus:
     \begin{center}
        \includegraphics[width=4in]{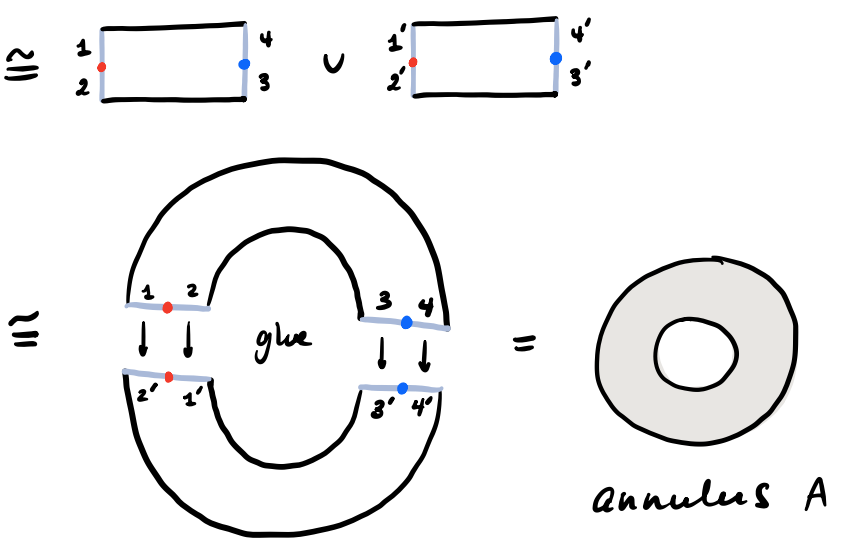}
    \end{center}

    So $\Sigma(D, \text{2 interior points}) \cong A,$ an annulus. 
\end{example}

\begin{example}
    Now consider the branched double cover of two arcs inside a 3-ball. This is homeomorphic to the thickening of the previous example by an interval $I$:
     \begin{center}
        \includegraphics[width=2in]{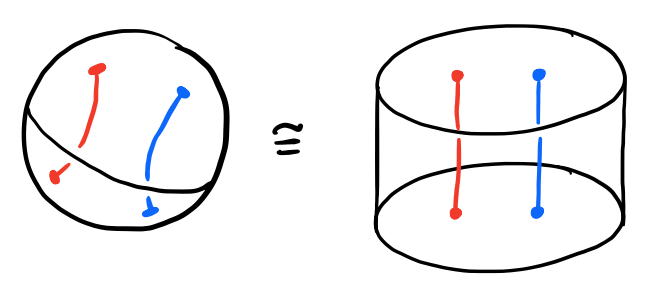}
    \end{center}
    We make the same branch cuts, but now thickened by $I$ as well:
    \begin{center}
        \includegraphics[width=2in]{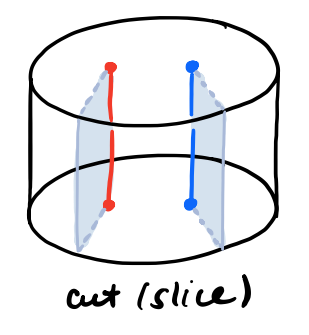}
    \end{center}
    Following the previous example, we make two copies and glue them together. This shows that the branched double over of a sphere with branch locus given by two properly embedded arcs is a solid torus:
    \begin{center}
        \includegraphics[width=4in]{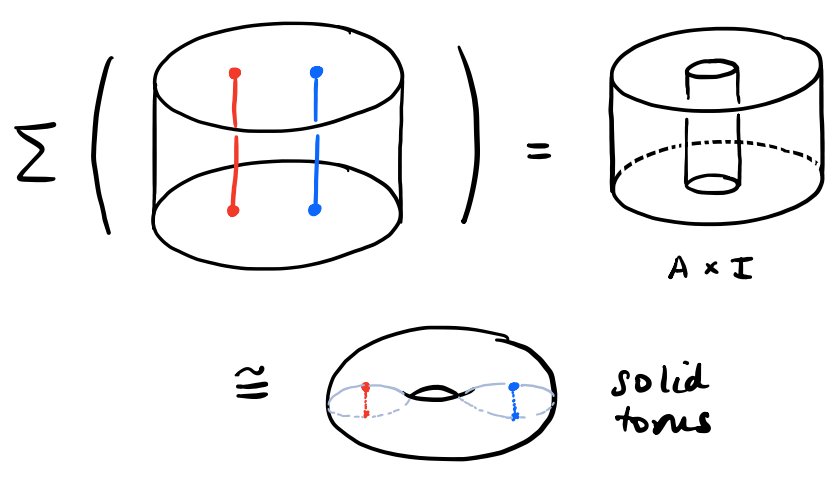}
    \end{center}
\end{example}

\begin{example}
    Now we are ready to find the branched double cover of a 2-component unlink inside $S^3$:
    \begin{center}
        \includegraphics[width=2in]{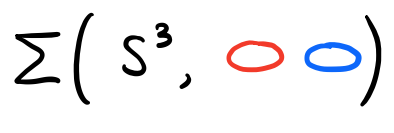}
    \end{center}
    The trick is to first cut $S^3$ into two balls as shown:
    \begin{center}
        \includegraphics[width=2in]{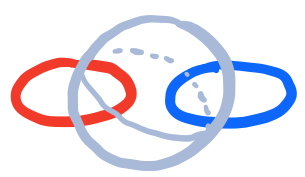}
    \end{center}
    Using the previous example, we have $\Sigma(S^3, \fullmoon \sqcup \fullmoon) \cong$
    \begin{center}
        \includegraphics[width=4in]{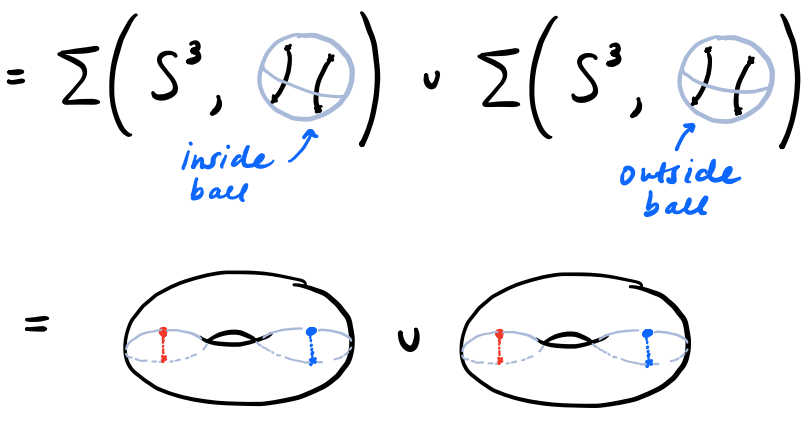}
    \end{center}
    Noting that the red endpoints need to glue to the red endpoints, and similarly for the blue endpoints, we reason that the result is 
    \[
        \Sigma(S^3, \fullmoon \sqcup \fullmoon)
        \cong 
        S^1 \times S^2.
    \]
\end{example}

\subsubsection{Reduced Khovanov homology}

In Example \ref{eg:s1xs2-CF-2copies}, I hinted at the fact that 
$\HFhat((S^1 \times S^2) \# (S^1 \times S^2))$ 
looks similar to $\Kh(\fullmoon \sqcup \fullmoon) = V \otimes V$. 
In the previous section, we hinted that the relationship between $\#^k S^1 \times S^2$ and unlinks.

But notice that there is a discrepancy: 
\[
    \Kh(\fullmoon \sqcup \fullmoon) \cong \HFhat((S^1 \times S^2) \# (S^1 \times S^2))
\]
but
\[
    \Sigma(\fullmoon \sqcup \fullmoon) \cong S^1 \times S^2.
\]
So, $\Kh(\fullmoon \sqcup \fullmoon) \not\cong \HFhat(\Sigma(S^3, \fullmoon \sqcup \fullmoon))$! 

This is one motivation for defining and studying \emph{reduced Khovanov homology}. 
This is a version of Khovanov homology defined for \emph{based} links, i.e.\ links with a chosen basepoint:
\begin{center}
    \includegraphics[width=1.5in]{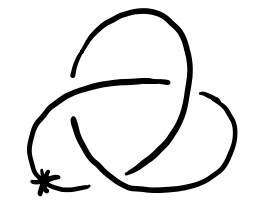}
\end{center}

\begin{definition}
    Let $(D,*)$ be a based diagram for based link $(L,p)$. 

    The \emph{reduced Khovanov complex} for $(D,*)$ is 
    \[
        \CKhred(D,*) := \left ( \CKh(D) / \im (\xi) \right )\{-1\}
    \]
    where $\xi$ is the chain map that merges a small circle labeled $X$ into the arc containing the basepoint:
    \begin{center}
        \includegraphics[width=1.5in]{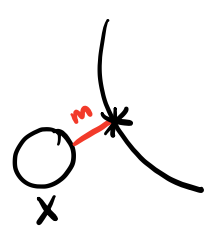}
    \end{center}
\end{definition}

\note{
In other words, the $\CKhred(D,*)$ complex is the quotient complex you get from $\CKh(D)$ by erasing all the generators that label the based component by $X$; 
the label on the component in a complete resolution $D_u$ containing the based point \emph{must} be $1$. 
}

\begin{example}
    Here is part of the chain complex for the based trefoil:
    \begin{center}
        \includegraphics[width=4in]{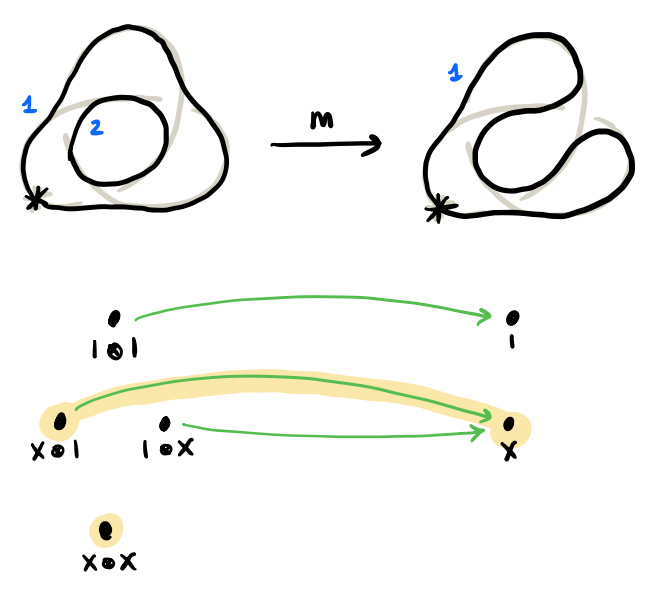}
    \end{center}
    The highlighted region is part of the subcomplex $\im(\xi)$. 
\end{example}

\begin{exercise}

\bea
\item 
Determine the reduced Khovanov homology of the unlink with $k$ components.
\item 
    Compute the reduced Khovanov homology of the trefoil. 
    \note{ Feel free to work over $\F_2$. Remember to incorporate the quantum shift in the definition!}

\ee
\end{exercise}

\begin{fact}
    For knots, the isomorphism class of $\Khred$ does not depend on the location of the basepoint. 
\end{fact}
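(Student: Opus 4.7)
The plan is to show that moving the basepoint along the single component of $K$ produces a chain homotopy equivalence between the corresponding reduced Khovanov complexes. Since any two basepoints on a knot are connected by an oriented arc along the knot, it suffices to check local invariance: (i) moving the basepoint within a single arc of a diagram $D$ changes nothing, since the based circle in every complete resolution $D_u$ is the same; and (ii) moving the basepoint across a crossing of $D$ induces a chain homotopy equivalence on reduced complexes. Reducing (ii) to a purely local computation is legitimate because the subcomplex $\im(\xi)$ is defined by a merge with a small $X$-labeled circle supported in a disk neighborhood of the basepoint, so only the two resolutions of the adjacent crossing are affected.

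The key step is therefore (ii), and here I would argue as follows. Consider a crossing $c$ of $D$ with the basepoint placed on one of the two outgoing arcs, and let $D_0,D_1$ denote the two local resolutions at $c$. In each complete resolution of $D$, the crossing has been smoothed in one of the two ways, and the arc containing the basepoint either stays on the same circle or jumps to a different circle of the resolution when we move the basepoint past $c$. Using the cube-of-resolutions description, I would write $\CKh(D)$ as the mapping cone $\CKh(D) = \left(\CKh(D_0)\to \CKh(D_1)\{1\}\right)$ (with appropriate global shifts depending on the sign of $c$), and identify $\im(\xi_p)$ resolution-by-resolution. Since the basepoint sits on a strand bounding the active region of $c$, the merge map $\xi_p$ commutes (up to known chain homotopies coming from the Bar-Natan neck-cutting relation) with the saddle cobordism connecting $D_0$ and $D_1$. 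This lets me exhibit an explicit degree-preserving chain map between $\CKhred(D,p)$ and $\CKhred(D,p')$, where $p'$ is the basepoint moved just past $c$.

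The main obstacle is checking that this candidate chain map is indeed a chain homotopy equivalence with the correct grading behavior in each of the several local cases (positive vs.\ negative crossing, and which of the four outgoing arcs the basepoint is on). I would use the cancellation/delooping tools of \S\ref{sec:BN-Kom-categories} to reduce the bookkeeping: after delooping any small loops near $c$ in both $\CKhred(D,p)$ and $\CKhred(D,p')$, the quotient by $\im(\xi_p)$ kills exactly the copies of $\emptyset\{1\}$ (the $X$-labeled summands) on the based component, and the resulting reduced complexes can be matched directly via Gaussian elimination on an isomorphism arrow. An equivalent way to package this, which I might use as a cleaner presentation, is to identify $\Khred(D,p)$ with the mapping cone of the multiplication-by-$X$ map $\xi_p$ applied to $\Kh(D)$; then one checks that the endomorphism $\xi_p$ of $\CKh(D)$ depends on $p$ only up to chain homotopy (since moving $p$ along an arc is literally the same operation, and crossing a strand can be realized by a cobordism inducing a chain homotopy), and mapping cones of chain-homotopic maps are chain homotopy equivalent.

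Finally, since $K$ has only one component, any two basepoints $p,p'$ are connected by a chain of such elementary moves along $K$, so iterating the local invariance yields $\Khred(D,p)\simeq \Khred(D,p')$, and passing to homology gives the desired isomorphism. Combining this with Reidemeister invariance (already established for $\Kh$ and inherited by the quotient by $\im(\xi_p)$, since $\xi_p$ is natural under Reidemeister chain maps away from the basepoint) yields a well-defined isomorphism class $\Khred(K)$ for based knots that is independent of the basepoint.
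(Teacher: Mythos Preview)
The paper does not prove this statement; it is recorded as a \emph{Fact} without argument. Your proposal therefore goes beyond what the notes provide, and the overall strategy you outline---slide the basepoint along the single component of $K$ and verify invariance as it passes each crossing---is the standard one and is correct in outline.

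Two points warrant tightening. First, your alternative packaging identifies $\Khred(D,p)$ with the mapping cone of $\xi_p$, but that is not quite right: by definition $\CKhred(D,p)$ is the \emph{quotient} $\CKh(D)/\im(\xi_p)$ (shifted), whereas $\cone(\xi_p)$ has homology built from \emph{two} shifted copies of $\Khred$, not one. The cone route can still be made to work---cones of chain-homotopic maps are chain homotopy equivalent, and the two copies of $\Khred$ sit in distinct bigradings so one can extract $\Khred$---but the identification must be stated with care. Relatedly, beware that ``quotients by images of chain-homotopic maps are chain homotopy equivalent'' is false in general, so if you argue via $\xi_p\simeq\xi_{p'}$ you really must pass through the cone rather than the quotient directly.

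Second, the cleanest justification that $\xi_p\simeq\xi_{p'}$ when $p,p'$ lie on the same component is not the crossing-by-crossing delooping you sketch but a single appeal to functoriality: the cobordism ``identity cylinder on $K$ with a dotted tube attached at $p$'' is isotopic rel boundary in $\R^3\times I$ to the same cobordism with the tube attached at $p'$ (slide the tube along the knot), so by the projective functoriality of \S\ref{sec:projective-functoriality} the induced chain maps agree up to sign and chain homotopy. This replaces your local case analysis with a result already established in the notes.
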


\subsubsection{Dehn surgery}

\emph{Dehn surgery} is a method for building 3-manifolds. 
Here's the general idea:
\begin{enumerate}
    \item Pick a knot $K \subset S^3$ and a framing of the knot, i.e.\ a homologically nontrivial circle $\gamma$ on the boundary of a neighborhood of $K$ (a longitude for a tubular neighborhood of $K$).
    \item Drill out the solid donut that is a neighborhood of $K$. Now, glue it back in but so that the $\gamma$ bounds a disk for the inside solid torus.
\end{enumerate}

Lickorish \cite{Lickorish-surgery} and Wallace \cite{Wallace-surgery} proved that any closed, connected, orientable 3-manifold can be obtained in this way. 

There are varying conventions in the literature, and we will stay agnostic about these conventions. Everyone agrees on the following two examples, though:

\begin{center}
\includegraphics[width=4in]{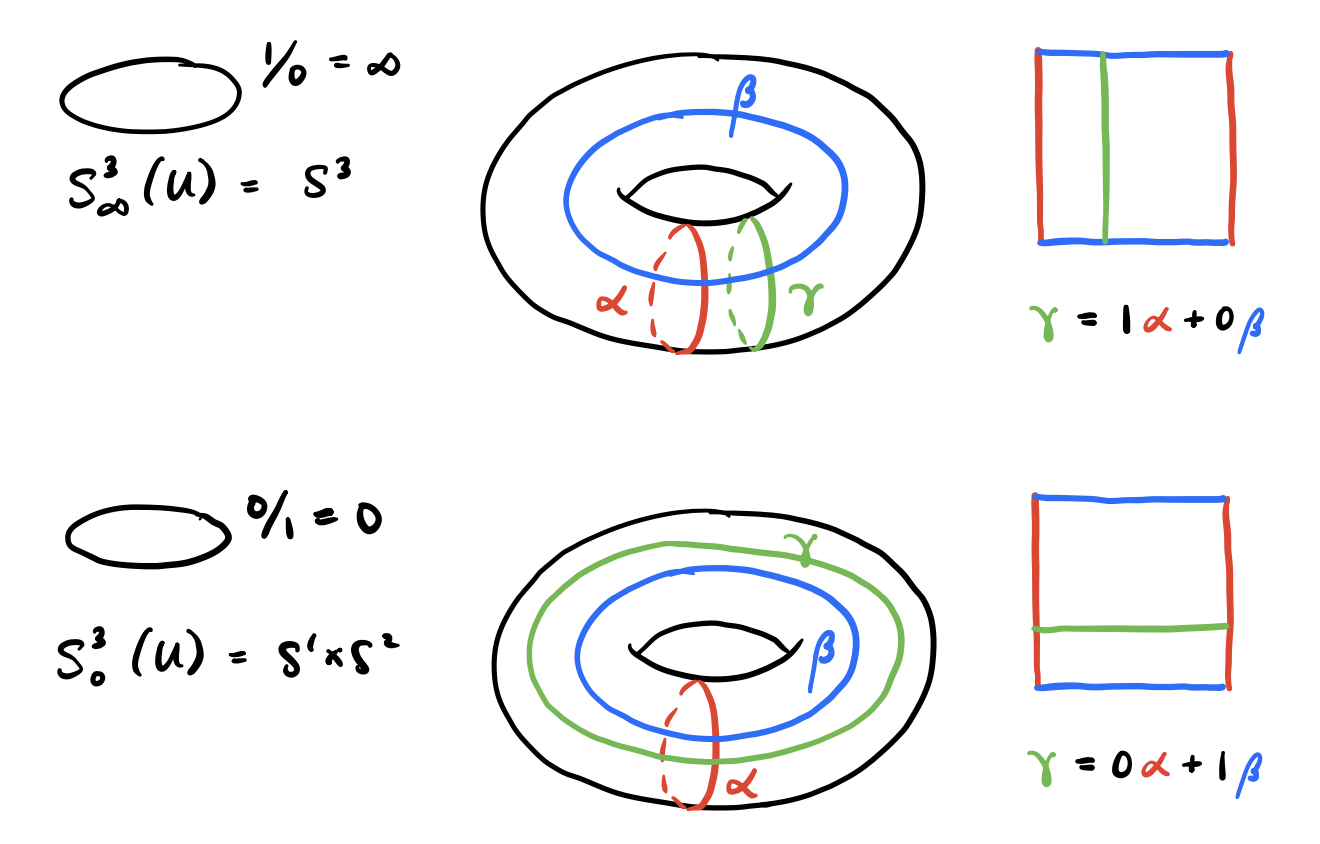}
\end{center}

In the Heegaard diagrams above, $(\Sigma_1, \alpha, \beta)$ is a Heegaard diagram for $S^3$. 
If we replace $\alpha$ with $\gamma$, then 
$(\Sigma_1, \gamma, \beta)$ is the Heegaard diagram for the manifold we get after surgery. 

The $\infty$-surgery of $S^3$ along the unknot $U$ is just $S^3$, since $\gamma$ is parallel to $\alpha$. 

The $0$-surgery instead yields $S^1 \times S^2$, because now, on the inside of the torus, $\gamma$ is supposed to bound a disk.

\subsubsection{The Ozsv\'ath--Szab\'o spectral sequence}

We are now ready to describe, in very broad strokes, the Ozsv\'ath--Szab\'o spectral sequence \eqref{eq:OS-spectral-sequence}.

Recall that we have already arranged so that for the $k$-component unlink $U^k$, 
\[
    \Khred(U^k) \cong \HFhat(\Sigma(S^3, U^k)).
\]
over $\F_2$ coefficients.

In order to boost this relationship to links other than the unlink, we use the same cube-of-resolutions idea from Khovanov homology: we essentially use an iterated mapping cone construction. 
The result will be a spectral sequence rather than an immediate identification.

If we see a neighborhood of a crossing in $L$, we can associate the $0$ and $1$ resolutions as shown below, with conventions reversed from Khovanov homology (hence the mirroring in  \eqref{eq:OS-spectral-sequence}).

In the drawing below, imagine that $L$ has the red crossing. 
Let $L_0$ be the link with the red crossing replaced with the blue arcs, and let $L_1$ be the link with the red crossing replaced with the green arcs.

\begin{center}
\includegraphics[width=2in]{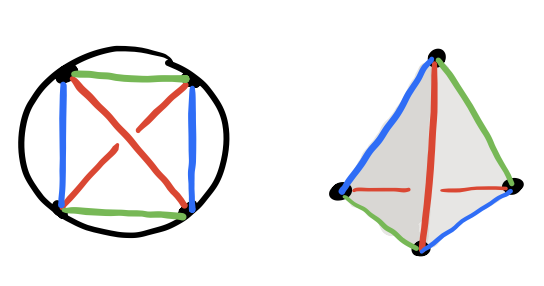}
\end{center}

These have a cyclic relationship, because they can be viewed as the three pairs of opposite edges of a tetrahedron.

We want to take the branched double cover of the ball along the red arcs first. To help with visualization, we will first rotate the inscribed tetrahedron (by rotating the ball) so that we see the diagram on the left:

\begin{center}
\includegraphics[width=3in]{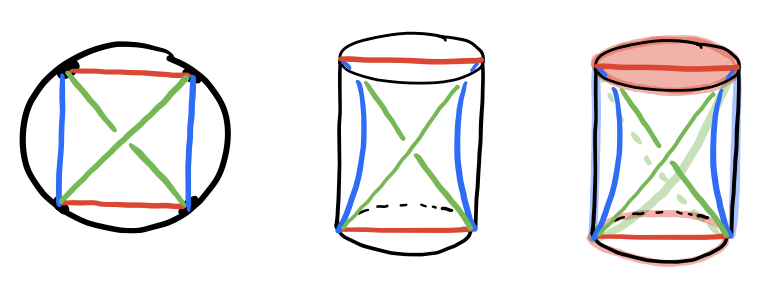}
\end{center}

We can then make a branch cut and isotop to get the solid cylinder in the middle, and note that once we build the branched double cover, the red arc will be contained in the red disk shown on the right. 

In other words, if we branch along the red arcs, the parallel arcs on the boundary of the ball will, together with their double, bound a disk in the resulting solid torus. 

A similar situation would occur if we took the branched double cover along blue or green, of course. 
On the right, we've faintly highlighted the boundary-parallel copies of the blue and green curves. 

After taking the branched double cover of the ball along the red arcs, we have the following solid torus, where we now only draw the boundary-parallel copies of the original red, blue, and green arcs (actually, just one copy):
\begin{center}
\includegraphics[width=3in]{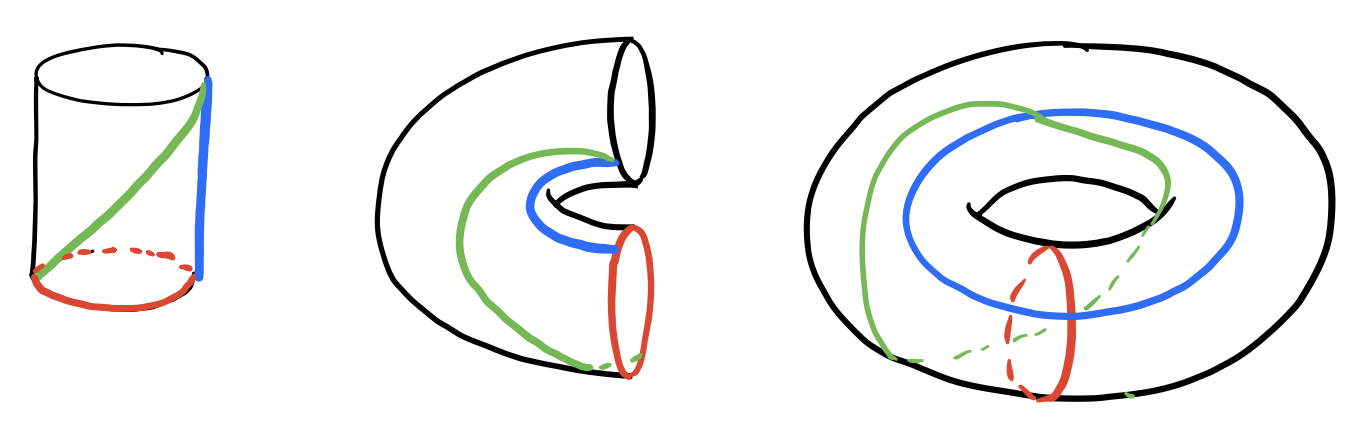}
\end{center}
\note{We only need to keep track of one copy of, say, the red circle, because the other copy will be parallel.}

Let's study the Heegaard diagram on the right more carefully. In fact, let's first isotop the green curve so that its intersections with red and blue are clearer:

\begin{center}
\includegraphics[width=2in]{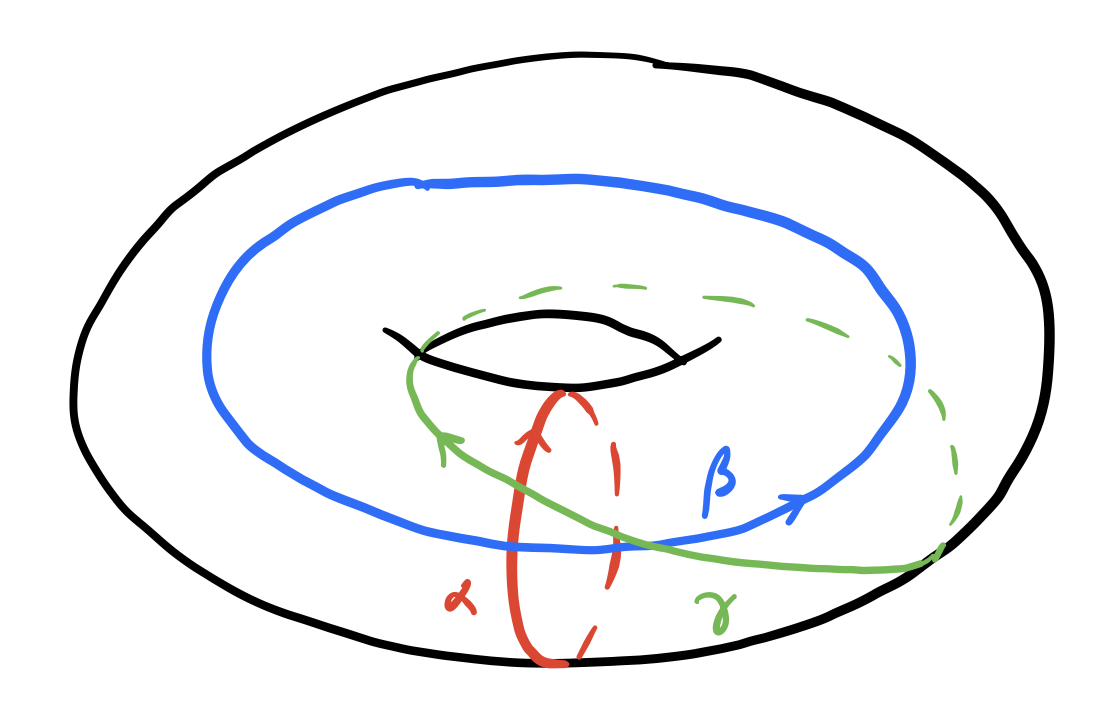}
\end{center}

In the diagram above, we've labeled the red, blue, and green curve as $\alpha$, $\beta$, and $\gamma$, and given them orientations so that their pairwise algebraic intersection numbers satisfy the following equation:
\begin{equation}
    \label{eq:triad-intersections}
    I(\alpha, \beta) 
    = I(\beta, \gamma)
    = I(\gamma, \alpha)
    = -1.
\end{equation}

A consequence of this cyclic relationship is that the three 3-manifolds $(\Sigma(S^3, L) = \Sigma(S^3, L_\infty), \Sigma(S^3, L_0), \Sigma(S^3, L_1))$
form a \emph{triad}; 
Ozsv\'ath--Szab\'o show that there is then a \emph{surgery exact triangle} (i.e.\ long exact sequence) relating their Heegaard Floer homologies:
\begin{center}
\begin{tikzcd}
    \HFhat(\Sigma(S^3, L_0)) \arrow{rr}{} 
      &  & \HFhat(\Sigma(S^3, L_1)) \arrow{dl}{} \\
    & \HFhat(\Sigma(S^3, L_\infty)) \arrow{ul}{}
\end{tikzcd}
\end{center}

This reveals how we can use the mapping cone construction: to compute the bottom object, we can `replace' it with the cone of the top row. 
\note{This is mostly analogous to how the Khovanov bracket works; see Remark \ref{rmk:HF-cone}.}

\begin{remark}
\label{rmk:HF-cone}
    Ozsv\'ath--Szab\'o show that for a triad $(Y, Y_0, Y_1)$, 
    \[
        \CFhat(Y) \simeq \cone \left ( \CFhat(Y_0) \map{\hat f} \CFhat(Y_1) \right)
    \]
    where $\hat f$ is the appropriate surgery map, and $\simeq$ means `quasi-isomorphic', which is the same as `chain homotopic' over a field. 
    Unlike in Khovanov homology, there will likely be a nontrivial homotopy that realizes this homotopy equivalence. 
\end{remark}

Ozsv\'ath--Szab\'o's key proposition tells us that these surgery maps actually correspond to the (reduced) Khovanov merge and split maps!

So, on $E^0$ of spectral sequence, we see a cube with $\CFhat$ of various connected sums of $S^1 \times S^2$ at the vertices. The differential $d^0$ computes the Heegaard Floer homology  for these very simple 3-manifolds; we can identify $E^1$ with the $\Khred$ cube, with unlinks at each vertex. 
One the next page, $d^1$ corresponds to the $\Khred$ differential, by the key proposition alluded to in the previous paragraph. 
Therefore $E^2 \cong \Khred(\bar L)$. 

We can view the remainder of the spectral sequence as a consequence of Remark \ref{rmk:HF-cone}. Due to the nontrivial homotopy equivalences, we have to add homotopies of all cube-degree lengths to the components of $d^1$ in order to truly recover the Heegaard Floer chain complex:
\begin{center}
    \includegraphics[width=4in]{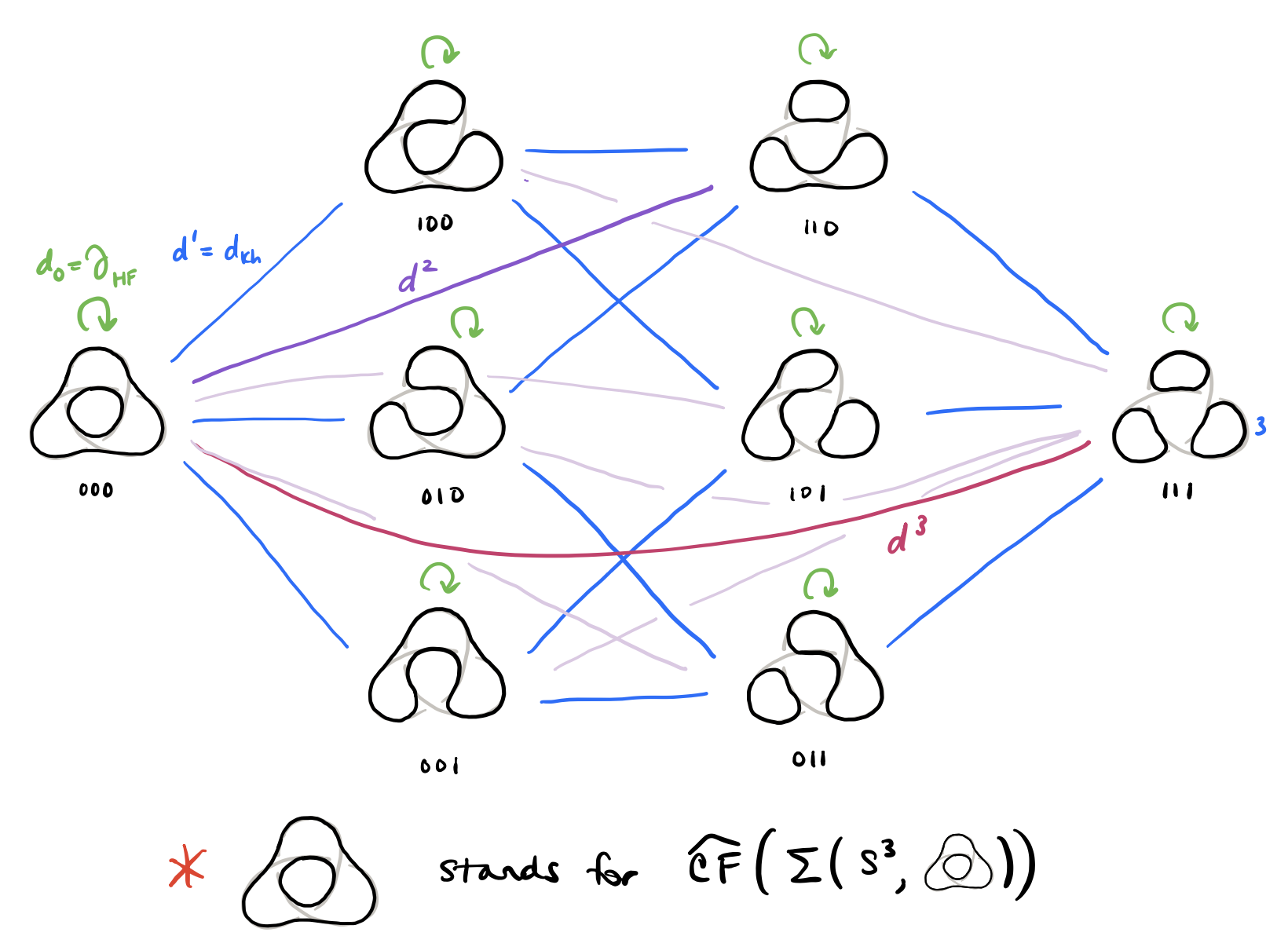}
\end{center}

So, the spectral sequence is really a filtration spectral sequence with respect to this cube degree.

\section{Khovanov stable homotopy type}

\note{In this course, we first carefully went through a definition of Khovanov homology and how one works with it. Then, we went on a tour of various applications of Khovanov homology. We now slow down (sort of) once again, to study a topic relatively carefully. 
But don't be fooled: our four-lecture journey through the construction of a Khovanov stable homotopy type is meant only as a primer, to help you read the papers in case you choose to dig deeper.}

We began this course by defining the Jones polynomial, and introduced Khovanov homology as a higher-level version of the invariant: while the Jones polynomial can only describe the knot embedded in $S^3$, the Khovanov invariant, as a TQFT, can also be used to describe cobordisms between knots. 

In this section, we will see how Lipshitz--Sarkar further boost the Khovanov homology invariant to a `space-level' invariant $\XKh$. Roughly speaking, for a link $L \subset S^3$, $\XKh(L)$ is a sufficiently high-dimensional space whose singular cohomology agrees with $\Kh(L)$, after an appropriate shift in homological grading. 

Just as the Khovanov invariant can be thought of as a chain homotopy equivalence class of chain complexes, the Lipshitz--Sarkar invariant is really a \emph{stable homotopy type}, rather than an actual particular space. (The `stable' part refers to the fact that it is an invariant once you reach a sufficiently high dimension.)

We first begin with some motivation, starting with a brief introduction to / recollection of Morse theory.
A standard reference for Morse theory is \cite{Milnor-morse-book}; if you want a quick overview, I am willing to bet that there are many, many video resources online covering Morse theory.

\subsection{The idea of Morse homology}

\begin{center}
    \includegraphics[width=1in]{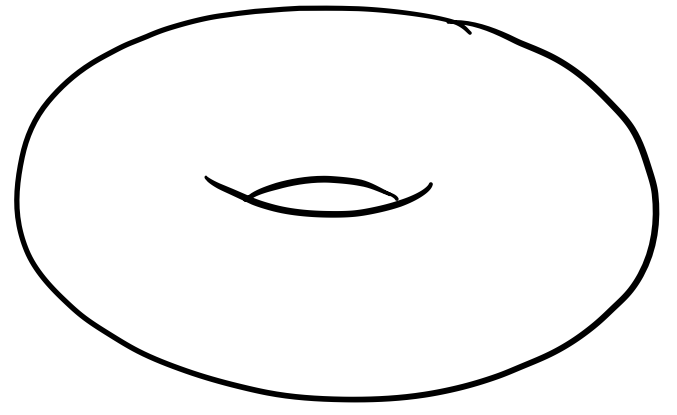}
\end{center}

Recall that we can build the torus $S^1 \times S^1$ using one 0-handle, two 1-handles, and one 2-handle:

\begin{center}
    \includegraphics[width=3.5in]{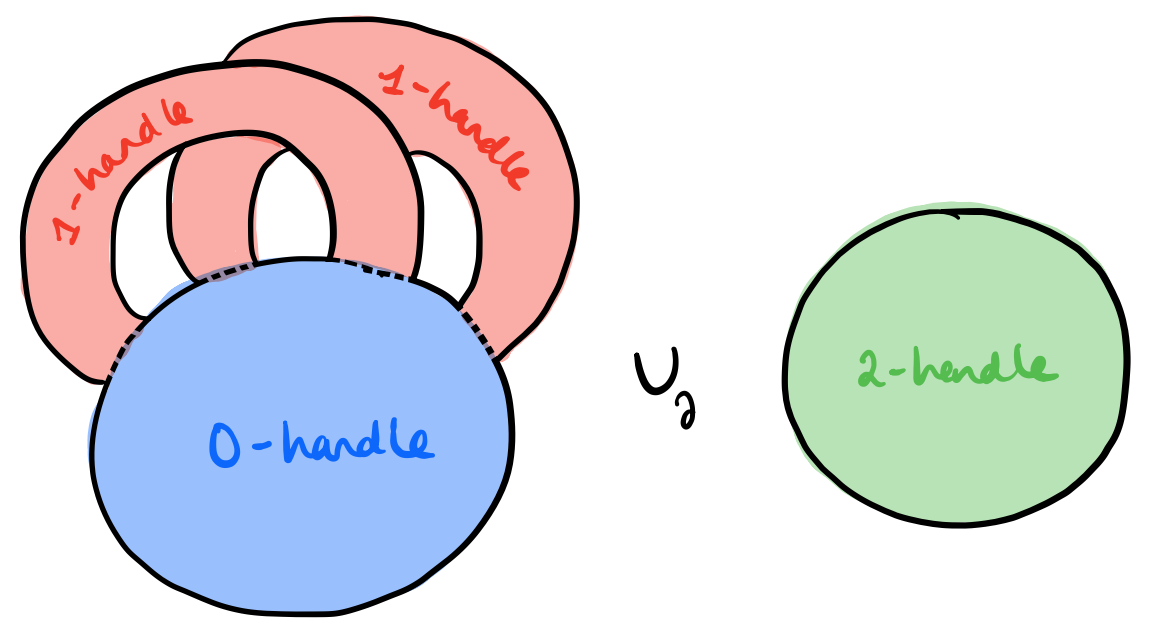}
\end{center}

As a CW-complex, we can also build it using one 0-cell, two 1-cells, and one 2-cell:

\begin{center}
    \includegraphics[width=3.5in]{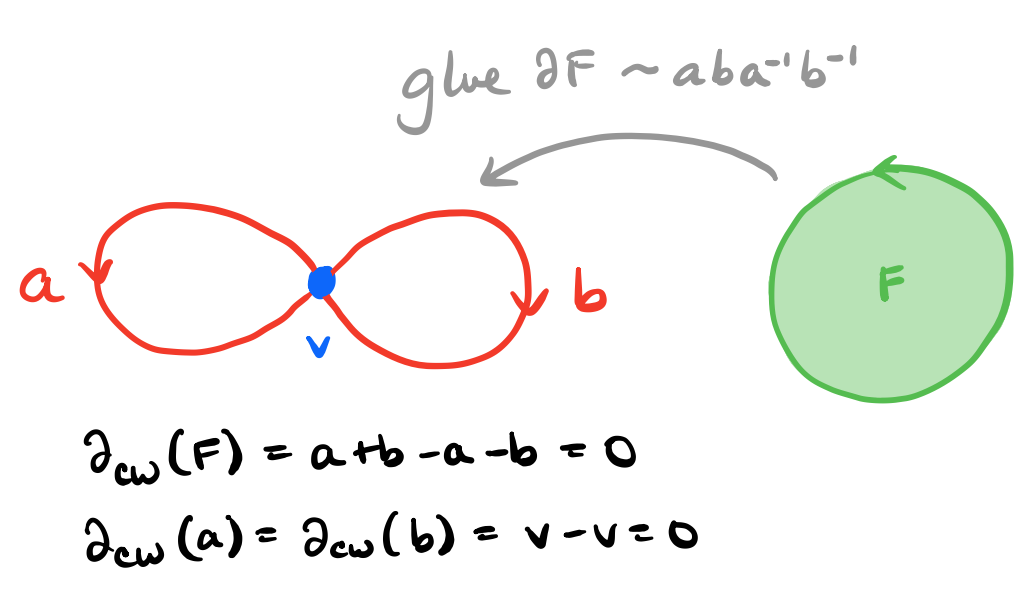}
\end{center}

The CW chain complex $\Z \to \Z \oplus \Z \to \underline{\Z}$  is generated by the cells:
\[
    \langle F \rangle \to \langle a,b \rangle \to \underline{\langle v \rangle}.
\]
Based on the attaching maps shown in the drawing, we have
\begin{align*}
    \partial F &= a + b -a -b = 0 \\
    \partial a &= \partial b = v-v =0
\end{align*}
so $\partial = 0$.

We will now describe a third way to compute the same homology, by looking at flow lines dictated by a sufficiently generic height function, called a \emph{Morse function}. 

Lean $T$ against a wall, and imagine dripping chocolate from the point at the top:

\begin{center}
    \includegraphics[width=2.5in]{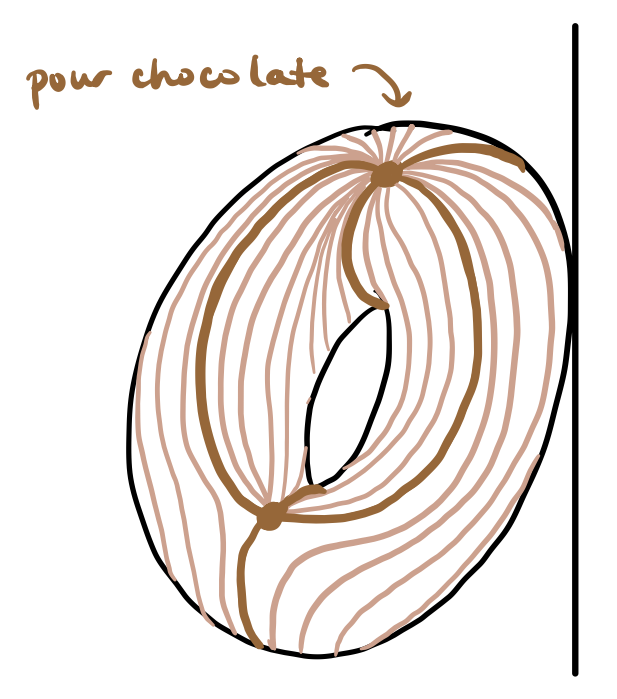}
\end{center}

At most points on the torus, chocolate flows. However, at the four critical points of the height function, the chocolate `pools'. Two are shown above; the other two are in the back of the torus, and you can't see them because chocolate is opaque. 

Here is a better picture; the two critical points on the back side of the torus are shown as \texttt{x}'s:

\begin{center}
    \includegraphics[width=2.5in]{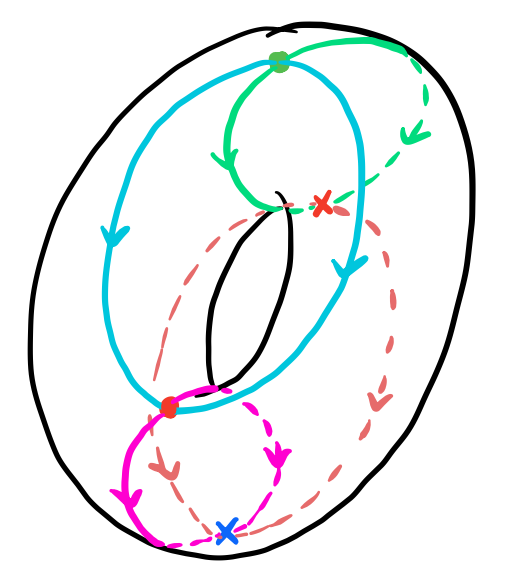}
\end{center}

There are three types of critical points shown. 
\begin{itemize}
    \item At the green critical point, if you shift slightly away in any direction, the chocolate flows away. More formally, we say that the \emph{descending manifold}, a.k.a. \emph{unstable manifold}, of the green critical point is 2-dimensional. 
    \item At each of the red critical points, shifting in two different directions might yield two different results: in one direction, the chocolate will flow back to the critical point; in another, the chocolate will flow away from the critical points. We say these critical points have a 1-dimensional descending / unstable manifold; they also have a 1-dimensional ascending / stable manifold.
    \item At the blue critical point at the bottom, any two directions you choose to shift in will yield chocolate flowing back into the critical point. So this point has a 0-dimensional descending / unstable manifold, and a 2-dimensional ascending / stable manifold.
\end{itemize}

The generators for the Morse homology complex are these critical points, and the homological grading of a critical point is given by the dimension of its descending / unstable manifold. 
The homological grading of a critical point is usually called the \emph{index} of the critical point. 

\begin{remark}
    \alert{Beware:} We will unfortunately also use the term `index' later to describe a difference between homological gradings, i.e.\ a homological degree. 
\end{remark}

A height function is said to be \emph{Morse-Smale} if 
\begin{itemize}
    \item the critical points are isolated and 
    \item the ascending and descending manifolds of different critical points intersect transversely.
\end{itemize} 
In this case, the Morse chain complex is well-defined and yields the Morse homology. 
Here are some examples of height functions on the torus that are \emph{not} Morse-Smale:

\begin{center}
    \includegraphics[width=4in]{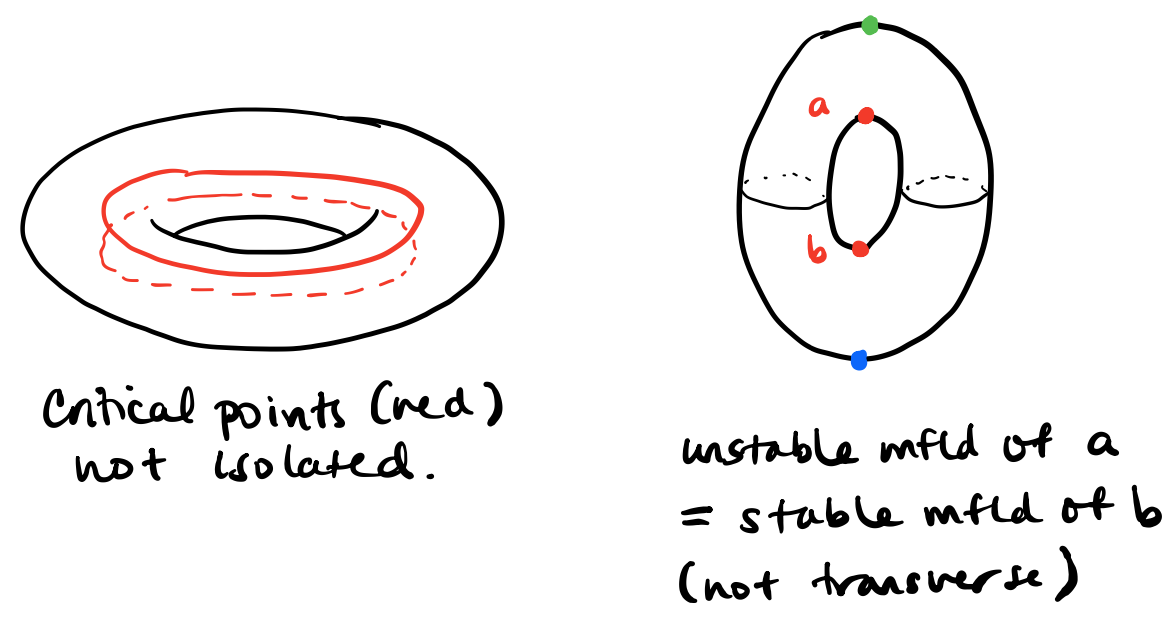}
\end{center}

Our leaning-torus height function is Morse-Smale, so we can describe the differential. 
The Morse differential counts flows from index $i$ critical points to index $i-1$ critical points, with sign. 
\note{
This means that the critical points, which are indeed points, actually have a framing. You could imagine them as tiny ball creatures facing a particular direction, which is either $+$ or $-$.} 

Let's revisit the picture of the flow lines we saw before, but now with the generators labeled:

\begin{center}
    \includegraphics[width=2.5in]{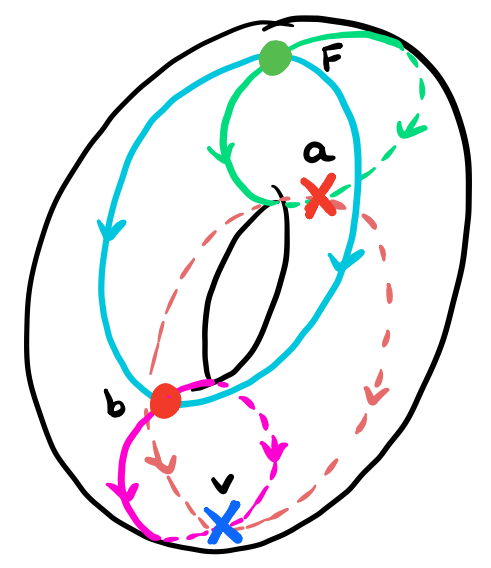}
\end{center}

We see that out of $F$ there are green and teal flow lines to index-1 critical points $a$ and $b$, and they come in pairs, with opposite signs. For example, the two teal flow lines from $F$ to $b$ clearly oppose each other while entering $b$. 
\note{There are ways to make all of this precise, of course! We unfortunately don't have enough time to stray off-course.}
Similarly, both $a$ and $b$ have pairs of oppositely-oriented flow lines into $v$. 
We conclude, just as we did in our CW-homology calculation, that the differential is 0.

Observe that, after choosing our Morse-Smale height function, we only actually needed the data of the 1-skeleton to complete our computation. 
This should give you a sense of how much information is being lost when we compute homology from a topological space. 

For example, we could have also thought about all the flow lines from $F$ to $v$. There are four 1-parameter families of these flow lines, one of which is shaded in purple below:

\begin{center}
    \includegraphics[width=3.5in]{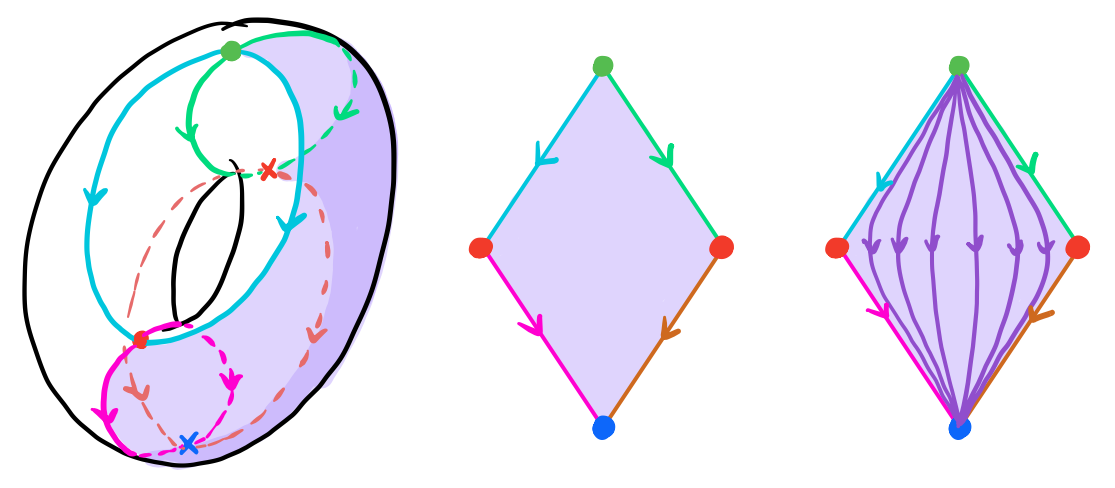}
\end{center}

The \emph{moduli space} of flow lines from a critical point $e^j$ of index $j$ and a critical point $e^i$ of index $i < j$ is a space $\moduli(e^j, e^i)$ where each point represents a single flow line;  the identification is continuous, so that flow lines close to each other correspond to points in the moduli space that are close to each other.
\note{Here is where we use the word `index' again: these are \emph{flows} of index $j-i$.}

In our situation, $\moduli(F,v)$ is the disjoint union of four intervals. The boundaries of each interval represent \emph{broken flow lines}, e.g.\ the concatenation of a flow line from $F$ to $a$ with a flow line from $a$ to $v$. (See the right-most drawing in the future above.)

\begin{remark}
    The actual proof that $\partial^2 = 0$ in Morse homology essentially boils down to the fact that the moduli spaces of index 2 (i.e.\ flows of index 2) are one manifolds, and we really understand 1-manifolds.
\end{remark}

All this is to say that, when we compute Morse homology, we are really only concerning ourselves with the data of moduli spaces of flows of index 1 (and you could maybe say 2, since we do want to check that $\partial^2 = 0$). 

\subsection{The spatial refinement problem}

Now suppose I \emph{start} with a chain complex of free $\Z$-modules (a free resolution!), and I wish to build a space whose Morse homology (or CW, singular, etc.\ homology) agrees with that of my chain complex.

The obvious way is to compute the homology, and then wedge together a bunch of spheres of the appropriate homological dimension. 
This method guarantees that your space will be no more interesting than your chain complex.

For example, had we start with the CW chain complex for the torus, we would have ended up with the space 
$S^0 \vee S^1 \vee S^1 \vee S^2$ (eww) which is definitely not our nice, smooth, beloved (not to mention \textit{connected}) torus $S^1 \times S^1$. 

Suppose we want to really build the space up with cells corresponding to the generators of the chain complex. The generators would tell us how many points to start with, and the differentials would tell us how to glue together the 1-skeleton. But at this point, we might still have 63 more dimensions to go --- how do we glue on the faces (corresponding to the moduli spaces of index 2 flow lines), or the 3-dimensional pieces? What if we glued together our 2-skeleton so weirdly that it's impossible to glue in a 3-ball where we want it?

This gluing information is what is lost when we take homology.

\begin{remark}
One should expect that this lost information is nontrivial. 
For example, in \S \ref{sec:periodic-links} we saw (asserted) that if a CW complex has a $\Z/2\Z$ action, then we can build the Tate bicomplex, and the $^{hv}E^\bullet$ spectral sequence is guaranteed to collapse by page 3. 
Perhaps you could imagine that you could build an abstract chain complex to place in the vertical columns of the Tate bicomplex, in such a way that the spectral sequence \emph{does not} collapse by page 3; then you'd know that the particular chain complex you chose could not have come from a $\Z/2\Z$ equivariant CW decomposition of a space! 
\end{remark}

While homology is just a graded $\Z$-module, a space has a \emph{cohomology ring}, whose multiplication is given by the cup product. 
This means that two spaces can have the same homology, but different cohomology rings.

Lipshitz--Sarkar's Khovanov stable homotopy type is a great example of this phenomenon, though in the category of suspension spectra rather than topological spaces.
The analogous operation to cup product operations $a \smile a$ for suspension spectra are \emph{Steenrod squares} $\Sq^i$. 
Seed showed by computing Steenrod squares that $\XKh$ can distinguish between knots and Khovanov homology can't \cite{Seed-sq2}! 

So indeed, the space-level refinement of Khovanov homology is a stronger invariant than Khovanov homology.

There are currently three methods for constructing $\XKh(L)$, which we outline briefly below. All three were shown  to be equivalent (in the appropriate sense) by Lawson--Lipshitz--Sarkar \cite{LLS-burnside-products}. 

\begin{enumerate}
    \item \textbf{Morse flow category approach} by Lipshitz--Sarkar. 
    
    This is the original method Lipshitz--Sarkar used to construct the spectrum. The main idea is to use Cohen--Jones--Segal's proposed idea \cite{CJS} for capturing the full data needed to rebuild a space, by building a \emph{framed flow category} that describes all the moduli spaces of flow lines, with framings, i.e.\ full gluing instructions.
    
    \note{This is the most hands-on method, and we will study this one in detail.}
    
    \item \textbf{Burnside functor approach} by Lawson--Lipshitz--Sarkar.
    
    This method is still quite hands-on, in the sense that some powerful theorems allow you to work with some very simple categories. The idea is to capture the information that a framed flow category would, but in a more abstract way. 
    We can view the Khovanov complex as a functor from category $\cubecat^n \to \Zmod$. We boost this to a functor to a version of the Burnside category $\Burn$, by making some careful choices. 
    Using the Pontrjagin--Thom construction, this gives us a functor $\cubecat^n \to \Topcat_*$ to based topological spaces. We then take the hocolim of this diagram of spaces, which gives us a (very high dimensional) space. Finally, suspend and desuspend to get a suspension spectrum. 

    \note{We might touch upon this construction if we have time.}

    \item \textbf{$K$-theory approach} by Hu--Kriz--Kriz. 

    This method feels the least hands-on to me, and I am actively trying to learn enough homotopy theory to be able to work with it. The basic idea is to build a permutative category which, again, contains all the framing information that we need to be able to build a space, and then pass this category through the Elmendorf--Mandell $K$-theory machine, which magically spits out a spectrum. 
    
\end{enumerate}

Finally, to bring us back down to earth, here is a warm-up question from class:

\begin{warmup}
    In our torus example, we saw that $\moduli(F,a)$ is two dots (there were two flow lines).     
    \bea
    \item What does $\moduli(F,v)$ look like?
    \item There are no moduli spaces for higher index flows in our example, but what would you guess a moduli space of index-3 flows would look like?
    \ee
\end{warmup}

\subsection{Cohen--Jones--Segal construction}

\subsubsection{Examples to keep in mind}

Let $f_1(x) = 3x^2 - 2x^3$. 
This is a Morse function on $\R$ with one index 0 critical point and a one index 1 critical point:

\begin{center}
    \includegraphics[width=4in]{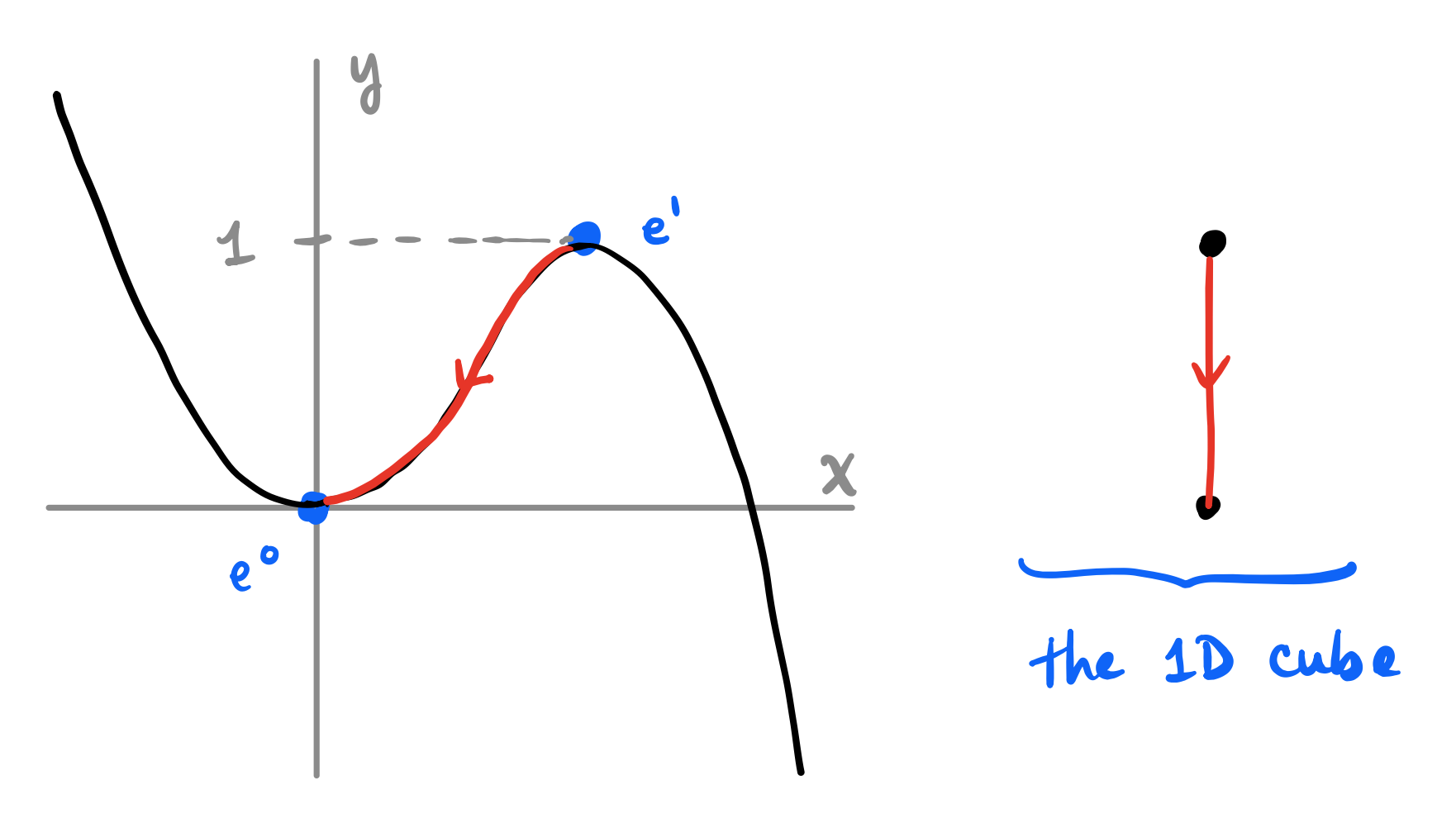}
\end{center}

The flows together form the 1-dimensional cube.
The Morse homology complex is captured by the flows of index 1 between the critical points.

\begin{remark}
    The Morse function $f_1: \R \to \R$ for $\R$ is indeed Smale. The ascending manifold of $e^0$ and descending manifold of $e^1$ are shown below:

    \begin{center}
        \includegraphics[width=4in]{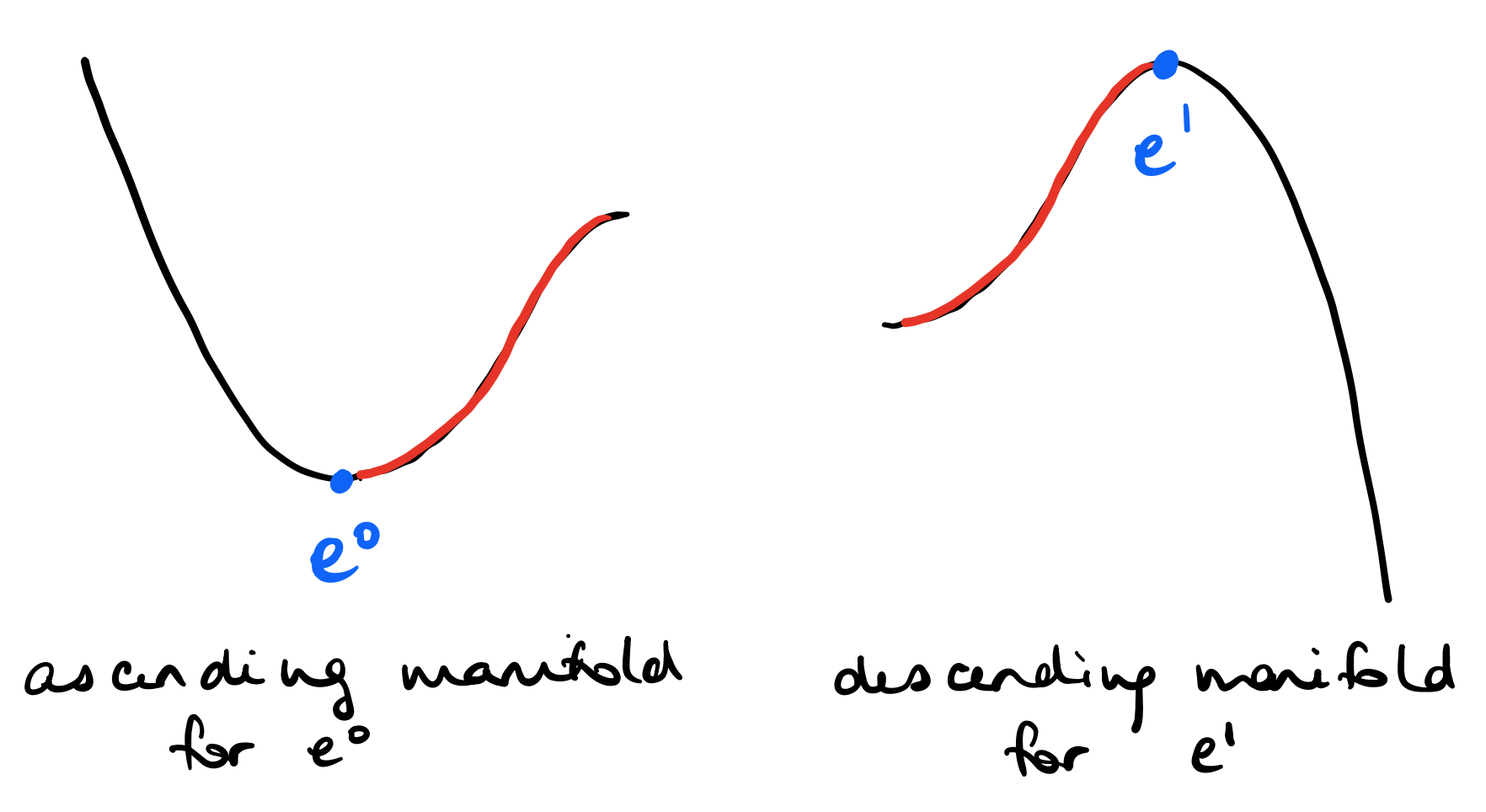}
    \end{center}    

    Their codimensions in $\R$ are both 0. Their intersection, the red curve, also has codimension 0. Since $0+0=0$, the intersection is transverse.

    (For more on transversality, see \cite{Guillemin-Pollack-difftop-book}, a standard textbook on differential topology.)
\end{remark}

Now let $f_2(x_1, x_2) = f_1(x_1) + f_1(x_2)$; this is a Morse function for $\R^2$. 
Here is the contour (i.e.\ topographic) map of the graph of this function:

    \begin{center}
        \includegraphics[width=2.5in]{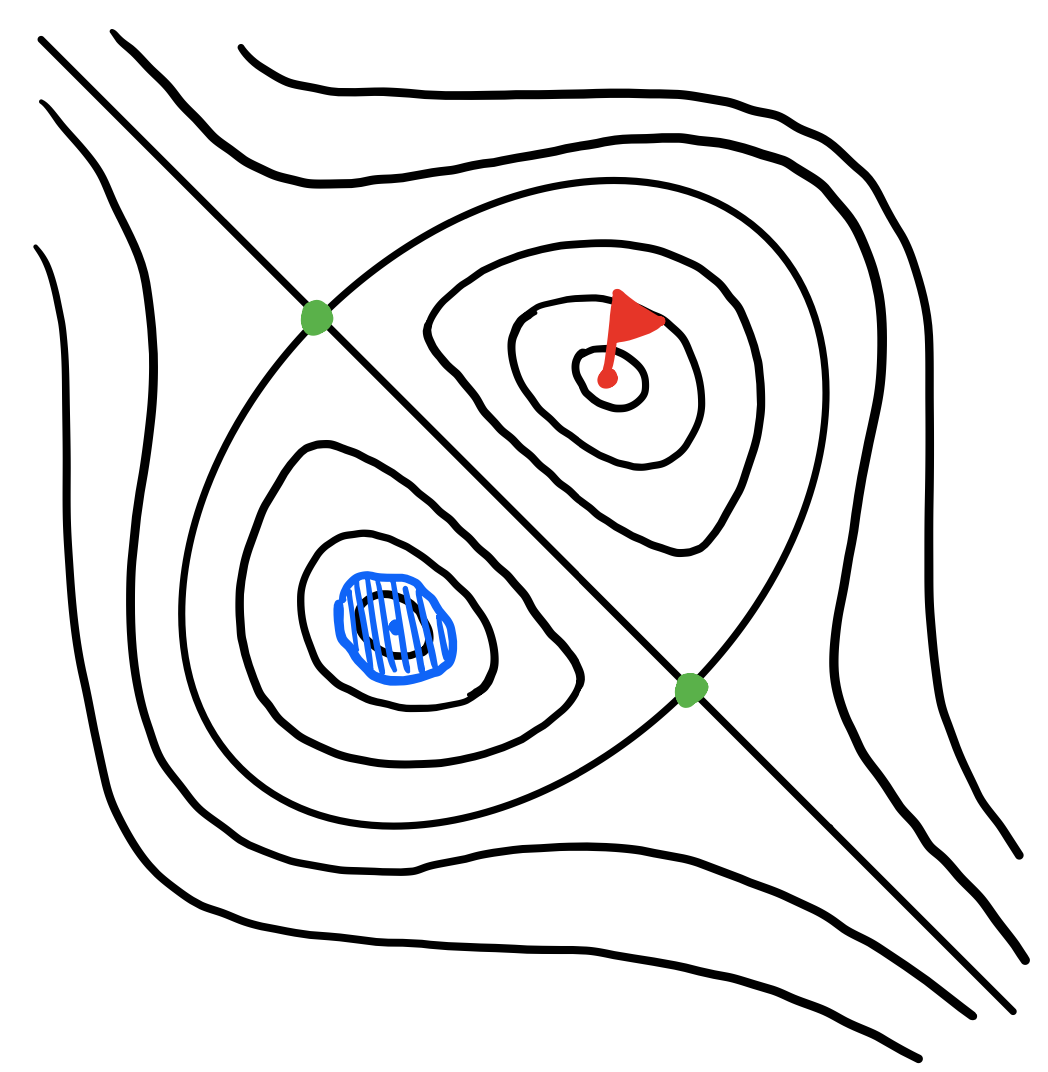}
    \end{center}

The red flag is at the top of a hill, and the blue dot is at the bottom of a lake. These points are at $(1,1)$ and $(0,0)$.  The green dots, located at $(0,1)$ and $(0,1)$, are saddle points. A skier starting slightly away from the red dot following the path of gradient descent to the lake has a 1-parameter family of possible paths, depending on where they started:

    \begin{center}
        \includegraphics[width=3in]{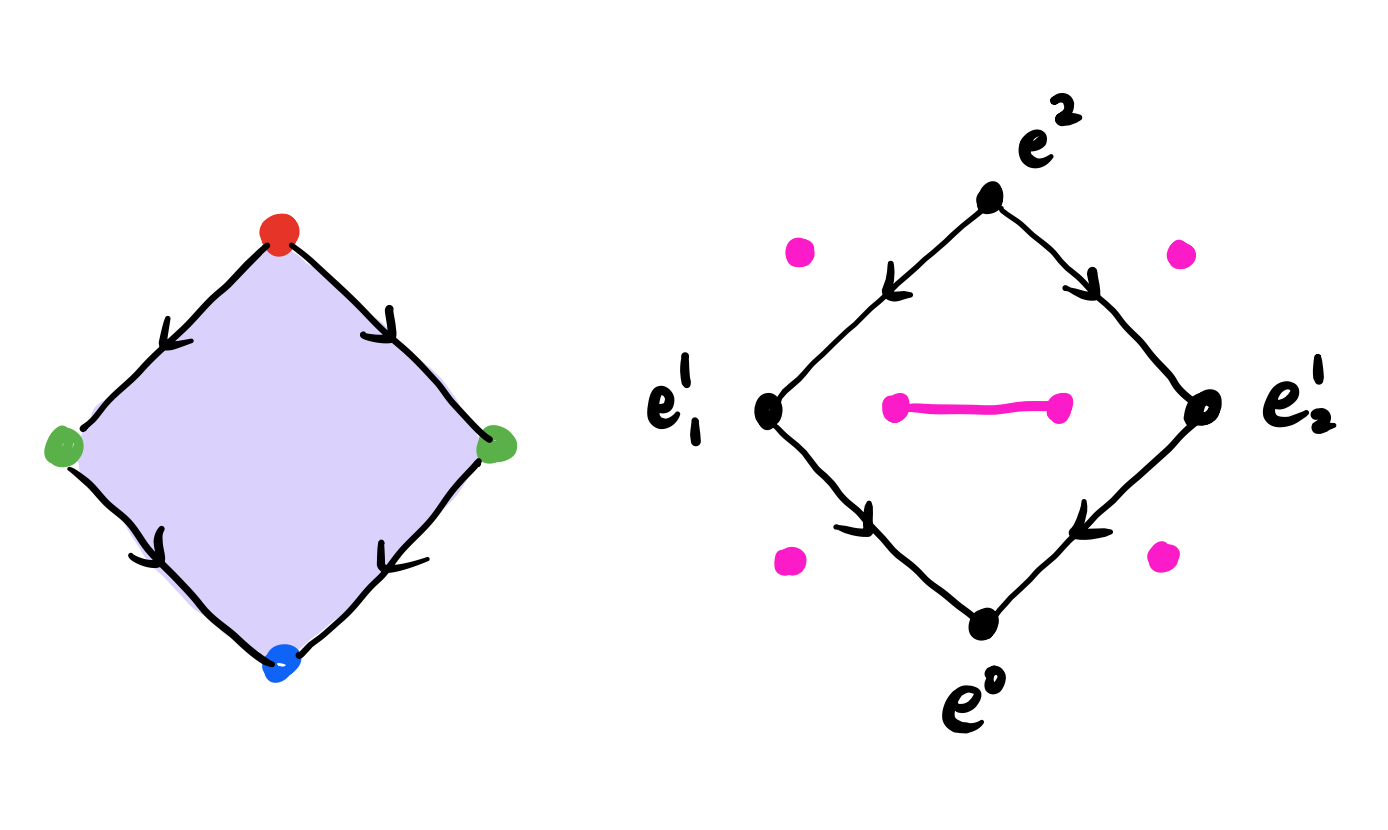}
    \end{center}  

The compactification of this moduli space of flows includes the flows where the skier skis from the red point to a green point, pauses (for eternity) and then skis to the blue point.

We write $\moduli(e^2, e^0)$ to represent the (compactified) moduli space of flows from $e^2$ to $e^0$, the pink interval.
Notice that the flows all together form a 2D cube.

Now let $f_3(x_1, x_2, x_3) = f_1(x_1) + f_1(x_2) + f_1(x_3)$. 
To visualize the graph of this function, imagine swimming in a pool, where the water temperature varies based on location. 
There is a local maximum in temperature at $(1,1,1)$ and a local minimum in temperature at $(0,0,0)$. At $(1,0,0)$, traveling away along the $x_1$ dimension means you'll feel colder; along the $x_2$ and $x_3$ dimensions, you'll feel warmer. 

The Morse flows form a 3D cube:

\begin{center}
    \includegraphics[width=2in]{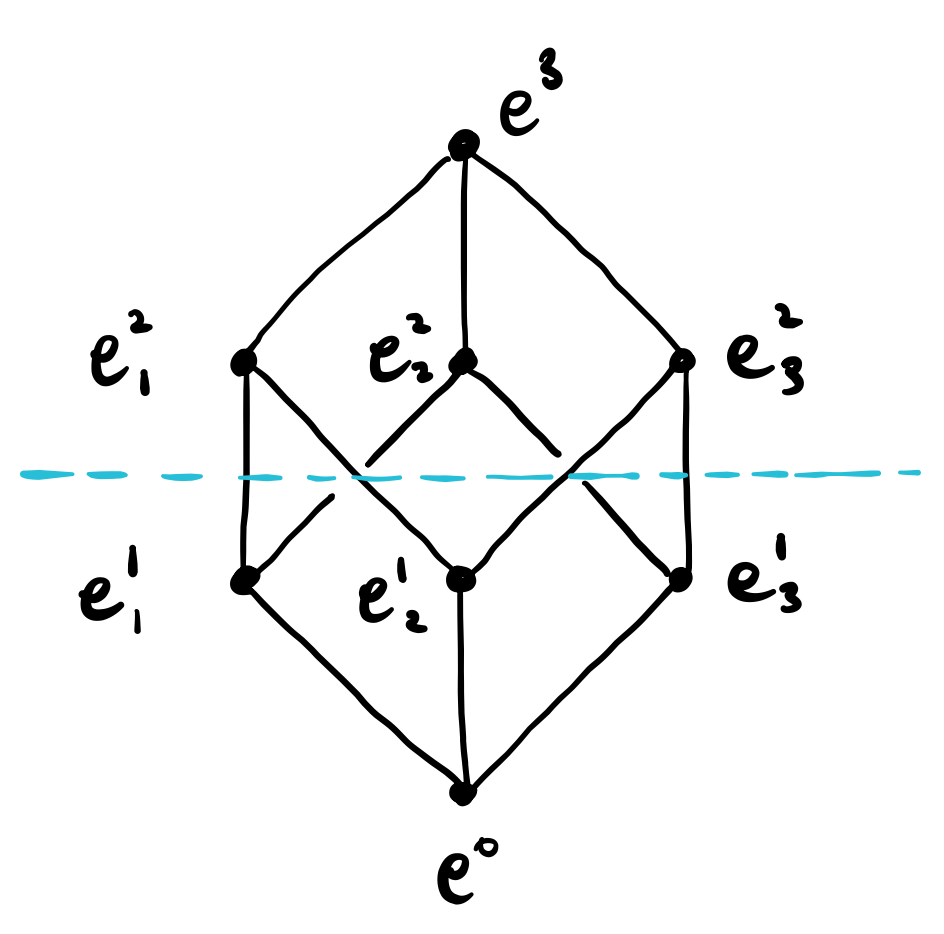}
\end{center} 

The moduli spaces of index 1 and 2 flows are dots and intervals, respectively. The moduli space $\moduli(e^3, e^0)$ of flows from the unique maximum to the unique minimum is a hexagon, which you can see by taking a cross-section of all the flows along the dotted line shown. 

The boundary of this hexagon is \emph{stratified}, i.e.\ decomposed into submanifolds of various dimensions. Here is the most natural stratefication:

\begin{center}
    \includegraphics[width=4in]{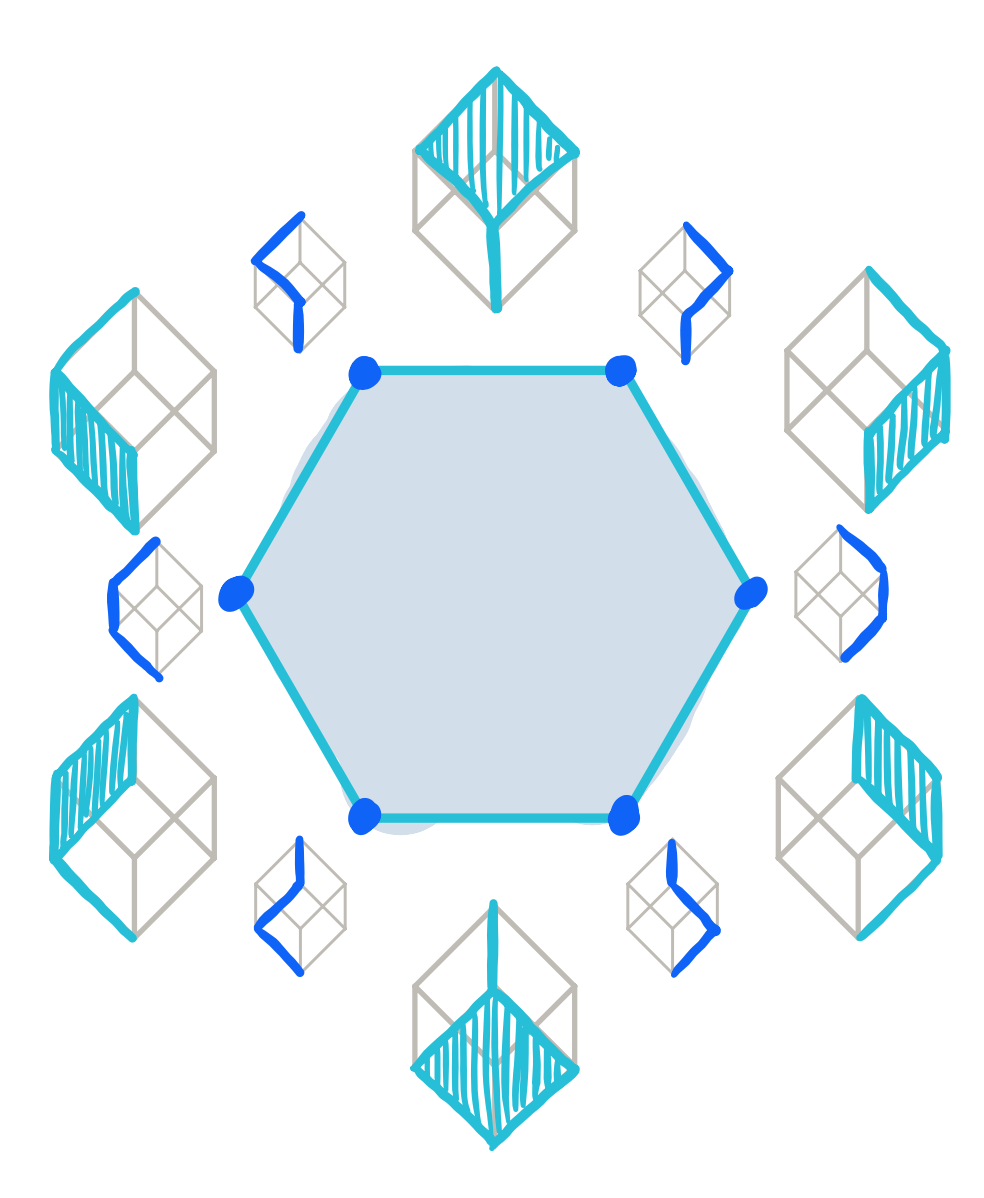}
\end{center}  
\note{This is most natural because, if you wanted to build this hexagon out of gumdrops, dried spaghetti, and cloth, you would build it this way.}

\begin{itemize}
    \item The 6 vertices are \emph{codimension 2 strata}. These correspond to \emph{twice broken} flows.
    \item The 6 edges are \emph{codimension 1 strata}. These correspond to \emph{once broken} flows. 
    These come in two flavors: 
    \begin{itemize}
        \item index 2 flow followed by index 1 flow, i.e.\ a composition
        \[
            \moduli(e^1_i, e^0) \times \moduli(e^3, e^1_i) \map{\circ} \partial \moduli(e^3, e^0)
        \]
        
        \item index 1 flow followed by index 2 flow, i.e.\ a composition
        \[
            \moduli(e^2_i, e^0) \times \moduli(e^3, e^2_i) \map{\circ} \partial \moduli(e^3, e^0)
        \] 
    \end{itemize}
\end{itemize}

The codimension 1 strata of the boundary are going to play a very important role in determining how the rest of the boundary looks.
We will think of the flavors of codimension 1 strata as colors of paint that we will use to paint the boundary of a $\langle n \rangle$-manifold,  which we define in the following section.

In general, the function 
\begin{equation}
\label{eq:fn-Morse-function-for-cube}
    f_n(x_1, \ldots, x_n) = \sum_{i=1}^n f_1(x_i)
\end{equation}
is a Morse function for $\R^n$. 
\begin{itemize}
    \item The flows between critical points glue together to form a cube of dimension $n$. 
    \item The moduli spaces that appear are all \emph{permutohedra}, which we will study more carefully soon. 
    \item The codimension $k$ strata correspond to $k$-times broken flows. 
\end{itemize}

The reason why we are so focused on this particular family of Morse functions is because the Khovanov chain complex lies over the $n$-dimensional cube, for a diagram with $n$ crossings. 
Lipshitz--Sarkar's \emph{Khovanov flow category}, which we will define in a few lectures, will be built by heavily relying on the \emph{cube flow category}, which is what we have been studying carefully. 

\subsubsection{$\langle n \rangle$-manifolds}

The moduli spaces of Morse flows will be very special kinds of manifolds with boundaries of all possible codimension, and that have a certain kind of combinatorial rigidity. 

\begin{definition}
A $k$-dimensional \emph{manifold with corners} locally looks like a neighborhood of $(\R_+)^k$, where $\R_+ = [0,\infty)$. 
\end{definition}

\begin{remark}
\begin{itemize}
    \item In other words, it can have boundary points belonging to strata of any codimension $c$ where $1 \leq c \leq k$. 
    \item This loose definition is meant to be analogous to how you would explain what a manifold is to a friend, without having to talk about charts and atlases.
    \item This teardrop-shaped disk is a 2-dimensional manifold with corners:
        \begin{center}
            \includegraphics[width=2in]{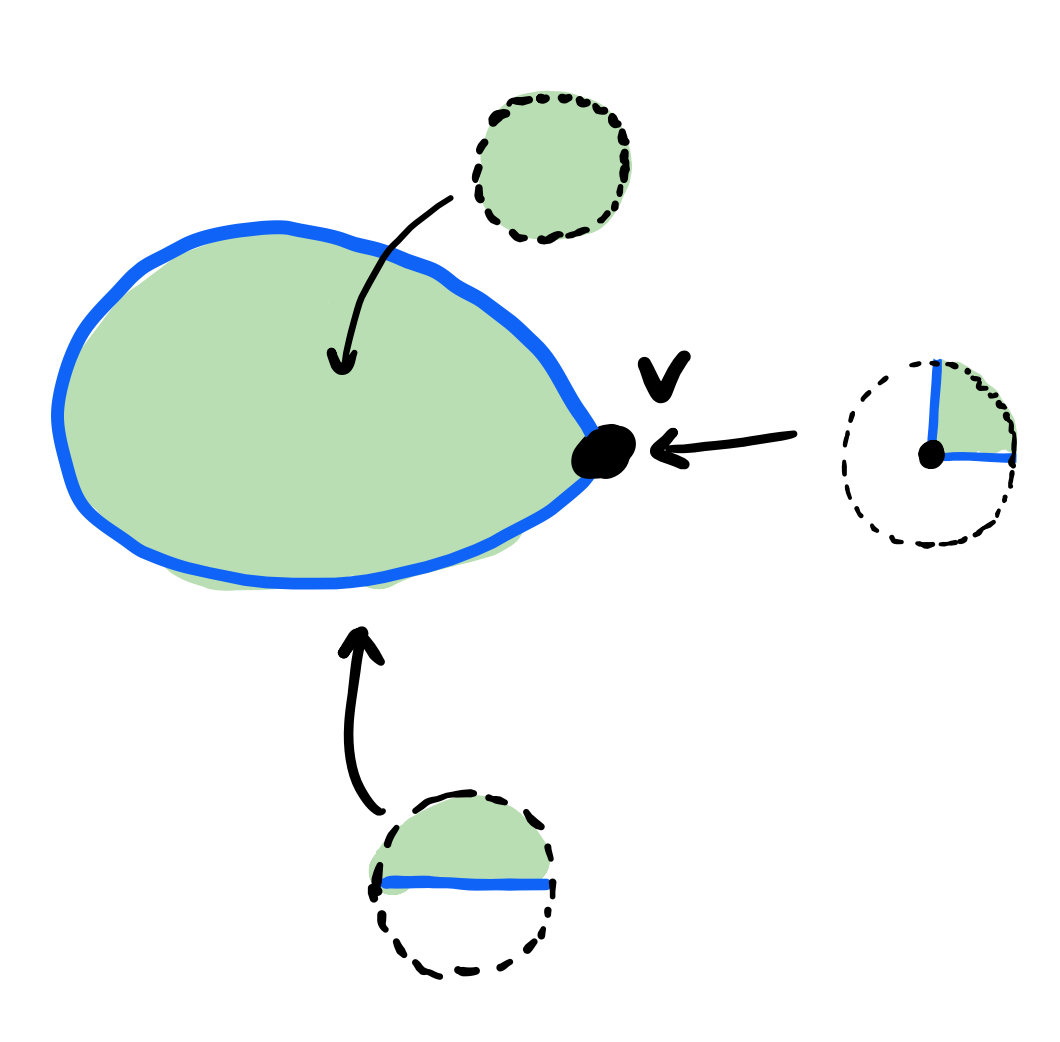} 
        \end{center}
\end{itemize}
\end{remark}

We will need a very precise definition of the term `face'.

\begin{definition}
Let $X$ be a manifold with corners. 
\begin{itemize}
    \item A \emph{connected face} is the closure of a connected component of the codimension-1 boundary of $X$.
    
    \note{For example, the 6 teal arcs in the stratified hexagon \eqref{eq:stratefied-hexagon} are, individually, connected faces of the hexagon.}
    \item A \emph{face} of $X$ is a possibly empty union of \textbf{disjoint} connected faces of $X$. 
    
    \note{For example, the pink subspace below is a face of the hexagon, but purple subspace below is \emph{not} a face of the hexagon:}
        \begin{center}
            \includegraphics[width=3in]{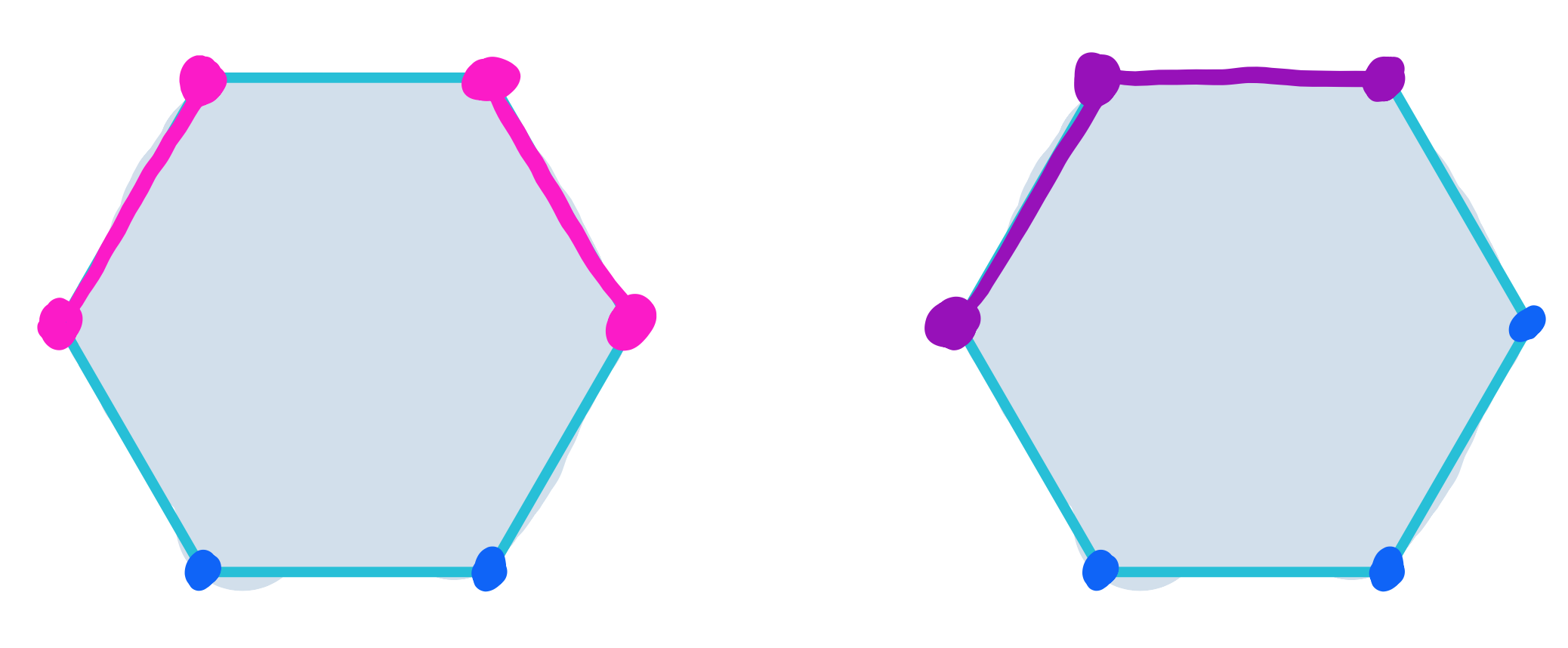}
        \end{center}
\end{itemize}
\end{definition}

I have been asking you to vibe out the `obvious' stratefication of the boundary of a manifold with corners.
To be more precise, we can describe the strata as the connected components of codimension $i$ boundary, with $i$ ranging from $1$ to $k$. 

For each $x$ in a manifold with corners $X$, let $c(x)$ denote the codimension of the stratum that $x$ lives in. (If $x$ is in the interior of $X$, let $c(x) = 0$.)

\begin{definition}
A $k$-dimensional \emph{manifold with faces}
is a $k$-dimensional manifold with corners such that, 
for each $x \in X$, 
$x$ belongs to exactly $c(x)$ different connected faces of $X$. 
\end{definition}

\begin{remark}
\begin{enumerate}
    \item The teardrop example from before is \emph{not} a manifold with corners because the vertex $v$ is adjacent to only one distinct edge, even though $c(v) = 2$. 
    \item This triangle is a manifold with faces:
        \begin{center}
            \includegraphics[width=2in]{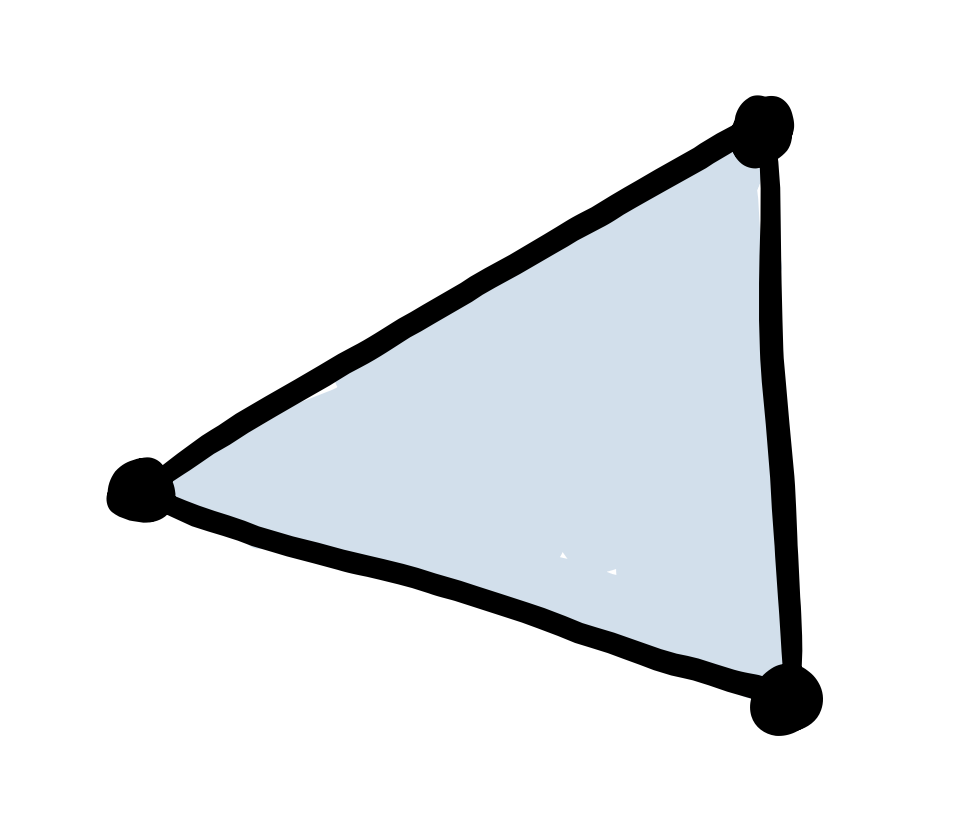}
        \end{center}
    \item Any face of a manifold with faces must necessarily also be a manifold with faces.
    \mz{explain why}
\end{enumerate}
\end{remark}

\begin{definition}
    A \emph{$k$-dimensional $\langle n \rangle$-manifold}
    is a $k$-dimensional manifold with faces 
    along with an ordered $n$-tuple of faces of $X$ where
    \begin{itemize}
        \item $\partial X = \bigcup_i \partial_i X$
        \item for $i\neq j$, $\partial_i X \cap \partial_j X$ is a face of both $\partial_i X$ and $\partial_j X$. 
    \end{itemize}
\end{definition}

\begin{remark}
    \begin{enumerate}
        \item We can think of each $i \in \{1,\ldots, n\}$ as a color of paint. 
        We paint a face of $X$ with each color, so that by the time we're done painting with color $n$, we have covered the entire boundary of $X$ with paint.
        \item A point $x \in \partial X$ will be covered with $c(x)$ different layers of paint, all of different colors, corresponding to the codimension-1 faces that it abuts. 
        \item For this reason, I verbally refer to $\langle n \rangle$-manifolds as `$n$-painted manifolds'.
        \item Notice that $k$ and $n$ are different and basically unrelated. 
        \item The triangle previously shown is cannot possibly be a 2-dimensional $\langle 2 \rangle$-manifold, but it can be a $\langle 3 \rangle$-manifold:
        \begin{center}
            \includegraphics[width=2in]{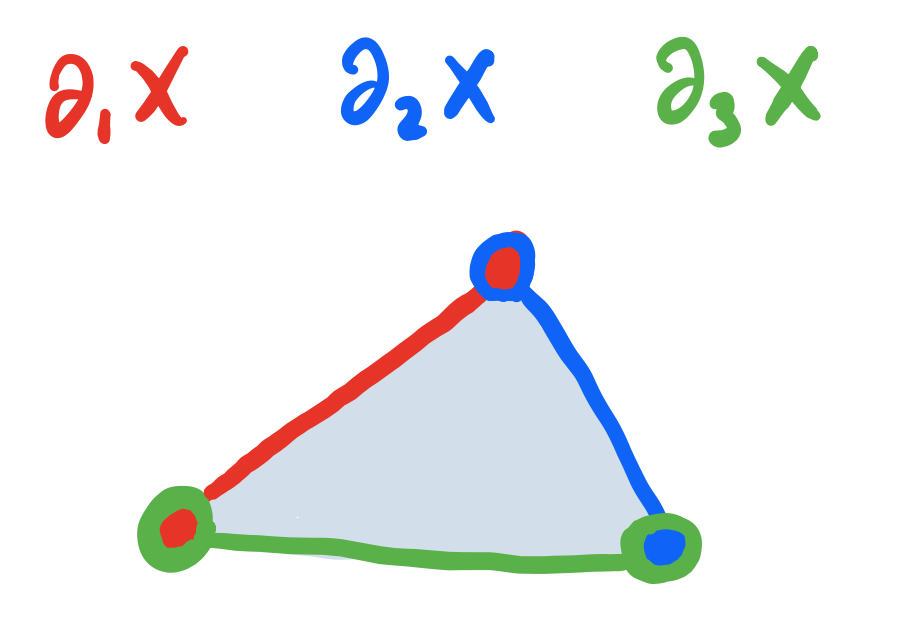}
        \end{center}
        Because the colors of the corners (boundary of codimension $\geq 2$) are determined by the colors painted on the codimension-1 boundary, it suffices to draw the figure below:
        \begin{center}
            \includegraphics[width=2in]{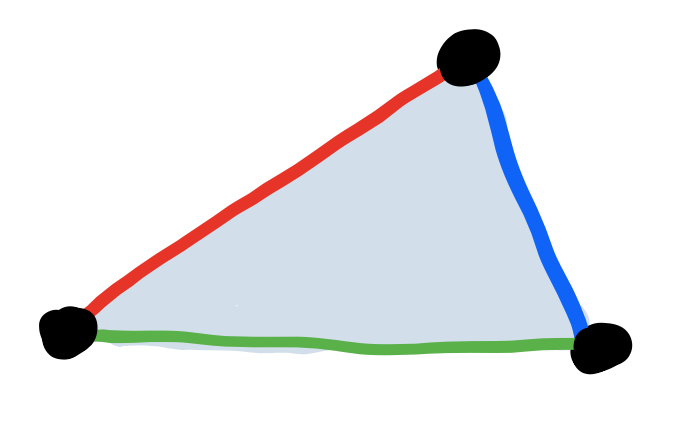}
        \end{center}
        
    \end{enumerate}
\end{remark}

\begin{example}(Permutohedra)

The \emph{permutohedron} $P_{n+1}$ is the most important example of an $n$-dimensional $\langle n \rangle$-manifold, for our purposes. 

\begin{itemize}
    \item As a space, $P_{n+1}$ is the convex hull in $\R^{n+1}$ of the $(n+1)!$ points 
    \[
        \{ ( \sigma(1), \ldots, \sigma(n+1)) \st \sigma \in S_{n+1} \}
    \]
    where $S_{n+1}$ is the symmetric group on the letters $[n+1] := \{1, 2, \ldots, n+1\}$. 
    \item The connected faces of $P_{n+1}$ are in bijection with proper subsets $\emptyset \neq S \subsetneq [n+1]$; 
    for a proper subset $S$ of cardinality $k$,
    the corresponding face $F_S \subset \partial P_{n+1}$ is the intersection of $P_{n+1}$ with the plane 
    \[
        \sum_{i \in S} x_i = \frac{k(k+1)}{2}.
    \]
    \item The $\langle n \rangle$-manifold structure is given by 
    \[
        \partial_i P_{n+1} = \bigcup_{|S|=i} F_S.
    \]
    \item Here is the hexagon $P_3$ again \footnote{Thanks to Josh Turner for spotting and fixing a previous error in this example!}:
        \begin{center}
            \includegraphics[width=4in]{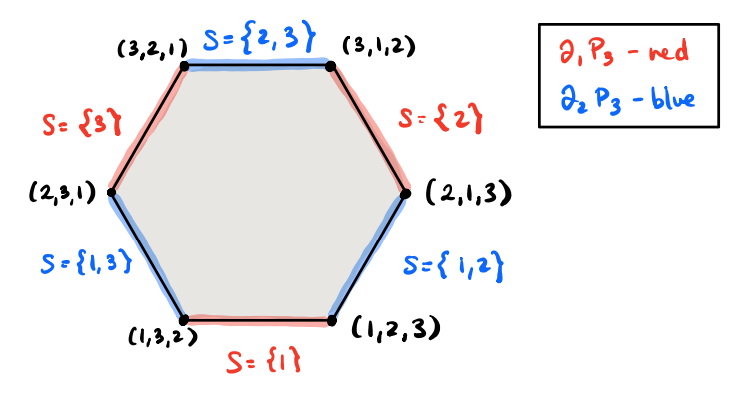}
        \end{center}
        Notice that the two `flavors' of once-broken flows on the boundary of $\moduli(e^3,e^0)$ exactly correspond to the faces $\partial_1 P_3$ and $\partial_2 P_3$. 
\end{itemize}

\end{example}

\subsubsection{Flow categories}

We are now ready to look at the definition of a \emph{flow category}. If you keep in mind the Morse flows picture, all of these conditions should feel quite natural.

\mz{Note that I am using different variables names from the source. Sorry.}

\begin{definition}[\cite{LS-stable-homotopy-type}, Definition 3.13 ] 
A \emph{flow category} is a category $\SC$ consisting of the following data:
\begin{itemize}
    \item finitely many objects $\Ob(\SC)$
    \item a $\Z$-grading on objects $\gr : \Ob(\SC) \to \Z$ 
    \item morphisms between objects $\Hom(x,y)$
\end{itemize}
subject to the following conditions:

\begin{enumerate}

\item $\Hom(x,x) = \{ \id_x \}$ for all $x \in \Ob(\SC)$. \note{This is so that we have a category. Note that $\moduli(x,x) = \emptyset$.}

\item For distinct objects $x \neq y$, $\Hom(x,y)$ is a compact $(\gr(x) - \gr(y) -1)$-dimensional $\langle \gr(x) - \gr(y) -1 \rangle$-manifold. 
This space is denoted by $\moduli(x,y)$.

\item Let $x,y,z \in \Ob(\SC)$ be distinct, with $\gr(x) \geq \gr(y) \geq \gr(z)$, with 
\begin{align*}
    \gr(x) - \gr(y) = \ell  \\
    \gr(y) - \gr(z) = m \\
    \gr(x) - \gr(z) = n = \ell + m.
\end{align*}
\begin{center}
    \begin{tikzcd}
        x \arrow{d}{\text{index } \ell} \arrow[bend right]{dd}[swap]{\text{index }{n}}\\
        y \arrow {d}{\text{index } m}\\
        z
    \end{tikzcd}
\end{center}

\begin{itemize}
    \item The composition map 
        \[
            \circ: \Hom(y,z) \times \Hom(x,y) \to \Hom(x,z) 
        \]
        is an embedding into $\partial_m \Hom(x,z)$.
        \note{Recall this is the face of $\Hom(x,z)$ painted with the color $m$.}
    \item For $i< m$, the preimage of $\partial_i\Hom(x,z)$ 
    \note{under the particular composition map we are looking at here} is
    \[
        \circ\inv (\partial_i\Hom(x,z)) = \partial_i \Hom(y,z) \times \Hom(x,y).
    \]
    \item For $j > m$, the preimage of $\partial_j \Hom(x,z)$ \note{under the particular composition map we are looking at here} is
    \[
        \circ\inv (\partial_i\Hom(x,z)) = \Hom(y,z) \times \partial_{j-m}\Hom(x,y).
    \]

\end{itemize}

\item 
If you run over all the $y$ with grading in between that of $x$ and $z$, you will be able to piece together the whole $\partial \Hom(x,z) = \partial \moduli(x,z)$:
\[
    \partial_m \Hom(x,z) \cong \coprod_{y \st \gr(y) = \gr(z) + m} \Hom(y,z) \times \Hom(x,y).
\]
\note{Here $\cong$ means `diffeomorphism'. }

\end{enumerate}

\end{definition}

\begin{example}
    For a Morse function $f: X \to \R$, we get the \emph{Morse flow category} for $f$, exactly the way we have been motivating the definition of `flow category'. 
\end{example}

\subsubsection{Framing the flow categories}

In this section we give a sense of what it means to upgrade a flow category to a \emph{framed flow category}. 

A `framing' typically refers to a continuous choice of orthogonal basis (a `frame') for the fibers of a bundle. 
When we talk about a framing for a knot $K$, for instance, we pick a choice of longitude $\lambda$ for the solid torus  $\nu (K)$; this gives a continuous choice of orthogonal bases for the normal bundle of the knot in $S^3$:

\begin{center}
    \includegraphics[width=2.5in]{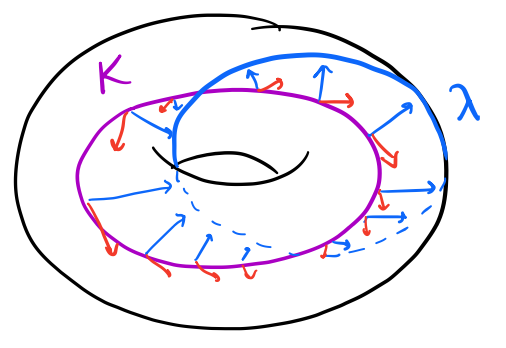}
\end{center}

Our goal is to provide framings for the moduli spaces in a flow category, in such a way that they can be glued together nicely. 
To do this, we embed the moduli spaces into some high dimensional Euclidean spaces and pick framings for their normal bundles there.

First, recall that for finite dimensional smooth manifolds $X$, we have the \emph{Whitney embedding theorem}, which guarantees that we can find some large $N$ such that there exists an embedding $X \into \R^N$.

In a way, this theorem tells us that Euclidean space $\R^d$ are the 'model smooth manifolds'. We already knew that, of course, but this raises the question,
\begin{center}
    ``what is a model $\langle n \rangle$-manifold?''
\end{center}

We already saw that manifolds with corners are modeled after $\R_+^d$, so we will certainly need at least that much information. 
The important difference between $\langle n \rangle$ manifolds and run-of-the-mill manifolds with corners is the painting of the faces; so, we need model spaces that are also $\langle n \rangle$-manifolds, where it is very clear how to paint the faces with $n$ colors.

\begin{definition}
    Let $\vec d = (d_0, d_1, \ldots, d_n) \in \N^{n+1}$ \note{(here $\N = \N \cup \{0\}$)}, and let
    \[
        \BE_n^{\vec d} := 
        \R^{d_0} \times 
        \left ( \R_+ \times \R^{d_1} \right ) \times 
        \left ( \R_+ \times \R^{d_2} \right ) \times 
        \cdots \times 
        \left ( \R_+ \times \R^{d_n} \right ). 
    \]
    \note{Note that there is a copy of $\R_+$ for each $i \in \{1, \ldots, n\}$. The boundaries of these give us $n$ different faces.}
    The $\langle n \rangle$-manifold structure is given by 
    \[
        \partial_i(\BE_n^{\vec d}) = 
        \R^{d_0} \times 
        \left ( \R_+ \times \R^{d_1} \right ) \times 
        \cdots \times
        \left ( {\color{red} \{0\}} \times \R^{d_i} \right ) \times 
        \cdots \times 
        \left ( \R_+ \times \R^{d_n} \right ).
    \]
\end{definition}

The appropriate, structure-preserving notion for an embedding of $\langle n \rangle$-manifolds, for our framing purposes, is the following:

\begin{definition}
\label{def:neat-embedding}
    A \emph{neat embedding} $\iota$ of an $\langle n \rangle$-manifold is a smooth embedding 
    \[
        \iota: X \into \BE^{\vec d}_n
    \]
    for some $\vec d \in \N^{n+1}$, where
    \begin{itemize}
        \item $\iota\inv(\partial_i \BE^{\vec d}_n) = \partial_iX$ for all $i$ \note{(respecting the $\langle n \rangle$-manifold structure)} and 
        \item whenever a stratum of $X$ abuts a lower-dimensional (higher-codimensional) stratum of $\BE^{\vec d}_n$, it does so perpendicularly.
        \note{This is the \emph{neat} part; see Remark \ref{rmk:neat-perpendicular} for a more precise definition.}
    \end{itemize}
\end{definition}

\begin{example}
    The triangle, as a 2-dimensional $\langle 3 \rangle$-manifold, neatly embeds in $\BE_3^{(0,0,0,0)} = (\R_+)^3$ as an octant of the unit 2-sphere.
\end{example}

\begin{remark}
\label{rmk:neat-perpendicular}
Lawson--Lipshitz--Sarkar note that a $\langle n \rangle$-manifold can be viewed as a functor $\cubecat^n \to \Topcat$ by
\[
    X(u) = 
    \begin{cases}
        X & \text{if }u = \overline 1 \\
        \bigcap_{i \in \{ i \st a_i = 0\}} \partial_i X & \text{otherwise}.
    \end{cases}
\]
For example, here is the hexagon as a cube-shaped diagram of topological spaces:
\begin{center}
    \includegraphics[width=3in]{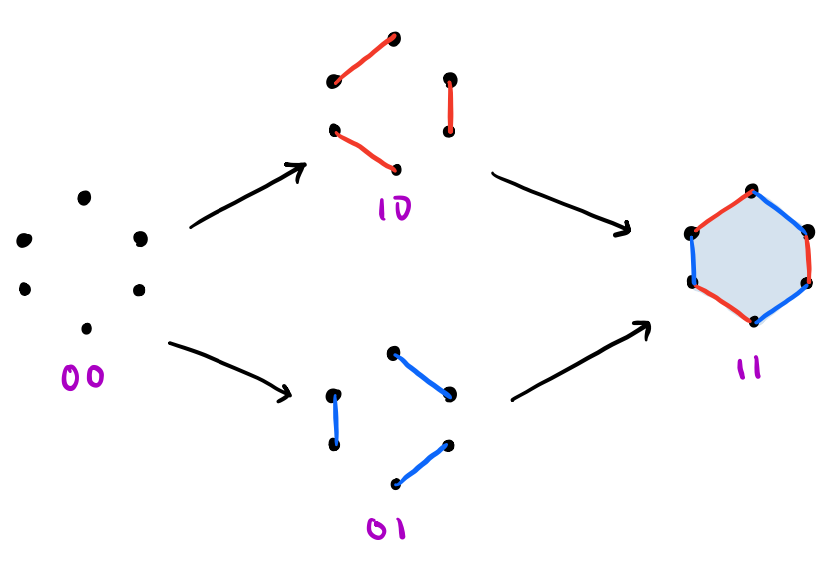}
\end{center}

The second requirement in Definition \ref{def:neat-embedding} becomes:
\begin{quote}
    For all $u < v$ in $\{0,1\}^n$, 
    $X(v) \perp \BE_n^{\vec d}(u)$. 
\end{quote}

\end{remark}

There is an analogue to the Whitney embedding theorem for $\langle n \rangle$-manifolds, which we will paraphrase loosely:

\begin{fact}[see Lemma 3.11 of \cite{LS-stable-homotopy-type} for more detail]
Let $X$ be a $\langle n \rangle$-manifold. 
Given a neat embedding $\iota$ of $\partial X \into \BE_n^{\vec d}$, there exists a neat embedding of $X \into \BE_n^{\vec d'}$ for some $\vec d' \geq \vec d$, induced by $\iota$.
\note{Here, $\vec d' \geq \vec d$ means $d_i' \geq d_i$ for all $i$.}
\end{fact}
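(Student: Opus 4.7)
My plan is to proceed by a collar-extension argument combined with the standard Whitney embedding theorem, being careful to respect the $\langle n \rangle$-structure at every stratum. The fact to keep in mind is that the hypothesis provides us with a neat embedding of $\partial X$, and ``neatness'' is a condition that only sees a small neighborhood of the boundary of $\BE_n^{\vec d}$; the interior of $X$ can be embedded freely as long as we allow $d_0$ (the ``free'' $\R^{d_0}$-factor) to grow.

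First I would establish a compatible collar neighborhood theorem for $\langle n \rangle$-manifolds. That is, I want a neighborhood $U$ of $\partial X$ in $X$ together with a diffeomorphism $U \cong \partial X \times [0,\epsilon)$ (suitably interpreted) that respects the entire cube diagram of faces $X: \cubecat^n \to \Topcat$ from Remark \ref{rmk:neat-perpendicular}. This is the well-known fact that a $\langle n \rangle$-manifold admits collars that are compatible across all strata; it can be proved by induction on $n$ or on the depth of corners, using partitions of unity adapted to the face structure. Once we have such a compatible collar, the neat embedding $\iota: \partial X \into \BE_n^{\vec d}$ extends canonically to a neat embedding of the collar $\partial X \times [0,\epsilon) \into \BE_n^{\vec d}$ by pushing perpendicularly into the interior of $\BE_n^{\vec d}$ (that is, along an outward-pointing direction in the $\R^{d_0}$-factor). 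Perpendicularity at every stratum is automatic from the collar's compatibility with the face structure, so this extended embedding remains neat.

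Next I would extend across the rest of $X$. Pick a smooth cutoff function $\rho: X \to [0,1]$ which is $1$ on a smaller collar $\partial X \times [0, \epsilon/2)$ and $0$ outside the collar. On the interior portion $X \setminus \partial X$, apply the classical Whitney embedding theorem to obtain a smooth embedding $j: X \setminus \partial X \into \R^N$ for $N$ sufficiently large. Then define
\[
    \Phi(x) = \bigl(\, \rho(x)\cdot \tilde\iota(x),\ (1-\rho(x))\cdot j(x)\,\bigr) \in \BE_n^{\vec d} \times \R^N \subseteq \BE_n^{\vec{d'}},
\]
where $\tilde\iota$ is the collar-extended embedding and $\vec{d'} = (d_0+N, d_1, \ldots, d_n)$. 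A short check shows $\Phi$ is injective (the $\rho$ and $1-\rho$ factors distinguish points in different regions, and $\tilde\iota$, $j$ are each injective on their respective domains). After a small generic perturbation in the $\R^N$-coordinates to remove any self-intersections introduced in the overlap region (again by Whitney-style transversality), $\Phi$ is a neat embedding into $\BE_n^{\vec{d'}}$.

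The step I expect to be the main obstacle is the compatible collar theorem: constructing a collar of $\partial X$ that simultaneously restricts to collars on every face $\partial_S X = \bigcap_{i \in S}\partial_i X$ in a coherent way. Naively iterating collars of collars will produce diffeomorphisms that fail to commute on overlaps, so one has to inductively choose collars by gluing via a partition of unity subordinate to a cover by face-neighborhoods, or equivalently reinterpret $X$ as a diagram over $\cubecat^n$ and build a levelwise-compatible collar using the inductive structure of the cube. Once this collaring is in hand, the rest of the argument is a routine Whitney-plus-cutoff construction, and neatness of the final embedding follows from the fact that near $\partial_i\BE_n^{\vec{d'}}$ the map $\Phi$ agrees with $\tilde\iota$, which is neat by construction.
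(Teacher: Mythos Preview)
The paper does not actually give its own proof of this statement: it is recorded as a \emph{Fact} with a pointer to Lemma~3.11 of \cite{LS-stable-homotopy-type}, and no argument appears in the notes. So there is no in-paper proof to compare your proposal against.

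That said, your overall strategy (compatible collar $+$ Whitney embedding $+$ cutoff) is the standard one and is essentially what Lipshitz--Sarkar do. One point in your sketch is not quite right and would need to be fixed. You propose to extend $\iota$ to a collar by pushing ``along an outward-pointing direction in the $\R^{d_0}$-factor.'' But $\partial X = \bigcup_i \partial_i X$, and the given embedding sends $\partial_i X$ into $\partial_i \BE_n^{\vec d}$, where it is the $i$-th $\R_+$-coordinate that is zero. To move off of $\partial_i X$ into the interior you must push in the positive direction of \emph{that} $\R_+$-factor, not in the $\R^{d_0}$-factor; and at a corner $\partial_i X \cap \partial_j X$ you must push simultaneously in both the $i$-th and $j$-th $\R_+$-directions. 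Getting these pushes to be coherent across all strata is exactly the content of the ``compatible collar'' step you flagged as the main obstacle, and it is indeed where the work lies. Your cutoff-function formula $\Phi = (\rho\cdot\tilde\iota,\ (1-\rho)\cdot j)$ also needs a little care (as written, $\tilde\iota$ is undefined off the collar and $j$ is undefined on $\partial X$; injectivity in the transition region is not automatic), but these are routine fixes once the collar is in hand.
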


Note that, since we are quite capable of neatly embedding finite sets of points, by induction we know that every $\langle n \rangle$-manifold can be neatly embedded.

We now return back to framing flow categories, and give very rough definitions.

A \emph{neat embedding} of a flow category $\SC$  is a collection of neat embeddings for all the moduli spaces in $\SC$, in a coherent way so that composition of moduli spaces is respected. (See Definition 3.16 of \cite{LS-stable-homotopy-type} for more detail.)
\begin{itemize}
    \item To make sure all the embeddings are in the same spatial universe (in the non-technical sense), we have to first pick a bi-infinite sequence $\vec d: \Z \to \N$. \note{Don't worry, we'll only use a finite subsequence of this, because $\SC$ only has finitely many objects.}
    \item Then, let $\moduli(i,j)$ be the union of all the moduli spaces $\moduli(x,y)$ where $\gr(x) = i$ and $\gr(y) = j$; neatly embed $\moduli(i,j)$ into pieces edgy spaces of the form $\BE_{i-j-1}^{(d_j, d_{j+1}, \ldots, d_{i-1})}$. Call these embeddings $\iota_{i,j}$.
    \item If $p \in \moduli(x,y)$ and $q \in \moduli(y,z)$, then the composition condition requires that 
    \[
        \iota_{x,z}(q \circ p) = (\iota_{y,z}(q), 0, \iota_{x,y}(p)).
    \]
    \note{The most important thing to note here is that $\iota_{x,y}$ and $\iota_{y,z}$ determine $\iota_{x,z}(q \circ p)$.}
\end{itemize}

Observe that at this point, we haven't really actually picked a \emph{framing} for anything. However, we do have moduli spaces embedded in Euclidean spaces, with normal bundles ready to be framed. 

We can now give a rough definition of framed flow categories:

\begin{definition}
    A \emph{framed flow category} is a neatly embedded flow category $\SC$ \textbf{plus} a \emph{coherent} framing of the normal bundles. 
\end{definition}

The word `coherent' is pulling a lot of weight here. This is the point at which we make sure we'll actually be able to glue our moduli spaces together to make a nice space. 
The main requirement is that the product framing of the normal bundles of two embeddings of moduli spaces
$\nu(\iota_{y,z}) \times \nu(\iota_{x,y})$
equals the pullback framing of $\circ \inv \nu(\iota_{x,z})$ induced by the composition map.

Finally, Lipshitz--Sarkar give explicit instructions for how to take the data of a framed flow category and transform them into a CW-complex (of some dimension). 
See Figures 3.3 and 3.4 of \cite{LS-stable-homotopy-type} to get a sense of how to do this.
\note{We talked about these pictures in class, but I will not be including my commentary in these notes.}

\subsection{A framed flow category for Khovanov homology}

Let $L$ be an oriented link, and let $D$ denote a diagram for $L$ with $n$ crossings.
Lipshitz--Sarkar build the framed flow category for Khovanov homology as follows:
\begin{enumerate}
    \item Frame the \emph{$n$-dimensional cube flow category} $\SC_C(n)$.
    \item Build a Khovanov flow category $\SC_K(D)$ with moduli spaces corresponding to \emph{decorated resolution configurations} for $D$.
    \item Make sure that the moduli spaces of $\SC_K(D)$ are all \emph{trivial} covers of the corresponding moduli spaces in $\SC_C(n)$.
    \item Conclude that framings can be lifted from $\SC_C(n)$ to $\SC_K(D)$, thereby boosting $\SC_K(D)$ to a \emph{framed} flow category.
\end{enumerate}

There is a lot to say and prove about all of these steps, but we will give an overview of the key philosophical ideas in each step (as opposed to the key technical ideas). 

\subsubsection{Framing the cube flow category: obstruction theory}

Let's first talk a bit about the idea behind \emph{obstruction theory}.

Consider the unknot $U \subset S^3$. 
A framing $\lambda$ of $U$ is a continuous choice of frame (orthonormal basis) for the fibers of the normal bundle. 

\begin{center}
    \includegraphics[width=1.5in]{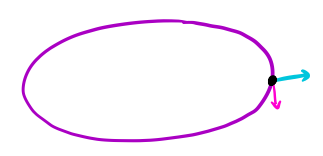}
\end{center}

This is equivalent to an integral choice of longitude on the torus $\partial \nu(U)$,  the boundary of a tubular neighborhood of $U$:

\begin{center}
    \includegraphics[width=2.5in]{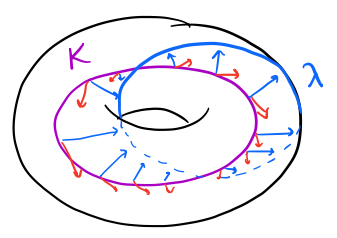}
\end{center}

Here are two different framings for $U$. 

\begin{center}
    \includegraphics[width=4in]{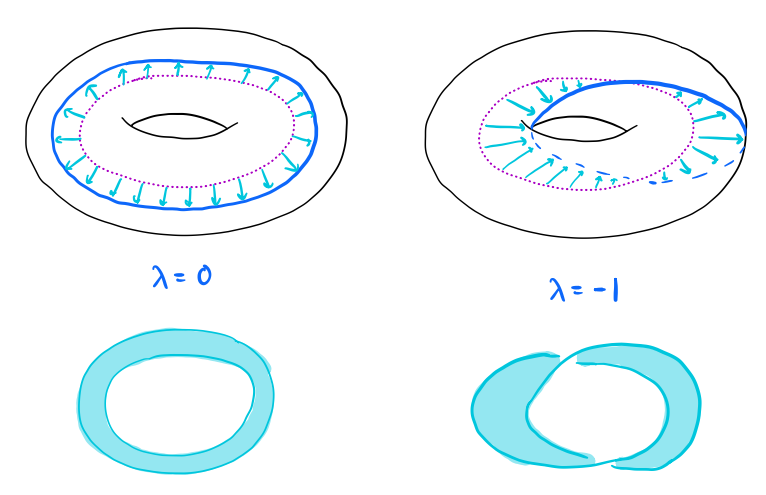}
\end{center}

Notice that the $0$-framed annulus can be extended to an embedded disk in $S^3$ bound by $U$, where as the $(-1)$-framed annulus cannot. 

In this sense, the value $\lambda \in \Z$, when nonzero, provides an \emph{obstruction} to extending the framed annulus to a disk.

Pictorially, we can think of $\lambda$ as a longitude. 
But another way to think about $\lambda$ is as an element of $\pi_1(O(2))$, as follows.
The columns of a matrix in $O(2)$ give an orthogonal basis for the fiber of the normal bundle over a point $p \in U$. 
Moving $p$ all the way around $U$ is a loop in $O(2)$, which corresponds to an element of $\pi_1(O(2))$. 
\note{You may convince yourself why we only care about these loops up to homotopy.}

In order to frame the cube flow category $\SC_C(n)$, Lipshitz--Sarkar first prove that there exists a coherent set of orientations for the moduli spaces 
\note{(we didn't really talk much about this)}. 
Then, suppose we are given a coherent orientation and a neat embedding of the $\SC_C(n)$ (both of which we know exist), we wish to frame the moduli spaces, starting from the 0-dimensional ones, whose framings are given by the orientations chosen, and then building up to framings for the moduli spaces of higher dimension --- if there is no obstruction. 

Suppose you already have a coherent framing of all the moduli spaces of dimension $< k$.
This data produces an obstruction class 
\[
    \mathfrak{o} \in C^{k+1}(\CC(n), \pi_{k-1}(\mathbf{O})).
\]
\begin{itemize}
    \item Here, $\CC(n)$ is `the cube' of dimension $n$, i.e.\ the CW-complex you get from gluing all the flow lines between critical points of $f_n(x_1, \ldots, x_n)$ (from \eqref{eq:fn-Morse-function-for-cube}). 
    \item $\mathbf{O} = \colim_{k \to \infty} O(k)$, the colimit of all the orthogonal groups of finite dimension, related by inclusion maps.    
\end{itemize}
While we will not discuss how the obstruction class $\mathfrak{o}$ is obtained, but we can still get an idea of why $\mathfrak{o}$ makes sense as the obstruction to extending the framing to the $k$-dimensional moduli spaces. 
\note{Recall that all our moduli spaces are permutahedra, which are in particular homeomorphic to balls of various dimensions.}
\begin{itemize}
    \item A (component of a) $k$-dimensional moduli space corresponds to a $k+1$-dimensional cell in the CW decomposition of $\CC(n)$ given by the spaces of flow lines.
    \item A framing of a $k$-dimensional moduli space $\moduli(x,y)$ is a choice of an orthogonal basis for the fiber of the normal bundle above every point.
    \note{So, depending on how high-dimensional the space you're embedded in is, your normal bundle is some rank, and you'd be working with that many vectors in your orthogonal bases.}
    \item For the framing along the boundary $\partial \moduli(x,y) \cong S^{k-1}$ to extend to a framing of $\moduli(x,y)$,  we need the boundary framing to `not be twisted,' or more precisely, for the map $S^k \to \mathbf{O}$ to be nullhomotopic.
\end{itemize}

Lipshitz--Sarkar show that indeed, the cochain $\mathfrak{o}$ is in fact a coboundary, and modifications can be made so that the cochain itself vanishes, so that there is no obstruction.

\subsubsection{Resolution configurations, ladybug configuration}
\label{sec:ladybug-configuration}

In the Khovanov flow category, the objects are in correspondence with the distinguished generators of the Khovanov chain complex.

\begin{remark}
    This time, I really do mean the \emph{chain} complex, rather than the technically cohomologically graded version we've been discussing all quarter. 
    But, since we are just getting a sense of the Khovanov stable homotopy type construction, I will not spend time worrying about things like whether we should be using $\cubecat$ or $\cubecat^{op}$. 
    Perhaps take this as a \alert{warning} that I am not being careful about these conventions here.
\end{remark}

The 0-dimensional moduli spaces are in correspondence with the components of the differential between distinguished generators (the `arrows'). 

If we think of the `dots and arrows' picture of the Khovanov chain complex as a direct graph, then the 1-dimensional moduli spaces come from paths of length 2
(these are the analogues to once broken flow lines in the Morse picture). 
We capture these moduli spaces by drawing \emph{resolution configurations}.

We will now introduce some vocabulary words in context, via the following very important example.

Consider the cube of resolutions for this two-component unlink diagram:
\begin{center}
    \includegraphics[width=4in]{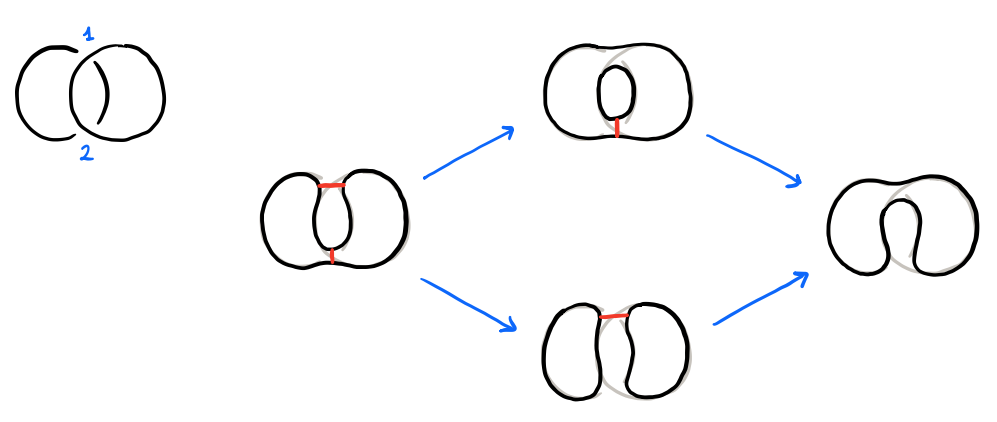}
\end{center}

\begin{itemize}
    \item At vertex $00$ is a complete resolution, along with two red arcs indicating that there are two crossings at which we have yet to switch the resolution to the 1 resolution. 
    \note{In other words, we have yet to \emph{surger} along the red arcs. 
    Yes, we are doing surgery on 1-manifolds.}
    \item At the $10$ and $01$ resolutions, we have already surgered along one red arc, but have yet to surger along the other. 
    \item At the $11$ resolution, all red arcs have been surgered, so there is nothing left to do. 
\end{itemize}

These complete-resolutions-with-red-arcs are called \emph{resolution configurations}. 
These diagrams are useful because they point out 'subcubes', in the sense that the $00$ resolution configuration in the above example can represent the whole 2-dimensional cube that you get from surgering the red arcs in the two different orders.

In general, a resolution configuration need not only describe a cube whose extremal vertex is the all-ones resolution. By omitting some red arcs, one can describe a subcube somewhere in the middle of the cube of resolutions. 
\note{In other words, the 2D cube of resolutions above could appear somewhere within a much bigger cube of resolutions for a much more complicated link diagram.}

A \emph{labeled resolution configuration} additionally specifies a particular distinguished generator. 
For example, for the unlink diagram above, at quantum grading $\gr_q = 0$ we see the following subcomplex:
\begin{center}
    \includegraphics[width=2.5in]{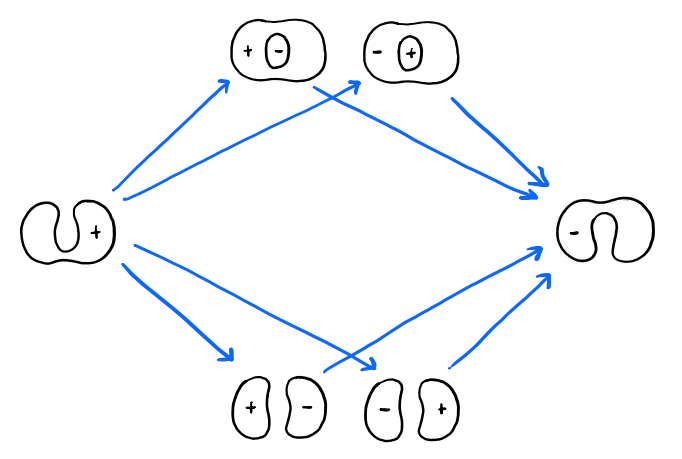}
\end{center}
I can specify this entire subcomplex by drawing
\begin{center}
    \includegraphics[width=1.5in]{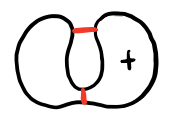}
\end{center}

In other words, a labeled resolution configuration is a pair $(D,x)$ where $D$ is a resolution configuration and $x$ is a labeled of all circles in $D$ with $v_+$ or $v_-$.

Finally, a \emph{decorated resolution configuration} specifies both the beginning and the end of the path.
\note{In the example above, there was a unique distinguished generator at the $11$ resolution that length-2 paths from the shown generator map to. In general, this might not be true. So a decorated resolution configuration also specifies which distinguished generator your path should end at.}

In other words, a decorated resolution configuration is a triple $(D,x,y)$ where $(D,x)$ and $(s(D),y)$ are both labeled resolution configuration; here, $s(D)$ means `result of surgery along the red arcs of $D$'. 

So, if $D$ is a resolution configuration with $k$ red arcs, we say that $(D,x,y)$ is an \emph{index $k$} decorated resolution configuration. 
These should correspond to $(k-1)$-dimensional moduli spaces in the Khovanov flow category. 

Notice that in our example, there were two possible ways that we could build our 1-dimensional moduli spaces:

\begin{center}
    \includegraphics[width=4in]{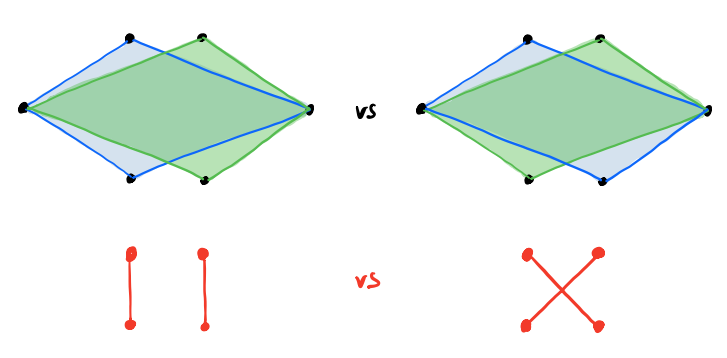}
\end{center}

A `correct choice' (i.e.\ a choice that allows us to obtain a framed flow category) will be determined by the constraints in the following section. 

\subsubsection{Khovanov flow category as a trivial cover}

\begin{definition}
    A grading-preserving functor $\mathscr{F}: \SChat \to \SC$ between flow categories is a \emph{cover} if, 
    for all $x,y \in \SChat$, 
    whenever $\moduli_{\SChat}(x,y) \neq \emptyset$, 
    the map of $\langle \gr(x) - \gr(y)-1 \rangle$ manifolds
    \begin{equation}
        \label{eq:flow-cat-cover-moduli-space-map}
        \mathscr{F}: \moduli_{\SChat}(x,y) \to \moduli_{\SC}(\mathscr{F}(x), \mathscr{F}(y))
    \end{equation}
    \begin{itemize}
        \item respects the $\langle \gr(x) - \gr(y)-1 \rangle$ manifold structures,
        \item is a local diffeomorphism, and 
        \item is a covering map. 
    \end{itemize}

    The cover is \emph{trivial} if the map \eqref{eq:flow-cat-cover-moduli-space-map} is a trivial covering map. 
\end{definition}

\note{Note that the term `trivial' makes it sound like the whole covering situation is somehow trivial; this is not true, because moduli spaces can glue together along their boundaries in many interesting ways. }

Trivial covers are particularly important to the construction at hand, because Lipshitz--Sarkar essentially prove the following:

\begin{proposition}[\cite{LS-stable-homotopy-type}, Proposition 5.2, (E-3), casually stated]
If the Khovanov flow category trivially covers the cube flow category $\SC_C(n)$, then we can use the framing of $\SC_C(n)$ to frame the Khovanov flow category. 
\end{proposition}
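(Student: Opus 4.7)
The plan is to lift the geometric data of a framed neat embedding from $\SC_C(n)$ to $\SC_K(D)$ along the trivial cover $\mathscr{F}: \SC_K(D) \to \SC_C(n)$. Since the map on each moduli space is a local diffeomorphism (in fact a trivial covering map of $\langle k \rangle$-manifolds), all the local geometric structures --- tangent spaces, normal bundles, orientations, framings --- can be pulled back from downstairs to upstairs in a canonical way.

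First, I would construct a neat embedding of $\SC_K(D)$ from a chosen neat embedding of $\SC_C(n)$. Since the cover is trivial, for each pair $x,y \in \Ob(\SC_K(D))$ the moduli space $\moduli_{\SC_K(D)}(x,y)$ is diffeomorphic to a disjoint union of finitely many copies of $\moduli_{\SC_C(n)}(\mathscr{F}(x), \mathscr{F}(y))$, indexed by the sheets of the covering. Composing the given neat embedding $\iota_{\mathscr{F}(x),\mathscr{F}(y)}$ of the base with $\mathscr{F}$ produces a smooth map $\moduli_{\SC_K(D)}(x,y) \to \BE_{\gr(x)-\gr(y)-1}^{\vec d}$ which is, in general, only an immersion (different sheets may share an image). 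The standard fix is to stabilize the ambient edgy space by adding one extra $\R$ factor for each pair $(x,y)$ and perturbing the different sheets into disjoint horizontal slices; this gives a genuine neat embedding, still compatible with the $\langle n \rangle$-manifold structure and still a lift of $\iota$ along $\mathscr{F}$.

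Second, I would pull back the framing. The trivial covering map on each moduli space is a local diffeomorphism, so the differential $d\mathscr{F}$ identifies the tangent bundle of $\moduli_{\SC_K(D)}(x,y)$ with the pullback of the tangent bundle of $\moduli_{\SC_C(n)}(\mathscr{F}(x),\mathscr{F}(y))$; together with the trivial normal direction from the stabilization used in the embedding step, this identifies the normal bundle of $\iota_{x,y}$ with the pullback $\mathscr{F}^* \nu(\iota_{\mathscr{F}(x),\mathscr{F}(y)})$ up to a trivial summand. Now pull back the chosen frame on $\nu(\iota_{\mathscr{F}(x),\mathscr{F}(y)})$ sheet by sheet, extending by the standard basis on the trivial summand.

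Third, I would verify coherence. The composition maps in $\SC_K(D)$ are, by functoriality of $\mathscr{F}$, lifts of composition maps in $\SC_C(n)$; consequently, the product framing $\nu(\iota_{y,z}) \times \nu(\iota_{x,y})$ pulled back to $\SC_K(D)$ agrees with the pullback of $\circ^{-1}\nu(\iota_{x,z})$ because downstairs this agreement is exactly the coherence condition for the framed flow category $\SC_C(n)$. The main obstacle will be bookkeeping at this last step: the stabilization trick used to separate sheets must be done compatibly across all moduli spaces at once, so that the product structure on $\langle n \rangle$-manifold boundaries is respected. Concretely, I expect to have to choose the sheet-separation perturbations inductively on $\gr(x)-\gr(y)$, using at each step the extension lemma for neat embeddings (as in \cite{LS-stable-homotopy-type}, Lemma 3.11) applied to the already-separated boundary embeddings, so that no perturbation on a higher-dimensional moduli space disagrees with the composition formula on its boundary.
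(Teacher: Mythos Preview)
The paper does not give its own proof of this proposition; it is stated as a citation to \cite{LS-stable-homotopy-type}, Proposition 5.2 (E-3), and the surrounding text immediately moves on to illustrate why the \emph{triviality} hypothesis is needed, via the ladybug configuration. So there is no in-paper proof to compare against. Your sketch is essentially the argument Lipshitz--Sarkar carry out: compose the downstairs neat embedding with the covering functor to get an immersion, stabilize to separate sheets, pull back the framing sheet by sheet, and inherit coherence from the base via functoriality of $\mathscr{F}$.

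Two refinements. First, you write that you add ``one extra $\R$ factor for each pair $(x,y)$,'' but recall that a neat embedding of a flow category uses a \emph{single} sequence $\vec d$ governing all moduli spaces simultaneously; the stabilization has to be done by enlarging the entries of $\vec d$, not pair by pair. Second, the cube flow category has exactly one object in each grading, whereas $\SC_K(D)$ has many; so the composite $\iota \circ \mathscr{F}$ collapses not only the sheets of a single $\moduli_{\SC_K(D)}(x,y)$, but also the moduli spaces for \emph{different} pairs $(x,y)$ with the same gradings, onto the same image downstairs. Your inductive separation scheme (using the extension lemma for neat embeddings) is the right tool and does handle this, but the bookkeeping is correspondingly heavier than your last paragraph lets on. Neither point is a genuine gap --- your outline is sound --- but both are places where ``standard fix'' is hiding real work.
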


We return to the ladybug configuration to see the difference between trivial and nontrivial covers:
\begin{center}
    \includegraphics[width=3.5in]{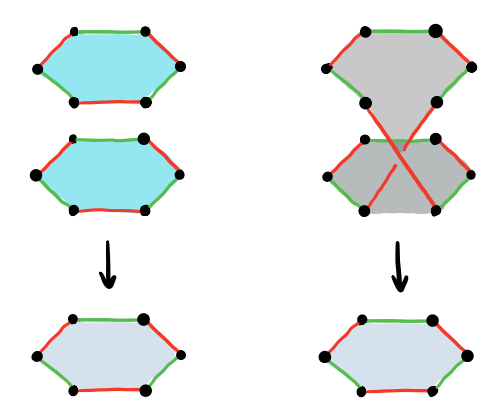}
\end{center}
We assume that all the 0- and 1-dimensional moduli spaces are trivial covers, as shown. 

In order to make sure the covering is trivial for the hexagonal 2-dimensional moduli space, we have to make sure that, on its boundary, we get two 6-cycles (left), rather than a single 12-cycle (right).

Lipshitz--Sarkar cleverly use the orientation of the diagram plane to make a global choice for any ladybug configurations that arise, ensuring that all covering maps for hexagons look like the picture on the left.

\subsubsection{Finishing up}

It turns out that this is the last important choice to make! 
Moduli spaces of dimension $k \geq 3$ are copies of the permutahedron $P_{k+1}$, which is homeomorphic to  $B^k$ (recall that it's the convex hull of a bunch of points). 
This means the boundary of the moduli space is homeomorphic to $S^{k-1}$, which is simply connected, and therefore only have trivial covers!
\note{Recall that connected covering spaces of $X$ correspond to subgroups of $\pi_1(X, *)$.}

To finish up:
\begin{itemize}
    \item We can build our moduli spaces inductively, and frame them using our framing of the cube flow category. 
    This gives us a framed flow category refining the Khovanov chain complex we started with.
    \item Next, use the Cohen--Jones--Segal construction to build a high-dimensional CW-complex from the framed flow category, called the \emph{realization}. 
    \item Finally, take the suspension spectrum of the realization (and formally desuspend appropriately to get the correct homological gradings) to get the link invariant $\XKh$. 
\end{itemize}

\begin{remark}
Actually, we get a spectrum for each quantum grading $\gr_q = j$:
\[
    \XKh(L) = \bigvee_{j} \XKh^j(L)
\]
\end{remark}

\subsection{The Burnside functor approach}

The reference for this section is \cite{LLS-burnside-products}.

Lawson--Lipshitz--Sarkar give a different way to build equivalent Khovanov spectra, via the following process:
\begin{itemize}
    \item Start with the data of the Khovanov complex (but not totalized), drawn as a (commuting) diagram of abelian groups and maps between them. 
    If a link diagram has $n$ crossings, then we can think of this diagram as a functor $\cubecat^n \to \Zmod$. 
    \item Make some choices (i.e.\ add information) to lift this to a functor $\cubecat^n \to \Burn$, where $\Burn$ is the \emph{Burnside 2-category}. 
    \item Use the \emph{Pontrjagin-Thom} construction to turn this data into a functor $(\cubecat^{op})^n \to \Topcat_*$.
    \item Check that this diagram of topological spaces (actually, wedges of spheres of some large dimension) is \emph{homotopy coherent}, and then take the homotopy colimit to obtain a space. (Then, as usual, take the suspension spectrum and desuspend to make up for the large dimension of the spheres you used.)
\end{itemize}

While we sketch out `Burnside-functor' approach to building $\XKh$, look out for the following important points we encountered while discussing the flow categories construction method:
\begin{itemize}
    \item What elements of this construction correspond to the higher-dimensional moduli spaces?
    \item Where do we make the ladybug matching choice?
    \item At what index (for the resolution configurations) can be stop making choices, and why?
\end{itemize}

\subsubsection{The Burnside category}

The \emph{Burnside category} $\Burn$ is a 2-category, meaning that there are objects, morphisms, and also morphisms between morphisms (level 2 homomorphisms).
In $\Burn$:
\begin{itemize}
    \item Objects are finite sets $X$
    
    \item 1-morphisms are \emph{finite correspondences}: a morphism in $\Hom(X,Y)$ is a diagram of set maps
    \begin{center}
    \begin{tikzcd}
        & A \arrow{dl}[swap]{s_A} \arrow{dr}{t_A} & \\
        X & & Y
    \end{tikzcd}
    \end{center}
    where $s_A$ and $t_A$ are called the \emph{source} and \emph{target} set maps, respectively. 

    \note{You can think of the elements of $A$ as `arrows' form points in $X$ to points in $Y$; an arrow $a \in A$ has a source point and a target point, and $s_A(a)$ and $t_A(a)$ are just telling us which points these are.}

    \item Composition of 1-morphisms is given by \emph{pullback} (i.e.\ \emph{fiber product}):
    the composition of $(A,s_A, t_A) \in \Hom(X,Y)$ and $(B, s_B, t_B) \in \Hom(Y,Z)$ is the finite correspondence $(C, s_A \circ s_C, t_B \circ t_C) \in \Hom(X,Z)$ 
    \begin{center}
    \begin{tikzcd}
        & & C \arrow{dl}[swap]{s_C} \arrow{dr}{t_C} & & \\
        & A \arrow{dl}[swap]{s_A} \arrow{dr}{t_A} & & B \arrow{dl}[swap]{s_B} \arrow{dr}{t_B} \\
        X & & Y & & Z 
    \end{tikzcd}
    \end{center}
    where 
    \[
        C = A \times_Y B 
        = \{ (a,b) \in A \times B \st t_A(a) = s_B(b).
    \]
    \note{The maps $s_C$ and $t_C$ are induced by projection from $A \times B$, i.e.\ $s_C((a,b)) = a$ and $t_C((a,b)) = b$.}

    \item 2-morphisms are \emph{isomorphisms of correspondences}: 
    given 
    $(A,s_A, t_A)$ and $(B, s_B, t_B)$ in $\Hom(X,Y)$, 
    an element of $2\Hom(A,B)$ is an isomorphism of sets $f: A \to B$ such that the following diagram commutes:
    \begin{center}
    \begin{tikzcd}
        & A \arrow{r}{f (\cong)} \arrow{dl}[swap]{s_A} \arrow{drr}[swap]{t_A} & 
            B \arrow{dll}{s_B} \arrow{dr}{t_B}
            & \\
        X &  & & Y
    \end{tikzcd}
    \end{center}

\end{itemize}

\subsubsection{Cube-to-Burnside functor}

Given a Khovanov cube-shaped complex $\cubecat^n \to \Zmod$, we want to build a lift of this functor to $\cubecat^n \to \Burn$. 
Here is how we define the new functor.

\textbf{(Objects)} At each vertex $u$ of the cube, the distinguished generators (pure tensors in $V^{\otimes |\pi_0(D_u)|}$) become the elements of the finite set at $u$.

\textbf{(1-Morphisms)} The arrows in the Khovanov differential between distinguished generators determine the finite correspondences. 
    
For example, consider the merge map $m: V \otimes V \to V$.
\begin{center}
    \includegraphics[width=4in]{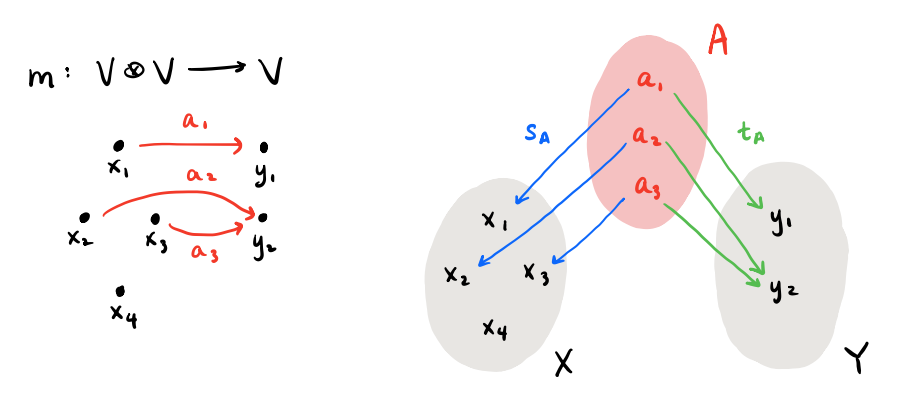}
\end{center}
On the left is our familiar dots-and-arrows picture, but now each arrow has a name. 
On the right is the \note{... corresponding...} correspondence in the Burnside category. 

\textbf{(2-Morphisms)} 
\note{To define a 2-functor from $\cubecat^n$, we technically need to think of $\cubecat^n$ as a 2-category as well. We do this by asserting that $\cubecat^n$ has no non-identity 2-morphisms.}

Defining our 2-morphisms is where new choices are made. 
For an example, we will once again study the ladybug configuration (see \S \ref{sec:ladybug-configuration}).

Here is the usual dots-and-arrows picture of the chain complex (at the interesting quantum grading), but now the arrows have names:

\begin{center}
    \includegraphics[width=4in]{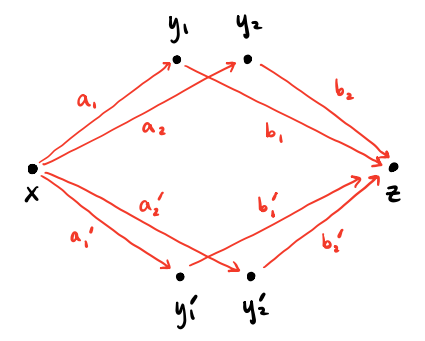}
\end{center}

Translating to a diagram in the Burnside category, we have the following picture, where a blue lines are parts of source maps, and green lines are parts of target maps.

\begin{center}
    \includegraphics[width=4in]{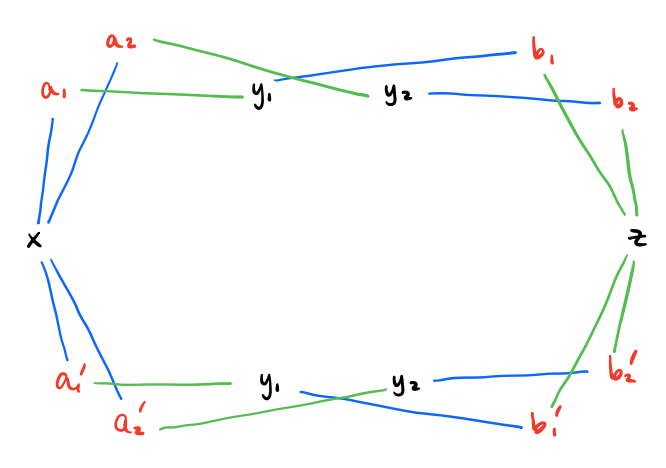}
\end{center}

Now let's consider the 1-morphisms we get from length-2 paths through the cube; these are compositions of the arrows in the dots-and-arrows picture.

\begin{center}
    \includegraphics[width=4in]{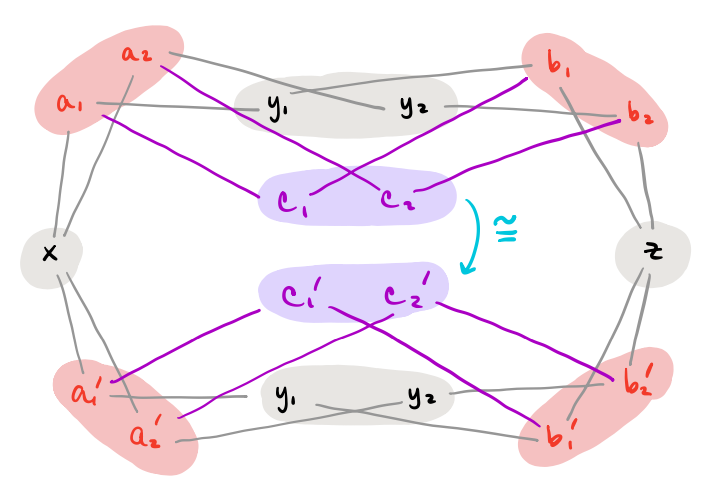}
\end{center}

\begin{itemize}
    \item Along  the top half of the cube, the composition of 1-morphisms $A = \{a_1, a_2\}$ and $B = \{b_1, b_2\}$ yields a 1-morphism $C \in \Hom(X = \{x\}, Z = \{z\})$ containing two elements,
        \[
            c_1 = (a_1, b_1) 
            \quad \text{and} \quad 
            c_2 = (a_2, b_2).
        \]
    \item Along the bottom half of the cube, we get a different composite 1-morphism $C' \in \Hom(X,Z)$ also containing two elements,
        \[
            c_1' = (a_1', b_1') 
            \quad \text{and} \quad 
            c_2' = (a_2', b_2')
        \]
\end{itemize}
An element of $2\Hom(C, C')$ is a choice of isomorphism between these two 2-element sets. There are exactly two possible isomorphisms (either $c_1 \mapsto c_1'$ or $c_1 \mapsto c_2'$). 

Lawson--Lipshitz--Sarkar check through all the combinatorics in the Khovanov cube, and this resolution configuration is, once again, the most interesting. 

\subsubsection{Burnside functor to homotopy coherent diagram of wedges of spheres}

We now use the Burnside functor $\cubecat^n \to \Burn$ to obtain a diagram of topological spaces, using the Pontrjagin-Thom construction. 
We will stick with our running ladybug example. 

First, replace all the objects in the Burnside functor diagram with $k$-dimensional disks, with the size of the disks increasing with the cube grading $|u| = \sum_i u_i$. 
In the drawing below, $k=2$:

\begin{center}
    \includegraphics[width=4in]{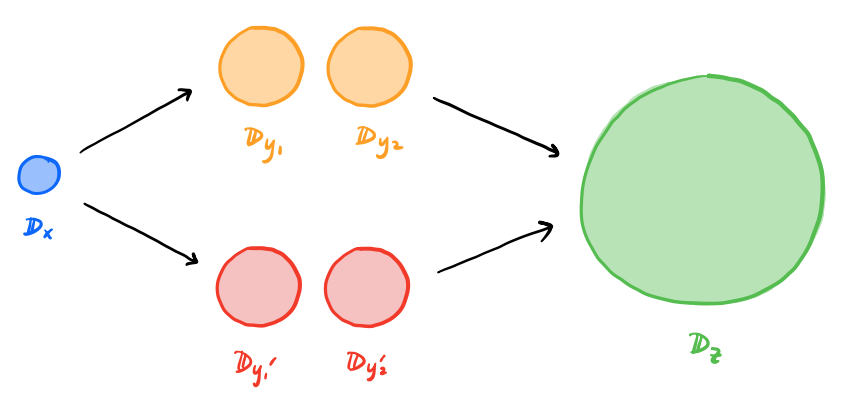}
\end{center}

The correspondences tell us where to embed smaller disks into larger disks, so that images are disjoint.

\alert{Beware:} \note{ The following picture is just for showing you how we're embedding the smaller disks into bigger ones. 
For example, I've drawn the image of the embeddings $\mathbb{D}_x \into \mathbb{D}_{y_i}$ ($i = 1,2$), 
in order to indicate a \emph{multi-embedding} (see Remark \ref{rmk:multi-embedding}) of 
$\mathbb{D}_x$ in $\mathbb{D}_{y_1} \sqcup \mathbb{D}_{y_2}$. 
}

\begin{remark}
\label{rmk:multi-embedding}
    \alert{Beware} that what I've drawn is not a map of topological spaces. This is not even a function, but rather a \emph{multifunction}, where each point maps to multiple points. 
    For this reason, I will be referring to these disk embeddings as `multi-embeddings'. 
    \note{I don't believe this is standard language. 
    For more technical language, you can look up the \emph{little boxes operad}. }
\end{remark}

\begin{equation}
\label{eq:pont-thom-disks-embedded}
    \includegraphics[width=4in]{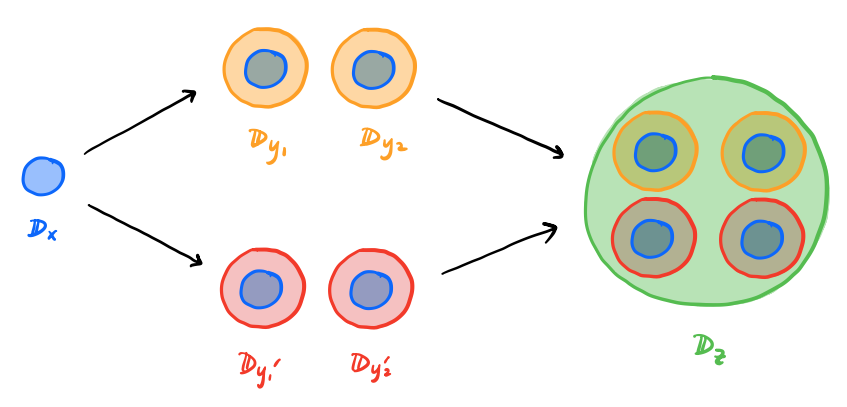}
\end{equation}

We are now set up to build a functor $(\cubecat^{op})^n \to \Top_*$. 
First, at each vertex of the cube, identify all of the boundaries to a single basepoint $*$:

\begin{center}
    \includegraphics[width=4in]{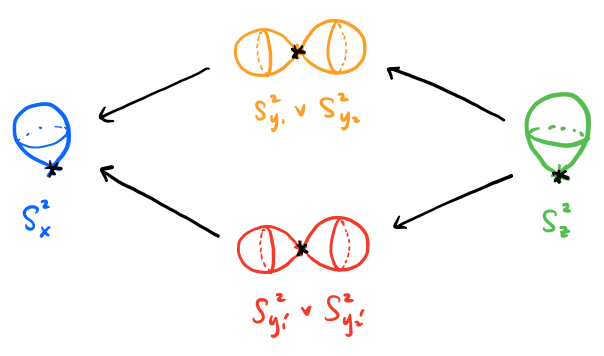}
\end{center}

We can get a map from $S_{y_1}^2 \to S_x^2$ by collapsing everything outside of the image of $\mathbb{D}_x \into \mathbb{D}_{y_1}$ to the basepoint in $S_x^2$. 
The same process gives us a map $S_{y_2}^2 \to S_x^2$. 
Combined, we obtain a map of based topological spaces $S^2_{y_1} \vee S^2_{y_1} \to S^2_{x}$.
The map from $S^2_{z} \to S^2_{y_1} \vee S^2_{y_1}$ is slightly more interesting, but defined the same way.
\note{Try to visualize this yourself, by imagining that you are, perhaps, making a balloon animal, grabbing a ball of dough from a large mass of dough, or balling up some mozzarella from a vat of curds.}

Even more interesting is the composition of embeddings.

\begin{itemize}
\item 
Along the top of the cube in \eqref{eq:pont-thom-disks-embedded}, we get two homeomorphic images of $\mathbb{D}_x$ inside $\mathbb{D}_z$. These are the blue disks inside the orange circles. 

The composite multi-embedding of $\mathbb{D}_x$ to $\mathbb{D}_z$ agrees with the collapse map $S^2_z \to S^2_x$
: we crush everything outside of the two images of  $\mathbb{D}_x$ to the basepoint, and get a degree-2 map   $S^2_z \to S^2_x$. 

\item Along the bottom of the cube, we get a different set of homeomorphic images of $\mathbb{D}_x$ inside $\mathbb{D}_z$. These are the blue disks inside the red circles. 

This path gives us a different degree-2 map $S^2_z \to S^2_x$. 
\end{itemize}

In order to make sure that the two maps of topological spaces $S^2_z \to S^2_x$ are homotopic, we need the dimension $k$ to be large enough (we will discuss more about this soon).
But because this square is potentially only a small part of a much larger cube, we actually need to \emph{pick specific homotopies} between the two maps. 
To do this, we use the 2-morphisms we chose when we built the Burnside functor. 

From the Burnside functor, we get a particular matching of the orange-blue disks (associated with $C = \{c_1, c_2\}$ in the Burnside functor)
with the red-blue disks (associated with $C' = \{c_1', c_2'\}$). 
Suppose we chose the matching $c_i \mapsto c_i'$. 
To choose a homotopy, we just need to pick two paths: one that takes the orange-blue disk associated with $c_1$ to the red-blue disk associated with $c_1'$, 
and another that takes the orange-blue disk associated with $c_2$ to the red-blue disk associated with $c_2'$

After choosing these homotopies, we also need to choose homotopies among homotopies when we zoom out and look at an index-3 subcube (or index-3 resolution configuration), and then homotopies of homotopies of homotopies ... etc. 
If we can successfully find homotopies at all levels all the way to index-$n$ resolution configurations (recall that our link diagram has $n$ crossings), then we have a \emph{homotopy coherent} diagram of topological spaces. 

We want to make as few choices as possible.
To do this, we just make sure $k$ is sufficiently large, because this will ensure that all possible choices are effectively equivalent (i.e.\ homotopic). 
Observe that the data of a multi-embedding $e: \mathbb{D}_x \to \mathbb{D}_y$ can be captured by just recording the centers of the image disks. 
Suppose there are $m$ image disks.
Then, $e$ is just an element of $\Conf(\mathbb{D}_y, m\text{ points})$, the configuration space of $m$ points inside the ($k$-dimensional) disk $\mathbb{D}_y$.

\begin{fact}
The configuration space of finitely many points in a $k$-dimensional disk (ball) is $(k-2)$-connected, i.e., 
for any $j \leq k-2$,
\[
    \pi_{j}(\Conf(\mathbb{D}^k, m \text{ points}) = 0. 
\]
\end{fact}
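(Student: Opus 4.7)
The plan is to reduce to $\Conf(\R^k, m)$, apply the Fadell--Neuwirth fibration that forgets one point, and induct on $m$.

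First, since $\mathbb{D}^k$ is homeomorphic to $\R^k$ (e.g.\ by radially rescaling the interior), there is a homotopy equivalence $\Conf(\mathbb{D}^k, m) \simeq \Conf(\R^k, m)$, and it suffices to prove that $\Conf(\R^k, m)$ is $(k-2)$-connected. We work with ordered configurations, which is the relevant notion for labeling the centers of the multi-embedded disks in the Pontrjagin--Thom construction; the unordered case would then follow from the fact that the forgetful map to unordered configurations is a covering map with finite fibers.

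Next, invoke the Fadell--Neuwirth fibration
\[
    \R^k \setminus \{p_1, \ldots, p_{m-1}\} \;\hookrightarrow\; \Conf(\R^k, m) \;\longrightarrow\; \Conf(\R^k, m-1),
\]
where the right map forgets the last point, and the fiber over $(p_1, \ldots, p_{m-1})$ is $\R^k$ with those $m-1$ points deleted. This fiber deformation retracts onto a wedge $\bigvee^{m-1} S^{k-1}$, which is $(k-2)$-connected because each sphere $S^{k-1}$ is. The induction on $m$ is then immediate: the base case $m=1$ gives the contractible space $\R^k$, and for the inductive step the segment
\[
    \pi_j(\mathrm{fiber}) \;\to\; \pi_j(\Conf(\R^k, m)) \;\to\; \pi_j(\Conf(\R^k, m-1))
\]
of the long exact sequence has both outer groups vanishing for $j \leq k-2$ (by the wedge-of-spheres computation and the inductive hypothesis, respectively), forcing the middle group to vanish as well.

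The main step requiring care is the classical input that the forget-a-point map really is a fibration and that the fiber is as described; this is precisely the content of Fadell--Neuwirth's original theorem and can simply be cited. The rest is a straightforward induction using the long exact sequence of homotopy groups, with no dimension-specific subtleties beyond the fact that $S^{k-1}$ is $(k-2)$-connected.
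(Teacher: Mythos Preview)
The paper states this as a \emph{Fact} without proof, so there is no argument in the paper to compare against. Your proof via the Fadell--Neuwirth fibration and induction on $m$ is the standard one and is correct for the ordered configuration space, which is indeed the relevant object here (the image disks are labeled by elements of the correspondence).

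One small correction: your parenthetical claim that the unordered case ``would then follow'' from the covering map is not right. For $k \geq 3$ the unordered configuration space $\Conf(\R^k, m)/S_m$ has $\pi_1 \cong S_m$, so it is \emph{not} simply connected; a covering map $X \to Y$ with $X$ simply connected certainly does not force $Y$ to be simply connected. Fortunately you only need the ordered version for the application, so this does not affect the main argument --- just drop or amend that remark.
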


Using this fact, we see that as long as $k \geq n+2$, no matter how we choose to multi-embed our disks, our diagram will be homotopy coherent.

At this point, we can proceed to the last step, where we use this diagram to build a space, by throwing our homotopy coherent diagram into a machine that spits out the \emph{homotopy colimit}. 

\subsubsection{Homotopy colimit}

If you want to see a concrete model of a homotopy colimit of based topological spaces, you can check out the reference for this whole subsection, \cite{LLS-burnside-products}. 

Instead, let's talk about the significance of homotopy colimits in general, given  that we are currently in possession of a homotopy coherent diagram of topological spaces $(\cubecat^{op})^n \to \Topcat_*$. 

When we work with topological spaces, we really consider them up to homeomorphism. 
When I take a colimit of a diagram of topological spaces, the resulting space should be well-defined up to homeomorphism.

On the other hand, in homotopy theory, we want to consider spaces up to homotopy equivalence.
When I take a colimit of a diagram of topological spaces, I should expect that if I replace one of the spaces in the diagram with something homotopy equivalent to it, that I should get the same colimit. 
But this is not always true, as evidenced by the following classic example.

Consider the two diagrams below ($\iota_\partial$ means `inclusion of boundary'):

\[
\begin{tikzcd}
    & S^{1} \arrow{dl}{\iota_\partial} \arrow{dr}{}& \\ 
    D^2 & & * \\
\end{tikzcd}
\qquad
\qquad
\begin{tikzcd}
    & S^{1} \arrow{dl}{} \arrow{dr}{}& \\ 
    * & & * \\
\end{tikzcd}
\]

In the second diagram (the one on the right), we've replaced $D^2$ with a homotopy equivalent space, $*$.

\begin{remark}
For this particular shape of diagram, the colimit is called a \emph{pushout}. 

Let's briefly recall how pushouts work in the category of sets, for simplicity. 
Given a diagram of sets
\begin{tikzcd}
    A  \arrow{r}{\beta} \arrow{d}{\gamma}
        & B \\
    C & \\
\end{tikzcd}
the pushout is the set $B \sqcup C / \sim$ where $b \sim c$ iff there exists some $a \in A$ such that $\beta(a) = b$ and $\gamma(a) = c$.
Colimits are a generalization of this.
\end{remark}

The colimit of the diagram on the left is $S^2$, where as the colimit of the diagram on the right is a one-point space:

\begin{center}
    \includegraphics[width=4in]{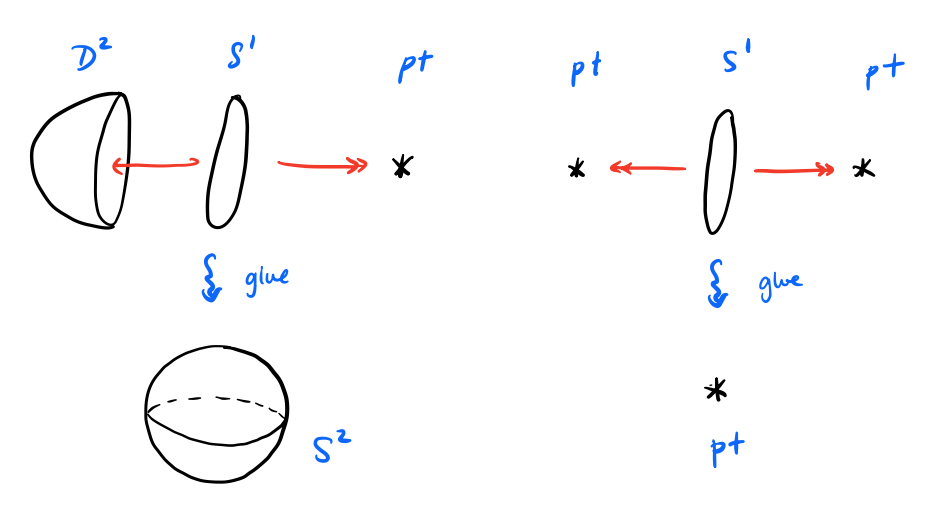}
\end{center}

These spaces are not homotopy equivalent! 
The problem is that the colimit on the right is too degenerate. 

To fix this, we thicken the pieces before we glue:

\begin{center}
    \includegraphics[width=4in]{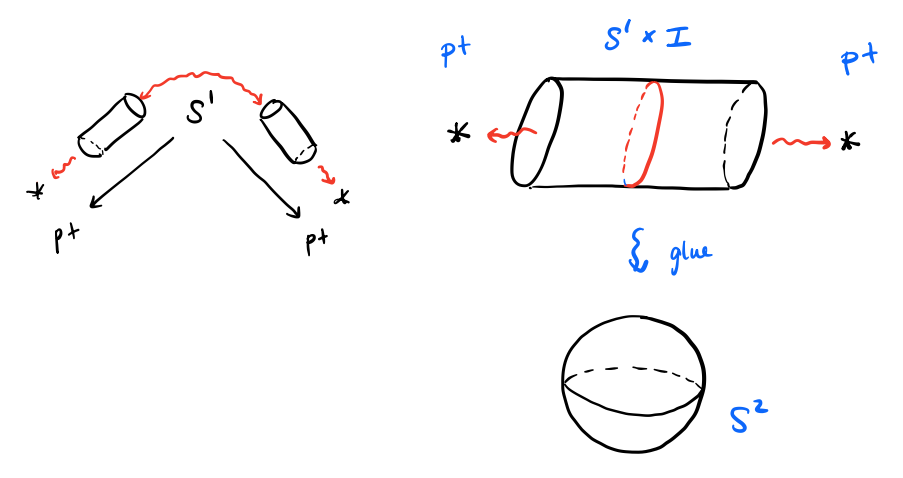}
\end{center}

This time, for each map $S^1 \to *$, we first replace $S^1$ with $S^1 \times I$, and then glue one end to $*$. Then, we glue the two pieces together.

On the right, we see that after gluing, we indeed get an $S^2$, which now agrees with the colimit of the first diagram we saw. 

This tells us that $S^2$ is the homotopy colimit (or homotopy pushout) of both diagrams.

The takeaway is that the Burnside functor approach to building $\XKh$ is meant to produce a \emph{stable homotopy type} rather than a homeomorphism class of topological spaces.
This flexibility allows us to actually obtain an invariant of the link (under Reidemeister moves of the diagram). 
So, when we build the space, we need to treat everything up to homotopy.

\section{Homological Skein Invariants for 4-manifolds}

This section serves a motivational introduction toward skein lasagna modules, which were introduced by Morrison--Walker--Wedrich in \cite{MWW-lasagna}. 
We will be brief on details; a forthcoming set of notes written with Mike Willis will focus more heavily on homological skein invariants; see \url{https://indico.ictp.it/event/10839}.

\subsection{4-dimensional exotica}

This section is based on Tye Lidman's 2015 IAS 15-minute talk \cite{lidman-postdoc-talk-IAS-4-manifolds}.

Suppose we are given two smooth 4-manifolds. How do we tell whether they are \emph{diffeomorphic}, i.e.\ smoothly equivalent? 

\begin{remark}
    Just as one can be `given' a smooth knot via a knot diagram or as a braid closure (or Gauss code, PD code, etc.), there are different ways a 4-manifold can be presented. 
    We will use Kirby diagrams, which essentially describe a handle decomposition of the smooth 4-manifold.
\end{remark}

In 3D, there is little difference among the smooth and a priori less smooth (e.g.\ PL, topological) categories.
Indeed, Reidemeister's theorem was actually proven by studying PL knot diagrams!

However, in 4D, the categories are very different.
In fact, 4D seems to be the weirdest dimension. Compare the following two theorems:
\begin{itemize}
    \item Stallings \cite{Stallings-PL-euclidean} proved that, for $d \neq 4$, if $X$ is homeomorphic to $\R^n$ 
    \note{(i.e.\  topologically equivalent to $\R^n$)}, then $X$ is in fact \emph{diffeomorphic} to $\R^n$ 
    \note{(i.e.\ smoothly equivalent to $\R^n$)}. \item Taubes \cite{Taubes-continuum} proved that there exist \emph{uncountably many} smooth manifolds homeomorphic to $\R^4$, up to diffeomorphism! 
    (See also \cite{Gompf-R4} and \cite{Freedman-Taylor}.)
\end{itemize}

\begin{notation}
Let us set some notation and terminology for the remainder of this section. 
\begin{itemize}
    \item If we consider a manifold only up to its homeomorphism class, we call it a \emph{topological manifold}. 
    If $X$ and $Y$ are homeomorphic, we write $X \homeo Y$. 
    \item If we consider a smooth manifold up to its diffeomorphism class, we call it a \emph{smooth manifold}. 
    If $X$ and $Y$ are diffeomorphism, we write $X \diffeo Y$. 

\end{itemize}
\end{notation}

Let's first restrict ourselves to \textbf{closed, connected, oriented, simply-connected 4-manifolds. }
To determine whether two such smooth manifolds, $X_1$ and $X_2$, are diffeomorphic, we can first forget their smooth structure and check if they're homeomorphic. 

Fortunately, Freedman proved that the topological 4-manifold invariant called the \emph{intersection form} classifies these manifolds. 
So, if the intersection forms are inequivalent, then $X_1$ and $X_2$ are certainly not diffeomorphic, since they aren't even homeomorphic.

So, it remains to distiguish between pairs of smooth and homeomorphic manifolds. 
In general, if we have two homeomorphic but non-diffeomorphic objects, we call them an \emph{exotic pair}. 

Here is the most famous open question about exotic manifolds:
\begin{conjecture}
    (Smooth Poincar\'e Conjecture in Dimension 4)
There are no exotic $S^4$'s. 
In other words,
\[
    X^4 \simeq S^4 
    \qquad
    \implies
    \qquad 
    X^4 \cong S^4.
\]
\end{conjecture}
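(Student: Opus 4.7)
The plan is to attempt a proof by leveraging the Morrison--Walker--Wedrich skein lasagna modules $\lasagna$ (the topic the paper is transitioning to), since these are currently the only known link-homological invariants that see the smooth structure of closed $4$-manifolds. The two-part strategy is: (1) for every homotopy $4$-sphere $X$, compute $\lasagna(X)$ and show it equals $\lasagna(S^4)$; (2) \textbf{separately}, show that $\lasagna$ is a complete smooth invariant on the class of homotopy $4$-spheres, so that agreement of invariants forces $X \diffeo S^4$. Step (2) is almost certainly out of reach in full generality, so in practice the plan is to reduce to a finite list of candidate exotic families and attack them individually.

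First I would tabulate the known candidate counterexamples, which roughly fall into three families: Gluck twists along knotted $2$-spheres in $S^4$, Cappell--Shaneson spheres (and the infinite families arising from matrix parameters), and the homotopy spheres presented by Kirby diagrams of ``potentially exotic'' handle decompositions (e.g.\ those of Akbulut, Gompf, Freedman--Gompf--Morrison--Walker). Many candidates in each family have been shown to be standard (Gompf's diffeomorphisms, Akbulut's $\#1$-stabilization arguments, etc.), but infinitely many remain. For each surviving candidate I would present $X$ as a Kirby diagram, read off a handle decomposition, and feed the attaching data into the definition of $\lasagna(X)$, which expresses it as a colimit over decorated $1$-handle/$2$-handle link input disks with Khovanov--Rozansky homology labels, modulo the local relation generated by $4$-dimensional handle cobordisms. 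The computational technique would be to use handle cancellation and slides in Kirby calculus in tandem with functoriality of $\KhR$ to simplify until either the module visibly agrees with $\lasagna(S^4)$ (confirming $X$ is standard, at least at the level of the invariant) or a non-standard class is detected.

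The main obstacle is that $\lasagna$ is currently very hard to compute: the only closed $4$-manifolds for which $\lasagna$ is fully known are trivial or near-trivial (e.g.\ $S^4$ itself, disk bundles over $S^2$ via Manolescu--Neithalath's cabled-$\KhR$ formula), and every known exotic pair of closed $4$-manifolds has so far produced \emph{equal} skein lasagna modules when both could be computed. There is thus a real possibility that $\lasagna$ is \emph{incapable} of detecting exoticness on closed manifolds, in which case step (2) of the plan fails in principle, not just in practice. Even ignoring step (2), step (1) is obstructed by the sheer combinatorial complexity of the Kirby diagrams for Cappell--Shaneson spheres: the relevant links have hundreds of crossings, and $\KhR$ cobordism maps for such links are not tractable by hand and are only borderline tractable by computer for small $n$.

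Realistically, therefore, the near-term version of this plan is modest: pick one small family (say, a specific Gluck twist), compute $\lasagna$ enough to either match $S^4$ or detect a discrepancy, and iterate. A genuine proof of SPC4 along these lines would require both a dramatic improvement in the computability of $\lasagna$ and a structural theorem showing that $\lasagna(X) \cong \lasagna(S^4) \implies X \diffeo S^4$ for homotopy $4$-spheres; absent the latter, the best one could hope for from this strategy is to \emph{disprove} SPC4 by exhibiting an $X \homeo S^4$ with $\lasagna(X) \not\cong \lasagna(S^4)$. This asymmetry, typical of smooth $4$-manifold topology, is the real heart of why SPC4 remains open.
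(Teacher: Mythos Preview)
This statement is the Smooth Poincar\'e Conjecture in dimension 4, which is \emph{open}; the paper states it as a conjecture and does not attempt a proof. There is therefore no proof in the paper to compare your proposal against.

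Your proposal is not a proof either, and you say so yourself: step (2) is ``almost certainly out of reach,'' step (1) is ``obstructed by sheer combinatorial complexity,'' and you conclude that ``the best one could hope for from this strategy is to \emph{disprove} SPC4.'' What you have written is an honest and well-informed assessment of why a particular line of attack (skein lasagna modules) is unlikely to resolve the conjecture, together with the correct observation that such invariants are asymmetric---they can in principle disprove SPC4 by detecting an exotic $S^4$, but cannot prove it without a separate completeness theorem that nobody knows how to establish. That is a reasonable research commentary, but it is not a proof attempt in any meaningful sense, and it should not be presented as one.
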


\subsection{Exotic $\R^4$'s via Rasmussen's invariant}

To motivate the use of Khovanov homology in developing smooth 4-manifold invariants, we first show how Rasmussen's invariant can be used to construct exotic $\R^4$'s.

\subsubsection{Relating 4D topology and knot theory}

\begin{definition}
Let $K$ be a smooth knot in $S^3$.
The \emph{$0$-trace} of $K$, denoted $X_0(K)$, is the  manifold obtained by attaching a 4-dimensional $0$-framed 2-handle to $B^4$ along $K$.
\note{Recall that the $0$-framing is the Seifert framing.}
This is a smooth 4-manifold whose closed 3-manifold boundary is $S_0^3(K)$, the $0$-surgery of $S^3$ along $K$. 
\end{definition}

The \emph{Trace Embedding Lemma} below is the powerful lemma that will allow us to translate our knowledge of knots (e.g.\ Rasmussen's invariant from Khovanov homology)
to knowledge about 4-manifolds.

It is true in both the smooth and \emph{topologically locally flat} categories. 
Roughly speaking, a surface $F$ is topologically locally flatly embedded in a 4-manifold $W$ if 
for every point $x \in F$, there is a neighborhood $U$ of $x$ and a chart that identifies $U$ with a neighborhood of $\R^2 \subset \R^4$. 
\note{In other words, locally, it can be make to look flat.}

\begin{lemma}[see \cite{Piccirillo-conway-knot}, Lemma 1.3] (Trace embedding lemma)
A knot $K$ embedded in $S^3$ is smoothly slice (resp.\ topologically locally flatly)
\textbf{if and only if}
$X_0(K)$ embeds smoothly (resp.\ topologically locally flatly) into $S^4$.
\end{lemma}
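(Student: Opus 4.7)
The plan is to prove each direction separately: the forward direction by a direct geometric construction from the slice disk, and the backward direction by analyzing the complement of the embedded $0$-handle in $S^4$ and extracting a disk from the image of the $2$-handle core.

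\emph{Forward direction.} Given a slice disk $D \subset B^4$ for $K$, I would write $S^4 = B^4_+ \cup_{S^3} B^4_-$, identify the equatorial $S^3$ with $\partial B^4$, and view $D \subset B^4_-$. Since $D$ is contractible, its normal bundle is trivial, so a tubular neighborhood $\nu(D) \subset B^4_-$ is a copy of $D^2 \times D^2$, and $\nu(D) \cap S^3 = \nu(K)$ is a tubular neighborhood of $K$. Thus $\nu(D)$ sits in $B^4_-$ as a $2$-handle attached to $B^4_+$ along $K$. The key calculation is that this handle has framing $0$: a disjoint parallel pushoff $D'$ of $D$ along a nonvanishing normal section is itself a slice disk bounded by a pushoff $K'$ of $K$, and since $K \subset \partial B^4_-$ while $\mathrm{int}(D') \subset \mathrm{int}(B^4_-)$ we have $K \cap D' = \emptyset$, so $\mathrm{lk}_{S^3}(K, K')$ --- computed as the algebraic intersection of $K$ with $D'$ --- is zero. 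Hence $B^4_+ \cup \nu(D) \diffeo X_0(K)$ is embedded in $S^4$.

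\emph{Backward direction.} Given $\iota: X_0(K) \hookrightarrow S^4$, the core $D_c \subset h^2$ of the $2$-handle satisfies $\partial D_c = K$, so $\iota(D_c) \subset S^4$ is a properly embedded disk with $\partial \iota(D_c) = \iota(K) \subset \iota(\partial B^4) = \iota(S^3)$. The embedded $3$-sphere $\iota(S^3)$ separates $S^4$ into $\iota(B^4)$ and the complementary region $V := S^4 \setminus \iota(\mathrm{int}(B^4))$, and $\iota(D_c)$ lies properly in $V$ (its interior lies on the $h^2$ side of $\iota(S^3)$). By Alexander duality in $S^4$, $V$ is a homology $4$-ball; combined with Van Kampen's theorem and $\pi_1(S^4) = \pi_1(\iota(B^4)) = \pi_1(\iota(S^3)) = 1$, this forces $\pi_1(V) = 1$, so $V$ is a simply connected homology $4$-ball with $\partial V \homeo S^3$ containing $\iota(D_c)$.

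\emph{The main obstacle.} The remaining step is to recognize $V$ as the standard $B^4$, at which point $\iota(D_c) \subset V$ becomes a slice disk for $\iota(K)$, and the diffeomorphism $\iota|_{S^3}\colon S^3 \to \iota(S^3)$ identifies $\iota(K)$ with $K$. In the topological locally flat category this is immediate from Freedman's $4$-dimensional topological Poincar\'e theorem, which gives $V \homeo B^4$ and closes that case. In the smooth category, however, the identification $V \diffeo B^4$ is exactly the open smooth $4$-dimensional Schoenflies problem, so the naive argument does not close the smooth case by itself. The crux of the smooth argument must therefore exploit extra structure: the decomposition $V = \iota(h^2) \cup W$ with $W := S^4 \setminus \iota(X_0(K))$ gives $V$ a canonical handle description, and $0$-surgery duality between $K$ and the surgery dual $K^* \subset S^3_0(K)$ should allow one to transport the embedded cocore $\iota(D^*)$ to a genuine slice disk for $K$ in the standard $B^4$ via a handle-cancellation argument, bypassing the need to identify $V$ as a smooth ball outright.
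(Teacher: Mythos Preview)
Your forward direction is correct and matches the paper's argument, with the added benefit that you spell out the framing computation explicitly via the linking number of $K$ with a pushoff bounding a parallel slice disk.

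Your backward direction, however, has a genuine gap, and you have correctly diagnosed where it lies: attempting to identify the complement $V = S^4 \setminus \iota(\mathrm{int}(B^4))$ as a standard smooth $B^4$ runs straight into the smooth Schoenflies problem, which is open. Your proposed workaround via surgery duality and handle cancellation is not carried out, and in any case is not the intended route.

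The paper's argument avoids Schoenflies entirely by a simple trick: rather than looking at the complement of the embedded $0$-handle, take a cone on $K$ to a point $p$ \emph{inside} the embedded $B^4$. This cone is a PL disk, smooth away from the cone point $p$. Now remove a small standard ball $\nu(p)$ around $p$. The complement $S^4 \setminus \nu(p)$ is automatically a standard smooth $B^4$ (no Schoenflies needed --- you removed a small round ball from $S^4$). The boundary sphere $\partial\nu(p) \cong S^3$ meets the cone in a copy of $K$, and the union of the cone outside $\nu(p)$ with the core of the $2$-handle is a smooth properly embedded disk in this standard $B^4$ with boundary $K$. That is the slice disk.

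The key insight you were missing is that you do not need the ``correct'' $B^4$ (the complement of the image of the $0$-handle); any standard $B^4$ in $S^4$ whose boundary sphere meets the image of $X_0(K)$ in a copy of $K$ will do, and a tiny ball around an interior cone point provides one for free.
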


\begin{remark}
    Note that smooth embeddings are automatically topologically locally flat embeddings. 
    Because the latter term is too long, we usually just say `topologically' or `locally flatly' embedded.

    \alert{However}, do not take this to mean `homeomorphically'. For example, for any knot $K$, there is a PL-disk in $B^4$ with boundary $K$: just cone $K$ to a point. In general, the cone point of this disk is not locally flat!
\end{remark}

\begin{proof}[Sketch of proof of Trace Embedding Lemma]

\note{The proof is similar in both categories; just replace the word `smooth' with `locally flat' throughout.}

\fbox{$\Longrightarrow$}

Suppose $K$ is slice; then there exists a slice disk $D$:

\begin{center}
    \includegraphics[width=3in]{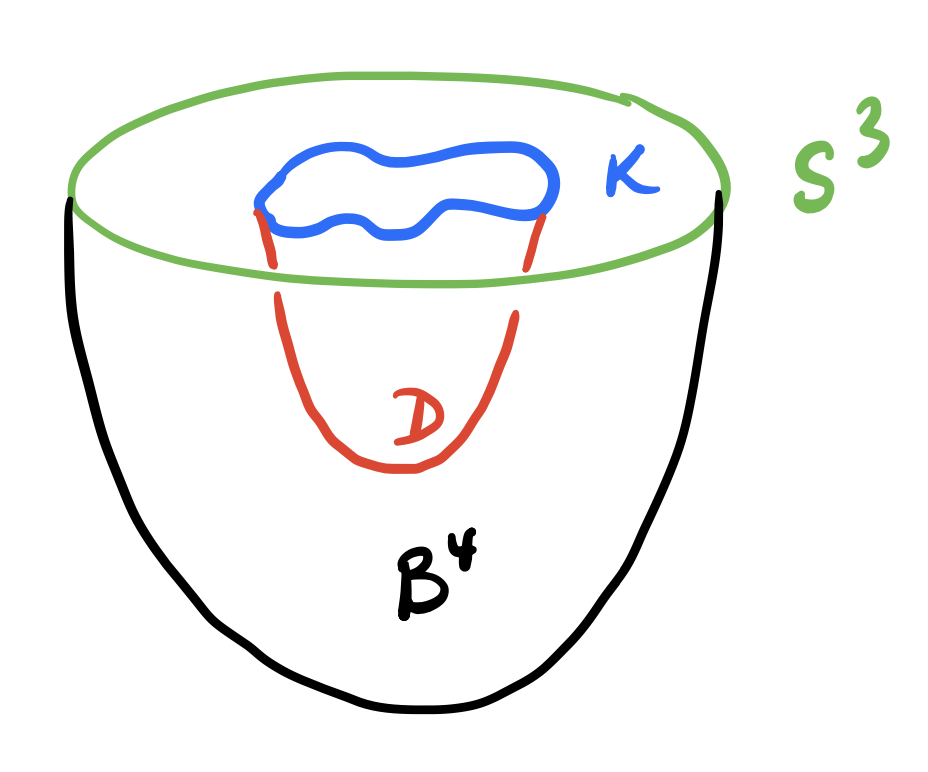}
\end{center}

Attaching a neighborhood of the disk to a $B^4$ along a neighborhood of $K$ as shown below gives us an embedded $X_0(K)$ inside $S^4$:

\begin{center}
    \includegraphics[width=2.5in]{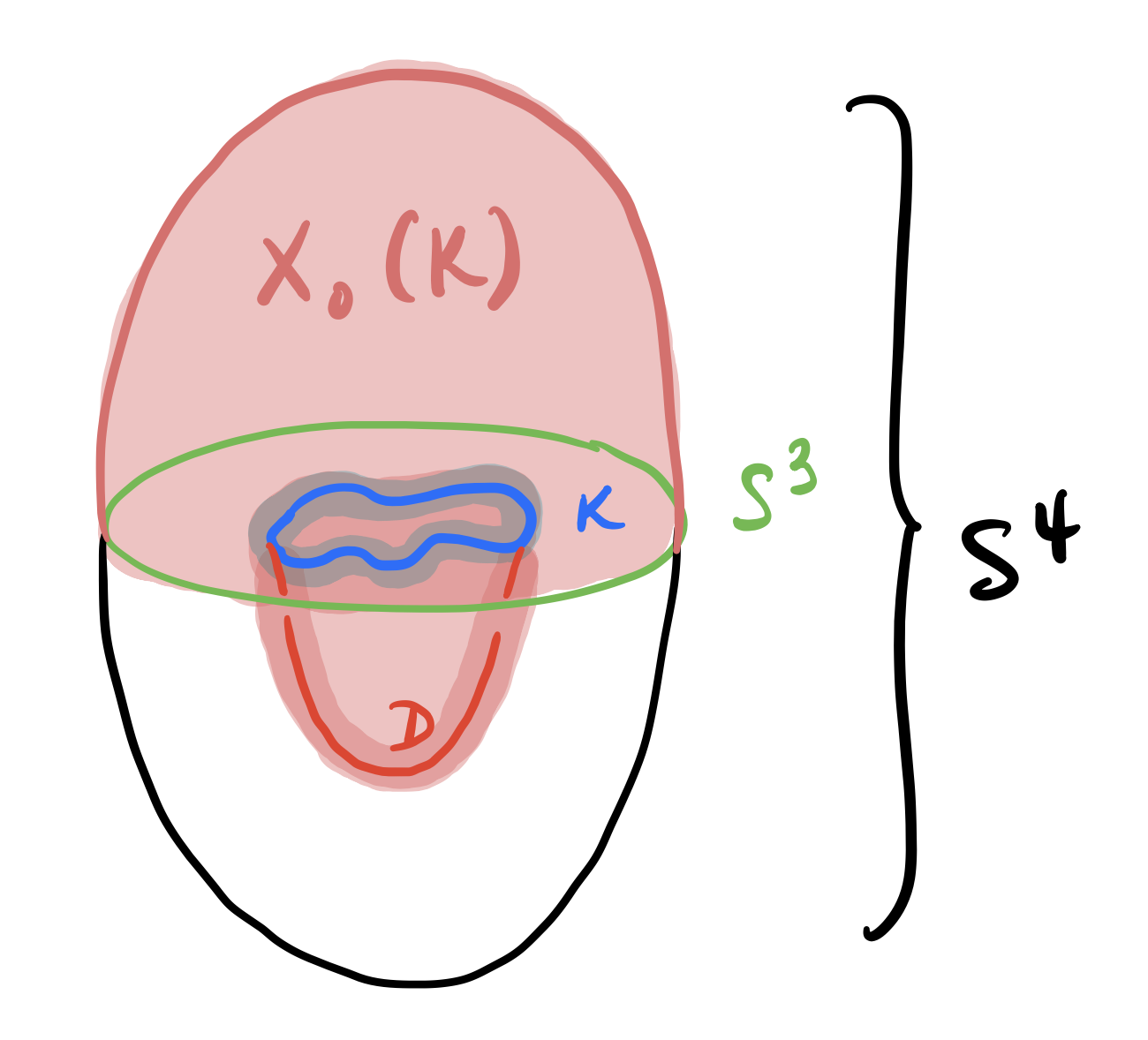}
\end{center}

\note{Note that the framing of the attached handle is indeed the surface framing $0$; this is clear if you are familiar with framings as obstruction classes. 
To see this more intuitively, you could imagine floating $D$ up to the $S^3$ boundary to get an immersed surface with boundary $K$, and then modify the self-intersections to get some (higher genus) Seifert surface for $K$.}

\fbox{$\Longleftarrow$}

Now assume that there exists an embedding $\phi: X_0(K) \into S^4$. 
Consider the cartoon below. 

\begin{center}
    \includegraphics[width=2in]{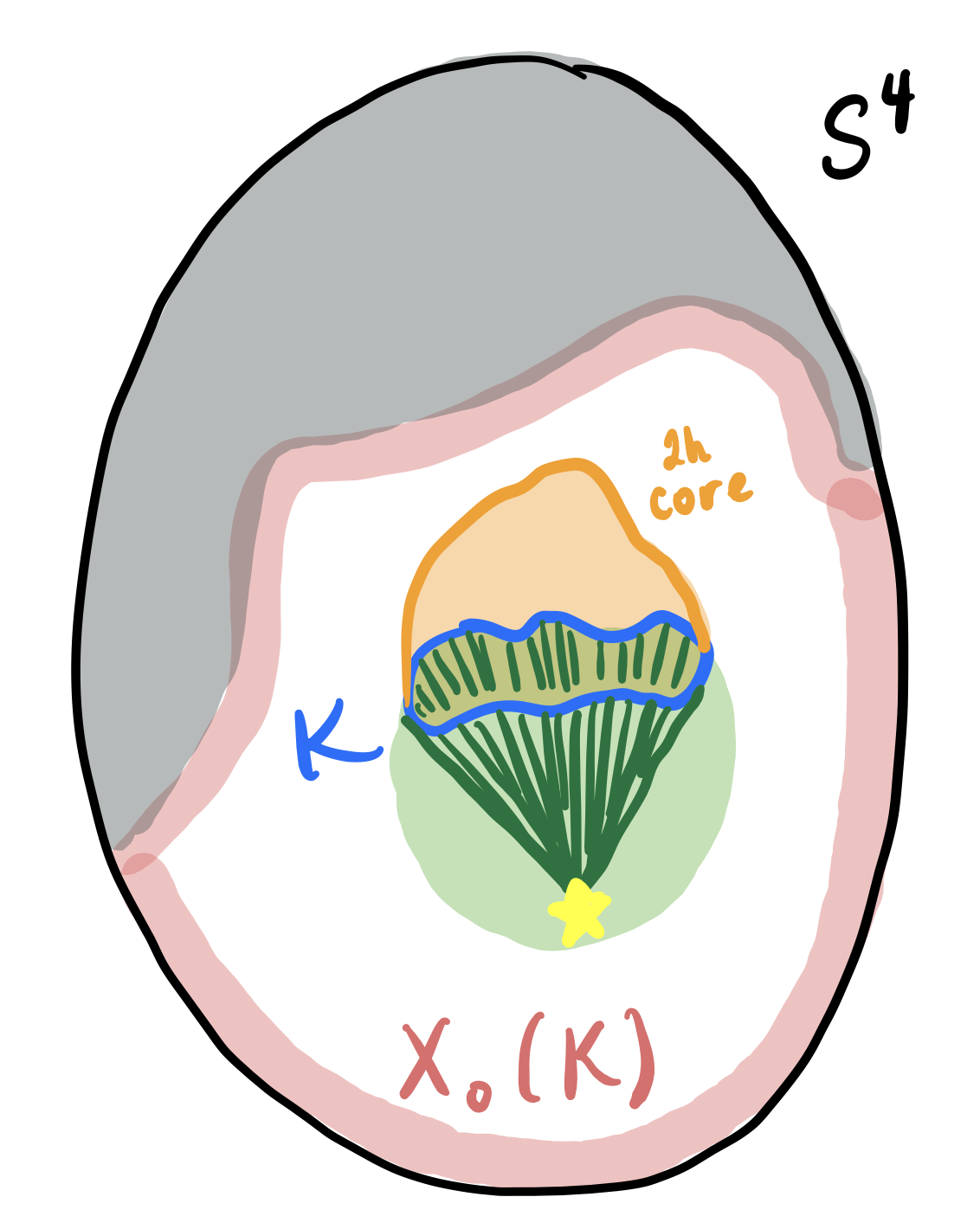}
\end{center}

\begin{itemize}
    \item The orange is the embedded image of the core of the 4D 2-handle. 
    \item The light green is the embedded image of the 4D 0-handle. This embedding might not look as nice is as it is drawn! 
    \item The dark green is the embedded image of a cone on $K$; it is smooth away from the yellow star, the cone point $p$.
\end{itemize}

Let $\nu(p) $ be a very small neighborhood of the cone point $p$. 
\note{This neighborhood looks like a standard $B^4$.}

Then, the intersection of the dark green cone and $\partial \nu(p) \cong S^3$ is a copy of the knot $K$.
Also, the dark green cone, outside of $\nu(p)$, together with the core of the 2-handle, form a slice disk for this copy of $K$. 
\end{proof}

\subsubsection{Detour: Satellites and cables}

\begin{definition}
    Let $P$ be a link in the solid torus $S^1 \times D^2$, and let $K$ be a knot in $S^3$. 
    Let $\nu(K)$ be a \emph{tubular neighborhood} of $K$ (a thickening of $K$).

    Both $S^1 \times D^2$ and $\nu(K)$ are solid tori. 
    The former comes with a canonical framing, and $\nu(K)$ has the canonical Seifert framing. 
    
    Let $\phi: S^1 \times D^2 \to \nu(K)$ be a  diffeomorphism that preserves our canonical framings. 
    
    The \emph{satellite} $P(K)$ is the image of $K$ under $\phi$. 

    In the context of the satelliting construction, we call $P$ the \emph{pattern} and $K$ the \emph{companion knot}. 

    We can also choose a different framing for the satelliting procedure by pre-composing $\phi$ with some Dehn twists. 
\end{definition}

\begin{example}
\label{eg:whitehead-double-trefoil}
Here is an example pattern $P = \Wh$ and companion $K = 3_1$:
    \begin{center}
    \includegraphics[width=4in]{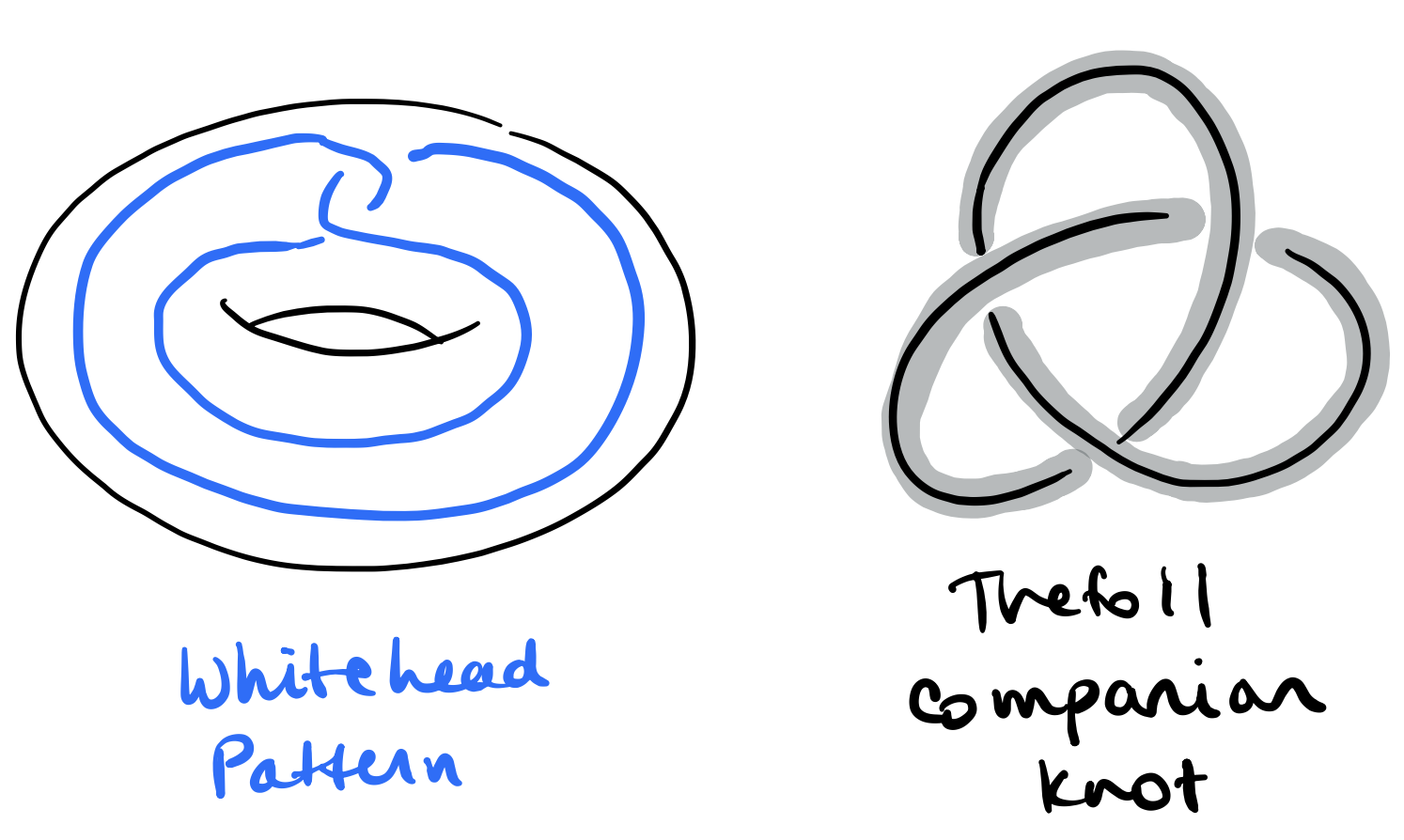}
    \end{center}

The trefoil diagram shown above has writhe $w=3$. 
It turns out that this means that the blackboard (flat-on-paper) framing is 3 more (positive Dehn twists) than the Seifert framing. 

Here is the blackboard framing:
\begin{center}
    \includegraphics[width=2in]{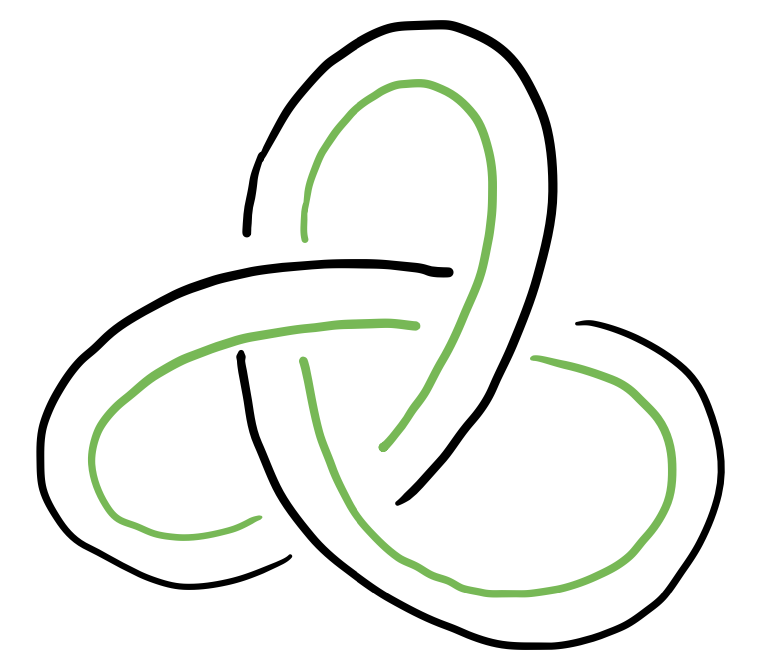}
\end{center}

Here is a Seifert surface for the trefoil:
\begin{center}
    \includegraphics[width=2in]{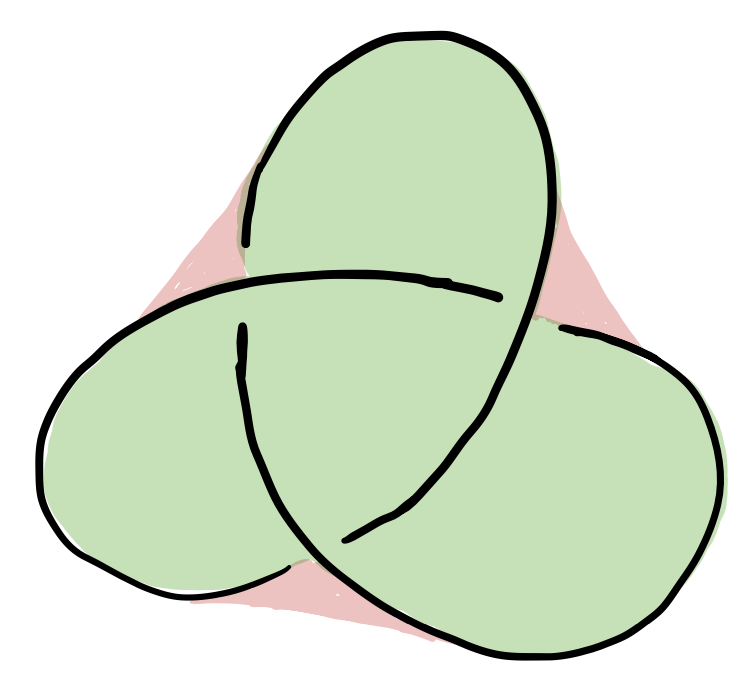}
\end{center}

Here is the Seifert-framed pushoff $K^+$ in blue: 
\begin{center}
    \includegraphics[width=2in]{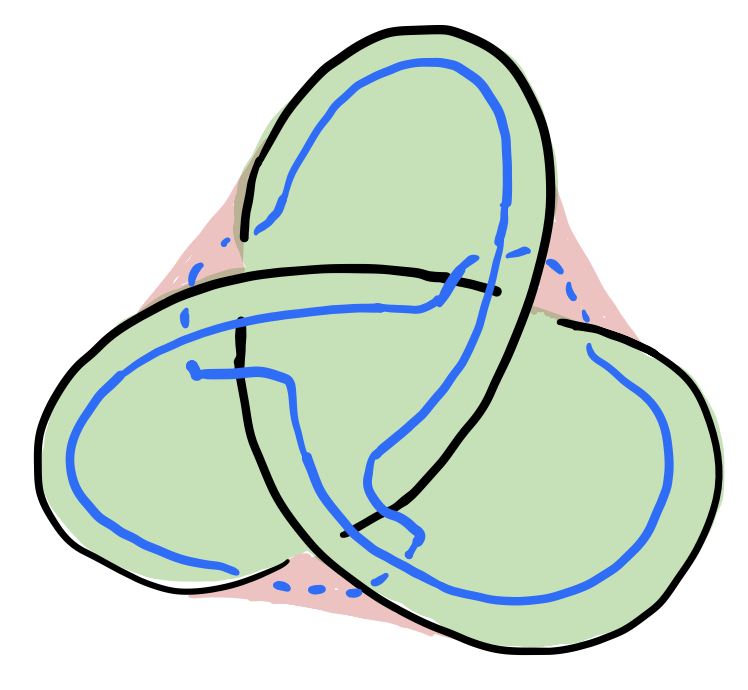}
\end{center}

Here are $K$ and $K^+$ without the Seifert surface:
\begin{center}
    \includegraphics[width=2in]{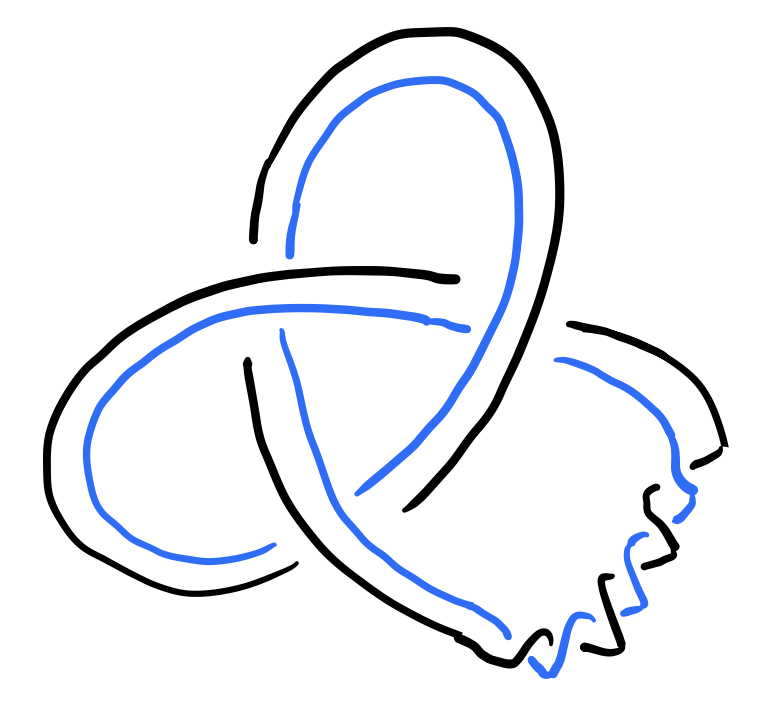}
\end{center}

We can use the following shorthand for the above diagram:
\begin{center}
    \includegraphics[width=2in]{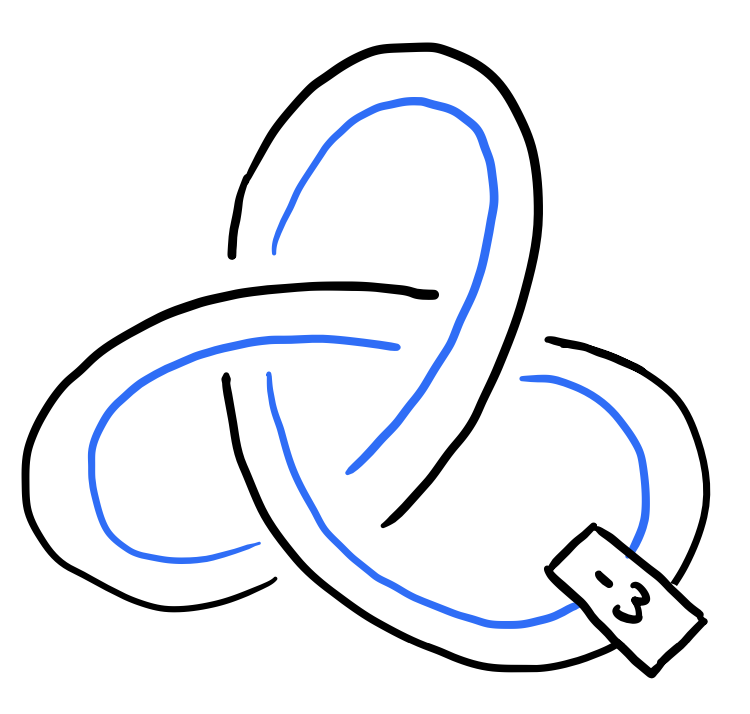}
\end{center}

So, the (Seifert-framed) satellite $\Wh(3_1)$ is the following knot:

\begin{center}
    \includegraphics[width=2in]{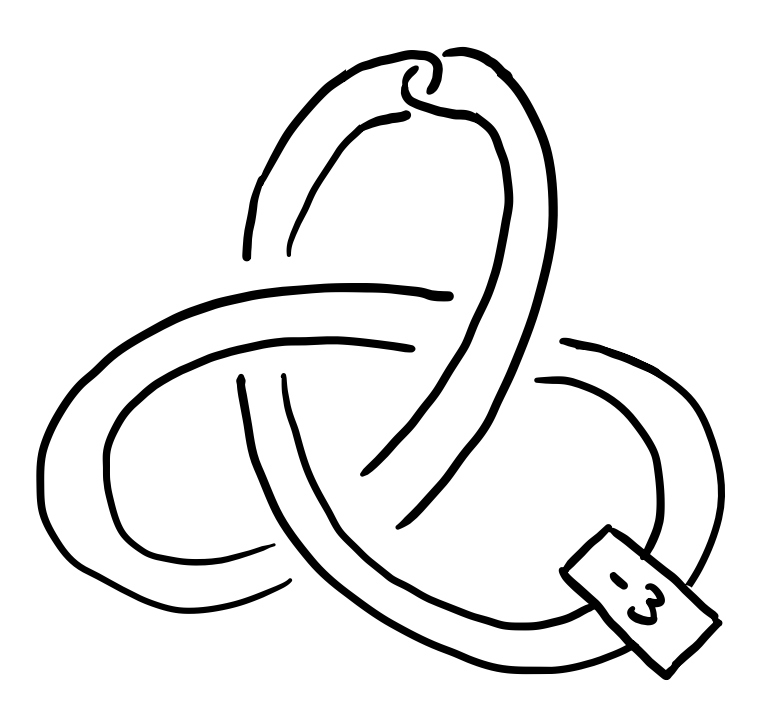}
\end{center}
\end{example}

\subsubsection{Back to exotic $\R^4$'s}

To build an exotic $\R^4$, we will make use of a knot $K$ that is 
\begin{itemize}
    \item topologically slice 
    \item smoothly \emph{not} slice. 
\end{itemize}

We can find such knots by applying the following criteria:
\begin{itemize}
    \item The Alexander polynomial of $K$ is trivial: $\Delta_K(t) \overset{\cdot}{=} 1$.
    \note{Freedman showed that if $\Delta_K(t) \overset{\cdot}{=} 1$, then $K$ is topologically slice \cite{Freedman-Quinn-book}.}
    
    \item The Rasmussen invariant of $K$ is nontrivial: $s(K) \neq 0$. 
    \note{Rasmussen showed, as we know, that $s(K) \neq 0$ means $K$ is \emph{not} smoothly slice \cite{Rasmussen-s}.}
\end{itemize}

For example, one could choose the 2-twisted
positive Whitehead double of the right-handed trefoil, $\Wh^{+2}(3_1)$. 

If you know how to compute the Alexander polynomial, you can fairly quickly convince yourself that the Whitehead double of \emph{any} knot has trivial Alexander polynomial. (Actually, any framing will work too; see Exercise \ref{ex:whitehead-alexander-trivial}.)

On the other hand, Whitehead doubling generally complicates knots, and complicated knots are less likely to be smoothly slice. 
Indeed, Hedden--Ording showed that $s(\Wh^{+2}(3_1)) \neq 0$ (Theorem 1.1 of \cite{Hedden-Ording-s-tau}).

\begin{exercise}
\label{ex:whitehead-alexander-trivial}
Let $K$ be a knot in $S^3$, and 
let $\Wh^n(K)$ be the $n$-framed Whitehead satellite of $K$.
The Alexander polynomial of $\Wh^n(K)$ is trivial
(i.e.\ $\Delta_{\Wh^n(K)}(t) \doteq 1$). 
\end{exercise}

The following can be considered a corollary of Hedden--Ording's example, or of Rasmussen's slice genus bound. 
The proof uses some facts we haven't discussed in class, though.

\begin{corollary}
There exist exotic $\R^4$'s. 

\begin{proof}
(This proof follows Shintaro Fushida-Hardy's notes from Ciprian Manolescu's course \cite{SFH-MAT283A-notes}.)

Let $K$ be a knot with $\Delta_K(t) \overset{\cdot}{=} 1$ but $s(K) \neq 0$.

Since $K$ is topologically slice, $X_0(K)$ topologically embeds in $S^4$:

\begin{center}
    \includegraphics[width=2.5in]{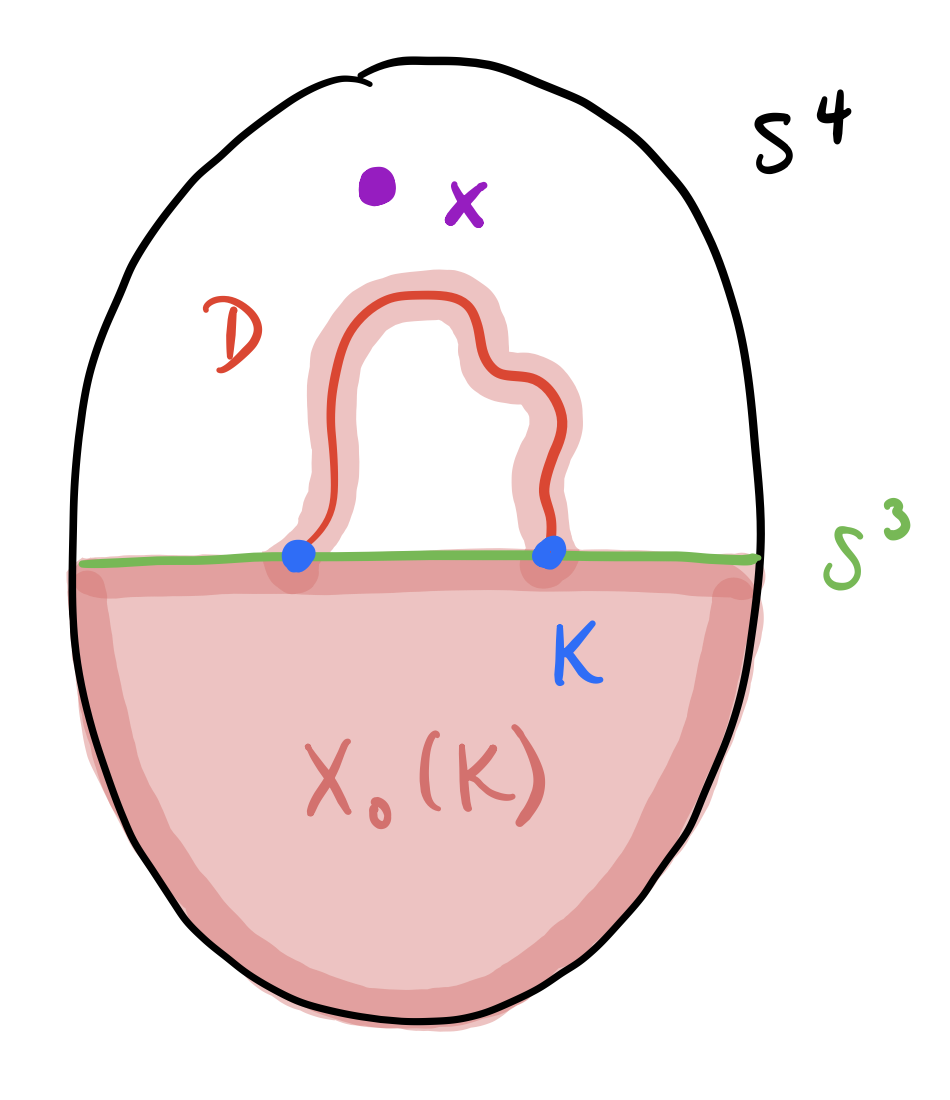}
\end{center}

Pick a point $x \in S^4 - X_0(K)$; then $S^4 - \{x\}$ is a topological $\R^4$.
We will give this space a smooth structure as follows. 

\begin{itemize}
    \item Let
    \[
        Z = S^4 - X_0(K) - \{x\}.
    \]
    Quinn showed that every \emph{open} 4-manifold admits a smooth structure \cite[Corollary 2.2.3]{quinn-ends-III}.
    Pick any such smooth structure; we now regard $Z$ as a smooth manifold.
    \item $X_0(K)$ comes with a smooth structure induced by the handle decomposition.
    \item The shared boundary $\partial Z \approx \partial X_0(K)$ is a 3-manifold, and actually has a unique smooth structure \cite{Moise-3-manifolds}; this means that there exists diffeomorphism $\phi: \partial Z \to \partial X_0(K)$.
\end{itemize}

Now build the open smooth 4-manifold
\[
    R = Z \cup_{\phi} X_0(K).
\]
Topologically, it is an $\R^4$. 
However, it cannot be the standard smooth $\R^4$ because $X_0(K)$ embeds into it smoothly. 
Therefore $R$ and $\R^4$ are an exotic pair.

\end{proof}
\end{corollary}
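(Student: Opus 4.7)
The plan is to exhibit a specific smooth 4-manifold $R$ that is homeomorphic to $\R^4$ but not diffeomorphic to the standard $\R^4$, using a knot $K$ whose smooth and topological sliceness disagree. First I would select such a $K$: by Freedman's theorem, any knot with trivial Alexander polynomial is topologically slice, and by Rasmussen's slice genus bound (Theorem \ref{thm:s-invt-genus-bound}), any knot with $s(K) \neq 0$ is not smoothly slice. The example $K = \Wh^{+2}(3_1)$ (Whitehead doubles have trivial Alexander polynomial by Exercise \ref{ex:whitehead-alexander-trivial}, and Hedden--Ording computed $s(K) = 2 \neq 0$) satisfies both conditions.

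Next I would apply the Trace Embedding Lemma in both directions. Since $K$ is topologically slice, $X_0(K)$ embeds topologically locally flatly into $S^4$; fix such an embedding and also pick a point $x \in S^4 \setminus X_0(K)$. Simultaneously, since $K$ is \emph{not} smoothly slice, $X_0(K)$ does \emph{not} smoothly embed into $S^4$ (equivalently, not into the standard $\R^4$, since any smooth embedding into $S^4$ misses some point). This asymmetry is the heart of the argument.

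Then I would build the candidate smooth $\R^4$ by splitting $S^4 \setminus \{x\}$ along the topological 3-manifold $\partial X_0(K)$ into the inside piece $X_0(K)$ and the complementary open piece $Z := S^4 \setminus (X_0(K) \cup \{x\})$, and re-assemble each piece smoothly. The smooth structure on $X_0(K)$ comes for free from its Kirby/handle description. For $Z$, I would invoke Quinn's result that every open topological $4$-manifold admits a smooth structure and pick any such structure. The key compatibility step is along the boundary: $\partial Z$ and $\partial X_0(K)$ are homeomorphic $3$-manifolds, and by Moise's theorem $3$-manifolds carry a unique smooth structure, so there is a diffeomorphism $\phi: \partial Z \to \partial X_0(K)$ allowing me to form the smooth glueing $R := Z \cup_\phi X_0(K)$.

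Finally I would argue that $R$ is the desired exotic $\R^4$. By construction $R$ is homeomorphic to $S^4 \setminus \{x\} \approx \R^4$, but it contains $X_0(K)$ as a smooth submanifold (by the way it was glued). If $R$ were diffeomorphic to the standard $\R^4 \subset S^4$, then $X_0(K)$ would embed smoothly in $S^4$, contradicting the Trace Embedding Lemma together with $s(K) \neq 0$. The main obstacle I anticipate is verifying the smooth compatibility across the gluing, i.e.\ that the smooth structure on $R$ is globally well-defined and that $X_0(K) \hookrightarrow R$ is genuinely a \emph{smooth} embedding (not merely topological); this is exactly where Moise's uniqueness of smooth structures on $3$-manifolds is essential, and it is the only nontrivial input beyond the knot-theoretic computation.
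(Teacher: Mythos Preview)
Your proposal is correct and follows essentially the same route as the paper: choose a knot with $\Delta_K \doteq 1$ and $s(K)\neq 0$, use the topological Trace Embedding Lemma to place $X_0(K)$ in $S^4$, smooth the complement $Z$ via Quinn, glue along the boundary via Moise, and conclude exoticness from the smooth Trace Embedding Lemma. Your write-up is in fact slightly more explicit than the paper's about invoking the Trace Embedding Lemma in the contrapositive direction to obstruct the standard smooth structure.
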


\subsection{Skein Lasagna modules from Khovanov homology}

For many decades (and counting), many state-of-the-art tools for detecting exotic 4-manifold pairs made use of gauge theory. 
In a way, gauge theory makes explicit use of the smooth structures on 4-manifolds by capturing the differential topology of the manifold; a priori, computing these invariants require solving partial differential equation.

On the other hand, Morrison--Walker--Wedrich's \emph{skein lasagna modules} feel radically different. 
There are no PDEs present, and the sensitivity to smooth structure seems to only rely on the fact that Khovanov homology sees smooth structure.

\begin{remark}
There are probably many viewpoints out there, and I should really ask around to see what how others would explain why Khovanov homology `sees' smooth structure. 
But right now, my point of view is that Khovanov homology `sees' smooth structure because the TQFT requires cobordisms to be decomposable into elementary cobordisms; this decomposition into smooth pieces implies that a functorial invariant like Khovanov homology really requires smooth structure. 
\end{remark}

Let us first motivate the definition of skein lasagna modules by looking at the 3-dimensional inspiration for this construction: skein modules. 
In particular, we will look at a particular example that makes use of what we have already seen in this course: the \emph{Kauffman bracket skein module}.

\begin{definition}
    Let $F$ be a surface. 
    The \emph{Kauffman bracket skein module} of $F$, denoted by $\KBSM(F)$ is the $\Z[A, A\inv]$-module 
    \begin{itemize}
        \item generated by all link diagrams drawn on $F$
        \item modulo the local relations from the Kauffman bracket polynomial:
        \[
            \langle \fullmoon \rangle = 1
            \qquad
            \langle \fullmoon \sqcup D \rangle = 
            (-A^{-2} - A^2) \langle D \rangle
            \qquad
            \langle \crossing \rangle
            = A \langle \vertres \rangle + A\inv \langle \horizres \rangle
        \]
    \end{itemize}
    \note{This is the original convention and differs  slightly from our Definition \ref{def:kauffman-bracket}.}
\end{definition}

We view $\KBSM(F)$ as a 3-manifold invariant, because it captures ``the knot theory'' in the manifold $F \times I$.

\begin{remark}
\begin{enumerate}
    \item For the simplest version of this invariant, we can evaluate $A$ at a root of unity, such as $A=1$.
    If we set $A = 1$, then $\KBSM(D^2, A=1) \cong \Z$.
    \item Since $D^2 \cong \R^2$, $\KBSM(D^2)$ captures the knot theory in $\R^3$, which is the same as the knot theory in $S^3$.
    \note{This is good, because if we want a 3-manifold invariant, we probably want it to be quite simple on the simplest of 3-manifolds.}
    \item Recall that the Kauffman bracket is not a knot invariant in our usual sense! Indeed, to recover the Jones polynomial, we actually had to multiply by a monomial determined by the writhe of the diagram. 
    But this just means that the Kauffman bracket polynomial is a \emph{framed link invariant}, which is a valid notion of `knot theory' in a 3-manifold. In fact, one could argue that this is more natural, since the framing of a framed knot tells us whether it can bound a surface embedded in the 3-manifold, which is more geometric information.
\end{enumerate}
    
\end{remark}

In this section, we will discuss Morrison--Walker--Wedrich's skein lasagna modules from $\KhR_2$, which is the Khovanov--Rozansky homology categorifying the $\essl_2$ link polynomial (a.k.a.\ the Jones polynomial). This differs slightly from $\Kh$ because it is a framed link invariant, and because the quantum degree is reversed by convention (so $\deg_q(X) = 2$ in $\KhR_2$). 
See Section 3.2 of \cite{Sullivan-Zhang-lasagna} for more details on various conventions appearing in the literature.

Let $W$ be a 4-manifold, possibly with boundary, and let $L \subset \partial W$ be a possibly empty link in the boundary of $W$. 

The skein lasagna module of the pair $(W;L)$ is a bigraded $\Q$ vector space
\[
    \lasagna(W;L)  = \bigoplus_{i,j, \alpha} \CS_{0,i,j}^{2,\alpha}(W;L)
\]
where
\begin{itemize}
    \item $2$ indicates that we are working with $\KhR_2$,
    \item $i$ is the homological grading,
    \item $j$ is the quantum grading, and
    \item $\alpha \in H_2(W,L)$ is a relative 2nd homology class.
\end{itemize}
\note{Also, $0$ is the blob grading, which we will not discuss. See \cite{MWW-lasagna}.}

This module is defined similarly to skein modules for 3-manifolds: there is a ridiculously large set of generators, but we then impose some local relations.

\mz{At this point, please clear the variable $i$ so that we can use it to index things again.}

\begin{definition}
\note{Refer to the cartoon below while reading the definition.}

\begin{center}
    \includegraphics[width=3in]{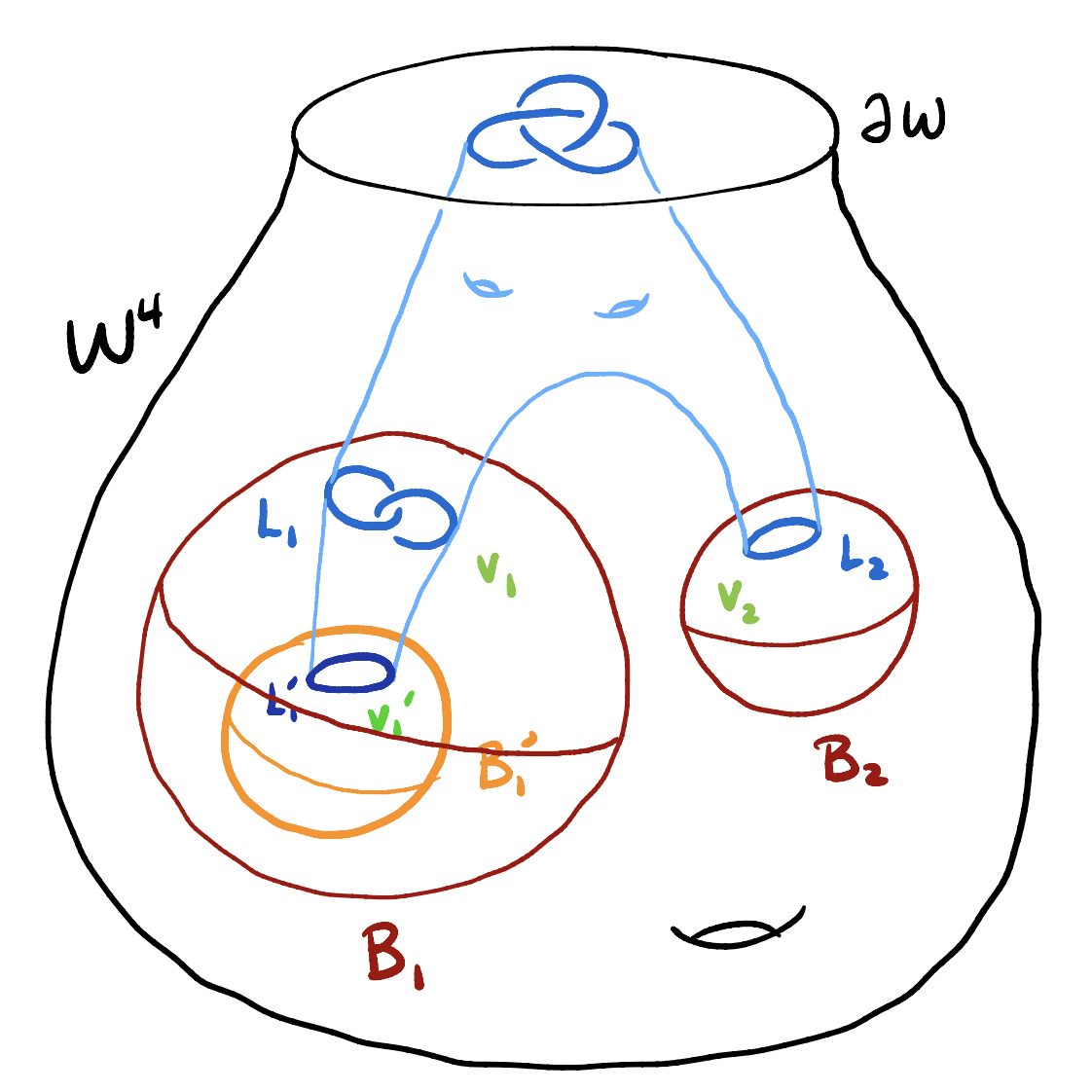}
\end{center}

    A \emph{lasagna filling} for the pair $(W,L)$ is the data
    \[
        \lasfilling = ( \Sigma, \{B_i, L_i, v_i\}_{i=1}^k)
    \]
    where
    \begin{itemize}
        \item each $B_i$ is a $B^4$ embedded in the interior of $W$
        \item $L_i$ is a link in $\partial B_i \cong S^3$ \\
        \note{We can't compute the Khovanov(-Rozansky) homology for $L \subset \partial W$ because it's some random 3-manifold, but we sure can compute Khovanov homology for the $L_i$ in these $S^3$ boundaries of 4-balls!}
        \item $v_i$ is a class in $\KhR_2(L_i)$ 
        \item $\Sigma$ is a framed, oriented, possibly disconnected surface properly embedded in $W \backslash \bigcup_i \overset{\circ}{B_i}$ with boundary $-L \sqcup \coprod_i L_i$ (with the correct framings!)
    \end{itemize}
\end{definition}

\begin{definition}
    The \emph{$\KhR_2$ skein lasagna module of the pair $(W;L)$} is 
    \[
        \lasagna(W;L)  = \Z \langle \text{lasagna fillings }\lasfilling\text{ for } (W; L) \rangle / \sim
    \]
    where the relations $\sim$ are generated by the following imposed equivalences:
    \begin{itemize}
        \item (Isotopy) $\Sigma$ is considered up to isotopy rel boundary. 
        \note{If you're concerned about isotopy of the $L_i$, note that the next relation takes care of this.}
        \item (Nesting) 
        \note{Refer to the previous cartoon.}
        Suppose we have two lasagna fillings 
        \[
            \lasfilling = ( \Sigma, \{B_i, L_i, v_i\}_{i=1}^k)
        \qquad 
        \text{and} 
        \qquad
            \lasfilling = ( \Sigma', \{B_i', L_i
            , v_i'\}_{i=1}^k)
        \]
        that are identical except within a ball $B_1$ where $B_1' \subset B_1$.
        That is, $(B_i, L_i, v_i) = (B_i', L_i', v_i')$ for all $i\neq 1$, and $\Sigma' \setminus B_1 = \Sigma$.

        Let $\Sigma'' = \Sigma' \setminus \Sigma$. 
        If we also have $\KhR_2(\Sigma'')(v_1') = v_1$, then 
        $\lasfilling' \sim \lasfilling$. 
    
        \note{In basic point-set topology, we compare open sets by looking at smaller open sets inside intersections. This is the same idea; we require also that the labels are coherent with respect to the TQFT.}
    \end{itemize}
\end{definition}

Not many examples have been computed thus far; most computations use Manolescu--Neithalath's \emph{cabled Khovanov homology} (\S \ref{sec:cabled-Kh}), to be discussed shortly.
Here is perhaps the most important example that you can compute right now.

\begin{exercise}
    Let $L \subset S^3$ be a link. Prove that 
    \[
        \lasagna(B^4; L) \cong \KhR_2(L).
    \]
\end{exercise}

\subsection{Cabled Khovanov homology}
\label{sec:cabled-Kh}

In \cite{MN22}, which is the reference for this subsection, Manolescu--Neithalath define \emph{cabled Khovanov-Rozansky homology} and show that it is isomorphic to the skein lasagna module for the pair $(W; \emptyset)$, where $W$ is a 2-handlebody (i.e.\ no 1- or 3-handles).

\begin{remark}
    There are of course many more results in \cite{MN22}. They can also handle certain types of links $L \subset \partial W$. See \cite{MWWhandles} for how to work with more links and other handles. 
\end{remark}

\begin{warning}
    We omit all grading information below. Remember that these are all graded invariants! So, for example, there will be quantum degree shifts that make all the cobordism maps grading-preserving.
\end{warning}

\alert{Be aware} that the link $\CL$ in the following theorem is \emph{not} a link in the boundary of $W$, but rather the framed link along which the 2-handles of $W$ are attached.

\begin{theorem}[see \cite{MN22}, Theorem 1.1]
Let $\CL$ be a framed $\ell$-component link in $S^3$, and let $W$ be the 4-manifold obtained by attaching 4D 2-handles along $\CL$. 
Then the cabled Khovanov homology of $\CL$ is isomorphic to the skein lasagna module of $W$. 
That is, for  $\alpha \in \Z^\ell \cong H_2(W;\Z)$, there is a graded isomorphism
\[
    \cabledKhR_{2,\alpha}(\CL) \cong \CS_0^{2,\alpha}(W; \emptyset).
\]
\end{theorem}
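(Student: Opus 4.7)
The plan is to build mutually inverse graded homomorphisms
\[
\Psi: \cabledKhR_{2,\alpha}(\CL) \to \CS_0^{2,\alpha}(W;\emptyset), \qquad
\Phi: \CS_0^{2,\alpha}(W;\emptyset) \to \cabledKhR_{2,\alpha}(\CL),
\]
exploiting the handle decomposition $W = B^4 \cup (\text{2-handles along } \CL)$. The key geometric input is that the cocores of the 2-handles form a collection of properly embedded disks $\Delta_1,\dots,\Delta_\ell \subset W$ whose boundaries are meridians of the components of $\CL$ pushed into $\partial W$; any class $\alpha \in H_2(W;\Z) \cong \Z^\ell$ is then detected by signed intersection with these cocores.

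For $\Psi$, a class in $\cabledKhR_{2,\alpha}(\CL)$ is represented by some $v \in \KhR_2(\CL_n^\alpha)$, where $\CL_n^\alpha$ is a cable of $\CL$ with net oriented multiplicity vector $\alpha$. I send $v$ to the lasagna filling whose single inner ball $B$ is a small $4$-ball inside the $0$-handle of $W$ with $\partial B \supset \CL_n^\alpha$ labelled by $v$, and whose surface $\Sigma$ is constructed by pairing up oppositely-oriented strands on each component by small bands inside $B^4 \setminus B$ and capping the remaining net $\alpha_i$ strands with parallel copies of $\Delta_i$. By construction, $[\Sigma] = \alpha$.

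For $\Phi$, start with a lasagna filling $\lasfilling$. First, apply the nesting relation together with projective functoriality of $\KhR_2$ to consolidate all inner balls into a single ball $B \subset B^4 \subset W$. Perturb $\Sigma$ so that it meets each cocore $\Delta_i$ transversely, and cut $\Sigma$ along the circles $\Sigma \cap \partial \nu(\Delta_i)$. This decomposes $\Sigma$ into an \emph{inner} piece $\Sigma_{\mathrm{in}} \subset B^4 \setminus B$ whose outer boundary is a cable $\CL_n^\alpha \subset \partial B^4$, together with an \emph{outer} piece consisting of parallel copies of the cocore disks. Reading $\Sigma_{\mathrm{in}}$ as a cobordism and pushing the label $v$ through via the TQFT produces a class in $\KhR_2(\CL_n^\alpha)$ that represents the output of $\Phi$.

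The main obstacle is well-definedness of $\Phi$, which requires independence from three kinds of choice: (i) the consolidation of inner balls into one, handled by projective functoriality combined with the nesting relation; (ii) the transverse perturbation of $\Sigma$ away from $\bigcup_i \Delta_i$, handled by isotopy invariance of $\KhR_2$; and, most substantively, (iii) the size of the tubular neighborhoods $\nu(\Delta_i)$, where enlarging $\nu(\Delta_i)$ corresponds precisely to pre-composing with a ``cup'' cobordism that adds an oppositely-oriented pair of cable strands. This last point is exactly the direct-system map defining $\cabledKhR_{2,\alpha}$ as a colimit, so the output class in the colimit is unambiguous. Once $\Phi$ is well-defined, $\Phi \circ \Psi = \mathrm{id}$ is almost immediate because $\Psi$ produces a filling already in the standard form to which $\Phi$ reduces, and $\Psi \circ \Phi = \mathrm{id}$ follows by running the decomposition of $\Sigma$ in reverse. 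Grading bookkeeping (homological, quantum, and the $H_2$-grading $\alpha$, tracked by signed cocore intersections) runs along in parallel, with the quantum shifts absorbed into $\chi(\Sigma_{\mathrm{in}})$ via Lemma \ref{lem:cobordism-degree-with-shifts}.
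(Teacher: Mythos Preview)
The paper does not prove this theorem; it is quoted from \cite{MN22} and followed only by a worked example illustrating the directed system that defines $\cabledKhR_{2,\alpha}$. So there is no in-paper proof to compare against. Your outline does follow the Manolescu--Neithalath strategy, but there are two concrete issues.

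First, you have conflated cores and cocores of the 2-handles. You correctly note that the cocores $\Delta_i$ have meridional boundary in $\partial W$ and that algebraic intersection with $\Delta_i$ reads off $\alpha$. But cable strands of $K_i$ are framed parallel pushoffs of $K_i$; the disks in $W$ that cap them are parallel copies of the \emph{core} of the $i$-th 2-handle, not the cocore. Likewise, after cutting $\Sigma$ along $\partial\nu(\Delta_i)$, the outer pieces are small disks transverse to $\Delta_i$, hence parallel to the core direction, not ``parallel copies of the cocore disks''. The picture you actually want is: isotope $\Sigma$ so that inside each 2-handle it consists of parallel copies of the core disk, whence $\Sigma \cap B^4$ is a cobordism in $B^4\setminus B$ whose outer boundary on $\partial B^4$ is a cable of $\CL$.

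Second, you do not address the symmetric-group coinvariants built into the definition of $\cabledKhR_{2,\alpha}$ (step (3) in the paper's description). Different orderings of the cable strands, or different choices of which antiparallel pair was ``added last'', produce isotopic lasagna fillings but a priori different classes in $\KhR_2(\CL_n^\alpha)$; passing to $S_{c_1}\times\cdots\times S_{c_\ell}$-coinvariants is precisely what makes $\Phi$ land in a well-defined target. Relatedly, what changes the number of cable strands is not enlarging $\nu(\Delta_i)$ but rather introducing a canceling pair of transverse intersections of $\Sigma$ with $\Delta_i$; that is the geometric move matching the dotted-annulus maps in the directed system.
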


In this section, we give a sense of how Manolescu--Neithalath's cabled Khovanov homology \cite{MN22} is computed, via an example computed independently in \cite{Sullivan-Zhang-lasagna} and \cite{Ren-Willis-lasagna}.

Throughout, refer to the drawing below:

\begin{equation}
\label{eq:cabled-Kh-example}
\includegraphics[width=.9\textwidth]{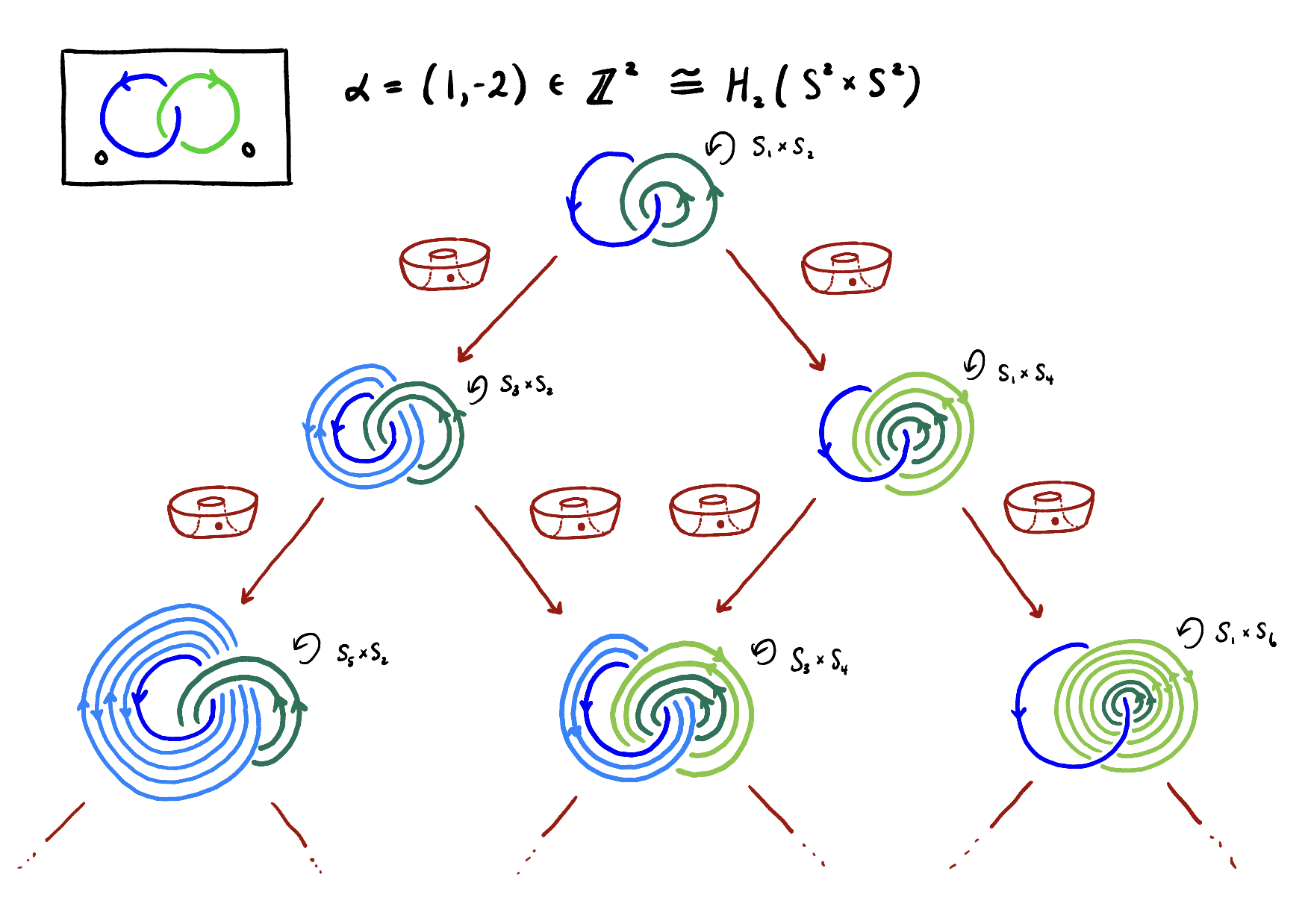}
\end{equation}

\begin{definition}
    Let $K$ be a framed, oriented knot in $S^3$, and let $c \in \Z$. The \emph{$c$-cable of $K$} is the $c$-component link obtained by satelliting $K$ with the $c$-component identity braid pattern $\widehat{\idbraid_c} \subset S^1 \times D^2$ according to the framing of $K$. 

    Let $\CL$ be a framed, oriented link in $S^3$ with $\ell$ components, indexed as 
    \[
        \CL = K_1 \cup K_2 \cup \cdots \cup K_\ell.
    \]
    Let $\alpha = (c_1, c_2, \cdots, c_\ell) \in \Z^\ell$. 
    The \emph{$\alpha$-cable of $\CL$} is the link obtained by replacing each $K_i$ with the $c_i$-cable of $K_i$. 
\end{definition}

\begin{warning}
    \alert{Be aware} that the following description is quite casual, and also only works for $\cabledKhR_2$, not the cabled $\KhR_N$ homologies for $N \geq 3$. 
\end{warning}

Below are the steps for computing $\cabledKhR_{2,\alpha}(W)$ where $W = S^2 \times S^2$ and $\alpha = (1,-2) \in H_2(W;\Z) \cong \Z^2$.

\begin{enumerate}

\item Start with the framed, oriented Hopf link $\CL$ as shown. 
The Kirby diagram fixes a choice of isomorphism between $H_2(W;\Z)$ and $\Z^2 = \Z_{\text{blue}} \oplus \Z_{\text{green}}$. 

\item Build a directed system of framed links and cobordisms, in the shape of the poset $\Z_{\geq 0}^2$, as follows.

At $(0,0)$, associate the $(1,-2)$-cabling of $\CL$. 
Given a link at $(x,y)$, 
\begin{itemize}
    \item associate the link at $(x+1,y)$ by adding two antiparallel components into the cable on the blue component.
    \item associate the link at $(x,y+1)$ by adding two antiparallel components into the cable on the green component.
\end{itemize}
The cobordism associated to the edges $(x,y) \to (x+1, y)$ and $(x,y) \to (x, y+1)$ are dotted annuli as shown in the drawing \eqref{eq:cabled-Kh-example}. 

\item Build a directed system $\mathscr{D}$ of graded vector spaces and linear maps in the same shape as above, by performing the following replacements:
\begin{itemize}
    \item The cabled link $L$ at $(x,y)$ has $c_1 = 1+2x$ blue components and $c_2 = -2+2y$ green components. Grigsby--Licata--Wehrli show that there is a $S_{c_1} \times S_{c_2}$ action on the links, induced by braiding the components with each other. 
    \note{By braiding, I mean passing hoops through each other; if you're not familiar with this, just imagine swapping components.}
    Replace the link $L$ at $(x,y)$ with the symmetrization of $\KhR_2(L)$ under the induced $S_{c_1} \times S_{c_2}$ action.
    \item Along edges, replace the dotted annuli with the corresponding induced symmetrized cobordism maps.
\end{itemize}
\note{To symmetrize in a sensible algebraic manner, we work over a field of characteristic $0$.}

\item Take the colimit of the directed system $\mathscr{D}$ to get the cabled Khovanov homology of $\mathcal{L}$ at homological level $\alpha$. 
\end{enumerate}

This description of the skein lasagna module of 4D 2-handlebodies makes some computations more tractable. 
For example, by computing the colimits of the diagonals of the drawing \eqref{eq:cabled-Kh-example} (and for all other homological levels $\alpha$), Ian and I were able to compute the $\KhR_2$ skein lasagna module of $S^2 \times S^2$:

\begin{theorem}[Sullivan-Zhang \cite{Sullivan-Zhang-lasagna}, Ren-Willis \cite{Ren-Willis-lasagna}]
    The $\KhR_2$ skein lasagna module of $S^2 \times S^2$ vanishes. 
\end{theorem}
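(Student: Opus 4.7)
The plan is to use Manolescu--Neithalath's theorem to reduce the computation to a colimit in cabled $\KhR_2$ homology, then to show that this colimit vanishes at every homological level $\alpha$.

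First, since the 0-framed Hopf link $\CL$ is a Kirby diagram for $S^2 \times S^2$, the cited theorem gives $\CS_0^{2,\alpha}(S^2 \times S^2) \cong \cabledKhR_{2,\alpha}(\CL)$ for each $\alpha = (c_1, c_2) \in H_2(S^2 \times S^2;\Z) \cong \Z^2$. So the task becomes showing that, for every such $\alpha$, the colimit of the directed system $\mathscr{D}_\alpha$ described in \S\ref{sec:cabled-Kh} vanishes. I would fix $\alpha$ and begin by identifying each object in $\mathscr{D}_\alpha$: the $(c_1 + 2x,\, c_2 + 2y)$-cable of the 0-framed Hopf link, together with its $\KhR_2$ homology symmetrized by the $S_{c_1 + 2x} \times S_{c_2 + 2y}$-action induced by braiding parallel strands within each cable. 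Since colimits in graded vector spaces commute with taking each fixed bigrading, it suffices to show that in every bigrading $(i,j)$ the directed system stabilizes to zero.

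Next, I would analyze the stabilization maps, which are the crucial input. The cobordism from $(x,y)$ to $(x+1,y)$ is a dotted annulus capping off one of the added antiparallel pairs on the blue component. Using the neck-cutting relation in (the $\KhR_2$ analogue of) Bar-Natan's category, this cobordism factors through a union of dotted/undotted cups, and its induced map on $\KhR_2$ is effectively multiplication by an $X$-like element on a designated strand. After symmetrization under the enlarging symmetric groups $S_{c_1 + 2x}$, these $X$-insertions become averaged over all strands in the blue cable, and an analogous picture holds for the green component. The key computation is to show that iterated application of this averaged $X$-multiplication sends any class in a fixed bigrading to zero after finitely many steps.

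Finally, I would assemble the argument: for any bigrading $(i,j)$ and any class $v$ appearing at some stage $(x,y)$ of $\mathscr{D}_\alpha$, apply stabilization maps alternately in the $x$ and $y$ directions. By the previous step, after a bounded number of stabilizations (depending on $i, j, x, y$) the image of $v$ becomes zero. Since this applies to every class at every finite stage, the colimit vanishes in every bigrading, giving $\cabledKhR_{2,\alpha}(\CL) = 0$ and hence $\CS_0^{2,\alpha}(S^2 \times S^2) = 0$ for all $\alpha$.

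The main obstacle will be Step 2: rigorously proving that the iterated, symmetrized dotted-annulus maps really do kill every class in the colimit. This is subtle because cabling simultaneously \emph{creates} new generators (the cable carries a larger $\KhR_2$), so one is racing two competing phenomena—the growing rank at each stage versus the destructive effect of the dotted annuli under symmetrization. A clean way to handle this is to pass to the cabled/symmetrized picture in the Karoubi envelope (using the red/green idempotent decomposition from Lee's structure theorem, or its $\KhR_2$ analogue), where the dotted annulus acts diagonally on idempotent summands and its nilpotent behavior becomes visible; the bookkeeping across the two components of the Hopf link, over both positive and negative cabling conventions governed by the signs of $c_1$ and $c_2$, is the technical core of the argument.
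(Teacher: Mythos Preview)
The paper does not actually prove this theorem; it is stated as a result from the cited references, with only the one-line hint that the computation proceeds ``by computing the colimits of the diagonals'' of the directed system in \S\ref{sec:cabled-Kh}. Your framework---reducing via Manolescu--Neithalath to cabled $\KhR_2$ of the $0$-framed Hopf link, then showing the colimit of the directed system $\mathscr{D}_\alpha$ vanishes in each bigrading by proving every class is eventually killed by the stabilization maps---matches that hint exactly, and your reduction to a ``race'' between growing rank and the destructive effect of dotted annuli is the right way to frame the difficulty.

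The genuine gap is in your proposed tool for Step~2. The red/green idempotent decomposition from Lee theory depends on the relation $X^2 = 1$ (equivalently $b^2 = \id$), which is what makes $(1 \pm b)/2$ idempotent. In undeformed $\KhR_2$ one has $X^2 = 0$: the dot is \emph{nilpotent}, not an involution, and there is no analogous idempotent splitting in the Karoubi envelope of the relevant cobordism category. So the Lee-style Karoubi argument does not port over, and ``its $\KhR_2$ analogue'' in the sense you need does not exist. The actual arguments in the cited papers instead carry out a direct analysis of the symmetrized cable homologies and the dotted-annulus transition maps along the diagonal of the system, exploiting the specific combinatorics of the $0$-framed Hopf link; alternatively, the Ren--Willis paper gives structural vanishing criteria (alluded to in the final lecture of these notes, involving Ng's Thurston--Bennequin bound) that handle $S^2 \times S^2$ without a bare-hands colimit computation. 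Your outline correctly isolates where the work lies, but the mechanism you propose for doing it would have to be replaced.
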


\subsubsection{Ian's lecture}
In the last lecture, Ian discussed Ren--Willis' vanishing and non-vanishing results from \cite{Ren-Willis-lasagna}:
\begin{itemize}
    \item The vanishing criteria partly relies on Ng's Thurston-Bennequin bound. 
    \item The non-vanishing criteria rely on filtered, Lee version of skein lasagna modules. 
\end{itemize}

In particular, he gave a sketch of the first non-gauge-theoretic exotic detection result for 4-manifolds using cabled Khovanov homology (Theorem 1.1 of \cite{Ren-Willis-lasagna}). 

We will not include more details here, as a forthcoming set of notes will delve into skein lasagna modules more carefully.

\bibliographystyle{alpha}
\bibliography{main}

\end{document}